\newcommand{\T}{\mathbb{T}}
\newcommand{\ii }{{\rm i} }
\newcolumntype{C}[1]{>{\centering\arraybackslash}b{#1}}
\newcolumntype{R}[1]{>{\raggedleft\arraybackslash}b{#1}}
\newcolumntype{L}[1]{>{\raggedright\arraybackslash}b{#1}}
\newcolumntype{M}[1]{>{\centering}m{#1}}
\newtheorem{theo}{Theorem}[section]
\newtheorem{defin}{Definition}[section]
\newtheorem{lem}{Lemma}[section]
\newtheorem{prop}{Proposition}[section]
\newtheorem{remark}{Remark}[section]
\newtheorem{cor}{Corollary}[section]
\numberwithin{equation}{section}
\DeclareMathOperator{\R}{\mathbb{R}}
\DeclareMathOperator{\N}{\mathbb{N}}
\DeclareMathOperator{\C}{\mathbb{C}}
\DeclareMathOperator{\Z}{\mathbb{Z}}
\DeclareMathOperator{\m}{\mathtt{m}}
\DeclareMathAlphabet{\mathsfit}{T1}{\sfdefault}{m}{sl}
\SetMathAlphabet{\mathsfit}{bold}{T1}{\sfdefault}{bx}{sl}
\DeclareMathAlphabet{\mathscr}{LS1}{stixscr}{m}{n}
\SetMathAlphabet{\mathscr}{bold}{LS1}{stixscr}{b}{n}
\date{}
\title{Invariant KAM tori around annular vortex patches\\  for 2D Euler equations}
\author{Zineb Hassainia, Taoufik Hmidi and Emeric Roulley}
\begin{document}
	\maketitle
	\begin{abstract}
		We construct time quasi-periodic  vortex patch solutions with one hole  for the planar Euler equations. These structures are captured close to any  annulus  provided that its modulus belongs to  a massive Borel set. 		The proof is based on   Nash-Moser scheme  and  KAM theory applied with a  Hamiltonian system governing  the  radial deformations of the patch.  Compared to the scalar case discussed recently in \cite{HHM21,HR21-1,HR21,R22}, some technical issues emerge due  to the interaction between the interfaces. One of them is related to a new  small divisor problem  in the second order Melnikov non-resonances condition coming from the  transport equations advected with different velocities.
	\end{abstract}
	\tableofcontents
	\section{Introduction}	
	This work deals with some aspects on the vortex motion for the classical planar incompressible Euler equations that can be reformulated in the vorticity/velocity form as follows
		\begin{equation}\label{Euler equations}
		\left\lbrace\begin{array}{ll}
			\partial_{t}\boldsymbol{\omega}+\mathbf{v}\cdot\nabla\boldsymbol{\omega}=0, & \textnormal{in }\mathbb{R}_{+}\times\mathbb{R}^2,\\
			\mathbf{v}=\nabla^{\perp}\psi,\\
			\Delta\psi=\boldsymbol{\omega},\\
			\boldsymbol{\omega}(0,\cdot)=\boldsymbol{\omega}_0.
		\end{array}\right.
	\end{equation}
	The quantity $\mathbf{v}$ represents the {velocity field} of the fluid particles which is supposed to be solenoidal according to the second equation in \eqref{Euler equations} where the notation $\nabla^{\perp}\triangleq \begin{pmatrix}
-\partial_2\\
\partial_1
\end{pmatrix}$ is used. The scalar potential  $\boldsymbol{\omega}$ is  called the {vorticity} and  measures the local rotation effects inside  the fluid. It is related to the velocity field by the relation
	$$\boldsymbol{\omega}\triangleq\nabla^{\perp}\cdot\mathbf{v}.$$
	From  the third equation in \eqref{Euler equations}, we can recover the {stream function} $\psi$  from the vorticity $\boldsymbol{\omega}$ through the following integral operator with a logarithmic kernel
	\begin{equation}\label{def streamL1}
		\psi(t,z)=\frac{1}{2\pi}\int_{\mathbb{R}^{2}}\log(|z-\xi|)\boldsymbol{\omega}(t,\xi)dA(\xi),
	\end{equation}
	where $dA$ is the $2$-dimensional Lebesgue measure. It is well-known since the work of Yudovich \cite{Y63} that any bounded and integrable initial datum $\boldsymbol{\omega}_0$ generates a unique global in time weak solution of \eqref{Euler equations} which is Lagrangian, namely
	$$\boldsymbol{\omega}(t,x)=\boldsymbol{\omega}_0\big(\boldsymbol{\Phi}_{t}^{-1}(x)\big),\qquad\boldsymbol{\Phi}_t(x)=x+\int_{0}^{t}\mathbf{v}\big(t,\boldsymbol{\Phi}_s(x)\big)ds.$$
	In particular, if the initial datum is the characteristic function of some  bounded domain $D_0$ then
	$$\boldsymbol{\omega}(t,\cdot)=\mathbf{1}_{D_t},\qquad D_t\triangleq\boldsymbol{\Phi}_t(D_0)$$
	and the resulting solution is called a \textit{vortex patch}. The dynamics of these solutions is entirely described by the evolution of the boundary $\partial D_t.$ The global in time persistence of the boundary regularity of type $C^{1,\alpha}$, with $\alpha\in(0,1)$  was first proved  by  Chemin in \cite{C93,C95} and later by Bertozzi and Constantin in  \cite{BC93}. Notice that the boundary motion can be tracked from  the {\it  contour dynamics equation} of the patch.  Indeed,  for any parametrization $z(t):\mathbb{T}\rightarrow\partial D_t$ of the boundary, denoting $\mathbf{n}\big(t,z(t,\theta)\big)\triangleq\ii \partial_\theta z(t,\theta)$ a normal vector to the boundary at the point $z(t,\theta)$, one has
	\begin{equation}\label{CDE}
		\partial_{t}z(t,\theta)\cdot\mathbf{n}\big(t,z(t,\theta)\big)=\partial_{\theta}\Big[\psi\big(t,z(t,\theta)\big)\Big].
	\end{equation}
	We refer for instance   to \cite{HMV13} for a complete derivation of this equation. In 1858, Rankine observed that any radial initial domain $D_0$ (disc, annulus, etc...)  generates a stationary solution to \eqref{Euler equations}. Then from a dynamical system point of view  it is of important interest to explore  the local structure of the phase portrait and to know whether periodic solutions may exist around  these equilibrium states. This topic turns out to be highly rich leading to fruitful subjects connecting various areas in Mathematics.  The first result in this direction is due to Kirchhoff in 1874 \cite{K74} where he proved that an ellipse $\mathcal{E}_{a,b}$ with semi-axes $a$ and $b$ performs a uniform rotation about its center  with an angular velocity $\Omega$ if and only if
	$$\Omega=\frac{ab}{(a+b)^2}\cdot$$
	Actually, the ellipses form a subclass of   \textit{relative equilibria} or \textit{V-states} which are solutions keeping the same shape during the motion, from which we derive another subclass given by rotating patches where the domain $D_t$ rotates uniformly about its center (due to the space invariance, we can suppose without any  loss of generality that the center is the origin),
	$$D_t=e^{\ii\Omega t}D_0,\qquad\Omega\in\mathbb{R}.$$
	They form a family of rigid periodic solutions where the domain  is not deformed during the motion  and keeps its initial shape. Then, more recently in 1978,  Deem and Zabusky \cite{DZ78} discovered numerically 3-fold, 4-fold and 5-fold V-states living close to the unit disc. Few years after, Burbea \cite{B82} confirmed analytically these simulations  using  bifurcation theory. More precisely, he proved that for any integer  $\mathtt{m}\geqslant 1,$ one can find a branch of $\mathtt{m}$-fold simply-connected V-states bifurcating from the unit disc at the angular velocity
	$$\Omega_{\mathtt{m}}\triangleq\frac{\mathtt{m}-1}{2\mathtt{m}}\cdot$$
	Actually, the case $\mathtt{m}=1$ corresponds to a translation of the Rankine vortex whereas the branch associated with the mode $\mathtt{m}=2$ gives the Kirchhoff ellipses. Observe that for any $\mathtt{m}\geqslant2,$ the bifurcation frequency $\Omega_{\mathtt{m}}$ lives in the interval $(0,\tfrac{1}{2})$  and the series of works \cite{F00,GPSY20,H15} showed that, outside this interval and in the simply-connected case, the only relative equilibria are the radial ones. The boundary regularity of the V-states and the global bifurcation were analyzed in  \cite{CCG16,C-C-G16, HMW20,HMV13}. The second bifurcation from the ellipses has been discussed in  \cite{C-C-G16,HM16}. More precisely, if we consider an ellipse $\mathcal{E}_{a,b}$ ($a>b$) described by	$$\mathcal{E}_{a,b}=\Big\{\tfrac{a+b}{2}\big(w+\tfrac{\mathcal{Q}}{w}\big),\quad w\in\mathbb{T},\quad\mathcal{Q}\triangleq\tfrac{a-b}{a+b}\in(0,1)\Big\},$$
	then for any integer $\mathtt{m}\geqslant3,$ the bifurcation occurs at the angular velocity $\Omega=\frac{1-Q^2}{4}$, where  $\mathcal{Q}$ is a solution to the polynomial equation 
	\begin{equation}\label{constraint ellipse}
		f(\mathtt{m},\mathcal{Q})\triangleq1+\mathcal{Q}^{\mathtt{m}}-\tfrac{1-\mathcal{Q}^2}{2}\mathtt{m}=0.
	\end{equation}
	The boundary effects on the emergence of V-states have been explored recently in \cite{HHHM15} where  the authors proved the existence of V-states when the fluid evolves in the unit disc $\mathbb{D}$. 
	It was  shown that  for any integer $\mathtt{m}\geqslant 1$ a family of $\mathtt{m}$-fold implicit curves bifurcate  from the disc $b\mathbb{D},$ $b\in(0,1),$ at the angular velocity 
	$$\Omega_{\mathtt{m}}(b)\triangleq\frac{\mathtt{m}-1+b^{2\mathtt{m}}}{2\mathtt{m}}\cdot$$
	In contrast to the flat case $\R^2$, the one-fold curve is no longer trivial here and moreover the numerical simulations performed in   \cite{HHHM15} show that  in some regimes of $b$ the bifurcating  curves oscillate with respect to the angular velocity.  In the same spirit,  Hmidi, de la Hoz,  Mateu and Verdera discussed in \cite{HHMV16} the existence of rotating patches with one hole called doubly-connected V-states. They proved that for a fixed symmetry $\mathtt{m}\geqslant3$ and $b\in(0,1)$  two $\mathtt{m}$-fold curves of doubly-connected V-states  bifurcate from the annulus   
		\begin{equation}\label{annulus-Ab}
		A_b\triangleq\big\{z\in\mathbb{C}\quad\textnormal{s.t.}\quad b<|z|<1\big\},
	\end{equation}
	provided that the following constraint is satisfied
	$$f(\mathtt{m},b)<0,$$
	where $f$ is given by \eqref{constraint ellipse}. The bifurcation occurs at the angular velocities
		\begin{align}\label{Om-doub}
		\Omega_{\mathtt{m}}^{\pm}(b)\triangleq\frac{1-b^{2}}{4}\pm\frac{1}{2\mathtt{m}}\sqrt{\left(\frac{1-b^2}{2}\mathtt{m}-1\right)^2-b^{2\mathtt{m}}}.
		\end{align}
		It is worthy to point out that the role played by the same function $f$ in the two different  cases (bifurcation from the ellipses and the annulus) is quite mysterious and could be explained through  Joukowsky transformation. 
	As for the degenerate case where  
	\begin{equation}\label{deg bif}
		f(m,b)=0
	\end{equation}
	the situation turns out to be  more delicate to handle. The solutions to  \eqref{deg bif} can be ranged  in the form
	\begin{align}\label{bn-def11}
	\{(2,b),\quad b\in(0,1)\}\qquad\textnormal{or}\qquad\Big\{(n,\underline{b}_n),\quad n\geqslant 3,\quad\underline{b}_{n}\in(0,1)\Big\},
	\end{align}
	where the sequence $(\underline{b}_{n})_{n\geqslant 3}$ is increasing and tends to $1.$  This problem has  been explored by Hmidi and Mateu in  \cite{HM16-2}, where  they show that for $b\in(0,1)\setminus\{\underline{b}_{2p},\,p\geqslant2\}$ there is a trans-critical bifurcation of the 2-fold V-states. However, there is no bifurcation with the $\mathtt{m}$-fold symmetry  for  $b=\underline{b}_{\mathtt{m}},$ $\mathtt{m}\geqslant3.$ Very recently, Wang, Xu and Zhou extended in  \cite{WXZ22} the 2-fold trans-critical bifurcation to the cases  $b=\underline{b}_{4}$ and $b=\underline{b}_{6}.$
	We should also mention that over the past few years  there were a lot of rich  activities on   the construction of  V-states  around more general steady  shapes (multi-connected patches, Thomson polygons, von K\'arm\'an vortex streets, etc...) and for various  active scalar equations (generalized quasi-geostrophic equations, quasi-geostrophic shallow-water equations,  Euler-$\alpha$ equations). For more details, we refer  to \cite{ADPMW20,CLZ21,CQZZ21,CWWZ21,CJ22,EJ19,EJ20,HHH16,HHHM15,DHR19,G20,G21,GH22,GHS20,GGS20,GS19,HH15,HH21,HW21,HM17,HXX22,R17,R21,R22,T85}.
	
	\smallskip
	
	In the current work, we intend to explore  the existence of time quasi-periodic  vortex patches  for \eqref{Euler equations} close to the annulus. Recall that a \textit{quasi-periodic function} is any application $f:\mathbb{R}\rightarrow\mathbb{R}$ which can be written
	$$\forall\, t\in\mathbb{R},\quad f(t)=F(\omega t),$$
	with $F:\mathbb{T}^{d}\rightarrow\mathbb{R},$ where $\mathbb{T}^d$ denotes the flat torus of dimension $d\in\mathbb{N}^*$ and $\omega\in\mathbb{R}^{d}$ a non-degenerate frequency vector, namely
	\begin{equation}\label{nonresonnace omega}
		\forall\, l\in\mathbb{Z}^{d}\setminus\{0\},\quad \omega\cdot l\neq 0.
	\end{equation}
	Observe that the case $d=1$ corresponds to the definition of periodic functions with frequency $\omega\in\mathbb{R}^*.$ This type of functions are the natural solutions of finite dimensional integrable Hamiltonian systems where the phase space is foliated by Lagrangian invariant tori supporting quasi-periodic motion. The Kolmogorov-Arnold-Moser (KAM) theory \cite{A63,K54,M62} asserts that under suitable  regularity and non-degeneracy conditions, most of these invariant tori persist, up to a smooth deformation, under a small Hamiltonian  perturbation. A typical difficulty in the implementation of the KAM method is linked to the \textit{small divisors problems} preventing some intermediate  series to be convergent. The solution, proposed by Kolmogorov, is to introduce Diophantine conditions on the small denominators which lead to a fixed algebraic loss of regularity. This loss can be treated through a classical Newton method in the analytical regularity framework  as proved by Kolmogorov and Arnold. However, this approach turns out to be more involved   in the finitely many differentiable case (for example Sobolev or H\"older spaces). Indeed, to overcome this technical difficulty,  Moser used in \cite{M67}  a regularization of the  Newton method in the spirit of the ideas of Nash implemented in the isometric embedding problem \cite{N54}. Now, such a method is known as  Nash-Moser scheme.
	
	\smallskip
	
	 The search of lower dimensional invariant tori is so relevant not only for finite dimensional Hamiltonian systems but also  for Hamiltonian PDE where this query  is quite natural. Actually,  in the finite dimensional case, this problem has been explored for instance by Moser and P\"{o}schel \cite{M67,P89} leading to new Diophantine conditions called \textit{first and second order Melnikov conditions}. Later on,  the theory has been extended and refined for several Hamiltonian PDE. For example, it has been implemented for the 1D semi-linear wave and Schr\"odinger equations in the following papers \cite{B94,CY00,CW93,K87,P96,P96-1,W90}. Several results were also obtained for semi-linear perturbations of integrable PDE  \cite{BBP13,BBP14,B05,EK10,GYZ11,KP03,K98,K00,LY11}. However, the case of quasi-linear or fully nonlinear perturbations were only explored very recently  in a series of papers  \cite{BBM14,BBM16,BBM16-1,BB15,BKM21,FP14}. A typical example in this direction is given by the water-waves equations which have been the  subject of    rich and intensive activity over the past few years dealing with  the periodic and quasi-periodic solutions, see for instance  \cite{AB11,BBMH18, BFM21,BFM21-1,BM18,IPT05,PT01}.
	 
	 \smallskip
	 
	Concerning the emergence of quasi-periodic structures for   the 2D Euler equations which is known to be Hamiltonian, few results are known in the literature and some interesting developments have been made very recently opening new perspectives around the vortex motion.  
	One of the results on the  smooth case, supplemented with  periodic boundary conditions, goes back to Crouseilles and Faou in  \cite{CF13}. The construction of quasi-periodic  solutions  is founded on the superposition of localized  traveling   solutions without interaction. Notice that no sophisticated tools from KAM theory are required in their approach. Very recently, their result has been extended to higher dimensions  by Enciso, Peralta-Salas and Torres de Lizaur in  \cite{EPT22}.  For Euler equations on  the  3-dimensional torus, Baldi and Montalto \cite{BM21} were able to generate quasi-periodic solutions through  small quasi-periodic forcing terms.
	
	\smallskip
	
Another new  and promising  topic concerns the construction of quasi-periodic vortex patches  to  the system \eqref{Euler equations}  or to various active scalar equations  (generalized surface quasi-geostrophic equations,  quasi-geostrophic shallow-water equations and Euler-$\alpha$ equations) which has been partially explored in the recent papers \cite{HHM21,HR21,R22}. All of them deal with simply-connected quasi-periodic patches near Rankine vortices provided that  the suitable external parameter is selected in a massive Cantor set. We emphasize that for Euler model    there is no natural parameter anymore and one has to create an internal one. Two works have been performed in this direction. The first one is due to Berti, Hassainia and Masmoudi \cite{BHM21} who proved using KAM theory the existence of quasi-periodic patches close to  Kirchhoff ellipses provided that the aspect ratio of the ellipse belongs to a Cantor set.
The second one is obtained by Hassainia and Roulley in \cite{HR21-1}, where the fluid evolves in  the unit disc, and they proved the existence of quasi-periodic patches    close to Rankine vortices $\mathbf{1}_{b\mathbb{D}},$ when $b$ belongs to a suitable Cantor set in  $(0,1)$.

\smallskip

Our main task here is to investigate the emergence of  quasi-periodic  patches (denoted by (QPP)) near   the annulus $A_b$. The motivation behind that is the existence of   time periodic patches around the annulus as stated in  \cite{HHMV16} and one may get (QPP) at the linear level by mixing a finite number of frequencies. Note that the rigidity of the frequencies \eqref{Om-doub}  with respect to the modulus $b$ is an essential element to get the non-degeneracy of the linear torus.   One of  the difficulties in the construction of (QPP) at   the nonlinear level stems from the vectorial structure of the problem because we are dealing with two coupled  interfaces. As we shall see, this leads to more time-space  resonances coming in part from the interaction between  the transport equations advected by  two different speeds.\\
In what follows, we intend to carefully describe the situation around doubly-connected (QPP), then formulate the main result and sketch the principal  ideas of the  proof. First, we consider a modified polar parametrization of the two interfaces of the patch   close to the annulus $A_b$, namely for $k\in\{1,2\}$
	$$ z_k(t,\theta)\triangleq R_k(t,\theta)e^{\ii(\theta-\Omega t)},\quad R_{k}(t,\theta)\triangleq\sqrt{b_{k}^{2}+2r_{k}(t,\theta)},\quad b_1\triangleq 1,\quad b_2\triangleq b.$$
	The unknown is the pair of functions $r=(r_1,r_2)$ of small  radial deformations of the patch. It is worthy to point out that similarly to \cite{BHM21,HHM21,HR21,R22} our parametrization is written in a rotating frame with an angular velocity $\Omega>0.$ Nevertheless, we have multiple reasons here behind the introduction of this auxiliary parameter $\Omega$.    In the previous works, we make appeal to this parameter  to remedy to the degeneracy of the first equilibrium frequency leading to a trivial resonance. In our setting, this parameter is needed   to avoid an exponential accumulation towards a constant  of the unperturbed frequencies (eigenvalues) $\{\mathtt{m}\Omega_{\mathtt{m}}^{\pm}(b), \mathtt{m}\geqslant2\}$, see \eqref{Om-doub}.  This fact   induces a harmful effect especially  related to the second order Melnikov non-resonance condition. Therefore, thanks to the parameter $\Omega$ the eigenvalues will grow linearly with respect to the modes $\mathtt{m}$.  Another useful property induced by large values of $\Omega$ is the   monotonicity  of the eigenvalues, see Lemma \ref{lem-asym}, which allows in turn to get  R\"ussmann conditions on the diagonal part, see   Lemma \ref{lemma transversalityE}-(iv). 
	
	\smallskip
	One of the major difference  with \cite{HHM21,HR21-1,HR21,R22} is the vectorial structure of the system related to the interfaces coupling. Despite that, we are  able to check the Hamiltonian structure in terms of  the contour dynamics equations. In fact, we prove in Lemma \ref{lem eq EDC r} and Proposition \ref{prop HAM eq Edc} that the pair of radial deformations $r=(r_1,r_2)$  solves a system of two coupled nonlinear and nonlocal transport PDE admitting a Hamiltonian formulation in the form
	\begin{equation}\label{Ham eq EDC r1-r2 intro}
		\partial_{t}r=\mathcal{J}\nabla H(r),\qquad\mathcal{J}\triangleq\begin{pmatrix}
			\partial_{\theta} & 0\\
			0 & -\partial_{\theta}
		\end{pmatrix},
	\end{equation}
	where the Hamiltonian $H$ can be recovered from  the kinetic energy and the angular momentum. This Hamiltonian is reversible and invariant under space translations. 
	The linearized operator at a general state $r$ close to the annulus $A_b$ is described in Lemma \ref{lem lin op 1 DCE} and writes
	$$\partial_{t}\begin{pmatrix}
		\rho_{1}\\
		\rho_{2}
	\end{pmatrix}
	=\mathcal{J} \mathbf{M}_r\begin{pmatrix}
		\rho_{1}\\
		\rho_{2}
	\end{pmatrix},\qquad \mathbf{M}_r\triangleq \begin{pmatrix}
		-V_{1}(r)-L_{1,1}({r}) & L_{1,2}(r)\\
		L_{2,1}(r) & V_{2}(r)-{L}_{2,2}(r)
	\end{pmatrix},$$
	where $V_{k}(r)$ are scalar functions and ${L}_{k,n}({r})$ are nonlocal operators given by \eqref{def Vpm}--\eqref{def mathbfLkn}.
	The diagonal terms correspond to the self-induction of each interface. In particular, the operators $L_{k,k},$ for $k\in\{1,2\},$ are of zero order and reflect the planar simply-connected Euler action. For $k\neq n\in\{1,2\},$ the anti-diagonal operators $L_{k,n}$  describe  the interaction between the two boundaries and they are smoothing at any order. In Lemma \ref{lem lin op 2 DCE}, we shall  prove that at the equilibrium $r=(0,0)$, corresponding to the annulus patch, 
each entry of $\mathbf{M}_0$ is a Fourier multiplier and the operator $\mathcal{J}\mathbf{M}_0$ can be written in Fourier expansion  as a superposition of $2\times 2$ matrices,
	$$\mathcal{J}\mathbf{M}_0\begin{pmatrix}
		\rho_{1}\\
		\rho_2
	\end{pmatrix}=\sum_{j\in\mathbb{Z}^*}M_{j}(b,\Omega)\begin{pmatrix}
		\rho_{j,1}\\
		\rho_{j,2}
	\end{pmatrix}\mathbf{e}_{j},\qquad M_{j}(b,\Omega)\triangleq\frac{\ii j}{|j|} \begin{pmatrix}
		-|j|\big(\Omega+\tfrac{1-b^2}{2}\big)+\tfrac{1}{2}& -\tfrac{b^{|j|}}{2}\\
		\tfrac{b^{|j|}}{2} & -|j|\Omega-\tfrac{1}{2}
	\end{pmatrix},$$
	for all  $\rho_1$ and$\rho_2$ with Fourier expansion 
	$$
	\rho_k=\sum_{j\in\mathtt{m}\mathbb{Z}^*}\rho_{j,k}\mathbf{e}_j\quad\textnormal{s.t.}\quad \rho_{-j,k}=\overline{\rho_{j,k}},\qquad\; \mathbf{e}_{j}(\theta)\triangleq e^{\ii j\theta}.
	$$
	The spectrum of $M_{j}(b,\Omega)$ is 
	$$\sigma\big(M_{j}(b,\Omega)\big)=\big\{-\ii\Omega_{j,1}(b),-\ii\Omega_{j,2}(b)\big\},\qquad\Omega_{j,k}(b)\triangleq\frac{j}{|j|}\bigg[\big(\Omega+\tfrac{1-b^2}{4}\big)|j|-\ii^{\mathtt{H}\big(\Delta_{j}(b)\big)} \tfrac{(-1)^{k}}{2}\sqrt{|\Delta_{j}(b)|}\bigg],$$
	with $\mathtt{H}\triangleq \mathbf{1}_{[0,\infty)}$ the Heaviside function and 
	$$\Delta_{j}(b)\triangleq b^{2|j|}-\big(\tfrac{1-b^2}{2}|j|-1\big)^2.$$
	At this stage, we shall restrict the discussion to $\mathtt{m}$-fold symmetric structures  for some integer $\mathtt{m}\geqslant 3$ large enough. This is done for several reasons.  First, the mode $j=2$ corresponds to a double root  for any $b\in (0,1),$ because  $\Delta_{2}(b)=0,$ implying a nontrivial resonance that we cannot remove using the parameter $b$ but simply  by imposing higher symmetry for the (QPP). Second,  the hyperbolic spectrum, associated to non-zero real part for the eigenvalues, that could generate instabilities and time growth emerge only for lower symmetries. We believe that with this latter configuration, one can still hope to construct (QPP) by inserting the hyperbolic modes on the normal directions as it was recently performed in \cite{BHM21}. We refer for instance the reader to \cite{BB12,LS19,EGK16,G74,PP15,Z75,Z76} for an introduction to hyperbolic KAM theory in finite or infinite dimension.
	
	\smallskip
	
	Now, we fix $b^*\in(0,1)$ and set
	\begin{equation}\label{def m star intr}
	\mathtt{m}^*\triangleq\min\big\{n\geqslant3\quad\textnormal{s.t.}\quad\underline{b}_n>b^*\big\},
	\end{equation}
	where the sequence $\underline{b}_n$ defined in \eqref{bn-def11}.
	Then, for any integer $|j|\geqslant\mathtt{m}^*$, we have $\Delta_{j}(b)<0.$ Hence, the quantity $\Omega_{j,k}(b)$ is real and the matrix  $M_{j}(b,\Omega)$ has pure imaginary spectrum. The restriction of the Fourier modes to the lattice $\mathbb{Z}_{\mathtt{m}}^*\triangleq\mathtt{m}\mathbb{Z}\setminus\{0\}$ with $\mathtt{m}\geqslant\mathtt{m}^*$ allows to eliminate the hyperbolic modes.   
	At this stage, we find it  convenient to work with  new coordinates where the linearized operator at the equilibrium state  is governed by a diagonal matrix Fourier multiplier   operator. This can be done through the diagonalization of each the matrix $M_j(b,\Omega).$  To do that, we use the  symplectic transformation (with respect to $\mathcal{W}$)  $\mathbf{Q}$ taking the form
	$$\mathbf{Q}\begin{pmatrix}
		\rho_1\\
		\rho_2
	\end{pmatrix}=\sum_{j\in\mathbb{Z}_{\mathtt{m}}^*}\mathbf{Q}_j\begin{pmatrix}
		\rho_{j,1}\\
		\rho_{j,2}
	\end{pmatrix}\mathbf{e}_{j},\qquad\mathbf{Q}_j\triangleq \tfrac{-1}{\sqrt{1-a_{j}^2(b)}} \begin{pmatrix}
		-1 &  a_{j} (b) \\
		 a_{j}(b)& -1 
	\end{pmatrix}, 
	$$
	where
	\begin{equation}\label{def coef ajb}
	a_{j}(b)\triangleq\frac{b^{|j|}}{\tfrac{1-b^2}{2}|j|-1+ \sqrt{\big(\tfrac{1-b^2}{2}|j|-1\big)^2-b^{2|j|}}}\in(0,1),
	\end{equation}
	(see Corollary \ref{coro-equilib-freq} for more details on the bound of $a_{j}(b)$)  such that
	$$\mathbf{Q}^{-1}\mathcal{J} \mathbf{M}_0\mathbf{Q}= \mathcal{J} \mathbf{L}_0, \qquad  \mathbf{L}_0\begin{pmatrix}
		\rho_{1}\\
		\rho_{2}
	\end{pmatrix}
	\triangleq \sum_{j\in\mathbb{Z}_{\mathtt{m}}^*}\tfrac{1}{j}\begin{pmatrix}
		-\Omega_{j,1}(b) & 0\\
		0 & \Omega_{j,2}(b)
	\end{pmatrix} \begin{pmatrix}
		\rho_{j,1}\\
		\rho_{j,2}
	\end{pmatrix} \mathbf{e}_{j}.$$
		Notice that $\ii\Omega_{j,1}(b)$ and $\ii\Omega_{j,2}(b)$ are not complex conjugate, thus the dynamics cannot be reduced to one scalar equation associated with a complex variable unlike the water-waves \cite{BBMH18,BFM21,BFM21-1,BM18} situation. 
	The new Hamiltonian system through the symplectic transformation $r\mapsto\mathbf{Q}r$
	$$\partial_{t}r=\mathcal{J}\nabla K(r),\qquad K(r)\triangleq H(\mathbf{Q}r)$$
	whose linearization at the trivial solution has a good normal form
	\begin{equation}\label{KL0 intro}
		\partial_t \rho=\mathcal{J}\nabla K_{\mathbf{L}_0}(\rho),\qquad K_{\mathbf{L}_0}(\rho)\triangleq \tfrac{1}{2}\big\langle\mathbf{L}_0\rho,\rho\big\rangle_{L^2(\mathbb{T})\times L^2(\mathbb{T})}=-\sum_{j\in\mathbb{Z}_{\mathtt{m}}^*}\left(\tfrac{\Omega_{j,1}(b)}{2j}|\rho_{j,1}|^2-\tfrac{\Omega_{j,2}(b)}{2j}|\rho_{j,2}|^2\right).
	\end{equation}
Consider two disjoint finite sets of Fourier modes
	\begin{equation}\label{tang sets intro}
		\mathbb{S}_1,\mathbb{S}_2\subset\mathtt{m}\mathbb{N}^*,\qquad\textnormal{with}\qquad |\mathbb{S}_1|=d_1<\infty,\qquad|\mathbb{S}_{2}|=d_2<\infty\qquad\textnormal{and}\qquad \mathbb{S}_1\cap\mathbb{S}_2=\varnothing.
	\end{equation}
	Then, from Lemma \ref{lemma sol Eq}, we deduce that for any $0<b^*<1,$  $\Omega>0$ and $r_{j,1},r_{j,2}\in\mathbb{R}^*$, for almost all $b\in[0,b^*],$ any function in the form
	$$r(t,\theta)=\sum_{j\in{\mathbb{S}}_1}\tfrac{r_{j,1}}{\sqrt{1-a_{j}^2(b)}} \begin{pmatrix}
		1  \\
		- a_{j}(b) 
	\end{pmatrix}\cos\big(j\theta-\Omega_{j,1}(b)t\big)+\sum_{j\in{\mathbb{S}}_2}\tfrac{r_{j,2}}{\sqrt{1-a_{j}^2(b)}} \begin{pmatrix}
		-a_{j} (b) \\
		1 
	\end{pmatrix}\cos\big(j\theta-\Omega_{j,2}(b)t\big)$$
	 is a quasi-periodic solution with frequency 
	\begin{equation}\label{def omegaEq intro}
		\omega_{\textnormal{Eq}}(b)\triangleq\Big(\big(\Omega_{j,1}(b)\big)_{j\in\mathbb{S}_1},\big(\Omega_{j,2}(b)\big)_{j\in\mathbb{S}_2}\Big)
	\end{equation}
	of the original linearized equation $\partial_{t} r=\mathcal{J}\mathbf{M}_0r$ which is $\mathtt{m}$-fold and reversible, namely $r(-t,-\theta)=r(t,\theta)=r\big(t,\theta+\tfrac{2\pi}{\mathtt{m}}\big).$ 
	Our main result states that these structures persist at the non-linear level. More precisely, we have the following theorem.
	\begin{theo}\label{thm QPS E}
		Let $0<b_*<b^*<1$ and fix $\mathtt{m}\in\mathbb{N}$ with $\mathtt{m}\geqslant\mathtt{m}^*,$ where   $\mathtt{m}^*$ defined in \eqref{def m star intr}.	
		There exists $\Omega_{\mathtt{m}}^{*}\triangleq\Omega(b^*,\mathtt{m})>0$ satisfying
		$$\lim_{\mathtt{m}\to\infty}\Omega_{\mathtt{m}}^*=0$$
		such that for any $\Omega>\Omega_{\mathtt{m}}^*,$ there exists $\varepsilon_{0}\in(0,1)$ small enough with the following properties :  For every amplitudes 
		$$\mathtt{a}=\big((\mathtt{a}_{j,1})_{j\in\mathbb{S}_1},(\mathtt{a}_{j,2})_{j\in\mathbb{S}_2}\big)\in(\mathbb{R}_{+}^{*})^{d_1+d_2}\qquad\textnormal{satisfying}\qquad|{\mathtt{a}}|\leqslant\varepsilon_{0},$$ 
		there exists a Cantor-like set 
		$$\mathcal{C}_{\infty}^{\mathtt{a}}\subset(b_{*},b^{*}),\qquad\textnormal{with}\qquad\lim_{{\mathtt{a}}\rightarrow 0}|\mathcal{C}_{\infty}^{\mathtt{a}}|=b^{*}-b_{*},$$
		such that for any $b\in\mathcal{C}_{\infty}^{\mathtt{a}}$, the planar Euler equations \eqref{Euler equations} admit a $\mathtt{m}$-fold time quasi-periodic  doubly-connected vortex patch solution in the form
		$$\boldsymbol{\omega}(t,\cdot)=\mathbf{1}_{D_t},\qquad D_{t}=\Big\{\ell e^{\ii(\theta-\Omega t)},\quad\theta\in[0,2\pi],\quad \sqrt{b^2+2r_2(t,\theta)}<\ell<\sqrt{1+2r_1(t,\theta)}\Big\},$$
		where
		\begin{align*}
			\begin{pmatrix}
				r_1\\
				r_2
			\end{pmatrix}(t,\theta)&=\sum_{j\in{\mathbb{S}}_1}\tfrac{\mathtt{a}_{j,1}\cos(j\theta-\omega_{j,1}(b,\mathtt{a})t)}{\sqrt{1-a_{j}^2(b)}} \begin{pmatrix}
				1  \\
				- a_{j}(b) 
			\end{pmatrix}+\sum_{j\in{\mathbb{S}}_2}\tfrac{\mathtt{a}_{j,2}\cos(j\theta-\omega_{j,2}(b,\mathtt{a})t)}{\sqrt{1-a_{j}^2(b)}} \begin{pmatrix}
				-a_{j}(b) \\
				1 
			\end{pmatrix}+\mathtt{p}\big(\omega_{\textnormal{\tiny{pe}}}(b,\mathtt{a})t,\theta\big)
		\end{align*}
		and $a_{j}(b)$ are given by \eqref{def coef ajb}.
		This solution is associated with a non-resonant frequency vector $${\omega}_{\tiny{\textnormal{pe}}}(b,{\mathtt{a}})\triangleq \Big(\big(\omega_{j,1}(b,{\mathtt{a}})\big)_{j\in\mathbb{S}_1},\big(\omega_{j,2}(b,{\mathtt{a}})\big)_{j\in\mathbb{S}_2}\Big)\in\mathbb{R}^{d_1+d_2}$$
		satisfying the convergence
		$$\mathtt{\omega}_{\tiny{\textnormal{pe}}}(b,{\mathtt{a}})\underset{\mathtt{a}\rightarrow 0}{\longrightarrow}-\Big(\big(\Omega_{j,1}(b)\big)_{j\in\mathbb{S}_1},\big(\Omega_{j,2}(b)\big)_{j\in\mathbb{S}_2}\Big),$$
		where $\Omega_{j,1}(b)$ and $\Omega_{j,2}(b)$ are the equilibrium frequencies. 
		The perturbation $\mathtt{p}:\T^{d_1+d_2+1}\to\mathbb{R}^2$ is a function satisfying the symmetry properties 
		$$\forall(\varphi,\theta)\in\mathbb{T}^{d_1+d_2+1},\quad\mathtt{p}(-\varphi,-\theta)=\mathtt{p}(\varphi,\theta)=\mathtt{p}(\varphi,\theta+\tfrac{2\pi}{\mathtt{m}})$$
		and for some large index of Sobolev regularity $s$ it satisfies  the estimate
		$$\|\mathtt{p}\|_{H^{{s}}(\mathbb{T}^{d_1+d_2+1},\mathbb{R}^2)}\underset{{\mathtt{a}}\rightarrow 0}{=}o(|{\mathtt{a}}|).$$
	\end{theo}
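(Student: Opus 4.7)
The plan is to implement a Nash-Moser/KAM scheme applied to the Hamiltonian system \eqref{Ham eq EDC r1-r2 intro} after conjugation by the symplectic map $\mathbf{Q}$ which puts the linearization at the annulus in the diagonal form \eqref{KL0 intro}. First I would introduce action-angle variables on the tangential sites $\mathbb{S}_1\cup\mathbb{S}_2$, writing $\rho=\mathbf{A}(\varphi,I)+z$, where $(\varphi,I)\in\mathbb{T}^{d_1+d_2}\times\mathbb{R}^{d_1+d_2}$ encode the actions on the tangential modes and $z$ lives in the normal subspace (the $L^{2}$-orthogonal complement of the span of the $\mathbf{e}_{j}$, $j\in\mathbb{S}_1\cup\mathbb{S}_2$). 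The problem then becomes that of finding an embedded invariant torus $i:\mathbb{T}^{d_1+d_2}\to\mathbb{T}^{d_1+d_2}\times\mathbb{R}^{d_1+d_2}\times H^s_\perp$ for a modified Hamiltonian vector field $X_{K_\alpha}$, where $\alpha=\omega$ plays the role of a counter-term used to adjust the frequency. I would reformulate the existence of such a torus as the vanishing of a functional $\mathcal{F}(i,\alpha,b,\omega)=\omega\cdot\partial_\varphi i-X_{K_\alpha}(i)$, to be solved via a Newton-type iteration with ultraviolet truncations.

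The heart of the proof is the construction of an approximate right inverse of $d\mathcal{F}$ at an approximate solution, with tame estimates in Sobolev spaces. I would follow the symplectic approximate decoupling between tangential/normal dynamics à la Berti-Bolle: after a change of variables straightening the torus, the main task is to invert the linearized operator restricted to normal directions, which is a coupled $2\times 2$ matrix pseudodifferential operator of the form $\omega\cdot\partial_\varphi-\mathcal{J}\mathbf{M}_r\Pi_\perp$, obtained from the linearization described in Lemma \ref{lem lin op 1 DCE}. The strategy is a reduction procedure in several steps: (i) symplectic straightening to decouple tangential and normal components; (ii) Egorov-type conjugation by a transport flow to put the highest-order transport part into constant coefficients, for each diagonal entry separately (noting the two transport speeds $V_1(r)$ and $V_2(r)$ are distinct); (iii) a KAM-type block-diagonal reducibility scheme, which diagonalizes the off-diagonal coupling $L_{k,n}$, $k\neq n$, exploiting the smoothing character of these operators to control the remainder at each step.

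The main obstacle, as anticipated by the authors, is the second-order Melnikov non-resonance condition. Because the two diagonal parts are advected at different speeds $V_1(r)\neq V_2(r)$, after the Egorov conjugation the corrected frequencies for the two components differ from each other by a nontrivial integrated quantity, and the small divisors $\omega\cdot l+\mu_{j,k}-\mu_{j',k'}$ with $(k,k')\neq(1,1),(2,2)$ are genuinely new; they cannot be absorbed using the tricks of the scalar case \cite{HHM21,HR21-1,HR21,R22}. I would handle them by enforcing transversality/non-degeneracy as a function of the parameter $b$, relying on the monotonicity and linear growth in $|j|$ of the equilibrium frequencies $\Omega_{j,k}(b)$, ensured by choosing $\Omega>\Omega_{\mathtt{m}}^{*}$ (see Lemma \ref{lem-asym} and Lemma \ref{lemma transversalityE}), together with a Rüssmann-type argument on finitely many transversal derivatives. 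This yields non-resonance except on a small set of $b$, whose total measure is controlled by a union bound over $(l,j,j',k,k')$.

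Once the approximate inverse with the appropriate tame losses is established on the Cantor-like sets $\mathcal{C}^{\mathtt{a}}_n$ defined by Diophantine and Melnikov conditions at step $n$, a standard Nash-Moser iteration with smoothing produces a sequence of approximate tori converging to a true embedded torus $i_\infty$ on $\mathcal{C}_\infty^{\mathtt{a}}=\bigcap_n\mathcal{C}_n^{\mathtt{a}}$. Preservation of the reversibility symmetry $r(-t,-\theta)=r(t,\theta)$ and the $\mathtt{m}$-fold symmetry throughout the scheme yields the claimed symmetry properties of $\mathtt{p}$. Finally, I would conclude with a measure estimate showing $|\mathcal{C}_\infty^{\mathtt{a}}|\to b^*-b_*$ as $\mathtt{a}\to 0$, by summing the measures of the resonant strips removed at each stage, exploiting the quantitative transversality obtained in the analysis of $b\mapsto\Omega_{j,k}(b)$.
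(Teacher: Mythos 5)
Your proposal follows essentially the same route as the paper: symplectic diagonalization by $\mathbf{Q}$, action--angle variables and the counter-term functional $\mathcal{F}(i,\alpha,b,\omega)$, Berti--Bolle approximate decoupling, reduction of the transport part followed by KAM reducibility of the smoothing remainder, treatment of the new mixed second-order Melnikov divisors $\mu_{j,1}^{(\infty)}-\mu_{j_0,2}^{(\infty)}$ via transversality in $b$ and a R\"ussmann argument, and finally Nash--Moser iteration with a measure estimate of the final Cantor set. The only detail left implicit in your sketch, which the paper makes explicit, is that the mixed divisors force an isotropic decay $\langle l,j,j_0\rangle^{-\tau_2}$ in the Cantor conditions and hence a hybrid operator topology (off-diagonal norm on the diagonal blocks, isotropic norm on the anti-diagonal smoothing blocks) to close the reducibility scheme.
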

\begin{remark}
	The lower bound restriction $b_*$ is required because the operators $L_{k,n}$, $V_{k,n}$ may become singular when $b=0,$ a situation which corresponds to the simply-connected case. 
\end{remark}
\begin{remark}
From this theorem,  we obtain global non trivial  solutions in the patch form which are confined around the annulus.  More studies on vortex confinement  can be found in \cite{ILN03,ISG99,M94}. 
\end{remark}

	We shall now briefly describe the main steps of the proof whose  general strategy  is borrowed from the Nash-Moser approach for KAM theory developed by Berti-Bolle \cite{BB15} and slightly modified in \cite[Sec. 6]{HHM21}. 
	Recall that the Nash-Moser scheme requires to invert the linearized operator in a neighborhood of the equilibrium state and the inverse operator must satisfy suitable tame estimates in the framework of Sobolev spaces. The first step that we intend to describe now is to reformulate the problem in terms of embedded tori. Remark that the Hamiltonian system associated with $K$ is a quasi-linear perturbation of its linearization at the equilibrium state, namely
	$$\partial_{t}r=\mathcal{J}\mathbf{L}_0r+X_{P}(r),\qquad\textnormal{with}\qquad X_{P}(r)\triangleq \mathcal{J}\nabla K({r})-\mathcal{J}\mathbf{L}_0r=\mathbf{Q}^{-1}X_{H\geqslant 3}(\mathbf{Q}{r}),$$
	where
	$$X_{H\geqslant 3}({r})\triangleq \mathcal{J}\big(\nabla H(r)-\mathbf{M}_0r\big).$$
	Under the rescaling $r\mapsto\varepsilon r$ and the quasi-periodic framework $\partial_{t}\leftrightarrow\omega\cdot\partial_{\varphi}$, the Hamiltonian system becomes
	$$\omega\cdot\partial_\varphi r=\mathcal{J}\mathbf{L}_0r+\varepsilon X_{P_{\varepsilon}}(r),$$
	where $X_{P_{\varepsilon}}$ is the rescaled Hamiltonian vector field defined by
	$X_{P_{\varepsilon}}(r)\triangleq \varepsilon^{-2}X_{P}(\varepsilon r).$
	Notice  that the previous equation is generated by
	the rescaled Hamiltonian  
	$$\mathcal{K}_{\varepsilon}(r)\triangleq\varepsilon^{-2}K(\varepsilon r)= K_{\mathbf{L}_0}(r)+\varepsilon P_{\varepsilon}(r),$$
	with $K_{\mathbf{L}_0}$ as in \eqref{KL0 intro} and 
	$\varepsilon P_{\varepsilon}(r)$ describes all the terms
	of higher order more than cubic.
	We consider two finite sets $\mathbb{S}_1$, $\mathbb{S}_2$ as in \eqref{tang sets intro} and we denote
	$d_1\triangleq|\mathbb{S}_1|,\, d_2\triangleq|\mathbb{S}_2|,\, d\triangleq d_1+d_2$
	and
	$$\overline{\mathbb{S}}_k\triangleq\mathbb{S}_k\cup(-\mathbb{S}_k),\qquad \overline{\mathbb{S}}_{0,k}\triangleq\overline{\mathbb{S}}_k\cup\{0\}.$$
	and set
	$$\mathbb{S}\triangleq \mathbb{S}_{1}\cup\mathbb{S}_{2},\quad  
		\overline{\mathbb{S}}\triangleq \mathbb{S}\cup(-\mathbb{S}), \quad \overline{\mathbb{S}}_0\triangleq \overline{\mathbb{S}}\cup \{0\}.$$
	Next, we decompose the phase space into the following orthogonal sum
	\begin{equation}\label{decomp prod l2}
	L_{\mathtt{m}}^2(\mathbb{T})\times L_{\mathtt{m}}^2(\mathbb{T})=\mathbf{H}_{\overline{\mathbb{S}}}\overset{\perp}{\oplus}\mathbf{H}_{\overline{\mathbb{S}_0}}^{\perp},\quad L_{\mathtt{m}}^2(\mathbb{T})\triangleq \bigg\{f=\sum_{j\in\mathbb{Z}_{\mathtt{m}}^*}f_{j}\mathbf{e}_j\quad\textnormal{s.t.}\quad f_{-j}=\overline{f_j},\quad\sum_{j\in\mathbb{Z}^*_{\mathtt{m}}}|f_j|^2<+\infty\bigg\},
	\end{equation}
	with 
	\begin{align*}
		\mathbf{H}_{\overline{\mathbb{S}}}&\triangleq \left\{\sum_{j\in\overline{\mathbb{S}}_1}v_{j,1}\begin{pmatrix}
			1\\
			0
		\end{pmatrix}\mathbf{e}_{j}+\sum_{j\in\overline{\mathbb{S}}_2}v_{j,2}\begin{pmatrix}
			0\\
			1
		\end{pmatrix}\mathbf{e}_{j},\quad v_{j,k}\in\mathbb{C},\,\quad\overline{v_{j,k}}=v_{-j,k}\right\},\\
		\mathbf{H}_{\overline{\mathbb{S}}_0}^{\perp}&\triangleq \left\{\sum_{j\in\mathbb{Z}_{\mathtt{m}}\setminus\overline{\mathbb{S}}_{0,1}}z_{j,1}\begin{pmatrix}
			1\\
			0
		\end{pmatrix}\mathbf{e}_{j}+\sum_{j\in\mathbb{Z}_{\mathtt{m}}\setminus\overline{\mathbb{S}}_{0,2}}z_{j,2}\begin{pmatrix}
			0\\
			1
		\end{pmatrix}\mathbf{e}_{j},\quad z_{j,k}\in\mathbb{C},\,\quad\overline{z_{j,k}}=z_{-j,k}\right\}.
	\end{align*}
	The sets $\mathbf{H}_{\overline{\mathbb{S}}}$ and $\mathbf{H}_{\overline{\mathbb{S}}_0}^{\perp}$ are called \textit{tangential} and \textit{normal subspaces}, respectively. The associated projections are defined through
	\begin{equation}\label{proj-nor1}
	\Pi_{\overline{\mathbb{S}}}\triangleq\begin{pmatrix}
		\Pi_{1} & 0\\
		0 & \Pi_{2}
	\end{pmatrix}\qquad\textnormal{and}\qquad\Pi_{\overline{\mathbb{S}}_0}^{\perp}\triangleq\begin{pmatrix}
		\Pi_{1}^{\perp} & 0\\
		0 & \Pi_{2}^{\perp}
	\end{pmatrix}
	\end{equation}
	namely,
	\begin{equation}\label{PI1-PI2}
	\forall k\in\{1,2\},\quad\Pi_k \sum_{j\in \mathbb{Z}_{\mathtt{m}}^*} v_{j,k}\mathbf{e}_{j}\triangleq\sum_{j\in \overline{\mathbb{S}}_k} v_{j,k}\mathbf{e}_{j}\qquad\textnormal{and}\qquad\Pi_{k}^{\perp}\triangleq\mathbb{I}_{\mathtt{m}}-\Pi_{k},
	\end{equation}
	where $\mathbb{I}_{\mathtt{m}}$ is the identity map of $L_{\mathtt{m}}^{2}(\mathbb{T}).$ On the tangential set $\mathbf{H}_{\overline{\mathbb{S}}}$, we introduce the action-angle variables 
	$$\vartheta\triangleq\big((\vartheta_{j,1})_{j\in{\mathbb{S}}_1},(\vartheta_{j,2})_{j\in{\mathbb{S}}_2}\big),\qquad I\triangleq\big((I_{j,1})_{j\in{\mathbb{S}}_1}, (I_{j,2})_{j\in{\mathbb{S}}_2}\big)$$
	as follows : Fix any amplitudes $(\mathtt{a}_{j,k})_{j\in \overline{\mathbb{S}}_k}$ such that for any  $ j\in\overline{\mathbb{S}}_k,\,\mathtt{a}_{-j,k}=\mathtt{a}_{j,k}>0$ and set 
	$$
	\forall k\in\{1,2\}, \quad \forall j\in\overline{\mathbb{S}}_k,\quad  v_{j,k}\triangleq \sqrt{(\mathtt{a}_{j,k})^2+|j|I_{j,k}}e^{\ii \vartheta_{j,k}},
	$$
	supplemented with the symmetry properties 
	$$\forall k\in\{1,2\},\quad\forall j\in\mathbb{S}_k,\quad I_{-j,k}=I_{j,k}\in\mathbb{R}\qquad\textnormal{and}\qquad\vartheta_{-j,k}=-\vartheta_{j,k}\in\mathbb{T}.$$
	Therefore, we have the following decomposition of $r=(r_1,r_2)$,
	\begin{equation}\label{aa-coord-00}
		r=\mathbf{A}(\vartheta,I,z)\triangleq v(\vartheta,I)+z,
	\end{equation}
	where $z\in \mathbf{H}_{\overline{\mathbb{S}}_0}^{\perp}$ and
	$$v (\vartheta, I)\triangleq \sum_{j\in\overline{\mathbb{S}}_1}   \sqrt{(\mathtt{a}_{j,1})^2+|j|I_{j,1}}e^{\ii \vartheta_{j,1}}\begin{pmatrix} 
		1\\
		0
	\end{pmatrix}\mathbf{e}_{j}+ \sum_{j\in\overline{\mathbb{S}}_2}\sqrt{(\mathtt{a}_{j,2})^2+|j|I_{j,2}}e^{\ii \vartheta_{j,2}} 
	\begin{pmatrix} 
		0\\
		1
	\end{pmatrix}\mathbf{e}_{j}\in \mathbf{H}_{\overline{\mathbb{S}}}.$$
	The transformation $\mathbf{A}$ is $\mathcal{W}$-symplectic and, in the new variables, the new Hamiltonian $K_{\varepsilon}\triangleq \mathcal{K}_{\varepsilon}\circ\mathbf{A}$ writes
	$$K_{\varepsilon} = 
	-\big({\mathtt J}\,{\omega}_{\textnormal{Eq}}(b)\big)\cdot I  + \tfrac12  \big\langle\mathbf{L}_0\, z, z \big\rangle_{L^2(\mathbb{T})\times L^2(\mathbb{T})} + \varepsilon \mathcal{ P}_{\varepsilon},\qquad{\mathtt J}\triangleq  \begin{pmatrix}
		{\rm I}_{d_1}& 0 \\
		0 & -{\rm I}_{d_2}  \end{pmatrix},  \qquad \mathcal{ P}_{\varepsilon} \triangleq    P_\varepsilon \circ  \mathbf{A}.$$
	Observe that the Poisson structure is associated with $\mathtt{J}$, and will be  needed later during the implementation of  Berti-Bolle approach. The corresponding Hamiltonian vector field is
	$$X_{K_{\varepsilon}} \triangleq  
	\big({\mathtt J}\partial_I K_{\varepsilon} , -{\mathtt J} \partial_\vartheta K_{\varepsilon} , \Pi_{\overline{\mathbb{S}}_0}^\bot
	\mathcal{J}\nabla_{z} K_{\varepsilon}\big).$$
	Therefore, the problem is reduced to looking for embedded invariant tori
	$$i :\mathbb{T}^d \rightarrow
	\mathbb{R}^d \times \mathbb{R}^d \times  \mathbf{H}^{\perp}_{\overline{\mathbb{S}}_0} 
	\,, \qquad \varphi \mapsto i(\varphi)\triangleq  \big(\vartheta(\varphi), I(\varphi), z(\varphi)\big),$$
	solution of the equation
	$$\omega\cdot\partial_{\varphi}i(\varphi)=X_{K_{\varepsilon}}\big(i(\varphi)\big).
	$$
	As observed in \cite{BB15,M67}, it turns out to be convenient along Nash-Moser scheme to work with one degree of freedom vector-valued parameter $\alpha\in\R^d$ which provides at the end of the scheme a solution for the original problem when it is fixed  to $-\mathtt{J}\omega_{\textnormal{Eq}}(b).$ Therefore, we shall consider the following $\alpha$-dependent family of Hamiltonians
	$$K_\varepsilon^\alpha \triangleq\alpha\cdot I+\tfrac12\big\langle\mathbf{L}_0\, z, z\big\rangle_{L^2(\mathbb{T})\times L^2(\mathbb{T})}+\varepsilon\mathcal{P}_{\varepsilon}$$
	and we search for the zeros of the following functional
	$$\mathcal{F}(i,\alpha,b,\omega,\varepsilon)\triangleq\omega\cdot\partial_{\varphi}i(\varphi)-X_{K_\varepsilon^\alpha}\big(i(\varphi)\big)=\left(\begin{array}{c}
		\omega\cdot\partial_{\varphi}\vartheta(\varphi)-\mathtt{J}\big(\alpha-\varepsilon\partial_{I}\mathcal{P}_{\varepsilon}(i(\varphi))\big)\\
		\omega\cdot\partial_{\varphi}I(\varphi)+\varepsilon\mathtt{J}\partial_{\vartheta}\mathcal{P}_{\varepsilon}(i(\varphi))\\
		\omega\cdot\partial_{\varphi}z(\varphi)-\mathcal{J}\big[\mathbf{L}_0z(\varphi)+\varepsilon\nabla_{z}\mathcal{P}_{\varepsilon}\big(i(\varphi)\big)\big]
	\end{array}\right).$$
	At each step of  Nash-Moser scheme, we have to linearize  this  functional at a small reversible embedded torus $i_0:\varphi\mapsto\big(\vartheta_0(\varphi),I_0(\varphi),z_0(\varphi)\big)$ and $\alpha_0\in\R^d,$ then we need  to construct an approximate right inverse of $d_{i,\alpha}\mathcal{F}(i_0,\alpha_0).$ The core of the Berti-Bolle theory is to conjugate the linearized operator by a linear diffeomorphism $G_0$ of the toroidal phase space $\mathbb{T}^d\times\mathbb{R}^d\times \mathbf{H}_{\overline{\mathbb{S}}_0}^{\perp}$ in order to obtain a triangular system in the action-angle-normal variables up to error terms.  Notice that in a similar way to \cite{HHM21}, we do not use isotropic tori. Therefore, in this framework, inverting the triangular system amounts to inverting  the linearized operator in the normal directions, denoted by  $\widehat{\mathcal{L}}.$ This latter fact is analyzed  in  Section \ref{reduction} and uses KAM reducibility techniques similarly to \cite{BBMH18,BM18,HHM21,HR21} that we shall now explain and extract the main new difficulties.
	According to Proposition \ref{lemma-normal-s}, we can write
	$$\widehat{\mathcal{L}}=\Pi_{\overline{\mathbb{S}}_0}^{\perp}\left(\mathcal{L}-\varepsilon\partial_\theta\mathcal{R}\right)\Pi_{\overline{\mathbb{S}}_0}^{\perp},\qquad \mathcal{L}\triangleq\omega\cdot\partial_{\varphi}\mathbf{I}_{\mathtt{m}}+\mathfrak{L}_{\varepsilon r},\qquad\mathbf{I}_{\mathtt{m}}\triangleq \begin{pmatrix}
		\mathbb{I}_{\mathtt{m}} &0\\
		0& \mathbb{I}_{\mathtt{m}}
	\end{pmatrix},\qquad \mathcal{R}\triangleq \begin{pmatrix} \mathcal{T}_{J_{1,1}}({r}) & \mathcal{T}_{J_{1,2}}({r})\\
		\mathcal{T}_{J_{2,1}}({r}) & \mathcal{T}_{J_{2,2}}({r})
	\end{pmatrix},
	$$
	where for any $(k,\ell)\in\{1,2\}^2$, $\mathcal{T}_{J_{k,\ell}}$ is a  $\mathtt{m}$-fold and reversibility preserving integral operator with smooth kernel $J_{k,\ell}$ and $\mathfrak{L}_{\varepsilon r}$ is the linearized operator 
	$$\begin{aligned}
		\mathfrak{L}_{r} =& \begin{pmatrix}
			\partial_\theta\big(\mathcal{V}_1(r)\, \cdot\big) +\frac{1}{2}\mathcal{H}+\partial_\theta\mathcal{Q}\ast\cdot&0\\
			0& \partial_\theta\big(\mathcal{V}_2(r)\, \cdot\big)-\tfrac{1}{2}\mathcal{H} -\partial_\theta \mathcal{Q}\ast\cdot
		\end{pmatrix} +\partial_\theta\begin{pmatrix}
			\mathcal{T}_{\mathscr{K}_{1,1}}(r)& \mathcal{T}_{\mathscr{K}_{1,2}}(r)\\
			\mathcal{T}_{\mathscr{K}_{2,1}}(r) & \mathcal{T}_{\mathscr{K}_{2,2}}(r)
		\end{pmatrix},
	\end{aligned}$$
	where we denote by $\mathcal{H}$ the $2\pi$-periodic Hilbert transform and $\mathcal{V}_{k}(r)$, $k\in\{1,2\}$, are scalar functions. The convolution operator $\mathcal{Q}\ast\cdot$ has even smooth kernel $\mathcal{Q}$. 
	For $k,n\in\{1,2\},$ the operator $\mathcal{T}_{\mathscr{K}_{k,n}}({r})$ is an integral  operator with smooth, $\mathtt{m}$-fold and reversibility preserving kernel $\mathscr{K}_{k,n}(r),$ see Proposition~\ref{prop:conjP}.

	First, following the KAM reducibility scheme in \cite{BM21,FGMP19,HR21}, we can reduce the transport part and the zero order part by conjugating by a quasi-periodic symplectic  change of variables in the form
	$$\mathscr{B}\triangleq\begin{pmatrix}
		\mathscr{B}_1 & 0\\
		0 & \mathscr{B}_2
	\end{pmatrix},\qquad\forall k\in\{1,2\},\quad\mathscr{B}_k\rho(\mu,\varphi,\theta)=\big(1+\partial_{\theta}\beta_k(\mu,\varphi,\theta)\big)\rho\big(\mu,\varphi,\theta+\beta_k(\mu,\varphi,\theta)\big).$$
	More precisely, as stated in Propositions \ref{prop strighten} and \ref{prop RTNL}, we can find two functions $\mathtt{c}_1\triangleq\mathtt{c}_1(b,\omega,i_0)$,  $\mathtt{c}_2\triangleq\mathtt{c}_2(b,\omega,i_0)$ and a Cantor set
	$$\mathcal{O}_{\infty,n}^{\gamma,\tau_1}(i_0)\triangleq\bigcap_{k\in\{1,2\}\atop\underset{|l|\leqslant N_{n}}{(l,j)\in\mathbb{Z}^{d}\times\mathbb{Z}\setminus\{(0,0)\}}}\Big\{(b,\omega)\in\mathcal{O}\quad\textnormal{s.t.}\quad\big|\omega\cdot l+j\mathtt{c}_{k}(b,\omega,i_0)\big|>\tfrac{4\gamma^{\upsilon}\langle j\rangle}{\langle l\rangle^{\tau_1}}\Big\},$$
	where $N_n\triangleq N_0^{(\frac{3}{2})^n}$ with $N_0\gg 1,$ in which the following decomposition holds
	$${\mathscr{B}}^{-1}\big(\omega\cdot\partial_{\varphi} {\mathbf{I}}_{\mathtt{m}}+\mathfrak{L}_{\varepsilon r}\big) {\mathscr{B}}=\omega\cdot\partial_{\varphi} \mathbf{I}_{\mathtt{m}}+{\mathscr{D}}+{\mathscr{R}}+{\mathscr{E}}_{n},$$
	where
	$$\mathscr{D}\triangleq \begin{pmatrix}
		\mathtt{c}_1\partial_{\theta}\, +\tfrac12\mathcal{H}+\partial_\theta  \mathcal{Q}\ast\cdot& 0\\
		0 &  \mathtt{c}_2\partial_{\theta}\, -\big(\tfrac12\mathcal{H}+\partial_\theta  \mathcal{Q}\ast\cdot)
	\end{pmatrix},$$
	and $\mathscr{R}\triangleq\mathscr{R}(\varepsilon r)$ is a real, $\mathtt{m}$-fold and reversibility preserving Toeplitz in time matrix integral operator enjoying good smallness properties. The operator $\mathscr{E}_n$ is of order one but with  small coefficients decaying faster in $n.$ 
	The next step deals with the localization effects  on  the normal modes. We first introduce  the operator
	$$\mathscr{B}_{\perp}\triangleq\Pi_{\overline{\mathbb{S}}_0}^{\perp}\mathscr{B}\Pi_{\overline{\mathbb{S}}_0}^{\perp}=\begin{pmatrix}
		\Pi_{1}^{\perp}\mathscr{B}_1\Pi_1^{\perp} & 0\\
		0 & \Pi_2^{\perp}\mathscr{B}_2\Pi_2^{\perp}
	\end{pmatrix}.$$
	Then, according to Proposition \ref{prop proj nor dir}, we prove by restricting the parameters to  the set  $\mathcal{O}_{\infty,n}^{\gamma,\tau_1}(i_{0})$ that  for any $n\in\mathbb{N}^{*}$ we have  the identity 
	$$\mathscr{B}_{\perp}^{-1}\widehat{{\mathcal{L}}}{\mathscr{B}}_{\perp}={\mathscr{L}}_{0}+{\mathscr{E}}_{n}^0,\qquad
	{\mathscr{L}}_{0}\triangleq \omega\cdot\partial_{\varphi}\mathbf{I}_{\mathtt{m},\perp}+{\mathscr{D}}_{0}+{\mathscr{R}}_{0},$$
	where $\mathbf{I}_{\mathtt{m},\perp}\triangleq \Pi_{\overline{\mathbb{S}}_{0}}^{\perp}\mathbf{I}_{\mathtt{m}}$ 
	and ${\mathscr{D}}_{0}=\Pi_{\overline{\mathbb{S}}_{0}}^{\perp} {\mathscr{D}}_{0}\Pi_{\overline{\mathbb{S}}_{0}}^{\perp}$ is an $\mathtt{m}$-fold preserving and reversible matrix Fourier multiplier operator in the form
	$${\mathscr{D}}_{0}\triangleq \begin{pmatrix}
		\mathscr{D}_{0,1} & 0\\
		0 & \mathscr{D}_{0,2}
	\end{pmatrix},$$
	with 
	$$\forall k\in\{1,2\},\quad\mathscr{D}_{0,k}\triangleq \left(\ii\mu_{j,k}^{(0)}\right)_{j\in\Z_{\m}\backslash\overline{\mathbb{S}}_{0,k}},\qquad\mu_{j,k}^{(0)}(b,\omega,i_{0})\triangleq \Omega_{j,k}(b)+j\big(\mathtt{c}_k(b,\omega)-\mathtt{v}_k(b)\big)$$
	and $\mathscr{R}_0=\Pi_{\overline{\mathbb{S}}_0}^{\perp}\mathscr{R}_0\Pi_{\overline{\mathbb{S}}_0}^{\perp}$ is a small real, $\mathtt{m}$-fold preserving and reversible Toeplitz in time matrix remainder whose entries are integral operators with smooth kernels. The error term $\mathscr{E}_n^0$ plays a similar role as the previous one  $\mathscr{E}_n$. The next goal is to implement a KAM reduction of the remainder term $\mathscr{R}_0.$ This is done in a new hybrid operator topology treating the diagonal and anti-diagonal terms differently. Along the scheme, the diagonal part is treated as in the scalar situation through the use of the off-diagonal Toeplitz norm, see for instance \cite[Prop. 6.5]{HR21}, whereas the anti-diagonal part, which is smoothing at any order in the spatial variable,  is studied in an isotropic topology. We refer to Section \ref{sec matrix op} for more details on this topological framework, in particular \eqref{hyb nor}.
	We point out that, thanks to the nice structure of the 2D-Euler equation, the diagonal and anti-diagonal terms of the remainder term $\mathscr{R}_0$ are smoothing at any order in the spatial variable  and therefore both  can be studied using the isotropic topology. However, this fact is not true for other transport models \cite{HHM21,HR21} where the remainders on the diagonal are not highly smoothing but of negative order. For this reason, we prefer to work in the most general framework.
	 The Proposition \ref{prop RR} states that we can find an operator $\Phi_{\infty}$ such that in the following Cantor set gathering both diagonal and anti-diagonal second order Melnikov conditions
	\begin{align*}
		&\mathscr{O}_{\infty,n}^{\gamma,\tau_{1},\tau_{2}}(i_{0})\triangleq \mathcal{O}_{\infty,n}^{\gamma,\tau_1}(i_{0})\\
		&\bigcap_{\underset{\,j, j_{0}\in\Z_{\m}\backslash\overline{\mathbb{S}}_{0,k}}{ {k\in\{1,2\}}}}\bigcap_{\underset{|l|\leqslant N_{n}}{l\in\mathbb{Z}^{d}\atop(l,j)\neq(0,j_{0})}}\Big\{(b,\omega)\in\mathcal{O}\quad\textnormal{s.t.}\quad\big|\omega\cdot l+\mu_{j,k}^{(\infty)}(b,\omega,i_{0})-\mu_{j_{0},k}^{(\infty)}(b,\omega,i_{0})\big|>\tfrac{2\gamma\langle j-j_0\rangle}{\langle l\rangle^{\tau_2}}\Big\}\\
		&\bigcap_{\underset{\,j_0\in\Z_{\m}\backslash\overline{\mathbb{S}}_{0,2}}{ j\in\Z_{\m}\backslash\overline{\mathbb{S}}_{0,1}}}\bigcap_{\underset{\langle l,j,j_0\rangle\leqslant N_{n}}{l\in\mathbb{Z}^{d}}}\Big\{(b,\omega)\in\mathcal{O}\quad\textnormal{s.t.}\quad\big|\omega\cdot l+\mu_{j,1}^{(\infty)}(b,\omega,i_{0})-\mu_{j_{0},2}^{(\infty)}(b,\omega,i_{0})\big|>\tfrac{2\gamma}{\langle l,j,j_0\rangle^{\tau_2}}\Big\}
	\end{align*}
	the following decomposition holds
	$$\Phi_{\infty}^{-1}\mathscr{L}_0\Phi_{\infty}=\omega\cdot\partial_{\varphi}\mathbf{I}_{\mathtt{m},\perp}+\mathscr{D}_{\infty}+\mathscr{E}_n^1\triangleq \mathscr{L}_{\infty}+\mathscr{E}_n^1,$$
	where $\mathscr{D}_{\infty}=\Pi_{\overline{\mathbb{S}}_{0}}^{\perp}\mathscr{D}_{\infty}\Pi_{\overline{\mathbb{S}}_{0}}^{\perp}=\mathscr{D}_{\infty}(b,\omega,i_{0})$ is a diagonal operator  with reversible Fourier multiplier entries, namely
	$$\mathscr{D}_{\infty}\triangleq\begin{pmatrix}
		\mathscr{D}_{\infty,1} & 0\\
		0 & \mathscr{D}_{\infty,2}
	\end{pmatrix},$$
	with
	$$\forall k\in\{1,2\},\quad\mathscr{D}_{\infty,k}\triangleq\left(\ii\mu_{j,k}^{(\infty)}\right)_{j\in\mathbb{Z}_{\mathtt{m}}\setminus\overline{\mathbb{S}}_{0,k}},\qquad\mu_{j,k}^{(\infty)}(b,\omega,i_{0})\triangleq\mu_{j,k}^{(0)}(b,\omega,i_{0})+r_{j,k}^{(\infty)}(b,\omega,i_{0})$$
	and 
	$$\sup_{j\in\mathbb{Z}_{\mathtt{m}}\setminus\overline{\mathbb{S}}_{0,k}}|j|\left\| r_{j,k}^{(\infty)}\right\|^{q,\gamma}\lesssim\varepsilon\gamma^{-1}.$$
	Notice that, according to the monotonicity of the eigenvalues  the difference $\mu_{j,k}^{(\infty)}-\mu_{j_0,k}^{(\infty)}$ is not vanishing for $j\neq j_0$ and grows like $|j-j_0|$. This is no longer true for the mixed difference $\mu_{j,1}^{(\infty)}-\mu_{j_0,2}^{(\infty)}$ (coming from the mutual interactions between the interfaces) due to the different transport speeds leading to a new  small divisor problem. Therefore, to handle this problem we should  adjust  the geometry of the Cantor sets  $\mathscr{O}_{\infty,n}^{\gamma,\tau_1,\tau_2}(i_0)$ with an isotropic decay on frequency. This explains in part the introduction of the  hybrid topology in \eqref{hyb nor} needed in the remainder reduction,  Another key observation is  that we have no resonances for the off-diagonal part at  $j=j_0$ and consequently the associated homological equations can be solved without any residual diagonal terms. Thus, at the end of the KAM scheme we get  a diagonal Fourier multiplier operator $\mathscr{D}_{\infty}.$
	  Now, the final  operator $\mathscr{L}_{\infty}$ can be easily inverted by  restricting the parameters to the following first order Melnikov conditions
	$$\Lambda_{\infty,n}^{\gamma,\tau_1}(i_0)\triangleq\bigcap_{k\in\{1,2\}\atop\underset{|l|\leqslant N_{n}}{(l,j)\in\mathbb{Z}^{d }\times(\mathbb{Z}_{\mathtt{m}}\setminus\overline{\mathbb{S}}_{0,k})} }\Big\{(b,\omega)\in\mathcal{O}\quad\textnormal{s.t.}\quad\left|\omega\cdot l+\mu_{j,k}^{(\infty)}(b,\omega,i_0)\right|>\tfrac{\gamma\langle j\rangle}{\langle l\rangle^{\tau_1}}\Big\}.$$
	As a consequence, we can construct an approximate right inverse of $\widehat{\mathcal{L}}$ provided that we choose $(b,\omega)$ in the set 
	$$\mathtt{G}_{n}^{\gamma}(i_0)\triangleq\mathcal{O}_{\infty,n}^{\gamma,\tau_1}(i_0)\cap\mathscr{O}_{\infty,n}^{\gamma,\tau_1,\tau_2}(i_0)\cap\Lambda_{\infty,n}^{\gamma,\tau_1}(i_0).$$
	Therefore, we can perform in Proposition \ref{Nash-Moser} and Corollary \ref{Corollary NM} a Nash-Moser scheme as in \cite{BM18,HHM21,HR21} with slight modifications due to our particular Poisson structure and the off-diagonal second order Melnikov conditions. Hence, we can find a non-trivial solution $(b,\omega)\mapsto(i_{\infty}(b,\omega),\alpha_{\infty}(b,\omega))$ to the equation $\mathcal{F}(i,\alpha,b,\omega,\varepsilon)=0$ provided that we restrict the parameters $(b,\omega)$ to a  Borel set $\mathtt{G}_{\infty}^{\gamma}$ constructed as the intersection of all the Cantor sets encountered along the different  schemes of the multiple reductions.   A  solution to the original problem is obtained by constructing a frequency curve $b\mapsto\omega(b,\varepsilon)$ solution to the implicit  equation 
	$$\alpha_{\infty}\big(b,\omega(b,\varepsilon)\big)=-\mathtt{J}\omega_{\textnormal{Eq}}(b).$$
	By this way we construct   a solution for any value of $b$ in 
	$$\mathcal{C}_{\infty}^{\varepsilon}\triangleq\Big\{b\in(b_*,b^*)\quad\textnormal{s.t.}\quad\big(b,\omega(b,\varepsilon)\big)\in\mathtt{G}_{\infty}^{\gamma}\Big\}.$$
	The last step is to check that this final set is non-empty and massive. Actually, we prove in Proposition \ref{lem-meas-es1} the following measure bound
	$$(b^*-b_*)-\varepsilon^{\delta}\leqslant|\mathcal{C}_{\infty}^{\varepsilon}|\leqslant(b^*-b_*)\quad\textnormal{for some }\delta\triangleq\delta(q_0,d,\tau_1,\tau_2)>0.$$
	The proof is quite standard and   based on the  perturbation of  R\"ussmann conditions, shown to be true at  the equilibrium state. We emphasize that  the restriction $\Omega>\Omega_{\mathtt{m}}^*$  is required by  Lemma \ref{lemma transversalityE}-(iv) and Lemma \ref{lem Ru-pert}-(iv), and the  value of $\Omega_{\mathtt{m}}^*$ given in \eqref{expr Omega star}  is not necessary  optimal.\\
	
	\noindent\textbf{Acknoledgments :} The work of Zineb Hassainia has been supported by Tamkeen under the NYU Abu Dhabi Research Institute grant of the center SITE. The work of Taoufik Hmidi has been supported by Tamkeen under the NYU Abu Dhabi Research Institute grant. The work of Emeric Roulley has been partially supported by PRIN 2020XB3EFL, "Hamiltonain and Dispersive PDEs".
	\section{Function and operator spaces}\label{sec funct set}

	This section is devoted to the presentation of the general topological framework for both functions and operators classes. In addition, we shall set some basic notations, definitions and give some technical results used in this work.
	
	\paragraph{Notations.}
	Along this paper we shall make use of the following set notations.
	\begin{enumerate}[label=\textbullet]
		\item The sets of numbers that will be frequently used are denoted as follows  
		$$
		\mathbb{N}= \{0,1,2,\ldots\},\quad\mathbb{N}^*= \mathbb{N}\setminus\{0\},\quad\mathbb{Z}=\mathbb{N}\cup(-\mathbb{N}),\qquad\mathbb{Z}^*= \mathbb{Z}\setminus\{0\}, \qquad \mathbb{T}= \mathbb{R}/2\pi\mathbb{Z}.
		$$
		For any $\mathtt{m}\in\mathbb{N}^*,$ we may denote
		$$
		\mathbb{Z}_{\mathtt{m}}=\mathtt{m}\mathbb{Z},\qquad\mathbb{N}_{\mathtt{m}}= \mathtt{m}\mathbb{N},\qquad \mathbb{Z}_{\mathtt{m}}^*= \mathtt{m}\mathbb{Z}^*, \qquad \mathbb{N}_{\mathtt{m}}^*= \mathtt{m}\mathbb{N}^*,\qquad \mathbb{T}^{\mathtt{m}}= \underbrace{\mathbb{T}\times\cdots \times\mathbb{T}}_{{\mathtt{m}} \textnormal{ times}},
		$$
		and for any $m,n\in\mathbb{Z},$ such that $m<n$,
		$$
		 \llbracket m,n\rrbracket\triangleq \{m,m+1,\ldots,n-1,n\}.
		 $$
		\item We fix two real numbers $b_*$ and $b^*$ such that
		$$0<b_*<b^*<1.$$
		The parameter $b$  lies in the interval $(b_{*},b^{*})$ and represents the radius of the annulus $A_b$ in \eqref{annulus-Ab}, corresponding to the equilibrium state and 
		\item Consider the following parameters, that will be used to construct the Cantor set as well as the regularity of the perturbations,
		\begin{align}
			d\in\mathbb{N}^*,&\qquad q\in\mathbb{N}^*,\label{setting q}\\
			0<\gamma\leqslant1,&\qquad \tau_{2}>\tau_{1}>d,\label{setting tau1 and tau2}\\
			S\geqslant s\geqslant &\,s_{0}>\tfrac{d+1}{2}+q+2.\label{init Sob cond}	
		\end{align}
		\item For any $n\in\mathbb{N}^*$ and any complex periodic function  $\rho:\T^n\to\R$, we denote
		\begin{equation}\label{average-notation}
			\int_{\mathbb{T}^{n}}\rho(\eta)d\eta\triangleq \frac{1}{(2\pi)^n}\int_{[0,2\pi]^n}\rho(\eta)d\eta.
		\end{equation} 
		\item Let $f:X\to Y$ be a map where $X$ is a set and $Y$ is a vector space.  For any $r_1,r_2\in X$, we denote
		$$\Delta_{12}f=f(r_1)-f(r_2).$$ 
	\end{enumerate}

	\subsection{Function spaces}\label{Functionspsaces}

	This section is devoted to  some functional tools frequently used along this paper. First, we shall  introduce the complex Sobolev space on the periodic setting  $H^{s}(\mathbb{T}^{d +1},\mathbb{C})$  with index regularity $s\in\R.$ It is  the set
	of all the complex periodic functions $\rho:\mathbb{T}^{d +1}\to \mathbb{C}$ with the Fourier expansion
	$$
	{\rho=\sum_{(l,j)\in\mathbb{Z}^{d+1 }}\rho_{l,j}\,\mathbf{e}_{l,j}},\qquad \mathbf{e}_{l,j}(\varphi,\theta)\triangleq e^{\ii(l\cdot\varphi+j\theta)},\qquad \rho_{l,j}\triangleq\big\langle \rho,\mathbf{e}_{l,j}\big\rangle_{L^{2}(\mathbb{T}^{d+1})}
	$$
	equipped with the scalar product
	$$
	\big\langle\rho,\widetilde{\rho}\big\rangle_{H^{s}}\triangleq \sum_{(l,j)\in\mathbb{Z}^{d+1 }}\langle l,j\rangle^{2s}\rho_{l,j}\overline{\widetilde\rho_{l,j}},\qquad\textnormal{with}\qquad \langle l,j\rangle\triangleq \max(1,|l|,|j|),
	$$
	where $|\cdot|$ denotes  the classical $\ell^{1}$ norm in $\mathbb{R}^{d }$. For $s=0$ this space coincides with the standard  $ L^{2}(\mathbb{T}^{d+1},\mathbb{C})$ space equipped with the scalar product
	$$
			\big\langle\rho_{1},\rho_{2}\big\rangle_{L^{2}(\mathbb{T}^{d+1})}\triangleq\bigintssss_{\mathbb{T}^{d+1}}\rho_{1}(\varphi,\theta)\overline{\rho_{2}(\varphi,\theta)}d\varphi d\theta.
	$$
We shall make use of {the product Sobolev space}
	\begin{equation}\label{def:sob-product}
	\mathbf{H}^{s}_{\mathtt{m}}(\mathbb{T}^{d+1},\mathbb{C})\triangleq H_{\mathtt{m}}^{s}(\mathbb{T}^{d+1},\mathbb{C})\times H_{\mathtt{m}}^{s}(\mathbb{T}^{d+1},\mathbb{C}),
	\end{equation}
	equipped with the scalar product 
	$$
	\big\langle(\rho_1,\rho_2),(\widetilde{\rho}_1,\widetilde{\rho}_1)\big\rangle_{\mathbf{H}^{s}_{\mathtt{m}}(\mathbb{T}^{d+1},\mathbb{C})}\triangleq \sum_{(l,j)\in\mathbb{Z}^{d+1 }}\langle l,j\rangle^{2s}\rho_{l,j}^1\overline{\widetilde\rho_{l,j}^1}+\sum_{(l,j)\in\mathbb{Z}^{d+1 }}\langle l,j\rangle^{2s}\rho_{l,j}^2\overline{\widetilde\rho_{l,j}^2}.
	$$
	We also simply denote the real space
	$$\mathbf{H}_{\mathtt{m}}^s\triangleq H_{\mathtt{m}}^s\times H_{\mathtt{m}}^{s}.$$		
		 As we shall see later, the main enemy in the construction of quasi-periodic solutions is the resonances and in particular the trivial ones which can be fortunately removed by imposing more symmetry on the solutions. For this aim we need  to work with the following subspace $H_{\mathtt{m}}^{s}(\mathbb{T}^{d +1},\mathbb{C})$, with $\mathtt{m}\in\N^*,$ whose elements enjoy the $\mathtt{m}$-fold symmetry in the variable $\theta,$ that is
	$$H_{\mathtt{m}}^{s}(\mathbb{T}^{d +1},\mathbb{C})\triangleq \Big\{\rho\in H^{s}(\mathbb{T}^{d +1},\mathbb{C})\quad\textnormal{s.t.}\quad\forall (\varphi,\theta)\in\T^{d+1},\quad\rho\left(\varphi,\theta+\tfrac{2\pi}{\mathtt{m}}\right)=\rho(\varphi,\theta)\Big\}.
	$$
	Notice  the $\m$-fold symmetry is equivalent to say that
	\begin{align*}
		\forall l\in\mathbb{Z}^{d },\quad \forall j\in \mathbb{Z}\setminus \Z_{\mathtt{m}},\quad  {\big\langle \rho,\mathbf{e}_{l,j}\big\rangle_{L^{2}(\mathbb{T}^{d+1})}=0.}
	\end{align*} 
	The real Sobolev space $H^{s}_{\mathtt{m}}(\mathbb{T}^{d+1},\mathbb{R})$  is simply denoted by $ H^{s}_{\mathtt{m}}$ and  we define the subspace $$H_{\m}^\infty\triangleq\bigcap_{s\in\R} H_{\m}^s.$$  
	For $N\in\mathbb{N}$, we define the cut-off frequency projectors $\Pi_{N}$ and its orthogonal $\Pi_{N}^\perp$ on  $H^{s}(\mathbb{T}^{d +1},\mathbb{C})$ as follows 
	\begin{equation}\label{def projectors PiN}
		\Pi_{N}\rho\triangleq\sum_{\underset{\langle l,j\rangle\leqslant N}{(l,j)\in\Z^{d+1}}}\rho_{l,j}\mathbf{e}_{l,j}\qquad\textnormal{and}\qquad \Pi^{\perp}_{N}\triangleq\textnormal{Id}-\Pi_{N}.
	\end{equation}
	We shall also make use of the following mixed weighted Sobolev spaces with respect to a given  parameter  $\gamma\in(0,1)$. 
	{Let $\mathcal{O}$ be an open bounded set of $\mathbb{R}^{d+1}$ and define the Banach spaces    }
	\begin{align*}
		W^{q,\infty,\gamma}(\mathcal{O},H^{s}_{\mathtt{m}})&\triangleq\Big\lbrace \rho:\mathcal{O}\rightarrow H^{s}_{\mathtt{m}}\quad\textnormal{s.t.}\quad\|\rho\|_{s}^{q,\gamma,\mathtt{m}}\triangleq\sum_{\underset{|\alpha|\leqslant q}{\alpha\in\mathbb{N}^{d+1}}}\gamma^{|\alpha|}\sup_{\mu\in{\mathcal{O}}}\|\partial_{\mu}^{\alpha}\rho(\mu,\cdot)\|_{H^{s-|\alpha|}_{\mathtt{m}}}<\infty\Big\rbrace,\\
		W^{q,\infty,\gamma}(\mathcal{O},\mathbb{C})&\triangleq\Big\lbrace\rho:\mathcal{O}\rightarrow\mathbb{C}\quad\textnormal{s.t.}\quad\|\rho\|^{q,\gamma}\triangleq\sum_{\underset{|\alpha|\leqslant q}{\alpha\in\mathbb{N}^{d+1}}}\gamma^{|\alpha|}\sup_{\mu\in{\mathcal{O}}}|\partial_{\mu}^{\alpha}\rho(\mu)|<\infty\Big\rbrace.
	\end{align*}
	Through this paper, we shall implicitly use the notation $\|\rho\|_{s}^{q,\gamma,\mathtt{m}}$, while the function $\rho$ depends on more variables such as with $(\varphi,\theta)\in\Z^{d}\times\Z_{\m}^{d^\prime}\mapsto \rho(\varphi,\theta)$, frequently encountered  when we have to estimate the kernels of some operators, in which case the variables can be doubled.\\
	
	In the  next lemma we shall collect   some useful classical results related {to various actions over  weighted Sobolev spaces. The proofs are standard and can be found for instance in \cite{BFM21,BFM21-1,BM18}.}
	\begin{lem}\label{lem funct prop}
		Let $q\in\N$, $\m\in\N^*$ and $(\gamma,d,s_{0},s)$ satisfy \eqref{setting tau1 and tau2}-\eqref{init Sob cond}, then the following assertions hold true.
			\begin{enumerate}[label=(\roman*)]
				\item Frequency growth/decay of projectors : Let $\rho\in W^{q,\infty,\gamma}(\mathcal{O},H^{s}_{\mathtt{m}}),$ then for all $N\in\mathbb{N}^{*}$ and $t>0$,
				$$\|\Pi_{N}\rho\|_{s+t}^{q,\gamma,{\mathtt{m}}}\leqslant N^{t}\|\rho\|_{s}^{q,\gamma,{\mathtt{m}}}\qquad\textnormal{and}\qquad\|\Pi_{N}^{\perp}\rho\|_{s}^{q,\gamma,{\mathtt{m}}}\leqslant N^{-t}\|\rho\|_{s+t}^{q,\gamma,{\mathtt{m}}},
				$$
				where the cut-off projectors are defined in \eqref{def projectors PiN}.
				\item Product law : 
				Let $\rho_{1},\rho_{2}\in W^{q,\infty,\gamma}(\mathcal{O},H^{s}_{\mathtt{m}}).$ Then $\rho_{1}\rho_{2}\in W^{q,\infty,\gamma}(\mathcal{O},H^{s}_{\mathtt{m}})$ and 
				$$\| \rho_{1}\rho_{2}\|_{s}^{q,\gamma,\mathtt{m}}\lesssim\| \rho_{1}\|_{s_{0}}^{q,\gamma,\mathtt{m}}\| \rho_{2}\|_{s}^{q,\gamma,\mathtt{m}}+\| \rho_{1}\|_{s}^{q,\gamma,\mathtt{m}}\| \rho_{2}\|_{s_{0}}^{q,\gamma,\mathtt{m}}.$$
				\item Composition law $1$ : Let $f\in C^{\infty}(\mathcal{O}\times\mathbb{R},\mathbb{R})$ and  $\rho_{1},\rho_{2}\in W^{q,\infty,\gamma}(\mathcal{O},H^{s}_{\mathtt{m}})$  such that $$\| \rho_{1}\|_{s}^{q,\gamma,\mathtt{m}},\|\rho_{2}\|_{s}^{q,\gamma,\mathtt{m}}\leqslant C_{0}$$ for an  arbitrary  constant  $C_{0}>0$ and define the pointwise composition $$\forall (\mu,\varphi,\theta)\in \mathcal{O}\times\mathbb{T}^{d+1},\quad f(\rho)(\mu,\varphi,\theta)\triangleq  f\big(\mu,\rho(\mu,\varphi,\theta)\big).$$
				Then 
				$$\| f(\rho_{1})-f(\rho_{2})\|_{s}^{q,\gamma,\mathtt{m}}\leqslant C(s,d,q,f,C_{0})\| \rho_{1}-\rho_{2}\|_{s}^{q,\gamma,\mathtt{m}}.$$
				\item Composition law $2$ : Let $f\in C^{\infty}(\mathbb{R},\mathbb{R})$ with bounded derivatives. Let $\rho\in W^{q,\infty,\gamma}(\mathcal{O},\mathbb{C}).$ Then 			$$\|f(\rho)-f(0)\|^{q,\gamma}\leqslant C(q,d,f)\|\rho\|^{q,\gamma}\left(1+\|\rho\|_{L^{\infty}(\mathcal{O})}^{q-1}\right).$$
				\item Interpolation inequality : Let $q<s_{1}\leqslant s_{3}\leqslant s_{2}$ and $\overline{\theta}\in[0,1],$ with  $s_{3}=\overline{\theta} s_{1}+(1-\overline{\theta})s_{2}.$\\
				If $\rho\in W^{q,\infty,\gamma}(\mathcal{O},H^{s_{2}}_{\mathtt{m}})$, then  $\rho\in W^{q,\infty,\gamma}(\mathcal{O},H^{s_{3}}_{\mathtt{m}})$ and
				$$\|\rho\|_{s_{3}}^{q,\gamma,\mathtt{m}}\lesssim\left(\|\rho\|_{s_{1}}^{q,\gamma,\mathtt{m}}\right)^{\overline{\theta}}\left(\|\rho\|_{s_{2}}^{q,\gamma,\mathtt{m}}\right)^{1-\overline{\theta}}.$$
		\end{enumerate}
	\end{lem}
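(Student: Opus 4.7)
The plan is to reduce each assertion to the analogous unweighted statement on $H^s_{\m}$ (or on $\C$ in case (iv)) and then upgrade to the weighted norm $\|\cdot\|_s^{q,\gamma,\m}$ by invoking the Leibniz rule (for bilinear operations) or the Faà di Bruno formula (for compositions) on the $\mu$-derivatives $\partial_\mu^\alpha$ with $|\alpha|\leqslant q$. Throughout, one uses that the weighted norm is a finite sum over $|\alpha|\leqslant q$ of unweighted norms of $\partial_\mu^\alpha\rho$ in the Sobolev space $H^{s-|\alpha|}$, so one only has to control each piece uniformly in $\mu\in\mathcal{O}$ and sum.

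For (i), the projectors $\Pi_N,\Pi_N^\perp$ are Fourier multipliers that commute with $\partial_\mu^\alpha$, so the estimate reduces, for each fixed $\alpha$, to the elementary pointwise bound on the Fourier side $\langle l,j\rangle^{s+t-|\alpha|}\mathbf{1}_{\langle l,j\rangle\leqslant N}\leqslant N^t\langle l,j\rangle^{s-|\alpha|}$ and the dual inequality $\langle l,j\rangle^{s-|\alpha|}\mathbf{1}_{\langle l,j\rangle> N}\leqslant N^{-t}\langle l,j\rangle^{s+t-|\alpha|}$. For (ii), I start from the standard tame product inequality in $H^s(\T^{d+1})$ valid for $s_0>(d+1)/2$, which follows from Bony's paraproduct decomposition, apply Leibniz to $\partial_\mu^\alpha(\rho_1\rho_2)=\sum_{\beta\leqslant\alpha}\binom{\alpha}{\beta}(\partial_\mu^\beta\rho_1)(\partial_\mu^{\alpha-\beta}\rho_2)$, estimate each summand in $H^{s-|\alpha|}$ using the tame product law, and reorganize using $s_0-|\beta|\geqslant s_0-q>(d+1)/2$ thanks to \eqref{init Sob cond}. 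The $\gamma^{|\alpha|}$ factor is distributed as $\gamma^{|\beta|}\gamma^{|\alpha-\beta|}$.

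For (iii) and (iv), I use the fundamental theorem of calculus to write $f(\rho_1)-f(\rho_2)=(\rho_1-\rho_2)\int_0^1(\partial_u f)\bigl(\mu,\rho_2+t(\rho_1-\rho_2)\bigr)\,dt$, which by the product law (ii) reduces everything to estimating $g(\rho)$ for a scalar smooth $g$. Applying Faà di Bruno to $\partial_\mu^\alpha g(\mu,\rho(\mu,\cdot))$ yields a finite sum of products involving $\partial^{k+j}_{\mu,u}g$ evaluated at $(\mu,\rho)$ and factors $\partial_\mu^{\beta_i}\rho$; since $\rho_i$ is uniformly bounded in $H^{s_0}\hookrightarrow L^\infty$ (by Sobolev embedding, whose threshold is secured by \eqref{init Sob cond}), those $g$-derivatives are uniformly bounded, and one concludes by Moser's composition estimate in $H^{s-|\alpha|}$ combined with the tame product law. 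Case (iv) is even simpler because $\rho$ is scalar-valued, so after applying Faà di Bruno one just bounds products of $|\partial_\mu^{\beta_i}\rho|$ by the definition of $\|\rho\|^{q,\gamma}$ and uses the boundedness of the derivatives of $f$.

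For (v), fix $|\alpha|\leqslant q$ and apply the standard logarithmic-convex interpolation inequality for the Sobolev scale, $\|\partial_\mu^\alpha\rho\|_{H^{s_3-|\alpha|}}\leqslant\|\partial_\mu^\alpha\rho\|_{H^{s_1-|\alpha|}}^{\overline\theta}\|\partial_\mu^\alpha\rho\|_{H^{s_2-|\alpha|}}^{1-\overline\theta}$, which holds because $s_3-|\alpha|=\overline\theta(s_1-|\alpha|)+(1-\overline\theta)(s_2-|\alpha|)$ and $s_1-|\alpha|>0$ by the hypothesis $s_1>q$. Multiplying by $\gamma^{|\alpha|}=\gamma^{\overline\theta|\alpha|}\gamma^{(1-\overline\theta)|\alpha|}$, taking the supremum in $\mu$, and summing over $|\alpha|\leqslant q$ via Hölder's inequality on the finite sum gives the stated inequality. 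The only mildly subtle point across the lemma is ensuring that the choice $s_0>\tfrac{d+1}{2}+q+2$ in \eqref{init Sob cond} provides enough room for the $q$ successive parameter differentiations while preserving the Sobolev embedding and the tame product threshold; this is precisely the role of the $+q+2$ in the definition of $s_0$, and no new idea is needed beyond the bookkeeping described above.
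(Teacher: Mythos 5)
The paper does not actually prove this lemma: it records the statements and refers to \cite{BFM21,BFM21-1,BM18} for the ``standard'' proofs. Your sketch reconstructs exactly those standard arguments (commuting the Fourier cut-offs with $\partial_\mu^\alpha$ for (i), Leibniz plus the tame product law in $H^s$ for (ii), the fundamental theorem of calculus plus Moser composition for (iii), Fa\`a di Bruno for (iv), log-convexity of the Sobolev scale plus H\"older on the finite sum over $\alpha$ for (v)), and the bookkeeping with the weights $\gamma^{|\alpha|}=\gamma^{|\beta|}\gamma^{|\alpha-\beta|}$ and the margin $s_0-q>\tfrac{d+1}{2}$ is handled correctly. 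So the approach is the right one and matches the intended (referenced) proofs.

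The one place where your argument, as written, does not deliver the stated inequality is item (iv). After Fa\`a di Bruno, a term with $k$ blocks carries a product $\prod_{i=1}^{k}\partial_\mu^{\beta_i}\rho$ with every $|\beta_i|\geqslant 1$; bounding each factor ``by the definition of $\|\rho\|^{q,\gamma}$'' yields $(\|\rho\|^{q,\gamma})^{k}$, i.e.\ the weaker estimate $\|f(\rho)-f(0)\|^{q,\gamma}\lesssim\|\rho\|^{q,\gamma}\big(1+(\|\rho\|^{q,\gamma})^{q-1}\big)$, whereas the lemma claims the factor $\|\rho\|_{L^\infty(\mathcal{O})}^{q-1}$, which is genuinely smaller since only the sup norm of $\rho$ itself (no derivatives, no $\gamma$ weights) appears. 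To recover the stated form you need the Landau--Kolmogorov/Gagliardo--Nirenberg interpolation for intermediate derivatives, $\gamma^{|\beta|}\sup_\mu|\partial_\mu^{\beta}\rho|\lesssim\|\rho\|_{L^\infty(\mathcal{O})}^{1-|\beta|/m}\,\big(\|\rho\|^{q,\gamma}\big)^{|\beta|/m}$ with $m=|\alpha|$, so that the product over the $k$ blocks collapses to $\|\rho\|_{L^\infty}^{k-1}\|\rho\|^{q,\gamma}$ --- the same convexity device you already invoke for item (v), applied in the parameter variable $\mu$ rather than in the Sobolev index (and requiring $\mathcal{O}$ to be a suitable extension domain, which is harmless here since $\mathcal{O}$ is a product of an interval and a ball). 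With that one-line insertion your proof of (iv) is complete; everything else is fine.
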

	The next result is proved in \cite[Lem. 4.2]{HR21} and  will be useful later in the study of some regularity aspects for  the linearized operator.
	\begin{lem}\label{lem triche}
		Let $q\in\N$, $\m\in\N^*$, $(\gamma,d,s_{0},s)$ satisfy \eqref{setting tau1 and tau2}-\eqref{init Sob cond} and  $f\in W^{q,\infty,\gamma}(\mathcal{O},H_{\mathtt{m}}^{s}).$\\
		We consider the function $g:\mathcal{O}\times\mathbb{T}_{\varphi}^{d}\times\mathbb{T}_{\theta}\times\mathbb{T}_{\eta}\rightarrow\mathbb{C}$ defined by
		$$g(\mu,\varphi,\theta,\eta)=\left\lbrace\begin{array}{ll}
			\frac{f(\mu,\varphi,\eta)-f(\mu,\varphi,\theta)}{\sin\left(\tfrac{\eta-\theta}{2}\right)}, & \textnormal{if }\theta\neq \eta,\\
			2\partial_{\theta}f(\mu,\varphi,\theta),& \textnormal{if }\theta=\eta.
		\end{array}\right.$$
		Then 
		$$\|g\|_{s}^{q,\gamma,\mathtt{m}}\lesssim\|f\|_{s+1}^{q,\gamma,\mathtt{m}}.$$
	\end{lem}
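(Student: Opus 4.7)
The plan is a direct Fourier analysis combined with the classical Dirichlet-type identity for $\sin(ju)/\sin(u)$. First, since $g$ is linear in $f$, applying the same construction to $\partial_{\mu}^{\alpha}f$ produces $\partial_{\mu}^{\alpha}g$. Hence it is enough to establish, pointwise in $\mu$, the unweighted estimate $\|g(\mu,\cdot)\|_{H^{s}(\mathbb{T}^{d+2})}\lesssim\|f(\mu,\cdot)\|_{H^{s+1}(\mathbb{T}^{d+1})}$, and then to sum over $|\alpha|\leqslant q$ with the weights $\gamma^{|\alpha|}$. The $\mathtt{m}$-fold periodicity in $\theta$ passes from $f$ to $g$ directly from the formula (for fixed $\eta$), and the same is true in $\eta$ by the antisymmetry of the expression.

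Expanding $f(\mu,\varphi,\theta)=\sum_{l,j} f_{l,j}(\mu)\,\mathbf{e}_{l,j}(\varphi,\theta)$ and using the factorization $e^{ij\eta}-e^{ij\theta}=2ie^{ij(\eta+\theta)/2}\sin(j(\eta-\theta)/2)$, one obtains
$$g(\mu,\varphi,\theta,\eta)=\sum_{l,j}f_{l,j}(\mu)\,e^{il\cdot\varphi}\,G_{j}(\theta,\eta),\qquad G_{j}(\theta,\eta)\triangleq 2i\,e^{ij(\eta+\theta)/2}\,\frac{\sin\big(j(\eta-\theta)/2\big)}{\sin\big((\eta-\theta)/2\big)}.$$
The key input is the elementary Dirichlet-type identity
$$\frac{\sin(ju)}{\sin(u)}=\mathrm{sign}(j)\sum_{k=0}^{|j|-1}e^{i(|j|-1-2k)u},\qquad j\in\mathbb{Z}\setminus\{0\},$$
which turns each $G_{j}$ into a trigonometric polynomial in $(\theta,\eta)$ made of $|j|$ plane waves, each with coefficient of modulus bounded by $2$ and with frequency vector of $\ell^{1}$-norm not exceeding $|j|$. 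The half-integer exponents appearing in the individual waves reflect the usual ambiguity of $\sin((\eta-\theta)/2)$ on $\mathbb{T}^{2}$; they are harmless once one passes to the obvious double cover in $\theta,\eta$, where Sobolev norms are equivalent to those on $\mathbb{T}^{2}$ up to universal constants.

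Plugging into $\|g\|_{H^{s}}^{2}$ and invoking Plancherel, each pair $(l,j)$ contributes to at most $|j|$ Fourier coefficients of $g$, all carried by frequencies $(l,m,n)$ satisfying $\langle l,m,n\rangle\lesssim\langle l,j\rangle$. Hence
$$\|g(\mu,\cdot)\|_{H^{s}}^{2}\lesssim\sum_{l,j}|f_{l,j}(\mu)|^{2}\,|j|\,\langle l,j\rangle^{2s}\leqslant\sum_{l,j}|f_{l,j}(\mu)|^{2}\,\langle l,j\rangle^{2s+2}=\|f(\mu,\cdot)\|_{H^{s+1}}^{2},$$
which is the desired bound (in fact the computation gives the slightly better $H^{s+1/2}$). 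Summing the analogous estimate for $\partial_{\mu}^{\alpha}g$ with weight $\gamma^{|\alpha|}$ gives the weighted inequality. The only nontrivial point is the bookkeeping of the Dirichlet kernel and of the half-integer Fourier modes on $\mathbb{T}^{2}$; past this, the estimate is a direct Plancherel computation, with no small-divisor or multiplier issue involved.
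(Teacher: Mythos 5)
Your argument is correct and is essentially the proof of the cited reference \cite[Lem. 4.2]{HR21} (the paper itself only cites it): expand $f$ in Fourier series, factor $e^{\ii j\eta}-e^{\ii j\theta}$, use the Dirichlet-kernel identity to write each mode as $|j|$ plane waves with bounded coefficients and frequencies of size $\lesssim\langle l,j\rangle$ (half-integers handled on the double cover), and conclude by Plancherel, the factor $|j|$ accounting for the loss of one (in fact one half) derivative. The only imprecision is the parenthetical claim that the $\mathtt{m}$-fold periodicity of $g$ holds in $\theta$ for fixed $\eta$: it holds only under the joint shift $(\theta,\eta)\mapsto\big(\theta+\tfrac{2\pi}{\mathtt{m}},\eta+\tfrac{2\pi}{\mathtt{m}}\big)$, for the same reason that forces the anti-periodicity you already handle, but this does not affect the norm estimate.
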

	\subsection{Operators}\label{section-ope}
	We intend in this section to explore some algebraic and analytical aspects on the a large class of operators that fit with our context. Firstly, we shall classify them according to their Toeplitz in time structures, real and $\mathtt{m}$-fold symmetry, etc... Secondly, we shall fix some specific norms, such as the off-diagonal/isotropic  decay, and analyze some of their properties. This part is a crucial later in the reduction of the remainder of the linearized operator. Thirdly, a particular attention will be focused on operators with kernels by exploring the link between the different norms and the action of suitable quasi-periodic transformations. The last point concerns a short discussion on matrix operators.
	
	\subsubsection{Symmetry}
	Consider a smooth family of bounded linear operators acting on the Sobolev spaces $H^s(\T^{d+1}, \C)$, 
	$$T: \mu\in \mathcal{O}\mapsto T(\mu)\in\mathcal{L}\big(H^s(\T^{d+1},\mathbb{C})\big).$$ 
	The linear operator $T(\mu)$ can be identified to the infinite dimensional matrix $\left(T_{l_{0},j_{0}}^{l,j}(\mu)\right)_{\underset{j,j_{0}\in\mathbb{Z}}{l,l_{0}\in\mathbb{Z}^{d}}}$ with 
	$$T(\mu)\mathbf{e}_{l_{0},j_{0}}=\sum_{(l,j)\in\mathbb{Z}^{d}\times\Z}T_{l_{0},j_{0}}^{l,j}(\mu)\mathbf{e}_{l,j},\qquad\textnormal{where}\qquad T_{l_{0},j_{0}}^{l,j}(\mu)\triangleq\big\langle T(\mu)\mathbf{e}_{l_{0},j_{0}},\mathbf{e}_{l,j}\big\rangle_{L^{2}(\mathbb{T}^{d+1},\mathbb{C})}.$$
	Along this paper the operators and the test functions may depend on the same parameter $\mu$ and thus 
	the action of the operator $T(\mu)$ on a scalar function $\rho\in W^{q,\infty,\gamma}\big(\mathcal{O},H^{s}(\mathbb{T}^{d+1},\mathbb{C})\big)$  is by convention
	defined through
	$$(T\rho)(\mu,\varphi,\theta)\triangleq T(\mu)\rho(\mu,\varphi,\theta).$$
	We recall the following definitions of Toeplitz, real, reversible, reversibility preserving and $\mathtt{m}$-fold preserving operators, see for instance \cite[Def. 2.2]{BBM14}.
	\begin{defin}\label{Def-Rev}
		Let $\rho:\T^{d+1}\to\R$ be a periodic function.  Define the  involution
		$$(\mathscr{I}_{0}\rho)(\varphi,\theta)\triangleq\rho(-\varphi,-\theta)$$
		and for a given integer  $\mathtt{m}\geqslant1$  consider the transformation
		$$(\mathscr{I}_{\mathtt{m}}\rho)(\varphi,\theta)\triangleq\rho\left(\varphi,\theta+\tfrac{2\pi}{\mathtt{m}}\right).$$
		We say that an operator $T\in  \mathcal{L}\big(L^2(\T^{d+1},\mathbb{C})\big)$ is 
		\begin{enumerate}[label=\textbullet]
			\item  Toeplitz in time (actually in the variable $\varphi$) if its Fourier coefficients satisfy, 
			$$\forall(l,l_{0},j,j_{0})\in(\mathbb{Z}^{d})^{2}\times\mathbb{Z}^{2},\quad T_{l_{0},j_{0}}^{l,j}=T_{j_{0}}^{j}(l-l_{0}),\qquad\textnormal{with}\qquad T_{j_{0}}^{j}(l)\triangleq T_{0,j_{0}}^{l,j},$$
			\item real if for all $\rho\in L^{2}(\mathbb{T}^{d+1},\mathbb{R}),$ we have 
			$T\rho$ is real-valued, or equivalently 
			$$\forall(l,l_{0},j,j_{0})\in(\mathbb{Z}^{d})^{2}\times\mathbb{Z}^{2},\quad T_{-l_{0},-j_{0}}^{-l,-j}=\overline{T_{l_{0},j_{0}}^{l,j}},$$
			\item reversible if
			$T\circ\mathscr{I}_{0}=-\mathscr{I}_{0}\circ T,$ or equivalently,
			$$\forall(l,l_{0},j,j_{0})\in(\mathbb{Z}^{d})^{2}\times\mathbb{Z}^{2},\quad T_{-l_{0},-j_{0}}^{-l,-j}=-T_{l_{0},j_{0}}^{l,j},$$
			\item reversibility preserving if
			$T\circ\mathscr{I}_{0}=\mathscr{I}_{0}\circ T,$ or equivalently,
			$$\forall(l,l_{0},j,j_{0})\in(\mathbb{Z}^{d})^{2}\times\mathbb{Z}^{2},\quad T_{-l_{0},-j_{0}}^{-l,-j}=T_{l_{0},j_{0}}^{l,j},$$
			\item $\mathtt{m}$-fold preserving if
			$T\circ\mathscr{I}_{\mathtt{m}}=\mathscr{I}_{\mathtt{m}}\circ T,$ or equivalently,
			$$\forall(l,l_{0},j,j_{0})\in(\mathbb{Z}^{d})^{2}\times\mathbb{Z}^{2},\quad T_{l_{0},j_{0}}^{l,j}\neq0\quad\Rightarrow\quad j- j_0\in\mathbb{Z}_{\mathtt{m}}.$$
		\end{enumerate}
	\end{defin}
	\subsubsection{Operator topologies} 	
	We shall restrict ourselves to Toeplitz operators and fix different topologies whose use will be motivated later by different applications. Given $\m\in\N^*$, then any $\mathtt{m}$-fold preserving Toeplitz operator  $T(\mu)$ acting on $\m$-fold symmetric  functions $\rho=\displaystyle\sum_{l_{0}\in\mathbb{Z}^{d}\atop j_{0}\in\Z_{\m}}\rho_{l_{0},j_{0}}\mathbf{e}_{l_{0},j_{0}}$ is described by
	$$T(\mu)\rho=\sum_{l,l_{0}\in\mathbb{Z}^{d}\\\atop j,j_{0}\in\mathbb{Z}_{\mathtt{m}}}T_{j_{0}}^{j}(\mu,l-l_{0})\rho_{l_{0},j_{0}}\mathbf{e}_{l,j}.$$
	For $q\in\mathbb{N},$ $\gamma\in(0,1]$ and $s\in\mathbb{R},$ {we equip this set of operators} with the off-diagonal norm  given by,
	\begin{align}\label{Top-NormX}
		\| T\|_{\textnormal{\tiny{O-d}},s}^{q,\gamma,\mathtt{m}}\triangleq\max_{\underset{|\alpha|\leqslant q}{\alpha\in\mathbb{N}^{d+1}}}\gamma^{|\alpha|}\sup_{\mu \in{\mathcal{O}}}\|\partial_{\mu}^{\alpha}(T)(\mu)\|_{\textnormal{\tiny{O-d}},s-|\alpha|},
	\end{align}
	with
	$$\| T\|_{\textnormal{\tiny{O-d}},s}\triangleq\sup_{(l,m)\in\mathbb{Z}^{d+1}}\langle l,m\rangle^{s}\sup_{j_0-j=m}|T_{j_0}^{j}(\mu,l)|.$$
	We define the cut-off frequency operator
	$$\left(P_{N}^1T(\mu)\right)\mathbf{e}_{l_{0},j_{0}}\triangleq\sum_{\underset{\langle l-l_{0},j-j_{0}\rangle\leqslant N}{(l,j)\in\mathbb{Z}^{d+1}}}T_{l_{0},j_{0}}^{l,j}(\mu)\mathbf{e}_{l,j}\qquad\mbox{and}\qquad P_{N}^{1,\perp}T\triangleq T-P_{N}^1T.$$
	or equivalently
	\begin{equation}\label{definition of projections for operators}
		\left(P_{N}^1T(\mu)\right)_{j_0}^j(l)=\begin{cases}
			T_{j_{0}}^{j}(\mu,l),& \hbox{if}\quad \langle l,j-j_0\rangle\leqslant N,\\
			0,&\hbox{if not.}
		\end{cases}
	\end{equation}
	Another norm that will be used together with the previous one during the reduction process of the remainder of the linearized operator, is given by the isotropic frequency decay
	\begin{align}\label{Top-NormX2}
		\|T\|_{\textnormal{\tiny{I-D}},s}^{q,\gamma,\mathtt{m}}\triangleq\sup_{\underset{|\alpha|\leqslant q}{\alpha\in\mathbb{N}^{d+1}}}\gamma^{|\alpha|}\sup_{\mu\in{\mathcal{O}}}\|\partial_{\mu}^{\alpha}(T)(\mu)\|_{\textnormal{\tiny{I-D}},s-|\alpha|},
	\end{align}
	where 
	$$\|T(\mu)\|_{\textnormal{\tiny{I-D}},s}\triangleq \sup_{l\in\mathbb{Z}^{d}\atop j,j_0 \in\Z_{\m}}\langle l,j_0,j\rangle^{s}|T_{j_0}^{j}(\mu,l)|.$$
	The associated cut-off projectors $(P_N^2)_{N\in\mathbb{N}}$ are defined as follows 
	\begin{equation}\label{definition of projections for operators2}
		\left(P_{N}^2T(\mu)\right)\mathbf{e}_{l_{0},j_{0}}\triangleq\begin{cases}
			\displaystyle{\sum_{\underset{\langle l-l_{0},j\rangle\leqslant N}{(l,j)\in\mathbb{Z}^{d}\times\Z_{\mathtt{m}}}}}T_{j_{0}}^{j}(\mu,l-l_0)\mathbf{e}_{l,j},& \hbox{if}\quad  |j_0|\leqslant N,\\
			0,&\hbox{if}\quad  |j_0|>N.
		\end{cases}
	\end{equation}
	or equivalently
	\begin{equation}\label{definition of projections for operators3}
		\left(P_{N}^2T(\mu)\right)_{j_0}^j(l)=\begin{cases}
			T_{j_{0}}^{j}(\mu,l),& \hbox{if}\quad \langle l,j,j_0\rangle\leqslant N,\\
			0,&\hbox{if not.}
		\end{cases}
	\end{equation}
	We also define the orthogonal projector  $ P_{N}^{2,\perp}T\triangleq T-P_{N}^2T.$ The next lemma lists some elementary results related to the off-diagonal and the isotropic norms.
	\begin{lem}\label{properties of Toeplitz in time operators}
		Let $q\in\N$, $\m\in\N^*$, $(\gamma,d,s_{0},s)$ satisfy \eqref{setting tau1 and tau2}-\eqref{init Sob cond}, $T$ and $S$ be Toeplitz in time operators.
		\begin{enumerate}[label=(\roman*)]
			\item Frequency localization : Let $N\in\mathbb{N}^{*}$ and  $\mathtt{t}\in\mathbb{R}_{+}.$ Then
			$$\|P_{N}^{1}T\|_{\textnormal{\tiny{O-d}},s+\mathtt{t}}^{q,\gamma,\mathtt{m}}\leqslant N^{\mathtt{t}}\|T\|_{\textnormal{\tiny{O-d}},s}^{q,\gamma,\mathtt{m}},\qquad\|P_{N}^{1,\perp}T\|_{\textnormal{\tiny{O-d}},s}^{q,\gamma,\mathtt{m}}\leqslant N^{-\mathtt{t}}\|T\|_{\textnormal{\tiny{O-d}},s+\mathtt{t}}^{q,\gamma,\mathtt{m}}$$
			and
			$$\|P_{N}^{2}T\|_{\textnormal{\tiny{I-D}},s+\mathtt{t}}^{q,\gamma,\mathtt{m}}\leqslant N^{\mathtt{t}}\|T\|_{\textnormal{\tiny{I-D}},s}^{q,\gamma,\mathtt{m}},\qquad\|P_{N}^{2,\perp}T\|_{\textnormal{\tiny{I-D}},s}^{q,\gamma,\mathtt{m}}\leqslant N^{-\mathtt{t}}\|T\|_{\textnormal{\tiny{I-D}},s+\mathtt{t}}^{q,\gamma,\mathtt{m}}.$$
			\item Link with the classical operator norm :
			\begin{align*}
				\|T\rho\|_{s}^{q,\gamma,\mathtt{m}}&\lesssim\|T\|_{\textnormal{\tiny{O-d}},s_{0}}^{q,\gamma,\mathtt{m}}\|\rho\|_{s}^{q,\gamma,\mathtt{m}}+\|T\|_{\textnormal{\tiny{O-d}},s}^{q,\gamma,\mathtt{m}}\|\rho\|_{s_{0}}^{q,\gamma,\mathtt{m}},\\
				\|T\rho\|_{s}^{q,\gamma,\mathtt{m}}&\lesssim\|T\|_{\textnormal{\tiny{I-D}},s_0}^{q,\gamma,\mathtt{m}}\|\rho\|_{s}^{q,\gamma,\mathtt{m}}+\|T\|_{\textnormal{\tiny{I-D}},s}^{q,\gamma,\mathtt{m}}\|\rho\|_{s_{0}}^{q,\gamma,\mathtt{m}}.
			\end{align*}
			In particular,
			\begin{align*}
				\|T\rho\|_{s}^{q,\gamma,\mathtt{m}}&\lesssim\|T\|_{\textnormal{\tiny{O-d}},s}^{q,\gamma,\mathtt{m}}\|\rho\|_{s}^{q,\gamma,\mathtt{m}},\\
				\|T\rho\|_{s}^{q,\gamma,\mathtt{m}}&\lesssim\|T\|_{\textnormal{\tiny{I-D}},s}^{q,\gamma,\mathtt{m}}\|\rho\|_{s}^{q,\gamma,\mathtt{m}}.
			\end{align*} 
			\item We have the embedding : for any $s\geqslant 0$
			$$\|T\|_{\textnormal{\tiny{O-d}},s}^{q,\gamma,\mathtt{m}}\lesssim\|T\|_{\textnormal{\tiny{I-D}},s}^{q,\gamma,\mathtt{m}}.$$
			\item Composition law :
			\begin{align*}
				\|TS\|_{\textnormal{\tiny{O-d}},s}^{q,\gamma,\mathtt{m}}&\lesssim\|T\|_{\textnormal{\tiny{O-d}},s}^{q,\gamma,\mathtt{m}}\|S\|_{\textnormal{\tiny{O-d}},s_0}^{q,\gamma,\mathtt{m}}+\|T\|_{\textnormal{\tiny{O-d}},s_0}^{q,\gamma,\mathtt{m}}\|S\|_{\textnormal{\tiny{O-d}},s}^{q,\gamma,\mathtt{m}}
			\end{align*}
			and
			\begin{align*}
				\|TS\|_{\textnormal{\tiny{I-D}},s}^{q,\gamma,\mathtt{m}}+\|ST\|_{\textnormal{\tiny{I-D}},s}^{q,\gamma,\mathtt{m}}&\lesssim\|T\|_{\textnormal{\tiny{I-D}},s}^{q,\gamma,\mathtt{m}}\|S\|_{\textnormal{\tiny{O-d}},s_0}^{q,\gamma,\mathtt{m}}+\|T\|_{\textnormal{\tiny{I-D}},s_0}^{q,\gamma,\mathtt{m}}\|S\|_{\textnormal{\tiny{O-d}},s}^{q,\gamma,\mathtt{m}}.
			\end{align*}
		\end{enumerate}
	\end{lem}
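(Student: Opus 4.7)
My plan is to establish each of the four items by working at the matrix-coefficient level, since a Toeplitz in time operator is determined by its kernel $T_{j_0}^{j}(l)$ with $l\in\mathbb{Z}^{d}$ and $j,j_0\in\mathbb{Z}_{\mathtt{m}}$. Items (i) and (iii) are essentially tautological from the definitions \eqref{Top-NormX}--\eqref{Top-NormX2} and the cut-off projectors in \eqref{definition of projections for operators}, \eqref{definition of projections for operators3}. For (i), I would observe that $P_N^{1}$ preserves only Fourier modes satisfying $\langle l, j-j_0\rangle\leqslant N$, so that $\langle l, j-j_0\rangle^{s+\mathtt{t}}\leqslant N^{\mathtt{t}}\langle l, j-j_0\rangle^{s}$ on the support, while on the support of $P_N^{1,\perp}$ one has $\langle l, j-j_0\rangle^{s}\leqslant N^{-\mathtt{t}}\langle l, j-j_0\rangle^{s+\mathtt{t}}$; the same bookkeeping with the triple index $\langle l,j,j_0\rangle$ handles $P_N^{2}$. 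For (iii), the triangle inequality $|j-j_0|\leqslant |j|+|j_0|$ yields $\langle l, j-j_0\rangle\lesssim \langle l,j,j_0\rangle$, giving the embedding mode by mode and then supremum-wise.

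For the tame action bound (ii), I would expand in Fourier modes. Writing $\rho=\sum_{l_0,j_0}\rho_{l_0,j_0}\mathbf{e}_{l_0,j_0}$, the coefficients of $T\rho$ read
\begin{equation*}
(T\rho)_{l,j}=\sum_{l_0\in\mathbb{Z}^{d},\,j_0\in\mathbb{Z}_{\mathtt{m}}} T_{j_0}^{j}(l-l_0)\,\rho_{l_0,j_0}.
\end{equation*}
Using the classical inequality $\langle l,j\rangle^{s}\lesssim\langle l-l_0,j-j_0\rangle^{s}+\langle l_0,j_0\rangle^{s}$ together with Cauchy--Schwarz and a Schur test based on the $\ell^{2}$-summable weight $\langle l_0,j_0\rangle^{-s_0}$, which is summable since $s_0>\tfrac{d+1}{2}$, I dominate $\|T\rho\|_{s}$ by splitting the high index between $T$ and $\rho$: the regime in which $T$ carries the large index produces $\|T\|_{\textnormal{\tiny{O-d}},s}^{q,\gamma,\mathtt{m}}\|\rho\|_{s_0}^{q,\gamma,\mathtt{m}}$, while the other yields $\|T\|_{\textnormal{\tiny{O-d}},s_0}^{q,\gamma,\mathtt{m}}\|\rho\|_{s}^{q,\gamma,\mathtt{m}}$. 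The isotropic variant uses $\langle l,j\rangle\lesssim\langle l,j_0,j\rangle$ to reduce to the same splitting. The $\mu$-derivatives propagate by the Leibniz rule, which explains the preservation of the $q,\gamma$ superscripts throughout.

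For the composition law (iv), the Toeplitz structure gives the kernel
\begin{equation*}
(TS)_{j_0}^{j}(l)=\sum_{l_1\in\mathbb{Z}^{d},\, j_1\in\mathbb{Z}_{\mathtt{m}}} T_{j_1}^{j}(l-l_1)\, S_{j_0}^{j_1}(l_1).
\end{equation*}
For the off-diagonal norm I use the splitting $\langle l, j-j_0\rangle^{s}\lesssim\langle l-l_1, j-j_1\rangle^{s}+\langle l_1,j_1-j_0\rangle^{s}$ and Schur-sum via $\langle l_1,j_1-j_0\rangle^{-s_0}$, assigning the large-index factor alternately to $T$ and $S$, which produces the claimed symmetric bound. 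For the mixed isotropic inequality on $\|TS\|_{\textnormal{\tiny{I-D}},s}^{q,\gamma,\mathtt{m}}$, I split $\langle l,j_0,j\rangle^{s}\lesssim\langle l-l_1,j_1,j\rangle^{s}+\langle l_1,j_0,j_1\rangle^{s}$: the first piece is controlled by $\|T\|_{\textnormal{\tiny{I-D}},s}^{q,\gamma,\mathtt{m}}\|S\|_{\textnormal{\tiny{O-d}},s_0}^{q,\gamma,\mathtt{m}}$, using the off-diagonal decay of $S$ in $(l_1, j_1-j_0)$ to perform the summation, and the second by $\|T\|_{\textnormal{\tiny{I-D}},s_0}^{q,\gamma,\mathtt{m}}\|S\|_{\textnormal{\tiny{O-d}},s}^{q,\gamma,\mathtt{m}}$; the bound on $\|ST\|_{\textnormal{\tiny{I-D}},s}^{q,\gamma,\mathtt{m}}$ follows by the analogous argument with the roles of the two factors exchanged.

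The main technical point is the Schur-type summation in (iv): it is essential that the convergence weight be of off-diagonal type so that the sum over $(l_1,j_1)$ converges once the Sobolev threshold $s_0>\tfrac{d+1}{2}$ is in force, which is precisely why the mixed bound is asymmetric between $T$ and $S$ and why the embedding from (iii) is tacitly used when one wishes to transfer a factor from isotropic to off-diagonal control. All remaining derivative and parameter dependencies are absorbed through standard Leibniz estimates in the $W^{q,\infty,\gamma}$ framework.
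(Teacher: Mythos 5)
Your strategy coincides with the paper's on every item: (i) and (iii) are read off the definitions (for (iii) the paper computes $\inf_{x\in\R}\langle l,x+m,x\rangle=\langle l,\tfrac m2\rangle$, which is just a sharpened form of your triangle inequality), (ii) is the standard tame bound the paper delegates to the literature, and (iv) is proved from the same coefficient formula for $(TS)_{j_0}^{j}(l)$ by weight-splitting and Schur summation.

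There is, however, one step that fails as written, and it sits exactly at what you single out as the main technical point. In the isotropic composition estimate you split
$$\langle l,j_0,j\rangle^{s}\lesssim\langle l-l_1,j_1,j\rangle^{s}+\langle l_1,j_0,j_1\rangle^{s}$$
and assert that the second piece is controlled by $\|T\|_{\textnormal{\tiny{I-D}},s_0}^{q,\gamma,\mathtt{m}}\|S\|_{\textnormal{\tiny{O-d}},s}^{q,\gamma,\mathtt{m}}$. But this second piece attaches the \emph{isotropic} weight $\langle l_1,j_0,j_1\rangle^{s}$ to the factor $S_{j_0}^{j_1}(l_1)$, whereas $S$ is only measured in the off-diagonal norm, which controls $\langle l_1,j_1-j_0\rangle^{s}|S_{j_0}^{j_1}(l_1)|$; the two weights are not comparable in the direction you need. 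Concretely, for the space-diagonal Toeplitz operator with $S_{j_0}^{j_0}(0)=1$ and all other coefficients zero, one has $\|S\|_{\textnormal{\tiny{O-d}},s}=1$ while $\langle 0,j_0,j_0\rangle^{s}|S_{j_0}^{j_0}(0)|=\langle j_0\rangle^{s}$ is unbounded, so the claimed domination cannot hold. The repair is the splitting the paper actually uses, namely $\langle l,j_0,j\rangle\leqslant 2\big(\langle l-l_1,j_1,j\rangle+\langle l_1,j_1-j_0\rangle\big)$, which is valid because $|j_0|\leqslant|j_1|+|j_1-j_0|$ and assigns to each factor precisely the weight its norm controls; with that substitution your Schur summation over $(l_1,j_1)$ closes, and the analogous correction handles $ST$. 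Apart from this slip, your argument — including the asymmetry of the mixed bound and the role of the off-diagonal weight in making the sums converge — is the paper's proof.
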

	\begin{proof}
		\textbf{(i)} and \textbf{(ii)} can be easily obtained using \eqref{Top-NormX}-\eqref{definition of projections for operators3} in a similar way to  \cite{BM18}. \\
		\noindent \textbf{(iii)} We shall prove the embedding for $q=0$ { and the  the case $q\geqslant 1$ is similar}. We write by definition 
		$$|T_{j_0}^{j}(\mu,l)|\leqslant \langle l,j_0,j\rangle^{-s}\|T(\mu)\|_{\textnormal{\tiny{I-D}},s}.$$
		Hence
		\begin{align*}
			\sup_{j_0-j=m}|T_{j_0}^{j}(\mu,l)|&\leqslant \sup_{j}\langle l,j+m,j\rangle^{-s}\|T(\mu)\|_{\textnormal{\tiny{I-D}},s}.
		\end{align*}
		By direct computations we infer
		$$\inf_{x\in\R}\langle l, x+m,x\rangle=\langle l,\tfrac{m}{2}\rangle$$
		Therefore
		\begin{align*}
			\sup_{j_0-j=m}|T_{j_0}^{j}(\mu,l)|&\lesssim  \langle l,m\rangle^{-s}\|T(\mu)\|_{\textnormal{\tiny{I-D}},s}.
		\end{align*}
		It follows that
		$$\|T\|_{\textnormal{\tiny{O-d}},s}\lesssim\|T(\mu)\|_{\textnormal{\tiny{I-D}},s}.$$
		\textbf{(iv)} We shall prove these tame estimates for $q=0$. The general case $q\geqslant 1$ can be done in a similar way using Leibniz formula. One can check that 
		$$\left(TS\right)_{j_0}^j(l)=\sum_{l_1\in\Z^d\atop  j_1\in\Z_\mathtt{m}}T_{j_0}^{j_1}(l_1)S_{j_1}^j(l-l_1).$$
		Hence for $s\geqslant 0$ and using the norm definition and the triangle inequality we infer
		\begin{align}\label{ST-1}
			\langle l,j_0,j\rangle^{s}\big|(TS)_{j_0}^j(l)\big|&\lesssim\sum_{l_1\in\Z^d\atop  j_1\in\Z_\mathtt{m}}\langle l_1,j_0,j_1\rangle^{s}|\nonumber T_{j_0}^{j_1}(l_1)S_{j_1}^j(l-l_1)|\\
			&\quad+\sum_{l_1\in\Z^d\atop  j_1\in\Z_\mathtt{m}}\langle l-l_1, j-j_1\rangle^{s}|T_{j_0}^{j_1}(l_1)S_{j_1}^j(l-l_1)|.
		\end{align}
		By definition we get 
		\begin{align}\label{Ineq-N1}
			\langle l-l_1,j- j_1\rangle^{s}|S_{j_1}^j(l-l_1)|\lesssim\|S\|_{\textnormal{\tiny{O-d}},s}.
		\end{align}
		Consequently,
		\begin{align*}
			\langle l,j_0,j\rangle^{s}\big|\left(TS\right)_{j_0}^j(l)\big|&\lesssim\|T\|_{\textnormal{\tiny{I-D}},s}\sum_{l_1\in\Z^d\atop  j_1\in\Z_\mathtt{m}}|S_{j_1}^j(l-l_1)|+  \| S\|_{\textnormal{\tiny{O-d}},s}\sum_{l_1\in\Z^d\atop  j_1\in\Z_\mathtt{m}}|T_{j_0}^{j_1}(l_1)|.
		\end{align*}
		Using \eqref{Ineq-N1} we deduce for $s_0>\frac{d+1}{2}$ that
		\begin{align*}
			\sum_{l_1\in\Z^d\atop  j_1\in\Z_\mathtt{m}}|S_{j_1}^j(l-l_1)|&\leqslant\|S\|_{\textnormal{\tiny{O-d}},s_0}\sum_{l_1\in\Z^d\atop j_1\in\Z_\mathtt{m}}\langle l-l_1,j- j_1\rangle^{-s_0}\\
			&\lesssim\|S\|_{\textnormal{\tiny{O-d}},s_0}.
		\end{align*}
		We also have
		\begin{align}
			\nonumber\sum_{l_1\in\Z^d\atop  j_1\in\Z_\mathtt{m}}|T_{j_0}^{j_1}(l_1)|&\leqslant\|T\|_{\textnormal{\tiny{I-D}},s_0} \sum_{l_1\in\Z^d\atop  j_1\in\Z_\mathtt{m}}\langle l_1,j_1\rangle^{-s_0}\\
			&\lesssim\|T\|_{\textnormal{\tiny{I-D}},s_0}.\label{T-00}
		\end{align}
		Therefore, we obtain
		\begin{align*}
			\langle l,j_0,j\rangle^{s}\big|\left(TS\right)_{j_0}^j(l)\big|&\lesssim\|T\|_{\textnormal{\tiny{I-D}},s} \| S\|_{\textnormal{\tiny{O-d}},s_0}+ \| S\|_{\textnormal{\tiny{O-d}},s}\|T\|_{\textnormal{\tiny{I-D}},s_0},
		\end{align*}
		leading to
		\begin{align*}
			\|TS\|_{\textnormal{\tiny{I-D}},s}&\lesssim\|T\|_{\textnormal{\tiny{I-D}},s}\|S\|_{\textnormal{\tiny{O-d}},s_0}+\|T\|_{\textnormal{\tiny{I-D}},s_0}\| S\|_{\textnormal{\tiny{O-d}},s}.
		\end{align*}
		Let us now move to the estimate of $ST$. Proceeding as for \eqref{ST-1} we get 
		\begin{align*}
			\nonumber\langle l,j_0,j\rangle^{s}\big|(ST)_{j_0}^j(l)\big|&\lesssim \sum_{l_1\in\Z^d\atop  j_1\in\Z_\mathtt{m}}\langle l_1,j_0-j_1\rangle^{s}| S_{j_0}^{j_1}(l_1)T_{j_1}^j(l-l_1)|\\
			&+\sum_{l_1\in\Z^d\atop  j_1\in\Z_\mathtt{m}}\langle l-l_1, j_1,j\rangle^{s}|S_{j_0}^{j_1}(l_1)T_{j_1}^j(l-l_1)|.
		\end{align*}
		Applying \eqref{Ineq-N1} together with \eqref{T-00} yields
		\begin{align*}
			\sum_{l_1\in\Z^d\atop j_1\in\Z_\mathtt{m}}\langle l_1,j_0-j_1\rangle^{s}|S_{j_0}^{j_1}(l_1)T_{j_1}^j(l-l_1)|&\lesssim \|S\|_{\textnormal{\tiny{O-d}},s}\sum_{l_1\in\Z^d\atop j_1\in\Z_\mathtt{m}}|T_{j_1}^j(l-l_1)|\\
			&\lesssim\|S\|_{\textnormal{\tiny{O-d}},s}\|T\|_{\textnormal{\tiny{I-D}},s_0}.
		\end{align*}
		Similarly we get
		\begin{align*}
			\sum_{l_1\in\Z^d\atop  j_1\in\Z_\mathtt{m}}\langle l-l_1, j_1,j\rangle^{s}|S_{j_0}^{j_1}(l_1)T_{j_1}^j(l-l_1)|&\lesssim\|T\|_{\textnormal{\tiny{I-D}},s}\sum_{l_1\in\Z^d\atop  j_1\in\Z_\mathtt{m}}|S_{j_0}^{j_1}(l_1)|\\
			&\lesssim\|T\|_{\textnormal{\tiny{I-D}},s}\| S\|_{\textnormal{\tiny{O-d}},s_0} \sum_{l_1\in\Z^d\atop  j_1\in\Z_\mathtt{m}}\langle l_1,j_1-j_0\rangle^{-s_0}\\
			&\lesssim\|T\|_{\textnormal{\tiny{I-D}},s}\| S\|_{\textnormal{\tiny{O-d}},s_0} .
		\end{align*}
		Putting together the preceding estimates we get
		\begin{align*}
			\|ST\|_{\textnormal{\tiny{I-D}},s}&\lesssim \|T\|_{\textnormal{\tiny{I-D}},s} \| S\|_{\textnormal{\tiny{O-d}},s_0}+ \|T\|_{\textnormal{\tiny{I-D}},s_0}\|S\|_{\textnormal{\tiny{O-d}},s}.
		\end{align*}
		This concludes the proof of the lemma.
	\end{proof}
	
	\subsubsection{Integral operators}
	{The main goal in this part is to analyze  Toeplitz integral operators and connect the different norms introduced before to the regularity of the kernel.  Consider a Toeplitz integral operator taking the form }
	\begin{equation}\label{Top-op1}(\mathcal{T}_K\rho)(\mu,\varphi,\theta)\triangleq\int_{\mathbb{T}}K(\mu,\varphi,\theta,\eta)\rho(\mu,\varphi,\eta)d\eta,
	\end{equation}
	where the kernel function $K(\mu,\varphi,\theta,\eta)$ may be smooth or singular at the  diagonal set $\{\theta=\eta\}$. The kernel is called $\mathtt{m}$-fold preserving if 
	$$(\mathscr{I}_{\mathtt{m},2}K)(\mu,\varphi,\theta,\eta)\triangleq K\left(\mu,\varphi,\theta+\tfrac{2\pi}{\mathtt{m}},\eta+\tfrac{2\pi}{\mathtt{m}}\right)=K(\mu,\varphi,\theta,\eta).$$
	We shall need the following  lemma whose proof is a consequence of  \cite[Lem. 4.4]{HR21}.
	\begin{lem}\label{lem sym--rev}
		Let $q\in\N$, $\m\in\N^*$, $(\gamma,d,s_{0},s)$ satisfy \eqref{setting tau1 and tau2}-\eqref{init Sob cond} and $\mathcal{T}_K$ be an integral operator with a real-valued kernel $K$. Then the following assertions hold true.
		\begin{enumerate}[label=\textbullet]
			\item If $K$ is even in $(\varphi,\theta,\eta)$, then $\mathcal{T}_K$ is reversibility preserving.
			\item If $K$ is odd in $(\varphi,\theta,\eta)$,
			then $\mathcal{T}_K$ is reversible.
			\item If $K$ is $\mathtt{m}$-fold preserving, then $\mathcal{T}_K$ is  $\mathtt{m}$-fold preserving.
		\end{enumerate}
		In addition, 
		$$\|\mathcal{T}_K\|_{\textnormal{\tiny{I-D}},s}^{q,\gamma,\mathtt{m}}\lesssim\|K\|_{s}^{q,\gamma,\mathtt{m}}$$
		and
		\begin{align*}
			\| \mathcal{T}_K\rho\|_{s}^{q,\gamma,\mathtt{m}}&\lesssim \|\rho\|_{s_0}^{q,\gamma,\mathtt{m}}\|K\|_{s}^{q,\gamma,\mathtt{m}} +\|\rho\|_{s}^{q,\gamma,\mathtt{m}} \|K\|_{s_0}^{q,\gamma,\mathtt{m}}.
		\end{align*}
	\end{lem}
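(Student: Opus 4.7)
My plan is to verify the three symmetry assertions by direct change of variables in the kernel integral, then obtain the isotropic bound by expanding $K$ in Fourier series, and finally derive the action estimate from Lemma \ref{properties of Toeplitz in time operators}.

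For the three structural assertions, I start from the identity
$$(\mathcal{T}_K\mathscr{I}_0\rho)(\mu,\varphi,\theta)=\int_{\mathbb{T}}K(\mu,\varphi,\theta,\eta)\,\rho(\mu,-\varphi,-\eta)\,d\eta=\int_{\mathbb{T}}K(\mu,\varphi,\theta,-\eta)\,\rho(\mu,-\varphi,\eta)\,d\eta,$$
obtained by the substitution $\eta\mapsto-\eta$. Comparing with
$$(\mathscr{I}_0\mathcal{T}_K\rho)(\mu,\varphi,\theta)=\int_{\mathbb{T}}K(\mu,-\varphi,-\theta,\eta)\,\rho(\mu,-\varphi,\eta)\,d\eta,$$
I read off that parity of $K$ in $(\varphi,\theta,\eta)$ controls whether $\mathcal{T}_K$ commutes or anticommutes with $\mathscr{I}_0$: evenness gives $K(-\varphi,-\theta,\eta)=K(\varphi,\theta,-\eta)$ (reversibility preserving), oddness gives the opposite sign (reversible). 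For the $\mathtt{m}$-fold assertion, the substitution $\eta\mapsto\eta-\tfrac{2\pi}{\mathtt{m}}$ reduces the desired identity $\mathcal{T}_K\mathscr{I}_{\mathtt{m}}=\mathscr{I}_{\mathtt{m}}\mathcal{T}_K$ to the kernel relation $K(\varphi,\theta+\tfrac{2\pi}{\mathtt{m}},\eta+\tfrac{2\pi}{\mathtt{m}})=K(\varphi,\theta,\eta)$, which is the hypothesis.

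For the isotropic norm bound, I expand $K(\mu,\varphi,\theta,\eta)=\sum_{(l,j,k)\in\mathbb{Z}^{d+2}}\widehat{K}_{l,j,k}(\mu)\,e^{\ii(l\cdot\varphi+j\theta+k\eta)}$ and compute the action on the basis $\mathbf{e}_{l_0,j_0}$ using the normalization \eqref{average-notation}. The Toeplitz matrix entries read
$$T_{j_0}^{j}(\mu,l)=\widehat{K}_{l,j,-j_0}(\mu).$$
Since $|\widehat{K}_{l,j,k}(\mu)|\langle l,j,k\rangle^{s}\leqslant\|K(\mu)\|_{H^{s}_{\mathtt{m}}}$ by Plancherel, I obtain $\|T(\mu)\|_{\textnormal{\tiny{I-D}},s}\leqslant\|K(\mu)\|_{H^{s}_{\mathtt{m}}}$. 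Applying $\partial_\mu^\alpha$ for $|\alpha|\leqslant q$, using that $\partial_\mu^\alpha$ commutes with the Fourier expansion in $(\varphi,\theta,\eta)$, and taking the weighted supremum, yields $\|\mathcal{T}_K\|_{\textnormal{\tiny{I-D}},s}^{q,\gamma,\mathtt{m}}\lesssim\|K\|_{s}^{q,\gamma,\mathtt{m}}$.

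The action estimate is then almost automatic: Lemma \ref{properties of Toeplitz in time operators}-(ii) gives
$$\|\mathcal{T}_K\rho\|_{s}^{q,\gamma,\mathtt{m}}\lesssim\|\mathcal{T}_K\|_{\textnormal{\tiny{I-D}},s_0}^{q,\gamma,\mathtt{m}}\|\rho\|_{s}^{q,\gamma,\mathtt{m}}+\|\mathcal{T}_K\|_{\textnormal{\tiny{I-D}},s}^{q,\gamma,\mathtt{m}}\|\rho\|_{s_0}^{q,\gamma,\mathtt{m}},$$
and substituting the kernel bound produces the tame estimate as stated. There is no serious obstacle in this proof; the only point requiring mild care is the bookkeeping of $\partial_\mu^\alpha$ distributed across the weighted supremum in both the operator and kernel norms, which works because the weighting $\gamma^{|\alpha|}$ and the drop $H^{s-|\alpha|}$ match on both sides of the Fourier estimate.
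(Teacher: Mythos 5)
Your proposal is correct and follows essentially the same route as the paper: the $\mathtt{m}$-fold property by the change of variables $\eta\mapsto\eta+\tfrac{2\pi}{\mathtt{m}}$, the isotropic bound by reading the Toeplitz entries as Fourier coefficients of $K$ and using the $H^s$ decay (the paper phrases this as an $H^{-s}$--$H^{s}$ duality, which is the same estimate), and the action estimate from Lemma \ref{properties of Toeplitz in time operators}-(ii). The only cosmetic difference is that the paper delegates the reversibility/reversibility-preserving cases to a citation of \cite[Lem. 4.4]{HR21}, whereas you carry out the parity change of variables explicitly, which is fine.
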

	\begin{proof}
		The reversibility properties have already been proved in \cite[Lem. 4.4]{HR21}. Now, let us prove the $\mathtt{m}$-fold property. Assume that $K$ is $\mathtt{m}$-fold preserving, then
		\begin{align*}
			\mathcal{T}_{K}(\mathscr{I}_{\mathtt{m}}\rho)(\varphi,\theta)&=\int_{\mathbb{T}}K(\varphi,\theta,\eta)\rho\big(\varphi,\eta+\tfrac{2\pi}{\mathtt{m}}\big)d\eta\\
			&=\int_{\mathbb{T}}K\big(\varphi,\theta+\tfrac{2\pi}{\mathtt{m}},\eta+\tfrac{2\pi}{\mathtt{m}}\big)\rho\big(\varphi,\eta+\tfrac{2\pi}{\mathtt{m}}\big)d\eta\\
			&=\int_{\mathbb{T}}K\big(\varphi,\theta+\tfrac{2\pi}{\mathtt{m}},\eta\big)\rho\big(\varphi,\eta\big)d\eta\\
			&=\mathscr{I}_{\mathtt{m}}(\mathcal{T}_{K}\rho)(\varphi,\theta).
		\end{align*}
		Hence $\mathcal{T}_{K}$ is $\mathtt{m}$-fold preserving. By duality $H_{\mathtt{m}}^{-s}-H_{\mathtt{m}}^{s}$, we have
		\begin{align*}
			\left|(\mathcal{T}_{K})_j^{j'}(l)\right|&=\left|\int_{\mathbb{T}^{d+2}}K(\varphi,\theta,\eta)e^{\ii(l\cdot\varphi+j\theta-j'\eta)}d\varphi d\theta d\eta\right|\\
			&\lesssim\langle l,j,j'\rangle^{-s}\|K\|_{s}^{q,\gamma,\m}
		\end{align*}
		proving the first estimate. The second one follows easily from Lemma \ref{properties of Toeplitz in time operators}-(ii)-(iii).
	\end{proof}
	The next task is to introduce some quasi-periodic symplectic change of variables needed later in the reduction of the transport part of the linearized operator. The following lemma is proved in the scalar case $d^\prime=1$ in \cite[Lem. 2.34]{BM18}. The vectorial case $d^\prime\geqslant 2$ can be obtained in a similar way, up to slight modifications.
	\begin{lem}\label{Compos1-lemm}
		Let $q\geqslant 0 $, $\m, d,d^\prime\geqslant 1$,
		$s\geqslant s_0> \tfrac{d+d^\prime}{2}+q+1$
		and $\beta_1,\cdots,\beta_{d^\prime}\in W^{q,\infty,\gamma}\big(\mathcal{O},H^{\infty}_{\m}(\T^{d+1})\big)$  such that 
		\begin{equation}\label{small beta lem}
			\max_{k\in\{1,\cdots,d^\prime\}}\|\beta_k \|_{2s_0}^{q,\gamma,\mathtt{m}}\leqslant \varepsilon_0,
		\end{equation}
		with $\varepsilon_{0}$ small enough. 		 Then the following assertions hold true.
		\begin{enumerate}[label=(\roman*)]
			\item The function $\widehat{\beta}$ defined by the inverse diffeomorphism
			$$y=x+\beta(\mu,\varphi,x)\qquad\Leftrightarrow\qquad x=y+\widehat\beta(\mu,\varphi,y),$$
			where
			\begin{align}\label{beta-new}
				\beta(\mu,\varphi,x)&\triangleq  \big(\beta_1(\mu,\varphi,x_1),\cdots, \beta_{d^\prime}(\mu,\varphi,x_{d^\prime})\big), \quad x=(x_1,\cdots,x_{d^\prime}),\\
				\widehat{\beta}(\mu,\varphi,y)&\triangleq  \big(\widehat{\beta}_1(\mu,\varphi,y_1),\cdots, \widehat{\beta}_{d^\prime}(\mu,\varphi,y_{d^\prime})\big), \,\quad y=(y_1,\cdots,y_{d^\prime}),\notag
			\end{align}
			satisfies
			\begin{equation}\label{beta-hat and beta norm}
				\forall s\geqslant s_0,\quad\|\widehat{\beta}\|_{s}^{q,\gamma,\mathtt{m}}\lesssim  \|\beta\|_{s}^{q,\gamma,\mathtt{m}}.
			\end{equation}
			\item The composition operator $\mathcal{B}:W^{q,\infty,\gamma}\big(\mathcal{O},H^{s}_{\m}(\T^{d+d^\prime})\big)\to W^{q,\infty,\gamma}\big(\mathcal{O},H^{s}_{\m}(\T^{d+d^\prime})\big)$, defined by
			\begin{equation}\label{def symplctik CVAR}
				\mathcal{B}\rho(\mu,\varphi,x)\triangleq \rho\big(\mu,\varphi,x+\beta(\mu,\varphi,x)\big),
			\end{equation}
			is continuous and invertible, with inverse
			$$
			\mathcal{B}^{-1} \rho(\mu,\varphi,y)=\rho\big(\mu,\varphi,y+\widehat{\beta}(\mu,\varphi,y)\big).
			$$
			Moreover, we have the estimates  
			\begin{align}
				\|\mathcal{B}^{\pm1}\rho\|_{s}^{q,\gamma,\mathtt{m}}&\leqslant \|\rho\|_{s}^{q,\gamma,\mathtt{m}}\left(1+C\|\beta\|_{s_{0}}^{q,\gamma,\mathtt{m}}\right)+C\|\beta\|_{s}^{q,\gamma,\mathtt{m}}\|\rho\|_{s_{0}}^{q,\gamma,\mathtt{m}},\nonumber
				\\
				\|\mathcal{B}^{\pm1}\rho-\rho\|_{s}^{q,\gamma,\mathtt{m}}&\leqslant C\left(\|\rho\|_{s+1}^{q,\gamma,\mathtt{m}}\|\beta\|_{s_0}^{q,\gamma,\mathtt{m}}+\|\rho\|_{s_0}^{q,\gamma,\mathtt{m}}\|\beta\|_{s}^{q,\gamma,\mathtt{m}}\right).\label{e-spe comp B}
			\end{align}
			\item Let $\beta^{[1]},\beta^{[2]}\in W^{q,\infty,\gamma}\big(\mathcal{O},H_{\m}^{\infty}(\mathbb{T}^{d+d^\prime})\big)$ as in \eqref{beta-new} and satisfying \eqref{small beta lem}. If we denote 
			$$\Delta_{12}\beta\triangleq\beta^{[1]}-\beta^{[2]}\qquad\textnormal{and}\qquad\Delta_{12}\widehat{\beta}\triangleq\widehat{\beta}^{[1]}-\widehat{\beta}^{[2]},$$
			then we have
			\begin{equation}\label{Delta12 bh vs Delta12 b}
				\forall s\geqslant s_0,\quad\|\Delta_{12}\widehat{\beta}\|_{s}^{q,\gamma,\mathtt{m}}\leqslant C\left(\|\Delta_{12}\beta\|_{s}^{q,\gamma,\mathtt{m}}+\|\Delta_{12}\beta\|_{s_{0}}^{q,\gamma,\mathtt{m}}\max_{\ell\in\{1,2\}}\|\beta^{[\ell]}\|_{s+1}^{q,\gamma,\mathtt{m}}\right).
			\end{equation}
		\end{enumerate}
	\end{lem}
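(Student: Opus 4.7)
The plan is to exploit the diagonal structure of $\beta(\mu,\varphi,x)=\bigl(\beta_k(\mu,\varphi,x_k)\bigr)_{k=1,\ldots,d^\prime}$: each component of $\beta$ depends on only one of the spatial coordinates, so the vector equation $y=x+\beta(\mu,\varphi,x)$ decouples into $d^\prime$ independent scalar equations $y_k=x_k+\beta_k(\mu,\varphi,x_k)$. For assertion (i), I would apply the scalar version [BM18, Lem. 2.34] to each component, viewing $x_\ell$ (for $\ell\neq k$) as silent parameters alongside $\varphi$. This produces $\widehat{\beta}_k\in W^{q,\infty,\gamma}(\mathcal{O},H_{\mathtt m}^{\infty})$ satisfying $\|\widehat{\beta}_k\|_s^{q,\gamma,\mathtt m}\lesssim\|\beta_k\|_s^{q,\gamma,\mathtt m}$, and since the $H^s_{\mathtt m}$-norm of the vector $\widehat{\beta}$ is controlled by the maximum (or sum) of its components, \eqref{beta-hat and beta norm} follows.

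For (ii), I would factor the composition operator as $\mathcal{B}=\mathcal{B}_1\circ\cdots\circ\mathcal{B}_{d^\prime}$, where
\[
\mathcal{B}_k\rho(\mu,\varphi,x)\triangleq\rho\bigl(\mu,\varphi,x_1,\ldots,x_{k-1},x_k+\beta_k(\mu,\varphi,x_k),x_{k+1},\ldots,x_{d^\prime}\bigr).
\]
Because the $\mathcal{B}_k$ modify disjoint coordinates, they commute, and each is a scalar composition operator to which the tame and near-identity estimates of [BM18, Lem. 2.34] apply directly (with the $d+d^\prime-1$ inert variables playing the role of $\varphi$). Iterating the tame composition bound a finite number of times gives the first inequality in (ii), and the near-identity estimate \eqref{e-spe comp B} is obtained by telescoping $\mathcal{B}-\mathrm{Id}=\sum_{k=1}^{d^\prime}(\mathcal{B}_1\cdots\mathcal{B}_{k-1})(\mathcal{B}_k-\mathrm{Id})$ and applying the scalar version of \eqref{e-spe comp B} term by term. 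Invertibility follows from the identity $\mathcal{B}^{-1}=\mathcal{B}_{d^\prime}^{-1}\circ\cdots\circ\mathcal{B}_1^{-1}$, each factor having the expression dictated by the component-wise $\widehat{\beta}_k$ from (i).

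For (iii), I would start from the fixed-point identity $\widehat{\beta}(\mu,\varphi,y)=-\beta\bigl(\mu,\varphi,y+\widehat{\beta}(\mu,\varphi,y)\bigr)$ satisfied for each $\beta^{[\ell]}$, and subtract to get
\[
\Delta_{12}\widehat{\beta}(y)=-\Delta_{12}\beta\bigl(y+\widehat{\beta}^{[1]}(y)\bigr)-\Bigl[\beta^{[2]}\bigl(y+\widehat{\beta}^{[1]}(y)\bigr)-\beta^{[2]}\bigl(y+\widehat{\beta}^{[2]}(y)\bigr)\Bigr].
\]
The second bracket is written as an integral of $\partial\beta^{[2]}$ against $\Delta_{12}\widehat{\beta}$, producing a bounded operator whose size is controlled by $\|\beta^{[2]}\|_{s_0+1}^{q,\gamma,\mathtt m}\leqslant\varepsilon_0$, hence absorbable into the left-hand side by a Neumann argument. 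Applying the composition estimates of Lemma~\ref{lem funct prop} and the tame product law to both terms, together with \eqref{beta-hat and beta norm} to control $\|\widehat{\beta}^{[\ell]}\|_{s}^{q,\gamma,\mathtt m}$ by $\|\beta^{[\ell]}\|_{s}^{q,\gamma,\mathtt m}$, yields \eqref{Delta12 bh vs Delta12 b}. The main obstacle will be bookkeeping the loss of one derivative that appears in the mean-value step (hence the factor $\|\beta^{[\ell]}\|_{s+1}^{q,\gamma,\mathtt m}$ on the right-hand side), while ensuring that the small factor is $\|\beta\|_{s_0}$ rather than $\|\beta\|_{s_0+1}$ so that the smallness assumption \eqref{small beta lem} still suffices to close the Neumann inversion.
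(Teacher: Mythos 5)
Your proposal is correct and coincides with the paper's intended argument: the paper gives no proof here, merely citing the scalar case \cite[Lem. 2.34]{BM18} and asserting that the vectorial case follows ``up to slight modifications,'' and your componentwise decoupling of $y=x+\beta(\mu,\varphi,x)$ into $d^\prime$ independent scalar equations, together with the commuting factorization $\mathcal{B}=\mathcal{B}_1\circ\cdots\circ\mathcal{B}_{d^\prime}$ and the fixed-point/Neumann argument for (iii), is precisely the modification intended. Your closing remark is also on point: the smallness hypothesis is placed on the $2s_0$-norm exactly so that $\|\beta^{[2]}\|_{s_0+1}\leqslant\|\beta^{[2]}\|_{2s_0}\leqslant\varepsilon_0$ suffices to absorb the mean-value term.
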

	Next, we gather several results related to the action of the transformation \eqref{def symplctik CVAR} on Toeplitz  integral operators.
	\begin{lem}\label{lem CVAR kernel}
		Let $q\in\N$, $\m\in\N^*$ and $(\gamma,d,s_{0},s)$ satisfy \eqref{setting tau1 and tau2}-\eqref{init Sob cond}. Consider a smooth $\mathtt{m}$-fold preserving kernel
		$$K:(\mu,\varphi,\theta_1,\theta_2)\mapsto K(\mu,\varphi,\theta_1,\theta_2).$$
		Let $\beta_k: \mathcal{O}\times \T^{d+1}\to \T$, $k\in\{1,2\}$ be odd $\m$-fold symmetric functions and subject to the smallness condition
		\begin{equation}\label{assumption smallness beta lem}
			\max_{k\in\{1,2\}}\|\beta_k \|_{2s_0}^{q,\gamma,\mathtt{m}}\leqslant \varepsilon_0.
		\end{equation} 
		Consider the quasi-periodic change of variables
		$$\forall k\in\{1,2\},\quad \mathscr{B}_k\triangleq (1+\partial_{\theta}\beta_k)\mathcal{B}_k,\qquad\mathcal{B}_k\rho(\mu,\varphi,\theta)=\rho\big(\mu,\varphi,\theta+\beta_k(\mu,\varphi,\theta)\big),$$
		Then the following assertions hold true.
		\begin{enumerate}[label=(\roman*)]
			\item The operator $\mathscr{B}_1^{-1}\mathcal{T}_K\mathscr{B}_2$ is $\mathtt{m}$-fold  preserving  integral operator. Moreover, we have 
			\begin{equation}\label{e-odsBtB}
				\|\mathscr{B}_1^{-1}\mathcal{T}_K\mathscr{B}_2\|_{\textnormal{\tiny{I-D}},s}^{q,\gamma,\mathtt{m}}\lesssim\|K\|_{s}^{q,\gamma,\mathtt{m}}+\|K\|_{s_0}^{q,\gamma,\mathtt{m}}\,\max_{k\in\{1,2\}}\|\beta_{k}\|_{s+1}^{q,\gamma,\mathtt{m}}.
			\end{equation}
			and
			\begin{equation}\label{e-odsBtB-diff}
				\|\mathscr{B}_1^{-1}\mathcal{T}_K\mathscr{B}_2-\mathcal{T}_K\|_{\textnormal{\tiny{I-D}},s}^{q,\gamma,\mathtt{m}}\lesssim\|K\|_{s+1}^{q,\gamma,\mathtt{m}}\,\max_{k\in\{1,2\}}\|\beta_{k}\|_{s_0}^{q,\gamma,\mathtt{m}}+\|K\|_{s_0}^{q,\gamma,\mathtt{m}}\,\max_{k\in\{1,2\}}\|\beta_{k}\|_{s+1}^{q,\gamma,\mathtt{m}}.
			\end{equation}
			
			\item If $K$ is even in all the variables $(\varphi,\theta_1,\theta_2)$ (resp. odd), then $\mathscr{B}_1^{-1}\mathcal{T}_K\mathscr{B}_2$ is a reversibility preserving (resp.  reversible) integral operator.			
			\item Given smooth functionals  $r\in W^{q,\infty,\gamma}(\mathcal{O},H_{\mathtt{m}}^{s}\times H_{\mathtt{m}}^{s})\mapsto K(r), \beta_{k}(r)$, for $k\in \{1,2\}$.\\
			Consider $r^{[\ell]}=(r_{1}^{[\ell]},r_2^{[\ell]})\in W^{q,\infty,\gamma}(\mathcal{O},H^s_{\mathtt{m}}\times H_{\mathtt{m}}^{s})$, $\ell\in\{1,2\}$.  We denote
			$$\forall k\in\{1,2\},\quad f^{[k]}\triangleq f(r^{[k]})\qquad\textnormal{and}\qquad\Delta_{12}f\triangleq f^{[1]}-f^{[2]}.$$ 
			Assume that there exists $\varepsilon_0>0$ small enough such that 
			\begin{equation}\label{assumption smallness beta K lem}
				\max_{(k,\ell)\in\{1,2\}^2}\|\beta_{k}^{[\ell]}\|_{2s_0}^{q,\gamma,{\mathtt{m}}}+\max_{\ell\in\{1,2\}}\|K^{[\ell]}\|_{s_0+1}^{q,\gamma,{\mathtt{m}}}\leqslant\varepsilon_0.
			\end{equation}
			Then, the following estimate holds,
			\begin{align}\label{diff12BoxBtB}
				\|\Delta_{12}\mathscr{B}_{1}^{-1}\mathcal{T}_{K}&\mathscr{B}_{2}\|_{\textnormal{\tiny{I-D}},s}^{q,\gamma,\mathtt{m}}\lesssim\|\Delta_{12}K\|_{s}^{q,\gamma,{\mathtt{m}}}+\|\Delta_{12}K\|_{s_0}^{q,\gamma,{\mathtt{m}}}\max_{(k,\ell)=\{1,2\}^2}\|\beta_{k}^{[\ell]}\|_{s}^{q,\gamma,\mathtt{m}}\\
				&\quad+\Big(\max_{\ell\in\{1,2\}}\|K^{[\ell]}\|_{s+1}^{q,\gamma,\mathtt{m}}+\max_{(k,\ell)\in\{1,2\}^2}\|\beta_{k}^{[\ell]} \|_{s+1}^{q,\gamma,\mathtt{m}}\Big)\max_{k\in\{1,2\}}\|\Delta_{12}\beta_{k}\|_{s_0+1}^{q,\gamma,\mathtt{m}}\nonumber\\
				&\quad+ \max_{k\in\{1,2\}}\|\Delta_{12}\beta_{k}\|_{s+1}^{q,\gamma,\mathtt{m}}. \nonumber
			\end{align}
		\end{enumerate}
	\end{lem}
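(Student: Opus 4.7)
The plan is to first derive an explicit kernel formula for the conjugated operator $\mathscr{B}_1^{-1}\mathcal{T}_K\mathscr{B}_2$. Since $\mathscr{B}_k$ is the symplectic lift of a shift, an elementary computation using Lemma \ref{Compos1-lemm} shows that $\mathscr{B}_k^{-1}g(\mu,\varphi,\theta) = \bigl(1+\partial_\theta\widehat{\beta}_k(\mu,\varphi,\theta)\bigr)\,g\bigl(\mu,\varphi,\theta+\widehat{\beta}_k(\mu,\varphi,\theta)\bigr)$. Substituting this into $\mathcal{T}_K\mathscr{B}_2\rho$, performing the change of variables $\eta\mapsto \eta+\beta_2$ inside the integral, and then applying $\mathscr{B}_1^{-1}$, one obtains that $\mathscr{B}_1^{-1}\mathcal{T}_K\mathscr{B}_2=\mathcal{T}_{\widetilde K}$ is an integral operator whose kernel reads
$$\widetilde K(\mu,\varphi,\theta,\eta)\triangleq \bigl(1+\partial_\theta\widehat\beta_1(\mu,\varphi,\theta)\bigr)\,K\bigl(\mu,\varphi,\theta+\widehat\beta_1(\mu,\varphi,\theta),\,\eta+\widehat\beta_2(\mu,\varphi,\eta)\bigr).$$
Since $\beta_k$ is $\mathtt m$-fold symmetric in $\theta$, the uniqueness of the inverse diffeomorphism ensures that $\widehat\beta_k$ is also $\mathtt m$-fold symmetric, so $\widetilde K$ inherits the $\mathtt m$-fold preserving property from $K$, and the $\mathtt m$-fold property of $\mathcal{T}_{\widetilde K}$ then follows from Lemma \ref{lem sym--rev}.

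For the isotropic estimate \eqref{e-odsBtB}, I would apply Lemma \ref{lem sym--rev} to obtain $\|\mathcal{T}_{\widetilde K}\|_{\textnormal{\tiny{I-D}},s}^{q,\gamma,\mathtt m}\lesssim\|\widetilde K\|_s^{q,\gamma,\mathtt m}$ and then estimate $\|\widetilde K\|_s^{q,\gamma,\mathtt m}$ by combining the product law (Lemma \ref{lem funct prop}-(ii)), the composition law (Lemma \ref{lem funct prop}-(iii) applied in each of the two variables $\theta$ and $\eta$), and the bound \eqref{beta-hat and beta norm} relating $\widehat\beta_k$ to $\beta_k$. The multiplicative factor $1+\partial_\theta\widehat\beta_1$ accounts for the $\|\beta_1\|_{s+1}^{q,\gamma,\mathtt m}$ loss via one derivative, whereas composition with the shifts $\theta\mapsto\theta+\widehat\beta_1$ and $\eta\mapsto\eta+\widehat\beta_2$ is controlled by Lemma \ref{Compos1-lemm}-(ii). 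For the refinement \eqref{e-odsBtB-diff}, decompose $\widetilde K - K = \partial_\theta\widehat\beta_1\cdot K\circ\Phi + (K\circ\Phi - K)$, where $\Phi(\mu,\varphi,\theta,\eta)=(\mu,\varphi,\theta+\widehat\beta_1,\eta+\widehat\beta_2)$, and treat the second summand via the integral representation $K\circ\Phi-K=\int_0^1\tfrac{d}{dt}[K\circ\Phi_t]\,dt$, along a linear homotopy $\Phi_t$, which trades one factor of $\beta$ for one derivative on $K$.

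For the symmetry statement (ii), one first observes that if $\beta_k$ is odd in $(\varphi,\theta)$ then so is $\widehat\beta_k$: applying $(\varphi,\theta)\mapsto(-\varphi,-\theta)$ to the defining relation $y = \theta + \beta_k(\varphi,\theta)$ gives $-y = -\theta + \beta_k(-\varphi,-\theta) = -\theta - \beta_k(\varphi,\theta)$, whence by inversion $\widehat\beta_k(-\varphi,-y) = -\widehat\beta_k(\varphi,y)$. Consequently $\partial_\theta\widehat\beta_1$ is even in $(\varphi,\theta)$ and the shifts $\theta\mapsto\theta+\widehat\beta_1$, $\eta\mapsto\eta+\widehat\beta_2$ commute with the involution $\mathscr I_0$. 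A direct substitution in the kernel formula then shows that $\widetilde K$ inherits from $K$ its parity in $(\varphi,\theta,\eta)$, and Lemma \ref{lem sym--rev} yields the reversibility preserving (resp.\ reversible) character of $\mathcal{T}_{\widetilde K}$.

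Finally, for the difference estimate (iii), denoting by $\Phi^{[\ell]}$ and $K^{[\ell]}$ the data attached to $r^{[\ell]}$, I would split
\begin{align*}
\Delta_{12}\widetilde K &= \partial_\theta(\Delta_{12}\widehat\beta_1)\cdot K^{[1]}\circ\Phi^{[1]}+\bigl(1+\partial_\theta\widehat\beta_1^{[2]}\bigr)(\Delta_{12}K)\circ\Phi^{[1]}\\
&\quad+\bigl(1+\partial_\theta\widehat\beta_1^{[2]}\bigr)\bigl(K^{[2]}\circ\Phi^{[1]}-K^{[2]}\circ\Phi^{[2]}\bigr),
\end{align*}
and estimate each piece using the tame product and composition laws of Lemma \ref{lem funct prop}, converting every occurrence of $\Delta_{12}\widehat\beta_k$ back into $\Delta_{12}\beta_k$ via \eqref{Delta12 bh vs Delta12 b}, while expanding the last bracket through the interpolation identity $K^{[2]}\circ\Phi^{[1]}-K^{[2]}\circ\Phi^{[2]} = \int_0^1 \nabla K^{[2]}\circ\Phi_t\cdot\Delta_{12}\widehat\beta\,dt$ along the segment $\Phi_t\triangleq t\Phi^{[1]}+(1-t)\Phi^{[2]}$. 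The main obstacle is the bookkeeping: one must ensure that every top-order quantity $\|K\|_{s+1}$, $\|\beta_k\|_{s+1}$ is paired with a low-order small factor such as $\|\Delta_{12}\beta_k\|_{s_0+1}$, so as to recover exactly the three-term structure of \eqref{diff12BoxBtB} without producing spurious super-tame terms.
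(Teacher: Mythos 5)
Your proposal is correct and follows essentially the same route as the paper: the same explicit kernel $\bigl(1+\partial_{\theta}\widehat\beta_1\bigr)K\bigl(\theta+\widehat\beta_1,\eta+\widehat\beta_2\bigr)$ obtained after absorbing the Jacobian from the $\eta$-substitution, the same reduction of all operator bounds to kernel bounds via Lemma \ref{lem sym--rev} combined with the product/composition laws and \eqref{beta-hat and beta norm}, and the same three-term decomposition of $\Delta_{12}$ of the kernel with a Taylor-formula treatment of $K^{[2]}\circ\Phi^{[1]}-K^{[2]}\circ\Phi^{[2]}$ (the paper interpolates the two shifts one variable at a time rather than along a single segment, but this is an immaterial variant).
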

	\begin{proof}
		\textbf{(i)}  
		Straightforward computations lead to
		\begin{equation}\label{mathscrB1}
			\mathscr{B}_1^{-1}\rho(\mu,\varphi,y_1)=\Big(1+\partial_y\widehat{\beta}_1(\mu,\varphi,y_1)\Big) \rho\big(\mu,\varphi,y_1+\widehat{\beta}_1(\mu,\varphi,y_1)\big).
		\end{equation}
		Thus, the conjugation of the operator $\mathcal{T}_K$ writes
		\begin{align}\label{comp b-1tb}
			\mathscr{B}_1^{-1}\mathcal{T}_{K}\mathscr{B}_2\rho(\mu,\varphi,\theta_1)&=\int_{\mathbb{T}}\rho(\mu,\varphi,{\theta_2})\widehat{K}(\mu,\varphi,\theta_1,{\theta_2})d{\theta_2},
		\end{align}
		with
		$$\widehat{K}(\mu,\varphi,\theta_1,{\theta_2})\triangleq \big(1+\partial_{\theta_1}\widehat{\beta}_1(\mu,\varphi,\theta_1)\big)(\mathcal{B}^{-1} K)(\mu,\varphi,\theta_1,{\theta_2})$$
		and
		$$
		\mathcal{B}^{-1} K(\mu,\varphi,\theta_1,\theta_2)=K\big(\mu,\varphi,\theta_1+\widehat{\beta}_1(\mu,\varphi,\theta_1),{\theta_2}+\widehat{\beta}_2(\mu,\varphi,{\theta_2})\big).
		$$
		Using the product laws in Lemma \ref{lem funct prop}, Lemma \ref{Compos1-lemm} and \eqref{assumption smallness beta lem}, we get
		\begin{equation}\label{e-B1B2rho}
			\|\widehat{K}\|_{s}^{q,\gamma,\mathtt{m}}\lesssim\|K\|_{s}^{q,\gamma,\mathtt{m}}+\|K\|_{s_0}^{q,\gamma,\mathtt{m}}\max_{k\in\{1,2\}}\|\beta_k\|_{s+1}^{q,\gamma,\mathtt{m}}.
		\end{equation}
		Consequently, we obtain the estimate \eqref{e-odsBtB} by applying Lemma \ref{lem sym--rev}. As for the difference with the original operator, we can write
		$$\big(\mathscr{B}_{1}^{-1}\mathcal{T}_{K}\mathscr{B}_2-\mathcal{T}_{K}\big)\rho(\mu,\varphi,\theta_1)=\int_{\mathbb{T}}\rho(\mu,\varphi,{\theta_2})\widetilde{K}(\mu,\varphi,\theta_1,{\theta_2})d{\theta_2},$$
		with
		$$\widetilde{K}(\mu,\varphi,\theta_1,{\theta_2})\triangleq \partial_{\theta_1}\widehat{\beta}_1(\mu,\varphi,\theta_1)(\mathcal{B}^{-1}K)(\mu,\varphi,\theta_1,{\theta_2})+\big[\mathcal{B}^{-1}K-K\big](\mu,\varphi,\theta_1,{\theta_2}).$$
		Therefore, by the product laws in Lemma \ref{lem funct prop}, together with \eqref{e-spe comp B} and \eqref{assumption smallness beta lem}, we infer
		$$\|\widetilde{K}\|_{s}^{q,\gamma,\mathtt{m}}\lesssim\|K\|_{s+1}^{q,\gamma,\mathtt{m}}\max_{k\in\{1,2\}}\|\beta_k\|_{s_0}^{q,\gamma,\mathtt{m}}+\|K\|_{s_0}^{q,\gamma,\mathtt{m}}\max_{k\in\{1,2\}}\|\beta_k\|_{s+1}^{q,\gamma,\mathtt{m}}.$$
		Then, the estimate \eqref{e-odsBtB-diff} follows by applying Lemma \ref{lem sym--rev}.\\
		\noindent \textbf{(ii)} The symmetry properties follow immediately from Lemma \ref{lem sym--rev} and the symmetry assumptions.\\
		\noindent \textbf{(iii)} By definition and according to \eqref{comp b-1tb} we have
		\begin{align*}
			\Delta_{12}(\mathscr{B}_1^{-1}\mathcal{T}_K\mathscr{B}_{2})(\rho)(\mu,\varphi,\theta_1)&=\int_{\mathbb{T}}\rho(\mu,\varphi,{\theta_2})\mathbb{K}(\mu,\varphi,\theta_1,{\theta_2})d{\theta_2},
		\end{align*}
		with 
		\begin{align*}
			\mathbb{K}(\mu,\varphi,\theta_1,{\theta_2})&\triangleq \big(1+\partial_{\theta_1}\widehat{\beta}_1^{[1]}(\mu,\varphi,\theta_1)\big)\mathcal{B}_{[1]}^{-1}K^{[1]}\big(\mu,\varphi,\theta_1,{\theta_2}\big)\\ &\quad-\big(1+\partial_{\theta_1}\widehat{\beta}_1^{[2]}(\mu,\varphi,\theta_1)\big)\mathcal{B}_{[2]}^{-1}K^{[2]}\big(\mu,\varphi,\theta_1,{\theta_2}\big)
		\end{align*}	
		and
		$$
		\mathcal{B}^{-1}_{[\ell]} f(\mu,\varphi,y)=f\big(\mu,\varphi,\theta_1+\widehat{\beta}_1^{[\ell]}(\mu,\varphi,\theta_1),{\theta_2}+\widehat{\beta}_2^{[\ell]}(\mu,\varphi,{\theta_2})\big).
		$$
		This can also be written as
		\begin{align*}
			\mathbb{K}(\mu,\varphi,\theta_1,{\theta_2})&=\partial_{\theta_1}\Delta_{12}\widehat{\beta}_1(\mu,\varphi,\theta_1)\mathcal{B}_{[1]}^{-1}K^{[1]}\big(\mu,\varphi,\theta_1,{\theta_2}\big)\\
			&\quad+\big(1+\partial_{\theta_1}\widehat{\beta}_1^{[2]}(\mu,\varphi,\theta_1)\big)\mathcal{B}_{[1]}^{-1}(\Delta_{12}K)\big(\mu,\varphi,\theta_1,{\theta_2}\big)\\
			&\quad+\big(1+\partial_{\theta_1}\widehat{\beta}_1^{[2]}(\mu,\varphi,\theta_1)\big)\Big[\mathcal{B}_{[1]}^{-1}K^{[2]}\big(\mu,\varphi,\theta_1,{\theta_2}\big)
			-\mathcal{B}_{[2]}^{-1}K^{[2]}\big(\mu,\varphi,\theta_1,{\theta_2}\big)\Big].
		\end{align*}
		Then, Taylor Formula implies
		\begin{align*}
			\mathbb{K}(\mu,\varphi,\theta_1,{\theta_2})&=\partial_{\theta_1}\Delta_{12}\widehat{\beta}_1(\mu,\varphi,\theta_1)\mathcal{B}_{[1]}^{-1}K^{[1]}\big(\mu,\varphi,\theta_1,{\theta_2}\big)\\
			&\quad+\big(1+\partial_{\theta_1}\widehat{\beta}_1^{[2]}(\mu,\varphi,\theta_1)\big)\mathcal{B}_{[1]}^{-1}(\Delta_{12}K)\big(\mu,\varphi,\theta_1,{\theta_2}\big)\\
			&\quad+\big(1+\partial_{\theta_1}\widehat{\beta}_1^{[2]}(\mu,\varphi,\theta_1)\big)\Big[\Delta_{12}\widehat{\beta}_1(\mu,\varphi,\theta_1)\int_{0}^1\big(\mathcal{B}_{[1],[2]}^\tau(\partial_{\theta_1}K^{[2]})\big)\Big(\mu,\varphi,\theta_1,{\theta_2}\Big)d\tau\\
			&\quad+\Delta_{12}\widehat{\beta}_2(\mu,\varphi,{\theta_2})\int_{0}^1\big(\widetilde{\mathcal{B}}_{[1],[2]}^\tau(\partial_{{\theta_2}}K^{[2]})\big)\Big(\mu,\varphi,\theta_1,{\theta_2}\Big)d\tau\Big],
		\end{align*}
		where we have used the notations
		\begin{align*}
			\mathcal{B}_{[1],[2]}^\tau f(\mu,\varphi,\theta_1,{\theta_2})&=f\big(\mu,\varphi,\theta_1+\tau\widehat{\beta}_{1}^{[1]}(\mu,\varphi,\theta_1)+(1-\tau)\widehat{\beta}_{1}^{[2]}(\mu,\varphi,\theta_1),{\theta_2}+\widehat{\beta}_2^{[2]}(\mu,\varphi,{\theta_2})\big),\\
			\widetilde{\mathcal{B}}_{[1],[2]}^\tau f(\mu,\varphi,\theta_1,{\theta_2})&=f\big(\mu,\varphi,\theta_1+\widehat{\beta}_1^{[2]}(\mu,\varphi,\theta_1),{\theta_2}+\tau\widehat{\beta}_{2}^{[1]}(\mu,\varphi,{\theta_2})+(1-\tau)\widehat{\beta}_{2}^{[2]}(\mu,\varphi,{\theta_2})\big).
		\end{align*}
		By product laws, \eqref{Delta12 bh vs Delta12 b}, \eqref{e-B1B2rho}, \eqref{assumption smallness beta K lem}, we obtain
		\begin{align*}
			\|\mathbb{K}\|_{s}^{q,\gamma,\mathtt{m}}&\lesssim\|\Delta_{12}K\|_{s}^{q,\gamma,\mathtt{m}}+\|\Delta_{12}K\|_{s_0}^{q,\gamma,\mathtt{m}}\max_{(k,\ell)\in\{1,2\}^2}\|\beta_{k}^{[\ell]}\|_{s}^{q,\gamma,\mathtt{m}}+\max_{k\in\{1,2\}}\|\Delta_{12}\beta_{k}\|_{s+1}^{q,\gamma,\mathtt{m}}\\
			&\quad+\left(\max_{\ell\in\{1,2\}}\|K^{[\ell]}\|_{s+1}^{q,\gamma,\mathtt{m}}+\max_{(k,\ell)\in\{1,2\}^2}\|\beta_{k}^{[\ell]}\|_{s+1}^{q,\gamma,\mathtt{m}}\right)\max_{k\in\{1,2\}}\|\Delta_{12}\beta_k\|_{s_0+1}^{q,\gamma,\mathtt{m}}.
		\end{align*}
		Finally, combining this estimate with Lemma \ref{lem sym--rev} we conclude \eqref{diff12BoxBtB}. This achieves the proof of Lemma~\ref{lem CVAR kernel}.
	\end{proof}
	Now, we recall the  following result stated in  \cite[Lemma 2.36]{BM18} and dealing with  the conjugation of the Hilbert transform with the quasi-periodic change of coordinates introduced in \eqref{mathscrB1}. Here, and along the paper, $ {\mathcal H}$ denotes the standard Hilbert transform on the periodic setting acting only on the variable $\theta\in\T,$ namely,
	\begin{equation}\label{def Hilbert}
		\mathcal{H}(1)=0,\qquad\forall j\in\mathbb{Z}^*,\quad\mathcal{H}\mathbf{e}_j=-\ii\, \mathtt{sgn}(j)\mathbf{e}_j,
	\end{equation}
	where $\mathtt{sgn}$ denotes the usual sign function.
	\begin{lem} \label{lemma:conjug-Hilbert}
		Let $q\in\N$, $\m\in\N^*$, $(\gamma,d,s_{0},s)$ satisfy \eqref{setting tau1 and tau2}-\eqref{init Sob cond} and $\beta\in W^{q,\infty,\gamma}(\mathcal{O},H_{\mathtt{m}}^{\infty})$ odd in the variables $(\varphi,\theta)$. There exists $\varepsilon_0> 0$ such that, if $\|\beta\|_{2s_0 }^{q,\gamma,\mathtt{m}} \leqslant \varepsilon_0$, then
		$$
		( {\mathscr B}^{-1} {\cal H} {\mathscr B} - {\cal H}) \rho (\mu, \varphi, \theta) 
		= \int_{\mathbb{T}}  \,K(\mu, \varphi, \theta,\eta) \rho(\mu, \varphi, \eta)\,d\eta,
		$$
		defines a reversible and $\mathtt{m}$-fold preserving integral operator with the estimates : for all $s\geqslant s_0,$  
		$$\|K\|_{s}^{q,\gamma,\mathtt{m}}\leqslant C(s,q)\|\beta\|_{s+2}^{q,\gamma,\mathtt{m}},\qquad\|\mathscr{B}^{-1}\mathcal{H}\mathscr{B}-\mathcal{H}\|_{\textnormal{\tiny{I-D}},s}^{q,\gamma,\mathtt{m}}\leqslant C(s,q)\|\beta \|_{s+2}^{q,\gamma,\mathtt{m}}$$
		and
		$$\|\Delta_{12}(\mathscr{B}^{-1}\mathcal{H}\mathscr{B}-\mathcal{H})\|_{\textnormal{\tiny{I-D}},s}^{q,\gamma,\mathtt{m}}\leqslant C(s,q)\Big[\|\Delta_{12}\beta\|_{s+2}^{q,\gamma,\mathtt{m}}+\|\Delta_{12}\beta\|_{s_0+1}^{q,\gamma,\mathtt{m}}\max_{\ell\in\{1,2\}}\|\beta^{[\ell]}\|_{s+3}^{q,\gamma,\mathtt{m}}\Big].$$
	\end{lem}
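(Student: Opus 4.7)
The plan is to write $\mathcal{H}$ as a principal value integral with cotangent kernel and then explicitly compute the conjugation by the symplectic change of coordinates. Recall that, up to a constant, $\mathcal{H}\rho(\mu,\varphi,\theta)=\mathrm{p.v.}\int_{\mathbb{T}}\tfrac{1}{2}\cot\bigl(\tfrac{\theta-\eta}{2}\bigr)\rho(\mu,\varphi,\eta)\,d\eta$. Using the change of variables $\eta=\sigma+\widehat{\beta}(\mu,\varphi,\sigma)$ in the inner integral of $\mathscr{B}^{-1}\mathcal{H}\mathscr{B}\rho$ and the expression \eqref{mathscrB1} for $\mathscr{B}^{-1}$, I would obtain an integral representation
\[
\mathscr{B}^{-1}\mathcal{H}\mathscr{B}\rho(\mu,\varphi,\theta)=\mathrm{p.v.}\int_{\mathbb{T}}\tfrac{1}{2}\cot\!\left(\tfrac{\theta+\beta(\mu,\varphi,\theta)-\eta-\beta(\mu,\varphi,\eta)}{2}\right)\bigl(1+\partial_\eta\beta(\mu,\varphi,\eta)\bigr)\rho(\mu,\varphi,\eta)\,d\eta.
\]

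The crucial algebraic step is to factor out $\theta-\eta$ from the argument of the cotangent via
\[
\theta+\beta(\theta)-\eta-\beta(\eta)=(\theta-\eta)\,g(\mu,\varphi,\theta,\eta),\qquad g(\mu,\varphi,\theta,\eta)\triangleq 1+\int_0^1\partial_\theta\beta\bigl(\mu,\varphi,\eta+\tau(\theta-\eta)\bigr)d\tau.
\]
For $\|\beta\|_{2s_0}^{q,\gamma,\mathtt{m}}\leqslant\varepsilon_0$ small, $g$ stays close to $1$, so $\cot\bigl(\tfrac{(\theta-\eta)g}{2}\bigr)$ differs from $\cot\bigl(\tfrac{\theta-\eta}{2}\bigr)$ by a smooth, non-singular function of $(\theta,\eta,\beta)$: using $\cot(A)-\cot(B)=\tfrac{\sin(B-A)}{\sin(A)\sin(B)}$ with $A=\tfrac{(\theta-\eta)g}{2}$, $B=\tfrac{\theta-\eta}{2}$, the numerator vanishes to order one in $(\theta-\eta)$, cancelling one factor in $\sin(B)$, and one applies Lemma \ref{lem triche} to handle the remaining $\sin$-quotient. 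Combined with the Jacobian $1+\partial_\eta\beta$, this shows that $\mathscr{B}^{-1}\mathcal{H}\mathscr{B}-\mathcal{H}$ is an integral operator with a smooth kernel
\[
K(\mu,\varphi,\theta,\eta)=F\!\bigl(\mu,\varphi,\theta,\eta,\beta(\theta),\beta(\eta),\partial_\theta\beta(\theta),\partial_\eta\beta(\eta)\bigr),
\]
where $F$ is a smooth function of its arguments (for $\beta$ small).

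For the bounds, I apply the composition law (Lemma \ref{lem funct prop}(iii)) together with Lemma \ref{lem triche} (responsible for the $+2$ loss of derivatives, one from the $\sin$-quotient and one from the factor $\partial\beta$). This yields $\|K\|_{s}^{q,\gamma,\mathtt{m}}\lesssim\|\beta\|_{s+2}^{q,\gamma,\mathtt{m}}$, and Lemma \ref{lem sym--rev} then upgrades this into the isotropic operator norm bound $\|\mathscr{B}^{-1}\mathcal{H}\mathscr{B}-\mathcal{H}\|_{\textnormal{\tiny{I-D}},s}^{q,\gamma,\mathtt{m}}\lesssim\|\beta\|_{s+2}^{q,\gamma,\mathtt{m}}$. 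The symmetries (reversibility and $\mathtt{m}$-fold preservation) follow from the oddness of $\beta$ and of $\cot$: the kernel $K$ is odd in $(\varphi,\theta,\eta)$ and $\mathtt{m}$-fold preserving in $(\theta,\eta)$, so Lemma \ref{lem sym--rev} applies.

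For the difference estimate, I would write $\Delta_{12}(\mathscr{B}^{-1}\mathcal{H}\mathscr{B}-\mathcal{H})=\mathcal{T}_{\Delta_{12}K}$ and expand $\Delta_{12}K$ via the mean value theorem as a sum of terms involving $\Delta_{12}\beta$, $\Delta_{12}\partial\beta$, and $\Delta_{12}\widehat{\beta}$, each multiplied by derivatives of $F$ evaluated at convex combinations of $\beta^{[1]},\beta^{[2]}$. Product laws (Lemma \ref{lem funct prop}(ii)) and the bound \eqref{Delta12 bh vs Delta12 b} then produce the stated estimate. The main obstacle is the algebraic manipulation of the cotangent kernel — extracting the leading singularity cleanly so that the residual kernel is genuinely smooth and its Sobolev norm can be controlled via standard composition estimates; once this desingularization is in hand, the remaining steps are routine applications of the lemmas already established.
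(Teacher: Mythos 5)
Your overall strategy coincides with the paper's (cotangent representation of $\mathcal{H}$, explicit conjugation, desingularization of the kernel near the diagonal, then Lemma \ref{lem triche}, the composition laws and Lemma \ref{lem sym--rev}), but the step you yourself single out as the main obstacle — the desingularization — is carried out incorrectly, and this is a genuine gap. With $A=\tfrac{(\theta-\eta)g}{2}$ and $B=\tfrac{\theta-\eta}{2}$ one has $B-A=-\tfrac{\beta(\theta)-\beta(\eta)}{2}$, so in the identity $\cot(A)-\cot(B)=\tfrac{\sin(B-A)}{\sin(A)\sin(B)}$ the numerator vanishes to order one in $\theta-\eta$ while the denominator vanishes to order two; after the cancellation you invoke, there remains a factor $\tfrac{1}{\sin(A)}$ multiplied by a quotient that tends to $-\partial_\theta\beta(\eta)\neq 0$ on the diagonal. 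Hence $\cot(A)-\cot(B)$ is \emph{not} a smooth kernel: it carries a simple pole $\sim -\tfrac{2\,\partial_\theta\beta(\eta)}{\theta-\eta}$ at $\theta=\eta$. This pole is only cancelled by the Jacobian contribution that you wrote into your integral representation but then dropped from the analysis: the kernel of $\mathscr{B}^{-1}\mathcal{H}\mathscr{B}-\mathcal{H}$ is $\big(1+\partial_\theta\widehat{\beta}(\theta)\big)\cot(A)-\cot(B)$, and it is the cross term $\partial_\theta\widehat{\beta}(\theta)\cot(A)$ that exactly compensates the residual singularity of $\cot(A)-\cot(B)$. As written, your argument proves smoothness of a kernel that is in fact singular, and never addresses the remaining singular piece.

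The paper resolves precisely this point with an exact-derivative identity: the full kernel, Jacobian included, equals $2\partial_{\theta}\big[\log\big(\sin(A)/\sin(B)\big)\big]$, so the cancellation between the cotangent difference and the Jacobian is built in algebraically, and the regularity reduces to estimating $\log(1+g)$ with $g$ containing the sine quotient handled by Lemma \ref{lem triche} (one derivative lost there, one more from the outer $\partial_\theta$, giving the stated $\|\beta\|_{s+2}^{q,\gamma,\mathtt{m}}$). To repair your proof you would either need to reproduce this identity, or explicitly combine $[\cot(A)-\cot(B)]$ with the term $\partial_\theta\widehat{\beta}(\theta)\cot(A)$ and verify the cancellation of the diagonal poles by hand before applying Lemma \ref{lem triche}. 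A secondary, more cosmetic issue: the conjugated kernel involves the inverse diffeomorphism $\widehat{\beta}$ of \eqref{mathscrB1} at both points, with the Jacobian $\big(1+\partial_\theta\widehat{\beta}(\theta)\big)$ attached to the outer variable $\theta$, not $\big(1+\partial_\eta\beta(\eta)\big)$ at $\eta$ as in your display; since $\beta$ and $\widehat{\beta}$ obey the same bounds by \eqref{beta-hat and beta norm} this does not change the estimates, but the correct placement of the Jacobian is exactly what makes the log-derivative identity work. The symmetry discussion and the $\Delta_{12}$ estimate via Taylor expansion are in line with the paper and would go through once the kernel is correctly desingularized.
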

	\begin{proof}
		We shall use the following classical integral representation of the Hilbert transform
		$$\mathcal{H}(\rho)(\theta)=\int_{\mathbb{T}}\rho(\eta)\cot\left(\frac{\theta-\eta}{2}\right)d\eta,$$
		where this integral is understood in the principal value sense. Therefore, we have
		$$K(\mu,\varphi,\theta,\eta)=\big(1+\partial_{\theta}\widehat{\beta}(\mu,\varphi,\theta)\big)\cot\left(\frac{\theta-\eta+\widehat{\beta}(\mu,\varphi,\theta)-\widehat{\beta}(\mu,\varphi,\eta)}{2}\right)-\cot\left(\frac{\theta-\eta}{2}\right).$$
		One can easily check that
		$$K(\mu,\varphi,\theta,\eta)=2\partial_{\theta}\left[\log\left(\frac{\sin\left(\frac{\theta-\eta+\widehat{\beta}(\mu,\varphi,\theta)-\widehat{\beta}(\mu,\varphi,\eta)}{2}\right)}{\sin\left(\frac{\theta-\eta}{2}\right)}\right)\right].$$
		This can also be written as
		$$K(\mu,\varphi,\theta,\eta)=2\partial_{\theta}\Big[\log\big(1+g(\mu,\varphi,\theta,\eta)\big)\Big],$$
		where
		$$g(\mu,\varphi,\theta,\eta)=\cos\left(\tfrac{\widehat{\beta}(\mu,\varphi,\theta)-\widehat{\beta}(\mu,\varphi,\eta)}{2}\right)-1+\cos\left(\tfrac{\theta-\eta}{2}\right)\frac{\sin\left(\frac{\widehat{\beta}(\mu,\varphi,\theta)-\widehat{\beta}(\mu,\varphi,\eta)}{2}\right)}{\sin\left(\frac{\theta-\eta}{2}\right)}\cdot$$
		The symmetry assumptions on $\beta$ (and thus $\widehat{\beta}$) implies
			$$g\big(\mu,\varphi,\theta+\tfrac{2\pi}{\mathtt{m}},\eta+\tfrac{2\pi}{\mathtt{m}}\big)=g(\mu,\varphi,\theta,\eta)=g(\mu,-\varphi,-\theta,-\eta),$$
			that is
			$$K\big(\mu,\varphi,\theta+\tfrac{2\pi}{\mathtt{m}},\eta+\tfrac{2\pi}{\mathtt{m}}\big)=K(\mu,\varphi,\theta,\eta)=-K(\mu,-\varphi,-\theta,-\eta),$$
			The Lemma \ref{lem sym--rev} implies that $\mathscr{B}^{-1}\mathcal{H}\mathscr{B}-\mathcal{H}$ is a reversible and $\mathtt{m}$-fold preserving integral operator.
		Using composition laws in Lemma \ref{lem funct prop}, Lemma \ref{lem triche} and \eqref{beta-hat and beta norm}, we get
		$$\|g\|_{s}^{q,\gamma,\mathtt{m}}\lesssim\|\beta\|_{s+1}^{q,\gamma,\mathtt{m}}.$$
		Hence, still by composition laws, we infer
		$$\|K\|_{s}^{q,\gamma,\mathtt{m}}\lesssim\|\beta\|_{s+2}^{q,\gamma,\mathtt{m}}$$
		and we conclude by applying Lemma \ref{lem sym--rev}. As for the difference, we have
		$$\|\Delta_{12}K\|_{s}^{q,\gamma,\mathtt{m}}\lesssim\|\Delta_{12}g\|_{s+1}^{q,\gamma,\mathtt{m}}.$$
		We set
		$$h(\mu,\varphi,\theta,\eta)\triangleq \frac{\widehat{\beta}(\mu,\varphi,\theta)-\widehat{\beta}(\mu,\varphi,\eta)}{2}\cdot$$
		Then, using Taylor formula, we can write
		\begin{align*}
			\Delta_{12}g(\mu,\varphi,\theta,\eta)=&-\Delta_{12}h(\mu,\varphi,\theta,\eta)\int_{0}^{1}\sin\big(h^{[2]}(\mu,\varphi,\theta,\eta)+t\Delta_{12}h(\mu,\varphi,\theta,\eta)\big)dt\\
			&+\cos\left(\tfrac{\theta-\eta}{2}\right)\frac{\Delta_{12}h(\mu,\varphi,\theta,\eta)}{\sin\left(\frac{\theta-\eta}{2}\right)}\int_{0}^{1}\cos\big(h^{[2]}(\mu,\varphi,\theta,\eta)+t\Delta_{12}h(\mu,\varphi,\theta,\eta)\big)dt.
		\end{align*}
		From the identity $\Delta_{12}h(\mu,\varphi,\theta,\eta)=\Delta_{12}\widehat{\beta}(\mu,\varphi,\theta)-\Delta_{12}\widehat{\beta}(\mu,\varphi,\eta)$ together with the  product/composition laws combined with Lemma \ref{lem triche} and \eqref{Delta12 bh vs Delta12 b}, we get
		$$\|\Delta_{12}g\|_{s}^{q,\gamma,\mathtt{m}}\lesssim\|\Delta_{12}\beta\|_{s+1}^{q,\gamma,\mathtt{m}}+\|\Delta_{12}\beta\|_{s_0+1}^{q,\gamma,\mathtt{m}}\max_{\ell\in\{1,2\}}\|\beta^{[\ell]}\|_{s+2}^{q,\gamma,\mathtt{m}}.$$
		Again we conclude by invoking Lemma \ref{lem sym--rev}.
	\end{proof}
	The following lemma deals with the kernel structure of iterative operators that will be useful later.
	\begin{lem} \label{iter-kerns}
		Let $q\in\N$, $\m\in\N^*$, $n\in \N^*$  and $(\gamma,d,s_{0},s)$ satisfy \eqref{setting tau1 and tau2}-\eqref{init Sob cond} and consider a  family of $\mathtt{m}$-fold preserving kernel operators $(\mathcal{T}_{K_i})_{i=1}^n$ as in \eqref{Top-op1}. Then there exists a kernel $K$ such that 
		$$
		\prod_{i=1}^n\mathcal{T}_{K_i}=\mathcal{T}_{K},\qquad\textnormal{with}\qquad 
		\|K\|_{s}^{q,\gamma,\mathtt{m}}\leqslant C\sum_{i=1}^n\|K_i\|_{s}^{q,\gamma,\mathtt{m}}\prod_{j\neq i}\|K_{j}\|_{s_0}^{q,\gamma,\mathtt{m}}.
		$$
		In addition, if for some $i_0$ we have $K_{i_0}(\varphi,\theta,\eta)=f(\varphi,\theta)\delta({\theta-\eta}),$ with $f\big(\varphi,\theta+\tfrac{2\pi}{\mathtt{m}}\big)=f(\varphi,\theta)$, then
		$$
		\|K\|_{s}^{q,\gamma,\mathtt{m}}\leqslant C\|f\|_{s}^{q,\gamma,\mathtt{m}}\prod_{i\neq i_0}\|K_{i}\|_{s_0}^{q,\gamma,\mathtt{m}}+C\|f\|_{s_0}^{q,\gamma,\mathtt{m}}\sum_{i=1,\atop  i\neq i_0}^n\|K_i\|_{s}^{q,\gamma,\mathtt{m}}\prod_{j\neq i, i_0}\|K_{j}\|_{s_0}^{q,\gamma,\mathtt{m}}.
		$$
	\end{lem}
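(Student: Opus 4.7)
The plan is to compute the composite kernel explicitly and then apply standard tame product estimates. By Fubini, for two integral operators,
$$\big(\mathcal{T}_{K_1}\mathcal{T}_{K_2}\rho\big)(\mu,\varphi,\theta) = \int_{\mathbb{T}}K(\mu,\varphi,\theta,\eta)\rho(\mu,\varphi,\eta)\,d\eta,$$
where $K(\mu,\varphi,\theta,\eta) \triangleq \int_\T K_1(\mu,\varphi,\theta,\tau)K_2(\mu,\varphi,\tau,\eta)\,d\tau$. Iterating yields
$$K(\mu,\varphi,\theta,\eta) = \int_{\T^{n-1}} \prod_{i=1}^n K_i(\mu,\varphi,\tau_{i-1},\tau_i)\,d\tau_1\cdots d\tau_{n-1},\qquad \tau_0 \triangleq \theta,\ \tau_n \triangleq \eta.$$
The $\mathtt{m}$-fold property of $K$ is inherited from that of each $K_i$: a simultaneous shift of $\theta,\tau_1,\ldots,\tau_{n-1},\eta$ by $\tfrac{2\pi}{\mathtt{m}}$ leaves the integrand invariant, and translation invariance of the Lebesgue measure on $\T^{n-1}$ reabsorbs the shifts of the intermediate variables into the integration.

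For the first bound, I apply Minkowski's inequality (valid since $H_{\mathtt{m}}^{s}$ is a Banach space) to move the norm $\|\cdot\|_s^{q,\gamma,\mathtt{m}}$ inside the integral over $\T^{n-1}$, then estimate the integrand by iterating the tame product law of Lemma~\ref{lem funct prop}-(ii). A straightforward induction on the number of factors shows that
$$\Big\|\prod_{i=1}^n K_i(\cdot,\tau_{i-1},\tau_i)\Big\|_s^{q,\gamma,\mathtt{m}} \lesssim \sum_{k=1}^n \|K_k(\cdot,\tau_{k-1},\tau_k)\|_s^{q,\gamma,\mathtt{m}}\prod_{j\neq k}\|K_j(\cdot,\tau_{j-1},\tau_j)\|_{s_0}^{q,\gamma,\mathtt{m}},$$
where each factor $K_i(\cdot,\tau_{i-1},\tau_i)$ is viewed as an element of $H^s_{\mathtt{m}}(\T^{d+2})$ trivially extended in the missing variable. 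Since each $\tau_i\in\T$ ranges over a compact set of finite measure, integrating this estimate preserves the bound up to a constant, yielding the first inequality.

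For the second bound, the hypothesis $K_{i_0}(\varphi,\theta,\eta) = f(\varphi,\theta)\delta(\theta-\eta)$ means $\mathcal{T}_{K_{i_0}}$ is simply multiplication by $f$. In the iterated kernel formula, integration against this delta forces $\tau_{i_0} = \tau_{i_0-1}$, producing
$$K(\mu,\varphi,\theta,\eta) = \int_{\T^{n-2}} \Bigg(\prod_{i<i_0}K_i(\cdot,\tau_{i-1},\tau_i)\Bigg)\, f(\cdot,\tau_{i_0-1})\,\Bigg(\prod_{i>i_0}K_i(\cdot,\tau_{i-1},\tau_i)\Bigg)\prod_{i\neq i_0}d\tau_i.$$
Reapplying Minkowski and the iterated tame product, this time to the product of the scalar multiplier $f$ with the $n-1$ remaining kernels $(K_i)_{i\neq i_0}$, and distributing the high-regularity slot $\|\cdot\|_s^{q,\gamma,\mathtt{m}}$ either onto $f$ or onto one of the $K_i$'s with $i\neq i_0$, produces exactly the two terms of the second estimate. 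No genuine obstacle arises; the argument is essentially combinatorial, and the main care lies in the book-keeping of which integration variable belongs to which kernel factor in the iterated integrand.
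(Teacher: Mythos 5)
Your proof follows essentially the same route as the paper's: write the composed kernel as an iterated integral over intermediate variables, inherit the $\mathtt{m}$-fold symmetry by a simultaneous shift absorbed by translation invariance of the measure, reduce the delta case to dropping one integration variable, and conclude with the tame product laws of Lemma~\ref{lem funct prop}. The one point worth tightening is the Minkowski-then-product order: taking the $H^s$ norm in $(\varphi,\theta,\eta)$ of each factor at frozen intermediate $\tau_i$'s implicitly uses a trace (a $\sup_{\tau}$ of a restricted Sobolev norm), which costs half a derivative; it is cleaner to apply the product law to $\prod_i K_i$ viewed jointly in $H^s_{\mathtt{m}}(\T^{d+n+1})$ and then note that integrating out $\tau_1,\dots,\tau_{n-1}$ is a contraction — which is what the paper's one-line appeal to the product law amounts to.
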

	\begin{proof}
		The kernel $K$ is explicit and takes the form,
		$$
		K(\varphi,\theta,\eta)=\bigintsss_{\T^{n-1}}\prod_{i=1}^{n}K_i(\varphi,\eta_{i-1},\eta_{i}) \prod_{i=1}^{n-1}d\eta_i,
		$$
		with the convention $\eta_0=\theta$ and $\eta_{n}=\eta.$ The $\mathtt{m}$-fold preserving property of $K$ is inherited from the one of the $K_i$. Thus, to get the first result it suffices to use the products law in Lemma  \ref{lem funct prop}. In the second case, the kernel takes the form
		$$
		K(\varphi,\theta,\eta)=\bigintsss_{\T^{n-2}}f(\varphi,\theta,\eta_{i_0})K_{i_0-1}(\varphi,\eta_{i_0-2},\eta_{i_0}) \prod_{i=1\atop i\neq i_0,i_0-1}^{n}K_i(\varphi,\eta_{i-1},\eta_{i}) \prod_{i=1\atop i\neq i_0-1}^{n-1}d\eta_i
		$$
		and the desired estimate follows once again from the products law detailed in Lemma  \ref{lem funct prop}.
	\end{proof}
	
	\subsubsection{Matrix operators}\label{sec matrix op}
	For further purposes related to the  reduction of  the remainder in the transport linear parts subject to a vectorial structure, we need to introduce $2\times2$ matrices of scalar operators taking the form
	\begin{align}\label{Op-Vec1}
		\mathbf{T}=\begin{pmatrix}
			T_1 & T_3\\
			T_4 & T_2
		\end{pmatrix},
	\end{align}
	acting on {the product Hilbert space}  $\mathbf{H}^{s}_{\mathtt{m}}(\mathbb{T}^{d+1},\mathbb{C})$, defined in \eqref{def:sob-product} . 
	Notice that we shall restrict our discussion to the case where all the $T_i:\mathcal{O}\rightarrow\mathcal{L}\big(H_{\m}^s(\mathbb{T}^{d+1},\mathbb{C})\big)$  are $\mathtt{m}$-fold preserving Toeplitz kernel operators as in \eqref{Top-op1}. The matrix operator $\mathbf{T}$ is said to be real (resp. $\mathtt{m}$-fold preserving, reversible, reversibility-preserving)  if all the entries $T_i$ enjoy  this property.
	The diagonal part $\lfloor\mathbf{T}\rfloor$ of   $\mathbf{T}$ is defined as follows, 
	\begin{equation}\label{def diag-diag}
		\lfloor\mathbf{T}\rfloor\triangleq\begin{pmatrix}
			\lfloor T_1\rfloor & 0\\
			0 & \lfloor T_2\rfloor
		\end{pmatrix},
	\end{equation}
	where for any scalar operator $T$, the notation $\lfloor T\rfloor$ is its diagonal part defined by
	\begin{equation}\label{def diag opi}
		\forall(l_{0},j_{0})\in\mathbb{Z}^{d}\times\mathbb{Z}_{\mathtt{m}},\quad\lfloor T\rfloor {\bf e}_{l_0,j_0}\triangleq T_{l_0,j_0}^{l_0,j_0}{\bf e}_{l_0,j_0}=\big\langle T{\bf e}_{l_0,j_0}, {\bf e}_{l_0,j_0}\big\rangle_{L^2(\T^{d+1},\mathbb{C})}\,{\bf e}_{l_0,j_0}.
	\end{equation}
	The next goal is to equip the class of matrix operators with the following hybrid norm
	\begin{equation}\label{hyb nor}
		\interleave {\bf T}\interleave_{s}^{q,\gamma,\mathtt{m}}\triangleq \|T_1\|_{\textnormal{\tiny{O-d}},s}^{q,\gamma,\mathtt{m}}+\|T_2\|_{\textnormal{\tiny{O-d}},s}^{q,\gamma,\mathtt{m}}+\|T_3\|_{\textnormal{\tiny{I-D}},s}^{q,\gamma,\mathtt{m}}+\|T_4\|_{\textnormal{\tiny{I-D}},s}^{q,\gamma,\mathtt{m}}.
	\end{equation}
	The choice of this norm will be motivated later in the remainder reduction performed in Section \ref{Reduction-Remaind}. Actually, the off-diagonal norm used to measure  the diagonal terms $T_1$ and $T_2$ is compatible with the scalar case as in the papers \cite{BHM21,HHM21,HR21-1,HR21,R22}. However the isotropic norm used to measure the anti-diagonal terms $T_3$ and $T_4$ is compatible with the smoothing effects of the operators and it is introduced to remedy to a new space resonance phenomenon in the second order Melnikov condition due to the interaction between the diagonal eigenvalues. The cut-off projectors $({\bf P_N})_ {N\in\mathbb{N}}$ are defined as follows 
	\begin{align}\label{proj mat}
		{\bf P_N}\mathbf{T}\triangleq\begin{pmatrix}
			P_N^1T_1 & P_N^2T_3\vspace{0.1cm}\\
			P_N^2T_4 & P_N^1T_2
		\end{pmatrix}\qquad\hbox{and}\qquad{\bf P_N^\perp}\mathbf{T}\triangleq\begin{pmatrix}
			P_N^{1,\perp}T_1 & P_N^{2,\perp}T_3\vspace{0.1cm}\\
			P_N^{2,\perp}T_4& P_N^{1,\perp}T_2
		\end{pmatrix},
	\end{align}
	where $P_N^1$ is defined in \eqref{definition of projections for operators} and $P_N^2$ in \eqref{definition of projections for operators2}. We shall prove the following result.
	\begin{cor}\label{cor-hyb-nor}
		Let $q\in\N$, $\m\in\N^*$, $(\gamma,d,s_{0},s)$ satisfy \eqref{setting tau1 and tau2}-\eqref{init Sob cond} and ${\bf T}$, ${\bf S}$ two matrix operators as in \eqref{Op-Vec1}, then the following assertions hold true.
		\begin{enumerate}[label=(\roman*)]
			\item Projector property : for any $\mathtt{t}\geqslant 0$ 
			$$\interleave{\bf P_N T }\interleave_{s+\mathtt{t}}^{q,\gamma,\mathtt{m}}\leqslant N^{\mathtt{t}}\interleave{\bf T}\interleave_{s}^{q,\gamma,\mathtt{m}}\qquad\textnormal{and}\qquad\interleave{\bf P_N^\perp  T }\interleave_{s}^{q,\gamma,\mathtt{m}}\leqslant N^{-\mathtt{t}}\interleave{\bf T}\interleave_{s+\mathtt{t}}^{q,\gamma,\mathtt{m}}.$$
			\item Composition law :
			$$\interleave {\bf T S}\interleave_{s}^{q,\gamma,\mathtt{m}}\lesssim \interleave {\bf T}\interleave_{s}^{q,\gamma,\mathtt{m}}\interleave {\bf S}\interleave_{s_0}^{q,\gamma,\mathtt{m}}+\interleave {\bf T}\interleave_{s_0}^{q,\gamma,\mathtt{m}}\interleave {\bf S}\interleave_{s}^{q,\gamma,\mathtt{m}}.$$
			\item Link with the operator norm : for $\rho=(\rho_1,\rho_{2})\in\mathbf{H}_{\mathtt{m}}^{s},$
			$$\|\mathbf{T}\rho\|_{s}^{q,\gamma,\mathtt{m}}\lesssim\interleave\mathbf{T}\interleave_{s_0}^{q,\gamma,\mathtt{m}}\|\rho\|_{s}^{q,\gamma,\mathtt{m}}+\interleave\mathbf{T}\interleave_{s}^{q,\gamma,\mathtt{m}}\|\rho\|_{s_0}^{q,\gamma,\mathtt{m}}.$$
			In particular,
			$$\|\mathbf{T}\rho\|_{s}^{q,\gamma,\mathtt{m}}\lesssim\interleave\mathbf{T}\interleave_{s}^{q,\gamma,\mathtt{m}}\|\rho\|_{s}^{q,\gamma,\mathtt{m}}.$$
		\end{enumerate}
	\end{cor}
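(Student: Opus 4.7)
The proof will reduce all three assertions of Corollary \ref{cor-hyb-nor} to the corresponding scalar statements of Lemma \ref{properties of Toeplitz in time operators}, exploiting the fact that the hybrid norm $\interleave\cdot\interleave_s^{q,\gamma,\mathtt{m}}$ is nothing but a finite sum of off-diagonal and isotropic norms on the entries.

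For assertion (i), the matrix projectors in \eqref{proj mat} act diagonally on the entries of $\mathbf{T}$: $P_N^1$ is applied to $T_1,T_2$ and $P_N^2$ to $T_3,T_4$. Since the first statement of Lemma \ref{properties of Toeplitz in time operators}(i) controls $P_N^1$ in the off-diagonal norm and the second controls $P_N^2$ in the isotropic norm, the claimed bounds follow directly by summing the four entry-wise inequalities. For assertion (iii), I write $\mathbf{T}\rho=(T_1\rho_1+T_3\rho_2,T_4\rho_1+T_2\rho_2)$ and apply the four tame estimates of Lemma \ref{properties of Toeplitz in time operators}(ii): the diagonal pieces $T_k\rho_k$ are controlled via the off-diagonal norm and the anti-diagonal pieces $T_3\rho_2$, $T_4\rho_1$ via the isotropic norm. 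Summing and taking the supremum yields the required bound on $\|\mathbf{T}\rho\|_s^{q,\gamma,\mathtt{m}}$.

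Assertion (ii) is the only one needing a short bookkeeping argument because the entries of the matrix product
$$
\mathbf{T}\mathbf{S}=\begin{pmatrix} T_1 S_1+T_3 S_4 & T_1 S_3+T_3 S_2\\ T_4 S_1+T_2 S_4 & T_4 S_3+T_2 S_2\end{pmatrix}
$$
mix operators measured in different topologies. I will split the eight products in two groups. The purely diagonal terms $T_kS_k$ ($k\in\{1,2\}$) are handled by the off-diagonal composition of Lemma \ref{properties of Toeplitz in time operators}(iv). For a diagonal entry involving two anti-diagonal factors such as $T_3S_4$, I use the embedding in Lemma \ref{properties of Toeplitz in time operators}(iii), namely $\|\cdot\|_{\textnormal{\tiny{O-d}},s}\lesssim\|\cdot\|_{\textnormal{\tiny{I-D}},s}$ for $s\geqslant 0$, to estimate the product directly in the isotropic norm, and then use the second inequality of (iv) with $T=T_3$ and a further application of the embedding $\|S_4\|_{\textnormal{\tiny{O-d}},\bullet}\lesssim\|S_4\|_{\textnormal{\tiny{I-D}},\bullet}$ to convert back to I-D data. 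For an anti-diagonal entry such as $T_1 S_3$ (respectively $T_3 S_2$), I apply Lemma \ref{properties of Toeplitz in time operators}(iv) in its $\|ST\|_{\textnormal{\tiny{I-D}}}$ form (respectively $\|TS\|_{\textnormal{\tiny{I-D}}}$ form) with the I-D factor being $S_3$ (respectively $T_3$) and the O-d factor being $T_1$ (respectively $S_2$). All resulting bounds fit the template $\interleave\mathbf{T}\interleave_s\interleave\mathbf{S}\interleave_{s_0}+\interleave\mathbf{T}\interleave_{s_0}\interleave\mathbf{S}\interleave_s$, and summing the eight contributions concludes the proof.

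No substantive obstacle is expected; the only point requiring a little care is that the isotropic composition law in Lemma \ref{properties of Toeplitz in time operators}(iv) is asymmetric (one factor in I-D, one in O-d), so the block $T_3S_4$ must be routed through the embedding of (iii) rather than directly. Once that routing is made explicit, everything reduces to mechanical sums of inequalities already available from the scalar lemma.
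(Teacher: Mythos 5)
Your proof is correct and follows essentially the same route as the paper: all three assertions are reduced entry-wise to Lemma \ref{properties of Toeplitz in time operators} via the definitions \eqref{hyb nor} and \eqref{proj mat}, with the composition law (ii) being the only point needing the case-by-case bookkeeping you describe. The one cosmetic difference is the block $T_3S_4$: the paper bounds it directly with the off-diagonal composition law and then embeds \emph{both} factors via $\|\cdot\|_{\textnormal{\tiny{O-d}},s}\lesssim\|\cdot\|_{\textnormal{\tiny{I-D}},s}$, whereas you first embed the product into the isotropic norm and apply the mixed I-D/O-d law before embedding one factor — both routings are equally valid and land on the same bound.
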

	\begin{proof}
		\textbf{(i)} It  follows immediately  from \eqref{proj mat}, \eqref{hyb nor} and Lemma \ref{properties of Toeplitz in time operators}-(i).\\
		\textbf{(ii)} One has
		$$\mathbf{TS}=\begin{pmatrix}
			T_1 S_1+T_3S_4& T_1S_3+T_3 S_2\\
			T_4S_1+T_2S_4 & T_2S_2+T_4S_3
		\end{pmatrix}\triangleq \begin{pmatrix}
			R_1 & R_3\\
			R_4 & R_2
		\end{pmatrix}.
		$$
		Let us estimate $R_1$. One has from the law products detailed in Lemma \ref{properties of Toeplitz in time operators}-$($iv$)$
		\begin{align*}
			\|R_1\|_{\textnormal{\tiny{O-d}},s}^{q,\gamma,\mathtt{m}}&\lesssim\|T_1\|_{\textnormal{\tiny{O-d}},s}^{q,\gamma,\mathtt{m}}\|S_1\|_{\textnormal{\tiny{O-d}},s_0}^{q,\gamma,\mathtt{m}}+\|T_1\|_{\textnormal{\tiny{O-d}},s_0}^{q,\gamma,\mathtt{m}}\|S_1\|_{\textnormal{\tiny{O-d}},s}^{q,\gamma,\mathtt{m}}\\
			&\quad+\|T_3\|_{\textnormal{\tiny{O-d}},s}^{q,\gamma,\mathtt{m}}\|S_4\|_{\textnormal{\tiny{O-d}},s_0}^{q,\gamma,\mathtt{m}}+\|T_3\|_{\textnormal{\tiny{O-d}},s_0}^{q,\gamma,\mathtt{m}}\|S_4\|_{\textnormal{\tiny{O-d}},s}^{q,\gamma,\mathtt{m}}.
		\end{align*}
		Then using the embedding estimate in Lemma \ref{properties of Toeplitz in time operators}-(iii) together with \eqref{hyb nor}, we get
		\begin{align*}
			\|R_1\|_{\textnormal{\tiny{O-d}},s}^{q,\gamma,\mathtt{m}}&\lesssim\interleave {\bf T }\interleave_{s}^{q,\gamma,\mathtt{m}}\interleave {\bf  S}\interleave_{s_0}^{q,\gamma,\mathtt{m}}+\interleave {\bf T }\interleave_{s_0}^{q,\gamma,\mathtt{m}}\interleave {\bf S}\interleave_{s}^{q,\gamma,\mathtt{m}}.
		\end{align*}
		Let us now estimate $R_3$. Using Lemma \ref{properties of Toeplitz in time operators}-(iv) and \eqref{hyb nor}, we infer
		\begin{align*}
			\|R_3\|_{\textnormal{\tiny{I-D}},s}^{q,\gamma,\mathtt{m}}&\lesssim\|S_3\|_{\textnormal{\tiny{I-D}},s}^{q,\gamma,\mathtt{m}}\|T_1\|_{\textnormal{\tiny{O-d}},s_0}^{q,\gamma,\mathtt{m}}+\|S_3\|_{\textnormal{\tiny{I-D}},s_0}^{q,\gamma,\mathtt{m}}\|T_1\|_{\textnormal{\tiny{O-d}},s}^{q,\gamma,\mathtt{m}}\\
			&\quad+\|T_3\|_{\textnormal{\tiny{I-D}},s}^{q,\gamma,\mathtt{m}}\|S_1\|_{\textnormal{\tiny{O-d}},s_0}^{q,\gamma,\mathtt{m}}+\|T_3\|_{\textnormal{\tiny{I-D}},s_0}^{q,\gamma,\mathtt{m}}\|S_1\|_{\textnormal{\tiny{O-d}},s}^{q,\gamma,\mathtt{m}}\\
			&\lesssim\interleave {\bf T }\interleave_{s}^{q,\gamma,\mathtt{m}}\interleave {\bf  S}\interleave_{s_0}^{q,\gamma,\mathtt{m}}+\interleave {\bf T }\interleave_{s_0}^{q,\gamma,\mathtt{m}}\interleave {\bf S}\interleave_{s}^{q,\gamma,\mathtt{m}}.
		\end{align*}
		The terms $R_2$ and $R_4$ can be treated in a similar way.\\
		\textbf{(iii)} This point is a direct consequence of \eqref{hyb nor} and Lemma \ref{properties of Toeplitz in time operators}-(ii).
	\end{proof}
	\section{Hamiltonian reformulation}
	Here, we  describe the contour dynamics by using polar parametrization for the two interfaces  of the patch near  the annulus. We end up with  a  system of coupled  nonlinear and nonlocal transport equations satisfied by the radial deformations and that can be recast  as a Hamiltonian system. This structure is crucial to establish quasi-periodic solutions near the stationary annulus patch.
	\subsection{Transport system for radial deformations}
	Let $D_0=D_1\backslash\overline{D_2}$ be a doubly-connected domain where $D_1$ and $D_2$ are two simply-connected domains with $\overline{D_{2}}$ strictly embedded in $ D_{1}.$ Consider the initial datum $\boldsymbol{\omega}_0={\bf{1}}_{D_0},$  then the Yudovich solution takes for any $t\geqslant 0$ the form $\boldsymbol{\omega}(t)={\bf{1}}_{D_t},$ with $D_t=D_{t,1}\backslash \overline{D_{t,2}}$ a doubly-connected domain. In addition, $D_{t,1}$ and $D_{t,2}$ are two simply-connected domains with $\overline{D_{t,2}}$ strictly embedded in $D_{t,1}$. For  fixed $b\in(0,1),$ we start with a domain $D_0$ close to the annulus $A_b$, defined in \eqref{annulus-Ab}, then for a short interval of time $[0,T]$,  the domain $D_t$ will be localized around the same annulus. Therefore, we may use on this time  interval  the following symplectic polar parametrization of the boundary.
	For $k\in\{1,2\},$
	\begin{equation}\label{def zk}
		z_{k}(t):\begin{array}[t]{rcl}
			\mathbb{T} & \mapsto & \partial D_{t,k}\\
			\theta & \mapsto & e^{-\ii\Omega t}w_k (t,\theta),
		\end{array}\quad \textnormal{where}\qquad\begin{array}{l}
			w_k (t,\theta)\triangleq \left(b_k^{2}+2r_k(t,\theta)\right)^{\frac{1}{2}}e^{\ii\theta},\\
			b_1\triangleq1, \;\; 
			b_2\triangleq b.
		\end{array}
	\end{equation} 
	Similarly to \cite{BHM21, HHM21,HR21}, the introduction of the angular velocity $\Omega>0$ is due to some technical issues and devised to circumvent the trivial resonances associated to the eigen-mode $n=1$ and used in the current configuration to remedy to a more delicate phenomenon related to the analytic accumulation  of a sequence of eigenvalues in the vectorial case, see Lemma \ref{lem lin op 2 DCE}.
	The radial deformations $r_1$ and $r_2$ are assumed to be small, namely 
	$$|r_1(t,\theta)|+|r_2(t,\theta)|\ll1.$$
	In the sequel, for more convenience, we denote 
	\begin{equation}\label{def Rk}
		\forall k\in\{1,2\},\quad R_k(t,\theta)\triangleq \left(b_k^{2}+2r_k(t,\theta)\right)^{\frac{1}{2}}.
	\end{equation}
	Remind that in this particular case the stream function defined through \eqref{def streamL1} takes the form
	\begin{equation}\label{def stream}
		\psi(t,z)=\frac{1}{2\pi}\int_{D_{t,1}}\log(|z-\xi|)dA(\xi)-\frac{1}{2\pi}\int_{D_{t,2}}\log(|z-\xi|)dA(\xi).
	\end{equation}
	The vortex patch equation \eqref{CDE} provides a system of coupled transport-type PDE satisfied by $r_1$ and $r_2$. This is described  by the following lemma.
	\begin{lem}\label{lem eq EDC r}
		For short time $T>0,$ the radial deformations $r_1$ and $r_2$ defined through \eqref{def zk} satisfy the following nonlinear coupled system: for all $k\in \{1,2\}$, $(t,\theta)\in[0,T]\times\mathbb{T}$, 
		\begin{align}
			\partial_{t}r_k(t,\theta)+\Omega \partial_\theta r_k (t,\theta)&=-\partial_{\theta}\Big[\psi\big(t,z_k(t,\theta)\big)\Big]\label{Edc eq rkPsi}\\
			&=(-1)^{k+1} F_{k,k}[r](t,\theta)+(-1)^{k}F_{k,3-k}[r](t,\theta), \label{Edc eq}
		\end{align}
		where $r=(r_1,r_2)$ and, for all $k,n\in\{1,2\}$,
		\begin{align}\label{Fkn Edc}
			F_{k,n}[r](t,\theta)&\triangleq \int_{\mathbb{T}}\log\big(A_{k,n}(t,\theta,\eta)\big)\partial_{\theta\eta}^2\Big(R_k(t,\theta)R_n(t,\eta)\sin(\eta-\theta)\Big)d\eta,\\
			A_{k,n}(t,\theta,\eta)&\triangleq \big|R_k(t,\theta)e^{\ii\theta}-R_n(t,\eta)e^{\ii\eta}\big|,\label{Akn}
		\end{align}
		where we have used the notation \eqref{average-notation}. 
	\end{lem}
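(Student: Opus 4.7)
The statement splits into two assertions: the transport identity \eqref{Edc eq rkPsi} and the explicit contour representation \eqref{Edc eq}. I would treat them in this order.

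For \eqref{Edc eq rkPsi}, the plan is a direct substitution of the parametrization \eqref{def zk} into the contour dynamics equation \eqref{CDE}. Differentiating $z_k(t,\theta)=e^{-\ii\Omega t}R_k(t,\theta)e^{\ii\theta}$ yields
$$\partial_t z_k=-\ii\Omega z_k+e^{-\ii(\Omega t-\theta)}\tfrac{\partial_t r_k}{R_k},\qquad \partial_\theta z_k=e^{-\ii(\Omega t-\theta)}\Bigl(\tfrac{\partial_\theta r_k}{R_k}+\ii R_k\Bigr),$$
and then $\mathbf{n}_k=\ii\partial_\theta z_k$. Computing the Euclidean scalar product $\partial_t z_k\cdot \mathbf{n}_k=\operatorname{Re}(\overline{\partial_t z_k}\,\mathbf{n}_k)$, the purely imaginary $-\ii\Omega R_k^2$ term is killed by taking real parts, and after cancellations one obtains $\partial_t z_k\cdot\mathbf{n}_k=-\partial_t r_k-\Omega\,\partial_\theta r_k$. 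Plugging this into \eqref{CDE} gives \eqref{Edc eq rkPsi}. This part is short and essentially algebraic.

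For \eqref{Edc eq}, I would start from the explicit stream function \eqref{def stream} and convert each area integral $I_{k,n}(\theta)\triangleq\int_{D_{t,n}}\log|z_k(t,\theta)-\xi|\,dA(\xi)$ into a contour integral. The cleanest way is Stokes applied to the $1$-form
$$\omega_n=\Bigl[\tfrac{\ii}{2}(\bar z_k-\bar\xi)\log|z_k-\xi|-\tfrac{\ii}{4}(\bar z_k-\bar\xi)\Bigr]d\xi,$$
whose exterior derivative is $\log|z_k-\xi|\,dA(\xi)$ (this is checked by a single $\partial_{\bar\xi}$ computation using $\partial_{\bar\xi}\log|z_k-\xi|=\tfrac{1}{2(\bar\xi-\bar z_k)}$). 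Parametrizing $\partial D_{t,n}$ by $\xi=z_n(t,\eta)$ and exploiting rotation invariance, $|z_k(\theta)-z_n(\eta)|=|w_k(\theta)-w_n(\eta)|=A_{k,n}(t,\theta,\eta)$ and similarly $(\bar z_k-\bar z_n)\partial_\eta z_n=(\bar w_k-\bar w_n)\partial_\eta w_n$, so the rotation factor $e^{-\ii\Omega t}$ drops and $I_{k,n}$ becomes a single integral over $\T$ in $\eta$. Then $\partial_\theta I_{k,n}$ is computed by differentiating under the integral (interpreting the diagonal singularity when $n=k$ in the principal-value sense, which is legitimate because the $\sin(\eta-\theta)$ factor that will appear absorbs the singularity), and one integrates by parts in $\eta$ so that both derivatives fall on the polynomial factor while $\log A_{k,n}$ remains as the only transcendental factor. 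Using the identities $\operatorname{Im}(\bar w_k w_n)=R_k R_n\sin(\eta-\theta)$ and $|w_k-w_n|^2=R_k^2+R_n^2-2R_k R_n\cos(\eta-\theta)$, the resulting polynomial kernel collapses exactly to $\partial_\theta\partial_\eta\bigl(R_k(\theta)R_n(\eta)\sin(\eta-\theta)\bigr)$, yielding $F_{k,n}$. The signs $(-1)^{k+1}$ and $(-1)^{k}$ in \eqref{Edc eq} then follow from the outer-minus-inner decomposition $\psi=\tfrac{1}{2\pi}(I_{k,1}-I_{k,2})$ in \eqref{def stream} combined with the extra global minus sign coming from \eqref{Edc eq rkPsi}.

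The main obstacle is the algebraic repackaging step at the end of Stage 2: after taking $\partial_\theta$ and one integration by parts in $\eta$, one has to recognize that the resulting combination of $R_k,R_n,\partial_\theta R_k,\partial_\eta R_n$ multiplied by $\sin(\eta-\theta)$ and $\cos(\eta-\theta)$ assembles into $\partial_\theta\partial_\eta(R_k R_n\sin(\eta-\theta))$; this requires a careful bookkeeping of four terms. The other delicate point is the regularization of the diagonal $\eta=\theta$ in $F_{k,k}$, but once both derivatives have been moved to the polynomial factor via integration by parts, the integrand becomes continuous and the principal-value interpretation is automatic, so no further analysis is needed.
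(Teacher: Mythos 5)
Your proposal is correct and follows the same overall architecture as the paper: a direct computation of $\partial_t z_k\cdot\mathbf{n}_k=\textnormal{Im}\big(\partial_t z_k\,\overline{\partial_\theta z_k}\big)=-\partial_t r_k-\Omega\partial_\theta r_k$ for \eqref{Edc eq rkPsi}, and a Stokes-theorem reduction of the Newtonian potential to a boundary integral for \eqref{Edc eq} (your $1$-form is exactly the paper's primitive $f_\epsilon(\xi,\bar\xi)=(\bar\xi-\bar z)[\log(|z-\xi|^2+\epsilon)-1]$ up to normalization and the $\epsilon$-regularization the paper inserts to justify Stokes rigorously near the logarithmic singularity). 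The only genuine divergence is your endgame for \eqref{Edc eq}: you differentiate the composed contour integral in $\theta$ and then integrate by parts in $\eta$, which is precisely where your flagged ``main obstacle'' of four-term bookkeeping and the principal-value discussion for $n=k$ arise. The paper sidesteps both by reversing the order of operations: it first computes $\partial_{\bar z}\psi(t,z)=-\tfrac{1}{4\ii}\int_{\T}\log\big(|z_1(t,\eta)-z|^2\big)\partial_\eta z_1\,d\eta+\cdots$ (differentiating under the Stokes'd integral in the \emph{target} variable $z$, which kills the prefactor $(\bar z_n-\bar z)$ against the derivative of the logarithm), and only then applies the chain rule $\partial_\theta[\psi(z_k(\theta))]=2\textnormal{Re}\big((\partial_{\bar z}\psi)(z_k(\theta))\,\partial_\theta\bar z_k\big)$; the product $\partial_\eta z_n\cdot\partial_\theta\bar z_k$ then yields $\partial^2_{\theta\eta}\textnormal{Im}\big(z_n(\eta)\bar z_k(\theta)\big)$ in one line, with no integration by parts and with the logarithm as the only singular factor from the start. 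If you want to shorten your Stage 2, adopt that ordering; otherwise your route goes through but costs the extra cancellation argument you anticipated.
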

	\begin{proof}
		For $k\in\{1,2\},$ we denote by $\mathbf{n}_k(t,\cdot)$ an inward normal vector to the boundary $\partial D_{t,k}$ of the patch. According to \cite[p. 174]{HMV13}, the vortex patch equation writes 
		$$\forall k\in\{1,2\},\quad\partial_{t}z_k(t,\theta)\cdot \mathbf{n}_k\big(t,z_k(t,\theta)\big)=\partial_{\theta}\Big[\psi\big(t,z_k(t,\theta)\big)\Big].$$
		Identifying $\mathbb{C}$ with $\mathbb{R}^{2}$ 
		and making the choice  $\mathbf{n}_k\big(t,z_k(t,\theta)\big)=\ii\partial_{\theta}z_k(t,\theta)$ we get from \eqref{def zk}
		\begin{align*}
			\partial_{t}z_k(t,\theta)\cdot \mathbf{n}_k\big(t,z_k(t,\theta)\big)&=\mbox{Im}\left(\partial_{t}z_k(t,\theta)\overline{\partial_{\theta}z_k(t,\theta)}\right)\\ &=-\partial_{t}r_k(t,\theta)-\Omega \partial_\theta r_k (t,\theta).
		\end{align*}
		Combining the last two identities we obtain \eqref{Edc eq rkPsi}. Next, we intend to use Stokes theorem in order to transform the integral \eqref{def stream} into an integration on the boundary. This theorem can be recast in the complex form,
		\begin{equation}\label{stokes}
			2\ii\int_{D}\partial_{\overline{\xi}}f(\xi,\overline{\xi})dA(\xi)=\int_{\partial D}f(\xi,\overline{\xi})d\xi,
		\end{equation}
		where $f:\overline{D}\to\C$ is a function of class $C^1$, $D$ is a simply-connected bounded domain and $\partial D$ is the boundary of $D$. To make the argument rigorous, we shall mollify the logarithmic kernel by setting,
		$$\epsilon>0,\quad f_{\epsilon}(\xi,\overline{\xi})\triangleq (\overline{\xi}-\overline{z})\Big[\log\big(|z-\xi|^2+\epsilon\big)-1\Big].$$
		Then, we have
		$$\partial_{\overline{\xi}}f_{\epsilon}(\xi,\overline{\xi})=\log\big(|z-\xi|^2+\epsilon\big)-\frac{\epsilon}{|z-\xi|^2+\epsilon}\cdot$$
		Applying \eqref{stokes} yields
		$$
		2\ii\int_{D_{t,k}}\log\big(|z-\xi|^2+\epsilon\big)dA(\xi)-2\ii\int_{D_{t,k}}\frac{\epsilon}{|z-\xi|^2+\epsilon}dA(\xi)=\int_{\partial D_{t,k}}f_\epsilon(\xi,\overline{\xi})d\xi
		$$
		and taking the limit $\epsilon\to0$ together with \eqref{def stream} allow to get
		$$\psi(t,z)=\frac{1}{8\ii\pi}\int_{\partial D_{t,1}}(\overline{\xi}-\overline{z})\Big[\log\left(|\xi-z|^{2}\right)-1\Big]d\xi-\frac{1}{8\ii\pi}\int_{\partial D_{t,2}}(\overline{\xi}-\overline{z})\Big[\log\left(|\xi-z|^{2}\right)-1\Big]d\xi.$$
		Parametrizing the boundaries with  \eqref{def zk} and using the notation \eqref{average-notation} we infer  	\begin{equation}\label{def Psii}
			\begin{aligned}
				\psi(t,z)&=\frac{1}{4\ii}\int_{\mathbb{T}}(\overline{z}_1(t,\eta)-\overline{z})\Big[\log\big(|z_1(t,\eta)-z|^2\big)-1\Big]\partial_{\eta}z_1(t,\eta)d\eta\\
				&\quad-\frac{1}{4\ii}\int_{\mathbb{T}}(\overline{z}_2(t,\eta)-\overline{z})\Big[\log\big(|z_2(t,\eta)-z|^2\big)-1\Big]\partial_{\eta}z_2(t,\eta)d\eta.
			\end{aligned}
		\end{equation}
		As a consequence, we get by differentiating inside the integral
		$$\partial_{\overline{z}}\psi(t,z)=-\frac{1}{4\ii}\int_{\mathbb{T}}\log\big(|z_1(t,\eta)-z|^2\big)\partial_{\eta}z_1(t,\eta)d\eta+\frac{1}{4\ii}\int_{\mathbb{T}}\log\big(|z_2(t,\eta)-z|^2\big)\partial_{\eta}z_2(t,\eta)d\eta.$$
		Therefore we find through elementary computations
		\begin{align}\label{partial-theta-psi}
			\nonumber\partial_{\theta}[\psi(t,z_k(t,\theta))]&=	2\textnormal{Re}\Big((\partial_{\overline{z}}\psi)(t,z_k(t,\theta))\partial_{\theta}\overline{z}_k(t,\theta)\Big)
			\\ &=
			\nonumber -\int_{\mathbb{T}}\log\Big(|z_k(t,\theta)-z_1(t,\eta)|\Big)\partial_{\theta\eta}^{2}\textnormal{Im}\big(z_1(t,\eta)\overline{z}_k(t,\theta)\big)d\eta
			\\ &\quad+
			\int_{\mathbb{T}}\log\Big(|z_k(t,\theta)-z_2(t,\eta)|\Big)\partial_{\theta\eta}^{2}\textnormal{Im}\big(z_2(t,\eta)\overline{z}_k(t,\theta)\big)d\eta.
		\end{align}
		Using \eqref{def zk} we obtain 
		\begin{align*}
			\forall\,k,n\in\{1,2\},\quad\textnormal{Im}\big(z_n(t,\eta)\overline{z}_k(t,\theta)\big)&=R_n(t,\eta)R_k(t,\theta)\sin(\eta-\theta),\\
			|z_k(t,\theta)-z_n(t,\eta)| &=\big|R_k(t,\theta)e^{\ii\theta}-R_n(t,\eta)e^{\ii\eta}\big|.
		\end{align*}
		By combining the last two identities with \eqref{Edc eq rkPsi}-\eqref{partial-theta-psi} we conclude the proof of Lemma \ref{lem eq EDC r}.
	\end{proof}
	\subsection{Hamiltonian structure}
	The main purpose is to explore the Hamiltonian structure beyond the equations described  in \mbox{Lemma \ref{lem eq EDC r}.} First, the kinetic energy associated to the  vortex  patch $\boldsymbol{\omega}(t,\cdot)={\bf{1}}_{D_t}=\mathbf{1}_{D_{t,1}\backslash \overline{D_{t,2}}}$ is given  by
	\begin{align}\label{def kinetic energy DCE}
		E(r)&\triangleq\frac{1}{2\pi}\int_{D_{t}}\mathbf{\psi}(t,z)dA(z)\nonumber\\
		&=\frac{1}{2\pi}\int_{D_{t,1}}\mathbf{\psi}(t,z)dA(z)-\frac{1}{2\pi}\int_{D_{t,2}}\mathbf{\psi}(t,z)dA(z)
	\end{align}
	and its angular impulse is defined by
	\begin{align}\label{def angular impulse DCE}
		J(r)&\triangleq\frac{1}{2\pi}\int_{D_{t}}|z|^{2}dA(z)\nonumber\\
		&=\frac{1}{2\pi}\int_{D_{t,1}}|z|^{2}dA(z)-\frac{1}{2\pi}\int_{D_{t,2}}|z|^{2}dA(z),
	\end{align}
	where the stream function $\psi$ is defined in \eqref{def stream}. The main result of this section reads as follows.  
	\begin{prop}\label{prop HAM eq Edc}
		The system \eqref{Edc eq} is Hamiltonian and takes the form 
		\begin{equation}\label{Hamilt form DCE}
			\partial_{t}r=\mathcal{J}\nabla H(r)
		\end{equation}
		where $r\triangleq(r_{1},r_{2})$,
		\begin{equation}\label{def calJ}
			\mathcal{J}\triangleq \begin{pmatrix}
				\partial_{\theta} & 0\\
				0 & -\partial_{\theta}
			\end{pmatrix}
		\end{equation}
		and $\nabla$ is the $L^{2}(\mathbb{T})\times L^{2}(\mathbb{T})$-gradient 
		and the hamiltonian $H$ is defined by 
		\begin{equation}\label{def H}
			H(r)\triangleq -\tfrac{1}{2}\big(E(r)+\Omega J(r)\big),
		\end{equation}
		where $E$ and $J$ are defined in \eqref{def kinetic energy DCE} and \eqref{def angular impulse DCE}.
	\end{prop}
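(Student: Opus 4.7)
\textbf{Proof plan for Proposition \ref{prop HAM eq Edc}.} My plan is to compute the $L^2(\mathbb{T})\times L^2(\mathbb{T})$-gradients of the angular impulse $J$ and of the kinetic energy $E$ separately, and then to verify by direct comparison with \eqref{Edc eq rkPsi} that the combination $\mathcal{J}\nabla H = -\tfrac12 \mathcal{J}(\nabla E + \Omega\nabla J)$ reproduces the right-hand side of \eqref{Edc eq rkPsi}. The key simplification is to rewrite integrals over $D_t = D_{t,1}\setminus\overline{D_{t,2}}$ using polar coordinates adapted to the parametrization \eqref{def zk}, so that the radial variable range is encoded directly in $R_k(t,\theta)$.

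\textbf{Step 1: gradient of $J$.} Using polar coordinates and $R_k^2=b_k^2+2r_k$, the change of variable $\beta=\theta-\Omega t$ immediately yields
$$
J(r)= \int_{\mathbb{T}}\tfrac{R_1^4(t,\theta)-R_2^4(t,\theta)}{4}\,d\theta.
$$
Differentiating in $r_k$ and using $\tfrac{\partial R_k}{\partial r_k}=\tfrac{1}{R_k}$, one finds $\nabla_{r_1}J = R_1^2=1+2r_1$ and $\nabla_{r_2}J = -R_2^2=-(b^2+2r_2)$. Applying $\mathcal{J}$ therefore gives $(\mathcal{J}\nabla J)_k = 2\partial_\theta r_k$ for $k\in\{1,2\}$, the two signs of $\mathcal{J}$ combining with the two opposite signs in $J$.

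\textbf{Step 2: gradient of $E$.} Since $\boldsymbol{\omega}=\mathbf{1}_{D_{t,1}}-\mathbf{1}_{D_{t,2}}$, expand
$$
E(r)=\frac{1}{(2\pi)^2}\sum_{k,n\in\{1,2\}}(-1)^{k+n}\iint_{D_{t,k}\times D_{t,n}}\log|z-\xi|\,dA(z)dA(\xi).
$$
In the polar representation, the variation of $\iint_{D_{t,k}\times D_{t,n}}\log|z-\xi|\,dAdA$ with respect to $r_k$ produces, by symmetry of the log kernel and the polar Jacobian, a boundary term of the form $2\int_0^{2\pi}\delta r_k(\theta)\,\big(\int_{D_{t,n}}\log|z_k(t,\theta)-\xi|\,dA(\xi)\big)\,d\theta$. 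Collecting the contributions and using the definition \eqref{def stream} of $\psi$, the bracket assembles into $2\pi\,\psi(t,z_k(t,\theta))$, so that
$$
\nabla_{r_1}E = 2\psi(t,z_1(t,\cdot)),\qquad \nabla_{r_2}E = -2\psi(t,z_2(t,\cdot)).
$$
Here the sign in $\nabla_{r_2}E$ reflects the fact that $\partial D_{t,2}$ is the inner boundary of $D_t$ and is oriented oppositely when viewed as part of $\partial D_t$; this is precisely what will match the sign in $\mathcal{J}$.

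\textbf{Step 3: assembly.} Applying $\mathcal{J}$ yields $(\mathcal{J}\nabla E)_k = 2\partial_\theta\psi(t,z_k(t,\theta))$ for both $k\in\{1,2\}$, the extra minus sign in $\nabla_{r_2}E$ cancelling the $-\partial_\theta$ in the second row of $\mathcal{J}$. Combining with Step 1,
$$
\bigl(\mathcal{J}\nabla H(r)\bigr)_k = -\tfrac12\bigl(\mathcal{J}\nabla E + \Omega\,\mathcal{J}\nabla J\bigr)_k = -\partial_\theta\psi(t,z_k(t,\theta))-\Omega\,\partial_\theta r_k,
$$
which is exactly the right-hand side of \eqref{Edc eq rkPsi}, and hence of \eqref{Edc eq}.

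\textbf{Main obstacle.} The only delicate point is the computation of $\nabla E$: one must check carefully that varying $r_k$ on the $L^2(\mathbb{T})$ side, with the polar Jacobian $\rho\,d\rho\,d\theta$, produces exactly the boundary density $\psi(t,z_k(t,\theta))$ (without spurious Jacobian factors of $R_k$), and that the sign bookkeeping between the diagonal and cross double integrals, together with the orientation of the inner boundary, yields the diagonal Poisson structure $\mathcal{J}=\mathrm{diag}(\partial_\theta,-\partial_\theta)$ rather than a scalar $\partial_\theta$ on both components. Once this sign analysis is done, the remainder of the proof is a short computation.
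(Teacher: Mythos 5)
Your proposal is correct and follows essentially the same route as the paper: an explicit polar/boundary formula for $J$ yielding $\nabla J=(1+2r_1,\,-b^2-2r_2)$, a G\^ateaux-derivative computation of $E$ in polar coordinates in which the Jacobian $\ell$ evaluated at $\ell=R_k$ cancels against $\partial R_k/\partial r_k=1/R_k$ to give $\nabla E=\big(2\psi(t,z_1),\,-2\psi(t,z_2)\big)$, and assembly against \eqref{Edc eq rkPsi}. The only cosmetic differences are that the paper derives the expression for $J$ via the complex Stokes theorem and keeps $E$ as a single quadruple integral with radial limits $R_2$ to $R_1$ (differentiating the limits) rather than your signed sum of four double integrals over $D_{t,k}\times D_{t,n}$.
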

	\begin{proof}
		We shall first compute the $L^{2}(\mathbb{T})\times L^{2}(\mathbb{T})$ gradient of the angular impulse $J$. For this aim, we need to  write its  expression  in terms of $r$. Using  \eqref{stokes} combined with \eqref{def angular impulse DCE} and \eqref{def zk} yields
		\begin{align*}
			J(r)&=\frac{1}{8\pi \ii}\int_{\partial D_{t,1}}|z|^{2}\overline{z}dz-\frac{1}{8\pi \ii}\int_{\partial D_{t,2}}|z|^{2}\overline{z}dz\\
			&=\frac{1}{4}\int_{\mathbb{T}}\big(1+2r_{1}(t,\theta)\big)^{2}d\theta-\frac{1}{4}\int_{\mathbb{T}}\left(b^{2}+2r_{2}(t,\theta)\right)^{2}d\theta.
		\end{align*}
		Differentiating in $r=(r_1,r_2)$ one gets for $\rho=(\rho_{1},\rho_{2})\in L^{2}(\mathbb{T})\times L^{2}(\mathbb{T})$,
		$$\big\langle\nabla J(r),\rho\big\rangle_{L^{2}(\mathbb{T})\times L^{2}(\mathbb{T})}=\int_{\mathbb{T}}\big(1+2r_{1}(t,\theta)\big)\rho_{1}(\theta)d\theta-\int_{\mathbb{T}}\big(b^{2}+2r_{2}(t,\theta)\big)\rho_{2}(\theta)d\theta.$$
		This implies that 
		\begin{equation}\label{link J and r DCE}
			\nabla J(r)=\begin{pmatrix}
				1+2r_{1}\\
				-b^{2}-2r_{2}
			\end{pmatrix}\qquad{\rm and}\qquad \tfrac{1}{2}\Omega\,\mathcal{J}\nabla J(r)=\Omega\,\partial_{\theta}r.
		\end{equation}
		The next task is to compute the $L^{2}(\mathbb{T})\times L^{2}(\mathbb{T})$ gradient of the kinetic energy $E$ defined in \eqref{def kinetic energy DCE}. Combining \eqref{def stream} with \eqref{def zk} and changing $\xi$ by $e^{-\ii t\Omega}\xi$ we find
		$${\psi}\big(t,e^{-\ii t \Omega} z\big)= \frac{1}{2\pi}\int_{\widetilde D_{t,1}}\log(|z-\xi|)dA(\xi)-\frac{1}{2\pi}\int_{\widetilde D_{t,2}}\log(|z-\xi|)dA(\xi),$$
		where $ \widetilde D_{t,k}$ are the domains with boundaries parametrized by
		$$w_{k}:\begin{array}[t]{rcl}
				\mathbb{T} & \mapsto & \partial \widetilde D_{t,k}\\
				\theta & \mapsto & R_k(t,\theta)e^{\ii\theta},\quad R_k(t,\theta)=\left(b_k^{2}+2r_k(t,\theta)\right)^{\frac{1}{2}}.
			\end{array}$$
		Using polar change of coordinates allows to get after straightforward computations,
		\begin{equation}\label{psipol}
			\psi\big(t,e^{-\ii \Omega t} z\big)=\int_{\mathbb{T}}\int_{R_2(t,\eta)}^{R_1(t,\eta)}G\big(z,\ell_2e^{\ii\eta}\big)\ell_2d\ell_2d\eta,\qquad G(z,\xi)\triangleq \log(|z-\xi|).
		\end{equation}
		Coming back to \eqref{def kinetic energy DCE} and using once again polar change of coordinates gives
		$$E(r)=\int_{\mathbb{T}}\int_{\mathbb{T}}\int_{R_2(t,\theta)}^{R_1(t,\theta)}\int_{R_2(t,\eta)}^{R_1(t,\eta)}G\big(\ell_1e^{\ii\theta},\ell_2e^{\ii\eta}\big)\ell_1\ell_2d\ell_1d\ell_2d\theta d\eta.$$
		Therefore, the G\^ateaux derivative of $E$ in a given direction $\rho=(\rho_1,\rho_2)$ takes the form
		\begin{align*}
			\frac{d E(r)\rho}{dr}&=\int_{\mathbb{T}}\int_{\mathbb{T}}\int_{R_2(t,\eta)}^{R_1(t,\eta)}G\big(R_1(t,\theta)e^{\ii\theta},\ell_2e^{\ii\eta}\big)\rho_1(\theta)\ell_2d\ell_2d\theta d\eta\\
			&\quad+\int_{\mathbb{T}}\int_{\mathbb{T}}\int_{R_2(t,\theta)}^{R_1(t,\theta)}G\big(\ell_1e^{\ii\theta},R_1(t,\eta)e^{\ii\eta}\big)\rho_1(\eta)\ell_1d\ell_1d\theta d\eta\\
			&\quad-\int_{\mathbb{T}}\int_{\mathbb{T}}\int_{R_2(t,\eta)}^{R_1(t,\eta)}G\big(R_2(t,\theta)e^{\ii\theta},\ell_2e^{\ii\eta}\big)\rho_2(\theta)\ell_2d\ell_2d\theta d\eta\\
			&\quad-\int_{\mathbb{T}}\int_{\mathbb{T}}\int_{R_2(t,\theta)}^{R_1(t,\theta)}G\big(\ell_1e^{\ii\theta},R_2(t,\eta)e^{\ii\eta}\big)\rho_2(\eta)\ell_1d\ell_1d\theta d\eta.
		\end{align*}
		Since $G(z,\xi)$ is symmetric in $(z,\xi)$ then we obtain, by exchanging $\theta\leftrightarrow \eta$ if necessary,
		\begin{align*}
			\frac{d E(r)\rho}{dr}&=2\int_{\mathbb{T}}\int_{\mathbb{T}}\int_{R_2(t,\eta)}^{R_1(t,\eta)}G\big(R_1(t,\theta)e^{\ii\theta},\ell_2e^{\ii\eta}\big)\rho_1(\theta)\ell_2d\ell_2d\theta d\eta\\
			&\quad-2\int_{\mathbb{T}}\int_{\mathbb{T}}\int_{R_2(t,\eta)}^{R_1(t,\eta)}G\big(R_2(t,\theta)e^{\ii\theta},\ell_2e^{\ii\eta}\big)\rho_2(\theta)\ell_2d\ell_2d\theta d\eta.
		\end{align*}
		It follows from \eqref{psipol} and  \eqref{def zk}
		\begin{equation}\label{link E and r}
			\nabla E(r)=\begin{pmatrix}
				\displaystyle 2\int_{\mathbb{T}}\int_{R_2(t,\eta)}^{R_1(t,\eta)}G\big(w_1(t,\theta),\ell_2e^{\ii\eta}\big)\ell_2d\ell_2d\eta\vspace{0.1cm}\\
				\displaystyle -2\int_{\mathbb{T}}\int_{R_2(t,\eta)}^{R_1(t,\eta)}G\big(w_2(t,\theta),\ell_2e^{\ii\eta}\big)\ell_2d\ell_2d\eta
			\end{pmatrix}=\begin{pmatrix}
				2\psi\big(t,z_1(t,\theta)\big)\\
				-2\psi\big(t,z_2(t,\theta)\big)
			\end{pmatrix}.
		\end{equation}
		Finally,  \eqref{link E and r}, \eqref{link J and r DCE} and  \eqref{Edc eq rkPsi} give the desired result.	This achieves the proof of Proposition \ref{prop HAM eq Edc}.
	\end{proof}
	\subsection{Symplectic structure and invariance}
	In this section, we intend to  discuss the symplectic structure behind the Hamiltonian formulation already seen in Proposition \ref{prop HAM eq Edc}. We shall also discuss some symmetry properties such as the reversibility and the $\mathtt{m}-$fold persistence.\\
	$\blacktriangleright${ \it{Symplectic structure.}} We shall present the symplectic structure associated with the Hamiltonian equation \eqref{Hamilt form DCE}. To do so, we need to fix the phase space but before that we shall use the following fact that can be derived from  \eqref{Hamilt form DCE},
	$$\frac{d}{dt}\int_{\mathbb{T}}r(t,\theta)d\theta=0.$$
	This means that  the  area enclosed by the boundaries is conserved in time. 
	Therefore, we shall work with the following phase space with zero space average $L_{*}^2(\mathbb{T})\times L_{*}^2(\mathbb{T})$ defined by 
	$$
	L_{*}^{2}(\mathbb{T})\triangleq \bigg\{f=\sum_{j\in\mathbb{Z}^*}f_{j}\mathbf{e}_j\quad\textnormal{s.t.}\quad f_{-j}=\overline{f_j},\quad\sum_{j\in\mathbb{Z}^*}|f_j|^2<+\infty\bigg\},\qquad\; \mathbf{e}_{j}(\theta)\triangleq e^{\ii j\theta}.$$
	The equation \eqref{Hamilt form DCE} induces on the phase space $L_{*}^2(\mathbb{T})\times L_{*}^2(\mathbb{T})$ a symplectic structure given by the symplectic $2$-form
	\begin{align*}
			\mathcal{W}(r,h)&\triangleq\big\langle \mathcal{J}^{-1}r,h\big\rangle_{L^{2}(\mathbb{T})\times L^{2}(\mathbb{T})}\\
			&=\int_{\mathbb{T}}\partial_{\theta}^{-1}r_1(\theta)h_1(\theta)d\theta-\int_{\mathbb{T}}\partial_{\theta}^{-1}r_2(\theta)h_2(\theta)d\theta,
	\end{align*}
	where
	$$
	\partial_{\theta}^{-1}f\triangleq\sum_{j\in\mathbb{Z}^{*}}\tfrac{f_{j}}{\ii j}\mathbf{e}_{j} \quad\quad\textnormal{for }\quad \quad f=\sum_{j\in\mathbb{Z}^{*}}f_{j}\mathbf{e}_{j}.$$
	The corresponding Hamiltonian vector field is $X_{H}(r)\triangleq\mathcal{J}\nabla H(r)$ (where $\nabla$ is the $L^{2}\times L^2$-gradient). It is defined as the symplectic gradient of the Hamiltonian $H$ with respect to the symplectic $2$-form $\mathcal{W}$, namely
	$$dH(r)[\cdot]=\mathcal{W}(X_{H}(r),\cdot).$$
	Decomposing into Fourier series
	$$r=(r_1,r_2),\qquad\forall k\in\{1,2\},\quad r_k=\sum_{j\in\mathbb{Z}^{*}}r_{j,k}\mathbf{e}_{j}\qquad\textnormal{with}\qquad r_{-j,k}=\overline{r_{j,k}},$$
	then the  symplectic form $\mathcal{W}$ becomes
	\begin{equation}\label{Symp-F}
		\mathcal{W}(r,h)=\sum_{j\in\mathbb{Z}^{*}}\frac{1}{\ii j}\Big[r_{j,1}h_{-j,1}-r_{j,2}h_{-j,2}\Big],	\end{equation}
	or equivalently,
	\begin{align}\label{sympl ref}
		\nonumber \mathcal{W}&=\tfrac{1}{2}\sum_{j\in\mathbb{Z}^{*}}\frac{1}{\ii j}\Big[dr_{j,1}\wedge dr_{-j,1}-dr_{j,2}\wedge dr_{-j,2}\Big]\\
		&=\sum_{j\in\mathbb{N}^{*}}\frac{1}{\ii j}\Big[dr_{j,1}\wedge dr_{-j,1}-dr_{j,2}\wedge dr_{-j,2}\Big].
	\end{align}
	\begin{defin} {\bf (Symplectic)}\label{def:sympl}
		A linear transformation $\Phi$ of the phase space $L_*^2(\mathbb{T})\times L_*^2(\mathbb{T})$ is symplectic, if $\Phi$ 
		preserves the symplectic $2$-form $\mathcal{W}$, i.e. 
		$${\mathcal W}(\Phi u,\Phi v)={\mathcal W}(u,v),$$ 
		or equivalently
		$$\Phi^\top\circ\mathcal{J}^{-1}\circ\Phi=\mathcal{J}^{-1}.$$
	\end{defin}
	This allows to establish the following result which is useful later and  whose proof is straightforward.
	\begin{lem}\label{lem: charac symp}
		Let $\Phi$ be a matrix  space-Fourier multiplier with the form
		$$\Phi\begin{pmatrix}
				\rho_{1}\\
				\rho_{2}
			\end{pmatrix}
			\triangleq \sum_{j\in \mathbb{Z}^*}\, \Phi_j \begin{pmatrix}
				\rho_{j,1}\\
				\rho_{j,2}
			\end{pmatrix} \mathbf{e}_j\, , \qquad 
			\Phi_j\in M_{2}(\mathbb{R}),$$
		and consider the symplectic $2$-form $\mathcal{W}$ defined in \eqref{Symp-F}. Then $\Phi$ is symplectic if and only if
		$$\forall j\in \mathbb{Z}^*,\quad \Phi_j^\top \begin{pmatrix}
			1 &  0 \\
			0 & -1 
		\end{pmatrix} \Phi_j=\begin{pmatrix}
			1 &  0 \\
			0 & -1 
		\end{pmatrix}.$$
		
	\end{lem}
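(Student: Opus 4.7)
My plan is to read both sides of the symplectic identity from Definition~\ref{def:sympl} mode-by-mode in Fourier and verify that the operator identity reduces to the stated matrix identity on each block. Since $\mathcal{J}^{-1}=\operatorname{diag}(\partial_\theta^{-1},-\partial_\theta^{-1})$ acts on the basis element $\mathbf{e}_{j}$ as multiplication by $\tfrac{1}{\ii j}\bigl(\begin{smallmatrix}1 & 0\\ 0 & -1\end{smallmatrix}\bigr)$, and $\Phi$ acts as the matrix $\Phi_{j}$, the composition $\mathcal{J}^{-1}\Phi$ is a Fourier multiplier whose symbol I can read off immediately. The only nontrivial point is to identify the Fourier symbol of the transpose $\Phi^{\top}$. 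Using the real pairing $\langle f,g\rangle_{L^{2}}=\sum_{j\in\mathbb{Z}^{*}}f_{j}\,g_{-j}$ (valid because $g_{-j}=\overline{g_{j}}$ for real $g$) and extending it to the product space, a direct computation
$$
\langle \Phi u,v\rangle_{L^{2}\times L^{2}}\;=\;\sum_{j\in\mathbb{Z}^{*}}(\Phi_{j}\vec{u}_{j})^{\top}\vec{v}_{-j}\;=\;\sum_{j\in\mathbb{Z}^{*}}\vec{u}_{j}^{\top}\Phi_{j}^{\top}\vec{v}_{-j}
$$
shows that $\Phi^{\top}$ is itself a Fourier multiplier with symbol $(\Phi^{\top})_{j}=\Phi_{-j}^{\top}$ on $\mathbf{e}_{j}$.

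Next, the hypothesis $\Phi_{j}\in M_{2}(\mathbb{R})$ combined with the fact that $\Phi$ acts on the real phase space $L_{*}^{2}(\mathbb{T})\times L_{*}^{2}(\mathbb{T})$ forces the reality constraint $\Phi_{-j}=\Phi_{j}$ for every $j\in\mathbb{Z}^{*}$; indeed, for real test functions $(\Phi h)_{-j}=\overline{(\Phi h)_{j}}$ reads $\Phi_{-j}\vec{h}_{-j}=\overline{\Phi_{j}}\,\vec{h}_{-j}$, so $\Phi_{-j}=\overline{\Phi_{j}}=\Phi_{j}$. Plugging this into the previous step, the composed operator $\Phi^{\top}\mathcal{J}^{-1}\Phi$ has Fourier symbol $\tfrac{1}{\ii j}\Phi_{j}^{\top}\bigl(\begin{smallmatrix}1 & 0\\ 0 & -1\end{smallmatrix}\bigr)\Phi_{j}$ on mode $\mathbf{e}_{j}$. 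Equating this with the symbol $\tfrac{1}{\ii j}\bigl(\begin{smallmatrix}1 & 0\\ 0 & -1\end{smallmatrix}\bigr)$ of $\mathcal{J}^{-1}$ and canceling the scalar $\tfrac{1}{\ii j}\neq 0$, I obtain the stated equivalence. Both implications are read off the same chain of equalities, so the argument works in both directions with no genuine obstacle; the only delicate point to state cleanly is the identification of the symbol of the transpose under the real pairing.
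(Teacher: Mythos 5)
Your proof is correct. The paper omits the argument entirely (the lemma is introduced as having a ``straightforward'' proof), and what you wrote is precisely the intended mode-by-mode verification of the equivalent operator identity $\Phi^{\top}\circ\mathcal{J}^{-1}\circ\Phi=\mathcal{J}^{-1}$ from Definition~\ref{def:sympl}. You correctly isolate the two points that actually require care: the identification $(\Phi^{\top})_{j}=\Phi_{-j}^{\top}$ under the real pairing $\langle f,g\rangle=\sum_{j}f_{j}g_{-j}$, and the reality constraint $\Phi_{-j}=\Phi_{j}$ forced by $\Phi$ preserving the real phase space $L_{*}^{2}(\mathbb{T})\times L_{*}^{2}(\mathbb{T})$, without which one would only obtain the weaker condition $\Phi_{-j}^{\top}\bigl(\begin{smallmatrix}1&0\\0&-1\end{smallmatrix}\bigr)\Phi_{j}=\bigl(\begin{smallmatrix}1&0\\0&-1\end{smallmatrix}\bigr)$.
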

	
	\noindent $\blacktriangleright$ {\it{Reversibility.}} We shall analyze  the reversibility property of the equation \eqref{Hamilt form DCE} which is crucial to reduce by  symmetry  the phase space and remove most of the trivial resonances.
	We consider the involution $\mathscr{S}$ defined on the phase space $L_{*}^{2}(\mathbb{T})\times L_{*}^2(\mathbb{T})$ by 
	\begin{equation}\label{defin inv scr S}
		(\mathscr{S}r)(\theta)\triangleq r(-\theta),
	\end{equation}
	which satisfies
	\begin{equation}\label{prop inv scr S}
		\mathscr{S}^{2}=\textnormal{Id}\qquad \mbox{and}\qquad \mathcal{J}\circ\mathscr{S}=-\mathscr{S}\circ\mathcal{J}.
	\end{equation}
	Using the change of variables $\eta\mapsto-\eta$ and parity arguments, one gets from \eqref{Fkn Edc}
	$$\forall\,k,n\in\{1,2\},\quad F_{k,n}\circ\mathscr{S}=-\mathscr{S}\circ F_{k,n}.$$
	Then we conclude by Lemma \ref{lem eq EDC r}, \eqref{Hamilt form DCE} and \eqref{prop inv scr S} that
	the Hamiltonian vector field $X_H$ satisfies
	$$X_H\circ\mathscr{S}=-\mathscr{S}\circ X_H.$$
	Therefore, we will focus on quasi-periodic solutions to \eqref{Hamilt form DCE} satisfying the reversibility condition
	\begin{equation}\label{reversibility condition r}
		r(-t,-\theta)=r(t,\theta).
	\end{equation}
	\noindent $\blacktriangleright$ {\it The {$\mathtt{m}$}-fold symmetry.} Let $\mathtt{m}\geqslant1$ be  an integer and consider the transformation $\mathscr{T}_{\mathtt{m}}$ on the phase space $L_{*}^2(\mathbb{T})\times L_{*}^2(\mathbb{T})$ defined by
	\begin{equation}\label{def scr Tm}
		(\mathscr{T}_{\mathtt{m}}r)(\theta)\triangleq r\left(\theta+\tfrac{2\pi}{\mathtt{m}}\right).
	\end{equation}
	Then it is an immediate fact that 
	$$\mathscr{T}_{\mathtt{m}}^{\mathtt{m}}=\textnormal{Id}\qquad\textnormal{and}\qquad\mathcal{J}\circ\mathscr{T}_{\mathtt{m}}=\mathscr{T}_{\mathtt{m}}\circ\mathcal{J}.$$
	Using the change of variables $\eta\mapsto\eta+\tfrac{2\pi}{\mathtt{m}}$ we easily obtain from \eqref{Fkn Edc} 
	$$\forall\, k,n\in\{1,2\},\quad F_{k,n}\circ\mathscr{T}_{\mathtt{m}}=\mathscr{T}_{\mathtt{m}}\circ F_{k,n}.$$
	Therefore,
	$$X_{H}\circ\mathscr{T}_{\mathtt{m}}=\mathscr{T}_{\mathtt{m}}\circ X_H.$$
	Thus, the solutions that we shall be interested in satisfy  the $\mathtt{m}$-fold symmetry 
	\begin{equation}\label{m-fold symmetry r}
		r\left(t,\theta+\tfrac{2\pi}{\mathtt{m}}\right)=r(t,\theta).
	\end{equation}
	Consequently, we shall work in the closed  subspace $L_{\mathtt{m}}^2(\mathbb{T})\times L_{\mathtt{m}}^2(\mathbb{T})$ defined by
	\begin{equation}\label{def L2m}
		L_{\mathtt{m}}^2(\mathbb{T})\triangleq \bigg\{f=\sum_{j\in\mathbb{Z}^*}f_{j}\mathbf{e}_j\in L_{*}^2(\mathbb{T})\quad\textnormal{s.t.}\quad f_{j}\neq 0\,\Rightarrow\,j\in\mathbb{Z}_{\mathtt{m}}\bigg\}.
	\end{equation}
	\section{Linearization and symplectic transformation}
	In this section we shall compute the linear Hamiltonian obtained through the linearization of the equation \eqref{Hamilt form DCE} at any state close to the equilibrium solution $r=0.$ It turns out that at the equilibrium state we find a matrix Fourier multiplier that can be diagonalized in a suitable basis using a linear symplectic change of coordinates, for more details we refer to Lemma \ref{lem: properties P}. However, this procedure requires to work with higher $\mathtt{m}$-fold symmetries to avoid the double eigenvalue corresponding to the mode $j=2$ as well as potential hyperbolic directions.
	\subsection{Linearized operator}
	The main purpose is to explore the structure of the linearized operator which takes the form of a transport system with variable coefficients and subject to compact perturbations.
	\begin{lem}\label{lem lin op 1 DCE}
		The linearized equation of \eqref{Hamilt form DCE} at a small state $r$ is given by the linear Hamiltonian equation,
		\begin{equation}\label{defLr}
			\partial_{t}\begin{pmatrix}
				\rho_{1}\\
				\rho_{2}
			\end{pmatrix}
			=\mathcal{J} \mathbf{M}_r\begin{pmatrix}
				\rho_{1}\\
				\rho_{2}
			\end{pmatrix},\qquad \mathbf{M}_r\triangleq \begin{pmatrix}
				-V_{1}(r)-L_{1,1}({r}) & L_{1,2}(r)\\
				L_{2,1}(r) & V_{2}(r)-{L}_{2,2}(r)
			\end{pmatrix},
		\end{equation}
		where $V_{k}(r)$ are scalar functions and  ${L}_{k,n}({r})$ are nonlocal operators  defined by
		\begin{align}
			V_{k}(r)(t,\theta)&\triangleq \Omega+(-1)^k\big[V_{k,k}(r)(t,\theta)-V_{k,3-k}(r)(t,\theta)\big], \label{def Vpm}\\
			V_{k,n}(r)(t,\theta)&\triangleq \int_{\mathbb{T}}\log\big(A_{k,n}({r})(t,\theta,\eta)\big)\partial_{\eta}\Big(\tfrac{R_{n}(t,\eta)}{R_{k}(t,\theta)}\sin(\eta-\theta)\Big)d\eta,\label{def Vkn}\\
			{L}_{k,n}(r)\rho(t,\theta)&\triangleq \int_{\mathbb{T}}\rho(t,\eta)\log\big(A_{k,n}({r})(t,\theta,\eta)\big)d\eta\label{def mathbfLkn}
		\end{align}
		and  $A_{k,n}({r})$ and $R_{k}$ are respectively defined by \eqref{Akn} and \eqref{def Rk}. Moreover, if $r$ satisfies \eqref{reversibility condition r} and \eqref{m-fold symmetry r} with $\m\geqslant1$, then  the operator $\mathbf{M}_r$ is $\mathtt{m}$-fold reversibility preserving. 
	\end{lem}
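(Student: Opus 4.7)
I would work from the equivalent formulation \eqref{Edc eq rkPsi},
\[\partial_t r_k+\Omega\partial_\theta r_k=-\partial_\theta\big[\psi(t,z_k(t,\theta))\big],\]
and linearize the right-hand side at $r$ along $\rho=(\rho_1,\rho_2)$. The key observation is that the stream function $\psi$ depends on $r$ through two distinct channels: the shape of the annular domain $D_t$ (via the radial boundaries $R_n(\eta)$, $n=1,2$) and, via $z_k(t,\theta)=R_k(t,\theta)e^{\ii(\theta-\Omega t)}$, the evaluation point itself. Using the polar representation \eqref{psipol} of $\psi$ makes this splitting transparent, so that
\[d\big[\psi(t,z_k(t,\theta))\big][\rho] = d_{\text{shape}}\psi[\rho] + d_{\text{eval}}\psi[\rho].\]

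For $d_{\text{shape}}\psi[\rho]$, I would differentiate \eqref{psipol} through the radial limits using $\partial_{r_n}R_n=1/R_n$; the factors $R_n(\eta)$ arising from the radial Jacobian cancel against the $1/R_n$, leaving
\[\int_\T \log A_{k,1}(\theta,\eta)\rho_1(\eta)\,d\eta - \int_\T \log A_{k,2}(\theta,\eta)\rho_2(\eta)\,d\eta=L_{k,1}(r)\rho_1-L_{k,2}(r)\rho_2,\]
which are exactly the nonlocal operators from \eqref{def mathbfLkn}. For $d_{\text{eval}}\psi[\rho]$, the only $r$-dependence in $z_k$ is through $R_k$, so differentiation gives $\frac{\rho_k(\theta)}{R_k(\theta)}$ times the radial component at $z_k(\theta)$ of $\nabla\psi = -\mathbf{v}^\perp$, equivalently the tangential velocity. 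To put this coefficient into the explicit form \eqref{def Vkn}, I would invoke the Stokes representation \eqref{def Psii}, differentiate in $z$, evaluate on $\{z_k(\theta)\}$, and perform one integration by parts in $\eta$ so as to extract the factor $\partial_\eta\bigl(\tfrac{R_n(\eta)}{R_k(\theta)}\sin(\eta-\theta)\bigr)$ against $\log A_{k,n}$. Collecting the advection $\Omega\partial_\theta\rho_k$ with these two contributions and factoring $\partial_\theta$ outside to match $\mathcal{J}\mathbf{M}_r\rho$ identifies the local multiplier as $V_k(r)=\Omega+(-1)^k\bigl(V_{k,k}(r)-V_{k,3-k}(r)\bigr)$, while the sign structure $(-1)^{k+1}$ inherited from \eqref{Edc eq} reassembles the off-diagonal blocks precisely as in \eqref{defLr}.

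For the symmetry claim, when $r$ is $\mathtt{m}$-fold symmetric each $R_k$ inherits this symmetry, and the kernel $\log A_{k,n}(\theta,\eta)$ defined in \eqref{Akn} is invariant under the joint rotation $(\theta,\eta)\mapsto(\theta+\tfrac{2\pi}{\mathtt{m}},\eta+\tfrac{2\pi}{\mathtt{m}})$; by Lemma \ref{lem sym--rev} each $L_{k,n}(r)$ is $\mathtt{m}$-fold preserving, and $V_k(r)$ is $\mathtt{m}$-fold symmetric so multiplication by $V_k(r)$ is also $\mathtt{m}$-fold preserving. If in addition $r$ satisfies the reversibility \eqref{reversibility condition r}, then $R_k$ is even in $\theta$, $A_{k,n}$ is even in $(\theta,\eta)$, and the kernels of $L_{k,n}(r)$ together with $V_k(r)$ are even in their arguments. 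A second application of Lemma \ref{lem sym--rev}, combined with $\partial_\theta\circ\mathscr{I}_0=-\mathscr{I}_0\circ\partial_\theta$ and the fact that $\mathcal{J}$ anti-commutes with $\mathscr{I}_0$ while $\mathscr{I}_\m$ commutes with both $\mathcal{J}$ and the multipliers/kernels above, shows that $\mathbf{M}_r$ commutes with $\mathscr{I}_0$ (reversibility-preserving) and with $\mathscr{I}_\m$ ($\m$-fold preserving), hence is $\m$-fold reversibility preserving.

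The hard part will be the identification of the local multiplier $V_k$ in closed form. The chain-rule contribution through $z_k(\theta)$ is naturally a boundary value of $\nabla\psi$, which is only defined as a principal-value contour integral via \eqref{def Psii}; converting this singular representation to the explicit integrable form \eqref{def Vkn} requires the right integration by parts in $\eta$ and careful tracking of the signs $(-1)^{k+1}$ from \eqref{Edc eq} and of the factor $R_k(\theta)$ in the denominator of $V_{k,n}$, in order to match \eqref{def Vpm}--\eqref{def Vkn} on the nose. Once this bookkeeping is settled, the rest of the proof is a direct computation.
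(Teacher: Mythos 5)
Your proposal follows essentially the same route as the paper's proof: split the differential of $\psi\big(z_k(\theta)\big)$ into the shape derivative (differentiating the polar representation \eqref{psipol} through the radial limits with $\partial_{r_n}R_n=1/R_n$, which produces the $L_{k,n}$) and the evaluation-point derivative through $z_k$ (which produces the $V_{k,n}$), and then deduce the $\mathtt{m}$-fold reversibility-preserving property from the parity and translation symmetries of the kernels via Lemma \ref{lem sym--rev}. One correction to the step you flag as hard: no principal value and no integration by parts in $\eta$ are required, since differentiating \eqref{def Psii} in $\overline{z}$ under the integral gives $\partial_{\overline z}\widetilde{\psi}(r_n,z)=-\tfrac{1}{4\ii}\int_{\T}\log\big(|z-z_n(\eta)|^2\big)\partial_\eta z_n(\eta)\,d\eta$, an absolutely convergent integral in which the factor $\partial_\eta z_n(\eta)$ is already present; combining this with $d_{r_k}\overline{z}_k(\theta)[\rho_k]=\tfrac{\rho_k(\theta)}{R_k(\theta)}e^{-\ii(\theta-\Omega t)}$ and the identity $\textnormal{Im}\big(z_n(\eta)e^{-\ii(\theta-\Omega t)}\big)=R_n(\eta)\sin(\eta-\theta)$ yields \eqref{def Vkn} on the nose.
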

	\begin{proof} Throughout the proof, we shall alleviate the notation by removing the time dependence and keep $r$ when it is relevant. In view of \eqref{Edc eq rkPsi}, it suffices to linearize the term involving the stream function. All the computations are done at a formal level, but can be rigorously  justified in a classical way in the functional context introduced in Section \ref{sec funct set}. According to \eqref{def Psii} we can write
		\begin{align}
			\psi\big(z_k(\theta)\big)&=(-1)^{k+1}\Big[\widetilde{\psi}\big(r_k,z_k(\theta)\big)-\widetilde{\psi}\big(r_{3-k},z_k(\theta)\big)\Big],\label{Psikn recall0}\\
			\widetilde{\psi}(r_n,z)&\triangleq \tfrac{1}{4\ii}\int_{\mathbb{T}}(\overline{z}_n(\eta)-\overline{z})\Big[\log\left(|z-z_n(\eta)|^{2}\right)-1\Big]\partial_{\eta}z_n(\eta) d\eta.\label{Psikn recall}
		\end{align}
		Applying the chain rule yields
		\begin{equation}\label{dr psi k-k}
			\begin{aligned}
				d_{r_k}\Big(\psi\big(z_k(\theta)\big)\Big)[\rho_k](\theta)&=(-1)^{k+1}\Big[d_{r_k}\widetilde{\psi}\big(r_k,z_k(\theta)\big)[\rho_k](\theta)\\ &\quad +2\textnormal{Re}\Big((\partial_{\overline{z}}\widetilde{\psi})\big(r_k,z_k(\theta)\big)d_{r_k}\overline{z}_k(\theta)[\rho_k](\theta)\Big)\\
				&\quad -2\textnormal{Re}\Big((\partial_{\overline{z}}\widetilde{\psi})\big(r_{3-k},z_k(\theta)\big)d_{r_k}\overline{z}_k(\theta)[\rho_k](\theta)\Big)\Big],\\
				d_{r_{3-k}}\Big(\psi\big(z_k(\theta)\big)\Big)[\rho_{3-k}](\theta)&=(-1)^{k}d_{r_{3-k}}\widetilde{\psi}\big(r_{3-k},z_{k}(\theta)\big)[\rho_{3-k}](\theta).
			\end{aligned}
		\end{equation}
		From \eqref{psipol}, we have the following  expression of $\widetilde{\psi}$,
		$$\widetilde{\psi}\big(r_k,e^{-\ii\Omega t}z\big)=\int_{\mathbb{T}}\int_0^{R_k(\eta)}\log\big(|z-\ell_2e^{\ii\eta}|\big)\ell_2d\ell_2d\eta.$$
		Therefore, differentiating with respect to $r_k$ in the direction $\rho_k$, we obtain
		$$d_{r_k}\widetilde{\psi}\big(r_k,e^{-\ii\Omega t}z\big)[\rho_k](\theta)=\int_{\mathbb{T}}\rho_k(\eta)\log\big(|z-R_k(\eta)e^{\ii\eta}|\big)d\eta.$$
		It follows that, for any $k,n\in\{1,2\}$, we have by virtue of \eqref{def mathbfLkn}
		\begin{align}\label{lkn part}
			\nonumber	d_{r_k}\widetilde{\psi}\big(r_k,z_{n}(\theta)\big)[\rho_k](\theta)&=\int_{\mathbb{T}}\rho_k(\eta)\log\big(|R_n(\theta)e^{\ii\theta}-R_k(\eta)e^{\ii\eta}|\big)d\eta\\
			&={L}_{k,n}(r)\rho_k(\theta).
		\end{align}
		On the other hand, differentiating \eqref{def zk} leads to
		$$d_{r_k}\overline{z}_k(\theta)[\rho_k](\theta)=\tfrac{\rho_k(\theta)}{R_k(\theta)}e^{-\ii(\theta-\Omega t)}.$$
		In addition, by virtue of \eqref{Psikn recall}, we have
		$$\partial_{\overline{z}}\widetilde{\psi}(r_n,z)=-\tfrac{1}{4\ii}\int_{\mathbb{T}}\log\big(|z-z_n(\eta)|^2\big)\partial_{\eta}z_n(\eta)d\eta.$$
		Combining the last two identities we infer, for $k,n\in\{1,2\}$,
		\begin{align*}
			2\textnormal{Re}\Big((\partial_{\overline{z}}\widetilde{\psi})\big(r_n,z_k(\theta)\big)d_{r_k}\overline{z}_k(\theta)[\rho_k](\theta)\Big)&=-\tfrac{\rho_k(\theta)}{R_k(\theta)}\int_{\mathbb{T}}\log\big(|z_k(\theta)-z_n(\eta)|\big)\partial_{\eta}\textnormal{Im}\Big(z_n(\eta)e^{-\ii(\theta-\Omega t)}\Big)d\eta.
		\end{align*}
		From \eqref{def zk} we find the identity 
		$$\textnormal{Im}\Big(z_n(\eta)e^{-\ii(\theta-\Omega t)}\Big)=R_n(\eta)\sin(\eta-\theta).$$
		Then, by \eqref{def Vkn} we conclude that
		\begin{align}\label{Vkn part}
			2\textnormal{Re}\Big((\partial_{\overline{z}}\widetilde{\psi})\big(r_n,z_k(\theta)\big)d_{r_k}\overline{z}_k(\theta)[\rho_k](\theta)\Big)&=-
			V_{k,n}(r)(\theta)\rho_k(\theta).
		\end{align}
		Putting together \eqref{Psikn recall0}, \eqref{dr psi k-k}, \eqref{lkn part} and \eqref{Vkn part} yields
		\begin{align*}
			d_{r}\Big(\psi\big(z_k(\theta)\big)\Big)[\rho](\theta)&=d_{r_k}\Big(\psi\big(z_k(\theta)\big)\Big)[\rho_k](\theta)+d_{r_{3-k}}\Big(\psi\big(z_k(\theta)\big)\Big)[\rho_{3-k}](\theta)\\ &=(-1)^{k+1}\Big[{L}_{k,k}(r)\rho_k(\theta)-V_{k,k}(r)(\theta)\rho_k(\theta)\\
			&\qquad\qquad\qquad +V_{k,3-k}(r)(\theta)\rho_{k}(\theta)-{L}_{k,3-k}(r)\rho_{3-k}(\theta)\Big].
		\end{align*}
		This gives the expression of $\mathbf{M}_r$ in \eqref{defLr}. Next, assume that $r$ satisfies \eqref{reversibility condition r} and \eqref{m-fold symmetry r}. Then, from  \eqref{def Rk}, \eqref{def Vpm} and \eqref{def Vkn}, we get the following symmetry properties
		\begin{equation}\label{sym-m Vpm}
			V_{k}(r)(-t,-\theta)=V_{k}(r)(t,\theta)=V_{k}(r)\big(t,\theta+\tfrac{2\pi}{\mathtt{m}}\big).
		\end{equation}
		Similarly, from \eqref{def Rk} and \eqref{Akn}, one has
		\begin{equation}\label{sym Akn}
			A_{k,n}(r)(-t,-\theta,-\eta)=A_{k,n}(r)(t,\theta,\eta)=A_{k,n}(r)\big(t,\theta+\tfrac{2\pi}{\mathtt{m}},\eta+\tfrac{2\pi}{\mathtt{m}}\big).
		\end{equation}
		Thus, the symmetry properties of the operator $\mathbf{M}_r$  are immediate consequences of Lemma \ref{lem sym--rev}. The proof of Lemma \ref{lem lin op 1 DCE} is now complete.
	\end{proof}
	The next goal is to derive the explicit structure of the linearized operator at the equilibrium \mbox{state $r=0$.}
	\begin{lem}\label{lem lin op 2 DCE}
		The linearized  equation of \eqref{Hamilt form DCE} at $r=0$ writes,
		\begin{equation}\label{Ham eq-eq DCE}
			\partial_{t}\begin{pmatrix}
				\rho_{1}\\
				\rho_{2}
			\end{pmatrix}=\mathcal{J} \mathbf{M}_0\begin{pmatrix}
				\rho_{1}\\
				\rho_{2}
			\end{pmatrix}, \qquad \mathbf{M}_0\triangleq \begin{pmatrix}
				-V_{1}(0)-\mathcal{K}_1\ast\cdot & \mathcal{K}_b\ast\cdot\\
				\mathcal{K}_b\ast\cdot & V_{2}(0)-\mathcal{K}_1\ast\cdot
			\end{pmatrix},
		\end{equation}
		where  
		\begin{align}
			\forall k\in\{1,2\}, \quad 	\mathtt{v}_k(b)&\triangleq V_k({0})=\Omega+(2-k)\tfrac{1-b^2}{2}, 
			\label{def V10 V20}\\
			\label{def mathcalKkn}
			\forall x\in(0,1],	\quad \mathcal{K}_x(\theta)&\triangleq\log\big|1-xe^{\ii\theta}\big|.
		\end{align}
		The convolution is understood in the following sense
		$$\mathcal{K}_{x}\ast\rho(\theta)=\int_{\mathbb{T}}\mathcal{K}_{x}(\theta-\eta)\rho(\eta)d\eta.$$
		Given the space Fourier expansion of the real solutions 
		$$\forall k\in\{1,2\},\quad \rho_{k}(t,\theta)=\displaystyle\sum_{j\in\mathbb{Z}^{*}}\rho_{j,k}(t)e^{\ii j\theta},\qquad \textnormal{with}\qquad  \rho_{-j,k}=\overline{\rho_{j,k}},$$  the system \eqref{Ham eq-eq DCE} is equivalent to the following countable family of linear differential systems 
		\begin{equation}\label{def MjbO}
			\forall j\in\mathbb{Z}^*,\quad\begin{pmatrix}
				\dot{\rho}_{j,1}\vspace{0.1cm}\\
				\dot{\rho}_{j,2}
			\end{pmatrix}= M_{j}(b,\Omega)\begin{pmatrix}
				\rho_{j,1}\vspace{0.1cm}\\
				\rho_{j,2}
			\end{pmatrix}, 
			\quad M_{j}(b,\Omega)\triangleq \frac{\ii j}{|j|} \begin{pmatrix}
				-|j|\big(\Omega+\tfrac{1-b^2}{2}\big)+\tfrac{1}{2}& -\tfrac{b^{|j|}}{2}\\
				\tfrac{b^{|j|}}{2} & -|j|\Omega-\tfrac{1}{2}
			\end{pmatrix}. 
		\end{equation}
	\end{lem}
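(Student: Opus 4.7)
The strategy is to substitute $r=0$ into the general expression of $\mathbf{M}_r$ from Lemma \ref{lem lin op 1 DCE} and then compute each entry explicitly by exploiting the rotational invariance at the equilibrium, which turns all the integrals into convolution operators with known Fourier coefficients.

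First I would evaluate the radii and interaction distance: from \eqref{def Rk} and \eqref{Akn}, at the equilibrium state $r=0$ we have $R_k = b_k$ (constants in $\theta$) and
$$A_{k,n}(0)(\theta,\eta)=\big|b_k e^{\ii\theta}-b_n e^{\ii\eta}\big|.$$
For $k=n$, factoring out $b_k$ gives $A_{k,k}(0)(\theta,\eta)=b_k|1-e^{\ii(\eta-\theta)}|$; for $k\ne n$ we get $\log A_{k,n}(0)(\theta,\eta)=\log|1-b\,e^{\pm\ii(\eta-\theta)}|$. Then the operators $L_{k,n}(0)$ from \eqref{def mathbfLkn} become convolutions: constant contributions like $\log b_k$ disappear against the zero-mean subspace $L_{\m}^2(\mathbb{T})$, so $L_{1,1}(0)=L_{2,2}(0)=\mathcal{K}_1\ast\cdot$ and $L_{1,2}(0)=L_{2,1}(0)=\mathcal{K}_b\ast\cdot$, where the evenness of $\mathcal{K}_b$ makes the two off-diagonal kernels agree.

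Next I would compute the scalar functions $\mathtt{v}_k(b)=V_k(0)$ from \eqref{def Vpm}--\eqref{def Vkn}. Using the classical Fourier expansion
$$\mathcal{K}_x(\theta)=\log|1-x e^{\ii\theta}|=-\sum_{n\geqslant 1}\tfrac{x^n}{n}\cos(n\theta),\qquad x\in(0,1],$$
and the identity $\partial_\eta(\sin(\eta-\theta))=\cos(\eta-\theta)$, the integrals $V_{k,n}(0)$ pick out only the first Fourier coefficient of $\mathcal{K}_{b_n/b_k}$ (or $\mathcal{K}_{b_k/b_n}$), yielding $V_{1,1}(0)=V_{2,2}(0)=-\tfrac12$, $V_{1,2}(0)=-\tfrac{b^2}{2}$ and $V_{2,1}(0)=-\tfrac12$. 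Plugging these into \eqref{def Vpm} gives exactly $\mathtt{v}_1(b)=\Omega+\tfrac{1-b^2}{2}$ and $\mathtt{v}_2(b)=\Omega$, which proves \eqref{def V10 V20} and the matrix form of $\mathbf{M}_0$ claimed in \eqref{Ham eq-eq DCE}.

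Finally, to obtain the modal system \eqref{def MjbO}, I would act on a pure Fourier mode $\mathbf{e}_j$, $j\in\mathbb{Z}^*$. The operator $\mathcal{J}$ in \eqref{def calJ} multiplies by $\operatorname{diag}(\ii j,-\ii j)$, while by the convolution computation above $\mathcal{K}_1\ast \mathbf{e}_j=-\tfrac{1}{2|j|}\mathbf{e}_j$ and $\mathcal{K}_b\ast \mathbf{e}_j=-\tfrac{b^{|j|}}{2|j|}\mathbf{e}_j$. Multiplying the two diagonal matrices and pulling out the common factor $\tfrac{\ii j}{|j|}$ produces precisely $M_j(b,\Omega)$ as stated. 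The only routine care needed is the book-keeping of signs and the verification that at $r=0$ the function $\mathbf{M}_0$ becomes $\theta$-independent (hence a Fourier multiplier), which is immediate since all kernels depend only on $\eta-\theta$; no step presents a real obstacle, the proof being essentially a direct computation.
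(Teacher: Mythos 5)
Your proposal is correct and follows essentially the same route as the paper: substitute $r=0$, observe that all kernels become functions of $\eta-\theta$ (with the constant $\log b_k$ killed by the zero-mean constraint), use the Fourier identity $\int_{\mathbb{T}}\log|1-xe^{\ii\theta}|\cos(j\theta)\,d\theta=-\tfrac{x^{|j|}}{2|j|}$ to evaluate $V_{k,n}(0)$ and the convolution symbols, and assemble $M_j(b,\Omega)$. The only cosmetic imprecision is describing $V_{k,3-k}(0)$ as a Fourier coefficient of $\mathcal{K}_{b_n/b_k}$ — the kernel is $\mathcal{K}_b$ in both off-diagonal cases and the ratio $b_{3-k}/b_k$ enters only as a prefactor from \eqref{def Vkn} — but your final values are the correct ones.
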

	\begin{proof}
		We shall make use of the following formula which can be found in \cite[Lem. A.3]{CCG16} and \cite[Lem. 3.2]{R21}.
		\begin{align}
			\forall j\in\mathbb{Z}^*,\quad\forall x\in (0,1],\quad\int_{\mathbb{T}}\log\big(\big|1-xe^{\ii\theta}\big|\big)\cos(j\theta)d\theta&=-\frac{x^{|j|}}{2|j|}\cdot\label{int-2}
		\end{align}
		First observe that from \eqref{Akn}, one deduces for $r=0$ that  
		$$A_{k,n}({0})(\theta,\eta)=\left|b_{k}e^{i\theta}-b_{n}e^{i\eta}\right|=\left(b_{k}^{2}+b_{n}^{2}-2b_{k}b_{n}\cos(\eta-\theta)\right)^{\frac{1}{2}}$$
		leading in particular to 
		$$
		A_{1,2}({0})(\theta,\eta)=A_{2,1}({0})(\theta,\eta)=\left|1-be^{\ii(\eta-\theta)}\right|.
		$$
		Taking $r=0$ in \eqref{def Vkn} and  using the change of variables $\eta\mapsto\eta+\theta$ together with  \eqref{int-2} imply	\begin{align*}
			\forall\, k\in\{1,2\},\quad V_{k,k}(0)(\theta) &= \bigintssss_{\mathbb{T}}\log\left(\left|1-e^{\ii\eta}\right|\right)\cos(\eta)d\eta= -\frac{1}{2},
			\\
			V_{k,3-k}(0)(\theta)
			& =  \frac{b_{3-k}}{b_{k}}\bigintssss_{\mathbb{T}}\log\left(\left|1-be^{\ii\eta}\right|\right)\cos(\eta)d\eta= -\frac{b_{3-k}b }{2b_{k}}\cdot
		\end{align*}
		Combining the last identity with  \eqref{def Vpm} yields \eqref{def V10 V20}.
		Substituting $r=0$ into \eqref{def mathbfLkn} gives, since the space average of $\rho$ is zero,
		\begin{align*}
			\forall\, k\in\{1,2\},\qquad L_{k,k}({0})\rho
			&=\mathcal{K}_1\ast\rho\qquad\textnormal{and}\qquad L_{k,3-k}({0})\rho={\mathcal{K}}_b\ast\rho,
		\end{align*}
		where the kernels $\mathcal{K}_1$ and $\mathcal{K}_b$ are defined by \eqref{def mathcalKkn}. Applying once again  \eqref{int-2} allows to get 
		$$\forall j\in\mathbb{Z}^*,\qquad\mathcal{K}_1\ast\mathbf{e}_{j}=-\frac{1}{2|j|}\mathbf{e}_{j}\qquad\textnormal{and}\qquad{\mathcal{K}}_b\ast\mathbf{e}_{j}=-\frac{b^{|j|}}{2|j|}\mathbf{e}_j,\qquad \mathbf{e}_{j}(\theta)=e^{\ii j\theta}.$$
		Finally, gathering the previous computations leads to \eqref{def MjbO}. This achieves the proof of Lemma \ref{lem lin op 2 DCE}. 
	\end{proof}
	\subsection{Diagonalization at the equilibrium state}
	In this subsection we shall diagonalize the equilibrium matrix operator appearing in Lemma \ref{lem lin op 2 DCE}. This provides a new Hamiltonian system more adapted for the action-angles reformulation. Before that, we will establish the following result on the spectral structure of the matrix $M_{j}$ introduced in \eqref{def MjbO}.
	\begin{lem}\label{lem lin op 3 DCE}
		Let $\Omega>0$,
		$j\in\mathbb{Z}^*$ and $b\in(0,1)$. Then the eigenvalues of the matrix $ M_{j}(b,\Omega)$, defined in \eqref{def MjbO}, are given by $-\ii \Omega_{j,k}(b)$, $k\in\{1,2\}$, where
		\begin{equation}\label{omega jk b}
			\Omega_{j,k}(b)\triangleq \frac{j}{|j|}\bigg[\big(\Omega+\tfrac{1-b^2}{4}\big)|j|-\ii^{\mathtt{H}\big(\Delta_{j}(b)\big)} \tfrac{(-1)^{k}}{2}\sqrt{|\Delta_{j}(b)|}\bigg],
		\end{equation}
		with $\mathtt{H}\triangleq \mathbf{1}_{[0,\infty)}$ the Heaviside function and 
		\begin{equation}\label{def delta j}
			\Delta_{j}(b)\triangleq b^{2|j|}-\big(\tfrac{1-b^2}{2}|j|-1\big)^2.
		\end{equation}
		The corresponding  eigenspaces are  one dimensional and generated by  the vectors
		$$v_{j,1}(b)\triangleq 
		\begin{pmatrix}
			1 \\
			-a_{j}(b)
		\end{pmatrix}, \quad v_{j,2}(b)\triangleq  
		\begin{pmatrix}
			-a_{j}(b)\\
			1
		\end{pmatrix}, \quad  a_{j}(b)\triangleq \frac{b^{|j|}}{\tfrac{1-b^2}{2}|j|-1+\ii^{\mathtt{H}\big(\Delta_{j}(b)\big)} \sqrt{|\Delta_{j}(b)|}}\cdot$$
	\end{lem}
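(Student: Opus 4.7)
The proof is a direct characteristic–polynomial computation. I would begin by factoring out the scalar $\frac{\ii j}{|j|}$, writing
$$
M_j(b,\Omega)=\tfrac{\ii j}{|j|}N_j,\qquad N_j=\begin{pmatrix}\mathtt{a} & -\tfrac{b^{|j|}}{2}\\ \tfrac{b^{|j|}}{2} & \mathtt{d}\end{pmatrix},\qquad \mathtt{a}=-|j|\big(\Omega+\tfrac{1-b^2}{2}\big)+\tfrac{1}{2},\quad \mathtt{d}=-|j|\Omega-\tfrac{1}{2}.
$$
The trace and determinant give the characteristic polynomial of $N_j$:
$$
\mathrm{tr}(N_j)=-2|j|\big(\Omega+\tfrac{1-b^2}{4}\big),\qquad (\mathtt{a}-\mathtt{d})^{2}-b^{2|j|}=\big(\tfrac{1-b^2}{2}|j|-1\big)^{2}-b^{2|j|}=-\Delta_{j}(b),
$$
so the eigenvalues of $N_j$ are $\mu_{\pm}=-|j|(\Omega+\tfrac{1-b^2}{4})\pm\tfrac12\sqrt{-\Delta_{j}(b)}$, where the square root is interpreted as the real number $\sqrt{|\Delta_j(b)|}$ when $\Delta_j(b)\leqslant 0$ and as $\ii\sqrt{|\Delta_j(b)|}$ when $\Delta_j(b)>0$. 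This unified writing is precisely $\ii^{\mathtt{H}(\Delta_j(b))}\sqrt{|\Delta_j(b)|}$. Multiplying by $\tfrac{\ii j}{|j|}$ and identifying the result with $-\ii\,\Omega_{j,k}(b)$ yields \eqref{omega jk b} upon assigning the sign $(-1)^{k}$ to the discriminant square root.

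For the eigenvectors, I would read off $v_{j,k}$ from the second row of $N_j-\mu_k \mathrm{I}$, which gives $v_2=-\tfrac{b^{|j|}/2}{\mathtt{d}-\mu_k}v_1$. A short computation (using $\mu_{k}=-|j|(\Omega+\tfrac{1-b^2}{4})-(-1)^{k}\tfrac12\ii^{\mathtt{H}(\Delta_j(b))}\sqrt{|\Delta_j(b)|}$) yields
$$
\mathtt{d}-\mu_{k}=\tfrac{1}{2}\Big[\tfrac{1-b^2}{2}|j|-1+(-1)^{k}\ii^{\mathtt{H}(\Delta_j(b))}\sqrt{|\Delta_j(b)|}\Big].
$$
For $k=1$ this gives $v_2/v_1=-b^{|j|}\big/\big[\tfrac{1-b^2}{2}|j|-1-\ii^{\mathtt{H}(\Delta_j(b))}\sqrt{|\Delta_j(b)|}\big]$. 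Rationalising this fraction by multiplying top and bottom by $\tfrac{1-b^2}{2}|j|-1+\ii^{\mathtt{H}(\Delta_j(b))}\sqrt{|\Delta_j(b)|}$ and using the identity $\big(\tfrac{1-b^2}{2}|j|-1\big)^{2}-(\ii^{\mathtt{H}(\Delta_j(b))})^{2}|\Delta_j(b)|=b^{2|j|}$ (which is exactly the discriminant identity above, recalling that $(\ii^{\mathtt{H}(\Delta_j)})^{2}=\mathrm{sign}(-\Delta_j)$ on the two regimes) reduces this ratio to $-a_{j}(b)$, matching $v_{j,1}(b)$. The case $k=2$ is analogous and produces $v_{j,2}(b)$.

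Finally, the one-dimensionality of each eigenspace follows from the observation that the off-diagonal entries $\pm \tfrac{b^{|j|}}{2}$ are nonzero for $b\in(0,1)$, so $N_j-\mu_k \mathrm{I}$ always has rank at least one and hence a kernel of dimension at most one; this remains true even in the borderline case $\Delta_j(b)=0$, where $\mu_+=\mu_-$ and $N_j$ is a Jordan block. The only bookkeeping subtlety — and the main point that needs care in presentation — is a consistent treatment of the two regimes $\Delta_j(b)\gtrless 0$ via the Heaviside function $\mathtt{H}$, which is a purely notational unification rather than a genuine analytical obstacle.
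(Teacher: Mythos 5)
Your overall route --- characteristic polynomial of $N_j$, eigenvector read off from the second row of $N_j-\mu_k\mathrm{I}$, and the Heaviside bookkeeping unifying the two regimes of $\Delta_j(b)$ --- is the same as the paper's, and your trace/discriminant computation is correct. But the eigenvector step does not close as written, because of a sign error. From $-\ii\,\Omega_{j,k}(b)=\tfrac{\ii j}{|j|}\mu_k$ and \eqref{omega jk b} one gets $\mu_k=-|j|\big(\Omega+\tfrac{1-b^2}{4}\big)+\tfrac{(-1)^{k}}{2}\,\ii^{\mathtt{H}(\Delta_j(b))}\sqrt{|\Delta_j(b)|}$, with a \emph{plus} sign in front of $(-1)^k$, not the minus sign you wrote. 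With the correct sign, $\mathtt{d}-\mu_1=\tfrac12\big[\tfrac{1-b^2}{2}|j|-1+\ii^{\mathtt{H}(\Delta_j(b))}\sqrt{|\Delta_j(b)|}\big]$ is exactly half the denominator of $a_j(b)$, so $v_2/v_1=-a_j(b)$ on the nose and no rationalisation is needed. With your sign, $\mathtt{d}-\mu_1$ carries the conjugate factor $D_-\triangleq\tfrac{1-b^2}{2}|j|-1-\ii^{\mathtt{H}(\Delta_j(b))}\sqrt{|\Delta_j(b)|}$, and your own rationalisation identity $D_-D_+=b^{2|j|}$ gives $v_2/v_1=-D_+/b^{|j|}=-1/a_j(b)$, not $-a_j(b)$: the vector you produce is proportional to $v_{j,2}=(-a_j,1)$, so the concluding sentence of that step is false given your own intermediate formulas. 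The repair is only a relabelling of $k$, but as presented the eigenvalue--eigenvector pairing is wrong.

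The more substantive omission is that you never verify that $a_j(b)$ is well defined, i.e.\ that the denominator $\tfrac{1-b^2}{2}|j|-1+\ii^{\mathtt{H}(\Delta_j(b))}\sqrt{|\Delta_j(b)|}$ never vanishes for $j\in\mathbb{Z}^*$ and $b\in(0,1)$. When $\Delta_j(b)>0$ this is clear (nonzero imaginary part), but when $\Delta_j(b)\leqslant0$ the denominator is real and could a priori vanish, in which case the stated eigenvectors would be meaningless. The paper devotes the bulk of its proof to precisely this point: it factorises $\Delta_j(b)=\big(b^{j}-1+\tfrac{1-b^2}{2}j\big)\big(b^{j}+1-\tfrac{1-b^2}{2}j\big)$, shows the first factor is strictly positive for all $j\geqslant3$, deduces that $\Delta_j(b)\leqslant0$ forces $\tfrac{1-b^2}{2}|j|-1\geqslant b^{|j|}>0$ so the denominator is bounded below by $b^{|j|}$, and handles the low modes separately (for $j=1,2$ the denominator equals $-b^{2}\neq0$, with $\Delta_2\equiv 0$ and $a_2(b)=-1$). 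This verification is not a notational matter and must appear in the proof; your closing remark that the only subtlety is the Heaviside bookkeeping understates what remains to be checked.
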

	\begin{proof}
		According to   \eqref{def MjbO} we have
		$$
		\forall j\in \mathbb{Z}^*,\quad M_{-j}(b,\Omega)=\overline{M_{j}(b,\Omega)}.
		$$
		Thus, it suffices to consider the case $j\in \mathbb{N}^*$. 
		The eigenvalues of the matrix  $M_{j}(b,\Omega)$ are solutions of the following second order polynomial equation 
		\begin{equation}\label{poly2}
			X^{2}+\ii\left[\mu_j+\delta_j\right]X-\left[\mu_j\delta_j+\tfrac{b^{2j}}{4}\right]=0, \quad\textnormal{with}\quad\mu_j\triangleq j\big(\Omega+\tfrac{1-b^2}{2}\big)-\tfrac{1}{2}\quad\textnormal{and}\quad\delta_j\triangleq j\Omega+\tfrac{1}{2}.
		\end{equation}
		The discriminant of the last equation is given by
		\begin{align*}
			-(\mu_j+\delta_j)^2+b^{2j}+4\mu_j\delta_j &=b^{2j}-\big(\mu_j-\delta_j\big)^2\\
			&=\Delta_j(b)
		\end{align*}
		and the solutions to the equation are $-\ii \Omega_{j,k}(b)$, $k\in\{1,2\}$, where $\Omega_{j,k}(b)$ are given by \eqref{omega jk b}. The expression of the eigenvectors $v_{j,k}(b)$ follows by direct computations. Notice that, for all $j\in \mathbb{N}^*$,
		$$a_j(0)=0,\qquad a_j(1)=-1$$
		and for all $b\in (0,1)$, $a_j(b)$ is well-defined if $\Delta_j(b)>0$. We shall prove that $a_j(b)$ is still well-defined even when $\Delta_j(b)\leqslant 0$. In view of \eqref{def delta j}, we may write for all $j\in\mathbb{N}^*$ 
		\begin{equation}\label{deltaj}
			\Delta_j(b)=\Big(b^{j}-1+\tfrac{1-b^2}{2}j\Big)\Big(b^{j}+1-\tfrac{1-b^2}{2}j\Big).
		\end{equation}
		In particular we have
		$$\forall b\in (0,1),\quad \Delta_1(b)=-\frac14(1-b)^2(1+b)^2<0,\, \qquad \Delta_2(b)=0\qquad\textnormal{and}\qquad a_2(b)=-1.$$
		For $j\geqslant 3$, we can easily check that
		\begin{align}\label{deltaj1}
			\nonumber\forall b\in (0,1),\quad \tfrac{1-b^2}{2}j +b^{j}-1&=(1-b)\Big(\tfrac{j}{2}(1+b)-(1+b)-b^2\big[1+b+\cdots +b^{j-3}\big]\Big)\\
			\nonumber&\geqslant (1-b)\Big(\tfrac{j-2}{2}(1+b)-b^2(j-2)\Big)\\
			\nonumber&\geqslant  \tfrac{j-2}{2}(1-b)^2(1+2b)\\ &>0.
		\end{align}
		It follows that  $\Delta_j(b)\leqslant 0$ if and only if $b^{j}+1-\tfrac{1-b^2}{2}j\leqslant 0$. In this case the denominator of   $a_j(b)$ satisfies, for all $b\in (0,1)$, 
		$$
		\tfrac{1-b^2}{2}|j|-1+ \sqrt{-\Delta_{j}(b)}\geqslant \tfrac{1-b^2}{2}|j|-1\geqslant b^{j}>0.
		$$ 
		This ends the proof of Lemma \ref{lem lin op 3 DCE}.
	\end{proof}
	The next task is devoted to the study of the sign of the  discriminant $\Delta_j(b)$ defined in \eqref{def delta j}.
	\begin{lem}\label{lem:disper-relation}
		There exists a strictly increasing sequence $(\underline{b}_{j})_{j\geqslant 3}\subset(0,1)$ converging to $1$ such that
		$$
		\Big\{b\in(0,1)\quad\textnormal{s.t.}\quad\exists\, j\in\mathbb{N}\setminus\{0,1,2\}, \quad \Delta_{j}(b)=0\Big\}=\Big\{\underline{b}_{j},\,j\geqslant 3\Big\},
		$$
		with
		\begin{equation}\label{b3b4}
			\underline{b}_{3}=\frac12\qquad\textnormal{and}\qquad \underline{b}_{4}=\sqrt{\sqrt{2}-1}.
		\end{equation}
		Moreover, for any fixed $j_0\geqslant 3$ we have
		$$\forall b\in(\underline{b}_{j_0},\underline{b}_{j_0+1}),\quad  \begin{cases}
				\Delta_{j}(b)>0, & \textnormal{if } j\leqslant j_0, \\
				\Delta_{j}(b)<0, & \textnormal{if } j\geqslant j_0+1.
			\end{cases}$$
	\end{lem}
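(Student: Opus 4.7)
The plan is to reduce the sign analysis of $\Delta_j(b)$ to the sign of a single scalar factor and then perform a straightforward monotonicity study in both $b$ and $j$.

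First, I would start from the factorization \eqref{deltaj}, namely
$$\Delta_j(b)=\Big(b^{j}-1+\tfrac{1-b^2}{2}j\Big)\Big(b^{j}+1-\tfrac{1-b^2}{2}j\Big).$$
The inequality \eqref{deltaj1} established just above shows that the first factor is strictly positive for $b\in(0,1)$ and $j\geqslant 3$. Therefore the sign of $\Delta_j(b)$ coincides with the sign of
$$f(j,b)\triangleq b^{j}+1-\tfrac{1-b^2}{2}j,$$
which is exactly the function appearing in \eqref{constraint ellipse}. Proving the lemma is then equivalent to studying the zero locus of $b\mapsto f(j,b)$ on $(0,1)$ for each integer $j\geqslant 3$.

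Second, I would show existence and uniqueness of $\underline{b}_j$. A direct differentiation gives $\partial_b f(j,b)=j(b^{j-1}+b)>0$ on $(0,1)$, so $f(j,\cdot)$ is strictly increasing. The boundary values $f(j,0)=1-j/2<0$ (since $j\geqslant 3$) and $f(j,1)=2>0$ then produce a unique zero $\underline{b}_j\in(0,1)$. The strict monotonicity of the sequence follows from the telescoping identity
$$f(j+1,b)-f(j,b)=(b-1)\Big(b^{j}+\tfrac{1+b}{2}\Big)<0\qquad\textnormal{for all }b\in(0,1),$$
which evaluated at $b=\underline{b}_j$ yields $f(j+1,\underline{b}_j)<0$; combined with monotonicity of $f(j+1,\cdot)$ this forces $\underline{b}_{j+1}>\underline{b}_j$. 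For the convergence $\underline{b}_j\to 1$, I would argue by contradiction: if $\underline{b}_j\to c<1$ along a subsequence, then since $b^{j}\to 0$ uniformly on $[0,c+\varepsilon]$ for small $\varepsilon>0$ with $c+\varepsilon<1$, we would have $f(j,\underline{b}_j)\to -\infty$, contradicting $f(j,\underline{b}_j)=0$.

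Third, I would compute the two explicit values. For $j=3$, the equation $f(3,b)=b^{3}+1-\tfrac{3(1-b^{2})}{2}=0$ becomes $2b^{3}+3b^{2}-1=0$, which factors as $(2b-1)(b+1)^{2}=0$, giving the unique root $b=\tfrac12$ in $(0,1)$. For $j=4$, setting $x=b^{2}$ in $f(4,b)=b^{4}+2b^{2}-1=0$ yields $x=\sqrt{2}-1$, whence $\underline{b}_{4}=\sqrt{\sqrt{2}-1}$. Finally, the trichotomy is immediate from what precedes: fix $j_0\geqslant 3$ and $b\in(\underline{b}_{j_0},\underline{b}_{j_0+1})$. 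For $3\leqslant j\leqslant j_0$ we have $\underline{b}_j\leqslant \underline{b}_{j_0}<b$, so the monotonicity of $f(j,\cdot)$ yields $f(j,b)>f(j,\underline{b}_j)=0$, hence $\Delta_j(b)>0$; for $j\geqslant j_0+1$ we have $b<\underline{b}_{j_0+1}\leqslant \underline{b}_j$, so $f(j,b)<f(j,\underline{b}_j)=0$, hence $\Delta_j(b)<0$.

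There is no real obstacle in this proof — all steps are elementary calculus on a one-variable function. The only mildly delicate point is making sure the sequence $(\underline{b}_j)$ actually exhausts the zero set appearing in the statement, which is handled by the strict monotonicity of $f(j,\cdot)$ guaranteeing uniqueness of the root for each $j\geqslant 3$.
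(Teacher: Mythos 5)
Your proof is correct and follows essentially the same route as the paper: both reduce the sign of $\Delta_j$ to that of $f(j,b)=b^{j}+1-\tfrac{1-b^2}{2}j$ via the factorization \eqref{deltaj} together with the positivity of the first factor from \eqref{deltaj1}, and then use strict monotonicity in $b$ plus the boundary values to get a unique root $\underline{b}_j$, the same explicit factorizations for $j=3,4$, and the same deduction of the trichotomy. The only cosmetic differences are that you derive the monotonicity of $j\mapsto\underline{b}_j$ from the discrete identity $f(j+1,b)-f(j,b)=(b-1)\big(b^{j}+\tfrac{1+b}{2}\big)$ instead of the sign of $\partial_x f(b,x)$ for the paper's continuous extension, and that you supply an explicit argument for $\underline{b}_j\to 1$, which the paper leaves implicit.
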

	\begin{proof}
		For $j\geqslant 3$, one gets in view of \eqref{deltaj} and \eqref{deltaj1} that  the zeros of the function $b\mapsto \Delta_j(b)$ are the zeros of the function $b\mapsto b^{j}+1-\tfrac{1-b^2}{2}j$. To study the zeros of the latter discrete function let us consider its continuous version
		\begin{equation}\label{def:f}
			\forall (b,x)\in(0,1)\times[3,\infty),\quad f(b,x)\triangleq b^{x}+1-\tfrac{1-b^2}{2}x.
		\end{equation}
		Then, for fixed $x\in[3,\infty)$, one has $f(0,x)=1-\tfrac{x}{2}<0,\quad f(1,x)=2\quad$ and 
		\begin{equation}\label{variation-b}
			\forall b\in(0,1),\quad \partial_b f(b,x)=x\big(b^{x-1}+b\big)>0. 
		\end{equation}
		Consequently, by the intermediate value theorem, there exists a unique  $\underline{b}_x\in (0,1)$ satisfying 
		$$
		f(\underline{b}_x,x)=0,\qquad \forall b< \underline{b}_x,\quad f(b,x)<0\qquad  {\rm and}\qquad  \forall b>\underline{b}_x,\quad  f(b,x)>0.
		$$
		Moreover, since 
		\begin{equation}\label{variation-x}
			\forall b\in(0,1),\quad \partial_x f(b,x)={b}^{x}\log (b)-\tfrac{1-{b}^2}{2}<0.
		\end{equation}
		Then the function $x\mapsto  f(b,x)$ is strictly decreasing on $[3,\infty)$, which implies that $x\mapsto \underline{b}_x$ is strictly increasing on $[3,\infty)$.
		It follows that for any fixed integer  $j_0\geqslant 3$ we have
		\begin{equation}\label{deltaj2}
			f(\underline{b}_{j_0},{j_0})=0 \qquad 	{\rm and }\qquad \forall b\in(\underline{b}_{j_0},\underline{b}_{{j_0}+1}),\quad \begin{cases}
				f(b,j)>0, & \textnormal{if } j\leqslant j_0, \\
				f(b,j)<0, & \textnormal{if } j\geqslant j_0+1.
			\end{cases}
		\end{equation}
		Combining \eqref{deltaj}, \eqref{deltaj1}, \eqref{def:f} and \eqref{deltaj2} we conclude the desired result. Finally, \eqref{b3b4} follows from the identities 
		\begin{align*}
			f(b,3)&=b^{3}+\tfrac{3}{2}b^2-\tfrac{1}{2}\\
			&=(b+1)^2\big(b-\tfrac{1}{2}\big)
		\end{align*}
		and
		\begin{align*}
			f(b,4)&=b^{4}+2b^2-1\\
			&=\big({b^2+1+\sqrt{2}}\big)\left(b+\sqrt{\sqrt{2}-1}\right)\left(b-\sqrt{\sqrt{2}-1}\right).
		\end{align*}
		This ends the proof of Lemma \ref{lem:disper-relation}.
	\end{proof}
	We shall now focus on the conditions that guarantee the ellipticity of the eigenvalues based on Lemma \ref{lem:disper-relation}.
	\begin{cor}\label{coro-equilib-freq}
		Let $\Omega>0$,  $b^*\in(0,1)\setminus \big\{\underline{b}_{n},\, n\geqslant 3\big\} $ and set
		\begin{equation}\label{define-m}
			\mathtt{m}^*\triangleq \mathtt{m}^*(b^*)\triangleq \min\big\{n\geqslant 3 \quad\textnormal{s.t.}\quad \underline{b}_{n}>b^*\big\}.
		\end{equation}
		Then,  for all $|j|\geqslant\mathtt{m}^*$ and $b\in[0, b^*]$, the eigenvalues of the matrix $ M_{j}(b,\Omega)$, defined in \eqref{def MjbO}, are simple and  pure imaginary   $-\ii \Omega_{j,k}(b)$, $k\in\{1,2\}$,  with
		\begin{align}
			\Omega_{j,k}(b)&=\tfrac{j}{|j|}\bigg[\big(\Omega+\tfrac{1-b^2}{4}\big)|j|+ \tfrac{(-1)^{k+1}}{2}\sqrt{\big(\tfrac{1-b^2}{2}|j|-1\big)^2-b^{2|j|}}\bigg]\label{omegajk}\\
			&=\tfrac{j}{|j|}\bigg[\Big(\Omega+(2-k)\tfrac{1-b^2}{2}\Big)|j|+ \tfrac{(-1)^k}{2}+(-1)^{k+1} \mathtt{r}_{j}(b)\bigg], \label{ASYFR1+}
		\end{align}
		where 
		\begin{align}\label{ASYFR1-}
			\forall (n,m)\in\mathbb{N}^2,\quad  \forall\alpha\in\mathbb{N}^*,\quad  \sup_{{b \in [0,b^*]}\atop |j| \geqslant \mathtt{m}^*}|j|^\alpha|\partial_j^m\partial_ b^n \mathtt{r}_{j}(b)| \leqslant C_{n,m,\alpha}.
		\end{align}
		The corresponding  eigenspaces are real and generated by
		\begin{equation}\label{def-aj} 
			v_{j,1}(b)=
			\begin{pmatrix}
				1 \\
				-a_{j}(b)
			\end{pmatrix}, \quad v_{j,2}(b)= 
			\begin{pmatrix}
				-a_{j}(b)\\
				1
			\end{pmatrix},   \quad  a_{j}(b)=\frac{b^{|j|}}{\tfrac{1-b^2}{2}|j|-1+ \sqrt{\big(\tfrac{1-b^2}{2}|j|-1\big)^2-b^{2|j|}}}\cdot
		\end{equation}
		Moreover, there exists $\delta\triangleq \delta(b^*)>0$ such that for all $|j|\geqslant\mathtt{m}^*$ and $b\in[0, b^*]$,
		\begin{equation}\label{bound aj}
			0\leqslant a_{j}(b)<1-\delta
		\end{equation}
		and
		\begin{equation}\label{estimate-aj}
			\forall n,\alpha\in\mathbb{N}^*,\quad  \sup_{{b \in [0,b^*]}\atop |j| \geqslant \mathtt{m}^*}|j|^\alpha|\partial_{b}^{n}a_j(b)|<\infty.
		\end{equation}
	\end{cor}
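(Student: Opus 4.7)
The plan is to derive everything directly from Lemmas \ref{lem lin op 3 DCE} and \ref{lem:disper-relation}, combined with a Taylor expansion of the square root around its leading term. First I would argue that the hypotheses force $\Delta_{j}(b)<0$ uniformly on the range of interest. Indeed, by the definition of $\mathtt{m}^*$ in \eqref{define-m} we have $b^*<\underline{b}_{\mathtt{m}^*}$, and the monotonicity of $(\underline{b}_n)_{n\geqslant 3}$ in Lemma~\ref{lem:disper-relation} implies $b\leqslant b^*<\underline{b}_j$ for every $|j|\geqslant\mathtt{m}^*$; hence $\Delta_j(b)<0$ on $[0,b^*]\times\{|j|\geqslant \mathtt{m}^*\}$, with strict inequality. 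Substituting this into \eqref{omega jk b} with $\mathtt{H}(\Delta_j(b))=0$ yields the real formula \eqref{omegajk}; the simplicity is automatic since the discriminant of the characteristic polynomial \eqref{poly2} is non-zero, and the eigenvectors in \eqref{def-aj} come from Lemma~\ref{lem lin op 3 DCE} after checking they are real (the denominator in $a_j(b)$ is real and positive, see below).

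Next, from \eqref{deltaj1} and the sign condition $\Delta_j(b)<0$ I would deduce that $M_j(b)\triangleq \tfrac{1-b^2}{2}|j|-1$ is positive and that the ratio $\epsilon_j(b)\triangleq b^{|j|}/M_j(b)$ belongs to $[0,1)$. Factoring $M_j(b)$ out of the square root gives
\[
\sqrt{\big(\tfrac{1-b^2}{2}|j|-1\big)^2-b^{2|j|}}=M_j(b)\sqrt{1-\epsilon_j(b)^2}=M_j(b)-\tfrac{b^{2|j|}}{2M_j(b)}+\mathcal{O}\!\Big(\tfrac{b^{4|j|}}{M_j(b)^3}\Big).
\]
Plugging this in \eqref{omegajk} and separating the polynomial-in-$|j|$ part from the exponentially small correction produces the decomposition \eqref{ASYFR1+}, with $\mathtt{r}_j(b)$ essentially a smooth function of $b$ proportional to $b^{2|j|}/M_j(b)$ plus higher order terms in $\epsilon_j^2$. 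Since $b^*<1$, the factor $b^{2|j|}$ beats any polynomial $|j|^\alpha$, which will imply \eqref{ASYFR1-} after estimating the derivatives.

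For the bound \eqref{bound aj} I would rewrite \eqref{def-aj} as $a_j(b)=\tfrac{\epsilon_j(b)}{1+\sqrt{1-\epsilon_j(b)^2}}$, which is non-negative and strictly less than $1$ on $\epsilon_j(b)\in[0,1)$. Uniformity in $(b,j)$ splits into two regimes: for $|j|$ large, $\epsilon_j(b)\leqslant 2(b^*)^{|j|}/|j|\to 0$ uniformly on $[0,b^*]$, so $a_j(b)\to 0$; for $\mathtt{m}^*\leqslant|j|\leqslant J_0$ in a finite range, a compactness argument on $[0,b^*]$ and the strict inequality $b^*<\underline{b}_j$ provide a uniform bound $\epsilon_j(b)\leqslant 1-\eta$ hence $a_j(b)\leqslant 1-\delta$. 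Merging both regimes yields the global $\delta>0$.

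Finally, the estimates \eqref{estimate-aj} come from writing $a_j(b)=b^{|j|}/D_j(b)$ with $D_j(b)\triangleq M_j(b)+\sqrt{M_j(b)^2-b^{2|j|}}$ and applying Leibniz's rule. The denominator $D_j$ is smooth in $b$, bounded from below by $M_j(b)\gtrsim |j|$, and its $b$-derivatives grow at most polynomially in $|j|$, so $1/D_j$ and its derivatives are $\mathcal{O}(1/|j|)$ with polynomial-in-$|j|$ losses at each differentiation. Derivatives of the numerator $b^{|j|}$ produce factors $|j|^n b^{|j|-n}$, but the surviving exponential $(b^*)^{|j|}$ dominates any polynomial $|j|^\alpha$. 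The analogous argument handles the derivatives of $\mathtt{r}_j(b)$ for \eqref{ASYFR1-}. The main (very mild) obstacle is bookkeeping the mixed $\partial_j^m\partial_b^n$ derivatives in \eqref{ASYFR1-}; here one treats $|j|$ as a continuous parameter in the explicit representation and uses that every expression built from $M_j$, $b^{|j|}$, and $\sqrt{M_j^2-b^{2|j|}}$ is real-analytic on $[0,b^*]\times[\mathtt{m}^*,\infty)$, the only non-polynomial piece being $b^{|j|}$ whose exponential decay absorbs all polynomial losses coming from differentiation.
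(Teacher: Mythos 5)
Your proposal is correct and follows essentially the same route as the paper: reduce to $\Delta_j(b)<0$ via the threshold $b^*<\underline{b}_{\mathtt{m}^*}\leqslant\underline{b}_j$, factor $M_j(b)=\tfrac{1-b^2}{2}|j|-1$ out of the square root to isolate $\mathtt{r}_j(b)$, and control everything through the uniform bound $b^{|j|}/M_j(b)<1$ together with Leibniz's rule, the exponential $b^{|j|}$ absorbing all polynomial losses. The only (immaterial) difference is that for \eqref{bound aj} the paper uses the single uniform inequality $a_j(b)\leqslant b^{|j|}M_j(b)^{-1}\leqslant (b^*)^{\mathtt{m}^*}\big(\tfrac{1-(b^*)^2}{2}\mathtt{m}^*-1\big)^{-1}<1$ coming from $f(b^*,\mathtt{m}^*)<0$, whereas you split into a large-$|j|$ regime and a compactness argument on a finite range.
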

	\begin{proof}
		In view of \eqref{define-m}, \eqref{variation-b}, \eqref{variation-x} and \eqref{deltaj2}, for all $b\in[0,b^*]$ and $|j|\geqslant \mathtt{m}^*$ one has
		\begin{equation}\label{deltaj4}
			f(b,|j|)\leqslant   f(b^*,\mathtt{m}^*)<f(\underline{b}_{\mathtt{m}^*},\mathtt{m}^*)=0.
		\end{equation}
		Combining \eqref{deltaj}, \eqref{deltaj1}, \eqref{def:f} and \eqref{deltaj4} we find
		$$
		\forall b\in[0,b^*],\quad\forall |j|\geqslant\mathtt{m}^*,\quad  \Delta_j(b)<0.
		$$
		Then, by Lemma \ref{lem lin op 3 DCE}, we conclude \eqref{omegajk} and \eqref{def-aj}. On the other hand, the inequality  \eqref{deltaj4} also implies 
		\begin{equation}\label{ineq-b-star}
			f(b^*,\mathtt{m}^*)=\big(b^*\big)^{\mathtt{m}^*}+1-\tfrac{1-(b^*)^2}{2}\mathtt{m}^*<0.
		\end{equation}
		This gives in turn, for any $|j|\geqslant\mathtt{m}^*$ and $b\in[0,b^*]$, 
		\begin{equation}\label{nOme3}
			\tfrac{1-b^2}{2}|j|-1 \geqslant\tfrac{1-(b^*)^2}{2}\mathtt{m}^*-1> 0.
		\end{equation}
		Consequently, for any  $|j|\geqslant\mathtt{m}^*$ and $b\in[0,b^*]$ we may write, by \eqref{omegajk},   
		\begin{align*}
			\Omega_{j,k}(b)		&=\tfrac{j}{|j|}\Bigg[\Big(\Omega+\tfrac{1-b^2}{4}\Big)|j|+\tfrac{(-1)^{k+1}}{2} \Big(
			\tfrac{1-b^2}{2}|j|-1\Big) \sqrt{1-b^{2|j|}\Big(\tfrac{1-b^2}{2}|j|-1\Big)^{-2}}\Bigg]\\ &=\tfrac{j}{|j|}\bigg[\Big(\Omega+\tfrac{1-b^2}{4}\Big)|j|+\tfrac{(-1)^{k+1}}{2} \Big(\tfrac{1-b^2}{2}|j|-1\Big)+(-1)^{k+1} \mathtt{r}_{j}(b)\bigg],
		\end{align*}
		with
		\begin{align}\label{rngpm}
			\mathtt{r}_{j}(b)&\triangleq \tfrac{1}{2} \Big(\tfrac{1-b^2}{2}|j|-1\Big)\Bigg[ \sqrt{1-b^{2|j|}\Big(\tfrac{1-b^2}{2}|j|-1\Big)^{-2}}-1\Bigg].
		\end{align}
		By virtue of \eqref{ineq-b-star} and  \eqref{nOme3},  one has for all $|j|\geqslant\mathtt{m}^*$ and $b\in [0,b^*]\subset[0,1],$
		\begin{align}\label{est dec}
			b^{|j|}\Big(\tfrac{1-b^2}{2}|j|-1\Big)^{-1}\leqslant (b^*)^{|j|}\Big(\tfrac{1-(b^*)^2}{2}\mathtt{m}^*-1\Big)^{-1}\leqslant (b^*)^{\mathtt{m}^*}\Big(\tfrac{1-(b^*)^2}{2}\mathtt{m}^*-1\Big)^{-1}<1.
		\end{align}
		Thus expanding in power series the square root and using Leibniz rule  we get  after straightforward computations the bounds for $\mathtt{r}_{j}(b)$ claimed in 
		\eqref{ASYFR1-}. Next, we shall check the inequalities  \eqref{bound aj}.
		Using \eqref{def-aj}, \eqref{nOme3} and \eqref{est dec} we conclude the existence of $\delta=\delta(b^*)\in(0,1)$ such that for all $|j|\geqslant\mathtt{m}^*$ and $b\in[0, b^*]$,
		$$
		0\leqslant a_{j}(b)\leqslant b^{|j|}\Big(\tfrac{1-b^2}{2}|j|-1\Big)^{-1} <1-\delta.$$
		Therefore the estimate \eqref{estimate-aj} follows from \eqref{def-aj} and Leibniz rule.
		This achieves the proof of Corollary~\ref{coro-equilib-freq}. 
	\end{proof}
	As a consequence of Corollary \ref{coro-equilib-freq}, we may restrict the Fourier modes to the lattice $\mathbb{Z}_{\mathtt{m}}$ with $\mathtt{m}\geqslant\mathtt{m}^*$ in order to avoid the hyperbolic spectrum. Therefore, we shall work in the phase space $L^2_{\mathtt{m}}(\mathbb{T})\times L^2_{\mathtt{m}}(\mathbb{T})$ introduced in \eqref{def L2m}. In what follows, we introduce a suitable symplectic transformation $\mathbf{Q}$ used in the diagonalization of the linearized operator at the equilibrium state described by Lemma \ref{lem lin op 2 DCE}. This diagonalization is required latter in order to perform the reduction of the remainder term, see Proposition \ref{prop RR}. The linear transformation $\mathbf{Q}$ is defined by its action on any element $(\rho_1,\rho_2)\in L^2_{\mathtt{m}}(\mathbb{T})\times L^2_{\mathtt{m}}(\mathbb{T})$ with the Fourier expansions
	$$\forall k\in\{1,2\},\quad \rho_{k}=\displaystyle\sum_{j\in\mathbb{Z}_{\mathtt{m}}^*}\rho_{j,k}\mathbf{e}_{j},\qquad \textnormal{with}\qquad  \rho_{-j,k}=\overline{\rho_{j,k}}, \qquad \mathbf{e}_{j}(\theta)=e^{\ii j\theta}
	$$ 
	as follows 
	\begin{equation}\label{def-P}
		\mathbf{Q}\begin{pmatrix}
			\rho_{1}\\
			\rho_{2}
		\end{pmatrix}
		\triangleq \sum_{j\in \mathbb{Z}_{\mathtt{m}}^*}\, \mathbf{Q}_j \begin{pmatrix}
			\rho_{j,1}\\
			\rho_{j,2}
		\end{pmatrix} \mathbf{e}_{j}\, , \qquad 
		\mathbf{Q}_j\triangleq \tfrac{1}{\sqrt{1-a_{j}^2(b)}} \begin{pmatrix}
			1 &  -a_{j} (b) \\
			- a_{j}(b)& 1 
		\end{pmatrix} \, , 
	\end{equation}
	where $a_{j}(b)$ is given by \eqref{def-aj}. 
	We have the following properties.
	\begin{lem}\label{lem: properties P}
		Let $\mathtt{m}\geqslant \mathtt{m}^*,$  $b\in[0,b^*]$, where $\mathtt{m}^*$ and $b^*$ are defined in Corollary $\ref{coro-equilib-freq}.$ Then the following assertions hold true.
		\begin{enumerate}
			\item  $\mathbf{Q}:L^2_{\mathtt{m}}(\mathbb{T})\times L^2_{\mathtt{m}}(\mathbb{T})\to L^2_{\mathtt{m}}(\mathbb{T})\times L^2_{\mathtt{m}}(\mathbb{T})$ is symplectic with respect to the symplectic form \eqref{sympl ref}.\\ In addition $\mathbf{Q}^{\top}=\mathbf{Q}.$
			\item $\mathbf{Q}$ is invertible and its inverse is given by
			\begin{equation}\label{def P-1}
				\mathbf{Q}^{-1}\begin{pmatrix}
					\rho_{1}\\
					\rho_{2}
				\end{pmatrix}
				\triangleq \sum_{j\in \mathbb{Z}_{\mathtt{m}}^*}\, \mathbf{Q}_j^{-1} \begin{pmatrix}
					\rho_{j,1}\\
					\rho_{j,2}
				\end{pmatrix} \mathbf{e}_{j}\, , \qquad 
				\mathbf{Q}_j^{-1}=\tfrac{1}{\sqrt{1-a_{j}^2(b)}}\begin{pmatrix}
					1 &  a_{j}(b) \\
					a_{j}(b)& 1 
				\end{pmatrix}.
			\end{equation}
			\item The transformations $\mathbf{Q}^{+1}\triangleq \mathbf{Q}$ and $\mathbf{Q}^{-1}$ write
			\begin{equation}\label{PP-1}
				\mathbf{Q}^{\pm 1}=\begin{pmatrix}
					\mathbb{I}_{\mathtt{m}} &0\\
					0& \mathbb{I}_{\mathtt{m}}
				\end{pmatrix}+\begin{pmatrix}
					{P}_{1}\ast \cdot & \mp{P}_{2}\ast \cdot\\
					\mp{P}_{2}\ast \cdot& {P}_{1}\ast \cdot
				\end{pmatrix},
				\qquad {P}_{k}\triangleq\sum_{j\in\mathbb{Z}_{\mathtt{m}}^*}\tfrac{(2-k)+(-1)^k\sqrt{(2-k)+(-1)^k a_j^2(b)}}{\sqrt{1-a_j^2(b)}}\mathbf{e}_{j}.
			\end{equation}
			For any $k\in\{1,2\},$ the kernel $P_{k}$ satisfies the symmetry properties
			\begin{equation}\label{sym Pk}
				P_{k}(-\theta)=P_{k}(\theta)=P_{k}\big(\theta+\tfrac{2\pi}{\mathtt{m}}\big)
			\end{equation}
			and the estimate
			\begin{equation}\label{e-P1P2}
				\forall n\in\mathbb{N},\quad\|\partial_{\theta}^n{P}_{k}\ast\rho\|_{s}^{q,\gamma,\m}\lesssim \|\rho\|_{s}^{q,\gamma,\m}.
			\end{equation}
			\item The transformation $\mathbf{Q}$ diagonalizes the operator $\mathcal{J} \mathbf{M}_0$,
			where $\mathbf{M}_0$ is introduced in  \eqref{Ham eq-eq DCE}, namely
			\begin{equation}\label{def Lbmat}
				\mathbf{Q}^{-1}\mathcal{J} \mathbf{M}_0\mathbf{Q}= \mathcal{J} \mathbf{L}_0, \qquad  \mathbf{L}_0\begin{pmatrix}
					\rho_{1}\\
					\rho_{2}
				\end{pmatrix}
				\triangleq \sum_{j\in\mathbb{Z}_{\mathtt{m}}^*}\tfrac{1}{j}\begin{pmatrix}
					-\Omega_{j,1}(b) & 0\\
					0 & \Omega_{j,2}(b)
				\end{pmatrix} \begin{pmatrix}
					\rho_{j,1}\\
					\rho_{j,2}
				\end{pmatrix} \mathbf{e}_{j}.
			\end{equation}
			\item All the real-valued solutions of the linearized contour dynamics equation \eqref{Ham eq-eq DCE} have the form   
			\begin{equation}\label{lin-sol}
				\rho(t,\theta)=\sum_{j\in\mathbb{Z}_\mathtt{m}^*}\tfrac{A_j}{\sqrt{1-a_{j}^2(b)}} \begin{pmatrix}
					1  \\
					- a_{j}(b) 
				\end{pmatrix}e^{-\ii \left(\Omega_{1,j}(b)t- j\theta\right)}+\tfrac{B_j}{\sqrt{1-a_{j}^2(b)}} \begin{pmatrix}
					-a_{j} (b) \\
					1 
				\end{pmatrix}e^{-\ii \left(\Omega_{2,j}(b)t-j\theta\right)}
			\end{equation}
			with $\overline{A_j}=A_{-j},\,\overline{B_j}=B_{-j}.$
		\end{enumerate}
	\end{lem}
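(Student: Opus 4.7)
The proof amounts to five direct verifications at each Fourier mode $j\in\mathbb{Z}_{\mathtt{m}}^*$, relying on the fact that $\mathbf{Q}$ is a matrix Fourier multiplier whose symbol $\mathbf{Q}_j$ is built from the (suitably normalized) eigenvectors of $M_j(b,\Omega)$ provided by Lemma \ref{lem lin op 3 DCE}; all manipulations are well-posed thanks to the uniform bound $0\leqslant a_j(b)<1-\delta$ from Corollary \ref{coro-equilib-freq}. Point (1) is read off at the level of the symbols: the equality $\mathbf{Q}^\top=\mathbf{Q}$ is immediate since each $\mathbf{Q}_j$ is symmetric, and the symplectic property follows from Lemma \ref{lem: charac symp} after computing
\[
\mathbf{Q}_j^\top\begin{pmatrix}1&0\\0&-1\end{pmatrix}\mathbf{Q}_j=\tfrac{1}{1-a_j^2(b)}\begin{pmatrix}1&a_j(b)\\-a_j(b)&-1\end{pmatrix}\begin{pmatrix}1&-a_j(b)\\-a_j(b)&1\end{pmatrix}=\begin{pmatrix}1&0\\0&-1\end{pmatrix}.
\]
Point (2) is a direct $2\times 2$ matrix multiplication giving $\mathbf{Q}_j\mathbf{Q}_j^{-1}=I$.

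For point (3), I would expand $\mathbf{Q}_j^{\pm 1}=I+(\mathbf{Q}_j^{\pm 1}-I)$ and read off the diagonal entry $\tfrac{1-\sqrt{1-a_j^2(b)}}{\sqrt{1-a_j^2(b)}}$, matching the $k=1$ case of the formula for $P_k$ in \eqref{PP-1}, and the off-diagonal entry $\mp\tfrac{a_j(b)}{\sqrt{1-a_j^2(b)}}$, matching the $k=2$ case (recall $a_j(b)\geqslant 0$). The symmetry \eqref{sym Pk} is a consequence of $a_{-j}(b)=a_j(b)$ together with the restriction of the Fourier expansion to $\mathbb{Z}_{\mathtt{m}}^*$, while the convolution estimate \eqref{e-P1P2} follows from Corollary \ref{coro-equilib-freq}: by \eqref{estimate-aj} the coefficients $a_j(b)$ decay faster than any polynomial in $|j|$, and by \eqref{bound aj} the prefactor $\sqrt{1-a_j^2(b)}$ is uniformly bounded below. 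Hence $P_1,P_2\in C^\infty(\mathbb{T})$ with all derivatives uniformly bounded in $(\mu,\theta)$, and Young's inequality applied Sobolev-level by Sobolev-level yields the stated bound.

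Point (4) is the central assertion. The operator $\mathcal{J}\mathbf{M}_0$ is already a matrix Fourier multiplier whose symbol at mode $j$ equals $M_j(b,\Omega)$ from \eqref{def MjbO}. Since, by Lemma \ref{lem lin op 3 DCE}, the columns of $\sqrt{1-a_j^2(b)}\,\mathbf{Q}_j$ are eigenvectors of $M_j(b,\Omega)$ for the eigenvalues $-\ii\Omega_{j,1}(b),-\ii\Omega_{j,2}(b)$, we obtain
\[
\mathbf{Q}_j^{-1}M_j(b,\Omega)\mathbf{Q}_j=\begin{pmatrix}-\ii\Omega_{j,1}(b)&0\\0&-\ii\Omega_{j,2}(b)\end{pmatrix}=\begin{pmatrix}\ii j&0\\0&-\ii j\end{pmatrix}\begin{pmatrix}-\Omega_{j,1}(b)/j&0\\0&\Omega_{j,2}(b)/j\end{pmatrix},
\]
which is precisely the mode-$j$ symbol of $\mathcal{J}\mathbf{L}_0$ in \eqref{def Lbmat}. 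Finally, for point (5), applying $\mathbf{Q}^{-1}$ to \eqref{Ham eq-eq DCE} and invoking (4) decouples the transported system into scalar linear ODEs $\dot{\tilde\rho}_{j,k}=-\ii\Omega_{j,k}(b)\tilde\rho_{j,k}$ with general complex solutions $A_je^{-\ii\Omega_{j,1}(b)t}$ and $B_je^{-\ii\Omega_{j,2}(b)t}$; pushing back by $\mathbf{Q}$ and imposing reality of $\rho$ (which, since $\mathbf{Q}_j$ has real entries and $\Omega_{-j,k}(b)=-\Omega_{j,k}(b)$, reduces to $A_{-j}=\overline{A_j}$ and $B_{-j}=\overline{B_j}$) produces exactly \eqref{lin-sol}. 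There is no real analytic obstacle here: every step is an algebraic verification on each Fourier symbol, and the only analytic input is the uniform spectral gap $1-a_j^2(b)\geqslant 2\delta-\delta^2>0$ from Corollary \ref{coro-equilib-freq}, which simultaneously ensures the invertibility of $\mathbf{Q}_j$ and the smoothness and boundedness of the kernels $P_k$.
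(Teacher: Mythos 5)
Your proof is correct and follows essentially the same route as the paper: all five points are verified symbol-by-symbol on each Fourier mode $j\in\mathbb{Z}_{\mathtt{m}}^*$, using Lemma \ref{lem: charac symp} for the symplectic property, the eigenvector structure of $M_j(b,\Omega)$ from Lemma \ref{lem lin op 3 DCE} and Corollary \ref{coro-equilib-freq} for the diagonalization, and the rapid decay of $a_j(b)$ together with the uniform bound \eqref{bound aj} for the kernel estimates. The identification of the $P_k$ coefficients in \eqref{PP-1} and the factorization of the diagonal symbol into $\mathcal{J}\mathbf{L}_0$ match the paper's computation exactly.
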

	\begin{proof}
		\textbf{1.} Straightforward computations based on the definition \eqref{def-P} lead to 
		$$\forall j\in \mathbb{Z}_{\mathtt{m}}^*,\quad \mathbf{Q}_j^\top \begin{pmatrix}
			1 &  0 \\
			0 & -1 
		\end{pmatrix} \mathbf{Q}_j=\begin{pmatrix}
			1 &  0 \\
			0 & -1 
		\end{pmatrix}.$$
		Then, using Lemma \ref{lem: charac symp} we conclude the first point. Notice that one also has $$\forall j\in\mathbb{Z}_{\mathtt{m}}^*,\quad\mathbf{Q}_{j}^{\top}=\mathbf{Q}_{j},$$
		which implies $\mathbf{Q}^{\top}=\mathbf{Q}.$\\
		\textbf{2.} The second point follows easily by direct computations.\\
		\textbf{3.} In view of \eqref{def-P} and \eqref{def P-1} we can write
		$$
		\mathbf{Q}_j^{\pm 1}= \begin{pmatrix}
			1 &  0\\
			0&  1 
		\end{pmatrix}+\tfrac{1}{\sqrt{1-a_j^2(b)}} \begin{pmatrix}
			1-\sqrt{1-a_j^2(b)} &  \mp a_j (b) \\
			\mp a_j(b)& 1-\sqrt{1-a_j^2(b)} 
		\end{pmatrix} \, , 
		$$
		leading to \eqref{PP-1}. The symmetry properties \eqref{sym Pk} are obtained either by the fact that $a_{-j}(b)=a_{j}(b)$ (see \eqref{def-aj}) or by the restriction of the Fourier  modes in the definition of $P_k.$ The estimate \eqref{e-P1P2} is obtained by applying the Leibniz and the chain rules with \eqref{PP-1} and \eqref{estimate-aj}.
		\\
		\textbf{4.} Notice, from \eqref{def-P} and \eqref{def-aj}, that 
		$$
		\mathbf{Q}_j=\tfrac{1}{\sqrt{1-a_{j}^2(b)}}\Big(v_{j,1}(b)\quad v_{j,2}(b)\Big).
		$$
		Then, according to Corollary  \ref{coro-equilib-freq}, the matrices $\mathbf{Q}_j$ diagonalize the matrices $M_j(b,\Omega)$, defined in \eqref{def MjbO}, namely
		$$\forall j\in\mathbb{Z}_{\mathtt{m}}^*,\quad \mathbf{Q}_j^{-1}M_j(b,\Omega)\mathbf{Q}_j=-\ii\begin{pmatrix}
			\Omega_{j,1}(b) & 0\\
			0 & \Omega_{j,2}(b)
		\end{pmatrix}.$$
		Therefore we deduce from Lemma \ref{lem lin op 2 DCE} 
		\begin{align}\label{linerized-op-diag}
			\big(\mathbf{Q}^{-1}\mathcal{J}\mathbf{M}_0\mathbf{Q}\big)\begin{pmatrix}
				\rho_{1}\\
				\rho_{2}
			\end{pmatrix}
			&=\sum_{j\in\mathbb{Z}_{\mathtt{m}}^*}\begin{pmatrix}
				-\ii\Omega_{1,j}(b) & 0\\
				0 & -\ii\Omega_{2,j}(b)
			\end{pmatrix} \begin{pmatrix}
				\rho_{j,1}\\
				\rho_{j,2}
			\end{pmatrix} \mathbf{e}_{j}\\ &=\sum_{j\in\mathbb{Z}_{\mathtt{m}}^*}\ii j\begin{pmatrix}
				1& 0\\
				0 & -1
			\end{pmatrix}\begin{pmatrix}
				-\frac{1}{j}\Omega_{1,j}(b) & 0\\
				0 & \frac{1}{j} \Omega_{2,j}(b)
			\end{pmatrix} \begin{pmatrix}
				\rho_{j,1}\\
				\rho_{j,2}
			\end{pmatrix} \mathbf{e}_{j},\nonumber
		\end{align}
		which gives in turn \eqref{def Lbmat}.\\
		\textbf{5.} It follows immediately from the fourth point when solving the linear differential system \eqref{def MjbO}. This completes the proof of Lemma \ref{lem: properties P}.
	\end{proof}
	\subsection{Symplectic change of coordinates}
	In this section we intend to conjugate the nonlinear Hamiltonian  system \eqref{Hamilt form DCE}  with respect to the symplectic linear  transformation $\mathbf{Q}$ introduced in \eqref{def-P}. Notice that this transformation does not depend on the unknown $r$, it depends only on the parameter $b.$\\
	
	Let us consider the symplectic unknown  $\tilde{r}\triangleq \mathbf{Q}^{-1}r$. Then the Hamiltonian system \eqref{Hamilt form DCE} writes
	\begin{equation}\label{nonlinear-func0}
		\partial_t \tilde{r}=X_K(\tilde{r})=\mathcal{J}\nabla K(\tilde{r}), \qquad K(\tilde{r})\triangleq H(\mathbf{Q}\tilde{r}),\qquad \tilde{r}=(\tilde{r}_1,\tilde{r}_2)\in L^2_{\mathtt{m}}(\mathbb{T})\times L^2_{\mathtt{m}}(\mathbb{T}).
	\end{equation}
	Indeed, on one hand, we have
	\begin{equation}\label{symp1}
		\partial_{t}\widetilde{r}=\mathbf{Q}^{-1}\partial_{t}r=\mathbf{Q}^{-1}\mathcal{J}\nabla H(r)=\mathbf{Q}^{-1}\mathcal{J}(\nabla H)(\mathbf{Q}\widetilde{r}).
	\end{equation}
	On the other hand, 
	\begin{equation}\label{symp2}
		\mathcal{J}\nabla K(\widetilde{r})=\mathcal{J}\nabla\big(H(\mathbf{Q}\widetilde{r})\big)=\mathcal{J}\mathbf{Q}(\nabla H)(\mathbf{Q}\widetilde{r}).
	\end{equation}
	Therefore, if $ \mathbf{Q}\mathcal{J}\mathbf{Q}=\mathcal{J}$ then we have the equivalence
	\begin{equation}\label{symp3}
		\Big(\partial_{t}r=\mathcal{J}\nabla H(r)\quad\Leftrightarrow\quad\partial_{t}\widetilde{r}=\mathcal{J}\nabla K(\widetilde{r})\Big)
	\end{equation}
	and this last condition is true since Lemma \ref{lem: properties P}-1 implies that $\mathbf{Q}$ is symplectic and $\mathbf{Q}^{\top}=\mathbf{Q}.$\\
	
	We shall look for time quasi-periodic solutions of \eqref{nonlinear-func0}  in the form
	$$\tilde{r}(t,\theta)=\hat{r}(\omega t,\theta),$$
	where $\hat{r}:(\varphi,\theta)\in \mathbb{T}^{d+1}\to \mathbb{R}^2$ and $\omega\in \mathbb{R}^d$ 
	is a non-resonant vector frequency. In
	this setting, the equation \eqref{nonlinear-func0} becomes
	$$
	\omega\cdot\partial_{\varphi} \hat{r} =\mathcal{J}\nabla K(\hat{r}).
	$$
	In the sequel, we shall alleviate the notation and denote $\hat{r}$ simply by $r$. Hence, the foregoing equation becomes
	\begin{equation}\label{nonlinear-func}
		\omega\cdot\partial_{\varphi} {r} =\mathcal{J}\nabla K(r),  \qquad K({r})\triangleq H(\mathbf{Q}r),\qquad r=(r_1,r_2)\in L^2_{\mathtt{m}}(\mathbb{T})\times L^2_{\mathtt{m}}(\mathbb{T}).
	\end{equation}
	The main result of this section reads as follows.
	\begin{prop}\label{prop:conjP}
		The linearized equation of \eqref{nonlinear-func} at a given small state $r$  takes the form 
		$$
		\omega\cdot\partial_{\varphi}\rho +\mathfrak{L}_{r}\rho=0,\qquad\mathfrak{L}_r\triangleq -d_r\big(\mathcal{J}\nabla K({r})\big),
		$$ with 
		\begin{equation}\label{defLr2}
			\begin{aligned}
				\mathfrak{L}_{r} =& \begin{pmatrix}
					\partial_\theta\big(\mathcal{V}_1(r)\, \cdot\big) +\frac{1}{2}\mathcal{H}+\partial_\theta\mathcal{Q}\ast\cdot&0\\
					0& \partial_\theta\big(\mathcal{V}_2(r)\, \cdot\big)-\tfrac{1}{2}\mathcal{H} -\partial_\theta \mathcal{Q}\ast\cdot
				\end{pmatrix}
				\\
				&\qquad +\partial_\theta\begin{pmatrix}
					\mathcal{T}_{\mathscr{K}_{1,1}}(r)& \mathcal{T}_{\mathscr{K}_{1,2}}(r)\\
					\mathcal{T}_{\mathscr{K}_{2,1}}(r) & \mathcal{T}_{\mathscr{K}_{2,2}}(r)
				\end{pmatrix},
			\end{aligned}		
		\end{equation}
		where
		\begin{enumerate}
			\item the scalar functions $\mathcal{V}_{k}(r)\triangleq V_k(\mathbf{Q}r)$, $k\in\{1,2\}$ satisfy 
			\begin{equation}\label{es-f0}
				\|\mathcal{V}_{k}(r)-V_{k}(0)\|_{s}^{q,\gamma,\mathtt{m}}\lesssim\|r\|_{s+1}^{q,\gamma,\mathtt{m}},
			\end{equation}
			with $V_k(r)$ and $V_k(0)$ described in \eqref{def Vpm} and \eqref{def V10 V20}, respectively,
			\item the convolution operator $\mathcal{Q}\ast\cdot$ with even kernel $\mathcal{Q}$ is defined through
			\begin{align}\label{dcp calDeb0}
				\forall\, j\in\mathbb{Z}_{\mathtt{m}}^*,\quad \mathcal{Q}\ast \mathbf{e}_{j}\triangleq \tfrac{\mathtt{r}_{j}(b)}{|j|}\mathbf{e}_{j},
			\end{align}
			with $\mathtt{r}_{j}(b)$ being introduced in Corollary \ref{coro-equilib-freq},
			\item  for  $k,n\in\{1,2\},$ the operator $\mathcal{T}_{\mathscr{K}_{k,n}}({r})$ is an integral  operator in the form \eqref{Top-op1}  whose kernel
			$\mathscr{K}_{k,n}(r)$ is $\mathtt{m}$-fold reversibility preserving and satisfies the estimates: for any $s\geqslant s_0$
			\begin{equation*}
				\|\mathscr{K}_{k,n}(r)\|_{s}^{q,\gamma,\mathtt{m}}\lesssim\|r\|_{s+1}^{q,\gamma,\mathtt{m}}
			\end{equation*}
			and
			\begin{equation*}
				\|\Delta_{12}\mathscr{K}_{k,n}(r)\|_{s}^{q,\gamma,\m}\lesssim\|\Delta_{12}r\|_{s+1}^{q,\gamma,\m}+\|\Delta_{12}r\|_{s_0+1}^{q,\gamma,\m}\max_{\ell\in\{1,2\}}\|r_\ell\|_{s+1}^{q,\gamma,\m}.
			\end{equation*}
		\end{enumerate}
	\end{prop}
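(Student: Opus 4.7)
The plan is to derive \eqref{defLr2} by combining the explicit form of the linearization of $\nabla H$ at a general state (Lemma \ref{lem lin op 1 DCE}) with the symplectic diagonalization at the equilibrium (Lemma \ref{lem: properties P}). Since $K(r)=H(\mathbf{Q}r)$, $\mathbf{Q}^{\top}=\mathbf{Q}$ and $\mathbf{Q}^{-1}\mathcal{J}\mathbf{Q}^{-1}=\mathcal{J}$ (from the symplecticity in Lemma \ref{lem: properties P}-1), the chain rule gives $\mathcal{J}\nabla K(r)=\mathbf{Q}^{-1}\mathcal{J}\nabla H(\mathbf{Q}r)$; linearizing in $r$ yields
\[
\mathfrak{L}_r\;=\;-d_{r}\big(\mathcal{J}\nabla K\big)(r)\;=\;-\mathbf{Q}^{-1}\mathcal{J}\,\mathbf{M}_{\mathbf{Q}r}\,\mathbf{Q},
\]
with $\mathbf{M}_{\mathbf{Q}r}$ the matrix operator in \eqref{defLr}. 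I would then split $\mathbf{M}_{\mathbf{Q}r}=\mathbf{M}_0+(\mathbf{M}_{\mathbf{Q}r}-\mathbf{M}_0)$ and treat each piece separately.

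For the equilibrium contribution, Lemma \ref{lem: properties P}-4 gives $-\mathbf{Q}^{-1}\mathcal{J}\mathbf{M}_0\mathbf{Q}=-\mathcal{J}\mathbf{L}_0$, a diagonal Fourier multiplier whose action on $\mathbf{e}_{j}$ is $\ii\Omega_{j,k}(b)$. Using the decomposition \eqref{ASYFR1+} of Corollary \ref{coro-equilib-freq}, each symbol reads $\mathtt{v}_k(b)\,\ii j+\tfrac{(-1)^{k}}{2}\,\ii\,\mathtt{sgn}(j)+(-1)^{k+1}\,\ii\,\mathtt{sgn}(j)\,\mathtt{r}_{j}(b)$, which by \eqref{def Hilbert} and \eqref{dcp calDeb0} is exactly the Fourier action of $\mathtt{v}_k\,\partial_{\theta}\pm\tfrac{1}{2}\mathcal{H}\pm\partial_{\theta}\mathcal{Q}\ast\cdot$ (with $+$ for $k=1$, $-$ for $k=2$). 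This accounts for the diagonal part of \eqref{defLr2} at $r=0$.

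For the perturbative contribution, a direct computation of $-\mathcal{J}(\mathbf{M}_{\mathbf{Q}r}-\mathbf{M}_0)$ produces diagonal entries $\partial_{\theta}\big((V_{k}(\mathbf{Q}r)-\mathtt{v}_{k}(b))\,\cdot\big)\pm\partial_{\theta}\big(L_{k,k}(\mathbf{Q}r)-L_{k,k}(0)\big)$ and anti-diagonal entries $\mp\partial_{\theta}\big(L_{k,n}(\mathbf{Q}r)-L_{k,n}(0)\big)$. Combining the first piece with the equilibrium $\mathtt{v}_{k}\partial_{\theta}$ gives precisely $\partial_{\theta}(\mathcal{V}_{k}(r)\,\cdot)$ with $\mathcal{V}_{k}(r)=V_{k}(\mathbf{Q}r)$, and \eqref{es-f0} follows from the explicit form \eqref{def Vpm}-\eqref{def Vkn} via the composition law of Lemma \ref{lem funct prop} together with the boundedness of $\mathbf{Q}$ on Sobolev spaces (the one-derivative loss comes from $\partial_{\eta}(R_{n}\sin(\eta-\theta))$ in \eqref{def Vkn}). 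The remaining pieces -- the integral operators $\partial_{\theta}(L_{k,n}(\mathbf{Q}r)-L_{k,n}(0))$ together with the conjugation corrections coming from $\mathbf{Q}^{\pm 1}-I$ acting on the transport and integral parts -- must be recast as $\partial_{\theta}\mathcal{T}_{\mathscr{K}_{k,n}}(r)$. I would use Lemma \ref{lem: properties P}-3 to identify $\mathbf{Q}^{\pm 1}-I$ as matrix convolutions with smooth kernels $\pm P_{1},\pm P_{2}$ (which decay faster than any polynomial in $j$ thanks to \eqref{estimate-aj}), then absorb each commutator of such a convolution with $\partial_{\theta}(f\,\cdot)$ or with a smooth-kernel integral operator into a new smooth-kernel integral operator via integration by parts. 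The announced estimates on $\mathscr{K}_{k,n}(r)$ and $\Delta_{12}\mathscr{K}_{k,n}$ then follow from Lemma \ref{iter-kerns}, Lemma \ref{lem sym--rev}, and the tame product laws. Symmetry preservation ($\mathtt{m}$-fold and reversibility) of each $\mathscr{K}_{k,n}$ comes from \eqref{sym-m Vpm}-\eqref{sym Akn}, from the real even symbol structure of $\mathbf{Q}^{\pm 1}$, and from Lemma \ref{lem sym--rev}.

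The main technical obstacle I anticipate is the verification that $L_{k,k}(\mathbf{Q}r)-L_{k,k}(0)$ is genuinely a smooth-kernel integral operator with the predicted tame estimate. Its kernel is $\log\big(A_{k,k}(\mathbf{Q}r)(\theta,\eta)/A_{k,k}(0)(\theta,\eta)\big)$, whose numerator and denominator both vanish linearly at the diagonal $\theta=\eta$. A Taylor expansion of $R_{k}(\theta+h)e^{\ii(\theta+h)}$ in $h$ shows that each of $A_{k,k}(\mathbf{Q}r)/|\sin(\tfrac{\eta-\theta}{2})|$ and $A_{k,k}(0)/|\sin(\tfrac{\eta-\theta}{2})|$ extends smoothly across the diagonal and stays bounded below when $r$ is small, so their ratio and its logarithm are smooth in $(\theta,\eta)$. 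The Sobolev estimates for this ratio are obtained by the divided-difference bound of Lemma \ref{lem triche}, followed by the composition laws of Lemma \ref{lem funct prop}. The anti-diagonal case $k\neq n$ is simpler since $A_{k,n}(\mathbf{Q}r)$ and $A_{k,n}(0)$ remain bounded away from zero uniformly in $b\in(b_{*},b^{*})$ and for $r$ small.
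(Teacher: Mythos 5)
Your proposal is correct and follows essentially the same route as the paper: the chain rule plus symplecticity to write $\mathfrak{L}_r=-\mathbf{Q}^{-1}\mathcal{J}\mathbf{M}_{\mathbf{Q}r}\mathbf{Q}$, the split $\mathbf{M}_{\mathbf{Q}r}=\mathbf{M}_0+(\mathbf{M}_{\mathbf{Q}r}-\mathbf{M}_0)$ with the equilibrium part identified via \eqref{ASYFR1+} as $\mathtt{v}_k\partial_\theta\pm\tfrac12\mathcal{H}\pm\partial_\theta\mathcal{Q}\ast$, and the perturbative part recast as smooth-kernel integral operators through the factorization $A_{k,k}=2b_k|\sin(\tfrac{\eta-\theta}{2})|\,v_k$ together with Lemmas \ref{lem triche}, \ref{lem funct prop}, \ref{iter-kerns} and \ref{lem sym--rev}. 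Your anticipated technical obstacle (smoothness of $\log(A_{k,k}(\mathbf{Q}r)/A_{k,k}(0))$ across the diagonal) is resolved exactly as you describe, and the $\mathbf{Q}^{\pm1}-I$ conjugation corrections are absorbed into the kernels just as in the paper.
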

	
	\begin{proof}	 	
		According to \eqref{symp1}, \eqref{symp2} and \eqref{symp3}, one has 
		\begin{equation}\label{id:XKXH}
			\mathcal{J}\nabla K({r})=\mathbf{Q}^{-1}\mathcal{J}(\nabla H)(\mathbf{Q}{r}).
		\end{equation}
		Differentiating this identity with respect to $r$ in the direction $\rho$  and using Lemma \ref{lem lin op 1 DCE} lead to
		\begin{align}\label{drjk}
			\nonumber d_r \big(\mathcal{J}\nabla K({r})\big)\rho&=\mathbf{Q}^{-1}\big(d_r \big(\mathcal{J}\nabla H\big)(\mathbf{Q}{r})\big)\mathbf{Q}\rho
			\\ \nonumber &=\mathbf{Q}^{-1}\mathcal{J} \mathbf{M}_{\mathbf{Q}{r}}\mathbf{Q}\rho
			\\ &=\mathbf{Q}^{-1}\mathcal{J} \mathbf{M}_0\mathbf{Q}\rho +\mathbf{Q}^{-1}\mathcal{J}\big( \mathbf{M}_{\mathbf{Q}{r}}-\mathbf{M}_0\big)\mathbf{Q}\rho.
		\end{align}
		By virtue of \eqref{linerized-op-diag}, \eqref{ASYFR1+}, \eqref{def V10 V20}  and recalling \eqref{def Hilbert} and \eqref{dcp calDeb0} we may write
		\begin{equation}\label{p-1jl0p}
			\mathbf{Q}^{-1}\mathcal{J} \mathbf{M}_0\mathbf{Q}=-\begin{pmatrix}
				V_1(0)\partial_\theta+\tfrac{1}{2}\mathcal{H}+\partial_\theta\mathcal{Q}\ast\cdot  & 0\\
				0 & V_2(0)\partial_\theta-\tfrac{1}{2}\mathcal{H}-\partial_\theta\mathcal{Q}\ast\cdot  
			\end{pmatrix}.
		\end{equation}
		On the other hand, from Lemma \ref{lem lin op 1 DCE} and Lemma \ref{lem lin op 2 DCE} we deduce that
		\begin{equation}\label{lr-l0}
			\mathbf{M}_{\mathbf{Q}{r}}-\mathbf{M}_{0}=\begin{pmatrix}
				-f_{1}(r) & 0\\
				0 & f_{2}(r)
			\end{pmatrix}+\begin{pmatrix}
				-L_{1,1}(\mathbf{Q}{r})+\mathcal{K}_1\ast\cdot & L_{1,2}(\mathbf{Q}r)-\mathcal{K}_b\ast\cdot \\
				L_{2,1}(\mathbf{Q}r)-\mathcal{K}_b\ast\cdot  & -L_{2,2}(\mathbf{Q}r)+\mathcal{K}_1\ast\cdot 
			\end{pmatrix}
		\end{equation}
		with 
		\begin{align}\label{f0+-}
			\forall k\in\{1,2\},\quad f_{k}(r)\triangleq V_{k}(\mathbf{Q}r)-V_k(0).
		\end{align}
		Notice that if $r$ satisfies \eqref{reversibility condition r} and \eqref{m-fold symmetry r} then,
		by virtue of  \eqref{sym-m Vpm}, one gets that $f_k(r)$ itself  satisfies the same symmetries,
		\begin{align}
			f_k(r)(-\varphi,-\theta)=f_k(r)(\varphi,\theta)= f_k(r)\big(\varphi,\theta+\tfrac{2\pi}{\m}\big).\label{sym f0}
		\end{align} 
		We shall now turn to the quantitative estimates. For this aim, we shall first give the following decompositions. According to \eqref{Akn}, we can write
		\begin{align*}
			A_{k,k}({r})(\varphi,\theta,\eta)&=\left((R_{k}(\varphi,\theta)-R_{k}(\varphi,\eta))^{2}+4R_{k}(\varphi,\theta)R_{k}(\varphi,\eta)\sin^2\left(\tfrac{\eta-\theta}{2}\right)\right)^{\frac{1}{2}}\\
			&=2b_k\left|\sin\Big(\tfrac{\eta-\theta}{2}\Big)\right|\Bigg(\bigg(\frac{R_{k}(\varphi,\theta)-R_{k}(\varphi,\eta)}{2b_k\sin\big(\tfrac{\eta-\theta}{2}\big)}\bigg)^{2}+\frac{1}{b_k^2}R_{k}(\varphi,\theta)R_{k}(\varphi,\eta)\Bigg)^{\frac{1}{2}}\\
			&\triangleq 2b_k\left|\sin\left(\tfrac{\eta-\theta}{2}\right)\right|v_{k}({r})(\varphi,\theta,\eta).
		\end{align*}
		the function $v_k(r)$ is smooth with respect to each variables and with respect to $r$. In addition $v_k(0)=1.$ An application of Lemma \ref{lem triche} and Lemma \ref{lem funct prop}-(iii) gives
		\begin{equation}\label{vk-e}
			\|v_{k}(r)-1\|_{s}^{q,\gamma,\mathtt{m}}\lesssim\|r\|_{s+1}^{q,\gamma,\mathtt{m}},\qquad\|\Delta_{12}v_{k}\|_{s}^{q,\gamma,\mathtt{m}}\lesssim\|\Delta_{12}r\|_{s+1}^{q,\gamma,\mathtt{m}}+\|\Delta_{12}r\|_{s_0+1}^{q,\gamma,\mathtt{m}}\max_{\ell\in\{1,2\}}\|r_{\ell}\|_{s+1}^{q,\gamma,\mathtt{m}}.
		\end{equation}
		We can also write
		\begin{align*}
			A_{k,3-k}^2(r)(\varphi,\theta,\eta)&=R_{k}^2(\varphi,\theta)+R_{3-k}^2(\eta)-2R_{k}(\varphi,\theta)R_{3-k}(\varphi,\eta)\cos(\eta-\theta)\\
			&=A_{k,3-k}^2(0)(\varphi,\theta,\eta)\big(1+h_k(r)(\varphi,\theta,\eta)\big),
		\end{align*}
		where
		$$h_k(r)(\varphi,\theta,\eta)\triangleq 2\frac{r_{k}(\varphi,\theta)+r_{3-k}(\varphi,\eta)-\cos(\eta-\theta)\big(R_{k}(\varphi,\theta)R_{3-k}(\varphi,\eta)-b_kb_{3-k}\big)}{b_{k}^2+b_{3-k}^2-2b_kb_{3-k}\cos(\eta-\theta)}\cdot$$
		The function $h_k$ satisfies by composition laws in Lemma \ref{lem funct prop}-(iii)
		\begin{equation}\label{h-e}
			\|h_k(r)\|_{s}^{q,\gamma,\mathtt{m}}\lesssim\|r\|_{s}^{q,\gamma,\mathtt{m}},\qquad\|\Delta_{12}h_k\|_{s}^{q,\gamma,\mathtt{m}}\lesssim\|\Delta_{12}r\|_{s}^{q,\gamma,\mathtt{m}}.
		\end{equation}
		According to the foregoing decompositions and the estimates \eqref{vk-e}, \eqref{h-e} together with \eqref{f0+-}, \eqref{def Vpm}, \eqref{def Vkn}, the product and composition laws in Lemma \ref{lem funct prop}-(iii) imply
		\begin{equation}\label{es-f0-00}
			\|f_k(r)\|_{s}^{q,\gamma,\m}\lesssim\|r\|_{s+1}^{q,\gamma,\m}
		\end{equation}
		and
		\begin{equation}\label{es-f0-diff}
			\|\Delta_{12}f_k\|_{s}^{q,\gamma,\m}\lesssim\|\Delta_{12}r\|_{s+1}^{q,\gamma,\m}+\|\Delta_{12}r\|_{s_0+1}^{q,\gamma,\m}\max_{\ell\in\{1,2\}}\|r_\ell\|_{s+1}^{q,\gamma,\m}.
		\end{equation}
		In view of \eqref{def mathbfLkn} and \eqref{def mathcalKkn}, we also have the following decompositions for $k\in\{1,2\}$
		\begin{equation}\label{lkk-lk3-k}
			L_{k,k}(\mathbf{Q}r)=\mathcal{K}_1\ast\cdot+\mathcal{T}_{\mathbb{K}_{k,k}}(r)\qquad\textnormal{and}\qquad L_{k,3-k}(\mathbf{Q}r)=\mathcal{K}_b\ast\cdot+\mathcal{T}_{\mathbb{K}_{k,3-k}}(r),
		\end{equation}
		where $\mathcal{T}_{\mathbb{K}_{k,k}}(r)$ and  $\mathcal{T}_{\mathbb{K}_{k,3-k}}(r)$ are integral operators with smooth kernels
		$$\mathbb{K}_{k,k}(r)(\varphi,\theta,\eta)\triangleq \log\big(v_{k}(\mathbf{Q}r)(\varphi,\theta,\eta)\big)$$
		and 
		$$\mathbb{K}_{k,3-k}(r)(\varphi,\theta,\eta)\triangleq \tfrac{1}{2}\log\big(1+h_k(\mathbf{Q}r)(\varphi,\theta,\eta)\big).$$
		Moreover, if $r$ satisfies \eqref{reversibility condition r} and \eqref{m-fold symmetry r} then, for all $k,n\in\{1,2\}$, one can easily check, using in particular \eqref{sym Akn}, that 
		\begin{align}
			\mathbb{K}_{k,n}(r)(-\varphi,-\theta,-\eta)=\mathbb{K}_{k,n}(r)(\varphi,\theta,\eta)= \mathbb{K}_{k,n}(r)\big(\varphi,\theta+\tfrac{2\pi}{\mathtt{m}},\eta+\tfrac{2\pi}{\mathtt{m}}\big).\label{sym-m Kkn}
		\end{align} 
		It is clear that \eqref{PP-1}-\eqref{e-P1P2} imply the continuity of $\mathbf{Q}$ on $\mathbf{H}_{\mathtt{m}}^s.$ Thus, applying the composition laws in Lemma \ref{lem funct prop}-(iii) together with \eqref{vk-e} and \eqref{h-e}, we infer
		\begin{equation}\label{est-Kkn}
			\forall (k,n)\in\{1,2\}^2,\quad\|\mathbb{K}_{k,n}(r)\|_{s}^{q,\gamma,\m}\lesssim\|r\|_{s+1}^{q,\gamma,\m}
		\end{equation}
		and
		\begin{equation}\label{est-Kkn-diff}
			\forall (k,n)\in\{1,2\}^2,\quad\|\Delta_{12}\mathbb{K}_{k,n}(r)\|_{s}^{q,\gamma,\m}\lesssim\|\Delta_{12}r\|_{s+1}^{q,\gamma,\m}+\|\Delta_{12}r\|_{s_0+1}^{q,\gamma,\m}\max_{\ell\in\{1,2\}}\|r_\ell\|_{s+1}^{q,\gamma,\m}.
		\end{equation}
		Putting together \eqref{lr-l0} and \eqref{lkk-lk3-k} we deduce that
		$$\mathbf{Q}^{-1}\mathcal{J}\big( \mathbf{M}_{\mathbf{Q}{r}}-\mathbf{M}_0\big)\mathbf{Q}=-\mathbf{Q}^{-1}\partial_\theta\begin{pmatrix}
			f_1(r)& 0\\
			0 & f_2(r)
		\end{pmatrix}\mathbf{Q}-\mathbf{Q}^{-1}\partial_\theta\begin{pmatrix}
			\mathcal{T}_{\mathbb{K}_{1,1}}(r) & -\mathcal{T}_{\mathbb{K}_{1,2}}(r) \\
			\mathcal{T}_{\mathbb{K}_{2,1}}(r) & -\mathcal{T}_{\mathbb{K}_{2,2}}(r) 
		\end{pmatrix}\mathbf{Q}.
		$$
		Combining the last identity with \eqref{drjk} and \eqref{p-1jl0p} we find \eqref{defLr2} with
		\begin{align*}
			\begin{pmatrix}
				\mathcal{T}_{\mathscr{K}_{1,1}}(r)& \mathcal{T}_{\mathscr{K}_{1,2}}(r)\\
				\mathcal{T}_{\mathscr{K}_{2,1}}(r) & \mathcal{T}_{\mathscr{K}_{2,2}}(r)
			\end{pmatrix}&\triangleq \mathbf{Q}^{-1}\begin{pmatrix}
				f_1(r)& 0\\
				0 & f_2(r)
			\end{pmatrix}\mathbf{Q}-\begin{pmatrix}
				f_1(r) & 0\\
				0 & f_2(r)
			\end{pmatrix}+\mathbf{Q}^{-1}\begin{pmatrix}
				\mathcal{T}_{\mathbb{K}_{1,1}}(r) & -\mathcal{T}_{\mathbb{K}_{1,2}}(r) \\
				\mathcal{T}_{\mathbb{K}_{2,1}}(r) & -\mathcal{T}_{\mathbb{K}_{2,2}}(r) 
			\end{pmatrix}\mathbf{Q}.
		\end{align*}
		By virtue of the preceding estimate, \eqref{PP-1}, \eqref{e-P1P2}, \eqref{es-f0-00} and \eqref{est-Kkn}, together with Lemma \ref{iter-kerns} we find through straightforward computations that the kernels $\mathscr{K}_{k,n}(r)$ satisfy the following estimate 
		\begin{equation*}
			\forall (k,n)\in\{1,2\}^2,\quad\|\mathscr{K}_{k,n}(r)\|_{s}^{q,\gamma,\m}\lesssim\|r\|_{s+1}^{q,\gamma,\m}.
		\end{equation*}
		Similar argument as before  using in particular \eqref{est-Kkn-diff} and \eqref{es-f0-diff} implies
		\begin{equation*}
			\forall (k,n)\in\{1,2\}^2,\quad\|\Delta_{12}\mathscr{K}_{k,n}(r)\|_{s}^{q,\gamma,\m}\lesssim\|\Delta_{12}r\|_{s+1}^{q,\gamma,\m}+\|\Delta_{12}r\|_{s_0+1}^{q,\gamma,\m}\max_{\ell\in\{1,2\}}\|r_\ell\|_{s+1}^{q,\gamma,\m}.
		\end{equation*}
		As for the symmetry property,  it can be obtained easily from the structure of the kernel. Actually, one may check  from   \eqref{sym-m Kkn}, \eqref{sym f0} and \eqref{sym Pk} that
		\begin{align}
			r(-\varphi,-\theta)=r(\varphi,\theta)\quad&\Longrightarrow\quad\mathscr{K}_{k,n}(r)(-\varphi,-\theta,-\eta)=\mathscr{K}_{k,n}(r)(\varphi,\theta,\eta),\label{sym scrKkn}\\
			r\big(\varphi,\theta+\tfrac{2\pi}{\mathtt{m}}\big)=r(\varphi,\theta)\quad&\Longrightarrow\quad\mathscr{K}_{k,n}(r)\big(\varphi,\theta+\tfrac{2\pi}{\mathtt{m}},\eta+\tfrac{2\pi}{\mathtt{m}}\big)=\mathscr{K}_{k,n}(r)(\varphi,\theta,\eta).\label{sym-m scrKkn}
		\end{align}
		The proof of the desired results is now complete.
	\end{proof}
	\begin{remark}\label{remark-lin-eq-eq}
		The linearized equation of \eqref{nonlinear-func0} at $r=0$ takes the form 
		\begin{equation}\label{Edc Ham eq0}
			\partial_t \rho=\mathcal{J}\nabla K_{\mathbf{L}_0}(\rho),\qquad \rho=(\rho_1,\rho_2)\in L^2_{\mathtt{m}}(\mathbb{T})\times L^2_{\mathtt{m}}(\mathbb{T}), 
		\end{equation}
		where $\mathcal{J}$ is defined in \eqref{def calJ},  
		$K_{\mathbf{L}_0}$ is the quadratic Hamiltonian
		\begin{equation}\label{def KL}
			K_{\mathbf{L}_0}(\rho)\triangleq \tfrac{1}{2}\big\langle\mathbf{L}_0\rho,\rho\big\rangle_{L^2(\mathbb{T})\times L^2(\mathbb{T})}=-\sum_{j\in\mathbb{Z}_{\mathtt{m}}^*}\left(\tfrac{\Omega_{j,1}(b)}{2j}|\rho_{j,1}|^2-\tfrac{\Omega_{j,2}(b)}{2j}|\rho_{j,2}|^2\right)
		\end{equation}
		and $\mathbf{L}_0$ is the operator defined by \eqref{def Lbmat}. Thus,  real-valued oscillating solution of the linearized contour dynamics equation \eqref{Edc Ham eq0} are given by  
		$$\rho(t,\theta)=\sum_{j\in\mathbb{Z}_\mathtt{m}^*}A_j \begin{pmatrix}
				1  \\
				0
			\end{pmatrix}e^{-\ii \left(\Omega_{1,j}(b)t- j\theta\right)}+B_j\begin{pmatrix}
				0 \\
				1 
			\end{pmatrix}e^{-\ii \left(\Omega_{2,j}(b)t-j\theta\right)},$$
		with $\overline{A_j}=A_{-j},\,\overline{B_j}=B_{-j}$.
	\end{remark}

	\subsection{Geometric structure of the equilibrium frequencies}
	This section is devoted to some useful properties of the equilibrium frequencies. We shall first discuss their monotonicity and prove some useful bounds. Then, we shall be concerned with their non-degeneracy through the study of the transversality conditions. Those latter are crucial in the measure estimates of the final Cantor set giving rise to quasi-periodic solutions emerging at the linear and nonlinear levels. We have the following lemma.
	\begin{lem}\label{lem-asym}
		Let $\Omega>0$ and   $ \mathtt{m}^*,  b^*$ be defined as in Corollary $\ref{coro-equilib-freq}$. Then the following holds true.
		\begin{enumerate}
			\item  For all   $|j|\geqslant \mathtt{m}^*$ and $k\in\{1,2\}$,   
			$
			\Omega_{-j,k}(b)=-\Omega_{j,k}(b). 
			$
			\item  The sequence $\Big(-\tfrac{\Delta_{j}(b)}{j^2}\Big)_{j\geqslant \mathtt{m}^*}$ is positive increasing. Recall that $\Delta_{j}(b)$ was defined in \eqref{def delta j}.
			\item The sequence $\Big(\tfrac{\Omega_{j,1}(b)}{j}\Big)_{j\geqslant\mathtt{m}^*}$ is positive  increasing and the sequence $\Big(\tfrac{\Omega_{j,2}(b)}{j}\Big)_{j\geqslant \mathtt{m}^*}$ is positive decreasing. Moreover, for all $|j|\geqslant \mathtt{m}^*$ and $k\in\{1,2\}$ we have 
			\begin{equation}\label{lim omega jk}
				\lim_{n\to\infty}\tfrac{\Omega_{j,k}(b)}{j}= \Omega +(2-k)\tfrac{1-b^2}{2}\cdot
			\end{equation}
			and
			\begin{equation*}
				\big|\Omega_{j,k}(b)\big|\geqslant \Omega |j|.
			\end{equation*}
			\item  For all $\mathtt{m}\geqslant \mathtt{m}^*$, there exists $\Omega_{\mathtt{m}}^*=\Omega^*(b^*,\mathtt{m})>0$ satisfying 
			\begin{equation}\label{lim omega-star}
				\lim_{\mathtt{m}\to \infty}\Omega_{\mathtt{m}}^*=0
			\end{equation}
		such that for all $\Omega>\Omega^*_{\mathtt{m}}$ the sequence  $\big(\Omega_{j,2}(b)\big)_{j\geqslant \mathtt{m}}$ is   increasing.
			\item There exists $c>0$ such that,  for all $k\in\{1,2\}$,
			\begin{equation*}
				\forall\, \Omega>\Omega^*_{\mathtt{m}},\quad \forall\, b\in[0,b^*],\quad \forall\, |j|\geqslant \mathtt{m}^*,\quad \forall\, |j'|\geqslant \mathtt{m}^*,\quad \big|\Omega_{j,k}(b)-\Omega_{j',k}(b)\big|\geqslant c |j- j'|.
			\end{equation*}
			\item Given  ${q}_0\in\N$, there exists $C>0$ such that, for all $k\in\{1,2\}$,
			\begin{equation*}
				\forall \, |j|\geqslant \mathtt{m}^*,\quad \forall \, |j'|\geqslant \mathtt{m}^*,\quad \max_{q\in\llbracket 0,{q}_0\rrbracket} \sup_{b\in [0,b^*]}\Big|\partial_b^q\big(\Omega_{j,k}(b)-\Omega_{j',k}(b)\big)\Big|\leqslant C |j-j'|.
			\end{equation*}
		\end{enumerate}
	\end{lem}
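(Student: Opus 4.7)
All six assertions should fall out of the explicit decomposition \eqref{ASYFR1+}--\eqref{ASYFR1-} together with the identity
\begin{equation*}
-\tfrac{\Delta_j(b)}{j^2}=\Big(\tfrac{1-b^2}{2}-\tfrac{1}{j}\Big)^2-\tfrac{b^{2j}}{j^2}.
\end{equation*}
Point 1 is a one-line consequence of \eqref{omegajk} since $|-j|=|j|$ and the prefactor $\tfrac{j}{|j|}$ flips sign. For point 2, the first summand above is positive (by \eqref{nOme3}) and strictly increasing as a square of a strictly increasing positive quantity, while $-\tfrac{b^{2j}}{j^2}$ is strictly increasing because $b\in[0,b^*]\subset[0,1)$; positivity is exactly Corollary \ref{coro-equilib-freq}. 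For point 3, dividing \eqref{omegajk} by $j>0$ gives $\tfrac{\Omega_{j,k}(b)}{j}=\Omega+\tfrac{1-b^2}{4}+\tfrac{(-1)^{k+1}}{2}\sqrt{-\Delta_j(b)/j^2}$, so the monotonicities follow from point 2; the limit \eqref{lim omega jk} comes from $\sqrt{-\Delta_j(b)/j^2}\to\tfrac{1-b^2}{2}$; and the lower bound $|\Omega_{j,k}(b)|\geqslant\Omega|j|$ comes from that same limit (used with monotonicity to bound $\sqrt{-\Delta_j(b)/j^2}\leqslant\tfrac{1-b^2}{2}$), with the case $j<0$ reduced to $j>0$ by point 1.

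\textbf{Construction of $\Omega_\mathtt{m}^*$.} For point 4 I would compute the consecutive difference
\begin{equation*}
\Omega_{j+1,2}(b)-\Omega_{j,2}(b)=\Omega+\mathtt{r}_j(b)-\mathtt{r}_{j+1}(b),
\end{equation*}
and use the estimate \eqref{ASYFR1-} with $\alpha=1$ to bound $|\mathtt{r}_j(b)-\mathtt{r}_{j+1}(b)|\leqslant 2C_{0,0,1}\mathtt{m}^{-1}$ uniformly in $b\in[0,b^*]$ when $j\geqslant\mathtt{m}$. Choosing
\begin{equation*}
\Omega_\mathtt{m}^*\triangleq 4C_{0,0,1}\,\mathtt{m}^{-1}
\end{equation*}
makes the difference bounded below by $\tfrac{\Omega}{2}>0$ whenever $\Omega>\Omega_\mathtt{m}^*$, and one clearly has $\Omega_\mathtt{m}^*\to 0$ as $\mathtt{m}\to\infty$, which yields \eqref{lim omega-star}.

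\textbf{Lower and upper bounds on spectral gaps.} For points 5--6 the workhorse is, when $j,j'>0$,
\begin{equation*}
\Omega_{j,k}(b)-\Omega_{j',k}(b)=(j-j')\Big(\Omega+(2-k)\tfrac{1-b^2}{2}\Big)+(-1)^{k+1}\big(\mathtt{r}_j(b)-\mathtt{r}_{j'}(b)\big),
\end{equation*}
and its counterpart for $j,j'<0$ obtained via point 1 (using that $\mathtt{r}_{-j}(b)=\mathtt{r}_j(b)$ from \eqref{rngpm}). For the same-sign case of point 5, combining this identity with $|\mathtt{r}_j(b)-\mathtt{r}_{j'}(b)|\leqslant 2C_{0,0,1}\mathtt{m}^{*-1}$ and $|j-j'|\geqslant 1$, and possibly enlarging $\Omega_\mathtt{m}^*$ from the previous step (say to $8C_{0,0,1}\mathtt{m}^{*-1}$), one gets $|\Omega_{j,k}(b)-\Omega_{j',k}(b)|\geqslant c|j-j'|$ with $c\triangleq\Omega_\mathtt{m}^*/4>0$. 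For the opposite-sign case one has $|j-j'|=|j|+|j'|$, and by points 1 and 3 the two spectral values have opposite signs, so $|\Omega_{j,k}(b)-\Omega_{j',k}(b)|=|\Omega_{j,k}(b)|+|\Omega_{j',k}(b)|\geqslant\Omega(|j|+|j'|)=\Omega|j-j'|$. For point 6, applying $\partial_b^q$ to the displayed identity: the polynomial-in-$b$ prefactor $(j-j')(2-k)\tfrac{1-b^2}{2}$ contributes at most $O(|j-j'|)$ uniformly in $q\in\llbracket 0,q_0\rrbracket$ and vanishes for $q\geqslant 3$, while $|\partial_b^q(\mathtt{r}_j(b)-\mathtt{r}_{j'}(b))|\leqslant 2C_{q,0,1}\mathtt{m}^{*-1}\leqslant 2C_{q,0,1}|j-j'|$ by \eqref{ASYFR1-}, giving the desired linear bound. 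The only mildly delicate point in the whole lemma is the simultaneous tuning of $\Omega_\mathtt{m}^*$ so that the assertions of points 4 and 5 hold at once without spoiling $\Omega_\mathtt{m}^*\to 0$; the chain above shows that any choice of order $\mathtt{m}^{-1}$ works.
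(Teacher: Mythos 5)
Points 1, 2, 3 and 6 of your proposal are sound, and in places your route is a little cleaner than the paper's: for point 2 the algebraic identity $-\Delta_j(b)/j^2=\big(\tfrac{1-b^2}{2}-\tfrac1j\big)^2-\tfrac{b^{2j}}{j^2}$ replaces the paper's computation of $\partial_x\big(-\Delta_x(b)/x^2\big)$ for the continuous extension, and for point 4 your telescoping identity $\Omega_{j+1,2}(b)-\Omega_{j,2}(b)=\Omega+\mathtt{r}_j(b)-\mathtt{r}_{j+1}(b)$ together with \eqref{ASYFR1-} yields a legitimate threshold of order $\mathtt{m}^{-1}$ in place of the paper's explicit \eqref{expr Omega star} (the lemma only asserts existence of some $\Omega_{\mathtt{m}}^*$, so the different value is immaterial here, and both choices decay like $\mathtt{m}^{-1}$).

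The same-sign case of point 5, however, has a genuine gap. There the indices range over all $|j|,|j'|\geqslant\mathtt{m}^*$, where $\mathtt{m}^*$ is fixed by $b^*$, whereas $\Omega_{\mathtt{m}}^*$ must vanish as $\mathtt{m}\to\infty$ by \eqref{lim omega-star}. Your triangle-inequality bound $|\mathtt{r}_j(b)-\mathtt{r}_{j'}(b)|\leqslant 2C_{0,0,1}(\mathtt{m}^*)^{-1}$ is a constant independent of $\mathtt{m}$, so absorbing it into $\Omega|j-j'|$ with $|j-j'|\geqslant1$ forces $\Omega\gtrsim(\mathtt{m}^*)^{-1}$; enlarging $\Omega_{\mathtt{m}}^*$ to $8C_{0,0,1}(\mathtt{m}^*)^{-1}$ as you suggest destroys \eqref{lim omega-star}, while keeping $\Omega_{\mathtt{m}}^*$ of order $\mathtt{m}^{-1}$ leaves the regime ($\mathtt{m}$ large, $\Omega$ just above $\Omega_{\mathtt{m}}^*$, $j=\mathtt{m}^*$, $j'=\mathtt{m}^*+1$) in which your lower bound $\Omega|j-j'|-2C_{0,0,1}(\mathtt{m}^*)^{-1}$ is negative and gives nothing. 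Your closing remark that ``any choice of order $\mathtt{m}^{-1}$ works'' conflates $\mathtt{m}$ with $\mathtt{m}^*$. The paper's argument is genuinely two-step: the mean value theorem in $j$ (not the triangle inequality) applied to \eqref{ASYFR1-} gives the finer bound $|\mathtt{r}_j(b)-\mathtt{r}_{j'}(b)|\leqslant C_0|j-j'|/(jj')$, which settles all pairs with $jj'$ large; the finitely many remaining pairs are then treated by the injectivity of $j\mapsto\Omega_{j,k}(b)$ (this is where the monotonicity from point 4, hence the hypothesis $\Omega>\Omega_{\mathtt{m}}^*$, is actually used) combined with continuity in $b$ and compactness of $[0,b^*]$, yielding a positive gap $c_{jj'}^k$ for each such pair. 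Point 6 is unaffected, since there only an upper bound is needed and $|j-j'|\geqslant1$ whenever $j\neq j'$.
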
 
	\begin{proof}
		{\bf 1.} It follows immediately from \eqref{omegajk}.\\
		{\bf 2.} In order to  study the discrete function $j\mapsto -\tfrac{\Delta_{j}(b)}{j^2}$ we shall  
		consider its continuous version 
		$$\forall x\geqslant \mathtt{m}^*,\quad  g(x)\triangleq -\tfrac{\Delta_{x}(b)}{x^2}=\tfrac{1}{x^2}\big(\tfrac{1-b^2}{2}x-1\big)^2-\tfrac{b^{2x}}{x^2}\cdot$$
		Differentiating with respect to $x$ and using \eqref{nOme3}, we conclude that
		\begin{align*}
			g'(x)&= \tfrac{2}{x^3}\big(\tfrac{1-b^2}{2}x-1+b^{2x}\big)-\tfrac{2b^{2x}}{x^2}\log (b)>0.
		\end{align*}
		Thus, the mapping $j\mapsto -\Delta_{j}(b)/j^2$ is strictly increasing. \\
		{\bf 3.} The monotonicity of the sequences $\Big(  \tfrac{\Omega_{j,k}(b)}{j}\Big)_{j\geqslant\mathtt{m}^*} $ follows from the identity 
		$$
		\tfrac{\Omega_{j,k}(b)}{j}=\big(\Omega+\tfrac{1-b^2}{4}\big)+ \tfrac{(-1)^{k+1}}{2}\sqrt{\tfrac{-\Delta_{j}(b)}{j^2}}
		$$
		and the second point. Moreover, from the last identity we also conclude that
		$$
		\tfrac{\Omega_{j,1}(b)}{j}\geqslant \Omega.
		$$
		Next, from \eqref{ASYFR1+}-\eqref{ASYFR1-} we obtain \eqref{lim omega jk}.
		Since $\Big(  \tfrac{\Omega_{j,2}(b)}{j}\Big)_{j\geqslant\mathtt{m}^*} $ is decreasing, then from \eqref{lim omega jk} we infer that
		$$\tfrac{\Omega_{j,2}(b)}{j}\geqslant \lim_{j\to\infty} \tfrac{\Omega_{j,2}(b)}{j}=\Omega>0.$$
		This ends the proof of the third point.\\ 
		{\bf 4.} Consider the continuous extension  of the discrete mapping $j\mapsto \Omega_{j,2}(b)$,
		\begin{align*}
			\forall (b,x)\in[0,b^*]\times(\mathtt{m}^*,\infty),\quad h(b,x)&\triangleq \Omega x+\tfrac{1-b^2}{4}x-\tfrac{1}{2}\sqrt{-\Delta_{x}(b)}.
		\end{align*}
		Differentiating with respect to $x$ and using \eqref{def delta j} and \eqref{nOme3} lead to
		\begin{align*}
			\partial_x h(b,x)
			&=\tfrac{b^{2x}\log (b)-\tfrac{1-b^2}{2}\big(\tfrac{1-b^2}{2}x-1-\sqrt{-\Delta_{x}(b)}\big)+2\Omega\sqrt{-\Delta_{x}(b)}}{2\sqrt{-\Delta_{x}(b)}}\\
			&=\tfrac{b^{2x}\log (b)+\big(\tfrac{1-b^2}{2}x-1\big)\Big[2\Omega \sqrt{1-b^{2x}\big(\tfrac{1-b^2}{2}x-1\big)^{-2}}+\tfrac{1-b^2}{2}\Big(\sqrt{1-b^{2x}\big(\tfrac{1-b^2}{2}x-1\big)^{-2}}-1\Big)\Big]}{2\sqrt{-\Delta_{x}(b)}}\cdot
		\end{align*}
		According to \eqref{est dec} we have, for all $b<b^*$ and $x\geqslant\mathtt{m}\geqslant\mathtt{m}^*$, 
		\begin{align}\label{est dec2}
			0<\sqrt{1-(b^*)^{2\mathtt{m}}\Big(\tfrac{1-(b^*)^2}{2}\mathtt{m}-1\Big)^{-2}}\leqslant\sqrt{1-b^{2x}\Big(\tfrac{1-b^2}{2}x-1\Big)^{-2}}<1.
		\end{align} 
		Thus, in view of \eqref{nOme3} 	and \eqref{est dec2} we get
		\begin{align*}
			2\sqrt{-\Delta_{x}(b)} \partial_x h(b,x)
			\geqslant b^{2x}\log (b)+\big(\tfrac{1-b^2}{2}x-1\big)\bigg[&2\Omega \sqrt{1-(b^*)^{2\mathtt{m}}\Big(\tfrac{1-(b^*)^2}{2}\mathtt{m}-1\Big)^{-2}}\\ &+\tfrac{1-(b^*)^2}{2}\Big(\sqrt{1-(b^*)^{2\mathtt{m}}\Big(\tfrac{1-(b^*)^2}{2}\mathtt{m}-1\Big)^{-2}}-1\Big)\bigg].
		\end{align*}
		Setting 
		\begin{equation}\label{expr Omega star}
			\Omega^*_{\mathtt{m}}\triangleq \tfrac{\tfrac{1-(b^*)^2}{2}\Big(1-\sqrt{1-(b^*)^{2\mathtt{m}}\big(\tfrac{1-(b^*)^2}{2}\mathtt{m}-1\big)^{-2}}\Big)-\displaystyle\min_{b\in [0,b^*]}\big(b^{\mathtt{m}}\log (b)\big)}{2 \sqrt{1-(b^*)^{2\mathtt{m}}\big(\tfrac{1-(b^*)^2}{2}\mathtt{m}-1\big)^{-2}}}>0
		\end{equation}
		gives
		\begin{align*}
			\forall \Omega>\Omega^*_{\mathtt{m}}, \quad 2\sqrt{-\Delta_{x}(b)} \partial_x h(b,x)>\big(b^{x}+1-\tfrac{1-b^2}{2}x\big)\min_{b\in [0,b^*]} \big(b^{\mathtt{m}}\log (b)\big)\stackrel{\eqref{def:f}}=f(b,x)\min_{b\in [0,b^*]} \big(b^{\mathtt{m}}\log (b)\big) .
		\end{align*}
		Then by \eqref{deltaj4} we conclude that
		\begin{align*}
			\forall \Omega>\Omega^*_{\mathtt{m}}, \quad \partial_x h(b,x)>0 .
		\end{align*}
		Taking the limit $\mathtt{m}\to\infty$ in \eqref{expr Omega star} gives immediately \eqref{lim omega-star}. This ends the proof of the fourth point.\\
		{\bf 5.} Since $j\mapsto \Omega_{j,k}(b)$, $k\in\{1,2\}$, are odd then it is enough to check the result for $j\in\mathbb{N}^*$. The estimate on the sum $|\Omega_{j,k}(b)+\Omega_{j',k}(b)|$ easily follows from the positivity of the sequences $\big(\Omega_{j,k}(b)\big)_{j\geqslant \mathtt{m}^*}$, $k\in\{1,2\}$, and the third point, namely,
		$$
		\forall b\in[0,b^*],\quad\big|\Omega_{j,k}(b)+\Omega_{j',k}(b)\big|=\Omega_{j,k}(b)+\Omega_{j',k}(b)\geqslant \Omega(j+j').
		$$
		Next we shall prove the estimate on the difference. In view of \eqref{ASYFR1+}, for all $j, j'\geqslant\mathtt{m}^*$ with $j\neq j'$, one has
		\begin{align}\label{omej-jp}
			\Omega_{j,k}(b)- \Omega_{j',k}(b)&=\Big(\Omega+(2-k)\tfrac{1-b^2}{2}\Big)(j-j')+(-1)^{k+1} \big(\mathtt{r}_{j}(b)-\mathtt{r}_{j'}(b)\big). 
		\end{align}
		It follows that
		$$\big|\Omega_{j,k}(b)- \Omega_{j',k}(b)\big|\geqslant\Omega |j-j'|- \sup_{b\in[0,b^*]}\big|\mathtt{r}_{j}(b)-\mathtt{r}_{j'}(b)\big|.$$
		Using Taylor formula combined with   \eqref{ASYFR1-} gives, for any $b\in[0,b^*],$
		\begin{align*}
			\big| \mathtt{r}_{j}(b)-\mathtt{r}_{j'}(b)\big| &\leqslant C_0\Big| \int_{j'}^{j}\frac{dx}{x^{2}}\Big|\\ &\leqslant C_0 \tfrac{|j-j'|}{j j'}\cdot
		\end{align*}
		This implies that
		\begin{align}\label{diff-oj-ojp}
			\big| \Omega_{j,k}(b)- \Omega_{j',k}(b)\big| 
			&\geqslant \Big(\Omega-\tfrac{C_0}{jj'}\Big)|j-j'|.
		\end{align}
		Therefore, there exists $N$ such that if $jj'> N$ the desired inequality holds. For $jj'\leqslant  N$ we shall use the one-to-one property of $j\mapsto \Omega_{j,k}(b)$ combined with the continuity of $b\in[0,b^*]\mapsto \Omega_{j,k}(b)-\Omega_{j^\prime,k}(b)$ to get, for any $j\neq j^\prime\in\llbracket \mathtt{m}^*,N\rrbracket$,
		$$
		\forall \Omega>\Omega^*_{\mathtt{m}},\quad \inf_{b\in[0,b^*]}\big|\Omega_{j,k}(b)-\Omega_{j^\prime,k}(b)\big|\triangleq  c_{jj^\prime}^k> 0.
		$$
		Consequently 
		$$
		\inf_{j\neq j^\prime\in\llbracket \mathtt{m}^*,N\rrbracket\\
			\atop b\in[0,b^*]}\big|\Omega_{j,k}(b)-\Omega_{j^\prime,k}(b)\big|=\inf_{j\neq j^\prime\in\llbracket \mathtt{m}^*,N\rrbracket}c_{jj^\prime}^k>0.
		$$
		Taking 
		$$c\triangleq\frac1N\min\left({\inf_{j\neq j^\prime\in\llbracket \mathtt{m}^*,N\rrbracket}c_{jj^\prime}^k}\,\,\,,\,\,\, N\Omega-{C_0}\right)$$
		and combining the last inequality with \eqref{diff-oj-ojp} we get the desired result.\\
		{\bf 6.} Differentiating \eqref{omej-jp} gives 
		\begin{align*}
			\partial_b\big(\Omega_{j,k}(b)- \Omega_{j',k}(b)\big)&=-(2-k)b(j-j')+(-1)^{k+1} \partial_b\big(\mathtt{r}_{j}(b)-\mathtt{r}_{j'}(b)\big),\\
			\partial_b^2\big( \Omega_{j,k}(b)- \Omega_{j',k}(b)\big)&=-(2-k)(j-j')+(-1)^{k+1}\partial_b^2 \big(\mathtt{r}_{j}(b)-\mathtt{r}_{j'}(b)\big),\\
			\forall q\geqslant 3,\quad 	\partial_b^q\big(  \Omega_{j,k}(b)- \Omega_{j',k}(b)\big)&=(-1)^{k+1} \big(\partial_b^q\mathtt{r}_{j}(b)-\partial_b^q \mathtt{r}_{j'}(b)\big).
		\end{align*}
		By the mean value theorem combined with \eqref{ASYFR1-} we conclude the proof of Lemma \ref{lem-asym}.
	\end{proof}
		\paragraph{Non-degeneracy and transversality.} Through the rest of this section we  shall follow the approach developed in \cite{BBM11,BM18} to  discuss the non-degeneracy and the transversality properties of the linear frequencies. Let us first recall the definition of the non-degeneracy for  vector-valued functions.
	\begin{defin}\label{def-deg} 
		Let  $N\in\mathbb{N}^*$. A function $f \triangleq  (f_1, \ldots , f_N ) : [\alpha_1,\alpha_2] \to \mathbb{R}^N$, with $\alpha_1<\alpha_2$, is called non-degenerate if, for any vector $c \triangleq  (c_1,\ldots,c_N) \in  \mathbb{R}^N \backslash \{0\}$, the scalar function $f \cdot c = f_1c_1 + \cdots+ f_Nc_N$ is not identically zero on the whole interval $[\alpha_1,\alpha_2]$.
	\end{defin}
	We have the following result.
	
	\begin{lem}\label{lem non-deg}
		Let $\Omega>0$ and   $ \mathtt{m}^*,  b^*$ be defined as in Corollary $\ref{coro-equilib-freq}.$
		Fix an integer $\mathtt{m}\geqslant \mathtt{m}^*$ and consider the finite subsets 
		\begin{align*}
			\forall k\in\{1,2\},\quad S_{k}\subset\mathbb{Z}_{\mathtt{m}}\cap\mathbb{N}^*\quad\textnormal{with}\quad |{S}_{k}|<\infty. 
		\end{align*}
		Then the following hold true.
		\begin{enumerate}
			\item If $|S_1\cap S_2|\leqslant 1$ then the  vector valued function 
			$$[0,b^*] \ni b\mapsto	\left(\big(\Omega_{j,1}(b)\big)_{j\in{S}_{1}},\big(\Omega_{j,2}(b)\big)_{j\in{S}_{2}}\right)$$
			is non-degenerate.
			\item If $|S_1\cap S_2|=0$ then the  vector valued functions 
			\begin{align*}
				[0,b^*] \ni b\mapsto	&\left(\big(\Omega_{j,1}(b)\big)_{j\in S_{1}},\big(\Omega_{j,2}(b)\big)_{j\in S_{2}},\mathtt{v}_1(b),\mathtt{v}_2(b)\right),
				\\
				[0,b^*] \ni b\mapsto &\left(\big(\Omega_{j,1}(b)\big)_{j\in S_{1}},\big(\Omega_{j,2}(b)\big)_{j\in S_{2}},\mathtt{v}_k(b)\right), \quad k\in\{1,2\}
			\end{align*}
			are non-degenerate, where the $\mathtt{v}_k$  are defined in \eqref{def V10 V20}.
		\end{enumerate}
	\end{lem}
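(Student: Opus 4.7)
Since each of the listed functions depends on $b$ only through $b^2$, it is convenient to set $x \triangleq b^2 \in [0,(b^*)^2]$ and view everything as a function of $x$. By \eqref{ASYFR1+} and the definition of $\mathtt{v}_k$ in \eqref{def V10 V20}, for every $j \in \mathbb{N}_{\mathtt{m}}^*$ (so $j \geqslant \mathtt{m} \geqslant \mathtt{m}^* \geqslant 3$) and $k\in\{1,2\}$,
$$\Omega_{j,k}(x) = j\,\mathtt{v}_k(x) + \tfrac{(-1)^k}{2} + (-1)^{k+1}\,\mathtt{r}_j(x), \qquad \mathtt{v}_k(x)=\Omega+(2-k)\tfrac{1-x}{2},$$
with $\mathtt{v}_k$ affine in $x$. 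Expanding the square root in \eqref{rngpm} using $\sqrt{1-u}-1=-\tfrac{u}{2}+O(u^2)$ (licit by \eqref{est dec}) and $\tfrac{1}{\tfrac{1-b^2}{2}j-1}=\tfrac{2}{j-2}+O(x)$ yields the convergent expansion at $x=0$
$$\mathtt{r}_j(x) = -\tfrac{x^j}{2(j-2)} + O(x^{j+1}),$$
whose leading coefficient does not vanish since $j-2\geqslant 1$.

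\emph{Inductive elimination.} Assume a linear combination $f(x)$ of the listed components vanishes identically on $[0,(b^*)^2]$; since $f$ is smooth at $0$, all its Taylor coefficients at $x=0$ are zero. Enumerate $S_1\cup S_2=\{j_1<j_2<\cdots<j_N\}$ and set
$$\beta_{j_i} \triangleq c_{j_i,1}\mathbf{1}_{j_i\in S_1} - c_{j_i,2}\mathbf{1}_{j_i\in S_2}.$$
The affine parts of the components contribute nothing above degree $1$ in $x$, while $\mathtt{r}_{j_\ell}(x)=O(x^{j_\ell})$. Consequently the coefficient of $x^{j_1}$ in $f$ equals $-\beta_{j_1}/(2(j_1-2))$, forcing $\beta_{j_1}=0$. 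Supposing $\beta_{j_1}=\cdots=\beta_{j_{i-1}}=0$, the corresponding $\mathtt{r}_{j_\ell}$ terms disappear identically, so the coefficient of $x^{j_i}$ reduces to $-\beta_{j_i}/(2(j_i-2))=0$. By induction $\beta_j=0$ for every $j\in S_1\cup S_2$.

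\emph{Case 1 ($|S_1\cap S_2|\leqslant 1$).} The vanishing of all $\beta_j$ yields $c_{j,k}=0$ whenever $j\in S_k\setminus S_{3-k}$ and $c_{j,1}=c_{j,2}$ whenever $j\in S_1\cap S_2$. If $S_1\cap S_2=\varnothing$ we are done; otherwise let $j^*$ be its unique element and set $c\triangleq c_{j^*,1}=c_{j^*,2}$. Substituting and using the cancellation $\Omega_{j^*,1}+\Omega_{j^*,2}=j^*(\mathtt{v}_1+\mathtt{v}_2)$, one gets
$$f(x)=c\,j^*\big(2\Omega+\tfrac{1-x}{2}\big)\equiv 0,$$
and the coefficient of $x$ forces $cj^*=0$, whence $c=0$.

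\emph{Case 2 ($|S_1\cap S_2|=0$).} Each $j_i$ now belongs to exactly one of $S_1,S_2$, so the induction gives $c_{j,k}=0$ for all $j,k$. The relation reduces to $d_1\mathtt{v}_1(x)+d_2\mathtt{v}_2(x)=d_1(\Omega+\tfrac{1-x}{2})+d_2\Omega\equiv 0$; the coefficient of $x$ forces $d_1=0$, and then the constant term gives $d_2\Omega=0$, hence $d_2=0$ since $\Omega>0$. The two variants involving a single $\mathtt{v}_k$ follow by applying only the relevant one of these two final steps. The main (and only) delicate step is the explicit Taylor expansion of $\mathtt{r}_j$ at $x=0$ and the verification that the leading coefficient $-1/(2(j-2))$ is non-zero for $j\geqslant\mathtt{m}^*\geqslant 3$; once this is in hand, the proof reduces to bookkeeping of Taylor orders.
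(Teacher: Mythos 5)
Your proof is correct and follows essentially the same route as the paper: expand in $z=b^2$, isolate the leading coefficient $-\tfrac{1}{2(j-2)}$ of $\mathtt{r}_j$ at order $z^j$, eliminate the coefficients in increasing order of $j$, and then use the affine part (evaluation at $z=0$ and the coefficient of $z$) together with $\Omega>0$ to kill the remaining unknowns. The only cosmetic difference is that you phrase the elimination as an induction on Taylor coefficients, whereas the paper packages the same computation as a lower-triangular matrix $\mathcal{M}(0)$ with nonzero determinant.
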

	
	\begin{proof} We point out that the linear frequencies \eqref{omegajk} are very similar to the linear frequencies close to the Kirchhoff ellipses, studied in \cite{BHM21}. Thus we shall use the same arguments developed in \cite[Lemma 5.2]{BHM21} with slight modifications. According to \eqref{omegajk} the functions  $b\mapsto\Omega_{j,k}(b)$, $k\in\{1,2\}$, are well defined and analytic in a full neighborhood of $b=0$. Moreover, by \eqref{ASYFR1+} and \eqref{rngpm} the frequencies $\Omega_{j,k}(b)$ write 
		\begin{align}
			& \Omega_{j,k}(b) =A_{j,k}(z)+{(-1)^{k+1}} B_j(z)\triangleq \widetilde{\Omega}_{j,k}(z), \label{Taylor-fre}\\
			\nonumber &z\triangleq b^2,\quad
			A_{j,k}(z)\triangleq \Omega j+\frac{2-k}{2}j(1-z)+ \tfrac{(-1)^{k}}{2},\quad
			B_j(z)\triangleq \mathtt{r}_j(b)\underset{z\to0}{=}-\frac{z^{j} }{2(j-2)}+O(z^{j+1}).
		\end{align}
		\textbf{1.} In view of Definition \ref{def-deg} one has to prove that, for all $c\triangleq \big((c_{j,1})_{j\in S_1},(c_{j,2})_{j\in S_2}\big)\in  \mathbb{R}^{|S_1|+|S_2|}\backslash\{0\}$, the function
		\begin{align*}
			z &\mapsto
			\sum_{j\in {S}_1\setminus (S_1\cap S_2)} c_{j,1} \widetilde{\Omega}_{j,1}(z) +\sum_{j\in S_2\setminus (S_1\cap S_2)}c_{j,2} \widetilde{\Omega}_{j,2}(z)+\sum_{j\in S_1\cap S_2} \big(c_{j,1}\widetilde{\Omega}_{j,1}(z)+c_{j,2} \widetilde{\Omega}_{j,2}(z)\big)
		\end{align*}
		is not identically zero on the interval $[0,(b^*)^2]$.
		By contradiction, suppose that there exists $c\triangleq \big((c_{j,1})_{j\in S_1},(c_{j,2})_{j\in S_2}\big)\in\mathbb{R}^{|S_1|+|S_2|}\backslash\{0\}$ such that for any $|z|\leqslant(b^*)^2,$
		\begin{equation}\label{rel lin 1}
			\sum_{j\in S_1\setminus(S_1\cap S_2)} c_{j,1} \widetilde{\Omega}_{j,1}(z)+\sum_{j\in {S}_2\setminus(S_1\cap S_2)} c_{j,2}\widetilde{\Omega}_{j,2}(z)+\sum_{j\in S_1\cap S_2} c_{j,1}\widetilde{\Omega}_{j,1}(z)+c_{j,2}\widetilde{\Omega}_{j,2}(z)=0.
		\end{equation}
		Writing  
		$$
		S_1\cup S_2=\{j_1,j_2,\cdots,j_d\},\qquad \textnormal{with}\qquad \mathtt{m}\leqslant j_1<j_2<\cdots<j_d,
		$$
		then differentiating with respect to $z$ the identity in \eqref{rel lin 1}, we find, since $\mathtt{m}\geqslant3,$
		$$
		\begin{cases}
			\widetilde{c}_{j_1} D_z^{(j_{1})}B_{j_{1}}(z)
			+ \ldots + \widetilde{c}_{j_{d}}D_z^{(j_{1})} B_{j_{d}}(z)= 0, \cr 
			\ldots  \ldots \ldots \cr
			\widetilde{c}_{j_1} D_z^{(j_{d})}B_{j_{1}}(z)
			+ \ldots + \widetilde{c}_{j_{d}}D_z^{(j_{d})} B_{j_{d}}(z)= 0,
		\end{cases}
		$$
		where
		$$\widetilde{c}_{j}\triangleq \begin{cases}
			c_{j,1},& \textnormal{if}\quad j\in {S}_1\setminus ({S}_1\cap S_2),\cr
			-c_{j,2},& \textnormal{if}\quad j\in {S}_2\setminus ({S}_1\cap S_2),\cr
			c_{j,1}-c_{j,2},& \textnormal{if}\quad j\in {S}_1\cap S_2.
		\end{cases}$$
		The latter is a linear system that can be recast as in a matricial form as
		$$\mathcal{M}(z)\widetilde{c}=0,\qquad
		{\mathcal M}(z)\triangleq\begin{pmatrix}
			D_z^{(j_{1})}B_{j_{1}}(z) & \dots & D_z^{(j_{1})}B_{j_{d}}(z)\\
			\vdots & \ddots & \vdots\\
			D_z^{(j_{d})}B_{j_{1}}(z)  & \dots & D_z^{(j_{d})}B_{j_{d}}(z) 
		\end{pmatrix},\qquad\widetilde{c}\triangleq \begin{pmatrix}
			\widetilde{c}_{j_1}\\
			\vdots\\
			\widetilde{c}_{j_d}
		\end{pmatrix}.
		$$ 
		Note, from \eqref{Taylor-fre}, that 
		for all $j\geqslant \mathtt{m}$ we have
		\begin{equation*}
			D^{(j)}_z  B_j(0) = -\frac{ j!}{2(j-2)}\qquad\textnormal{and}\qquad \forall\, 2\leqslant m< j, \quad D^{(m)}_z  B_{j}(0) =0 .
		\end{equation*} 
		It follows that, for some real constants $ \alpha_{i,j} $, we have that
		$$ 
		{\mathcal M}(0)  =
		\begin{pmatrix}
			-\frac{j_{1}!}{2(j_{1}-2)} & 0 & 0 & \dots & 0 \\
			\alpha_{2,1} &-\frac{j_2!}{2(j_2-2)}   & 0 & \dots & 0 \\
			\vdots & \ddots & \ddots & \ddots  & \vdots  \\
			\alpha_{d-1,1} & \ldots & \alpha_{d-1,d-2}  &  -\frac{j_{d-1}!}{2(j_{d-1}-2)} & 0 \\
			\alpha_{d,1} & \ldots & \alpha_{d,d-2} & \alpha_{d,d-1} & -\frac{j_{d}!}{2(j_{d}-2)}  
		\end{pmatrix} 
		$$
		which is a triangular matrix whose determinant  is given by
		$$ 
		\det {\mathcal M}(0) = (-1)^{d}\prod_{i=1}^{d}   \frac{ j_{i}!}{2(j_{i}-2)}  \neq 0  \, . 
		$$
		It follows that $\widetilde{c}=0$, i.e.
		\begin{equation}\label{cj=0}
			\forall j\in ({S}_1\cup {S}_2)\setminus ({S}_1\cap {S}_2), \quad  c_{j,k}=0\qquad \textnormal{and}\qquad \forall j\in {S}_1\cap {S}_2, \quad  c_{j,1}=c_{j,2}.
		\end{equation}
		Inserting \eqref{cj=0} into \eqref{rel lin 1} evaluated at $z=0$, we get from \eqref{Taylor-fre} that
		$$
		\big(2\Omega +\tfrac{1}{2}\big)\sum_{j\in {S}_1\cap S_2} jc_{j,1}=0.
		$$
		Using the fact that $\Omega>0,$ if ${S}_1\cap S_2=\{j_0\}$ then we have 
		$$
		c_{j_0,1}=0.
		$$
		This with \eqref{cj=0} lead to a contradiction proving  the first point.\\
		\textbf{2.} Next, we shall prove that the function 
		$$
		[0,b^*] \ni b\mapsto	\Big(\big(\Omega_{j,1}(b)\big)_{j\in S_{1}},\big(\Omega_{j,2}(b)\big)_{j\in S_{2}},\mathtt{v}_1(b),\mathtt{v}_2(b)\Big)
		$$
		is non-degenerate according to the Definition~\ref{def-deg} provided that $S_1\cap S_2=\varnothing$.
		Suppose, by contradiction, that there exists 
		$$c\triangleq \big((c_{j,1})_{j\in S_1},(c_{j,2})_{j\in S_2},c_{0,1},c_{0,2}\big)\in  \mathbb{R}^{|S_1|+|S_2|+2} \backslash \{0\}$$ such that for any $|z| \leqslant  (b^*)^2,$  
		\begin{equation}\label{ident3}
			c_{0,1}\widetilde{\mathtt{v}}_1(z)+c_{0,2}\widetilde{\mathtt{v}}_2(z)+\sum_{j\in {S}_1}c_{j,1}\widetilde{\Omega}_{j,1}(z) +\sum_{j\in{S}_2}c_{j,2}\widetilde{\Omega}_{j,2}(z)=0, 
		\end{equation}
		where we denote
		$$\forall k\in\{1,2\},\quad\widetilde{\mathtt{v}}_{k}(z)\triangleq \mathtt{v}_{k}(b)=\Omega+\frac{2-k}{2}(1-z).$$
		Arguing in a similar way to the first case we conclude by a differentiation argument that
		$$
		\forall j\in {S}_1\cup {S}_2,\quad\forall k\in \{1,2\},\quad  c_{j,k}=0. 
		$$
		Plugging these identities into \eqref{ident3} we find
		$$
		c_{0,1}\widetilde{\mathtt{v}}_1(z)+c_{0,2}\widetilde{\mathtt{v}}_2(z)=0.
		$$
		That is 
		$$
		\Omega\big(c_{0,1}+c_{0,2}\big)+\frac{1-z}{2}c_{0,1}=0.
		$$
		The last expression being true for any $|z|\leqslant (b^*)^2,$ then using the fact that $\Omega\neq 0$ we infer
		$$
		c_{0,1}=c_{0,2}=0.
		$$
		Thus,  the vector $c$ is vanishing and this contradicts the assumption. The proof of the non-degeneracy of the function 
		$$
		[0,b^*] \ni b\mapsto \left(\big(\Omega_{j,1}(b)\big)_{j\in S_{1}},\big(\Omega_{j,2}(b)\big)_{j\in S_{2}},\mathtt{v}_k(b)\right), \qquad k\in\{1,2\}
		$$
		can be obtained from the previous case by  choosing
		$$
		c\triangleq \big((c_{j,1})_{j\in S_1},(c_{j,2})_{j\in S_2},c_{0,k},c_{0,3-k}\big)=\big((c_{j,1})_{j\in S_1},(c_{j,2})_{j\in S_2},c_{0,k},0\big).
		$$
		This ends the proof of Lemma \ref{lem non-deg}.	
	\end{proof}
	Let $\Omega>0$ and  $ \mathtt{m}^*,  b^*$ be defined as in Corollary \ref{coro-equilib-freq}.
	Fix an integer $\mathtt{m}\geqslant \mathtt{m}^*$ and consider the finite subsets 
	\begin{align}
		\forall k\in\{1,2\},\quad 	\mathbb{S}_{k}\subset\mathbb{Z}_{\mathtt{m}}\cap\mathbb{N}^*,\qquad\textnormal{with}\qquad d_k\triangleq |\mathbb{S}_{k}|<\infty \qquad\textnormal{and}\qquad \mathbb{S}_{1}\cap \mathbb{S}_{2}=\varnothing.\label{S+}
	\end{align}
	For all $b\in [0,b^*]$ define the tangential equilibrium frequency vector by
	\begin{equation}\label{Eq freq vec Edc}
		\omega_{\textnormal{Eq}}(b)\triangleq \big(\omega_{\textnormal{Eq},1}(b),\omega_{\textnormal{Eq},2}(b)\big)\in\mathbb{R}^{d},\quad\textnormal{with}\quad \omega_{\textnormal{Eq},k}(b)\triangleq \big(\Omega_{j,k}(b)\big)_{j\in\mathbb{S}_{k}}\in\mathbb{R}^{d_{k}},\quad d\triangleq d_1+d_2 
	\end{equation}
	and set
	$$\mathbb{S}\triangleq \mathbb{S}_{1}\cup\mathbb{S}_{2},\quad  
		\overline{\mathbb{S}}\triangleq \mathbb{S}\cup(-\mathbb{S}), \quad \overline{\mathbb{S}}_0\triangleq \overline{\mathbb{S}}\cup \{0\},\quad \overline{\mathbb{S}}_k=\mathbb{S}_k \cup(-\mathbb{S}_k)\quad\textnormal{and}\quad\overline{\mathbb{S}}_{0,k}=\overline{\mathbb{S}}_k\cup\{0\}.$$
	In the next proposition we deduce some quantitative bounds from the qualitative non-degeneracy condition of Lemma \ref{lem non-deg}, the analyticity of the linear frequencies and their asymptotics.
	
	\begin{lem}{\textnormal{[Transversality]}}\label{lemma transversalityE}
		There exist $q_0\in\mathbb{N}$ and $\rho_{0}>0$ such that the following results hold true. Recall that $\mathtt{v}_k(b)$, $\Omega_{j,k}$ and  $\omega_{\textnormal{Eq}}$  are defined in \eqref{def V10 V20},  \eqref{omegajk} and  \eqref{Eq freq vec Edc} and respectively.
		\begin{enumerate}
			\item For any $l\in\mathbb{Z}^{d}\setminus\{0\},$ we have
			$$
			\inf_{b\in[0,b^{*}]}\max_{q\in\llbracket 0, q_{0}\rrbracket}\Big|\partial_{b}^{q}\omega_{\textnormal{Eq}}(b)\cdot l\Big|\geqslant\rho_{0}\langle l\rangle.
			$$
			\item For any $k\in\{1,2\}$ and $ (l,j)\in(\mathbb{Z}^{d}\times\mathbb{N}_{\mathtt{m}})\setminus\{(0,0)\}$ 
			$$
			\quad\inf_{b\in[0,b^{*}]}\max_{q\in\llbracket 0, q_{0}\rrbracket}\Big|\partial_{b}^{q}\big(\omega_{\textnormal{Eq}}(b)\cdot l+ j\mathtt{v}_k(b)\big)\Big|\geqslant\rho_{0}\langle l\rangle.
			$$
			\item  For any $k\in\{1,2\}$ and $ (l,j)\in\mathbb{Z}^{d}\times (\mathbb{N}_{\mathtt{m}}^*\setminus\mathbb{S}_{k})$ 
			$$
			\quad\inf_{b\in[0,b^{*}]}\max_{q\in\llbracket 0, q_{0}\rrbracket}\Big|\partial_{b}^{q}\big(\omega_{\textnormal{Eq}}(b)\cdot l+\Omega_{j,k}(b)\big)\Big|\geqslant\rho_{0}\langle l\rangle.
			$$
			\item We assume the additional constraint $\Omega>\Omega_{\mathtt{m}}^*$, see Lemma $\ref{lem-asym}$-4-5. For any $k\in\{1,2\}$ and $ l\in\mathbb{Z}^{d}, j,j^\prime\in\mathbb{N}_{\mathtt{m}}^*\setminus\mathbb{S}_k,$ satisfying the additional condition $(l,j)\neq(0,j^\prime)$, we have
			$$\,\quad\inf_{b\in[0,b^*]}\max_{q\in\llbracket 0, q_{0}\rrbracket}\Big|\partial_{b}^{q}\big(\omega_{\textnormal{Eq}}(b)\cdot l+\Omega_{j,k}(b)\pm\Omega_{j^\prime,k}(b)\big)\Big|\geqslant\rho_{0}\langle l\rangle.$$
			
			\item For any  $ l\in\mathbb{Z}^{d}, j\in\mathbb{N}^*\setminus\mathbb{S}_1,j^\prime\in\mathbb{N}^*\setminus\mathbb{S}_2,$  we have
			$$\,\quad\inf_{b\in[0,b^{*}]}\max_{q\in\llbracket 0, q_{0}\rrbracket}\Big|\partial_{b}^{q}\big(\omega_{\textnormal{Eq}}(b)\cdot l+\Omega_{j,1}(b)\pm\Omega_{j^\prime,2}(b)\big)\Big|\geqslant\rho_{0}\langle l,j,j^\prime\rangle.$$	
		\end{enumerate}
	\end{lem}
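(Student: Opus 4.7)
The plan is to prove all five items by a unified contradiction-and-compactness scheme inspired by \cite{BBM11,BM18}, combining the qualitative non-degeneracy statement of Lemma \ref{lem non-deg} with the asymptotic expansion \eqref{ASYFR1+}--\eqref{ASYFR1-} of the linear frequencies and, where it is needed, the monotonicity properties of Lemma \ref{lem-asym}. In each case I would suppose the claim to fail: for every $\rho_0>0$ and every $q_0\in\mathbb{N}$ there exists an index producing a quotient bounded by $\rho_0$. A normalization by the appropriate combinatorial weight ($\langle l\rangle$ or $\langle l,j,j'\rangle$) and a diagonal extraction along $n=q_0=1/\rho_0\to\infty$ deliver a limit point $b_*\in[0,b^*]$ and a non-zero limiting coefficient vector at which all derivatives up to arbitrary order of the relevant real-analytic function of $b$ vanish; by analyticity the function vanishes identically on $[0,b^*]$, contradicting Lemma \ref{lem non-deg}. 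A uniform choice of $q_0$ would then be obtained as the maximum of the (finite) orders of vanishing arising from the finitely many reductions.

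For Part 1 the extraction produces a subsequence with $l_n/|l_n|\to c\in\mathbb{R}^d$, $|c|=1$; the limit contradicts Lemma \ref{lem non-deg}(1) (applicable because $\mathbb{S}_1\cap\mathbb{S}_2=\varnothing$, see \eqref{S+}). For Parts 2 and 3 I would use the asymptotic identity $\Omega_{j,k}(b)=j\,\mathtt{v}_k(b)+(-1)^k/2+(-1)^{k+1}\mathtt{r}_j(b)$ with $\mathtt{r}_j(b)=O(|j|^{-\alpha})$ for every $\alpha$. When $|j|\gg|l|$ the lower bounds $\mathtt{v}_k(b)\geqslant\Omega>0$ and $|\Omega_{j,k}(b)|\geqslant\Omega|j|$ from Lemma \ref{lem-asym}(3) force the expression to be of order $|j|\gg\langle l\rangle$; when $|j|\lesssim|l|$ the extraction produces a bounded ratio $j_n/|l_n|\to\beta$, so the limiting identity reduces to the non-degeneracy of a function of the form $\omega_{\text{Eq}}(b)\cdot c+\beta\,\mathtt{v}_k(b)+\text{const}$ provided by Lemma \ref{lem non-deg}(2).

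Part 4 is the first case where the hypothesis $\Omega>\Omega_{\mathtt{m}}^*$ enters. The delicate scenario is the $-$ sign with $l=0$ and $j\neq j'$: the leading asymptotic term $(j-j')\mathtt{v}_k(b)$ is only small when $j=j'$, but a quantitative lower bound is still needed. Here the monotonicity of $j\mapsto\Omega_{j,k}(b)$ furnished by Lemma \ref{lem-asym}(4)--(5) yields $|\Omega_{j,k}(b)-\Omega_{j',k}(b)|\geqslant c\,|j-j'|\geqslant c$ uniformly in $b\in[0,b^*]$ whenever $j\neq j'$, which is precisely the required lower bound $\rho_0\langle l\rangle=\rho_0$. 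For all other $(l,j,j')$ with $(l,j)\neq(0,j')$, the compactness argument of Parts 2--3 applied to the leading expression $\omega_{\text{Eq}}(b)\cdot l+(j\pm j')\mathtt{v}_k(b)+\text{const}$ provides the bound.

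Part 5 is the main obstacle, precisely because the transport speeds $\mathtt{v}_1\neq\mathtt{v}_2$ preclude any monotonicity-based lower bound on $\Omega_{j,1}(b)\pm\Omega_{j',2}(b)$; this is the new small-divisor phenomenon emphasized in the introduction. Here the natural normalization weight is the isotropic $M_n\triangleq\langle l_n,j_n,j'_n\rangle$, and the compactness argument produces subsequential limits $l_n/M_n\to\alpha$, $j_n/M_n\to\beta_1$, $j'_n/M_n\to\beta_2$ and $b_n\to b_*$ with $(\alpha,\beta_1,\beta_2)\neq 0$. The asymptotics \eqref{ASYFR1+}--\eqref{ASYFR1-} make the shifts $\pm 1/2$ and the tail $\mathtt{r}_j$ negligible after division by $M_n$, so the limiting identity
\[
\omega_{\text{Eq}}(b)\cdot\alpha+\beta_1\,\mathtt{v}_1(b)\pm\beta_2\,\mathtt{v}_2(b)\equiv 0\qquad\text{on }[0,b^*]
\]
holds; this contradicts the non-degeneracy of $\big(\omega_{\text{Eq}}(b),\mathtt{v}_1(b),\mathtt{v}_2(b)\big)$ granted by Lemma \ref{lem non-deg}(2) (which uses $\mathbb{S}_1\cap\mathbb{S}_2=\varnothing$). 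The presence of both scalar functions $\mathtt{v}_1$ and $\mathtt{v}_2$ in that non-degeneracy statement is precisely what the vectorial/two-interface structure forces in the transversality analysis, and is the reason Lemma \ref{lem non-deg} was formulated with these extra scalar frequencies to begin with.
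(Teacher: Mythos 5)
Your overall architecture---contradiction, normalization by the appropriate weight, compactness, analyticity, then Lemma \ref{lem non-deg}---is exactly the paper's, and Parts 1--3 are essentially correct as sketched. But there is a genuine gap in Parts 4 and, more seriously, 5: you only treat the regime where the normalizing weight tends to infinity, in which the asymptotics \eqref{ASYFR1+}--\eqref{ASYFR1-} let you replace $\Omega_{j,k}$ by its leading term $j\,\mathtt{v}_k(b)$ up to errors that vanish after division. When the extracted sequences $(l_n,j_n,j'_n)$ stay \emph{bounded} (hence stationary after extraction), the shifts $\pm\tfrac12$ and the tails $\mathtt{r}_j(b)$ do \emph{not} become negligible after division by $\langle l_n,j_n,j'_n\rangle$, and the limiting identity in Part 5 is not $\omega_{\textnormal{Eq}}(b)\cdot\alpha+\beta_1\mathtt{v}_1(b)\pm\beta_2\mathtt{v}_2(b)\equiv0$ but rather $\omega_{\textnormal{Eq}}(b)\cdot\widetilde l+\Omega_{\widetilde\jmath,1}(b)\pm\Omega_{\widetilde\jmath',2}(b)\equiv0$, a relation among the genuine analytic frequency curves. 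To contradict it one must invoke Lemma \ref{lem non-deg}-1 with the \emph{augmented} sets $S_1=\mathbb{S}_1\cup\{\widetilde\jmath\}$, $S_2=\mathbb{S}_2\cup\{\widetilde\jmath'\}$, and this is precisely where the argument can fail: if $\widetilde\jmath\in\mathbb{S}_2$ and $\widetilde\jmath'\in\mathbb{S}_1$ then $|S_1\cap S_2|=2$ and the non-degeneracy lemma does not apply. The paper disposes of this residual case by an explicit computation: the differentiation argument pins the coefficients to $c_{\widetilde\jmath,2}=1$ and $c_{\widetilde\jmath',1}=\pm1$, and evaluating the resulting linear relation at $z=b^2=0$ via \eqref{Taylor-fre} yields $\big(2\Omega+\tfrac12\big)(\widetilde\jmath'\pm\widetilde\jmath)=0$, i.e.\ $\widetilde\jmath'=\mp\widetilde\jmath$, impossible since both indices are positive and $\mathbb{S}_1\cap\mathbb{S}_2=\varnothing$. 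Your sketch contains no substitute for this step, and it is the genuinely new difficulty of item 5.

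The same issue appears in milder form in Parts 3--4: when $l_n$, $j_n$ (and $j'_n$) all remain bounded, the contradiction must come from the non-degeneracy of the augmented tuples $(\omega_{\textnormal{Eq}},\Omega_{\widetilde\jmath,k})$ or $(\omega_{\textnormal{Eq}},\Omega_{\widetilde\jmath,k},\Omega_{\widetilde\jmath',k})$, not from a function of the form $\omega_{\textnormal{Eq}}(b)\cdot c+\beta\,\mathtt{v}_k(b)+\textnormal{const}$; this is the reason Lemma \ref{lem non-deg}-1 is formulated for arbitrary finite sets $S_1,S_2$ with $|S_1\cap S_2|\leqslant1$ rather than only for $\mathbb{S}_1,\mathbb{S}_2$. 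There the fix is immediate because $\widetilde\jmath,\widetilde\jmath'\notin\mathbb{S}_k$ keeps the intersection of cardinality at most one, but it should be said; in Part 5 the bounded case is not a formality and must be argued separately.
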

	\begin{proof}
		${\bf{1.}}$
		Suppose, by contradiction, that 
		for all $n\in\N$ there exist  $b_n\in [0,b^{*}]$ and $l_n\in\Z^d\backslash \{0\}$ such that 
		\begin{equation}\label{alphan}
			\max_{q\in\llbracket 0, n\rrbracket}\Big|\partial_b^q\Big({\omega}_{\textnormal{Eq}}(b)\cdot \tfrac{l_n}{\langle l_n\rangle}\Big)_{|{b=b_n}}\Big|< \tfrac{1}{n+1}.\cdot
		\end{equation}
		The sequences $(b_n)_n\subset [0,b^{*}]$ and $(c_n)_n\triangleq \big(\frac{l_n}{\langle l_n\rangle}\big)_n\subset \mathbb{R}^d\backslash\{0\}$ are bounded. Up to an extraction we may assume that 
		$$\lim_{n\to\infty}\tfrac{l_{n}}{\langle l_n\rangle}=\widetilde{c}\neq 0\qquad\hbox{and}\qquad \lim_{n\to\infty}b_{n}=\widetilde{b}.
		$$
		Taking to the limit in \eqref{alphan} for $n\to\infty$ we deduce that 
		$$\forall q \in \N,\quad \partial_b^q\big({\omega}_{\textnormal{Eq}}( b)\cdot \widetilde{c}\big)_{|{b=\widetilde b}}=0,\qquad{\rm with}\qquad \widetilde{c}\neq 0.$$  Therefore, the real analytic  function $b  \to {\omega}_{\textnormal{Eq}}(b)\cdot \widetilde{c}\,$ is identically zero. This contradicts Lemma~\ref{lem non-deg}.\\
		${\bf{2.}}$ In the case $l=0$ and $j\in\mathbb{N}^*_{\mathtt{m}}$ we obviously have from \eqref{def V10 V20}, 		
		\begin{align*}
			\quad\inf_{b\in[0,b^{*}]}\max_{q\in\llbracket 0, q_{0}\rrbracket}\big|\partial_{{b}}^{q}\big( j\mathtt{v}_k(b)\big)\big|&\geqslant \inf_{b\in[0,b^{*}]}\big| \mathtt{v}_k(b)\big|\geqslant \Omega\geqslant\rho_{0}\langle l\rangle,
		\end{align*}
		for some $\rho_0>0.$ Next, we shall consider the case  $j\in\mathbb{N}^*_{\mathtt{m}}$, $l\in\mathbb{Z}^d\setminus\{0\}$.
		By the triangle inequality combined with the boundedness of  ${\omega}_{\textnormal{Eq}}$ and $\mathtt{v}_k(b)$   we get
		$$\big|\omega_{\textnormal{Eq}}({b})\cdot l+j\mathtt{v}_k(b)\big|\geqslant|j|\big|\mathtt{v}_k(b)\big|-\big|\omega_{\textnormal{Eq}}({b})\cdot l\big|\geqslant c|j|-C|l|\geqslant |l|$$
		provided that  $|j|\geqslant C_{0}|l|$ for some $C_{0}>0.$ Hence, we shall only consider   indices  $j$ and $l$ satisfying
		\begin{equation}\label{parameter condition 10}
			|j|\leqslant C_{0}|l|, \qquad j\in\mathbb{N}_{\mathtt{m}}, \qquad l\in\mathbb{Z}^d\setminus\{0\}.
		\end{equation}
		By contradiction,  assume the existence of sequences $\{l_{n}\}\subset \Z^d\backslash\{0\}$, $\{j_n \}\subset {\mathbb{N}}_{\mathtt{m}}$ satisfying \eqref{parameter condition 10} and $\{{b}_{n}\}\subset[0,b^*]$ such that 
		\begin{equation}\label{Ross-01}
			\forall q\in\mathbb{N},\quad\forall n\geqslant q,\quad\left|\partial_{{b}}^{q}\left({\omega}_{\textnormal{Eq}}({b})\cdot\tfrac{l_{n}}{\langle l_{n}\rangle}+\tfrac{{j_{n}}\mathtt{v}_k(b)}{\langle l_{n}\rangle}\right)_{|{b=b_n}}\right|<\tfrac{1}{1+n}\cdot
		\end{equation}
		The sequences $\{{b}_n\}$, $\{d_n\}\triangleq \big\{\frac{j_n}{\langle l_n\rangle}\big\}$  and $\{c_n\}\triangleq \big\{\frac{l_n}{\langle l_n\rangle}\big\}$ are bounded. Thus,  up to an extraction, we may assume that
		$$
		\lim_{n\to\infty}{b}_{n}=\widetilde{{b}},\qquad \lim_{n\to\infty}d_{n}=\widetilde{d}\geqslant 0\qquad\hbox{and}\qquad \lim_{n\to\infty}c_{n}=\widetilde{c}\neq 0.
		$$
		Hence, letting  $n\rightarrow+\infty$ in \eqref{Ross-01} and  using the fact that  ${b}\mapsto \mathtt{v}_k(b)$ is smooth   we obtain
		$$\forall q\in\mathbb{N},\quad\partial_{{b}}^{q}\left({\omega}_{\textnormal{Eq}}({{b}})\cdot\widetilde{ c}+{\widetilde{d}}\,\mathtt{v}_k(b)\right)_{|{b}=\widetilde{{b}}}=0.$$
		Consequently, the real analytic  function ${b}\mapsto {\omega}_{\textnormal{Eq}}({{b}})\cdot\widetilde{ c}+{\widetilde{d}}\,\mathtt{v}_k(b)$ with $(\widetilde{ c},{\widetilde{d}})\neq (0,0)$ is identically zero
		and this is in contradiction with Lemma  \ref{lem non-deg}.
		\\
		${\bf{3.}}$ Let $k\in\{1,2\}$ and consider $(l,j)\in\mathbb{Z}^{d }\times (\mathbb{N}_{\mathtt{m}}^*\setminus\mathbb{S}_k)$. By  the  triangle inequality and Lemma \ref{lem-asym}-${\rm{3}}$, we get
		$$\big|\omega_{\textnormal{Eq}}(b)\cdot l+\Omega_{j,k}(b)\big|\geqslant\big|\Omega_{j}(b)\big|-\big|\omega_{\textnormal{Eq}}(b)\cdot l\big|\geqslant \Omega j-C|l|\geqslant \tfrac{\Omega}{2}\langle l\rangle$$
		provided that  $j\geqslant C_{0}| l|$ for some $C_{0}>0.$ Therefore, we shall restrict the proof to integers  $j$ with 
		\begin{equation}\label{parameter condition 1}
			0\leqslant j< C_{0} | l|,\qquad j\in\mathbb{N}_{\mathtt{m}}^*\setminus\mathbb{S}_k\qquad\hbox{and}\qquad l\in\mathbb{Z}^d\backslash\{0\}.
		\end{equation}
		By contradiction,  for all $n\in\mathbb{N}$, we assume  the existence of sequences  $\{l_{n}\}\subset \Z^d\backslash\{0\}, \{j_n\} \subset\mathbb{N}_{\mathtt{m}}^*\setminus\mathbb{S}_k$ and $\{b_{n}\}\subset [0,{b}^*]$ such that 
		\begin{equation}\label{Ross-1}
			\forall q\in\mathbb{N},\quad\forall n\geqslant q,\quad\Big|\partial_{b}^{q}\Big({\omega}_{\textnormal{Eq}}(b)\cdot\tfrac{l_{n}}{\langle l_{n}\rangle}+\tfrac{\Omega_{j_{n},k}(b)}{\langle l_{n}\rangle}\Big)_{|b={b}_n}\Big|<\tfrac{1}{1+n}\cdot
		\end{equation}
		Since the sequences $\{b_n\}$  and $\{c_n\}\triangleq \big\{\frac{l_n}{\langle l_n\rangle}\big\}$ are bounded,  then  by compactness  we can assume that
		$$
		\lim_{n\to\infty}b_{n}=\widetilde{b}\qquad\hbox{and}\qquad \lim_{n\to\infty}c_{n}=\widetilde{c}\neq 0.
		$$
		We shall distinguish two cases.\\
		$\bullet$ {\it Case $1$}: $\{l_{n}\}$ is bounded. From \eqref{parameter condition 1} and up to an extraction the sequences $\{l_{n}\}$ and $\{j_{n}\}$ are stationary. Thus, we can assume that for any $n\in\N$, we have $l_n=\widetilde{l}\in  \Z^d\backslash\{0\}$ and $j_n=\widetilde{\jmath}\in \mathbb{N}_{\mathtt{m}}^*\setminus\mathbb{S}_k$. 
		Taking the limit as $n\rightarrow+\infty$ in \eqref{Ross-1} yields
		$$\forall q\in\mathbb{N},\quad\partial_{b}^{q}\left({\omega}_{\textnormal{Eq}}({b})\cdot\widetilde{l}+\Omega_{\widetilde{\jmath},k}({b})\right)_{|_{b=\widetilde{b}}}=0.$$
		Consequently, the real analytic function $b\mapsto\omega_{\textnormal{Eq}}(b)\cdot\widetilde{l}+\Omega_{\widetilde{\jmath},k}(b)$ with $(\widetilde{l},1)\neq (0,0)$ is identically zero and this contradicts
		Lemma \ref{lem non-deg}.\\
		$\bullet$ {\it Case $2$}:  $\{l_{n}\}$ is unbounded. Up to a subsequence, we assume that $ \displaystyle\lim_{n\to\infty} | l_{n}|=\infty$ and 
		$\displaystyle \lim_{n\to\infty}\frac{ l_{n}}{\langle  l_{n}\rangle} =\widetilde{c}\in \R^d\backslash\{0\}.$
		We shall distinguish two sub-cases.\\
		$\bullet$ Sub-case \ding{172}. The sequence $\{j_n\}$ is bounded.
		Up to an extraction we may assume that this sequence of integers  is stationary.  Taking the limit $n\rightarrow+\infty$ in \eqref{Ross-1}, we get
		$$\forall q\in\mathbb{N},\quad \partial_{b}^{q}{\omega}_{\textnormal{Eq}}({b})_{|_{b=\widetilde{b}}}\cdot\widetilde{c}=0.$$
		Thus, the real analytic function $b\mapsto{\omega}_{\textnormal{Eq}}(b)\cdot\widetilde{c}$, with $\widetilde{c}\neq 0$,   is identically zero  and this is a contradiction with the Lemma  \ref{lem non-deg}.\\
		$\bullet$ Sub-case \ding{173}. The sequence $\{j_{n}\}$ is unbounded. Then up to an extraction we can assume that   $\displaystyle \lim_{n\to\infty} j_{n}=\infty$. According to \eqref{ASYFR1+} we have
		\begin{equation}\label{quo1}
			\tfrac{\Omega_{j_{n},k}(b)}{\langle  l_{n}\rangle}=\tfrac{j_n}{\langle  l_{n}\rangle}\Big(\Omega+(2-k)\tfrac{1-b^2}{2}\Big)+ \tfrac{(-1)^k}{2\langle  l_{n}\rangle}+(-1)^{k+1} \tfrac{\mathtt{r}_{j_n}(b)}{\langle  l_{n}\rangle}\cdot
		\end{equation}
		By \eqref{parameter condition 1}, the sequence $\Big\{\frac{j_{n}}{\langle  l_{n}\rangle}\Big\}$ is bounded. Up to a subsequence,  it converges to $\widetilde{d}.$ Differentiating then taking the limit in \eqref{quo1} we obtain 
		$$\lim_{n\to+\infty}\tfrac{\partial_{b}^q\Omega_{j_{n},k}(b_{n})}{\langle  l_{n}\rangle}=\partial_{b}^q\big(\widetilde{d}\,\mathtt{v}_k(b)\big)_{|_{b=\widetilde{b}}},
		$$
		having used in the last identity the estimate \eqref{ASYFR1-}.
		Hence, taking the limit $j\rightarrow+\infty$ in \eqref{Ross-1} gives
		$$\forall q\in\mathbb{N},\quad \partial_{b}^{q}\left({\omega}_{\textnormal{Eq}}({b})\cdot\widetilde{c}+\widetilde{d}\,\mathtt{v}_k(b)\right)_{|_{b=\widetilde{b}}}=0.$$
		Thus, the real analytic function $b\mapsto{\omega}_{\textnormal{Eq}}(b)\cdot\widetilde{c}+\widetilde{d}\mathtt{v}_k(b)$  is identically zero. This contradicts Lemma~\ref{lem non-deg} as  $(\widetilde{c},\widetilde{d})\neq 0$.
		\\
		\textbf{4.} Let $ l\in\mathbb{Z}^{d }, j,j^\prime\in \mathbb{N}_{\mathtt{m}}^*\setminus\mathbb{S}_k$  with $(l,j)\neq(0,j^\prime).$ By the  triangle inequality and  Lemma \ref{lem-asym}-${5}$, since $\Omega>\Omega_{\mathtt{m}}^*,$ we infer that
		$$\big|\omega_{\textnormal{Eq}}(b)\cdot  l+\Omega_{j,k}(b)\pm\Omega_{j',k}(b)\big|\geqslant\big|\Omega_{j,k}(b)\pm\Omega_{j',k}(b)\big|-\big|\omega_{\textnormal{Eq}}(b)\cdot  l\big|\geqslant c|j\pm j'|-C|l|\geqslant \langle l\rangle$$
		provided $|j-j'|\geqslant C_{0}| l|$ for some $C_{0}>0.$ In this case the desired estimate is trivial. So we shall restrict the proof to integers  such that
		\begin{equation}\label{parameter condition 2}
			|j\pm j'|< C_{0}\langle l\rangle,\qquad  l\in\mathbb{Z}^{d }\backslash\{0\},\qquad  j,j^\prime\in\mathbb{N}_{\mathtt{m}}^*\setminus\mathbb{S}_k.
		\end{equation}
		Arguing by contradiction,  assume that for all $n\in\mathbb{N}$, there exists $( l_n,j_n)\neq(0,j_n^\prime)\in \Z^{d+1}$ satisfying \eqref{parameter condition 2} and $b_{n}\in[0,b^{*}]$ such that 
		\begin{equation}\label{Ross-2}
			\forall q\in\mathbb{N},\quad\forall n\geqslant q,\quad\left|\partial_{b}^{q}\left({\omega}_{\textnormal{Eq}}(b)\cdot\tfrac{ l_{n}}{\langle  l_{n}\rangle}+\tfrac{\Omega_{j_{n},k}(b)\pm\Omega_{j'_{n},k}(b)}{\langle  l_{n}\rangle}\right)_{|_{b=b_n}}\right|<\frac{1}{1+n}\cdot
		\end{equation}
		Since the sequences $\left\{\frac{ l_{n}}{\langle  l_{n}\rangle}\right\}_{n}$ and $\{b_{n}\}_{n}$ are bounded, then up to an extraction we can assume that $\displaystyle \lim_{n\to\infty}\frac{ l_{n}}{\langle  l_{n}\rangle}=\widetilde{c}\neq 0$ and $\displaystyle \lim_{n\to\infty}b_{n}=\widetilde{b}.$ We distinguish two cases :\\
		$\bullet$ {\it Case $1$}: $( l_{n})_{n}$ is bounded. We shall only focus on the most delicate case associated to the difference $\Omega_{j_n,k}-\Omega_{j^\prime_n,k}$. Up to an extraction we may assume  that this sequence of integers  is stationary, that is, $ l_{n}=\widetilde l.$ Looking at \eqref{parameter condition 2} we have two sub-cases.
		\\
		$\bullet$ Sub-case \ding{172} : $(j_{n})_{n}$ and $(j'_{n})_{n}$ are bounded. Up to an extraction we can assume that they are stationary, that is, $j_n=\widetilde{\jmath}, j_n^\prime=\widetilde{\jmath}^\prime$. Moreover, by assumption we also have $(\widetilde l, \widetilde{\jmath})\neq(0,\widetilde{\jmath}^\prime)$ and $\widetilde{\jmath},\widetilde{\jmath}^\prime\notin\mathbb{S}_k$.
		Hence taking the limit $n\rightarrow+\infty$ in \eqref{Ross-2}, we get
		$$\forall q\in\mathbb{N},\quad\partial_{b}^{q}\left({\omega}_{\textnormal{Eq}}({b})\cdot\widetilde{ l}+\Omega_{\widetilde{\jmath},k}({b})-\Omega_{\widetilde{\jmath}^\prime,k}({b})\right)_{|_{b=\widetilde{b}}}=0.$$
		Therefore, the real analytic function $b\mapsto\omega_{\textnormal{Eq}}(b)\cdot\widetilde{ l}+\Omega_{\widetilde{\jmath},k}(b)-\Omega_{\widetilde{\jmath}^\prime,k}(b)$  is identically zero.  If $\widetilde{\jmath}=\widetilde{j^\prime}$ then this  contradicts Lemma \ref{lem non-deg} since $\widetilde{l}\neq 0.$ In the case  $\widetilde{\jmath}\neq \widetilde{j^\prime}\in \mathbb{N}_{\mathtt{m}}^*\setminus\mathbb{S}_k$ this still contradicts Lemma~\ref{lem non-deg}, applied with the vector frequency $(\omega_{\textnormal{Eq}},\Omega_{\widetilde{\jmath},k},\Omega_{\widetilde{j'},k})$. 
		\\
		$\bullet$ Sub-case \ding{173} :   $(j_n)_{n}$ and $(j'_{n})_{n}$ are unbounded. Up to an extraction, we  assume that  $\displaystyle \lim_{n\to\infty}j_{n}= \lim_{m\to\infty}j'_{n}=\infty$. 
		Assume, without loss of generality,  that for a given $n$ we have $j_n\geqslant j_n^\prime$. In view of  \eqref{ASYFR1+} we may write
		\begin{align}\label{split}
			\tfrac{\partial_{b}^{q}\left(\Omega_{j_{n},k}(b)-\Omega_{j'_{n},k}(b)\right)}{\langle  l_{n}\rangle}
			&=\partial_{b}^{q}\mathtt{v}_k(b)\tfrac{j_n-j_n^\prime}{\langle  l_n\rangle}
			+\tfrac{(-1)^{k+1}}{\langle  l_n\rangle}\partial_{b}^{k}\big(\mathtt{r}_{j_n}(b)-\mathtt{r}_{j_n^\prime}(b)\big).
		\end{align}
		According to \eqref{parameter condition 2},   up to an extraction, we can assume that $\displaystyle\lim_{n\to\infty}\frac{j'_{n}-j_{n}}{\langle l_{n}\rangle}=\widetilde{d}$. Therefore, combining \eqref{split} and \eqref{ASYFR1-}, we find
		$$\lim_{n\to\infty}\partial_{b}^{q}\left(\frac{\Omega_{j_{n},k}(b)-\Omega_{j'_{n},k}(b)}{\langle  l_{n}\rangle}\right)_{|_{b=b_n}}=\widetilde{d}\,\partial_{b}^{q}\big(\mathtt{v}_k(b)\big)_{|_{b=\widetilde{b}}}.$$
		Taking the limit $n\rightarrow+\infty$ in \eqref{Ross-2} gives
		$$\forall q\in\mathbb{N},\quad\partial_{b}^{q}\left({\omega}_{\textnormal{Eq}}({b})\cdot\widetilde{c}+\widetilde{d}\,\mathtt{v}_k(b)\right)_{|_{b=\widetilde{b}}}=0.$$
		Then, the real analytic function $b\mapsto{\omega}_{\textnormal{Eq}}({b})\cdot\widetilde{c}+\widetilde{d}\,\mathtt{v}_k(b)$ with $(\widetilde{c},\widetilde{d})\neq (0,0)$ is identically zero. This contradicts  Lemma \ref{lem non-deg}.
		\\
		$\bullet$ {\it Case} $2$: $( l_{n})_{n}$ is unbounded. Up to an extraction  we can assume that $\displaystyle \lim_{n\to\infty}|l_{n}|=\infty.$ We shall distinguish three sub-cases.\\
		$\bullet$ Sub-case \ding{172}. The sequences  $(j_{n})_{n}$ and $(j'_{n})_{n}$ are bounded. Thus,  up to an extraction  they will  converge. Taking the limit in \eqref{Ross-2} leads to
		$$
		\forall q\in\mathbb{N},\quad\partial_{b}^{q}{\omega}_{\textnormal{Eq}}(\bar{b})\cdot\widetilde{c}=0.
		$$
		which gives a contradiction with Lemma \ref{lem non-deg}. \\
		$\bullet$ Sub-case \ding{173}. The sequences  $(j_{n})_{n}$ and $(j'_{n})_{n}$ are both unbounded. This case is similar to  the sub-case \ding{173} of the case 1.\\
		$\bullet$ Sub-case \ding{174}. The sequence $(j_{n})_{n}$ is unbounded and $(j'_{n})_{n}$ is bounded.  Without loss of generality, we can assume that $\displaystyle \lim_{n\to\infty}j_n=\infty$ and $  j_{n}^\prime=\widetilde{\jmath}.$ By \eqref{parameter condition 2} and  up to an extraction one gets  $\displaystyle \lim_{n\to\infty}\frac{j_{n}\pm j'_{n}}{| l_{n}|}=\widetilde{d}.$
		Using  Taylor formula combined with   \eqref{ASYFR1-} gives for any $b\in[0,b^*],$
		\begin{align}\label{estm:dif22}
			\big| \partial_{b}^{q}\mathtt{r}_{j_n^\prime}(b)- \partial_{b}^{q}\mathtt{r}_{j_n}(b)\big| &\leqslant C\Big| \int_{j'_n}^{j_n}\frac{dx}{x^{2}}\Big|\nonumber\\ &\leqslant C {|j_n-j_n'|}{(j_n j_n^\prime)^{-1}}\cdot
		\end{align}
		Using \eqref{ASYFR1+}
		combined with  \eqref{estm:dif22} and \eqref{ASYFR1-} we get, for any $q\in\mathbb{N},$
		\begin{align*}
			\lim_{n\to\infty}\langle  l_{n}\rangle^{-1}
			\partial_b^q\Big(\Omega_{j_n,k}(b)\pm\Omega_{j_{n}^\prime,k}(b)-(j_n\pm j^\prime_{n})\mathtt{v}_k(b)\Big)_{|b=b_n}&=\\
			(-1)^{k}\ \lim_{n\to\infty}
			\partial_b^q\left(\tfrac{1\pm 1}{2\langle  l_n\rangle}-\tfrac{\mathtt{r}_{j_n}(b)\pm \mathtt{r}_{j_n^\prime}(b)}{\langle  l_n\rangle}\right)_{|b=b_n}&=0.
		\end{align*}
		Hence, taking the limit  in \eqref{Ross-2} implies 
		$$\forall q\in\mathbb{N},\quad \partial_{b}^{q}\left({\omega}_{\textnormal{Eq}}({b})\cdot\widetilde{c}+\widetilde{d}\mathtt{v}_k(b)\right)_{b=\widetilde{b}}=0.$$
		Thus, the real analytic function ${b}\mapsto{\omega}_{\textnormal{Eq}}({b})\cdot\widetilde{c}+\widetilde{d}\mathtt{v}_k(b)$ is identically zero with $(\widetilde{c},\widetilde{d})\neq0$ leading to a contradiction with Lemma \ref{lem non-deg}. \\
		\textbf{5.} 
		Arguing by contradiction, suppose  that for all $n\in\mathbb{N}$, there exist $b_n\in[0,b^*]$ and  $( l_n,j_n,j_n^\prime)\in \Z^{d+2}\setminus\{0\}$, with $j_n\in (\mathbb{N}^*\cap \mathbb{Z}_{\mathtt{m}})\setminus\mathbb{S}_1,$ and $j^\prime_n\in (\mathbb{N}^*\cap \mathbb{Z}_{\mathtt{m}})\setminus\mathbb{S}_2$, such that 
		$$
		\max_{q\in\llbracket 0, n\rrbracket}\left|\partial_{b}^{q}\left({\omega}_{\textnormal{Eq}}(b)\cdot\tfrac{ l_{n}}{\langle  l_{n},j_n,j_n^\prime\rangle}+\tfrac{\Omega_{j_{n},1}(b)\pm\Omega_{j'_{n},2}(b)}{\langle  l_{n},j_n,j_n^\prime\rangle}\right)_{|_{b=b_n}}\right|<\frac{1}{1+n}$$ 
		and therefore
		\begin{equation}\label{Rossemann 2-dif0}
			\forall q\in\mathbb{N},\quad\forall n\geqslant q,\quad\left|\partial_{b}^{q}\left({\omega}_{\textnormal{Eq}}(b)\cdot\tfrac{ l_{n}}{\langle  l_{n},j_n,j_n^\prime\rangle}+\tfrac{\Omega_{j_{n},1}(b)\pm\Omega_{j'_{n},2}(b)}{\langle  l_{n},j_n,j_n^\prime\rangle}\right)_{|_{b=b_n}}\right|<\frac{1}{1+n}\cdot
		\end{equation}
		The sequence $(b_n)_n\subset [0,b^{*}]$ 		is bounded. Up to an extraction we may assume that 
		$$
				\lim_{n\to\infty}b_{n}=\widetilde{b}\in [0,b^{*}].
		$$
		We distinguish two cases.\\
		$\bullet$ {\it Case $1$}:  The sequence $\{\langle  l_{n},j_n,j_n^\prime\rangle\}_n$ is bounded. Then up to an extraction we may assume that 
		$$\lim_{n\to\infty}l_n=\widetilde{c}\in \mathbb{Z}^d,\qquad \lim_{n\to\infty}j_n=\widetilde{\jmath} \in(\mathbb{N}^*\cap \mathbb{Z}_{\mathtt{m}})\setminus\mathbb{S}_1\qquad\hbox{and}\qquad \lim_{n\to\infty}j'_n=\widetilde{\jmath}^\prime \in(\mathbb{N}^*\cap \mathbb{Z}_{\mathtt{m}})\setminus\mathbb{S}_2 .
		$$
		Taking the limit in \eqref{Rossemann 2-dif0} we find
		$$\forall q\in\mathbb{N},\quad \partial_{b}^{q}\left({\omega}_{\textnormal{Eq}}(b)\cdot\widetilde{c}+\Omega_{\widetilde{\jmath},1}(b)\pm\Omega_{\widetilde{\jmath}^\prime,2}(b)\right)_{b=\widetilde{b}}=0.$$
		Thus, the real analytic function ${b}\mapsto {\omega}_{\textnormal{Eq}}(b)\cdot\widetilde{c}+\Omega_{\widetilde{\jmath},1}(b)\pm\Omega_{\widetilde{\jmath}^\prime,2}(b)$ is identically zero on the interval $[0,b^*]$. This contradicts Lemma \ref{lem non-deg} if one of the following holds: $$\widetilde{\jmath} \not\in \mathbb{S}_2\qquad\hbox{and}\qquad \widetilde{\jmath}^\prime \not\in \mathbb{S}_1,$$ or 
		$$\widetilde{\jmath} \in \mathbb{S}_2\qquad\hbox{and}\qquad \widetilde{\jmath}^\prime \not\in \mathbb{S}_1,$$
		or
		$$\widetilde{\jmath} \not\in \mathbb{S}_2\qquad\hbox{and}\qquad \widetilde{\jmath}^\prime \in \mathbb{S}_1.$$ Thus, it remain to check the case where
		\begin{equation}\label{last-case}
			\widetilde{\jmath} \in \mathbb{S}_2\qquad\hbox{and}\qquad \widetilde{\jmath}^\prime \in \mathbb{S}_1. 
		\end{equation}
		Denoting $\widetilde{c}=\Big(\big(\widetilde{c}_{j,1}\big)_{j\in\mathbb{S}_1},\big(\widetilde{c}_{j,1}\big)_{j\in\mathbb{S}_2}\Big),$ then we have for any $z\in [0,(b^*)^2],$
		\begin{equation}\label{comb-aj2}
			\sum_{j\in \mathbb{S}_1\setminus\{\widetilde{\jmath}^\prime\}} \widetilde{c}_{j,1}\widetilde{\Omega}_{j,1}(z) +\sum_{j\in \mathbb{S}_2\setminus\{\widetilde{\jmath}\}} \widetilde{c}_{j,2} \widetilde{\Omega}_{j,2}(z)+\widetilde{c}_{\widetilde{\jmath}^\prime,1} \widetilde{\Omega}_{\widetilde{\jmath}^\prime,1}(z)\pm\widetilde{\Omega}_{\widetilde{\jmath}^\prime,2}(z) +\widetilde{c}_{\widetilde{\jmath},2} \widetilde{\Omega}_{\widetilde{\jmath},2}(z)+ \widetilde{\Omega}_{\widetilde{\jmath},1}(z)=0. 
		\end{equation}
		Arguing as in the proof of Lemma \ref{lem non-deg} we conclude by a differentiation argument that  
		$$\forall j\in (\mathbb{S}_1\cup \mathbb{S}_2)\setminus \{\widetilde{\jmath}^\prime, \widetilde{\jmath}\}, \quad \forall k\in\{1,2\},\quad c_{j,k}=0, \qquad c_{\widetilde{\jmath},2}=1\qquad \textnormal{and}\qquad   c_{\widetilde{\jmath}^\prime,1}=\pm 1.$$
		Substituting these identities into \eqref{comb-aj2} evaluated at $z=0$ and using \eqref{Taylor-fre} we get 
		$$\big(2\Omega +\tfrac{1}{2}\big)(\widetilde{\jmath}^\prime\pm \widetilde{\jmath})=0.$$
		This implies that $\widetilde{\jmath}^\prime=\mp \widetilde{\jmath} $ contradicting \eqref{last-case} and \eqref{S+}.\\			
		$\bullet$ {\it Case $2$}: The sequence $\{\langle  l_{n},j_n,j_n^\prime\rangle\}_n$ is unbounded. 
		Using \eqref{ASYFR1+} we may write 		
		\begin{equation}\label{Rossemann 2-dif}
			\forall q\in\mathbb{N},\quad\forall n\geqslant q,\quad\left|\partial_{b}^{q}\left(\big({\omega}_{\textnormal{Eq}}(b),\Omega+\tfrac{1-b^2}{2},\pm\Omega\big)\cdot\tfrac{ (l_{n},j_n,j_n^\prime)}{\langle  l_{n},j_n,j_n^\prime\rangle}+\tfrac{- \tfrac{1}{2}\pm  \tfrac{1}{2}+ \mathtt{r}_{j_n}(b)\mp  \mathtt{r}_{j_n^\prime}(b)}{\langle  l_{n},j_n,j_n^\prime\rangle}\right)_{|_{b=b_n}}\right|<\frac{1}{1+n}\cdot
		\end{equation}	
		The sequence $(c_n)_n\triangleq \big(\tfrac{ (l_{n},j_n,j_n^\prime)}{\langle  l_{n},j_n,j_n^\prime\rangle}\big)_n\subset \mathbb{R}^d\backslash\{0\}$ is bounded. 
		By compactness and up to an extraction we may assume that 
		$$\lim_{n\to\infty}\tfrac{ (l_{n},j_n,j_n^\prime)}{\langle  l_{n},j_n,j_n^\prime\rangle}=\widetilde{c}\neq 0.
		$$			
		Taking the limit in \eqref{Rossemann 2-dif} and using \eqref{ASYFR1-}  we get	
		$$\forall q\in\mathbb{N},\quad \partial_{b}^{q}\left(\big({\omega}_{\textnormal{Eq}}(b),\Omega+\tfrac{1-b^2}{2},\pm\Omega\big)\cdot\widetilde{c}\right)_{b=\widetilde{b}}=0.$$
		Thus, the real analytic function ${b}\mapsto \big({\omega}_{\textnormal{Eq}}(b),\Omega+\tfrac{1-b^2}{2},\pm\Omega\big)\cdot\widetilde{c}$ is identically zero with $\widetilde{c}\neq0$ which contradicts Lemma \ref{lem non-deg}. 
		This completes the proof of the lemma.
	\end{proof}
	
	\paragraph{Linear quasi-periodic solution.} Notice  that by selecting  only  a finite number of frequencies, the sum  in \eqref{lin-sol}  gives rise to  quasi-periodic solutions of the linearized equation \eqref{Ham eq-eq DCE}, provided that the parameter $b$ belongs to a suitable  Cantor-like set of full measure.
	The following result follows in a similar way to \cite[Lem 3.3]{HHM21}, based on Lemma \ref{lemma transversalityE}-(i) and Lemma \ref{lemma Russmann book}.
	\begin{lem}\label{lemma sol Eq}
		Let $\Omega>0,$   $\mathbb{S}_1,\mathbb{S}_2\subset\mathbb{N}^*$, as in \eqref{S+} and $b^*$ as in Corollary $\ref{coro-equilib-freq}.$ Then, there exists a Cantor-like set $\mathcal{C}\subset[0,b^*]$ satisfying $|\mathcal{C}|=b^*$ and such that for all $b\in\mathcal{C}$, every function in the form
		$$\rho(t,\theta)=\sum_{j\in{\mathbb{S}}_1}\tfrac{\rho_{j,1}}{\sqrt{1-a_{j}^2(b)}} \begin{pmatrix}
				1  \\
				- a_{j}(b) 
			\end{pmatrix}\cos\big(j\theta-\Omega_{j,1}(b)t\big)+\sum_{j\in{\mathbb{S}}_2}\tfrac{\rho_{j,2}}{\sqrt{1-a_{j}^2(b)}} \begin{pmatrix}
				-a_{j} (b) \\
				1 
			\end{pmatrix}\cos\big(j\theta-\Omega_{j,2}(b)t\big),$$
		with $\rho_{j,1},\rho_{j,2}\in\mathbb{R}^*$, 	is a time quasi-periodic reversible solution to the equation \eqref{Ham eq-eq DCE} with the vector frequency $\omega_{\textnormal{Eq}}(b)$, defined in \eqref{Eq freq vec Edc}. 
	\end{lem}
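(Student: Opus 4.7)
\medskip

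\noindent\textbf{Proof plan.} The plan is to separate two independent issues: the fact that the stated superposition is a solution of the linearized equation for every admissible $b$, and the measure-theoretic requirement coming from the non-resonance of the tangential frequency vector $\omega_{\textnormal{Eq}}(b)$. The former is essentially already contained in Lemma \ref{lem: properties P}-(5), while the latter is handled via the transversality bound of Lemma \ref{lemma transversalityE}-(i) combined with R\"ussmann's lemma.

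\smallskip

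\noindent\emph{Step 1: Identifying $\rho$ as a real superposition of normal modes.} Since $\mathtt{m}\geqslant\mathtt{m}^*$ and $\mathbb{S}_k\subset\mathbb{Z}_{\mathtt{m}}\cap\mathbb{N}^*$, Corollary~\ref{coro-equilib-freq} ensures that $\Omega_{j,k}(b)$ is real for every $b\in[0,b^*]$ and every $j\in\mathbb{S}_k$, and that $v_{j,1}(b)=(1,-a_j(b))^\top$, $v_{j,2}(b)=(-a_j(b),1)^\top$ are the associated eigenvectors of $M_j(b,\Omega)$. Writing each cosine as $\tfrac12(e^{\ii(j\theta-\Omega_{j,k}(b)t)}+\textnormal{c.c.})$ and using $\Omega_{-j,k}(b)=-\Omega_{j,k}(b)$, $a_{-j}(b)=a_j(b)$, the function $\rho$ matches exactly the real-valued normal mode expansion \eqref{lin-sol} (with amplitudes $A_j=\tfrac12\tfrac{\rho_{j,1}}{\sqrt{1-a_j^2(b)}}$ and $B_j=\tfrac12\tfrac{\rho_{j,2}}{\sqrt{1-a_j^2(b)}}$ satisfying the conjugacy conditions), so $\rho$ is automatically a solution of \eqref{Ham eq-eq DCE} for every $b\in[0,b^*]$. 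The reversibility $\rho(-t,-\theta)=\rho(t,\theta)$ follows from the evenness of $\cos$, while the $\mathtt{m}$-fold symmetry $\rho(t,\theta+\tfrac{2\pi}{\mathtt{m}})=\rho(t,\theta)$ is immediate from $j\in\mathbb{Z}_{\mathtt{m}}$.

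\smallskip

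\noindent\emph{Step 2: Non-resonance via R\"ussmann's lemma.} To qualify as a quasi-periodic function with the stated vector frequency, $\omega_{\textnormal{Eq}}(b)$ must satisfy the non-resonance condition \eqref{nonresonnace omega}. I would in fact establish a stronger Diophantine estimate. Fix $\tau>dq_0$ with $q_0$ given by Lemma~\ref{lemma transversalityE}, and for $\gamma\in(0,1)$ define
\begin{equation*}
	\mathcal{R}_{l}^{\gamma}\triangleq\Big\{b\in[0,b^*]\quad\textnormal{s.t.}\quad|\omega_{\textnormal{Eq}}(b)\cdot l|<\gamma\langle l\rangle^{-\tau}\Big\},\qquad l\in\mathbb{Z}^{d}\setminus\{0\}.
\end{equation*}
Lemma~\ref{lemma transversalityE}-(i) guarantees $\displaystyle\inf_{b\in[0,b^*]}\max_{q\in\llbracket 0,q_0\rrbracket}|\partial_b^q\omega_{\textnormal{Eq}}(b)\cdot l|\geqslant\rho_0\langle l\rangle$, so R\"ussmann's lemma (Lemma~\ref{lemma Russmann book}) yields
\begin{equation*}
	|\mathcal{R}_{l}^{\gamma}|\lesssim\big(\gamma\langle l\rangle^{-\tau-1}\big)^{1/q_0}.
\end{equation*}

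\smallskip

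\noindent\emph{Step 3: Full-measure Cantor-like set.} Summing over $l\in\mathbb{Z}^{d}\setminus\{0\}$ with $\tau$ chosen so that $\sum_{l\in\mathbb{Z}^d\setminus\{0\}}\langle l\rangle^{-(\tau+1)/q_0}<\infty$, the total resonant set $\mathcal{R}^{\gamma}\triangleq\bigcup_{l\neq 0}\mathcal{R}_{l}^{\gamma}$ satisfies $|\mathcal{R}^{\gamma}|\lesssim\gamma^{1/q_0}$. Setting
\begin{equation*}
	\mathcal{C}\triangleq\bigcup_{n\geqslant 1}\Big([0,b^*]\setminus\mathcal{R}^{1/n}\Big),
\end{equation*}
we obtain $|\mathcal{C}|=b^*$ and, for any $b\in\mathcal{C}$, there is some $n\geqslant1$ with $|\omega_{\textnormal{Eq}}(b)\cdot l|\geqslant\tfrac{1}{n\langle l\rangle^{\tau}}$ for every $l\in\mathbb{Z}^{d}\setminus\{0\}$; in particular $\omega_{\textnormal{Eq}}(b)$ is non-resonant and $\rho(t,\theta)$ is a genuine quasi-periodic reversible $\mathtt{m}$-fold solution of \eqref{Ham eq-eq DCE}.

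\smallskip

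\noindent\emph{Main obstacle.} Step 1 is essentially bookkeeping given the diagonalization already established. The only real work lies in Step 2: verifying that the transversality estimate of Lemma~\ref{lemma transversalityE}-(i) is indeed strong enough to feed into R\"ussmann's lemma and yield a summable bound on $|\mathcal{R}_l^\gamma|$. This is standard once $\tau$ is tuned to $q_0$ and $d$, so no genuine difficulty is expected.
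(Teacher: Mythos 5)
Your proof is correct and follows the same route the paper indicates: identify $\rho$ with the normal-mode expansion \eqref{lin-sol} of Lemma \ref{lem: properties P}-(5), then combine the transversality estimate of Lemma \ref{lemma transversalityE}-(i) with R\"ussmann's Lemma \ref{lemma Russmann book} to carve out a full-measure set of non-resonant $b$. The paper itself only sketches this step by referring to the analogous argument in \cite{HHM21}; the single cosmetic slip in your Step 1 is that the factor $1/\sqrt{1-a_j^2(b)}$ already appears in \eqref{lin-sol}, so the amplitudes should read $A_j=\rho_{j,1}/2$ and $B_j=\rho_{j,2}/2$.
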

	
	\section{Hamiltonian toolkit}
	The main scope of this section is to relate the existence of quasi-periodic solutions to the Hamiltonian equation \eqref{nonlinear-func0}  to the construction of invariant tori in a suitable phase space. More precisely, we shall reformulate the problem in terms of embedded tori through the introduction of action-angle variables.  Note that, according to Remark \ref{remark-lin-eq-eq}, \eqref{id:XKXH} and \eqref{def Lbmat}, the equation \eqref{nonlinear-func0} can be seen as a quasilinear perturbation of its linear part at the equilibrium state, namely,
	\begin{equation}\label{def XP}
		\partial_{t}r=\mathcal{J}\mathbf{L}_0r+X_{P}(r),\qquad\textnormal{with}\qquad X_{P}(r)\triangleq \mathcal{J}\nabla K({r})-\mathcal{J}\mathbf{L}_0r=\mathbf{Q}^{-1}X_{H\geqslant 3}(\mathbf{Q}{r}),
	\end{equation}
	where
	\begin{align*}
		X_{H\geqslant 3}({r})\triangleq \mathcal{J}\big(\nabla H(r)-\mathbf{M}_0r\big)
	\end{align*}
	and $\mathbf{Q},$ $H$, $\mathbf{M}_0,$ $\mathbf{L}_0$ are defined in \eqref{def-P}, \eqref{def H}, \eqref{Ham eq-eq DCE}, \eqref{def Lbmat}, respectively. The following lemma summarizes some tame estimates satisfied by the  vector field $X_P$. Notice that the structure of the two components of  vector  field $X_{H\geqslant 3}({r})$ are very similar to the one obtained in the setting of Euler equations in the unit disc  \cite[eq. (5.1)]{HR21-1}. Moreover, the symplectic change of variables $\mathbf{Q}^{\pm 1}$ depends only on the parameter $b$ (and not on  $r$) and  it acts continuously  from $\mathbf{H}_{\mathtt{m}}^s$ into itself for any $s.$ Therefore, one gets in a similar way to \cite[Lemma 5.2]{HR21-1}  the following estimates.
	\begin{lem}\label{tame XP}
		Let $b^*, \mathtt{m}^*$ as in Corollary $\ref{coro-equilib-freq},$   $\mathtt{m}\geqslant \mathtt{m}^*$, and  $(\gamma,q,s_{0},s)$ satisfy \eqref{setting q}, \eqref{setting tau1 and tau2} and \eqref{init Sob cond}. There exists $\varepsilon_{0}\in(0,1]$ such that if
		$$\| r\|_{s_{0}+2}^{q,\gamma,\mathtt{m}}\leqslant\varepsilon_{0},$$
		then the vector field $X_{P}$, defined in \eqref{def XP} satisfies the following estimates
		\begin{enumerate}[label=(\roman*)]
			\item $\| X_{P}(r)\|_{s}^{q,\gamma,\mathtt{m}}\lesssim \| r\|_{s+2}^{q,\gamma,\mathtt{m}}\| r\|_{s_0+1}^{q,\gamma,\mathtt{m}}.$
			\item $\| d_{r}X_{P}(r)[\rho]\|_{s}^{q,\gamma,\mathtt{m}}\lesssim\|\rho\|_{s+2}^{q,\gamma,\mathtt{m}}\| r\|_{s_0+1}^{q,\gamma,\mathtt{m}}+\| r\|_{s+2}^{q,\gamma,\mathtt{m}}\|\rho\|_{s_{0}+1}^{q,\gamma,\mathtt{m}}.$
			\item 
			$\| d_r^{2}X_{P}(r)[\rho_{1},\rho_{2}]\|_{s}^{q,\gamma,\mathtt{m}}\lesssim\|\rho_{1}\|_{s_{0}+1}^{q,\gamma,\mathtt{m}}\|\rho_{2}\|_{s+2}^{q,\gamma,\mathtt{m}}+\big(\|\rho_{1}\|_{s+2}^{q,\gamma,\mathtt{m}}+\| r\|_{s+2}^{q,\gamma,\mathtt{m}}\|\rho_{1}\|_{s_{0}+1}^{q,\gamma,\mathtt{m}}\big)\|\rho_{2}\|_{s_{0}+1}^{q,\gamma,\mathtt{m}}$.
		\end{enumerate}
	\end{lem}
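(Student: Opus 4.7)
\medskip

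\textbf{Proof proposal.} The plan is to reduce all three tame bounds to analogous estimates on the vector field $X_{H\geqslant 3}$ appearing in \eqref{def XP}, and then exploit the explicit contour-dynamics expression of $\nabla H-\mathbf{M}_0 r$ coming from Lemma \ref{lem eq EDC r} and Lemma \ref{lem lin op 2 DCE}. By \eqref{def XP} we may write $X_P(r)=\mathbf{Q}^{-1}X_{H\geqslant 3}(\mathbf{Q}r)$, and, differentiating in $r$,
\begin{align*}
d_r X_P(r)[\rho] &= \mathbf{Q}^{-1}\,d_r X_{H\geqslant 3}(\mathbf{Q}r)[\mathbf{Q}\rho],\\
d_r^2 X_P(r)[\rho_1,\rho_2] &= \mathbf{Q}^{-1}\,d_r^2 X_{H\geqslant 3}(\mathbf{Q}r)[\mathbf{Q}\rho_1,\mathbf{Q}\rho_2].
\end{align*}
Since the kernels $P_1,P_2$ defining $\mathbf{Q}^{\pm 1}$ in \eqref{PP-1} are independent of $r$, the estimate \eqref{e-P1P2} gives $\|\mathbf{Q}^{\pm 1}\rho\|_s^{q,\gamma,\m}\lesssim\|\rho\|_s^{q,\gamma,\m}$ for every $s$ and preserves the smallness condition. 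Thus matters reduce to proving the three claimed tame bounds for $X_{H\geqslant 3}$ itself, under the hypothesis $\|r\|_{s_0+2}^{q,\gamma,\m}\leqslant\varepsilon_0$.

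The next step is to unpack $X_{H\geqslant 3}(r)=\mathcal{J}\big(\nabla H(r)-\mathbf{M}_0 r\big)$. From \eqref{Edc eq rkPsi}--\eqref{Fkn Edc} and Proposition \ref{prop HAM eq Edc}, each component of $\mathcal{J}\nabla H(r)$ is (up to the $\Omega\partial_\theta r$ linear piece, which appears in $\mathcal{J}\mathbf{M}_0 r$) a sum of terms of the form
\begin{equation*}
F_{k,n}[r](\theta)=\int_{\mathbb{T}}\log\big(A_{k,n}(r)(\theta,\eta)\big)\,\partial^2_{\theta\eta}\big(R_k(\theta)R_n(\eta)\sin(\eta-\theta)\big)\,d\eta,
\end{equation*}
while the linear subtraction produces the $b$-independent convolution and multiplication operators identified in \eqref{Ham eq-eq DCE}. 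The decompositions $A_{k,k}(r)=2b_k|\sin(\tfrac{\eta-\theta}{2})|\,v_k(r)$ and $A_{k,3-k}^2(r)=A_{k,3-k}^2(0)(1+h_k(r))$ already exploited in the proof of Proposition \ref{prop:conjP}, with the associated bounds \eqref{vk-e}--\eqref{h-e}, let us write $\log A_{k,n}(r)=\log A_{k,n}(0)+\mathbb{K}_{k,n}(r)$, where $\mathbb{K}_{k,n}(r)$ vanishes at $r=0$ and is tamely controlled by $\|r\|_s^{q,\gamma,\m}$ (indeed the additional derivative in \eqref{est-Kkn} comes from $R_k$, not from $\log A_{k,n}$). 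Plugging these into $F_{k,n}$ and subtracting the $r=0$ contribution (which is exactly cancelled by $\mathbf{M}_0 r$ on account of Lemma \ref{lem lin op 2 DCE}) leaves an expression that is at least quadratic in $r$ and involves at most two $\theta$-derivatives of $R_k,R_n$.

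Once this structure is in place, estimate (i) is obtained by combining the product law and composition law of Lemma \ref{lem funct prop}, the integral-operator bound of Lemma \ref{lem sym--rev}, and Lemma \ref{iter-kerns}, always putting the high-Sobolev norm on the factor carrying the two $\theta$-derivatives (which produces the $\|r\|_{s+2}^{q,\gamma,\m}$ loss) and the low norm on the remaining factor that guarantees the quadratic character (yielding the factor $\|r\|_{s_0+1}^{q,\gamma,\m}$). Estimates (ii) and (iii) are obtained by a direct differentiation of the resulting formulas and the same tame calculus, exactly as in the simply-connected argument of \cite[Lem.\ 5.2]{HR21-1}; the second derivative produces a Taylor remainder with cubic factors, which accounts for the extra $\|r\|_{s+2}^{q,\gamma,\m}\|\rho_1\|_{s_0+1}^{q,\gamma,\m}$ term in (iii). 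The main technical nuisance---not a deep obstacle but the place where the bookkeeping must be done carefully---is tracking the loss of exactly two derivatives: the $\partial^2_{\theta\eta}$ inside $F_{k,n}$ is the sole source of the $+2$ in $s+2$, and one has to make sure that every time this factor is differentiated in $r$ the freed derivative is absorbed by a low-norm factor so that no further derivative loss is incurred. After this accounting the three bounds follow, and pulling everything back through $\mathbf{Q}^{\pm 1}$ via \eqref{e-P1P2} concludes the proof.
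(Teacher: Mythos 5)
Your proposal takes essentially the same route as the paper: the paper's own proof consists precisely of (a) observing that $\mathbf{Q}^{\pm1}$ is $r$-independent and bounded on $\mathbf{H}^s_{\mathtt{m}}$ for every $s$, so that the estimates reduce to those for $X_{H\geqslant 3}$, and (b) invoking the fact that the components of $X_{H\geqslant 3}$ have the same contour-dynamics structure as in the disc setting, so the tame bounds follow as in \cite[Lem.~5.2]{HR21-1}. Your additional unpacking of the $F_{k,n}$ kernels and the derivative bookkeeping is consistent with that argument (up to the minor point that one of the two lost derivatives comes from the kernel $\log A_{k,k}$ via Lemma~\ref{lem triche}, not only from $\partial^2_{\theta\eta}$), so no further comment is needed.
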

	Since we shall  look for small amplitude quasi-periodic solutions then it is  more convenient  to 
	rescale the solution as follows  $r\mapsto\varepsilon r$ with $r$ bounded in  a suitable functions space. Hence, the Hamiltonian equation \eqref{nonlinear-func} takes the form
	\begin{equation}\label{perturbed hamiltonian}
		\omega\cdot\partial_\varphi r=\partial_{\theta}\mathbf{L}_0r+\varepsilon X_{P_{\varepsilon}}(r),
	\end{equation}
	where $\mathbf{L}_0$ is the operator defined by \eqref{def Lbmat} and $X_{P_{\varepsilon}}$ is the rescaled  Hamiltonian vector field defined by
	$X_{P_{\varepsilon}}(r)\triangleq \varepsilon^{-2}X_{P}(\varepsilon r).$
	Notice  that \eqref{perturbed hamiltonian} is the Hamiltonian system generated by
	the rescaled Hamiltonian  
	\begin{align}\label{def calKe}
		\nonumber \mathcal{K}_{\varepsilon}(r)&\triangleq\varepsilon^{-2}K(\varepsilon r)\\
		&= K_{\mathbf{L}_0}(r)+\varepsilon P_{\varepsilon}(r),
	\end{align}
	with  $K_{\mathbf{L}_0}$ the quadratic Hamiltonian defined in Remark \ref{remark-lin-eq-eq} and 
	$\varepsilon P_{\varepsilon}(r)$ describes all the terms
	of higher order more than cubic.
	\paragraph{Action-angle-normal variables}\label{subsec act-angl}
	Recalling the notations introduced in \eqref{def L2m}, \eqref{S+}--\eqref{Eq freq vec Edc}. Given  the decomposition \eqref{decomp prod l2} of the phase space $L_{\mathtt{m}}^2(\mathbb{T})\times L_{\mathtt{m}}^2(\mathbb{T})$
	and the decomposition in action-angle-normal variables \eqref{aa-coord-00},
	the symplectic  $2$-form in \eqref{sympl ref} becomes
	\begin{equation}\label{sympl_form3}
		{\mathcal W}=\sum_{j\in\mathbb{S}_1} d\vartheta_{j,1}\wedge dI_{j,2}-\sum_{j\in \mathbb{S}_2}d\vartheta_{j,2}\wedge dI_{j,2}   +\frac{1}{2\ii}\sum_{j\in\mathbb{Z}_{\mathtt{m}}\setminus\overline{\mathbb{S}}_{0,1}}\frac{1}{j}dr_{j,1}\wedge dr_{-j,1}-\frac{1}{2\ii}\sum_{j\in \mathbb{Z}_{\mathtt{m}}\setminus\overline{\mathbb{S}}_{0,2}}\frac{1}{j}dr_{j,2}\wedge dr_{-j,2}.
	\end{equation}
	The Poisson bracket is given by
	\begin{equation}\label{poisson-bracket}
		\{F,G\}\triangleq\mathcal{W}(X_F,X_G)=\big\langle\nabla F, \mathbf{J}\nabla G\big\rangle,
	\end{equation}
	where $\langle \cdot,\cdot\rangle$ is  the inner product, defined by
	$$\big\langle (\vartheta,I,z),(\underline \vartheta,\underline  I, \underline  z)\big\rangle\triangleq\vartheta\cdot \underline \vartheta+I\cdot\underline  I+\big\langle z,\underline  z \big\rangle_{L^2 (\mathbb{T})\times L^2 (\mathbb{T})}.$$
	The Poisson structure $\mathbf{J}$ corresponding to $\mathcal{W}$, defined by the identity \eqref{poisson-bracket} , is the unbounded operator
	$$\mathbf{J}:(\vartheta,I,z)\mapsto({\mathtt J}I,-{\mathtt J}\vartheta,\mathcal{J}z),$$ 
	where $\mathcal{J}$ is given by \eqref{def calJ} and
	$${\mathtt J}\triangleq  \begin{pmatrix}
		{\rm I}_{d_1}& 0 \\
		0 & -{\rm I}_{d_2}  \end{pmatrix},
	$$
	with  ${\rm I}_{d_k}$  the identity matrix of size ${d_k}$.
	Now we shall  study the Hamiltonian system  generated by the Hamiltonian $ {\mathcal K}_\varepsilon   $ in \eqref{def calKe}, 
	in  the action-angle-normal variables 
	$ 
	(\vartheta, I, z) \in  \mathbb{T}^d \times \mathbb{R}^d \times \mathbf{H}^{\perp}_{\overline{\mathbb{S}}_0} \, . 
	$ 
	We consider the Hamiltonian $ K_{\varepsilon} (\vartheta, I, z )$ defined by 
	\begin{equation}\label{K eps}
		K_{\varepsilon} \triangleq\mathcal{K}_{\varepsilon} \circ   \mathbf{A}
	\end{equation}
	where  
	$\mathbf{A} $ is the map defined in \eqref{aa-coord-00}. Since $\mathbf{L}_0 $ in \eqref{def Lbmat} stabilizes the subspace $ \mathbf{H}^{\perp}_{\overline{\mathbb{S}}_0}$ then the quadratic Hamiltonian $ K_{\mathbf{L}_0}$ in \eqref{def KL} in the variables $ (\vartheta, I, z) $ reads, up to a constant,
	\begin{align}\label{QHAM}
		K_{\mathbf{L}_0} \circ  \mathbf{A} &=  -\sum_{j\in\mathbb{S}_1} \, \Omega_{j,1}(b)I_{j,1} +\sum_{j\in\mathbb{S}_2} \,  \Omega_{j,2}(b)I_{j,2}+ \frac12 \big\langle \mathbf{L}_0\, z, z\big\rangle_{L^2(\T)\times L^2(\T)}\nonumber\\ & =   -\big({\mathtt J}\,{\omega}_{\textnormal{Eq}}(b)\big)\cdot I
		+ \frac12 \big\langle\mathbf{L}_0\, z, z \big\rangle_{L^2(\T)\times L^2(\T)}, 
	\end{align}
	where $ {\omega}_{\textnormal{Eq}}(b) \in \mathbb{R}^d $ is the unperturbed 
	tangential frequency vector.
	By \eqref{def calKe} and \eqref{QHAM}, 
	the Hamiltonian $K_{\varepsilon} $ in \eqref{K eps} reads
	\begin{equation}\label{cNP-K}
		\begin{aligned}
			&  K_{\varepsilon} = 
			{\mathcal N} + \varepsilon \mathcal{ P}_{\varepsilon},  \qquad \textnormal{with}  \\
			&
			{\mathcal N} \triangleq    -\big({\mathtt J}\,{\omega}_{\textnormal{Eq}}(b)\big)\cdot I  + \frac12  \big\langle\mathbf{L}_0\, z, z \big\rangle_{L^2(\mathbb{T})\times L^2(\mathbb{T})},  
			\quad 
			\quad \mathcal{ P}_{\varepsilon} \triangleq    P_\varepsilon \circ  \mathbf{A}.  
		\end{aligned}
	\end{equation}
	We look for an embedded invariant torus
	$$i :\mathbb{T}^d \rightarrow\mathbb{R}^d \times \mathbb{R}^d \times  \mathbf{H}^{\perp}_{\overline{\mathbb{S}}_0}\,, \qquad \varphi \mapsto i(\varphi)\triangleq  \big(\vartheta(\varphi), I(\varphi), z(\varphi)\big),$$
	where $\vartheta (\varphi)-\varphi $ is a  $ (2 \pi)^d $-periodic function, of the Hamiltonian vector field 
	$$ 
	X_{K_{\varepsilon}} \triangleq  
	\big({\mathtt J}\partial_I K_{\varepsilon} , -{\mathtt J} \partial_\vartheta K_{\varepsilon} , \Pi_{\overline{\mathbb{S}}_0}^\bot
	\mathcal{J}\nabla_{z} K_{\varepsilon}\big) 
	$$ 
	filled by quasi-periodic solutions with Diophantine frequency 
	vector $\omega$.
	Note that for the  value   $\varepsilon=0$, the Hamiltonian system 
	\begin{equation}\label{HS-T}
		\omega\cdot\partial_\varphi i (\varphi) = (X_{{\mathcal N}}  +\varepsilon   X_{\mathcal{P}_{\varepsilon}})  (i(\varphi) )
	\end{equation}   possesses, for any value of the parameter $b\in [0,b^*]$ , the invariant torus  $$i_{\textnormal{flat}}(\varphi)\triangleq(\varphi,0,0),
	$$
	provided that $\omega=-{\omega}_{\textnormal{Eq}}(b).$
	Now, in order to construct  an invariant torus to the  Hamiltonian system \eqref{HS-T} which supports a quasi-periodic motion with frequency vector $\omega$, close to $-{\omega}_{\textnormal{Eq}}(b)$, we shall formulate the
	problem as a "Nash-Moser Theorem of hypothetical conjugation" established in \cite{BM18}. It consists in using the frequencies $\omega\in\mathbb{R}^d$ as parameters and introducing “counter-terms” $\alpha\in\mathbb{R}^d$ in the family of Hamiltonians
	\begin{equation}\label{K alpha}
		\begin{aligned}
			K_\varepsilon^\alpha \triangleq  {\mathcal N}_\alpha +\varepsilon  {\mathcal P}_{\varepsilon} \, , \qquad  {\mathcal N}_\alpha \triangleq   \alpha \cdot I 
			+ \tfrac12\big\langle\mathbf{L}_0\, z, z\big\rangle_{L^2(\mathbb{T})\times L^2(\mathbb{T})}. 
		\end{aligned}
	\end{equation}
	The value of $\alpha$ will be adjusted along the iteration in order to control the average of the $I$-component at the linear level of the Hamiltonian equation
	\begin{align}
		{\mathcal F} (i, \alpha ) 
		& \triangleq    {\mathcal F} (i, \alpha, \omega,b,  \varepsilon )  \triangleq  \omega\cdot\partial_\varphi i (\varphi) - X_{K_\varepsilon^\alpha} ( i (\varphi))
		=  \omega\cdot\partial_\varphi i (\varphi) -  (X_{{\mathcal N}_\alpha}  +\varepsilon   X_{\mathcal{P}_{\varepsilon}})  (i(\varphi) ) \nonumber \\
		&  =   \left(
		\begin{array}{c}
			\omega\cdot\partial_\varphi \vartheta (\varphi) - \mathtt{J}\big(\alpha -  \varepsilon \partial_I \mathcal{P}_{\varepsilon} ( i(\varphi)   ) \big) \\
			\omega\cdot\partial_\varphi I (\varphi) + \varepsilon  {\mathtt J}\partial_\vartheta \mathcal{P}_{\varepsilon}( i(\varphi)  )  \\
			\omega\cdot\partial_\varphi z (\varphi) 
			-  \mathcal{J} \mathbf{L}_0 z(\varphi) - \varepsilon  \mathcal{J} \nabla_z \mathcal{P}_{\varepsilon} ( i(\varphi) )   
		\end{array}
		\right) =0. \label{operatorF} 
	\end{align}
	This degree of freedom through a parameter $\alpha$ will provides at the end of the scheme a solution $(\omega,i)$ for the original problem when it is fixed to $\alpha=-\mathtt{J}{\omega}_{\textnormal{Eq}}(b)$ for any value of $b$ in a suitable  Cantor set. Note that the involution $\mathscr{S}$, described in \eqref{defin inv scr S}, becomes 
	\begin{equation}\label{rev th I z}
		\mathfrak{S}:(\vartheta,I,z)\mapsto(-\vartheta,I,\mathscr{S}z)
	\end{equation}
{and the operator $\mathscr{T}_{\mathtt{m}}$ in \eqref{def scr Tm} becomes
\begin{equation}\label{mfold th I z}
	\mathfrak{T}_{\mathtt{m}}:(\vartheta,I,z)\mapsto(\vartheta,I,\mathscr{T}_{\mathtt{m}}z).
\end{equation}}
	Moreover, we can easily check that the Hamiltonian vector field $X_{K_\varepsilon^\alpha}$ is reversible with respect  $\mathfrak{S} $ {and $\mathtt{m}$-fold preserving with respect to $\mathfrak{T}_{\mathtt{m}}$}.
	Thus, it is natural to  look for $\mathtt{m}$-fold reversible solutions of $ {\mathcal F}(i, \alpha) = 0 $,  namely satisfying
	\begin{equation}\label{parity solution}
		\vartheta(-\varphi) = - \vartheta (\varphi),\qquad\
		I(-\varphi) = I(\varphi),\qquad z(-\varphi)=\big(\mathscr{S} z(\varphi)\big){\qquad\textnormal{and}\qquad\mathscr{T}_{\mathtt{m}}z(\varphi)=z(\varphi).}
	\end{equation}
	In the sequel, we shall denote by 
	$$\mathfrak{I}(\varphi)\triangleq  i(\varphi)-(\varphi,0,0)=\big(\vartheta (\varphi)-\varphi, I(\varphi), z(\varphi)\big)$$
	the periodic component of the torus $\varphi\mapsto i(\varphi)$.
	We end this section by  summarizing  some  tame estimates satisfied by the Hamiltonian vector field 
	$$X_{\mathcal{P}_{\varepsilon}}\triangleq\big(\mathtt{J}\partial_{I}\mathcal{P}_{\varepsilon},-\mathtt{J}\partial_{\vartheta}\mathcal{P}_{\varepsilon},\Pi_{\overline{\mathbb{S}}_0}^{\perp}\mathcal{J}\nabla_{z}\mathcal{P}_{\varepsilon}\big),$$
	where $\mathcal{P}_{\varepsilon}$ is  defined in \eqref{cNP-K}. The proof of the next lemma follows in a similar way  to  \cite[Lem. 5.1]{BM18} using Lemma \ref{tame XP}.
	\begin{lem}\label{tame X per}
		Let $b^*, \mathtt{m}^*$ as in Corollary $\ref{coro-equilib-freq},$   $\mathtt{m}\geqslant \mathtt{m}^*$ and  $(\gamma,q,s_{0},s)$ satisfy \eqref{setting q}, \eqref{setting tau1 and tau2} and \eqref{init Sob cond}.
		There exists $\varepsilon_0\in(0,1)$ such that if 
		$$\varepsilon\leqslant\varepsilon_0\qquad\textnormal{and}\qquad\|\mathfrak{I}\|_{s_{0}+2}^{q,\gamma,\mathtt{m}}\leqslant 1,$$ 
		then  the  perturbed Hamiltonian vector field $X_{\mathcal{P}_{\varepsilon}}$ satisfies the following tame estimates,
		\begin{enumerate}[label=(\roman*)]
			\item $\| X_{\mathcal{P}_{\varepsilon}}(i)\|_{s}^{q,\gamma,\mathtt{m}}\lesssim 1+\|\mathfrak{I}\|_{s+2}^{q,\gamma,\mathtt{m}}.$
			\item $\big\| d_{i}X_{\mathcal{P}_{\varepsilon}}(i)[\,\widehat{i}\,]\big\|_{s}^{q,\gamma,\mathtt{m}}\lesssim \|\,\widehat{i}\,\|_{s+2}^{q,\gamma,\mathtt{m}}+\|\mathfrak{I}\|_{s+2}^{q,\gamma,\mathtt{m}}\|\,\widehat{i}\,\|_{s_{0}+1}^{q,\gamma,\mathtt{m}}.$
			\item $\big\| d_{i}^{2}X_{\mathcal{P}_{\varepsilon}}(i)[\,\widehat{i},\widehat{i}\,]\big\|_{s}^{q,\gamma,\mathtt{m}}\lesssim \|\,\widehat{i}\,\|_{s+2}^{q,\gamma,\mathtt{m}}\|\,\widehat{i}\,\|_{s_{0}+1}^{q,\gamma,\mathtt{m}}+\|\mathfrak{I}\|_{s+2}^{q,\gamma,\mathtt{m}}\big(\|\,\widehat{i}\,\|_{s_{0}+1}^{q,\gamma,\mathtt{m}}\big)^{2}.$
		\end{enumerate}
	\end{lem}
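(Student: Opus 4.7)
\medskip

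\noindent\textbf{Proof proposal.} The plan is to reduce each of the three estimates to the corresponding ones for the unrescaled perturbative vector field $X_P$ given by Lemma \ref{tame XP}, via the two intermediate identifications
$$
X_{P_\varepsilon}(r)=\varepsilon^{-2}X_P(\varepsilon r),\qquad X_{\mathcal{P}_\varepsilon}(i)=\big(\mathtt{J}\partial_I,-\mathtt{J}\partial_\vartheta,\Pi_{\overline{\mathbb{S}}_0}^\perp\mathcal{J}\nabla_z\big)\big[P_\varepsilon\circ\mathbf{A}\big](i),
$$
with $\mathbf{A}$ the action-angle-normal map defined in \eqref{aa-coord-00}. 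The first identity is immediate from the definition of $P_\varepsilon$ just below \eqref{perturbed hamiltonian}; the second just writes out the components of $X_{\mathcal{P}_\varepsilon}$.

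\medskip

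\noindent First I would express the three components of $X_{\mathcal{P}_\varepsilon}(i)$ through the chain rule in terms of the vector field $X_{P_\varepsilon}$ evaluated at the curve $r=\mathbf{A}(\mathfrak{I}(\varphi))=v(\vartheta,I)+z$. Concretely, differentiating $\mathcal{P}_\varepsilon=P_\varepsilon\circ\mathbf{A}$ and using that $\mathbf{A}$ is linear in $z$ and smooth in $(\vartheta,I)$ with $\partial_{\vartheta}v$ and $\partial_I v$ bounded operators whose norms only depend on the tangential amplitudes $\mathtt{a}_{j,k}$, yields a schematic formula
$$
X_{\mathcal{P}_\varepsilon}(i)=\mathcal{M}(\vartheta,I)\big[X_{P_\varepsilon}(\mathbf{A}(\mathfrak{I}))\big],
$$
where $\mathcal{M}$ is a smooth, tame matrix-valued operator that loses no regularity in $\varphi$ (it only involves pointwise multiplications by smooth functions of the action-angle variables together with the projection $\Pi_{\overline{\mathbb{S}}_0}^\perp$).

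\medskip

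\noindent Next I would apply Lemma \ref{tame XP} with $r=\varepsilon\mathbf{A}(\mathfrak{I})$. The smallness hypothesis there, $\|r\|_{s_0+2}^{q,\gamma,\mathtt{m}}\leqslant\varepsilon_0$, follows by combining $\varepsilon\leqslant\varepsilon_0$, the assumption $\|\mathfrak{I}\|_{s_0+2}^{q,\gamma,\mathtt{m}}\leqslant1$, the boundedness of $\mathbf{A}$ on Sobolev spaces (the sublinear dependence comes from the square-root function $\sqrt{\mathtt{a}_{j,k}^2+|j|I_{j,k}}$, which is smooth since $I_{j,k}$ stays near $0$), and the product/composition laws of Lemma \ref{lem funct prop}. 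Plugging the estimates of Lemma \ref{tame XP} into the chain-rule expression gives, for example,
$$
\|X_{P_\varepsilon}(\mathbf{A}(\mathfrak{I}))\|_s^{q,\gamma,\mathtt{m}}\lesssim \varepsilon^{-2}\big\|\varepsilon\mathbf{A}(\mathfrak{I})\big\|_{s+2}^{q,\gamma,\mathtt{m}}\big\|\varepsilon\mathbf{A}(\mathfrak{I})\big\|_{s_0+1}^{q,\gamma,\mathtt{m}}\lesssim 1+\|\mathfrak{I}\|_{s+2}^{q,\gamma,\mathtt{m}},
$$
which (after absorbing $\mathcal{M}$) yields (i). Assertions (ii) and (iii) follow by differentiating the chain-rule expression one and two more times, applying the second and third bounds of Lemma \ref{tame XP}, and repeatedly using the product law in Lemma \ref{lem funct prop} to re-distribute derivatives onto the highest-regularity factor.

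\medskip

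\noindent The main technical point, rather than an actual obstacle, is bookkeeping the various contributions produced by differentiating through $\mathbf{A}$: each derivative $d_i$ can either hit $v(\vartheta,I)$, producing lower-order factors in $\mathfrak{I}$, or act on $X_{P_\varepsilon}$, producing the quasilinear loss of two derivatives already present in Lemma \ref{tame XP}. The interpolation inequality in Lemma \ref{lem funct prop}-(v) is then used to trade off regularity between the terms so that the final bound contains $\|\hat i\|_{s+2}$ or $\|\mathfrak{I}\|_{s+2}$ paired only with $s_0+1$ factors, as stated. Since $\mathbf{A}$ does not depend on the parameters $(b,\omega)$, differentiating in $\mu$ presents no additional difficulty; the weighted norms $\|\cdot\|^{q,\gamma,\mathtt{m}}$ are controlled by the very same computations run in parallel on each $\partial_\mu^\alpha$, $|\alpha|\leqslant q$.
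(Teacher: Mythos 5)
Your proposal is correct and follows essentially the same route as the paper, which simply invokes Lemma \ref{tame XP} composed with the action-angle map $\mathbf{A}$ (citing the analogous Lemma 5.1 of \cite{BM18}): the rescaling identity $X_{P_\varepsilon}(r)=\varepsilon^{-2}X_P(\varepsilon r)$ cancels the $\varepsilon^{-2}$ against the quadratic smallness in Lemma \ref{tame XP}, and the chain rule through $\mathbf{A}$ only introduces tame, regularity-preserving factors controlled by the product and composition laws of Lemma \ref{lem funct prop}. Your bookkeeping of where the two-derivative loss lands and of the $W^{q,\infty,\gamma}$ parameter derivatives matches the intended argument.
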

	\section{Approximate inverse}\label{sec:Approximate-inverse}
	In order to prove the Theorem \ref{thm QPS E} using a  Nash-Moser scheme, we have to construct an approximate right inverse of the linearized operator associated to the functional $\mathcal{F}$, defined in \eqref{operatorF}, at any $\mathtt{m}$-fold and reversible state $(i_0,\alpha_0)$ close to the flat torus, 
	\begin{equation}\label{Linearized-op-F-DC}
		d_{(i,\alpha)}\mathcal{F}(i_0,\alpha_0)=\omega\cdot\partial_\varphi i_0  - d_i X_{K_\varepsilon^{\alpha_0}} ( i_0 )- \left(
		\begin{array}{c}
			\mathtt{J}\widehat\alpha  \\
			0
			\\
			0  
		\end{array}
		\right).
	\end{equation}
	For this aim, we shall use the Berti-Bolle approach for the approximate inverse developed  in \cite{BBP14} and which "approximately" decouples the linearized equations through a triangular system in the action-angle components and the normal ones. This strategy was slightly simplified in \cite[Section 6]{HHM21} bypassing the introduction of an intermediate  isotropic torus and directly working with the original one $i_0$. Here, we shall closely follow this latter procedure with giving close attention to the difference in the Hamiltonian structure, which is due to the vectorial framework. Thus, for completeness sake, we shall reproduce all the algebraic computations and refer the reader to  \cite[Section 6]{HHM21} for more details on the analysis, which is very similar.
	 
	We first introduce  the  diffeomorpshim 
	$ G_0 : (\phi, y, w) \mapsto (\vartheta, I, z)$ of the phase space $\mathbb{R}^d \times \mathbb{R}^d \times  \mathbf{H}_{\overline{\mathbb{S}}_0}^{\perp}$ given  by
	\begin{equation}\label{trasformation G}
		\begin{pmatrix}
			\vartheta \\
			I \\
			z
		\end{pmatrix} \triangleq G_0 \begin{pmatrix}
			\phi \\
			y \\
			w
		\end{pmatrix} \triangleq 
		\begin{pmatrix}
			\!\!\!\!\!\!\!\!\!\!\!\!\!\!\!\!\!\!\!\!\!\!\!\!\!\!\!\!\!\!\!\!\!
			\!\!\!\!\!\!\!\!\!\!\!\!\!\!\!\!\!\!\!\!
			\vartheta_0(\phi) \\
			I_0 (\phi) + L_1(\phi) y + L_2(\phi) w \\
			\!\!\!\!\!\!\!\!\!\!\!\!\!\!\!\!\!\!\!\!\!\!\!\!\!\!\!\!\!
			\!\!\!\!\!\!\!\!\!\!\!\!
			z_0(\phi) + w
		\end{pmatrix}, 
	\end{equation}
	where
	\begin{equation}\label{L1-L2}
		L_1(\phi)\triangleq  \mathtt{J}[\partial_\varphi \vartheta_0(\phi)]^{-\top},\qquad L_2(\phi) \triangleq \mathtt{J}[(\partial_\vartheta \widetilde{z}_0)(\vartheta_0(\phi))]^\top \mathcal{J}^{-1},\qquad \widetilde{z}_0 (\vartheta) \triangleq z_0 (\vartheta_0^{-1} (\vartheta))
	\end{equation}
	and the transposed operator is defined through the following duality relation :
	Given a Hilbert space $\mathtt{H}$ equipped with the inner product $\langle \cdot,\cdot \rangle_{\mathtt{H}}$ and a linear operator $A\in  {\mathcal L}({\mathbb R}^d, \mathtt{H})$,
	\begin{align*}
		\forall \, u\in {\mathtt{H}}\, , \qquad\forall  v\in\mathbb{R}^d,\qquad \big\langle A^{\top}u,v\big\rangle_{\mathbb{R}^d} \triangleq\big\langle u,  Av\big\rangle_{\mathtt{H}} .
	\end{align*}
	Note that, in the new coordinates, $i_0$ becomes the trivial
	embedded torus $(\phi,y,w)=(\varphi,0,0)$, namely
	$$
	G_0(\varphi,0,0)=i_0(\varphi).
	$$
	In what follows we shall use the following notations
	\begin{itemize}
		\item We denote by ${\mathtt u}=(\phi, y,w)$ the coordinates induced by $G_0$ in \eqref{trasformation G}.
		\item The mapping $${\mathtt u}_0(\varphi)\triangleq G_0^{-1}
		(i_0)(\varphi)=(\varphi,0,0)$$ refers to the trivial torus 
		\item  We shall denote by 
		$$
		\widetilde{G}_0 ({\mathtt u}, \alpha) \triangleq  \big( G_0 ({\mathtt u}), \alpha \big) 
		$$ 
		the diffeomorphism with the identity on the $ \alpha $-component.
		\item We quantify how an embedded torus $i_0(\mathbb{T})$ is approximately invariant for the Hamiltonian vector field $X_{K_\varepsilon^{\alpha_0}}$ in terms of the "error function"     
		\begin{equation}\label{def Z}
			Z(\varphi) \triangleq  (Z_1, Z_2, Z_3) (\varphi) \triangleq {\mathcal F}(i_0, \alpha_0) (\varphi) =
			\omega \cdot \partial_\varphi i_0(\varphi) - X_{K^{\alpha_0}_\varepsilon}\big(i_0(\varphi)\big).
		\end{equation}
	\end{itemize}
	
	\subsection{Linear change of variables and defect of the symplectic structure}
	In this subsection we shall  conjugate the linearized operator $d_{i,\alpha} {\mathcal F} (i_0,{\alpha}_0)$ in \eqref{Linearized-op-F-DC}, via the linear change of variables
	\begin{equation}\label{Differential G0}
		D G_0({\mathtt u}_0(\varphi))
		\begin{pmatrix}
			\widehat \phi \, \\
			\widehat y \\
			\widehat w
		\end{pmatrix} 
		=
		\begin{pmatrix}
			\partial_\varphi \vartheta_0(\varphi) & 0 & 0 \\
			\partial_\varphi I_0(\varphi) &L_1(\varphi) & 
			L_2(\varphi) \\
			\partial_\varphi z_0(\varphi) & 0 & I
		\end{pmatrix}
		\begin{pmatrix}
			\widehat \phi \, \\
			\widehat y \\
			\widehat w
		\end{pmatrix} ,
	\end{equation}
	to a triangular system with small errors of size $Z=\mathcal{F}(i_0,\alpha_0)$.  Our main result is the following.
	\begin{prop}\label{Proposition-Conjugation}
		Under the linear change of variables $D G_0({\mathtt u}_0)$ the linearized operator $d_{i,\alpha} {\mathcal F} (i_0,\alpha_0)$ is transformed into
		\begin{align}\label{Id-conj}
			& [D G_0({\mathtt u}_0)]^{-1}  d_{(i,\alpha)} {\mathcal F} (i_0,\alpha_0) D\widetilde G_0({\mathtt u}_0)
			[\widehat \phi, \widehat y, \widehat w, \widehat \alpha ]
			= \mathbb{D} [\widehat \phi, \widehat y, \widehat w, \widehat \alpha ]+\mathbb{E} [\widehat \phi, \widehat y, \widehat w] 
		\end{align}
		where
		\begin{enumerate} 
			\item the operator $ \mathbb{D}$ has the triangular form
			\begin{align*}
				\mathbb{D} [\widehat \phi, \widehat y, \widehat w, \widehat \alpha ]\triangleq\left(
				\begin{array}{c}
					\omega\cdot\partial_\varphi  \widehat\phi-\big[K_{20}(\varphi) \widehat y+K_{11}^\top(\varphi) \widehat w+L_1^\top (\varphi)\widehat \alpha\big]
					\\
					\omega\cdot\partial_\varphi  \widehat y+\mathcal{B}(\varphi) \widehat \alpha \\
					\omega\cdot\partial_\varphi \widehat w-\mathcal{J}\big[K_{11}(\varphi) \widehat y+K_{02}(\varphi)\widehat w +L_2^{\top}(\varphi) \widehat\alpha   \big] 
				\end{array}
				\right),
			\end{align*}
			$\mathcal{B}(\varphi)$ and   $K_{20}(\varphi) $ are   $d \times d$ real matrices, 
			\begin{align*}
				\mathcal{B}(\varphi)& \triangleq L_1^{-1} (\varphi)\partial_\varphi I_0(\varphi) L_1^\top (\varphi)+[\partial_\varphi z_0(\varphi)]^{\top} L_2^\top (\varphi) ,
				\\
				K_{20}(\varphi)&\triangleq \varepsilon L_1^\top(\varphi) ( \partial_{II} \mathcal{P}_\varepsilon)(i_0(\varphi))  L_1(\varphi) \, ,
			\end{align*}
			$K_{02}(\varphi)$ is a linear self-adjoint operator of $  \mathbf{H}_{\overline{\mathbb{S}}_0}^{\perp}$, given by 
			\begin{align}\label{def K02}
				K_{02}(\varphi)& \triangleq ( \partial_{z}\nabla_z K_\varepsilon^{\alpha_0}) (i_0(\varphi))  +\varepsilon L_2^\top(\varphi) ( \partial_{II} \mathcal{P}_\varepsilon)(i_0(\varphi)) L_2(\varphi) 
				\\ &\quad+\varepsilon L_2^\top(\varphi)( \partial_{zI} \mathcal{P}_\varepsilon) (i_0(\varphi))     + \varepsilon  (\partial_I\nabla_z \mathcal{P}_\varepsilon) (i_0(\varphi))  L_2(\varphi),\nonumber
			\end{align}
			and $K_{11}(\varphi)  \in {\mathcal L}({\mathbb R}^d,   \mathbf{H}_{\overline{\mathbb{S}}_0}^{\perp})$, 
			\begin{align*}
				K_{11}(\varphi)&\triangleq \varepsilon  L_2^\top(\varphi)  ( \partial_{II} \mathcal{P}_\varepsilon)(i_0(\varphi))L_1(\varphi) +\varepsilon ( \partial_I\nabla_z \mathcal{P}_\varepsilon) (i_0(\varphi)) L_1(\varphi) ,
			\end{align*}
			\item the remainder $\mathbb{E} $ is given by
			\begin{align*}
				\mathbb{E} [\widehat \phi, \widehat y, \widehat w] &\triangleq
				[D G_0({\mathtt u}_0)]^{-1}    \partial_\varphi Z(\varphi) \widehat\phi  \\  &\quad + \left(
				\begin{array}{c}
					0 
					\\
					\mathcal{A}(\varphi)\big[K_{20}(\varphi) \widehat y+K_{11}^\top(\varphi) \widehat w\big]-R_{10}(\varphi) \widehat y -R_{01}(\varphi) \widehat w 
					\\
					0   
				\end{array}
				\right)
			\end{align*}
			where $\mathcal{A}(\varphi)$ and   $R_{10}(\varphi) $ are   $d \times d$ real matrices, 
			\begin{align*}
				\mathcal{A}(\varphi)& \triangleq[\partial_\varphi \vartheta_0(\varphi)]^\top{\mathtt J} \partial_\varphi I_0(\varphi)-[\partial_\varphi I_0(\varphi)]^\top {\mathtt J}\partial_\varphi \vartheta_0(\varphi)  -[\partial_\varphi z_0(\varphi)]^{\top} \mathcal{J}^{-1} \partial_\varphi z_0(\varphi),
				\\
				R_{10}(\varphi)&\triangleq  [\partial_\varphi Z_1(\varphi)]^{\top}  [\partial_\varphi \vartheta_0(\varphi)]^{-\top}, 
			\end{align*}
			and $R_{01}(\varphi)\in {\mathcal L}( \mathbf{H}_{\overline{\mathbb{S}}_0}^{\perp},{\mathbb R}^d)$, 
			\begin{align*}
				R_{01}(\varphi)&\triangleq [\partial_\varphi Z_1(\varphi)]^{\top} [(\partial_\vartheta \widetilde{z}_0)(\vartheta_0(\varphi))]^\top \mathcal{J}^{-1}- [\partial_\varphi  Z_3(\varphi)]^{\top} \mathcal{J}^{-1}
				\, .
			\end{align*}
		\end{enumerate}
	\end{prop}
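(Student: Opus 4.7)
The statement is an algebraic identity: one conjugates the linearized operator \eqref{Linearized-op-F-DC} by the tangent map $DG_0(\mathtt{u}_0)$ of \eqref{Differential G0} and reorganises the outcome into a triangular Hamiltonian piece $\mathbb{D}$ plus a remainder $\mathbb{E}$ that vanishes both when $Z\equiv 0$ and when the torus $i_0$ is isotropic. I follow the Berti-Bolle scheme in the form used in \cite[Sec.~6]{HHM21}, paying attention to the vectorial Poisson tensor $(\mathtt{J}I,-\mathtt{J}\vartheta,\mathcal{J}z)$ from \eqref{poisson-bracket}.

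The first step is to write down explicitly $[DG_0(\mathtt{u}_0)]^{-1}$, which is block-lower-triangular, and to record the transposition identities that the definitions in \eqref{L1-L2} of $L_1,L_2$ were engineered to produce. Using $\omega\cdot\partial_\varphi i_0 = X_{K_\varepsilon^{\alpha_0}}(i_0)+Z$, one splits the commutator $[DG_0(\mathtt{u}_0)]^{-1}\bigl(\omega\cdot\partial_\varphi DG_0(\mathtt{u}_0)\bigr)$ into a principal part that will combine with the Hessian computation below to reconstruct the Hamiltonian vector field of the pulled-back Hamiltonian, plus a residual piece $[DG_0(\mathtt{u}_0)]^{-1}\partial_\varphi Z\,\widehat\phi$, which is the first summand of $\mathbb{E}$.

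The heart of the computation is then to expand
$$X_{K_\varepsilon^{\alpha_0}}=\bigl(\mathtt{J}\partial_I K_\varepsilon^{\alpha_0},\,-\mathtt{J}\partial_\vartheta K_\varepsilon^{\alpha_0},\,\Pi_{\overline{\mathbb{S}}_0}^\perp\mathcal{J}\nabla_z K_\varepsilon^{\alpha_0}\bigr),$$
differentiate at $i_0$, and collect the second derivatives of $K_\varepsilon^{\alpha_0}\circ G_0$ into the blocks $K_{20},K_{11},K_{02}$ of the statement (in particular \eqref{def K02} is precisely the self-adjoint Hessian of the pulled-back Hamiltonian restricted to the $w$-subspace). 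The $\widehat\alpha$-contribution $-(\mathtt{J}\widehat\alpha,0,0)^\top$ is transported by $[DG_0(\mathtt{u}_0)]^{-1}$ and, thanks to the transpose identities above, produces $-L_1^\top\widehat\alpha$, $\mathcal{B}(\varphi)\widehat\alpha$, $-L_2^\top\widehat\alpha$ in the three rows of $\mathbb{D}$ respectively.

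The terms that refuse to fit into this triangular form split into two families: those that measure the failure of $i_0$ to be isotropic, which assemble into $\mathcal{A}(\varphi)\bigl[K_{20}\widehat y+K_{11}^\top\widehat w\bigr]$ in the $\widehat y$-equation (this is precisely the quantity by which the pullback of $\mathcal{W}$ by $DG_0(\mathtt{u}_0)$ differs from the standard symplectic form), and those coming from $\partial_\varphi Z$, which after transposition against $[\partial_\varphi\vartheta_0]^{-\top}$ and $[(\partial_\vartheta\widetilde z_0)(\vartheta_0)]^\top\mathcal{J}^{-1}$ form $R_{10}\widehat y+R_{01}\widehat w$. The main obstacle is purely clerical bookkeeping: one must verify that every cross-term produced by differentiating $DG_0(\mathtt{u}_0)$ and $X_{K_\varepsilon^{\alpha_0}}$ lands either in the Hessian blocks $K_{ij}$, or in the isotropy defect $\mathcal{A}$, or as a derivative of the error $Z$ packaged in $R_{10},R_{01}$. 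No analytical estimate is needed at this stage; the proposition is a chain-rule identity.
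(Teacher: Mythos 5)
Your proposal is correct and follows essentially the same route as the paper: a direct chain-rule computation in the spirit of Berti--Bolle as simplified in \cite[Sec.~6]{HHM21}, conjugating by $DG_0(\mathtt u_0)$, using $\omega\cdot\partial_\varphi i_0=X_{K_\varepsilon^{\alpha_0}}(i_0)+Z$ to isolate the $\partial_\varphi Z$ contributions, collecting the Hessian of the pulled-back Hamiltonian into $K_{20},K_{11},K_{02}$, and identifying the residual terms as the isotropy defect $\mathcal{A}$ together with $R_{10},R_{01}$. The only caveat is the phrasing that $\mathbb{E}$ ``vanishes both when $Z\equiv 0$ and when $i_0$ is isotropic'': the two conditions control different pieces of $\mathbb{E}$ (the $R$-terms and the $\mathcal{A}$-term respectively), and in the paper the smallness of $\mathcal{A}$ is itself deduced from $Z$ via Lemma \ref{lemma1est}, not assumed.
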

	\begin{proof}
		The composition of the nonlinear operator $\mathcal{F}$,  in \eqref{operatorF}, with the map  $G_0$ is given by
		\begin{align}\label{Composition F G0}
			{\mathcal F} (G_0({\mathtt u}(\varphi)),\alpha) 
			= \omega\cdot\partial_\varphi \big( G_0({\mathtt u}(\varphi))\big)- X_{K_\varepsilon^\alpha} \big(G_0({\mathtt u}(\varphi))\big).  
		\end{align}
		Then, by differentiating \eqref{Composition F G0}  at $({\mathtt u}_0,\alpha_0)$  in the direction  $(\widehat {\mathtt u}, \widehat \alpha)$ we obtain
		\begin{align}\label{differential-composition}
			&d_{({\mathtt u},\alpha)} ({\mathcal F} \circ G_0)({\mathtt u}_0,\alpha_0)
			[(\widehat {\mathtt u}, \widehat \alpha )](\varphi)
			=  \omega\cdot\partial_\varphi \big( DG_0({\mathtt u}_0)\widehat {\mathtt u} \big)- \partial_\phi\big[ X_{K_\varepsilon^{\alpha_0}} \big(G_0({\mathtt u}(\varphi))\big)\big]_{{\mathtt u}={\mathtt u}_0} \widehat\phi
			\\ &\qquad\qquad\qquad\quad -  \partial_y\big[ X_{K_\varepsilon^{\alpha_0}} \big(G_0({\mathtt u}(\varphi))\big)\big]_{{\mathtt u}={\mathtt u}_0} \widehat y- \partial_w\big[ X_{K_\varepsilon^{\alpha_0}} \big(G_0({\mathtt u}(\varphi))\big) \big]_{{\mathtt u}={\mathtt u}_0} \widehat w
			- \left(
			\begin{array}{c}
				\mathtt{J}\widehat\alpha  \\
				0
				\\
				0  
			\end{array}
			\right).\nonumber
		\end{align}
		In view of  \eqref{Differential G0},  one has
		\begin{equation}\label{omg-d-phi-DG}
			\omega\cdot\partial_\varphi \big( DG_0({\mathtt u}_0)[\widehat {\mathtt u} ](\varphi)\big)
			=DG_0({\mathtt u}_0) \, \omega\cdot\partial_\varphi \widehat {\mathtt u} + \partial_\varphi\big(\omega\cdot\partial_\varphi  i_0 \big)\widehat\phi +\left(
			\begin{array}{c}
				0
				\\
				(\omega\cdot\partial_\varphi  L_1(\varphi))\widehat y+\big(\omega\cdot\partial_\varphi  L_2(\varphi)\big) \widehat w
				\\
				0
			\end{array}
			\right),
		\end{equation} 
		and from \eqref{L1-L2}--\eqref{def Z} we get
		\begin{equation}\label{omg-d-phi-L1}
			\begin{aligned}
				\omega\cdot\partial_\varphi  L_1(\varphi)&=-\mathtt{J}[\partial_\varphi \vartheta_0(\varphi)]^{-\top}(\omega\cdot\partial_\varphi  [\partial_\varphi \vartheta_0(\varphi)]^{\top}) [\partial_\varphi \vartheta_0(\varphi)]^{-\top}\\
				&=-\mathtt{J}[\partial_\varphi \vartheta_0(\varphi)]^{-\top} \Big(\big[\partial_\varphi Z_1(\varphi)\big]^{\top} + \big[ \partial_\varphi\big( ( \partial_{I} K_\varepsilon^{\alpha_0}) (i_0(\varphi))\big) \big]^{\top}{\mathtt J}\Big) [\partial_\varphi \vartheta_0(\varphi)]^{-\top}.
			\end{aligned}
		\end{equation}
		Observe, from \eqref{L1-L2}, that  we have the identity 
		\begin{equation}\label{d-phi-z}
			\partial_\varphi z_0(\varphi)=(\partial_\vartheta \widetilde{z}_0)(\vartheta_0(\varphi))\partial_\varphi \vartheta_0(\varphi).
		\end{equation}
		Therefore  the operator $L_2(\varphi)$ can be written as 
		\begin{equation}\label{rewriting L2}
			L_2(\varphi)=\mathtt{J}[\partial_\varphi \vartheta_0(\varphi)]^{-\top}[\partial_\varphi z_0(\varphi)]^{\top}\mathcal{J}^{-1}=L_1(\varphi)[\partial_\varphi z_0(\varphi)]^{\top}\mathcal{J}^{-1}.
		\end{equation}
		From the last two identities we find
		\begin{align}
			\omega\cdot\partial_\varphi  L_2(\varphi)
			&=-\mathtt{J}[\partial_\varphi \vartheta_0(\varphi)]^{-\top}(\omega\cdot\partial_\varphi  [\partial_\varphi \vartheta_0(\varphi)]^{\top}) [\partial_\varphi \vartheta_0(\varphi)]^{-\top}[\partial_\varphi z_0(\varphi)]^{\top}\mathcal{J}^{-1}\notag
			\\
			& \quad +\mathtt{J}[\partial_\varphi \vartheta_0(\varphi)]^{-\top}  [\partial_\varphi    (\omega\cdot\partial_\varphi z_0)(\varphi)]^{\top} \mathcal{J}^{-1}\notag
		\end{align}
		and by \eqref{def Z}  we obtain
		\begin{align}\label{omg-d-phi-L2}
			\omega\cdot\partial_\varphi  L_2(\varphi)
			&=-\mathtt{J}[\partial_\varphi \vartheta_0(\varphi)]^{-\top}\Big(\big[\partial_\varphi Z_1(\varphi)\big]^{\top} + \big[ \partial_\varphi\big( ( \partial_{I} K_\varepsilon^{\alpha_0}) (i_0(\varphi))\big) \big]^{\top}\Big) L_2(\varphi)  \notag
			\\ &\quad  +\mathtt{J}[\partial_\varphi \vartheta_0(\varphi)]^{-\top}   \Big( \big[\partial_\varphi  Z_3(\varphi)\big]^{\top}-\big[ \partial_\varphi\big(( \nabla_{z} K_\varepsilon^{\alpha_0}) (i_0(\varphi))\big) \big]^{\top}\Big) .
		\end{align}
		Gathering  \eqref{omg-d-phi-DG}, \eqref{omg-d-phi-L1} and  \eqref{omg-d-phi-L2} gives
		\begin{align}\label{omdphdg}
			&\omega\cdot\partial_\varphi \big( DG_0({\mathtt u}_0)[\widehat {\mathtt u}](\varphi) \big)=DG_0({\mathtt u}_0) \, \omega\cdot\partial_\varphi \widehat {\mathtt u} + \partial_\varphi\big(\omega\cdot\partial_\varphi  i_0 \big)\widehat\phi \nonumber
			\\ &\quad  -  \left(
			\begin{array}{c}
				0
				\\
				~ \mathtt{J}[\partial_\varphi \vartheta_0(\varphi)]^{-\top}\Big(\big[\mathcal{C}_I(\varphi) L_1(\varphi)  +R_{10}(\varphi)  \big] \widehat y+\big[\mathcal{C}_I(\varphi) L_2(\varphi)+\mathcal{C}_z(\varphi)     + R_{01}(\varphi) \big] \widehat w\Big]
				\\
				0
			\end{array}
			\right),
					\end{align} 
		where $R_{10}(\varphi)$ and  $R_{01}(\varphi)$ are given by {\rm (ii)} 
		and
		\begin{align}
			\mathcal{C}_I(\varphi) &\triangleq \big[ \partial_\varphi\big( ( \partial_{I} K_\varepsilon^{\alpha_0}) (i_0(\varphi))\big) \big]^{\top}\label{def CI}\\ 
			&=[\partial_\varphi I_0(\varphi)]^\top(  \partial_{II} K_\varepsilon^{\alpha_0})(i_0(\varphi))+ [\partial_\varphi \vartheta_0(\varphi)]^\top (  \partial_{\vartheta I} K_\varepsilon^{\alpha_0})(i_0(\varphi))+ [\partial_\varphi z_0(\varphi)]^\top (  \partial_{I} \nabla_z K_\varepsilon^{\alpha_0})(i_0(\varphi)),\notag
			\\ 
			\mathcal{C}_z(\varphi) & \triangleq \big[ \partial_\varphi\big( ( \nabla_{z} K_\varepsilon^{\alpha_0}) (i_0(\varphi))\big) \big]^{\top}\label{def Cz}\\
			&=[\partial_\varphi I_0(\varphi)]^\top( \partial_{zI} K_\varepsilon^{\alpha_0})(i_0(\varphi))+ [\partial_\varphi \vartheta_0(\varphi)]^\top ( \partial_{z\vartheta } K_\varepsilon^{\alpha_0})(i_0(\varphi)) + [\partial_\varphi z_0(\varphi)]^\top ( \partial_{z} \nabla_z K_\varepsilon^{\alpha_0})(i_0(\varphi)).\notag
		\end{align}
		According  \eqref{operatorF} and  \eqref{trasformation G}, one may writes 
		\begin{align*}
			\partial_\phi\big[ X_{K_\varepsilon^{\alpha_0}} \big(G_0({\mathtt u}(\varphi))\big)\big]_{{\mathtt u}={\mathtt u}_0} \widehat\phi &
			= \partial_\varphi\big[ X_{K_\varepsilon^{\alpha_0}} (i_0(\varphi)))\big] \widehat\phi,
			\\
			\partial_y\big[ X_{K_\varepsilon^{\alpha_0}} \big(G_0({\mathtt u}(\varphi))\big)\big]_{{\mathtt u}={\mathtt u}_0} \widehat y &
			= \left(
			\begin{array}{c}
				{\mathtt J}( \partial_{II} K_\varepsilon^{\alpha_0})(i_0(\varphi))  L_1(\varphi) \widehat y 
				\\
				-{\mathtt J}( \partial_{I\vartheta} K_\varepsilon^{\alpha_0}) (i_0(\varphi))  L_1(\varphi) \widehat y 
				\\
				\mathcal{J} \big[ ( \partial_I\nabla_z K_\varepsilon^{\alpha_0}) (i_0(\varphi))  L_1(\varphi) \widehat y \big]     
			\end{array}
			\right), 
			\\
			\partial_w\big[ X_{K_\varepsilon^{\alpha_0}} \big(G_0({\mathtt u}(\varphi))\big) \big]_{{\mathtt u}={\mathtt u}_0} \widehat w &
			=\left(
			\begin{array}{c}
				({\mathtt J} \partial_{II} K_\varepsilon^{\alpha_0})(i_0(\varphi))L_2(\varphi) \widehat w+{\mathtt J}( \partial_{zI} K_\varepsilon^{\alpha_0}) (i_0(\varphi))  \widehat w 
				\\
				-{\mathtt J}( \partial_{I\vartheta} K_\varepsilon^{\alpha_0}) (i_0(\varphi)) L_2(\varphi) \widehat w- {\mathtt J}( \partial_{z\vartheta} K_\varepsilon^{\alpha_0}) (i_0(\varphi)) \widehat w
				\\
				\mathcal{J} \big[( \partial_I\nabla_z K_\varepsilon^{\alpha_0}) (i_0(\varphi)) L_2(\varphi) \widehat w  + ( \partial_{z}\nabla_z K_\varepsilon^{\alpha_0}) (i_0(\varphi)) \widehat w   \big]  
			\end{array}
			\right).
		\end{align*}
		Therefore inserting  \eqref{omdphdg} and the last three identities into \eqref{differential-composition} we get
		\begin{align}\label{dfcirg}
			&d_{({\mathtt u},\alpha)} ({\mathcal F} \circ G_0)({\mathtt u}_0,\alpha_0)
			[(\widehat {\mathtt u}, \widehat \alpha )]= DG_0({\mathtt u}_0) \, \omega\cdot\partial_\varphi \widehat {\mathtt u} + \partial_\varphi\big[{\mathcal F}(i_0(\varphi)) \big] \widehat\phi  \nonumber
			\\ 
			&   + \left(
			\begin{array}{c}
				-{\mathtt J}( \partial_{II} K_\varepsilon^{\alpha_0})(i_0(\varphi))  L_1(\varphi) \widehat y 
				\\
				{\mathtt J}( \partial_{I\vartheta} K_\varepsilon^{\alpha_0}) (i_0(\varphi))  L_1(\varphi)\widehat y  -{\mathtt J}[\partial_\varphi \vartheta_0(\varphi)]^{-\top} [\mathcal{C}_I(\varphi) L_1(\varphi) +R_{10}(\varphi) ]\widehat y 
				\\
				- \mathcal{J} ( \partial_I\nabla_z K_\varepsilon^{\alpha_0}) (i_0(\varphi))  L_1(\varphi) \widehat y      
			\end{array}
			\right) 
			\nonumber
			\\ 
			&+\left(
			\begin{array}{c}
				-{\mathtt J}( \partial_{II} K_\varepsilon^{\alpha_0})(i_0(\varphi))L_2(\varphi) \widehat w-{\mathtt J}( \partial_{zI} K_\varepsilon^{\alpha_0}) (i_0(\varphi))  \widehat w 
				\\
				\big[{\mathtt J}( \partial_{I\vartheta} K_\varepsilon^{\alpha_0}) (i_0(\varphi)) L_2(\varphi) + {\mathtt J}( \partial_{z\vartheta} K_\varepsilon^{\alpha_0}) (i_0(\varphi))\big] \widehat w
				\\
				-  \mathcal{J} \big[( \partial_I\nabla_z K_\varepsilon^{\alpha_0}) (i_0(\varphi)) L_2(\varphi) \widehat w  + ( \partial_{z}\nabla_z K_\varepsilon^{\alpha_0}) (i_0(\varphi)) \widehat w \big]     
			\end{array}
			\right)\nonumber
			\\ &-  \left(
			\begin{array}{c}
				0
				\\
				~ \mathtt{J}[\partial_\varphi \vartheta_0(\varphi)]^{-\top}\big[\mathcal{C}_I(\varphi) L_2(\varphi)+ \mathcal{C}_z(\varphi) +R_{01}(\varphi) \big] \widehat w
				\\
				0
			\end{array}
			\right)- \left(
			\begin{array}{c}
				\mathtt{J}\widehat\alpha  \\
				0
				\\
				0  
			\end{array}
			\right).
		\end{align}
		From  \eqref{Differential G0}, \eqref{d-phi-z} and \eqref{rewriting L2},  one may easily check that
		\begin{align*}
			& [D G_0({\mathtt u}_0)]^{-1} 
			= 
			\begin{pmatrix}
				[\partial_\varphi \vartheta_0(\varphi)]^{-1} & 0 & 0 \\
				-\mathcal{B}(\varphi){\mathtt J} & [\partial_\varphi \vartheta_0(\varphi)]^\top {\mathtt J}& 
				-[\partial_\varphi z_0(\varphi)]^\top \mathcal{J}^{-1}   \\
				-(\partial_\vartheta \widetilde{z}_0)(\vartheta_0(\varphi))    & 0 & I
			\end{pmatrix}
		\end{align*}
		where $\mathcal{B}(\varphi)$  is given by {\rm(i)}.
		Finally, applying $[D G_0({\mathtt u}_0)]^{-1}$ to \eqref{dfcirg} and using  \eqref{def CI}, \eqref{def Cz} 
		we obtain
		\begin{align*}
			& [D G_0({\mathtt u}_0)]^{-1}  d_{({\mathtt u},\alpha)} ({\mathcal F} \circ G_0)({\mathtt u}_0,\alpha_0)
			[\widehat {\mathtt u}, \widehat \alpha ]
			=  \omega\cdot\partial_\varphi \widehat {\mathtt u}+  [D G_0({\mathtt u}_0)]^{-1}   \partial_\varphi\big[{\mathcal F}(i_0(\varphi)) \big]  \widehat\phi 
			\\
			&  + \left(
			\begin{array}{c}
				-K_{20}(\varphi)\widehat y
				\\
				\mathcal{A}(\varphi) K_{20}(\varphi)  \widehat y -R_{10}(\varphi) \widehat y  \\
				-\mathcal{J} K_{11}(\varphi) \widehat y       
			\end{array}
			\right)
			+ \left(
			\begin{array}{c}
				-K_{11}^\top(\varphi) \widehat w 
				\\
				\mathcal{A}(\varphi) K_{11}^\top(\varphi) \widehat w -R_{01}(\varphi) \widehat w  \\
				-\mathcal{J} K_{02}(\varphi)\widehat w   
			\end{array}
			\right)
			+ \left(
			\begin{array}{c}
				- [\partial_\varphi \vartheta_0(\varphi)]^{-1}\mathtt{J}\widehat \alpha
				\\
				\mathcal{B}(\varphi) \widehat\alpha  \\
				~ [(\partial_\vartheta \widetilde{z}_0)(\vartheta_0(\varphi))  ]\mathtt{J}\widehat\alpha      
			\end{array}
			\right),
		\end{align*}
		where $\mathcal{A}(\varphi)$ is defined in {\rm (ii)} and satisfies 
		$\mathcal{B}(\varphi)=\mathcal{A}(\varphi) L_1^\top(\varphi) +[\partial_\varphi I_0(\varphi)]^\top,$
		and 
		\begin{align*}
			K_{20}(\varphi)&\triangleq L_1^\top(\varphi) ( \partial_{II} K_\varepsilon^{\alpha_0})(i_0(\varphi))  L_1(\varphi) \, ,
			\\
			K_{11}(\varphi)&\triangleq L_2^\top(\varphi)( \partial_{II} K_\varepsilon^{\alpha_0})(i_0(\varphi))L_1(\varphi) + ( \partial_I\nabla_z K_\varepsilon^{\alpha_0}) (i_0(\varphi)) L_1(\varphi)\, , 
			\\ 
			K_{02}(\varphi)& \triangleq ( \partial_{z}\nabla_z K_\varepsilon^{\alpha_0}) (i_0(\varphi))  +L_2^\top(\varphi) ( \partial_{II} K_\varepsilon^{\alpha_0})(i_0(\varphi)) L_2(\varphi)
			+L_2^\top(\varphi)( \partial_{zI} K_\varepsilon^{\alpha_0}) (i_0(\varphi))     \\ &\quad +   (\partial_I\nabla_z K_\varepsilon^{\alpha_0}) (i_0(\varphi))  L_2(\varphi). \nonumber 
		\end{align*}
		This together with \eqref{K alpha} give the desired identity, concluding the proof of Proposition~\ref{Proposition-Conjugation}.
	\end{proof}
	
	Next, in order to prove that the remainder $\mathbb{E} $ is of size $Z$, we shall prove that the matrix  $\mathcal{A}$, defined in  Proposition \ref{Proposition-Conjugation}-{\rm (ii)},  is zero at an exact solution on some Cantor like set, up to an exponentially small remainder. In particular, we shall prove the following lemma.
	\begin{lem}\label{lemma1est} 
		The coefficients of the matrix $\mathcal{A}$, given by 
		\begin{align*}
			\mathcal{A}_{kj}(\varphi)
			\triangleq [\mathtt{J} \partial_{\varphi_j} I_0(\varphi)]\cdot\partial_{\varphi_k} \vartheta_0(\varphi)
			-[\mathtt{J} \partial_{\varphi_j} \vartheta_0(\varphi)]\cdot\partial_{\varphi_k} I_0(\varphi)  
			-\big\langle\mathcal{J}^{-1} \partial_{\varphi_j}  z_0(\varphi) ,\partial_{\varphi_k}  z_0(\varphi)\big\rangle_{L^2(\mathbb{T})\times L^2(\mathbb{T})}.
		\end{align*}
		satisfy for all $\varphi\in\mathbb{T}^d$, are
		\begin{align*}
			\omega\cdot\partial_\varphi \mathcal{A}_{kj}(\varphi)
			&=[{\mathtt J} \partial_{\varphi_j} Z_2(\varphi)]\cdot\partial_{\varphi_k} \vartheta_0(\varphi)
			-[{\mathtt J} \partial_{\varphi_j} Z_1(\varphi)] \cdot\partial_{\varphi_k} I_0(\varphi)
			-\big\langle\mathcal{J}^{-1} \partial_{\varphi_j} Z_3(\varphi),\partial_{\varphi_k}  z_0(\varphi)\big\rangle_{L^2(\mathbb{T})\times L^2(\mathbb{T})}\nonumber
			\\
			&+ [{\mathtt J} \partial_{\varphi_j} I_0(\varphi)]\cdot\partial_{\varphi_k} Z_2(\varphi)
			-[{\mathtt J} \partial_{\varphi_j} \vartheta_0(\varphi)]\cdot\partial_{\varphi_k} Z_1(\varphi) 
			-\big\langle\mathcal{J}^{-1} \partial_{\varphi_j}  z_0(\varphi) ,\partial_{\varphi_k} Z_3(\varphi)\big\rangle_{L^2(\mathbb{T})\times L^2(\mathbb{T})}.
		\end{align*}
		where $(\underline{e}_1,\ldots,\underline{e}_d)$ denotes the canonical basis of $\mathbb{R}^d$.

	\end{lem}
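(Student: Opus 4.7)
The strategy is a direct computation: differentiate $\mathcal{A}_{kj}$ along $\omega\cdot\partial_\varphi$ via Leibniz, then substitute the identities furnished by $Z=\mathcal{F}(i_0,\alpha_0)$ and exploit the Hessian symmetries of the Hamiltonian $K_\varepsilon^{\alpha_0}$ so that every term built out of the Hamiltonian cancels, leaving exactly the six $Z$-terms claimed. At the conceptual level, this is nothing but the statement that the Hamiltonian flow preserves the symplectic form $\mathcal{W}$, translated into the explicit algebraic identity satisfied by the pullback $\mathcal{A}_{kj}$ on the embedded torus.

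Step 1 (Leibniz). Since $\omega\in\mathbb{R}^d$ is constant, the operators $\omega\cdot\partial_\varphi$ and $\partial_{\varphi_j}$ commute. Applying Leibniz to each of the three terms defining $\mathcal{A}_{kj}$ produces six summands involving $\omega\cdot\partial_\varphi\partial_{\varphi_j}\vartheta_0$, $\omega\cdot\partial_\varphi\partial_{\varphi_j} I_0$, $\omega\cdot\partial_\varphi\partial_{\varphi_j} z_0$ and their $k$-counterparts.

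Step 2 (use of the error function). The definition \eqref{def Z} together with the expression \eqref{operatorF} of $\mathcal{F}$ and \eqref{K alpha} yields the three identities
\begin{align*}
\omega\cdot\partial_\varphi \vartheta_0&=\mathtt{J}\,\partial_I K_\varepsilon^{\alpha_0}(i_0)+Z_1,\\
\omega\cdot\partial_\varphi I_0&=-\mathtt{J}\,\partial_\vartheta K_\varepsilon^{\alpha_0}(i_0)+Z_2,\\
\omega\cdot\partial_\varphi z_0&=\mathcal{J}\nabla_z K_\varepsilon^{\alpha_0}(i_0)+Z_3.
\end{align*}
Differentiating these in $\varphi_j$ (commuting $\partial_{\varphi_j}$ with $\omega\cdot\partial_\varphi$) and expanding $\partial_{\varphi_j}$ of the $K$-gradients by the chain rule, each $\omega\cdot\partial_\varphi\partial_{\varphi_j}(\cdot)$ splits into a term built only from the Hessian of $K_\varepsilon^{\alpha_0}$ applied to $\partial_{\varphi_j}(\vartheta_0,I_0,z_0)$, and a term of the form $\partial_{\varphi_j}Z_\bullet$.

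Step 3 (cancellation of the $K$-terms). Substituting the expansions from Step 2 and grouping the terms that do \emph{not} involve $Z$, one has to show that the resulting combination vanishes pointwise in $\varphi$. This uses the following algebraic facts: (a) $\mathtt{J}$ is symmetric with $\mathtt{J}^2={\rm I}_d$; (b) the Hessian of $K_\varepsilon^{\alpha_0}$ is symmetric, so $\partial_{II}K$, $\partial_{\vartheta\vartheta}K$ and $\partial_z\nabla_z K$ are self-adjoint and $(\partial_{I\vartheta}K)^\top=\partial_{\vartheta I}K$, $(\partial_I\nabla_z K)^\top=\partial_{z}\partial_I K$, etc.; (c) the formal integration-by-parts identity $\mathcal{J}^\top=-\mathcal{J}$, equivalently $(\mathcal{J}^{-1})^\top=-\mathcal{J}^{-1}$, on the space $\mathbf{H}^{\perp}_{\overline{\mathbb{S}}_0}$. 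A pair-by-pair check shows that the nine $K$-contributions organize into three pairs: the $\partial_{II}K$ piece cancels with itself via $\mathtt{J}^2={\rm I}$ and symmetry; the mixed $\partial_{I\vartheta}K$ and $\partial_{\vartheta I}K$ pieces cancel against each other; and the $z$-type terms $\partial_{Iz}K,\partial_{\vartheta z}K,\partial_{z}\nabla_z K$ cancel thanks to $\mathcal{J}^{-1}\mathcal{J}={\rm I}$ combined with self-adjointness. This is exactly the infinitesimal version of the invariance of $\mathcal{W}$ under $X_{K_\varepsilon^{\alpha_0}}$.

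Step 4 (collection of the $Z$-terms). The remaining terms are precisely
\begin{align*}
&[\mathtt{J}\partial_{\varphi_j}Z_2]\cdot\partial_{\varphi_k}\vartheta_0 -[\mathtt{J}\partial_{\varphi_j}Z_1]\cdot\partial_{\varphi_k}I_0-\big\langle\mathcal{J}^{-1}\partial_{\varphi_j}Z_3,\partial_{\varphi_k}z_0\big\rangle_{L^2\times L^2}\\
&\quad+[\mathtt{J}\partial_{\varphi_j}I_0]\cdot\partial_{\varphi_k}Z_2-[\mathtt{J}\partial_{\varphi_j}\vartheta_0]\cdot\partial_{\varphi_k}Z_1-\big\langle\mathcal{J}^{-1}\partial_{\varphi_j}z_0,\partial_{\varphi_k}Z_3\big\rangle_{L^2\times L^2},
\end{align*}
which is the stated formula. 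The main (purely technical) obstacle is Step 3: the careful bookkeeping of signs coming from $\mathtt{J}^2={\rm I}$ versus $(\mathcal{J}^{-1})^\top=-\mathcal{J}^{-1}$, and of transposes between the finite-dimensional pairing on $\mathbb{R}^d$ and the $L^2\times L^2$ pairing on the normal subspace. Once this accounting is done, the identity follows without any further analytic input.
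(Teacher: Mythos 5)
Your proposal is correct and follows essentially the same route as the paper: Leibniz differentiation of $\mathcal{A}_{kj}$, substitution of the three components of $Z=\mathcal{F}(i_0,\alpha_0)$ from \eqref{def Z}--\eqref{operatorF}, and cancellation of all Hamiltonian contributions via the chain rule, the symmetry of the mixed second derivatives of $K_\varepsilon^{\alpha_0}$, $\mathtt{J}^\top=\mathtt{J}$ with $\mathtt{J}^2={\rm I}_d$, and the skew-adjointness of $\mathcal{J}^{-1}$. The paper organizes the surviving $K$-terms into three blocks $\mathcal{B}^1_{kj}+\mathcal{B}^2_{kj}+\mathcal{B}^3_{kj}=0$ rather than your pair-by-pair grouping, but the mechanism is identical.
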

	
	\begin{proof}
		
		From  the expression of the coefficients $\mathcal{A}_{kj}$ one has
		\begin{align*}
			\omega\cdot\partial_\varphi \mathcal{A}_{kj}(\varphi)
			&=\big\langle  \mathtt{J}\partial_{\varphi_j} \omega\cdot\partial_\varphi I_0(\varphi),\partial_{\varphi_k} \vartheta_0(\varphi)\big\rangle_{\mathbb{R}^d}+\big\langle \mathtt{J} \partial_{\varphi_j} I_0(\varphi),\partial_{\varphi_k} \omega\cdot\partial_\varphi\vartheta_0(\varphi)\big\rangle_{\mathbb{R}^d}
			\\
			&
			-\big\langle \mathtt{J}\partial_{\varphi_j} \omega\cdot\partial_\varphi\vartheta_0(\varphi),\partial_{\varphi_k} I_0(\varphi)\big\rangle_{\mathbb{R}^d} 
			-\big\langle \mathtt{J}\partial_{\varphi_j} \vartheta_0(\varphi),\partial_{\varphi_k} \omega\cdot\partial_\varphi I_0(\varphi)\big\rangle_{\mathbb{R}^d}   
			\\
			&
			-\big\langle\mathcal{J}^{-1} \partial_{\varphi_j} \omega\cdot\partial_\varphi z_0(\varphi) ,\partial_{\varphi_k}  z_0(\varphi)\big\rangle_{L^2(\mathbb{T})\times L^2(\mathbb{T})}-\big\langle\mathcal{J}^{-1} \partial_{\varphi_j}  z_0(\varphi) ,\partial_{\varphi_k} \omega\cdot\partial_\varphi z_0(\varphi)\big\rangle_{L^2(\mathbb{T})\times L^2(\mathbb{T})} 
		\end{align*}
		In view of \eqref{def Z} we get
		\begin{align}\label{omdphiakj}
			\omega\cdot\partial_\varphi \mathcal{A}_{kj}(\varphi)
			&=\big\langle \mathtt{J}\partial_{\varphi_j} Z_2(\varphi),\partial_{\varphi_k} \vartheta_0(\varphi)\big\rangle_{\mathbb{R}^d}+\big\langle \mathtt{J}\partial_{\varphi_j} I_0(\varphi),\partial_{\varphi_k} Z_1(\varphi)\big\rangle_{\mathbb{R}^d}  
			\\
			&
			\quad-\big\langle \mathtt{J}\partial_{\varphi_j} Z_1(\varphi),\partial_{\varphi_k} I_0(\varphi)\big\rangle_{\mathbb{R}^d}  
			-\big\langle \mathtt{J}\partial_{\varphi_j} \vartheta_0(\varphi),\partial_{\varphi_k} Z_2(\varphi)\big\rangle_{\mathbb{R}^d}\nonumber
			\\
			&
			\quad-\big\langle  \mathcal{J}^{-1} \partial_{\varphi_j}  z_0(\varphi) ,\partial_{\varphi_k} Z_3(\varphi)\big\rangle_{L^2(\mathbb{T})\times L^2(\mathbb{T})}
			-\big\langle \mathcal{J}^{-1} \partial_{\varphi_j} Z_3(\varphi) ,\partial_{\varphi_k}  z_0(\varphi)\big\rangle_{L^2(\mathbb{T})\times L^2(\mathbb{T})}\nonumber
			\\ &\quad+\mathcal{B}_{kj}^1(\varphi)+\mathcal{B}_{kj}^2(\varphi)+\mathcal{B}_{kj}^3(\varphi),\nonumber
		\end{align}
		where
		\begin{align*}
			\mathcal{B}_{kj}^1(\varphi)&\triangleq-\big\langle \partial_{\varphi_j} ( \partial_{I} K_\varepsilon^{\alpha_0}) (i_0(\varphi)),\partial_{\varphi_k} I_0(\varphi)\big\rangle_{\mathbb{R}^d}+\big\langle\partial_{\varphi_j} I_0(\varphi),\partial_{\varphi_k} ( \partial_{I} K_\varepsilon^{\alpha_0}) (i_0(\varphi))\big\rangle_{\mathbb{R}^d}, 
			\\ 
			\mathcal{B}_{kj}^2(\varphi)&\triangleq-\big\langle \partial_{\varphi_j} ( \partial_{\vartheta} K_\varepsilon^{\alpha_0}) (i_0(\varphi)),\partial_{\varphi_k} \vartheta_0(\varphi)\big\rangle_{\mathbb{R}^d}+\big\langle\partial_{\varphi_j} \vartheta_0(\varphi),\partial_{\varphi_k} ( \partial_{\vartheta} K_\varepsilon^{\alpha_0}) (i_0(\varphi))\big\rangle_{\mathbb{R}^d}, 
			\\ 
			\mathcal{B}_{kj}^3(\varphi)&\triangleq \big\langle \partial_{\varphi_j}  z_0(\varphi) ,\partial_{\varphi_k}  ( \nabla_{z} K_\varepsilon^{\alpha_0}) (i_0(\varphi))\big\rangle_{L^2(\mathbb{T})\times L^2(\mathbb{T})}
			-\big\langle \partial_{\varphi_j} ( \nabla_{z} K_\varepsilon^{\alpha_0}) (i_0(\varphi)) ,\partial_{\varphi_k}  z_0(\varphi)\big\rangle_{L^2 \times L^2}.
		\end{align*}
		Straightforward computations leads to
		\begin{align*}
			\mathcal{B}_{kj}^1(\varphi)&
			=-\big\langle  ( \partial_{I\vartheta} K_\varepsilon^{\alpha_0}) (i_0(\varphi)) \partial_{\varphi_j}\vartheta_0(\varphi),\partial_{\varphi_k} I_0(\varphi)\big\rangle_{\mathbb{R}^d}
			-\big\langle  ( \partial_{zI} K_\varepsilon^{\alpha_0}) (i_0(\varphi))\partial_{\varphi_j} z_0 (\varphi),\partial_{\varphi_k} I_0(\varphi)\big\rangle_{\mathbb{R}^d}
			\\ & \quad 
			+\big\langle\partial_{\varphi_j} I_0(\varphi), ( \partial_{I\vartheta} K_\varepsilon^{\alpha_0}) (i_0(\varphi))\partial_{\varphi_k}\vartheta_0(\varphi)\big\rangle_{\mathbb{R}^d}
			+\big\langle\partial_{\varphi_j} I_0(\varphi),( \partial_{I z} K_\varepsilon^{\alpha_0}) (i_0(\varphi)) \partial_{\varphi_k} z_0(\varphi)\big\rangle_{\mathbb{R}^d},\nonumber
			\\ 
			\mathcal{B}_{kj}^2(\varphi)&=-\big\langle  ( \partial_{I\vartheta} K_\varepsilon^{\alpha_0}) (i_0(\varphi)) \partial_{\varphi_j}I_0(\varphi),\partial_{\varphi_k} \vartheta_0(\varphi)\big\rangle_{\mathbb{R}^d}
			-\big\langle  ( \partial_{z\vartheta} K_\varepsilon^{\alpha_0}) (i_0(\varphi))\partial_{\varphi_j} z_0 (\varphi),\partial_{\varphi_k} \vartheta_0(\varphi)\big\rangle_{\mathbb{R}^d}
			\\ &
			\quad+\big\langle\partial_{\varphi_j} \vartheta_0(\varphi), ( \partial_{I\vartheta} K_\varepsilon^{\alpha_0}) (i_0(\varphi))\partial_{\varphi_k}I_0(\varphi)\big\rangle_{\mathbb{R}^d}
			+\big\langle\partial_{\varphi_j} \vartheta_0(\varphi),( \partial_{\vartheta z} K_\varepsilon^{\alpha_0}) (i_0(\varphi)) \partial_{\varphi_k} z_0(\varphi)\big\rangle_{\mathbb{R}^d},\nonumber
			\\
			\mathcal{B}_{kj}^3(\varphi)&=
			\big\langle  (\partial_I \nabla_{z} K_\varepsilon^{\alpha_0}) (i_0(\varphi)) \partial_{\varphi_k}I_0(\varphi) ,\partial_{\varphi_j}  z_0(\varphi)\big\rangle_{L^2(\mathbb{T})\times L^2(\mathbb{T})}
			\\
			&\quad-\big\langle \partial_{\varphi_k}  z_0(\varphi) , ( \partial_I \nabla_{z} K_\varepsilon^{\alpha_0}) (i_0(\varphi)) \partial_{\varphi_j}I_0(\varphi)\big\rangle_{L^2(\mathbb{T})\times L^2(\mathbb{T})}
			\\ &\quad+\big\langle( \partial_\vartheta \nabla_{ z} K_\varepsilon^{\alpha_0}) (i_0(\varphi)) \partial_{\varphi_k}\vartheta_0(\varphi),\partial_{\varphi_j}  z_0(\varphi)\big\rangle_{L^2(\mathbb{T})\times L^2(\mathbb{T})}
			\\
			&\quad-\big\langle \partial_{\varphi_k}  z_0(\varphi) ,( \partial_\vartheta \nabla_{ z} K_\varepsilon^{\alpha_0}) (i_0(\varphi)) \partial_{\varphi_j}\vartheta_0(\varphi)\big\rangle_{L^2(\mathbb{T})\times L^2(\mathbb{T})}.\nonumber
		\end{align*}
		Combining the last three  identities we obtain
		$$
		\mathcal{B}_{kj}^1(\varphi)+\mathcal{B}_{kj}^2(\varphi)+\mathcal{B}_{kj}^3(\varphi)=0.
		$$
		This with  \eqref{omdphiakj} concludes the proof of the lemma.
	\end{proof}
	We define the sequence $(N_n)_{n\in\N\cup\{-1\}}$ as
	\begin{equation}\label{def geo Nn}
		N_{-1}\triangleq 1,\qquad \forall n\in\mathbb{N},\quad N_{n}\triangleq N_{0}^{\left(\frac{3}{2}\right)^{n}},\qquad \hbox{with} \qquad N_0\geqslant2.
	\end{equation}
	The following lemma  is proved in \cite[Lemma 5.3]{BM18} and \cite[Lemma 6.2]{HHM21}.
	\begin{lem}\label{lem:est-akj}
		The coefficients $\mathcal{A}_{jk}$, defined in Lemma  $\ref{lemma1est}$, decomposes as
		\begin{equation}\label{Akj decomposition}
			\mathcal{A}_{kj}=\mathcal{A}_{kj}^{(n)}+\mathcal{A}_{kj}^{(n),\perp},\qquad\hbox{with} \qquad \mathcal{A}_{kj}^{(n)}\triangleq \Pi_{N_n}\mathcal{A}_{kj}\qquad \hbox{and}\qquad \mathcal{A}_{kj}^{(n),\perp}\triangleq \Pi_{N_n}^\perp\mathcal{A}_{kj}.
		\end{equation}
		In addition, the following properties hold true.
		\begin{enumerate}
			\item The function  $\mathcal{A}_{kj}^{(n),\perp}$ satisfies  for any $s\in \mathbb{R}$,
			\begin{equation*}
				\forall \,\mathtt{b}\geqslant 0,\quad\| \mathcal{A}_{k j}^{(n),\perp} \|_{s}^{q,\gamma,\mathtt{m}} \lesssim N_n^{-\mathtt{b}}  \|  {\mathfrak I}_0 \|_{s+1+\mathtt{b}}^{q,\gamma,\mathtt{m}}.
			\end{equation*}
			\item There exist functions $\mathcal{A}_{kj}^{(n),\textnormal{ext}}$ defined for any $(b,\omega)\in \mathcal{O}$ and satisfying, 
			for any $s\geqslant s_0$, the estimate
			\begin{equation*}
				\|  \mathcal{A}_{k j}^{(n),\textnormal{ext}} \|_{s}^{q,\gamma,\mathtt{m}} \lesssim \gamma^{-1}
				\big(\| Z \|_{s+\tau_1(q + 1)+1 }^{q,\gamma,\mathtt{m}} + \| Z \|_{s_0+1}^{q,\gamma,\mathtt{m}} \|  {\mathfrak I}_0 \|_{s+\tau_1(q + 1) +1}^{q,\gamma,\mathtt{m}}\big)\,.
			\end{equation*}
			Moreover,  $\mathcal{A}_{k j}^{(n),\textnormal{ext}}$ coincides with $\mathcal{A}_{k j}^{(n)}$ on the Cantor set
			\begin{equation}\label{DC tau gamma N}
				\mathtt {DC}_{N_n} (\gamma, \tau_1) \triangleq \bigcap_{l\in\mathbb{Z}^{d}\setminus\{0\}\atop|l|\leqslant N_n}\Big\{ \omega \in \mathbb{R}^d\quad\textnormal{s.t.}\quad|\omega \cdot l | \geqslant \tfrac{\gamma}{\langle l \rangle^{\tau_1}}\Big\}. \, 
			\end{equation}
		\end{enumerate}
	\end{lem}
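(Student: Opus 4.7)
My plan is to establish the two items in sequence, taking the decomposition $\mathcal{A}_{kj}=\mathcal{A}_{kj}^{(n)}+\mathcal{A}_{kj}^{(n),\perp}$ as immediate from the definition of the frequency projector $\Pi_{N_n}$ in \eqref{def projectors PiN}.

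For item (1) I would simply invoke the smoothing property of $\Pi_{N_n}^\perp$ from Lemma \ref{lem funct prop}-(i), giving
$$\|\mathcal{A}_{kj}^{(n),\perp}\|_s^{q,\gamma,\mathtt{m}} \leqslant N_n^{-\mathtt{b}}\|\mathcal{A}_{kj}\|_{s+\mathtt{b}}^{q,\gamma,\mathtt{m}}.$$
Since the explicit formula for $\mathcal{A}_{kj}$ from Lemma \ref{lemma1est} is a bilinear combination of $\partial_\varphi\vartheta_0,\partial_\varphi I_0,\partial_\varphi z_0$, and since writing $\vartheta_0=\varphi+\widetilde\vartheta_0$ produces a linear leading part controlled by $\partial_\varphi \mathfrak{I}_0$, the product law in Lemma \ref{lem funct prop}-(ii), together with the smallness of $\|\mathfrak{I}_0\|_{s_0+1}^{q,\gamma,\mathtt{m}}$, yields $\|\mathcal{A}_{kj}\|_{s+\mathtt{b}}^{q,\gamma,\mathtt{m}}\lesssim\|\mathfrak{I}_0\|_{s+1+\mathtt{b}}^{q,\gamma,\mathtt{m}}$, concluding this step.

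For item (2), the starting point is the differentiated identity of Lemma \ref{lemma1est}, $\omega\cdot\partial_\varphi\mathcal{A}_{kj}=G_{kj}$, where $G_{kj}$ is the explicit bilinear combination of components of $\partial_\varphi Z$ and components of $\partial_\varphi i_0$. Exactly as in Part 1, the product law gives
$$\|G_{kj}\|_s^{q,\gamma,\mathtt{m}}\lesssim \|Z\|_{s+1}^{q,\gamma,\mathtt{m}} + \|Z\|_{s_0+1}^{q,\gamma,\mathtt{m}}\|\mathfrak{I}_0\|_{s+1}^{q,\gamma,\mathtt{m}}.$$
A crucial preliminary observation is that the reversibility conditions \eqref{parity solution} force $\mathcal{A}_{kj}$ to be odd in $\varphi$, so its zero Fourier mode vanishes; this is a short parity check using $\vartheta_0(-\varphi)=-\vartheta_0(\varphi)$, $I_0(-\varphi)=I_0(\varphi)$, $z_0(-\varphi)=\mathscr{S}z_0(\varphi)$, combined with the commutation identity $\mathcal{J}^{-1}\mathscr{S}=-\mathscr{S}\mathcal{J}^{-1}$ from \eqref{prop inv scr S}. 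The non-zero Fourier coefficients of $\mathcal{A}_{kj}^{(n)}$ are therefore determined, on the Cantor set $\mathtt{DC}_{N_n}(\gamma,\tau_1)$, by inverting $\omega\cdot\partial_\varphi$:
$$(\mathcal{A}_{kj}^{(n)})_l(\omega)=\tfrac{(G_{kj})_l(\omega)}{i\omega\cdot l},\qquad 0<|l|\leqslant N_n.$$

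To extend this to all of $\mathcal{O}$, I would introduce a smooth cut-off $\chi_n(\omega,l)$ equal to $1$ on $\{|\omega\cdot l|\geqslant\gamma\langle l\rangle^{-\tau_1}\}$ and supported in $\{|\omega\cdot l|\geqslant\tfrac{\gamma}{2}\langle l\rangle^{-\tau_1}\}$, and set
$$\mathcal{A}_{kj}^{(n),\textnormal{ext}}(\omega,\varphi)\triangleq \sum_{0<|l|\leqslant N_n}\tfrac{(G_{kj})_l(\omega)}{i\omega\cdot l}\,\chi_n(\omega,l)\,e^{il\cdot\varphi},$$
which coincides with $\mathcal{A}_{kj}^{(n)}$ on $\mathtt{DC}_{N_n}(\gamma,\tau_1)$ by construction. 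The main bookkeeping is that each derivative in $\omega$ of the factor $\chi_n(\omega,l)/(\omega\cdot l)$ costs $\gamma^{-1}\langle l\rangle^{\tau_1}$ on its support, so $q$ derivatives give an extra $\gamma^{-q}\langle l\rangle^{\tau_1 q}$ factor; the weighted norm $\|\cdot\|^{q,\gamma}$ absorbs the $\gamma^{-q}$ into the $\gamma^{|\alpha|}$ weights, leaving an overall $\gamma^{-1}$ prefactor and a total loss of $\tau_1(q+1)+1$ spatial derivatives (the extra $\tau_1+1$ coming from the zeroth-order inversion of $\omega\cdot\partial_\varphi$). Combining this with the bilinear estimate on $G_{kj}$ yields the claim.

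The main technical obstacle is precisely this bookkeeping of the $q$-dependent losses in the weighted $\|\cdot\|^{q,\gamma}$-norm when differentiating the Diophantine factor and its cut-off; the argument is standard (the analogue is carried out in \cite[Lem. 5.3]{BM18} and \cite[Lem. 6.2]{HHM21}), but must be done carefully to pin down exactly the $\tau_1(q+1)+1$ total loss of regularity appearing in the stated estimate.
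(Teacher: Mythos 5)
Your argument is correct and coincides with the proof the paper relies on (the paper simply cites \cite[Lem. 5.3]{BM18} and \cite[Lem. 6.2]{HHM21}, where exactly this scheme is carried out): the smoothing estimate for $\Pi_{N_n}^{\perp}$ combined with the bilinear bound $\|\mathcal{A}_{kj}\|_{s}^{q,\gamma,\mathtt{m}}\lesssim\|\mathfrak{I}_0\|_{s+1}^{q,\gamma,\mathtt{m}}$ for item (1), and for item (2) the parity argument killing the zero Fourier mode, the inversion of $\omega\cdot\partial_\varphi$ on the identity of Lemma \ref{lemma1est}, and the cut-off extension of the Diophantine divisors with the standard $\tau_1(q+1)+1$ bookkeeping in the weighted norm. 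No gaps.
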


	\subsection{Construction of an approximate inverse} 
	According to Proposition \ref{Proposition-Conjugation}-{\rm (ii)} and Lemma \ref{lem:est-akj}, the error term $\mathbb{E}$  is zero at an exact solution, up to an exponentially small remainder on the Cantor set $\mathtt {DC}_{N_n} (\gamma, \tau_1) $. Therefore,  in order to find an approximate inverse of the linear operator in \eqref{Id-conj} it is sufficient to almost invert the operator $\mathbb{D}$, which is triangular. More precisely, we first invert the action-component equation, in the  linear system $\mathbb{D}[\widehat u]=(g_1,g_2,g_3)$, which is decoupled from the other equations, 
	$$
	{\omega\cdot\partial_\varphi  \widehat y=g_2-\mathcal{B}(\varphi)\widehat \alpha.}
	$$
	Then, we shall solve the last normal-component equation
	$$
	\omega\cdot\partial_\varphi \widehat w- 
	\mathcal{J}  K_{02}(\varphi)\widehat w =g_3+\mathcal{J}\big[K_{11}(\varphi) \widehat y +L_2^{\top}(\varphi)\widehat\alpha   \big].
	$$
	For this aim we need to find an approximate  right inverse of the linearized operator in the normal direction 
	\begin{equation}\label{def hat L}
		\widehat{\mathcal{L}}\triangleq \Pi_{\overline{\mathbb{S}}_0}^\bot \big(\omega\cdot \partial_\varphi   - 
		\mathcal{J}  K_{02}(\varphi) \big)\Pi_{\overline{\mathbb{S}}_0}^\bot
	\end{equation}
	when the set of parameters is restricted to a Cantor-like set. Here the projector $\Pi_{\overline{\mathbb{S}}_0}^\bot$ is the one defined in \eqref{proj-nor1}. Finally, we shall solve the first equation in $\mathbb{D}[\widehat u]=(g_1,g_2,g_3)$ after  choosing $\widehat \alpha $ in such way we get zero average in the equation.

	The following proposition gives a brief statement about the invertibility in the normal direction; the construction of an approximate right inverse of the operator $\widehat{\mathcal{L}}$ is the subject of Section \ref{reduction}  and a precise statement with a detailed description of Cantor like sets,  see  Proposition~\ref{prop inv linfty}.
	
	\begin{prop}\label{thm:inversion of the linearized operator in the normal directions}
		Given the conditions  \eqref{setting tau1 and tau2}, \eqref{init Sob cond}, \eqref{p-RR} and \eqref{sml-RR}. There exists $\sigma_5\triangleq \sigma_5(\tau_1,\tau_2,q,d)>0$   such that if 
		\begin{equation*}
			\|\mathfrak{I}_0\|_{s_h+\sigma_5}^{q,\gamma,\mathtt{m}}\leqslant 1,
		\end{equation*}
		then there exists a family of  linear operators $\big(\widehat{\mathtt{T}}_{n}\big)_{n\in\mathbb{N}}$  defined in $\mathcal{O}$ and  satisfying the estimate
		\begin{equation*}
			\forall \, s\in\,[ s_0, S],\quad\sup_{n\in\mathbb{N}}\|\widehat{\mathtt{T}}_{n}\rho\|_{s}^{q,\gamma ,\mathtt{m}}\lesssim\gamma^{-1}\left(\|\rho\|_{s+\sigma_5}^{q,\gamma ,\mathtt{m}}+\| \mathfrak{I}_{0}\|_{s+\sigma_5}^{q,\gamma ,\mathtt{m}}\|\rho\|_{s_{0}+\sigma_5}^{q,\gamma,\mathtt{m}}\right)
		\end{equation*}
		and,  for any $n\in\mathbb{N}$, we have the following splitting
		$$\widehat{\mathcal{L}}=\widehat{\mathtt{L}}_{n}+\widehat{\mathtt{R}}_{n},\qquad\textnormal{with}\qquad\widehat{\mathtt{L}}_{n}\widehat{\mathtt{T}}_{n}=\textnormal{Id},$$
		in a Cantor set
		$\mathtt{G}_n\triangleq \mathtt{G}_n(\gamma,\tau_{1},\tau_{2},i_{0})\subset \mathtt {DC}_{N_n} (\gamma, \tau_1) \times (b_*, b^*),$
		where the operators $\widehat{\mathtt{L}}_{n}$ and $\widehat{\mathtt{R}}_{n}$ are defined in  $\mathcal{O}$ and satisfy 
		\begin{align*}
			\forall s\in[s_{0},S],\quad& \sup_{n\in\mathbb{N}}\|\widehat{\mathtt{L}}_{n}\rho\|_{s}^{q,\gamma,\mathtt{m}}\lesssim\|\rho\|_{s+1}^{q,\gamma,\mathtt{m}}+\varepsilon\gamma^{-2}\|\mathfrak{I}_{0}\|_{s+\sigma_5}^{q,\gamma,\mathtt{m}}\|\rho\|_{s_{0}+1}^{q,\gamma,\mathtt{m}},\\
			\forall s\in[s_{0},S],\quad &\|\widehat{\mathtt{R}}_{n}\rho\|_{s_{0}}^{q,\gamma,\mathtt{m}}\lesssim N_{n}^{s_{0}-s}\gamma^{-1}\left(\|\rho\|_{s+\sigma_5}^{q,\gamma,\mathtt{m}}+\varepsilon\gamma^{-2}\|\mathfrak{I}_{0}\|_{s+\sigma_5}^{q,\gamma,\mathtt{m}}\|\rho\|_{s_{0}+\sigma_5}^{q,\gamma,\mathtt{m}}\right)\\
			&\qquad\qquad\qquad\quad+\varepsilon\gamma^{-3}N_{0}^{\mu_{2}}N_{n+1}^{-\mu_{2}}\|\rho\|_{s_{0}+\sigma_5}^{q,\gamma,\mathtt{m}}.
		\end{align*}
		
	\end{prop}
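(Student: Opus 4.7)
The plan is to build $\widehat{\mathtt T}_n$ by composing three successive conjugations of $\widehat{\mathcal L}$, following the scheme outlined in the introduction and already partially developed in the text up to the statement. Recall from Proposition \ref{lemma-normal-s} that $\widehat{\mathcal L}=\Pi_{\overline{\mathbb S}_0}^\perp(\mathcal L-\varepsilon\partial_\theta\mathcal R)\Pi_{\overline{\mathbb S}_0}^\perp$, where $\mathcal L=\omega\cdot\partial_\varphi\mathbf I_{\mathtt m}+\mathfrak L_{\varepsilon r}$ and $\mathfrak L_{\varepsilon r}$ has the block structure \eqref{defLr2}. The first step is to straighten the transport part and normalize the zero-order Hilbert transform/convolution contribution via the symplectic change of variables $\mathscr B=\mathrm{diag}(\mathscr B_1,\mathscr B_2)$ of Lemma \ref{Compos1-lemm}, applied componentwise to each scalar diagonal block. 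Each $\beta_k$ is obtained by solving two independent transport homological equations with unknown speed $\mathtt c_k(b,\omega,i_0)$, which requires restricting $(b,\omega)$ to the Diophantine set $\mathcal O_{\infty,n}^{\gamma,\tau_1}(i_0)$ (Propositions \ref{prop strighten}, \ref{prop RTNL}). Using Lemmas \ref{lem CVAR kernel} and \ref{lemma:conjug-Hilbert} to handle the Hilbert transform and the kernel remainders, and Corollary \ref{cor-hyb-nor} to track the hybrid norm, the conjugate takes the form $\mathscr B^{-1}(\omega\cdot\partial_\varphi\mathbf I_{\mathtt m}+\mathfrak L_{\varepsilon r})\mathscr B=\omega\cdot\partial_\varphi\mathbf I_{\mathtt m}+\mathscr D+\mathscr R+\mathscr E_n$.

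Second, I project onto the normal directions by introducing $\mathscr B_\perp\triangleq\Pi_{\overline{\mathbb S}_0}^\perp\mathscr B\Pi_{\overline{\mathbb S}_0}^\perp$ and commuting the projector through the above identity, up to finite-rank corrections absorbed into $\mathscr E_n^0$ (Proposition \ref{prop proj nor dir}). This yields $\mathscr B_\perp^{-1}\widehat{\mathcal L}\mathscr B_\perp=\mathscr L_0+\mathscr E_n^0$ with $\mathscr L_0=\omega\cdot\partial_\varphi\mathbf I_{\mathtt m,\perp}+\mathscr D_0+\mathscr R_0$, where $\mathscr D_0$ is a diagonal matrix Fourier multiplier with eigenvalues $\mu_{j,k}^{(0)}$, and $\mathscr R_0$ is a small reversible integral-type remainder.

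Third, I run a KAM reducibility scheme to conjugate away $\mathscr R_0$. The scheme constructs $\Phi_\infty=\prod_n e^{\Psi_n}$ iteratively, where each $\Psi_n$ solves a homological equation block by block. Because $\mathscr D_0$ is diagonal the diagonal blocks of the homological equation have no resonances of type $j=j_0$ and hence the off-diagonal homological equations can be solved without producing residual diagonal corrections, so the limit $\mathscr D_\infty$ remains a clean diagonal Fourier multiplier. The measurement topology is the hybrid norm \eqref{hyb nor}: the diagonal blocks of $\mathscr R_n$ are controlled in the off-diagonal Toeplitz norm, which fits with the scalar second-order Melnikov conditions of type $|\omega\cdot l+\mu_{j,k}^{(\infty)}-\mu_{j_0,k}^{(\infty)}|\geqslant 2\gamma\langle j-j_0\rangle\langle l\rangle^{-\tau_2}$, while the anti-diagonal blocks are controlled in the isotropic norm, which fits with the weaker non-resonance $|\omega\cdot l+\mu_{j,1}^{(\infty)}-\mu_{j_0,2}^{(\infty)}|\geqslant 2\gamma\langle l,j,j_0\rangle^{-\tau_2}$. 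On the intersection $\mathscr O_{\infty,n}^{\gamma,\tau_1,\tau_2}(i_0)$ of all these Cantor-type sets (together with $\mathcal O_{\infty,n}^{\gamma,\tau_1}(i_0)$) Proposition \ref{prop RR} delivers $\Phi_\infty^{-1}\mathscr L_0\Phi_\infty=\mathscr L_\infty+\mathscr E_n^1$, with $\mathscr L_\infty=\omega\cdot\partial_\varphi\mathbf I_{\mathtt m,\perp}+\mathscr D_\infty$. Finally, on the first-order Melnikov Cantor set $\Lambda_{\infty,n}^{\gamma,\tau_1}(i_0)$ the diagonal multiplier $\mathscr L_\infty$ is inverted modewise, and setting $\widehat{\mathtt T}_n\triangleq\mathscr B_\perp\Phi_\infty\mathscr L_\infty^{-1}\Phi_\infty^{-1}\mathscr B_\perp^{-1}$ and $\widehat{\mathtt L}_n\triangleq\mathscr B_\perp\Phi_\infty\mathscr L_\infty\Phi_\infty^{-1}\mathscr B_\perp^{-1}$ produces an exact right inverse on $\mathtt G_n\triangleq\mathcal O_{\infty,n}^{\gamma,\tau_1}(i_0)\cap\mathscr O_{\infty,n}^{\gamma,\tau_1,\tau_2}(i_0)\cap\Lambda_{\infty,n}^{\gamma,\tau_1}(i_0)$. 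The remainder $\widehat{\mathtt R}_n\triangleq\widehat{\mathcal L}-\widehat{\mathtt L}_n$ collects the errors $\mathscr E_n,\mathscr E_n^0,\mathscr E_n^1$ coming from the cut-off at frequencies $N_n$ in each scheme, and these are controlled either by $N_n^{s_0-s}$ smoothing (for the truncations) or by $N_{n+1}^{-\mu_2}$ super-exponential decay (for the KAM tail).

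The quantitative tame estimates claimed for $\widehat{\mathtt T}_n$, $\widehat{\mathtt L}_n$, $\widehat{\mathtt R}_n$ are obtained by tracking, at each step, the propagation of the weighted norms through the composition: Lemma \ref{Compos1-lemm} and Lemma \ref{lem CVAR kernel} control $\mathscr B_\perp^{\pm 1}$ in terms of $\|\mathfrak I_0\|_{s+\sigma}^{q,\gamma,\mathtt m}$ for some fixed $\sigma$ (depending on $\tau_1,\tau_2,q,d$), while the KAM scheme gives tame control of $\Phi_\infty^{\pm 1}-\mathrm{Id}$ and of $\mathscr D_\infty^{-1}$ with loss $\gamma^{-1}$. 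Choosing $\sigma_5$ large enough to absorb all these losses and using the smallness assumption $\|\mathfrak I_0\|_{s_h+\sigma_5}^{q,\gamma,\mathtt m}\leqslant 1$ to linearize the tame products produces the stated bounds. The main obstacle is the handling of the off-diagonal second-order Melnikov small divisors: contrary to the diagonal case where Lemma \ref{lem-asym}(5) ensures a linear growth $|\mu_{j,k}^{(\infty)}-\mu_{j_0,k}^{(\infty)}|\gtrsim|j-j_0|$, the mixed combination $\mu_{j,1}^{(\infty)}-\mu_{j_0,2}^{(\infty)}$ has no such lower bound, so one cannot close the KAM step in the off-diagonal norm and the hybrid topology together with the isotropic Cantor condition must be used; this is precisely the reason for introducing \eqref{hyb nor} and for the transversality statement Lemma \ref{lemma transversalityE}(5), which are the analytical heart of the proof.
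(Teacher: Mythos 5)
Your proposal is correct and follows essentially the same route as the paper, which itself defers the proof of this proposition to Section \ref{reduction}: transport straightening via $\mathscr{B}$ (Propositions \ref{prop strighten}--\ref{prop RTNL}), localization to the normal directions (Proposition \ref{prop proj nor dir}), KAM reduction of the remainder in the hybrid topology with the two types of second-order Melnikov conditions (Proposition \ref{prop RR}), and inversion of the resulting diagonal operator on $\Lambda_{\infty,n}^{\gamma,\tau_1}(i_0)$ (Proposition \ref{prop inv linfty}), with $\mathtt{G}_n$ the same triple intersection of Cantor sets. The only cosmetic imprecisions are that the final inversion uses a truncated approximate inverse $\mathtt{T}_n$ of $\mathscr{L}_\infty$ (imposing only $|l|\leqslant N_n$ first-order Melnikov conditions, whence the $N_n^{s_0-s}$ tail you correctly attribute to $\widehat{\mathtt{R}}_n$) rather than $\mathscr{L}_\infty^{-1}$, and that it is the \emph{anti-diagonal} homological equations, not the diagonal ones, that are free of the $(l,j)=(0,j_0)$ resonance.
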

	
	The main goal is to find an approximate inverse to the operator $[D G_0({\mathtt u}_0)]^{-1}  d_{(i,\alpha)} {\mathcal F} (i_0,\alpha_0) D\widetilde G_0({\mathtt u}_0)$  in \eqref{Id-conj}. For this aim, since we require only  finitely many non-resonance conditions \eqref{DC tau gamma N}, for any $\omega\in\mathbb{R}^d$, we   decompose $\omega\cdot \partial_\varphi$ as
	\begin{equation}\label{omeg-phi-decomposition}
		\omega\cdot \partial_\varphi=\mathcal{D}_{(n)} +\mathcal{D}_{(n)}^{\perp},\qquad\mathcal{D}_{(n)}\triangleq \omega\cdot \partial_\varphi\,\Pi_{N_n}+ \Pi_{N_n,\mathtt{g}}^\perp, \qquad\mathcal{D}_{(n)}^{\perp}\triangleq  \omega\cdot \partial_\varphi\, \Pi_{N_n}^\perp- \Pi_{N_n, \mathtt{g}}^\perp,
	\end{equation}
	where
	$$
	\Pi_{N_n, \mathtt{g}}^\perp\sum_{l\in\Z^{d}\setminus\{0\}}h_{l} {\bf{e}}_{l}\triangleq\sum_{l\in\Z^{d}\setminus\{0\}\atop |l|> N_n} \mathtt{g}(l) h_{l} {\bf{e}}_{l}.
	$$
	and the function $\mathtt{g}:\mathbb{Z}^d\setminus \{0\}\to \{-1,1\}$ is defined, for all $l=(l_1,\cdots,l_d)\in \mathbb{Z}^d\setminus \{0\}$,   as the sign of the first non-zero component in the vector $l$. Thus, it satisfies 
	$$\forall l\in \mathbb{Z}^d\setminus \{0\} ,\quad\mathtt{g}(-l)=-\mathtt{g}(l).$$
	The projector  $\Pi_{N_n, \mathtt{g}}^\perp$ is used here instead of $\Pi_{N_n}^\perp$ in order to preserve the reversibility property. Then,
	according to Proposition~\ref{Proposition-Conjugation}, the identities \eqref{Akj decomposition}-\eqref{omeg-phi-decomposition} and  Proposition~\ref{thm:inversion of the linearized operator in the normal directions} we have the following decomposition 
	\begin{equation}\label{decomposition conj op}
		[D G_0({\mathtt u}_0)]^{-1}  d_{(i,\alpha)} {\mathcal F} (i_0,\alpha_0) D\widetilde G_0({\mathtt u}_0)={\mathbb D}_n+{\mathbb E}_n+ {\mathscr P}_n+{\mathscr Q}_n,
	\end{equation}
	with
	\begin{align*}
		&{\mathbb D}_n [\widehat \phi, \widehat y, \widehat w, \widehat \alpha ]  \triangleq
		\left(
		\begin{array}{c}
			\mathcal{D}_{(n)}  \widehat\phi-K_{20}\widehat y-K_{11}^\top \widehat w -L_1^{\top}\widehat \alpha
			\\
			\mathcal{D}_{(n)} \widehat y+ \mathcal{B} \widehat \alpha \\
			\widehat{\mathtt{L}}_{\omega,n} \widehat w -\mathcal{J}\big[K_{11} \widehat y+ L_2^{\top} \widehat\alpha   \big] 
		\end{array}
		\right),
		\\ &{\mathbb E}_{n} [\widehat \phi, \widehat y, \widehat w]  \triangleq
		[D G_0({\mathtt u}_0)]^{-1}    [\partial_\varphi Z] \widehat\phi+\left(
		\begin{array}{c}
			0 
			\\
			\mathcal{A}^{(n)}\big[K_{20} \widehat y+K_{11}^\top \widehat w\big]-R_{10}\widehat y-R_{01}\widehat w
			\\
			0   
		\end{array}
		\right) 
		\\
		&{\mathscr P}_n [\widehat \phi, \widehat y, \widehat w ]  \triangleq
		\left(
		\begin{array}{c}
			\mathcal{D}_{(n)}^{\perp}  \widehat\phi
			\\
			\mathcal{D}_{(n)}^{\perp} \widehat y + \mathcal{A}^{(n),\perp}\big[K_{20} \widehat y+K_{11}^\top \widehat w\big]
			\\
			0
		\end{array}
		\right),\quad {\mathscr Q}_n  [\widehat \phi, \widehat y, \widehat w ]  \triangleq
		\left(
		\begin{array}{c}
			0
			\\
			0
			\\
			\widehat{\mathtt{R}}_n[ \widehat  w]
		\end{array}
		\right),
	\end{align*}
	where $\mathcal{A}^{(n)}$ and $\mathcal{A}^{(n),\perp}$ are the matrices with coefficients 
	$\mathcal{A}_{kj}^{(n)}$ and $\mathcal{A}_{kj}^{(n),\perp}$ respectively, see \eqref{Akj decomposition}.
	We define the linear operator $\mathbb{L}_{\textnormal{ext}}$ as 
	\begin{equation}\label{def Lext}
		\mathbb{L}_{\textnormal{ext}}\triangleq \mathbb{D}_n+{\mathbb E}_{n}^{{\rm ext}}+{\mathscr P}_n+{\mathscr Q}_n,
	\end{equation}
	where the operator  ${\mathbb E}_{n}^{{\rm ext}}$ vanishes at  exact solutions on the whole set of parameters $\mathcal{O}$ and it is given by
	\begin{align*}
		{\mathbb E}_{n}^{{\rm ext}} [\widehat \phi, \widehat y, \widehat w ] & \triangleq
		[D G_0({\mathtt u}_0)]^{-1}    [\partial_\varphi Z] \widehat\phi+\left(
		\begin{array}{c}
			0 
			\\
			\mathcal{A}^{(n),\textnormal{ext}}\big[K_{20}(\varphi) \widehat y+K_{11}^\top \widehat w\big]-R_{10}\widehat y-R_{01}\widehat w
			\\
			0   
		\end{array}
		\right), 
	\end{align*}
	with $\mathcal{A}^{(n),\textnormal{ext}}$ is the matrix with coefficients 
	$\mathcal{A}_{kj}^{(n),\textnormal{ext}}$, see \eqref{Akj decomposition}.
	The operator $\mathbb{L}_{\textnormal{ext}}$ is  defined on the whole set $\mathcal{O}$ and, by construction,  coincides with the linear operator in \eqref{decomposition conj op} on the Cantor set $\mathtt{G}_n$,
	\begin{equation}\label{lext-f}
		\forall (b,\omega) \in \mathtt{G}_n,\quad\mathbb{L}_{\textnormal{ext}}=[D G_0({\mathtt u}_0)]^{-1}  d_{(i,\alpha)} {\mathcal F} (i_0,\alpha_0) D \tilde G_0({\mathtt u}_0).
	\end{equation}

	The following proposition  shows that the principal term  $\mathbb{D}_n$ has an exact inverse. Its proof can be found in  \cite[Prop. 6.3]{HHM21}
	\begin{prop}\label{prop:decomp-lin}
		Given the conditions  \eqref{setting tau1 and tau2}, \eqref{init Sob cond}, \eqref{p-RR} and \eqref{sml-RR}. There exists $\sigma_6\triangleq \sigma_6(\tau_1,\tau_2,q,d) >0$   such that if 
		\begin{equation*}
			\|\mathfrak{I}_0\|_{s_h+\sigma_6}^{q,\gamma,\mathtt{m}}\leqslant 1,
		\end{equation*}
		then there exist a family of operators $\big([{\mathbb D}_n]_{\textnormal{ext}}^{-1}\big)_n$ such that for all $ g \triangleq (g_1, g_2, g_3) $ 
		satisfying the reversibility and $\mathtt{m}$-fold symmetry properties 
		\begin{equation}\label{symmetry g1 g2 g3}
			g_1(\varphi) = g_1(- \varphi),\qquad g_2(\varphi) = - g_2(- \varphi),\qquad g_3(\varphi) = - ({\mathcal S} g_3)(\varphi),\qquad(\mathscr{T}_{\mathtt{m}}g_3)(\varphi)=g_3(\varphi), 
		\end{equation}
		the function 
		$ [{\mathbb D}_n]_{\textnormal{ext}}^{-1} g $ 
		satisfies the estimate, for all $s_0 \leqslant s \leqslant S$,
		\begin{equation*} 
			\| [{\mathbb D}_n]_{\textnormal{ext}}^{-1}g \|_{s}^{q,\gamma,\mathtt{m}}
			\lesssim \gamma^{-1} \big( \| g \|_{s + \sigma_6}^{q,\gamma,\mathtt{m}}
			+  \| {\mathfrak I}_0  \|_{s + \sigma_6}^{q,\gamma,\mathtt{m}}
			\| g \|_{s_0 + \sigma_6}^{q,\gamma,\mathtt{m}}  \big)
		\end{equation*}
		and for all $(b,\omega) \in \mathtt{G}_n$ one has
		$$
		{\mathbb D}_n [{\mathbb D}_n]_{\textnormal{ext}}^{-1} =\textnormal{Id}.
		$$	
	\end{prop}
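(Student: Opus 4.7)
The plan is to exploit the triangular structure of $\mathbb{D}_n$ and invert its three block-equations in sequence, using the counter-term $\widehat\alpha$ to absorb the obstructions coming from the vanishing of $\mathcal{D}_{(n)}$ on the zero Fourier mode. I would follow closely the Berti-Bolle construction used in \cite[Prop. 6.3]{HHM21}, with the only substantial modification being the replacement of the scalar symplectic structure of the Euler disc model by the vectorial one encoded in the matrix $\mathtt{J}$.

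The first step is to observe that the operator $\mathcal{D}_{(n)}$ in \eqref{omeg-phi-decomposition} is invertible on zero-$\varphi$-mean functions on the Cantor set $\mathtt{DC}_{N_n}(\gamma,\tau_1)$: on the low-frequency block $0<|l|\leqslant N_n$ its symbol $\ii\,\omega\cdot l$ is Diophantine, producing the classical loss $\tau_1(q+1)$, while on the high-frequency tail $|l|>N_n$ it acts as the sign multiplier $\mathtt{g}(l)=\pm 1$ with no loss. The zero mode is annihilated, and this is precisely the obstruction that the counter-term $\widehat\alpha$ will remove.

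The second step is to construct $(\widehat\phi,\widehat y,\widehat w,\widehat\alpha)$ from the datum $g=(g_1,g_2,g_3)$. Combining the zero-$\varphi$-mean projections of rows one and two of $\mathbb{D}_n[\widehat\phi,\widehat y,\widehat w,\widehat\alpha]=g$ yields a closed $2d\times 2d$ linear system for the pair $(\widehat\alpha,\langle\widehat y\rangle_\varphi)$ whose coefficient matrix, at the flat embedding $\mathfrak{I}_0=0$, reduces to a triangular invertible matrix built from $\langle L_1^\top\rangle=\mathtt{J}$ and the average of $\mathcal{B}$. The key identity $\mathcal{B}=\mathcal{A}L_1^\top+[\partial_\varphi I_0]^\top$ derived in the proof of Proposition~\ref{Proposition-Conjugation}, together with the smallness of $\mathfrak{I}_0$ and the Whitney-type extension $\mathcal{A}^{(n),\mathrm{ext}}$ of Lemma~\ref{lem:est-akj}, then makes this $2d\times 2d$ system invertible everywhere on $\mathcal{O}$ with tame bounds. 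Once $(\widehat\alpha,\langle\widehat y\rangle_\varphi)$ are fixed, the non-zero modes of $\widehat y$ follow from row two by applying $\mathcal{D}_{(n)}^{-1}$; the normal component $\widehat w$ is obtained from row three by applying the approximate inverse $\widehat{\mathtt{T}}_n$ of Proposition~\ref{thm:inversion of the linearized operator in the normal directions}; and $\widehat\phi$ is recovered from row one by a final application of $\mathcal{D}_{(n)}^{-1}$, the solvability condition being automatic by the very choice of $\widehat\alpha$. The reversibility and $\mathtt{m}$-fold symmetries of the data encoded in \eqref{symmetry g1 g2 g3} transfer to $(\widehat\phi,\widehat y,\widehat w,\widehat\alpha)$ fitting \eqref{parity solution} because all operators entering $\mathbb{D}_n$ preserve these symmetries.

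The main obstacle will be tracking the derivative losses so that they can all be absorbed into the single constant $\sigma_6=\sigma_6(\tau_1,\tau_2,q,d)$ of the final estimate: each use of $\mathcal{D}_{(n)}^{-1}$ costs $\tau_1(q+1)$ derivatives and a factor $\gamma^{-1}$, the normal inversion costs $\sigma_5$ and another factor $\gamma^{-1}$, and the matrices $K_{20},K_{11},K_{02},L_1,L_2,\mathcal{B}$ depend tamely on $\mathfrak{I}_0$ through \eqref{L1-L2}-\eqref{def K02}, contributing additional nonlinear losses; combining them through the product and interpolation estimates of Lemma~\ref{lem funct prop} yields the stated bound. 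A secondary technical point is the passage from $\mathtt{G}_n$ to the whole parameter set $\mathcal{O}$: since $\mathcal{D}_{(n)}^{-1}$ is a priori only defined on $\mathtt{DC}_{N_n}(\gamma,\tau_1)$, I would extend it by a smooth Whitney cut-off preserving the tame estimate, so that the identity $\mathbb{D}_n\,[\mathbb{D}_n]_{\textnormal{ext}}^{-1}=\textnormal{Id}$ holds exactly on $\mathtt{G}_n$ while the bound on $[\mathbb{D}_n]_{\textnormal{ext}}^{-1}g$ remains valid on all of $\mathcal{O}$.
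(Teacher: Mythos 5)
Your overall route --- triangular resolution in the order (row two) $\to$ (row three) $\to$ (row one), the exact right inverse $\widehat{\mathtt{T}}_n$ of Proposition~\ref{thm:inversion of the linearized operator in the normal directions} for the normal block, a Whitney cut-off extension of $\mathcal{D}_{(n)}^{-1}$ off the Diophantine set, and the counter-term $\widehat\alpha$ absorbing the zero-mode obstruction --- is precisely the argument of \cite[Prop.~6.3]{HHM21} to which the paper defers for this proposition, and your bookkeeping of the losses and of the symmetry transfer is sound. There is, however, a concrete error in the step that is supposed to determine $(\widehat\alpha,\langle\widehat y\rangle_\varphi)$.

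The $\varphi$-average of row two reads $\langle\mathcal{B}\rangle_\varphi\,\widehat\alpha=\langle g_2\rangle_\varphi$: it does not involve $\langle\widehat y\rangle_\varphi$ at all, since $\mathcal{D}_{(n)}$ annihilates constants. Moreover $\mathcal{B}=L_1^{-1}\partial_\varphi I_0\,L_1^\top+[\partial_\varphi z_0]^\top L_2^\top$ vanishes identically at the flat embedding, and at any reversible torus satisfying \eqref{parity solution} it is odd in $\varphi$ ($L_1$ is even, $\partial_\varphi I_0$ is odd, and $[\partial_\varphi z_0]^\top L_2^\top$ is odd, using $\mathscr{S}^2=\textnormal{Id}$ and $\mathcal{J}\mathscr{S}=-\mathscr{S}\mathcal{J}$), so $\langle\mathcal{B}\rangle_\varphi=0$ in every relevant situation. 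Consequently your $2d\times 2d$ coefficient matrix has the block form $\left(\begin{smallmatrix}\ast & \ast\\ 0 & 0\end{smallmatrix}\right)$ and is never invertible; the ``triangular invertible matrix built from $\mathtt{J}$ and the average of $\mathcal{B}$'' does not exist, because that average is zero. The correct mechanism is that the row-two average condition is vacuous rather than determining: reversibility of $g_2$ gives $\langle g_2\rangle_\varphi=0$ as well, so row two is solvable for \emph{every} $\widehat\alpha$, with $\langle\widehat y\rangle_\varphi$ a genuinely free parameter that one simply sets to zero. Only the $d$ average conditions of row one remain, and they determine $\widehat\alpha$ alone through the invertibility of $\langle L_1^\top\rangle_\varphi=\mathtt{J}+O\big(\|\mathfrak{I}_0\|_{s_0+1}^{q,\gamma,\mathtt{m}}\big)$; the residual linear dependence of $\widehat y$ and $\widehat w$ on $\widehat\alpha$ enters that condition only through $K_{20}$ and $K_{11}$, hence with a factor $\varepsilon$, and is absorbed by a Neumann series thanks to \eqref{sml-RR}. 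With this correction the remainder of your construction, including the estimate and the exact identity ${\mathbb D}_n[{\mathbb D}_n]_{\textnormal{ext}}^{-1}=\textnormal{Id}$ on $\mathtt{G}_n$, goes through.
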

	Coming back to the linear operator  $d_{i,\alpha}\mathcal{F}(i_{0},\alpha_{0})$,	according to  \eqref{def Lext} and 
	\eqref{lext-f}, on the Cantor set $ \mathtt{G}_n, $ we have the decomposition
	\begin{equation*}
		\begin{aligned}
			d_{i,\alpha}\mathcal{F}(i_{0},\alpha_{0})
			&=DG_{0}({\mathtt u}_0) \, {\mathbb{D}}_n\, [D\widetilde{G}_{0}({\mathtt u}_0)]^{-1}+ DG_{0}({\mathtt u}_0) \, {\mathbb E}_{n}^{{\rm ext}} \, [D\widetilde{G}_{0}({\mathtt u}_0)]^{-1}\\ &\quad+DG_{0}({\mathtt u}_0)\,   {\mathscr P}_n\, [D\widetilde{G}_{0}({\mathtt u}_0)]^{-1}+DG_{0}({\mathtt u}_0) \, {\mathscr Q}_n\, [D\widetilde{G}_{0}({\mathtt u}_0)]^{-1}.
		\end{aligned}
	\end{equation*}
	Applying the operator 
	\begin{equation}\label{def inverse T} 
		{\rm T}_0 \triangleq {\rm T}_0(i_0) \triangleq D { \widetilde G}_0({\mathtt u}_0)\, [{\mathbb D}_n]_{\textnormal{ext}}^{-1}\,[D G_0({\mathtt u}_0)]^{-1} 
	\end{equation} 
	to the right of the last identity we get for all $(b,\omega)\in  \mathtt{G}_n,$
	\begin{align*}
	d_{i,\alpha}\mathcal{F}(i_{0},\alpha_{0}) {\rm T}_{0}-\textnormal{Id}=\mathcal{E}_1^{(n)}+\mathcal{E}_2^{(n)}+\mathcal{E}_3^{(n)}\quad \textnormal{with}\quad\begin{array}[t]{rcl}
			&\mathcal{E}_1^{(n)}\triangleq DG_{0}({\mathtt u}_0) \,  {\mathbb E}_{n}^{{\rm ext}} \, [D\widetilde{G}_{0}({\mathtt u}_0)]^{-1}{\rm T}_{0},
		\\
		&\mathcal{E}_2^{(n)}\triangleq DG_{0}({\mathtt u}_0)\,  {\mathscr P}_n\, [D\widetilde{G}_{0}({\mathtt u}_0)]^{-1}{\rm T}_{0},
		\\
		&\mathcal{E}_3^{(n)}\triangleq DG_{0}({\mathtt u}_0)\,  {\mathscr Q}_n\, [D\widetilde{G}_{0}({\mathtt u}_0)]^{-1}{\rm T}_{0}.
		\end{array}
			\end{align*}
	Consequently, the operator ${\rm T}_0$ is an approximate right inverse for $d_{i,\alpha} {\mathcal F}(i_0,\alpha_0)$. In particular, we have the following result, whose proof is similar to \cite[Theorem 5.1]{HR21}.
	
	\begin{theo}  \label{theo appr inv}
		{\bf (Approximate inverse)}
		Let $(\gamma,q,d,\tau_{1},s_{0},\mu_2,s_h,S)$ satisfy \eqref{setting tau1 and tau2}--\eqref{init Sob cond} and \eqref{p-RR}--\eqref{sml-RR}. There exists $ { \overline\sigma}= { \overline\sigma}(\tau_1,\tau_2,d,q)>0$   such that if 
		\begin{equation}\label{bnd frkIn-final}
			\|\mathfrak{I}_0\|_{s_h+\overline\sigma}^{q,\gamma,\mathtt{m}}\leqslant 1,
		\end{equation}
		then for smooth $ g = (g_1, g_2, g_3) $, satisfying \eqref{symmetry g1 g2 g3},  
		the operator $ {\rm T}_0  $ defined in \eqref{def inverse T} is reversible, $\mathtt{m}$-fold preserving and satisfies 
		\begin{equation}\label{tame T0}
			\forall s\in [s_0,S],\quad \| {\rm T}_0 g\|_{s}^{q,\gamma,\mathtt{m}}\lesssim\gamma^{-1}\left(\|g\|_{s+{\overline\sigma}}^{q,\gamma,\mathtt{m}}+\|\mathfrak{I}_{0}\|_{s+{\overline\sigma}}^{q,\gamma,\mathtt{m}}\|g\|_{s_{0}+\overline{\sigma}}^{q,\gamma,\mathtt{m}}\right).
		\end{equation}
		Moreover  ${\rm T}_0$ is an almost-approximate  
		right 
		inverse of $d_{i, \alpha} 
		\mathcal{ F}(i_0, \alpha_0)$ on the Cantor set $ \mathtt{G}_n$. More precisely,   for all $(b,\omega)\in \mathtt{G}_n $ one has
		\begin{equation}\label{splitting of approximate inverse}
			d_{i,\alpha} \mathcal{ F} (i_0,\alpha_0)  {\rm T}_0
			- {\rm Id} = \mathcal{E}^{(n)}_1+\mathcal{E}^{(n)}_2+\mathcal{E}^{(n)}_3,
		\end{equation}
		where the operators $\mathcal{E}^{(n)}_1$, $\mathcal{E}^{(n)}_2$ and $\mathcal{E}^{(n)}_3$ are defined in the whole set $\mathcal{O}$ with the estimates
		\begin{align}
			\|{\mathcal{E}_1^{(n)}} \rho \|_{s_0}^{q,\gamma,\mathtt{m}} & \lesssim  \gamma^{-1 } \| \mathcal{ F}(i_0, \alpha_0) \|_{s_0 +\overline\sigma}^{q,\gamma,\mathtt{m}} \|\rho\|_{s_0 + \overline\sigma}^{q,\gamma,\mathtt{m}},\label{calE1}
			\\
			\forall\,  \mathtt{b}\geqslant 0,
			\quad \| \mathcal{E}_2^{(n)} \rho \|_{s_0}^{q,\gamma,\mathtt{m}}& \lesssim 
			\gamma^{-1} N_n^{-\mathtt{b}} \big(\|\rho\|_{s_0 +\overline\sigma +\mathtt{b}}^{q,\gamma,\mathtt{m}}+
			\|\mathfrak{I}_{0}\|_{s_0+\overline\sigma+\mathtt{b}}^{q,\gamma,\mathtt{m}}\big\|\rho\|_{s_0+\overline\sigma}^{q,\gamma,\mathtt{m}}\big),\label{calE2}
			\\
			\forall\, \mathtt{b}\in [0,S],
			\quad \| \mathcal{E}_3^{(n)} \rho \|_{s_0}^{q,\gamma,\mathtt{m}}& \lesssim N_n^{-\mathtt{b}}\gamma^{-2}\Big( \|\rho\|_{s_0+\mathtt{b}+\overline\sigma}^{q,\gamma,\mathtt{m}}+{\varepsilon\gamma^{-2}}\| \mathfrak{I}_{0}\|_{s_0+\mathtt{b}+\overline\sigma}^{q,\gamma,\mathtt{m}}\|\rho\|_{s_0+{\overline\sigma}}^{q,\gamma,\mathtt{m}} \Big)\label{calE3}\\ &\quad\quad+ \varepsilon\gamma^{-4}N_{0}^{{\mu}_{2}}{N_{n}^{-\mu_{2}}} \|\rho\|_{s_0+\overline\sigma}^{q,\gamma,\mathtt{m}}.\nonumber
		\end{align}
	\end{theo}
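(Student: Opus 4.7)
The proof plan is to collect, assemble and quantify the decomposition that has already been derived above the theorem statement, namely \eqref{decomposition conj op}, and to propagate tame estimates and symmetries through each piece. Concretely, starting from
\[
[DG_{0}({\mathtt u}_0)]^{-1}  d_{(i,\alpha)} {\mathcal F} (i_0,\alpha_0) D\widetilde G_0({\mathtt u}_0)={\mathbb D}_n+{\mathbb E}_n+ {\mathscr P}_n+{\mathscr Q}_n,
\]
the operator ${\rm T}_0$ in \eqref{def inverse T} is naturally built by inverting only the principal block ${\mathbb D}_n$ via Proposition~\ref{prop:decomp-lin}, and then conjugating by $DG_0$. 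The defect therefore comes from the three remainders; the extension $\mathbb{L}_{\mathrm{ext}}$ in \eqref{def Lext} ensures that all objects are defined on the full parameter set $\mathcal{O}$, while coinciding with the actual linearized operator on the Cantor set $\mathtt{G}_n$ by \eqref{lext-f}. This gives at once the algebraic identity \eqref{splitting of approximate inverse} with $\mathcal{E}_i^{(n)}$, $i=1,2,3$, as defined in the paragraph preceding the theorem.

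To get \eqref{tame T0} and the reversibility/$\mathtt{m}$-fold preservation of ${\rm T}_0$, I would first record that $DG_0({\mathtt u}_0)$ and $[DG_0({\mathtt u}_0)]^{-1}$ are tame maps on weighted Sobolev spaces, with operator norm of the form $1+C\|\mathfrak{I}_0\|_{s+\sigma}^{q,\gamma,\mathtt{m}}$ for a fixed loss $\sigma$ depending only on $(\tau_1,\tau_2,q,d)$; these are standard consequences of the explicit formulas \eqref{trasformation G}, \eqref{L1-L2} together with the product and composition laws in Lemma~\ref{lem funct prop}. Composing with Proposition~\ref{prop:decomp-lin}, which provides a tame estimate for $[{\mathbb D}_n]_{\mathrm{ext}}^{-1}$ with loss $\sigma_6$, and choosing $\overline{\sigma}$ large enough to absorb every intermediate loss ($\sigma_5,\sigma_6$ and those coming from $DG_0^{\pm 1}$), yields \eqref{tame T0}. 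The reversibility is propagated because $\mathfrak{S}$ anti-commutes with $\omega\cdot\partial_\varphi$, $\mathcal{J}$ and the Hamiltonian vector fields under consideration; the $\mathtt{m}$-fold preservation is inherited from $i_0$ and from $\mathfrak{T}_{\mathtt{m}}$ commuting with $\mathcal{J}$ and with all Fourier multipliers involved in ${\mathbb D}_n$.

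The core technical work is the estimate of the three error terms. For $\mathcal{E}_1^{(n)}$ the key observation is that $\mathbb{E}_n^{\mathrm{ext}}$ involves $\partial_\varphi Z = \partial_\varphi \mathcal{F}(i_0,\alpha_0)$ and the truncated components $\mathcal{A}_{kj}^{(n),\mathrm{ext}}$ bounded by Lemma~\ref{lem:est-akj}-(2); sandwiching with $DG_0^{\pm 1}$, ${\rm T}_0$ and applying Lemma~\ref{tame X per}, Lemma~\ref{lem funct prop} and interpolation gives \eqref{calE1}. For $\mathcal{E}_2^{(n)}$, the operator $\mathscr{P}_n$ gathers $\mathcal{D}_{(n)}^{\perp}$ (which gains $N_n^{-\mathtt{b}}$ by the frequency projector Lemma~\ref{lem funct prop}-(i)) and $\mathcal{A}^{(n),\perp}$ (which gains $N_n^{-\mathtt{b}}$ by Lemma~\ref{lem:est-akj}-(1)); multiplying by the tame bounds for $DG_0^{\pm 1}$ and ${\rm T}_0$ produces \eqref{calE2}. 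For $\mathcal{E}_3^{(n)}$, the inner piece is the remainder $\widehat{\mathtt R}_n$ from Proposition~\ref{thm:inversion of the linearized operator in the normal directions}, whose estimate already splits into an $N_n^{-\mathtt{b}}\gamma^{-1}$ term and an $\varepsilon\gamma^{-3}N_{n+1}^{-\mu_2}$ term; repeating the sandwich argument yields \eqref{calE3}, with the extra $\gamma^{-1}$ coming from ${\rm T}_0$.

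The main obstacle is purely bookkeeping: one has to choose $\overline{\sigma}$ carefully so that the cumulative derivative losses of $DG_0^{\pm 1}$, $[{\mathbb D}_n]_{\mathrm{ext}}^{-1}$ and the error operators fit into a single universal constant depending only on $(\tau_1,\tau_2,q,d)$, while the smallness condition \eqref{bnd frkIn-final} remains compatible with the hypotheses $\|\mathfrak{I}_0\|_{s_h+\sigma_5}^{q,\gamma,\mathtt{m}}\leqslant 1$ and $\|\mathfrak{I}_0\|_{s_h+\sigma_6}^{q,\gamma,\mathtt{m}}\leqslant 1$ required by Propositions~\ref{thm:inversion of the linearized operator in the normal directions} and \ref{prop:decomp-lin}. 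No new idea is needed beyond what is already set up; the result is a careful consolidation of the decomposition \eqref{decomposition conj op}, the tame estimates of $DG_0^{\pm1}$, and the two earlier inversion statements.
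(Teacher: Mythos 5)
Your proposal is correct and follows essentially the same route as the paper: the theorem is obtained exactly by assembling the decomposition \eqref{decomposition conj op} with the extension \eqref{def Lext}--\eqref{lext-f}, inverting the triangular block via Proposition \ref{prop:decomp-lin}, and propagating the tame bounds of $DG_0^{\pm 1}$, Lemma \ref{lem:est-akj} and Proposition \ref{thm:inversion of the linearized operator in the normal directions} through the three error terms (the paper itself only sketches this, referring to the analogous argument in \cite[Thm. 5.1]{HR21}). No gap.
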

	
	\section{Reduction}\label{reduction}
	This section is devoted to the reducibility of the linearized operator associated to the nonlinear equation \eqref{nonlinear-func}, whose structure is detailed in Proposition \ref{prop:conjP}. The first main step is to conjugate it into a diagonal matrix Fourier multiplier using a suitable quasi-periodic symplectic change of coordinates as in \cite{HHM21,HR21}. The second part deals with  the asymptotic structure of the operator localized on the normal directions. In the last part, we focus on the remainder reduction. To formulate our statements we need to  introduce the following parameters.
	\begin{equation}\label{param}
		\begin{array}{ll} 
			s_{l}\triangleq s_0+\tau_1 q+\tau_1 +2,\qquad\qquad& \overline{\mu}_2\triangleq 4\tau_1 q+6\tau_1 +3, \\
			\overline{s}_{l}\triangleq s_l+\tau_2 q+\tau_2 , & \overline{s}_{h}\triangleq \frac{3}{2}\overline{\mu}_{2}+s_{l}+1
		\end{array}
	\end{equation}
	and 
	\begin{equation}\label{sigma-F}
		\sigma_{1}\triangleq s_0+\tau_1 q+2\tau_1+4,\qquad \sigma_2\triangleq\sigma_{1}+3.		
	\end{equation}
	Throughout this section and we shall work under the following assumption
	\begin{align}\label{ouvert-sym}
		\mathcal{O}\triangleq(b_*,b^*)\times \mathscr{U},\qquad\hbox{with}\qquad 0<b_*<b^*<1\qquad \hbox{and} \qquad \mathtt{m}\geqslant \mathtt{m}^*,
	\end{align}
	where $\mathtt{m}^*$ is defined in Corollary \ref{coro-equilib-freq}. The set $\mathscr{U}$ is an open subset of $\mathbb{R}^{d}$ containing the equilibrum frequency vector curve, namely, we choose
	\begin{equation}\label{def scrU}
		\mathscr{U}\triangleq B(0,R_0)\qquad\textnormal{s.t.}\qquad\omega_{\textnormal{Eq}}\big([b_*,b^*]\big)\subset B\big(0,\tfrac{R_0}{2}\big),\qquad R_0>0.
	\end{equation}
We denote 
	$$\mathbf{H}^s_{\perp,\m}\triangleq\mathbf{H}_{\mathtt{m}}^{s}\cap \mathbf{H}_{\overline{\mathbb{S}}_0}^{\perp}$$
	and equip this space with the same norm as Sobolev spaces.
	\subsection{Structure of the linearized operator restricted to the normal directions}
	Here, we present the structure of the linearized operator in the normal directions
	\begin{equation*}
		\widehat{\mathcal{L}}=\widehat{\mathcal{L}}(i_0)= \Pi_{\overline{\mathbb{S}}_0}^\bot \big(\omega\cdot \partial_\varphi   - 
		\mathcal{J}  K_{02}(\varphi) \big)\Pi_{\overline{\mathbb{S}}_0}^\bot
	\end{equation*} defined through \eqref{def hat L} and \eqref{def K02}, where $i_0=(\vartheta_0,I_0,z_0)$ is a $\mathtt{m}$-fold reversible torus (satisfying \eqref{parity solution}) and whose periodic component $\mathfrak{I}_0$  satisfy the smallness condition
	$$\|\mathfrak{I}_0\|_{s_{0}+2}^{q,\gamma,\mathtt{m}}\leqslant 1,$$  
	given in Lemma  \eqref{tame X per}. The linear operator $\widehat{\mathcal{L}}$ decomposes as a finite rank perturbation of the linearized operator associated with the original problem,  as the following shows. 
	We refer the reader to \cite[Prop. 6.1]{HR21} for a detailed proof that one can adapt to our matrix case.	We mention that the $\mathtt{m}$-fold symmetry property can also be easily tracked.	
			\begin{prop}\label{lemma-normal-s}
		Let $(\gamma,q,d,s_{0})$ satisfy \eqref{init Sob cond}. 
		Then the operator $\widehat{\mathcal{L}}$ defined in \eqref{def hat L} takes the form 
		$$\widehat{\mathcal{L}}=\Pi_{\overline{\mathbb{S}}_0}^{\perp}\left(\mathcal{L}-\varepsilon\partial_\theta\mathcal{R}\right)\Pi_{\overline{\mathbb{S}}_0}^{\perp},\qquad \mathcal{L}\triangleq\omega\cdot\partial_{\varphi}\mathbf{I}_{\mathtt{m}}+\mathfrak{L}_{\varepsilon r},\qquad \mathcal{R}\triangleq \begin{pmatrix} \mathcal{T}_{J_{1,1}}({r}) & \mathcal{T}_{J_{1,2}}({r})\\
			\mathcal{T}_{J_{2,1}}({r}) & \mathcal{T}_{J_{2,2}}({r})
		\end{pmatrix},
		$$
		where 
		$$
		\mathbf{I}_{\mathtt{m}}\triangleq \begin{pmatrix}
			\mathbb{I}_{\mathtt{m}} &0\\
			0& \mathbb{I}_{\mathtt{m}}
		\end{pmatrix}
		$$
		denotes the identity map of $L^2_{\mathtt{m}}(\mathbb{T}^{d+1})\times L^2_{\mathtt{m}}(\mathbb{T}^{d+1})$. The operator $\mathfrak{L}_{\varepsilon r}$ is defined in Proposition  \ref{prop:conjP} and from \eqref{aa-coord-00} we have 
		\begin{align*}r(\varphi)
			&=\mathbf{A}\big(\vartheta_{0}(\varphi),\, I_{0}(\varphi),z_0(\varphi)\big),
		\end{align*}
		with $\mathbf{A}$ as in \eqref{aa-coord-00}, supplemented with the reversibility and $\mathtt{m}$-fold properties
		\begin{equation*}
			r(-\varphi,-\theta)=r(\varphi,\theta)=r\big(\varphi,\theta+\tfrac{2\pi}{\mathtt{m}}\big).
		\end{equation*}
		Moreover, for  any $k,n\in\{1,2\},$ the operator $\mathcal{T}_{J_{k,n}}({r})$ is an integral  operator in the form \eqref{Top-op1},  whose  kernel
		$J_{k,n}(r)$ is $\mathtt{m}$-fold reversibility preserving. In addition, under the assumption
		\begin{equation}\label{frakI0 bnd}
			\|\mathfrak{I}_0\|_{s_0}^{q,\gamma,\m}\leqslant  1,	
		\end{equation} we have for all $s\geqslant s_{0}$, 
		\begin{enumerate}[label=(\roman*)]
			\item  the function $r$ satisfies the estimates,	
			\begin{equation}\label{esti r I0}
				\|r\|_{s}^{q,\gamma,\m}\lesssim 1+\|\mathfrak{I}_{0}\|_{s}^{q,\gamma,\m}
			\end{equation}
			and 
			\begin{equation}\label{esti r I0d}
				\|\Delta_{12}r\|_{s}^{q,\gamma,\m}\lesssim\|\Delta_{12}i\|_{s}^{q,\gamma,\m}+\| \Delta_{12}i\|_{s_0}^{q,\gamma,\m}\max_{\ell\in\{1,2\}}\|\mathfrak{I}_{\ell}\|_{s}^{q,\gamma,\m}.
			\end{equation}
			\item for any $k,n\in\{1,2\},$ the kernel $J_{k,n}$ satisfies the following estimates
			\begin{equation}\label{e-Jkn}
				\|J_{k,n}\|_{s}^{q,\gamma,\m}\lesssim 1+\|\mathfrak{I}_{0}\|_{s+3}^{q,\gamma,\m}
			\end{equation}
			and
			\begin{equation}\label{e-Jknd} 
				\|\Delta_{12}J_{k,n}\|_{s}^{q,\gamma,\m}\lesssim\|\Delta_{12}i\|_{s+3}^{q,\gamma,\m}+\|\Delta_{12}i\|_{s_0+3}^{q,\gamma,\m}\max_{\ell\in\{1,2\}}\|\mathfrak{I}_{\ell}\|_{s+3}^{q,\gamma,\m}.
			\end{equation}
			Here  $\displaystyle\mathfrak{I}_{\ell}(\varphi)=i_{\ell}(\varphi)-(\varphi,0,0), $ and  for any function $f$,   $\Delta_{12} f\triangleq f(i_1)-f(i_2)$ refers to the difference of $f$ taken  at two different states $i_1$ and $i_2$ satisfying \eqref{frakI0 bnd}. 
		\end{enumerate}
	\end{prop}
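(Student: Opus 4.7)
The plan is to unfold the definition of $K_{02}$ in \eqref{def K02} using the explicit form of $K_\varepsilon^{\alpha_0}$ in \eqref{K alpha} and then recognize three types of contributions: (i) a leading piece that reproduces the linearization $\mathfrak{L}_{\varepsilon r}$ from Proposition~\ref{prop:conjP}, (ii) finite-rank corrections which arise because $L_2(\varphi)$ takes values in the finite-dimensional tangential subspace $\mathbf{H}_{\overline{\mathbb{S}}}$, and (iii) the action-angle counterterm $-\mathtt{J}(\alpha_0 \cdot I)$ which disappears under the second $z$-derivative. The first piece gives the operator $\mathcal{L}$; the second piece is absorbed into $-\varepsilon\partial_\theta \mathcal{R}$; the third is irrelevant for $K_{02}$.

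First I would compute $(\partial_z\nabla_z K_\varepsilon^{\alpha_0})(i_0(\varphi)) = \mathbf{L}_0 + \varepsilon(\partial_z\nabla_z \mathcal{P}_\varepsilon)(i_0(\varphi))$. Since $\mathbf{A}(\vartheta,I,z) = v(\vartheta,I) + z$ is affine in $z$, the chain rule yields $(\partial_z\nabla_z \mathcal{P}_\varepsilon)(i_0(\varphi)) = (d^2 P_\varepsilon)(r(\varphi))|_{\text{normal}\times\text{normal}}$ with $r(\varphi) = v(\vartheta_0(\varphi),I_0(\varphi)) + z_0(\varphi)$. Combining this with the identity $\mathfrak{L}_{\varepsilon r} = -d_r(\mathcal{J}\nabla K)(\varepsilon r) = -\mathcal{J}\bigl(\mathbf{L}_0 + \varepsilon\, d^2 P_\varepsilon(r)\bigr)$, which follows from the definition of $\mathcal{P}_\varepsilon$ together with the rescaling $\mathcal{K}_\varepsilon(r) = \varepsilon^{-2}K(\varepsilon r)$, one gets
$$
-\Pi_{\overline{\mathbb{S}}_0}^\perp \mathcal{J}(\partial_z\nabla_z K_\varepsilon^{\alpha_0})(i_0)\Pi_{\overline{\mathbb{S}}_0}^\perp
= \Pi_{\overline{\mathbb{S}}_0}^\perp \,\mathfrak{L}_{\varepsilon r}\,\Pi_{\overline{\mathbb{S}}_0}^\perp.
$$
This produces the term $\mathcal{L} = \omega\cdot\partial_\varphi \mathbf{I}_{\mathtt{m}} + \mathfrak{L}_{\varepsilon r}$ (after adding the transport contribution $\omega\cdot\partial_\varphi$).

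Next I would treat the three remaining pieces of $K_{02}$, all of which carry an $L_2(\varphi)$ factor either on the left (as $L_2^\top$) or on the right. By \eqref{L1-L2}, $L_2(\varphi)$ has range in $\mathbf{H}_{\overline{\mathbb{S}}}$, which is finite-dimensional and spanned by the basis $\{\mathbf{e}_j\}_{j\in\overline{\mathbb{S}}_k}$. Consequently each of the three terms $\varepsilon L_2^\top (\partial_{II}\mathcal{P}_\varepsilon) L_2$, $\varepsilon L_2^\top(\partial_{zI}\mathcal{P}_\varepsilon)$, $\varepsilon(\partial_I\nabla_z\mathcal{P}_\varepsilon)L_2$, when multiplied by $\mathcal{J}$ and sandwiched between normal projectors, is a finite-rank integral operator in $\theta$; writing its Schwartz kernel explicitly as a finite trigonometric polynomial in $\theta,\eta$ with coefficients that are smooth in $\varphi$, one expresses the whole contribution in the matricial form $-\varepsilon\partial_\theta \mathcal{R}$ with $\mathcal{R} = (\mathcal{T}_{J_{k,n}}(r))_{k,n\in\{1,2\}}$. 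The factor $\partial_\theta$ is extracted by differentiating with respect to $\theta$ the smooth trigonometric polynomial in $\theta$, which is harmless.

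The estimates \eqref{esti r I0}-\eqref{esti r I0d} on $r$ follow from \eqref{aa-coord-00} together with the smoothness of $v$ and the product/composition laws of Lemma~\ref{lem funct prop}, applied to the amplitudes $\sqrt{\mathtt{a}_{j,k}^2 + |j| I_{j,k}}$ under the smallness hypothesis \eqref{frakI0 bnd}. The estimates \eqref{e-Jkn}-\eqref{e-Jknd} on the kernels $J_{k,n}$ are obtained by combining (a) the tame bounds of Lemma~\ref{tame X per} on the second derivatives of $\mathcal{P}_\varepsilon$ at $i_0$, (b) the elementary estimates on $L_1,L_2$ coming from \eqref{L1-L2} and the trivial action-angle embedding, and (c) the product laws of Lemma~\ref{lem funct prop} together with Lemma~\ref{iter-kerns}; the shift by $+3$ in the indices in \eqref{e-Jkn}-\eqref{e-Jknd} reflects the two derivatives of $\vartheta_0$ and $z_0$ appearing in $L_2$ plus the extra $\partial_\theta$ factored out of $\mathcal{R}$. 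Finally, the reversibility and $\mathtt{m}$-fold preservation of $r$ follow from \eqref{parity solution} and from the definition \eqref{aa-coord-00}, and the corresponding properties of the kernels $J_{k,n}$ propagate from those of $\mathcal{P}_\varepsilon$ and of $L_1,L_2$ (which inherit them from $i_0$) via Lemma~\ref{lem sym--rev}.

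The main technical obstacle will be step two, namely keeping track of how the finite-rank $L_2$-terms combine and factor into the form $\varepsilon\partial_\theta\mathcal{R}$ while preserving the matrix block structure $(\mathcal{T}_{J_{k,n}})_{k,n\in\{1,2\}}$ and the symmetry properties, since in our vectorial setting $\mathcal{J}$, $L_2$ and the Hessian of $\mathcal{P}_\varepsilon$ are $2\times 2$ matrix objects rather than scalars as in the scalar case of \cite[Prop.~6.1]{HR21}. Once the algebraic identification is done, the quantitative estimates follow by straightforward (if tedious) applications of the tame calculus already set up in Section~\ref{sec funct set}.
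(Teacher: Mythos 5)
Your plan is correct and follows essentially the same route as the paper, which simply defers to the scalar case \cite[Prop. 6.1]{HR21} and adapts it to the matrix setting: split $-\mathcal{J}K_{02}$ into the $z$-Hessian piece, which reproduces $\Pi_{\overline{\mathbb{S}}_0}^{\perp}\mathfrak{L}_{\varepsilon r}\Pi_{\overline{\mathbb{S}}_0}^{\perp}$ because $\mathbf{A}$ is affine in $z$ and $d^2K(\varepsilon r)=\mathbf{L}_0+\varepsilon\, d^2P_\varepsilon(r)$, plus finite-rank corrections carrying $L_2$ which are collected into $-\varepsilon\partial_\theta\mathcal{R}$. Two small corrections to your write-up, neither fatal: $L_2(\varphi)$ maps $\mathbf{H}_{\overline{\mathbb{S}}_0}^{\perp}$ into $\mathbb{R}^d$ (it is $L_2^{\top}$ whose range is a $d$-dimensional subspace of $\mathbf{H}_{\overline{\mathbb{S}}_0}^{\perp}$, spanned by $d$ smooth functions built from $\mathcal{J}^{-1}\partial_\varphi z_0$, not by finitely many exponentials), so the resulting kernels are finite sums of tensor products of smooth functions of $\theta$ and $\eta$ rather than trigonometric polynomials; and the factor $\partial_\theta$ in $-\varepsilon\partial_\theta\mathcal{R}$ comes directly from the outer $\mathcal{J}=\mathrm{diag}(\partial_\theta,-\partial_\theta)$ acting on the $\varepsilon$-corrections of $K_{02}$, with the signs absorbed into the kernels $J_{k,n}$, so there is no need to differentiate the kernel itself.
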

	
	\subsection{Reduction of the transport part}
	The main purpose is to reduce to  constant coefficients  the transport parts in the linearized operator, described in Proposition \ref{prop:conjP}. Notice that the transport operator is diagonal, therefore we shall reduce each scalar component  apart.  This  was done by a KAM iterative scheme in \cite{HHM21,HR21}, in the same spirit of the papers \cite{BBMH18,BM21,BFM21,FGMP19}.   We skip the proof of the following proposition  since it is the same as in \cite[Prop. 6.2]{HR21}, where the scheme is initialized by \eqref{f0+-}, \eqref{es-f0} and  \eqref{sml-r0}. Moreover, the persistence of  the $\mathtt{m}$-fold symmetry property can be  easily checked along the scheme.
	\begin{prop}\label{prop strighten}
		Given the conditions  \eqref{init Sob cond}--\eqref{setting tau1 and tau2} and \eqref{param}--\eqref{ouvert-sym}. Let $\upsilon \in(0,\tfrac{1}{q+1}]$.  For any $(\mu_2,\mathtt{p},s_h)$ satisfying 
		\begin{equation}\label{p-trs}
			\mu_{2}\geqslant \overline{\mu}_{2}, \qquad\mathtt{p}\geqslant 0,\qquad s_{h}\geqslant\max\left(\tfrac{3}{2}\mu_{2}+s_{l}+1,\overline{s}_{h}+\mathtt{p}\right),
		\end{equation}
		there exists $\varepsilon_{0}>0$ such that if
		\begin{equation}\label{sml-trs}
			\varepsilon\gamma^{-1}N_{0}^{\mu_{2}}\leqslant\varepsilon_{0}\qquad\textnormal{and}\qquad \|\mathfrak{I}_{0}\|_{s_{h}+\sigma_{1}}^{q,\gamma,\mathtt{m}}\leqslant 1,
		\end{equation}
	 then for all $k\in\{1,2\}$ there exist
		$
		\mathtt{c}_k\triangleq \mathtt{c}_k(b,\omega,i_0)\in W^{q,\infty,\gamma }(\mathcal{O},\mathbb{C})$ and $\beta_{k}\triangleq \beta_k(b,\omega,i_0)\in W^{q,\infty,\gamma }(\mathcal{O},H_{\m}^{S})$
		such that the following results hold true.
		\begin{enumerate}[label=(\roman*)]
			\item The constants $\mathtt{c}_k$  satisfy the following estimate,
			\begin{equation}\label{sml-r0}
				\| \mathtt{v}_k-\mathtt{c}_k\|^{q,\gamma}\lesssim\varepsilon,
			\end{equation}
			where $\mathtt{v}_k$ is defined in \eqref{def V10 V20}.
			\item The transformation $\mathscr{B}_{k}^{\pm 1}$, related to the functions $\beta_{k}$ and $\widehat{\beta}_{k}$ through \eqref{def symplctik CVAR}-\eqref{mathscrB1}, are  $\mathtt{m}$-fold  reversibility preserving and satisfying  the following estimates: for all $s\in[s_{0},S]$ 
			\begin{align}\label{cont Bk}
				\|\mathscr{B}_{k}^{\pm 1}\rho\|_{s}^{q,\gamma ,\mathtt{m}}
				&\lesssim\|\rho\|_{s}^{q,\gamma ,\mathtt{m}}+\varepsilon\gamma ^{-1}\| \mathfrak{I}_{0}\|_{s+\sigma_{1}}^{q,\gamma ,\m}\|\rho\|_{s_{0}}^{q,\gamma ,\mathtt{m}},
				\\
				\label{sml betak I0}
				\|\widehat{\beta}_{k}\|_{s}^{q,\gamma,\mathtt{m}}\lesssim\|\beta_{k}\|_{s}^{q,\gamma ,\mathtt{m}}&\lesssim \varepsilon\gamma ^{-1}\left(1+\| \mathfrak{I}_{0}\|_{s+\sigma_{1}}^{q,\gamma ,\mathtt{m}}\right).
			\end{align}
			\item In the Cantor set
			\begin{align}\label{Cant-trs}
				\mathcal{O}_{\infty,n}^{\gamma,\tau_1}(i_{0})\triangleq& \bigcap_{ \underset{(l,j)\in\mathbb{Z}^d\times\mathbb{Z}_{\mathtt{m}}\backslash\{(0,0)\}\atop|l|\leqslant N_{n}}{k\in\{1,2\}}}\left\lbrace(b,\omega)\in \mathcal{O}\quad\textnormal{s.t.}\quad\big|\omega\cdot l+j\mathtt{c}_k(b,\omega)\big|>\tfrac{4\gamma^{\upsilon}\langle j\rangle}{\langle l\rangle^{\tau_1}}\right\rbrace
			\end{align}
			we have 
			\begin{equation}\label{Strighten Vk}
				\mathscr{B}_k^{-1}\Big(
				\omega\cdot\partial_{\varphi} \mathbb{I}_{\mathtt{m}}+\partial_\theta \big(\mathcal{V}_k(\varepsilon r)\, \cdot\big)\Big)\mathscr{B}_k=
				\omega\cdot\partial_{\varphi} \mathbb{I}_{\mathtt{m}}+\mathtt{c}_{k}\partial_{\theta} +\mathscr{E}_{n,k},
			\end{equation}
			where $\mathcal{V}_k$ are defined in Proposition \ref{prop:conjP}-1 and  $\mathscr{E}_{n,k}\triangleq \mathscr{E}_{n,k}(b,\omega,i_{0})$ are linear operators satisfying
			\begin{equation}\label{scr-Enk}
				\|\mathscr{E}_{n,k}\rho\|_{s_{0}}^{q,\gamma,\m}\lesssim\varepsilon N_{0}^{\mu_{2}}N_{n+1}^{-\mu_{2}}\|\rho\|_{s_{0}+2}^{q,\gamma,\m}.
			\end{equation}
			\item Given two tori $i_{1}$ and $i_{2}$ both satisfying \eqref{sml-trs} (replacing $\mathfrak{I}_{0}$ by $\mathfrak{I}_{1}$ or $\mathfrak{I}_{2}$), we have 
			\begin{align}\label{diff Vpm}
				\|\Delta_{12}\mathtt{c}_k\|^{q,\gamma}&\lesssim\varepsilon\| \Delta_{12}i\|_{\overline{s}_{h}+\sigma_{1}}^{q,\gamma,\mathtt{m}},
				\\ 
				\label{diff betak}
				\|\Delta_{12}\beta_{k}\|_{\overline{s}_{h}+\mathtt{p}}^{q,\gamma,\mathtt{m}}+\|\Delta_{12}\widehat{\beta}_{k}\|_{\overline{s}_{h}+\mathtt{p}}^{q,\gamma,\mathtt{m}}&\lesssim\varepsilon\gamma^{-1}\|\Delta_{12}i\|_{\overline{s}_{h}+\mathtt{p}+\sigma_{1}}^{q,\gamma,\mathtt{m}}.
			\end{align}
		\end{enumerate}
	\end{prop}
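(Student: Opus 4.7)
The key structural observation is that the transport part entering the statement is diagonal in the component index $k\in\{1,2\}$: the operator to be straightened is a direct sum of two scalar transport operators $\omega\cdot\partial_\varphi+\partial_\theta(\mathcal{V}_k(\varepsilon r)\cdot)$, and the candidate conjugation $\mathscr{B}_k^{\pm 1}$ is constructed componentwise. Hence the plan is to reduce matters to the scalar straightening theorem already developed in \cite{HR21} (and in the spirit of \cite{BBMH18,BM21,FGMP19}) applied twice, once for each $k$, and then to track the $\mathtt{m}$-fold symmetry, the reversibility, and the differences with respect to two tori.

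For a fixed $k\in\{1,2\}$, I would run a standard KAM iterative straightening scheme producing sequences $(\beta_k^{(n)},\mathtt{c}_k^{(n)})$ such that, after conjugation by $\mathscr{B}_k^{(n)}\triangleq(1+\partial_\theta\beta_k^{(n)})\mathcal{B}_k^{(n)}$, one has
\begin{equation*}
\big(\mathscr{B}_k^{(n)}\big)^{-1}\Big(\omega\cdot\partial_\varphi+\partial_\theta(\mathcal{V}_k(\varepsilon r)\cdot)\Big)\mathscr{B}_k^{(n)}=\omega\cdot\partial_\varphi+\mathtt{c}_k^{(n)}\partial_\theta+p_k^{(n)}\partial_\theta,
\end{equation*}
with $p_k^{(n)}$ decaying super-exponentially fast on the shrinking Cantor set $\mathcal{O}_{\infty,n}^{\gamma,\tau_1}(i_0)$. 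At each step one chooses the correction $\widetilde\beta$ to solve the homological equation
\begin{equation*}
\omega\cdot\partial_\varphi\widetilde\beta+\mathtt{c}_k^{(n)}\partial_\theta\widetilde\beta=-p_k^{(n)}+\langle p_k^{(n)}\rangle_{\varphi,\theta},
\end{equation*}
whose Fourier solvability is precisely ensured by the Diophantine condition $|\omega\cdot l+j\mathtt{c}_k|>4\gamma^{\upsilon}\langle j\rangle\langle l\rangle^{-\tau_1}$ appearing in \eqref{Cant-trs}; the mean of $p_k^{(n)}$ is absorbed into the update $\mathtt{c}_k^{(n+1)}=\mathtt{c}_k^{(n)}+\langle p_k^{(n)}\rangle$. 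The initial step starts from $\mathtt{c}_k^{(0)}=\mathtt{v}_k(b)$ (see \eqref{def V10 V20}) and the smallness $\mathcal{V}_k(\varepsilon r)-\mathtt{v}_k=O(\varepsilon)$ furnished by \eqref{es-f0}. The geometric truncations at frequency $N_n$ produce the remainder $\mathscr{E}_{n,k}$ in \eqref{Strighten Vk}, which is of order one but with coefficients bounded as in \eqref{scr-Enk}. The scheme converges under \eqref{sml-trs} by the standard tame estimates and interpolation inequalities in Lemma \ref{lem funct prop}, and yields \eqref{sml-r0}, \eqref{cont Bk}, \eqref{sml betak I0}, \eqref{Strighten Vk}, \eqref{scr-Enk}.

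The symmetry properties are preserved automatically along the iteration: since $\mathcal{V}_k(\varepsilon r)$ is $\mathtt{m}$-fold symmetric and even in $(\varphi,\theta)$ by \eqref{sym-m Vpm} (recall $r$ inherits reversibility and $\mathtt{m}$-fold symmetry from $i_0$, cf.\ \eqref{parity solution}), the homological solution $\widetilde\beta$ is $\mathtt{m}$-fold and odd in $(\varphi,\theta)$, so the structure of Lemma \ref{Compos1-lemm} applies and the conjugation $\mathscr{B}_k^{\pm1}$ is $\mathtt{m}$-fold reversibility preserving. In particular the update $\mathtt{c}_k^{(n)}$ remains real and the new perturbation $p_k^{(n+1)}\partial_\theta$ is again of the same form, allowing the iteration to close.

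The difference estimates \eqref{diff Vpm} and \eqref{diff betak} for two tori $i_1,i_2$ are obtained by running the scheme in parallel and estimating $\Delta_{12}\mathtt{c}_k^{(n)}$ and $\Delta_{12}\beta_k^{(n)}$ inductively, using the Lipschitz dependence of $\mathcal{V}_k$ in $r$ from Proposition \ref{prop:conjP} (via \eqref{es-f0-diff}) together with the difference version of Lemma \ref{Compos1-lemm}-(iii). The main obstacle in this proof is not any single new ingredient but a careful bookkeeping of the indices and the loss of regularity $\sigma_1$ defined in \eqref{sigma-F}; this has to be done so that (a) the geometric scheme of truncations at $N_n$ yields the decay \eqref{scr-Enk} while only paying a finite loss $\sigma_1$ in $s$, and (b) the Cantor set restrictions at step $n$ are compatible with those at step $n+1$, so that the final intersection $\mathcal{O}_{\infty,n}^{\gamma,\tau_1}(i_0)$ in \eqref{Cant-trs} is nonempty with good measure. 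Since our transport operator is scalar on each diagonal block, the well-developed scalar machinery from \cite[Prop.~6.2]{HR21} can be invoked twice, which makes the proof a routine adaptation.
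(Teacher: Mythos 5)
Your proposal is correct and follows essentially the same route as the paper: the authors explicitly skip the proof of this proposition, noting that the transport part is diagonal in $k$ and that the argument is identical to the scalar straightening result of \cite[Prop.~6.2]{HR21} applied componentwise, with the scheme initialized by \eqref{f0+-} and \eqref{es-f0} and the $\mathtt{m}$-fold symmetry tracked along the iteration. Your sketch of the KAM iteration, the homological equation, the symmetry preservation, and the parallel scheme for the difference estimates is exactly the intended argument.
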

	Define the matrix operators 
	\begin{equation}\label{def BE}
		{\mathscr{B}}\triangleq \begin{pmatrix}
			\mathscr{B}_{1} & 0\\
			0 & \mathscr{B}_{2}
		\end{pmatrix}\qquad\hbox{and}  \qquad 			
		{\mathscr{E}}_{n}\triangleq \begin{pmatrix}
			\mathscr{E}_{n,1} & 0\\
			0 & \mathscr{E}_{n,2}
		\end{pmatrix},
	\end{equation}
	where $\mathscr{B}_{k}$ and $\mathscr{E}_{n,k}$ have  been defined in Proposition \ref{prop strighten}. Next, we plan to describe the action of the transformation $\mathscr{B}$ on the linearized operator introduced in Proposition \ref{prop:conjP} and derive some useful estimates.
	\begin{prop}\label{prop RTNL}
		Given the conditions  \eqref{setting tau1 and tau2}--\eqref{init Sob cond}, \eqref{param}, \eqref{ouvert-sym} and \eqref{p-trs}. Then, there exists $\varepsilon_{0}>0$ such that if
		\begin{equation}\label{sml-nl}
			\varepsilon\gamma^{-1}N_{0}^{\mu_{2}}\leqslant\varepsilon_{0}\qquad\textnormal{and}\qquad \|\mathfrak{I}_{0}\|_{q,s_{h}+\sigma_{2}}^{\gamma,\mathtt{m}}\leqslant 1,
		\end{equation}
		where $\sigma_2$ is given by \eqref{sigma-F},  
		then by restricting the parameters to  the Cantor set defined in \eqref{Cant-trs}  we get
		\begin{align}\label{b-1lb}
			\mathscr{L}\triangleq {\mathscr{B}}^{-1}\big(\omega\cdot\partial_{\varphi} {\mathbf{I}}_{\mathtt{m}}+\mathfrak{L}_{\varepsilon r}\big) {\mathscr{B}}&=\omega\cdot\partial_{\varphi} \mathbf{I}_{\mathtt{m}}+{\mathscr{D}}+{\mathscr{R}}+{\mathscr{E}}_{n},
		\end{align}
		where
		$$\mathscr{D}\triangleq \begin{pmatrix}
			\mathtt{c}_1\partial_{\theta}\, \cdot+\tfrac12\mathcal{H}+\partial_\theta  \mathcal{Q}\ast\cdot& 0\\
			0 &  \mathtt{c}_2\partial_{\theta}\, \cdot-\big(\tfrac12\mathcal{H}+\partial_\theta  \mathcal{Q}\ast\cdot)
		\end{pmatrix},$$
		and the operator ${\mathscr{R}}\triangleq {\mathscr{R}}({\varepsilon r})$ is a real, $\mathtt{m}$-fold and reversibility preserving matricial integral operator satisfying
		\begin{equation}\label{e-Rnl}
			\forall s\in[s_0,S],\quad\interleave{\mathscr{R}}\interleave_{s}^{q,\gamma,\mathtt{m}}\lesssim\varepsilon\gamma^{-1}\Big(1+\|\mathfrak{I}_0\|_{s+\sigma_{2}}^{q,\gamma,\mathtt{m}}\Big).
		\end{equation}
		Moreover, given two tori $i_{1}$ and $i_{2}$ both satisfying \eqref{sml-nl} (replacing $\mathfrak{I}_{0}$ by $\mathfrak{I}_{1}$ or $\mathfrak{I}_{2}$), we have 
		\begin{equation}\label{e-Rnld}
			\interleave\Delta_{12}{\mathscr{R}}\interleave_{\overline{s}_h+\mathtt{p}}^{q,\gamma,\mathtt{m}}\lesssim\varepsilon\gamma^{-1}\|\Delta_{12}i\|_{\overline{s}_h+\mathtt{p}+\sigma_{2}}^{q,\gamma,\mathtt{m}}.
		\end{equation}
	\end{prop}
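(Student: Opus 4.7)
The strategy is to exploit the diagonal structure of the transformation $\mathscr{B}=\mathrm{diag}(\mathscr{B}_1,\mathscr{B}_2)$ so that the conjugation acts block-wise on $\mathfrak{L}_{\varepsilon r}$ as written in Proposition~\ref{prop:conjP}. Writing $\mathfrak{L}_{\varepsilon r}=\mathfrak{L}_{\varepsilon r}^{\mathrm{diag}}+\mathfrak{L}_{\varepsilon r}^{\mathrm{ker}}$ where the first block contains, on the $k$-th diagonal entry, the scalar transport part $\partial_\theta(\mathcal{V}_k(\varepsilon r)\,\cdot)$ together with $\pm(\tfrac12\mathcal{H}+\partial_\theta\mathcal{Q}\ast\cdot)$, and the second block is the matrix $\partial_\theta\bigl(\mathcal{T}_{\mathscr{K}_{k,n}}(\varepsilon r)\bigr)_{k,n}$, I would then treat each piece separately and regroup the error contributions into a single remainder $\mathscr{R}$.

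\smallskip

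First, on the Cantor set $\mathcal{O}_{\infty,n}^{\gamma,\tau_1}(i_0)$ the identity \eqref{Strighten Vk} in Proposition~\ref{prop strighten} reduces each scalar transport block to $\mathtt{c}_k\partial_\theta+\mathscr{E}_{n,k}$, which produces the diagonal $\mathscr{D}$-contribution $\mathtt{c}_k\partial_\theta$ and feeds $\mathscr{E}_{n,k}$ directly into the error $\mathscr{E}_n$ in \eqref{def BE}. Next, since $\mathscr{B}_k$ commutes with $\omega\cdot\partial_\varphi$ only up to its Jacobian correction and the Hilbert transform is a Fourier multiplier in $\theta$, I would invoke Lemma~\ref{lemma:conjug-Hilbert} to write $\mathscr{B}_k^{-1}\mathcal{H}\mathscr{B}_k=\mathcal{H}+\mathcal{T}_{K_k^{\mathcal H}}$ with $K_k^{\mathcal H}$ a smooth reversible $\mathtt{m}$-fold kernel, and a nearly identical computation (using the same cotangent trick or just Lemma~\ref{lem CVAR kernel}) for the smoothing convolution $\partial_\theta\mathcal{Q}\ast\cdot$ reveals that $\mathscr{B}_k^{-1}(\partial_\theta\mathcal{Q}\ast)\mathscr{B}_k-\partial_\theta\mathcal{Q}\ast$ is also a kernel operator of the same class. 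This justifies writing $\mathscr{B}^{-1}\mathfrak{L}_{\varepsilon r}^{\mathrm{diag}}\mathscr{B}=\mathscr{D}+\mathscr{R}^{\mathrm{diag}}+\mathscr{E}_n$ with $\mathscr{R}^{\mathrm{diag}}$ a diagonal matricial integral operator. For the off-diagonal block $\mathscr{B}^{-1}\mathfrak{L}_{\varepsilon r}^{\mathrm{ker}}\mathscr{B}$, I would use Lemma~\ref{lem CVAR kernel}-(i) applied to each entry, absorbing the extra $\partial_\theta$ into the kernel by writing $\partial_\theta\mathcal{T}_{\mathscr{K}_{k,n}}=\mathcal{T}_{\partial_\theta\mathscr{K}_{k,n}}$ (differentiation in the external variable), which costs one derivative on the kernel but remains controlled by Proposition~\ref{prop:conjP}-3. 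Collecting everything yields $\mathscr{R}\triangleq\mathscr{R}^{\mathrm{diag}}+\mathscr{B}^{-1}\mathfrak{L}_{\varepsilon r}^{\mathrm{ker}}\mathscr{B}$, a matricial integral operator whose symmetry properties (real, $\mathtt{m}$-fold, reversibility preserving) follow from Lemma~\ref{lem sym--rev} combined with \eqref{sym scrKkn}--\eqref{sym-m scrKkn} and the oddness/$\mathtt{m}$-fold symmetry of the $\beta_k$.

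\smallskip

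For the norm bound \eqref{e-Rnl}, I would estimate each block in the norm dictated by \eqref{hyb nor}: the diagonal entries $\mathscr{B}_k^{-1}(\tfrac12\mathcal{H}+\partial_\theta\mathcal{Q}\ast)\mathscr{B}_k-(\tfrac12\mathcal{H}+\partial_\theta\mathcal{Q}\ast)$ are measured in the off-diagonal norm via the second estimate of Lemma~\ref{lemma:conjug-Hilbert} and the embedding in Lemma~\ref{properties of Toeplitz in time operators}-(iii), giving a bound of the form $\|\beta_k\|_{s+\sigma}^{q,\gamma,\mathtt{m}}\lesssim\varepsilon\gamma^{-1}(1+\|\mathfrak{I}_0\|_{s+\sigma_2}^{q,\gamma,\mathtt{m}})$ by \eqref{sml betak I0}, while the anti-diagonal contributions are measured in the isotropic norm thanks to \eqref{e-odsBtB} and the kernel estimate from Proposition~\ref{prop:conjP}-3. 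Applying the chain of Sobolev estimates \eqref{esti r I0}, \eqref{sml-nl}, and the interpolation property to balance the $s_0$- and $s$-factors produces \eqref{e-Rnl} provided $\sigma_2$ is chosen large enough; the slight loss of one derivative from the extra $\partial_\theta$ in front of $\mathcal{T}_{\mathscr{K}_{k,n}}$ is absorbed by taking $\sigma_2=\sigma_1+3$ as in \eqref{sigma-F}. The main technical obstacle here is controlling the off-diagonal block in the isotropic rather than off-diagonal topology: this is crucial because the second-order Melnikov conditions coming from interaction between the two interfaces will later require the anti-diagonal remainder to have isotropic decay, and the key input is Lemma~\ref{lem CVAR kernel}-(i) together with the fact that $\mathscr{K}_{k,n}(r)$ has the integrand smoothness needed so $\partial_\theta\mathscr{K}_{k,n}(r)$ remains bounded by $\|r\|_{s+2}^{q,\gamma,\mathtt{m}}$.

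\smallskip

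Finally, the difference estimate \eqref{e-Rnld} is obtained by tracking the same decomposition with $\Delta_{12}$ applied to each factor: Proposition~\ref{prop strighten}-(iv) handles the transport reductions, Lemma~\ref{lemma:conjug-Hilbert} (last estimate) handles the difference of Hilbert/convolution conjugates, Lemma~\ref{lem CVAR kernel}-(iii) handles the anti-diagonal integral operators, and Proposition~\ref{prop:conjP}-3 provides the Lipschitz bound on $\mathscr{K}_{k,n}$. Combining these with the product rule at indices $\overline{s}_h+\mathtt{p}$ and the smallness conditions \eqref{sml-nl} yields \eqref{e-Rnld} without any new difficulty beyond careful bookkeeping of loss of derivatives.
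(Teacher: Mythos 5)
Your proposal is correct and follows essentially the same route as the paper: block-wise conjugation using Proposition~\ref{prop strighten} for the transport parts on the Cantor set, Lemma~\ref{lemma:conjug-Hilbert} for the Hilbert transform, Lemma~\ref{lem CVAR kernel} for the convolution $\partial_\theta\mathcal{Q}\ast\cdot$ and the kernel operators $\partial_\theta\mathcal{T}_{\mathscr{K}_{k,n}}$ (paying one derivative on the kernel, controlled by Proposition~\ref{prop:conjP}-3), with all entries estimated in the isotropic norm and the diagonal ones then controlled in the off-diagonal norm by the embedding of Lemma~\ref{properties of Toeplitz in time operators}-(iii). The difference estimate is likewise assembled from the same Lipschitz lemmas the paper invokes, so no substantive deviation or gap.
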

	
	\begin{proof}
		From \eqref{def BE} and \eqref{defLr2} we may write
		\begin{align*}
			{\mathscr{B}}^{-1}\big(\omega\cdot\partial_{\varphi} \mathbf{I}_{\mathtt{m}}+\mathfrak{L}_{\varepsilon r}\big){\mathscr{B}}
			&=
			\begin{pmatrix}\mathscr{B}_1^{-1}\big(\omega\cdot\partial_{\varphi} \mathbb{I}_{\mathtt{m}}+\partial_\theta \big(\mathcal{V}_1(\varepsilon r)\, \cdot\big)\big)\mathscr{B}_1 &0\\ 0& \mathscr{B}_2^{-1}\big(	\omega\cdot\partial_{\varphi} \mathbb{I}_{\mathtt{m}}+\partial_\theta \big(\mathcal{V}_2(\varepsilon r)\, \cdot\big)\big)\mathscr{B}_2\end{pmatrix}
			\\
			&\quad +\begin{pmatrix}
				\tfrac{1}{2}\mathcal{H}+\partial_\theta \mathcal{Q}\ast\cdot & 0\\
				0 & -\tfrac{1}{2}\mathcal{H}-\partial_\theta \mathcal{Q}\ast\cdot
			\end{pmatrix}+\begin{pmatrix}
				\mathscr{R}_{1,1} & \mathscr{R}_{1,2}\\
				\mathscr{R}_{2,1} & \mathscr{R}_{2,2}
			\end{pmatrix},
		\end{align*}
		where
		\begin{align*}
			\quad \mathscr{R}_{k,k'}\triangleq \mathscr{B}_k^{-1}\partial_{\theta}\mathcal{T}_{\mathscr{K}_{k,k'}(\varepsilon r)}\mathscr{B}_{k'}+(-1)^{k+1}\delta_{k,k'}\Big(\tfrac{1}{2}\mathscr{B}_{k}^{-1}\mathcal{H}\mathscr{B}_{k}-\tfrac{1}{2}\mathcal{H}+\mathscr{B}_{k}^{-1}\big(\partial_\theta \mathcal{Q}\ast\cdot\big)\mathscr{B}_{k}-\partial_\theta \mathcal{Q}\ast\cdot\Big)
		\end{align*}
		and  $\delta_{k,k'}$ denotes the usual Kronecker symbol. Putting together  \eqref{Strighten Vk} and  \eqref{defLr2} allows to get  in the Cantor set $\mathcal{O}_{\infty,n}^{\gamma,\tau_{1}}(i_{0})$ the decomposition \eqref{b-1lb} 
		with
		\begin{align*}
			{\mathscr{R}}&=\begin{pmatrix}
				\mathscr{R}_{1,1} & \mathscr{R}_{1,2}\\
				\mathscr{R}_{2,1} & \mathscr{R}_{2,2}
			\end{pmatrix}.
		\end{align*} 
		The symmetry properties of $\mathcal{Q}$, $\beta$ and $\widehat{\beta}$, given by Proposition \ref{prop:conjP}-2 and Proposition \ref{prop strighten}-2, together with Lemma \ref{lem sym--rev} and Lemma \ref{lemma:conjug-Hilbert} imply that $\mathscr{B}_{k}^{-1}\mathcal{H}\mathscr{B}_{k}-\mathcal{H}$ and $\mathscr{B}_{k}^{-1}\mathcal{Q}\mathscr{B}_{k}-\mathcal{Q}$ are real, reversible and $\mathtt{m}$-fold preserving integral operators. In view of  Lemma \ref{lemma:conjug-Hilbert}, \eqref{sml betak I0}, \eqref{diff betak}, \eqref{sml-nl} and \eqref{sigma-F} the operator $\mathscr{B}_{k}^{-1}\mathcal{H}\mathscr{B}_{k}-\mathcal{H}$  is an integral operator and satisfies
		\begin{align}\label{e-Box H}
			\|\mathscr{B}_{k}^{-1}\mathcal{H}\mathscr{B}_{k}-\mathcal{H}\|_{\textnormal{\tiny{I-D}},s}^{q,\gamma,\mathtt{m}}&\lesssim\varepsilon\gamma^{-1}\left(1+\|\mathfrak{I}_0\|_{s+2+\sigma_{1}}^{q,\gamma,\mathtt{m}}\right),
			\\
			\label{e-Box Hd}
			\|\Delta_{12}(\mathscr{B}_{k}^{-1}\mathcal{H}\mathscr{B}_{k}-\mathcal{H})\|_{\textnormal{\tiny{I-D}},\overline{s}_h+\mathtt{p}}^{q,\gamma,\mathtt{m}}&\lesssim\varepsilon\gamma^{-1}\|\Delta_{12}i\|_{\overline{s}_h+\mathtt{p}+2+\sigma_{1}}^{q,\gamma,\mathtt{m}}.
		\end{align}		
		As for the term in $\mathcal{Q}$, we observe according to the notation \eqref{Top-op1},  \eqref{dcp calDeb0} and \eqref{ASYFR1-} that	$$\mathcal{Q}\ast\rho=\mathcal{T}_{\widehat{Q}}\,\rho,\qquad\hbox{with}\qquad \widehat{Q}(b,\varphi,\theta,\eta)\triangleq \mathcal{Q}(b,\theta-\eta)$$ 
		and
		$$\|\widehat{Q}\|_{s}^{q,\gamma,\mathtt{m}}\leqslant C(q,s),$$
		for some constant $C(q,s)>0$. Therefore, applying  Lemma \ref{lem CVAR kernel} together with \eqref {sml betak I0}, \eqref{diff betak} and the smallness condition \eqref{sml-nl} yield to  
		\begin{align}\label{e-Box Q}
			\|\mathscr{B}_{k}^{-1}(\partial_\theta \mathcal{Q}\ast\cdot)\mathscr{B}_{k}-\partial_\theta \mathcal{Q}\ast\cdot\|_{\textnormal{\tiny{I-D}},s}^{q,\gamma,\mathtt{m}}&\lesssim\varepsilon\gamma^{-1}\left(1+\|\mathfrak{I}_0\|_{s+\sigma_{1}+1}^{q,\gamma,\mathtt{m}}\right),
			\\
			\label{e-Box Qd}
			\|\Delta_{12}\big(\mathscr{B}_{k}^{-1}(\partial_\theta \mathcal{Q}\ast\cdot)\mathscr{B}_{k}-\partial_\theta \mathcal{Q}\ast\cdot\big)\|_{\textnormal{\tiny{I-D}},\overline{s}_h+\mathtt{p}}^{q,\gamma,\mathtt{m}}&\lesssim\varepsilon\gamma^{-1}\|\Delta_{12}i\|_{\overline{s}_h+\mathtt{p}+\sigma_{1}+1}^{q,\gamma,\mathtt{m}}.
		\end{align}
		As for the operator $\mathscr{B}_k^{-1}\partial_{\theta}\mathcal{T}_{\mathscr{K}_{k,k'}(\varepsilon r)}\mathscr{B}_{k'}$, first it is real, $\mathtt{m}$-fold preserving and reversible Toeplitz in time operator according to Lemma \ref{lem sym--rev} together with \eqref{sym scrKkn}, \eqref{sym-m scrKkn} and the symmetry properties of $\beta,\widehat{\beta}$, given by Proposition \ref{prop strighten}-2. In addition, using  \eqref{e-odsBtB}, Proposition \ref{prop:conjP}-3, \eqref{sml betak I0}, \eqref{esti r I0}, \eqref{sml-nl} and \eqref{sigma-F}, we deduce that
		\begin{align}\label{e-Box calT}
			\|\mathscr{B}_k^{-1}\partial_{\theta}\mathcal{T}_{\mathscr{K}_{k,k'}(\varepsilon r)}\mathscr{B}_{k'}\|_{\textnormal{\tiny{I-D}},s}^{q,\gamma,\mathtt{m}}&\lesssim \|\mathscr{K}_{k,k'}(\varepsilon r)\|_{s+1}^{q,\gamma,\mathtt{m}}+\|\mathscr{K}_{k,k'}({\varepsilon r})\|_{s_0}^{q,\gamma,\mathtt{m}}\,\max_{\ell\in\{1,2\}}\|\beta_\ell\|_{s+2}^{q,\gamma,\mathtt{m}}\nonumber\\
			&\lesssim\varepsilon\gamma^{-1}\left(1+\|\mathfrak{I}_0\|_{s+\sigma_1+2}^{q,\gamma,\mathtt{m}}\right).
		\end{align}
		Using Proposition \ref{prop:conjP}-3 supplemented by \eqref{esti r I0} and \eqref{esti r I0d}, we obtain
		\begin{align}\label{ed-scrKkner}
			\nonumber \|\Delta_{12}\mathscr{K}_{k,k'}({\varepsilon r})\|_{s}^{q,\gamma,\mathtt{m}}&\lesssim\varepsilon\|\Delta_{12}r\|_{s+1}^{q,\gamma,\m}+\varepsilon\|\Delta_{12}r\|_{s_0+1}^{q,\gamma,\m}\max_{\ell\in\{1,2\}}\|r_\ell\|_{s+1}^{q,\gamma,\m}\\
			&\lesssim \varepsilon\Big(\|\Delta_{12}i\|_{s+1}^{q,\gamma,\mathtt{m}}+\|\Delta_{12}i\|_{s_0+1}^{q,\gamma,\mathtt{m}}\max_{\ell\in\{1,2\}}\|\mathfrak{I}_{\ell}\|_{s+1}^{q,\gamma,\mathtt{m}}\Big).
		\end{align}
		Therefore, applying \eqref{diff12BoxBtB} together with \eqref{ed-scrKkner}, \eqref{sml betak I0}, \eqref{diff betak} and \eqref{sml-nl}, we get
		\begin{align}\label{e-Box calTd}
			\|\Delta_{12}\mathscr{B}_k^{-1}\partial_{\theta}\mathcal{T}_{\mathscr{K}_{k,k'}(\varepsilon r)}\mathscr{B}_{k'}\|_{\textnormal{\tiny{I-D}},\overline{s}_h+\mathtt{p}}^{q,\gamma,\mathtt{m}}\lesssim\varepsilon\gamma^{-1}\|\Delta_{12}i\|_{\overline{s}_h+\mathtt{p}+2+\sigma_{1}}^{q,\gamma,\mathtt{m}}.
		\end{align}
		Combining  \eqref{e-Box H}, \eqref{e-Box Q} and \eqref{e-Box calT}, we find 
		\begin{align*}
			\|{\mathscr{R}}_{k,k'}\|_{\textnormal{\tiny{I-D}},s}^{q,\gamma,\mathtt{m}}&\lesssim\varepsilon\gamma^{-1}\left(1+\|\mathfrak{I}_0\|_{s+2+\sigma_{1}}^{q,\gamma,\mathtt{m}}\right).
		\end{align*}
		Moreover, putting together \eqref{e-Box Hd}, \eqref{e-Box Qd} and \eqref{e-Box calTd} implies
		\begin{align*}
			\|\Delta_{12}{\mathscr{R}}_{k,k'}\|_{\textnormal{\tiny{I-D}},\overline{s}_{h}+\mathtt{p}}^{q,\gamma,\mathtt{m}}&\lesssim\varepsilon\gamma^{-1}\|\Delta_{12}i\|_{\overline{s}_{h}+\mathtt{p}+2+\sigma_{1}}^{q,\gamma,\mathtt{m}}.
		\end{align*}
		This proves the Proposition \ref{prop RTNL}.
	\end{proof}	
	\subsection{Localization into  the normal directions}
	We shall focus in this section on the   localization effects in the normal directions for the reduction of the transport part. For this aim, we consider the localized quasi-periodic symplectic change of coordinates defined by
	$${\mathscr{B}}_{\perp}\triangleq \Pi_{\overline{\mathbb{S}}_{0}}^{\perp}{\mathscr{B}}\Pi_{\overline{\mathbb{S}}_{0}}^{\perp}=\begin{pmatrix}
		\Pi_{1}^{\perp}\mathscr{B}_1\Pi_{1}^{\perp} & 0\\
		0 & \Pi_{2}^{\perp}\mathscr{B}_2\Pi_{2}^{\perp}
	\end{pmatrix},$$
	where the projectors are defined in \eqref{proj-nor1}-\eqref{PI1-PI2}.
	Then, the main result of this section reads as follows.
	\begin{prop}\label{prop proj nor dir}
		Let $(\gamma,q,d,\tau_1,s_{0},S,\mathtt{m})$ satisfy \eqref{setting tau1 and tau2}--\eqref{init Sob cond} and \eqref{ouvert-sym}.
		Let  $(\overline{\mu}_2,s_l,\overline{s}_h,\mu_2,\mathtt{p},s_h)$  satisfy \eqref{param} and  \eqref{p-trs}. 
		There exist $\varepsilon_0>0$ and $\sigma_{3}\triangleq\sigma_{3}(\tau_{1},q,d,s_{0})\geqslant\sigma_{2}$, where $\sigma_2$ is given by \eqref{sigma-F}, such that if 
		\begin{equation}\label{sml-pnor}
			\varepsilon\gamma^{-1}N_0^{\mu_2}\leqslant\varepsilon\qquad\textnormal{and}\qquad\|\mathfrak{I}_0\|_{s_h+\sigma_3}^{q,\gamma,\mathtt{m}}\leqslant 1,
		\end{equation}
		then the following assertions hold true.
		\begin{enumerate}[label=(\roman*)]
			\item The operators $\mathscr{B}_{\perp}^{\pm 1}$ satisfy the following estimate
			\begin{equation}\label{e-vectBnor}
				\|{\mathscr{B}}_{\perp}^{\pm 1}\rho\|_{s}^{q,\gamma ,\mathtt{m}}\lesssim\|\rho\|_{s}^{q,\gamma ,\mathtt{m}}+\varepsilon\gamma ^{-1}\| \mathfrak{I}_{0}\|_{s+\sigma_{3}}^{q,\gamma ,\mathtt{m}}\|\rho\|_{s_{0}}^{q,\gamma ,\mathtt{m}}.
			\end{equation}
			\item For any $n\in\mathbb{N}^{*},$ in the Cantor set $\mathcal{O}_{\infty,n}^{\gamma,\tau_1}(i_{0})$ introduced in \eqref{Cant-trs}, we have 
			\begin{equation}\label{reduction on nor}
				{\mathscr{B}}_{\perp}^{-1}\widehat{{\mathcal{L}}}{\mathscr{B}}_{\perp}={\mathscr{L}}_{0}+{\mathscr{E}}_{n}^0,\qquad
				{\mathscr{L}}_{0}\triangleq \omega\cdot\partial_{\varphi}\mathbf{I}_{\mathtt{m},\perp}+{\mathscr{D}}_{0}+{\mathscr{R}}_{0},
			\end{equation}
			where $\mathbf{I}_{\mathtt{m},\perp}\triangleq \Pi_{\overline{\mathbb{S}}_{0}}^{\perp}\mathbf{I}_{\mathtt{m}}$ 
			and ${\mathscr{D}}_{0}=\Pi_{\overline{\mathbb{S}}_{0}}^{\perp} {\mathscr{D}}_{0}\Pi_{\overline{\mathbb{S}}_{0}}^{\perp}$ is a reversible Fourier multiplier operator given by 
			$${\mathscr{D}}_{0}\triangleq \begin{pmatrix}
				\mathscr{D}_{0,1} & 0\\
				0 & \mathscr{D}_{0,2}
			\end{pmatrix},\qquad{\mathscr{D}}_{0,k}\triangleq \left(\ii\mu_{j,k}^{(0)}\right)_{j\in\Z_{\m}\backslash\overline{\mathbb{S}}_{0,k}},\qquad\mu_{-j,k}^{(0)}(b,\omega)=-\mu_{j,k}^{(0)}(b,\omega),$$
			with 
			\begin{equation}\label{mu0 r0}
				\mu_{j,k}^{(0)}(b,\omega,i_{0})\triangleq \Omega_{j,k}(b)+jr_{k}^{(0)}(b,\omega,i_{0}),\qquad r_{k}^{(0)}(b,\omega,i_{0})\triangleq \mathtt{c}_k(b,\omega)-\mathtt{v}_k(b)
			\end{equation}
			and such that
			\begin{equation}\label{e-ed-r0}
				\|r_{k}^{(0)}\|^{q,\gamma}\lesssim \varepsilon \qquad\textnormal{and}\qquad \|\Delta_{12}r_{k}^{(0)}\|^{q,\gamma}\lesssim \varepsilon \| \Delta_{12}i\|_{\overline{s}_{h}+\sigma_{1}}^{q,\gamma,\mathtt{m}}.
			\end{equation}
			Notice that the frequencies $\Omega_{j,k}(b)$ are defined in \eqref{omega jk b}.
			\item The operator ${\mathscr{E}}_{n}^0$ satisfies the following estimate
			\begin{equation}\label{e-scrEn0}
				\|{\mathscr{E}}_{n}^0\rho\|_{s_{0}}^{q,\gamma,\mathtt{m}}\lesssim \varepsilon N_{0}^{\mu_{2}}N_{n+1}^{-\mu_{2}}\|\rho\|_{s_{0}+2}^{q,\gamma,\mathtt{m}}.
			\end{equation}
			\item The operator ${\mathscr{R}}_{0}$ is an  $\mathtt{m}$-fold preserving and reversible Toeplitz in time matricial operator satisfying 
			\begin{equation}\label{e-hyb-scrR0}
				\forall s\in [s_{0},S],\quad \interleave{\mathscr{R}}_{0}\interleave_{s}^{q,\gamma,\mathtt{m}}\lesssim\varepsilon\gamma^{-1}\left(1+\| \mathfrak{I}_{0}\|_{s+\sigma_{3}}^{q,\gamma,\mathtt{m}}\right)
			\end{equation}
			and
			\begin{equation}\label{ed-hyb-scrR0}
				\interleave\Delta_{12}{\mathscr{R}}_{0}\interleave_{\overline{s}_{h}+\mathtt{p}}^{q,\gamma,\mathtt{m}}\lesssim\varepsilon\gamma^{-1}\| \Delta_{12}i\|_{\overline{s}_{h}+\mathtt{p}+\sigma_{3}}^{q,\gamma,\mathtt{m}}.
			\end{equation}
			\item The operator $\mathscr{L}_{0}$ satisfies
			\begin{equation}\label{e-scrL0}
				\forall s\in[s_{0},S],\quad\|{\mathscr{L}}_{0}\rho\|_{s}^{q,\gamma,\mathtt{m}}\lesssim\|\rho\|_{s+1}^{q,\gamma,\mathtt{m}}+\varepsilon\gamma ^{-1}\| \mathfrak{I}_{0}\|_{s+\sigma_{3}}^{q,\gamma,\mathtt{m}}\|\rho\|_{s_{0}}^{q,\gamma ,\mathtt{m}}.
			\end{equation}
		\end{enumerate}
	\end{prop}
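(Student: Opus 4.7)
The plan is to decompose the conjugated operator $\mathscr{B}_\perp^{-1}\widehat{\mathcal{L}}\mathscr{B}_\perp$ along the lines already set by Propositions~\ref{lemma-normal-s} and \ref{prop RTNL}, identify the Fourier multiplier piece $\mathscr{D}_0$ by hand on the equilibrium symbols, and absorb all other contributions (finite-rank commutator terms, the compact perturbation $-\varepsilon\partial_\theta\mathcal{R}$ from Proposition~\ref{lemma-normal-s}, and the tail $\Pi_\perp\mathscr{R}\Pi_\perp$) into a single remainder $\mathscr{R}_0$ measured in the hybrid norm. First I would establish point (i): write $\mathscr{B}_\perp=\mathbf{I}_{\mathtt{m},\perp}+\Pi_{\overline{\mathbb{S}}_0}^\perp(\mathscr{B}-\mathbf{I}_{\mathtt{m}})\Pi_{\overline{\mathbb{S}}_0}^\perp$, and since $\mathscr{B}-\mathbf{I}_\mathtt{m}$ is small by \eqref{sml betak I0}, invert on $\mathbf{H}_{\overline{\mathbb{S}}_0}^\perp$ via a Neumann argument, combined with an explicit expression $\mathscr{B}_\perp^{-1}=\Pi_\perp\mathscr{B}^{-1}\Pi_\perp+\text{(finite rank correction)}$ to propagate the tame estimate \eqref{cont Bk} into \eqref{e-vectBnor}.

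For assertion (ii) the identification of $\mathscr{D}_0$, I would use Proposition~\ref{lemma-normal-s} to write $\widehat{\mathcal{L}}=\Pi_\perp\mathcal{L}\Pi_\perp-\varepsilon\Pi_\perp\partial_\theta\mathcal{R}\,\Pi_\perp$ and then compute on the Cantor set $\mathcal{O}_{\infty,n}^{\gamma,\tau_1}(i_0)$
\begin{align*}
\mathscr{B}_\perp^{-1}\widehat{\mathcal{L}}\mathscr{B}_\perp &= \Pi_\perp\mathscr{B}^{-1}\big(\omega\cdot\partial_\varphi\mathbf{I}_\mathtt{m}+\mathscr{D}+\mathscr{R}+\mathscr{E}_n-\varepsilon\partial_\theta\mathcal{R}\big)\mathscr{B}\,\Pi_\perp + \mathtt{C}_1,
\end{align*}
where $\mathtt{C}_1$ groups the finite-rank commutator terms coming from the non-commutation of $\Pi_\perp$ with $\mathscr{B}^{\pm 1}$ and with $\omega\cdot\partial_\varphi\mathbf{I}_\mathtt{m}$. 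Since $\mathscr{D}$ is a diagonal matrix Fourier multiplier, it commutes with $\Pi_\perp$, and a direct computation on $\mathbf{e}_j$ using \eqref{def Hilbert}, \eqref{dcp calDeb0} and \eqref{ASYFR1+} yields for $k\in\{1,2\}$ and $j\in\mathbb{Z}_\mathtt{m}\setminus\overline{\mathbb{S}}_{0,k}$ the eigenvalue
\[
\tfrac{j}{|j|}\Big(|j|\mathtt{c}_k-\tfrac{(-1)^k}{2}+(-1)^{k+1}\mathtt{r}_j(b)\Big)\cdot\ii=\ii\big(\Omega_{j,k}(b)+j(\mathtt{c}_k-\mathtt{v}_k(b))\big)=\ii\mu_{j,k}^{(0)},
\]
which gives the claimed form of $\mathscr{D}_0$; the estimates \eqref{e-ed-r0} are then direct consequences of \eqref{sml-r0} and \eqref{diff Vpm}.

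For the remainder $\mathscr{R}_0$ (point (iv)), I would set
\[
\mathscr{R}_0\triangleq\Pi_\perp\mathscr{R}\,\Pi_\perp\;-\;\varepsilon\,\mathscr{B}_\perp^{-1}\Pi_\perp\mathscr{B}^{-1}\partial_\theta\mathcal{R}\,\mathscr{B}\,\Pi_\perp\;+\;\mathtt{C}_2,
\]
where $\mathtt{C}_2$ collects all commutator terms between $\Pi_\perp$ and $\mathscr{B}^{\pm 1}$, $\mathscr{D}$, $\mathscr{R}$. Each of these commutators involves only finitely many tangential modes in $\overline{\mathbb{S}}_{0}$, hence is a smoothing integral operator whose kernel can be bounded via Lemmata~\ref{iter-kerns} and \ref{Compos1-lemm}; their isotropic-decay norms are controlled by \eqref{sml betak I0}, and thus also their off-diagonal norms by Lemma~\ref{properties of Toeplitz in time operators}-(iii), so these pieces fit inside both parts of the hybrid norm \eqref{hyb nor}. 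The principal contribution $\Pi_\perp\mathscr{R}\,\Pi_\perp$ inherits the bound \eqref{e-Rnl} from Proposition~\ref{prop RTNL} directly (projectors are bounded in both norms of \eqref{hyb nor}), and the finite-rank piece $\varepsilon\Pi_\perp\mathscr{B}^{-1}\partial_\theta\mathcal{R}\,\mathscr{B}\,\Pi_\perp$ is handled by combining \eqref{e-Jkn} with Lemma~\ref{lem CVAR kernel} and Lemma~\ref{properties of Toeplitz in time operators}, noting the crucial prefactor $\varepsilon$. The difference estimate \eqref{ed-hyb-scrR0} follows along identical lines from \eqref{e-Rnld}, \eqref{e-Jknd} and \eqref{diff betak}. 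The error term $\mathscr{E}_n^0\triangleq\Pi_\perp\mathscr{B}^{-1}\mathscr{E}_n\mathscr{B}\,\Pi_\perp$ inherits the bound \eqref{e-scrEn0} from \eqref{scr-Enk} via the tame estimate \eqref{cont Bk}, while assertion (v) follows by combining (ii)--(iv) with Corollary~\ref{cor-hyb-nor}-(iii). Symmetries (reality, $\mathtt{m}$-fold, reversibility preserving) are preserved at each step because $\mathscr{B}$, $\mathscr{D}$, $\mathscr{R}$, $\mathcal{R}$ and the projector $\Pi_\perp$ all enjoy them, as established in Propositions~\ref{lemma-normal-s} and \ref{prop RTNL}.

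The main technical obstacle here is bookkeeping rather than analysis: one must carefully identify which pieces of the conjugated operator are truly Fourier-multiplier (to populate $\mathscr{D}_0$ with the exact symbol $\mu_{j,k}^{(0)}$) and which pieces, although finite-rank or smoothing, must be absorbed into $\mathscr{R}_0$ in a way compatible with the \emph{hybrid} norm \eqref{hyb nor} — in particular, the anti-diagonal entries must be measured in the isotropic-decay topology to match the remainder-reduction scheme of Section~\ref{Reduction-Remaind}, and checking that every commutator fits this constraint is the delicate point.
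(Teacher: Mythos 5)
Your proposal follows essentially the same route as the paper: conjugate $\widehat{\mathcal{L}}=\Pi_{\overline{\mathbb{S}}_0}^{\perp}(\mathcal{L}-\varepsilon\partial_\theta\mathcal{R})\Pi_{\overline{\mathbb{S}}_0}^{\perp}$ by $\mathscr{B}_\perp$, use the identity $\mathcal{L}\mathscr{B}=\mathscr{B}(\omega\cdot\partial_\varphi\mathbf{I}_{\mathtt{m}}+\mathscr{D}+\mathscr{R}+\mathscr{E}_n)$ valid on $\mathcal{O}_{\infty,n}^{\gamma,\tau_1}(i_0)$, read off $\mathscr{D}_0=\mathscr{D}\Pi_{\overline{\mathbb{S}}_0}^{\perp}$ as the Fourier-multiplier part, and dump the projector commutators, the smoothing piece $\varepsilon\partial_\theta\mathcal{R}$ and the conjugated $\mathscr{R}$ into $\mathscr{R}_0$, estimated in the hybrid norm via the kernel lemmas; this is exactly the paper's decomposition. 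Two bookkeeping slips should be corrected before this compiles into a proof. First, your displayed identity double-conjugates: since $\mathscr{B}^{-1}\mathcal{L}\mathscr{B}$ already equals $\omega\cdot\partial_\varphi\mathbf{I}_{\mathtt{m}}+\mathscr{D}+\mathscr{R}+\mathscr{E}_n$, the correct starting point is $\mathscr{B}_\perp^{-1}\widehat{\mathcal{L}}\mathscr{B}_\perp=\mathscr{B}_\perp^{-1}\Pi_{\overline{\mathbb{S}}_0}^{\perp}\mathcal{L}\mathscr{B}\Pi_{\overline{\mathbb{S}}_0}^{\perp}-\mathscr{B}_\perp^{-1}\Pi_{\overline{\mathbb{S}}_0}^{\perp}\mathcal{L}\Pi_{\overline{\mathbb{S}}_0}\mathscr{B}\Pi_{\overline{\mathbb{S}}_0}^{\perp}-\varepsilon\mathscr{B}_\perp^{-1}\partial_\theta\mathcal{R}\mathscr{B}_\perp$, and the same over-conjugation appears in your definitions of $\mathscr{E}_n^0$ and of the $\varepsilon\partial_\theta\mathcal{R}$ contribution to $\mathscr{R}_0$ (no inner $\mathscr{B}^{-1}\cdots\mathscr{B}$ should occur there). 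Second, the eigenvalue computation has a sign error in the constant term: acting $\mathscr{D}$ on $\mathbf{e}_j$ gives $\ii\tfrac{j}{|j|}\big(|j|\mathtt{c}_k+\tfrac{(-1)^k}{2}+(-1)^{k+1}\mathtt{r}_j(b)\big)$, i.e. $+\tfrac{(-1)^k}{2}$ rather than $-\tfrac{(-1)^k}{2}$, which is what matches $\ii\mu_{j,k}^{(0)}$ via \eqref{ASYFR1+}. Neither slip affects the strategy, which is sound and coincides with the paper's.
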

	\begin{proof}
		\textbf{(i)} It is obtained using \eqref{cont Bk} and Lemma \ref{lem funct prop}-(i).\\
		\textbf{(ii)} The first estimate of \eqref{e-ed-r0} follows from \eqref{sml-r0} and the second one from \eqref{diff Vpm}. On the other hand,  using the expression of $\widehat{\mathcal{L}}$ detailed  in Proposition \ref{lemma-normal-s}, combined with  the decomposition $\textnormal{Id}=\Pi_{\mathbb{S}_{0}}+\Pi_{\overline{\mathbb{S}}_{0}}^{\perp}$  we write
		\begin{align*}
			\mathscr{B}_{\perp}^{-1}\widehat{\mathcal{L}}\mathscr{B}_{\perp}&=\mathscr{B}_{\perp}^{-1}\Pi_{\overline{\mathbb{S}}_{0}}^{\perp}({\mathcal{L}}-\varepsilon\partial_{\theta}\mathcal{R})\mathscr{B}_{\perp}
			\\
			&=\mathscr{B}_{\perp}^{-1}\Pi_{\overline{\mathbb{S}}_{0}}^{\perp}{\mathcal{L}}\mathscr{B}\Pi_{\overline{\mathbb{S}}_{0}}^{\perp}-\mathscr{B}_{\perp}^{-1}\Pi_{\overline{\mathbb{S}}_{0}}^{\perp}{\mathcal{L}}\Pi_{\mathbb{S}_{0}}\mathscr{B}\Pi_{\overline{\mathbb{S}}_{0}}^{\perp}-\varepsilon\mathscr{B}_{\perp}^{-1}\Pi_{\overline{\mathbb{S}}_{0}}^{\perp}\partial_{\theta}\mathcal{R}\mathscr{B}_{\perp}.
		\end{align*}
		By virtue of
		Proposition \ref{prop RTNL} 
		one has in the Cantor set $\mathcal{O}_{\infty,n}^{\gamma,\tau_{1}}(i_{0})$,
		$$
		{\mathcal{L}}\mathscr{B}=\mathscr{B}\mathscr{L}
		$$
		and therefore, using also that $\mathscr{B}_{\perp}^{-1}\Pi_{\overline{\mathbb{S}}_0}^{\perp}=\mathscr{B}_{\perp}^{-1},$ we get
		\begin{align*}
			\mathscr{B}_{\perp}^{-1}\widehat{\mathcal{L}}\mathscr{B}_{\perp}&=\mathscr{B}_{\perp}^{-1}\mathscr{B}\mathscr{L}\Pi_{\overline{\mathbb{S}}_{0}}^{\perp}-\mathscr{B}_{\perp}^{-1}\mathcal{L}\Pi_{\mathbb{S}_{0}}\mathscr{B}\Pi_{\overline{\mathbb{S}}_{0}}^{\perp}
			-\varepsilon\mathscr{B}_{\perp}^{-1}\partial_{\theta}\mathcal{R}\mathscr{B}_{\perp}.
		\end{align*}
		Thus, using  \eqref{b-1lb} we deduce that
		\begin{align*}
			\mathscr{B}_{\perp}^{-1}\mathscr{B}\mathscr{L}\Pi_{\overline{\mathbb{S}}_{0}}^{\perp}&=\left(\omega\cdot\partial_{\varphi}\mathbf{I}_{\mathtt{m}}+\mathscr{D}\right)\Pi_{\overline{\mathbb{S}}_{0}}^{\perp}+{\mathscr{E}}_{n}^0+\mathscr{B}_{\perp}^{-1}\mathscr{B}\mathscr{R}\Pi_{\overline{\mathbb{S}}_{0}}^{\perp},
		\end{align*}
		with
		\begin{equation}\label{def scr En0}
			{\mathscr{E}}_{n}^0\triangleq \mathscr{B}_{\perp}^{-1}\mathscr{B}\mathscr{E}_{n}\Pi_{\overline{\mathbb{S}}_{0}}^{\perp}.
		\end{equation}
		Consequently, in the Cantor set $\mathcal{O}_{\infty,n}^{\gamma,\tau_1}(i_{0})$, one has the following reduction
		\begin{align*}
			\mathscr{B}_{\perp}^{-1}\widehat{\mathcal{L}}\mathscr{B}_{\perp}&=\omega\cdot\partial_{\varphi}\mathbf{I}_{\mathtt{m},\perp}+\mathscr{D}_0+\mathscr{R}_0+{\mathscr{E}}_{n}^0,
		\end{align*}
		where we set
		$$\mathscr{D}_0\triangleq \begin{pmatrix}
			\mathtt{c}_1\partial_{\theta}\, \cdot+\tfrac{1}{2}\mathcal{H}+\partial_\theta  \mathcal{Q}\ast\cdot& 0\\
			0 &  \mathtt{c}_2\partial_{\theta}\, \cdot-\tfrac{1}{2}\mathcal{H}-\partial_\theta  \mathcal{Q}\ast\cdot
		\end{pmatrix}\Pi_{\overline{\mathbb{S}}_{0}}^{\perp},$$
		\begin{align*}
			\mathscr{R}_0&\triangleq -\mathscr{B}_{\perp}^{-1}{\mathcal{L}}\Pi_{\mathbb{S}_{0}}\mathscr{B}\Pi_{\overline{\mathbb{S}}_{0}}^{\perp}-\varepsilon\mathscr{B}_{\perp}^{-1}\partial_{\theta}\mathcal{R}\mathscr{B}_{\perp}+\mathscr{B}_{\perp}^{-1}\mathscr{B}\mathscr{R}\Pi_{\overline{\mathbb{S}}_{0}}^{\perp}.
		\end{align*}
		\textbf{(iii)} It can be obtained from \eqref{def scr En0}, \eqref{e-vectBnor}, \eqref{cont Bk} and \eqref{scr-Enk}.\\
		\textbf{(iv)}
		To get the estimates \eqref{e-hyb-scrR0} and \eqref{ed-hyb-scrR0}, we may refer to Lemma \cite[Prop 6.3 and Lem. 6.3]{HR21} up to very slight modifications corresponding to the hybrid topology introduced in Section \ref{sec funct set}. The computations are long and based on a duality representations of $\mathscr{B}_{\perp}^{\pm 1}$ and $\mathscr{B}^{\pm 1}$. In particular, one may use Lemma \ref{lem sym--rev}, \eqref{e-Rnl}, \eqref{e-Rnld}, \eqref{e-Jkn}, \eqref{e-Jknd} and \eqref{diff betak}.\\
		\textbf{(v)} This estimate follows from \eqref{reduction on nor}, \eqref{sml-r0}, \eqref{e-hyb-scrR0}, \eqref{sml-pnor} and Corollary \ref{cor-hyb-nor}-(iii).
	\end{proof}

	\subsection{Reduction of the remainder}\label{Reduction-Remaind}
	This section is devoted to the conjugation of the operator $\mathscr{L}_{0}$ defined in Proposition \ref{prop proj nor dir} to a diagonal one, up to a fast decaying small remainders. This will be achieved through a standard KAM reducibility techniques well-adapted to the operators setting. This will be implemented by taking advantage of the exterior parameters which are restricted to a suitable Cantor set that prevents the resonances in the second order Melnikov assumption. Notice that one gets from this study some estimates on the distribution of the eigenvalues and their stability with respect to the torus parametrization. This is considered as the key step not only to get an approximate inverse but also to achieve the Nash-Moser scheme with a final massive Cantor set.	 We may refer for instance to \cite{BBM14,B19,FP14,HHM21,HR21} for some implementations of this KAM strategy to PDEs.
	\begin{prop}\label{prop RR}
		Let $(\gamma,q,d,\tau_1,\tau_2,s_{0},\overline{s}_l,\overline{\mu}_{2},S,\mathtt{m})$ satisfy \eqref{init Sob cond}, \eqref{setting tau1 and tau2}, \eqref{param} and   \eqref{ouvert-sym}. For any $(\mu_2,s_h)$ satisfying  
		\begin{align}\label{p-RR}
			\mu_2\geqslant \overline{\mu}_2+2\tau_2q+2\tau_2\qquad\textnormal{and}\qquad  s_h\geqslant \frac{3}{2}\mu_{2}+\overline{s}_{l}+1,
		\end{align}
		there exist $\varepsilon_{0}\in(0,1)$ and $\sigma_{4}\triangleq\sigma_{4}(\tau_1,\tau_2,q,d)\geqslant\sigma_{3}$, with $\sigma_{3}$ defined in Proposition $\ref{prop proj nor dir},$ such that if 
		\begin{equation}\label{sml-RR}
			\varepsilon\gamma^{-2-q}N_{0}^{\mu_{2}}\leqslant \varepsilon_{0}
		\end{equation}
		and
		\begin{equation}\label{bnd frkI0-4}
			\|\mathfrak{I}_{0}\|_{s_{h}+\sigma_{4}}^{q,\gamma,\mathtt{m}}\leqslant 1,
		\end{equation}
		then the following assertions hold true.
		\begin{enumerate}[label=(\roman*)]
			\item There exists a  family of invertible linear operator $\Phi_{\infty}:\mathcal{O}\to \mathcal{L}\big(\mathbf{H}^s_{\perp,\m}\big)$ satisfying the estimates
			\begin{equation}\label{cont-Phifty}
				\forall s\in[s_{0},S],\quad \mbox{ }\|\Phi_{\infty}^{\pm 1}\rho\|_{s}^{q,\gamma ,\m}\lesssim \|\rho\|_{s}^{q,\gamma,\m}+\varepsilon\gamma^{-2}\| \mathfrak{I}_{0}\|_{s+\sigma_{4}}^{q,\gamma,\m}\|\rho\|_{s_{0}}^{q,\gamma,\m}.
			\end{equation}
			There exists a diagonal operator $\mathscr{L}_\infty\triangleq\mathscr{L}_{\infty}(b,\omega,i_{0})$ taking the form
			\begin{align*}\mathscr{L}_{\infty}&=\omega\cdot\partial_{\varphi}\mathbf{I}_{\mathtt{m},\perp}+\mathscr{D}_{\infty}
			\end{align*}
			where $\mathscr{D}_{\infty}=\Pi_{\overline{\mathbb{S}}_{0}}^{\perp}\mathscr{D}_{\infty}\Pi_{\overline{\mathbb{S}}_{0}}^{\perp}=\mathscr{D}_{\infty}(b,\omega,i_{0})$ is a diagonal operator  with reversible Fourier multiplier entries, namely
			$$\mathscr{D}_{\infty}\triangleq\begin{pmatrix}
				\mathscr{D}_{\infty,1} & 0\\
				0 & \mathscr{D}_{\infty,2}
			\end{pmatrix},\qquad\mathscr{D}_{\infty,k}\triangleq\left(\ii\mu_{j,k}^{(\infty)}\right)_{j\in\mathbb{Z}_{\mathtt{m}}\setminus\overline{\mathbb{S}}_{0,k}},\qquad\mu_{-j,k}^{(\infty)}(b,\omega)=-\mu_{j,k}^{(\infty)}(b,\omega),$$
			with
			\begin{align}\label{def mu lim}
				\forall j\in\Z_{\m}\backslash \overline{\mathbb{S}}_{0,k},&\quad\mu_{j,k}^{(\infty)}(b,\omega,i_{0})\triangleq\mu_{j,k}^{(0)}(b,\omega,i_{0})+r_{j,k}^{(\infty)}(b,\omega,i_{0})
			\end{align}
			and 
			\begin{align}\label{e-rjfty}
				\sup_{j\in\mathbb{Z}_{\mathtt{m}}\setminus\overline{\mathbb{S}}_{0,k}}|j|\left\| r_{j,k}^{(\infty)}\right\|^{q,\gamma}\lesssim\varepsilon\gamma^{-1},
			\end{align}
			such that in the Cantor set
			\begin{align*}
				&\mathscr{O}_{\infty,n}^{\gamma,\tau_{1},\tau_{2}}(i_{0})\triangleq \mathcal{O}_{\infty,n}^{\gamma,\tau_1}(i_{0})\\
				&\bigcap_{\underset{\,j, j_{0}\in\Z_{\m}\backslash\overline{\mathbb{S}}_{0,k}}{ {k\in\{1,2\}}}}\bigcap_{\underset{|l|\leqslant N_{n}}{l\in\mathbb{Z}^{d}\atop(l,j)\neq(0,j_{0})}}\Big\{(b,\omega)\in\mathcal{O}\quad\textnormal{s.t.}\quad\big|\omega\cdot l+\mu_{j,k}^{(\infty)}(b,\omega,i_{0})-\mu_{j_{0},k}^{(\infty)}(b,\omega,i_{0})\big|>\tfrac{2\gamma\langle j-j_0\rangle}{\langle l\rangle^{\tau_2}}\Big\}\\
				&\bigcap_{\underset{\,j_0\in\Z_{\m}\backslash\overline{\mathbb{S}}_{0,2}}{ j\in\Z_{\m}\backslash\overline{\mathbb{S}}_{0,1}}}\bigcap_{\underset{\langle l,j,j_0\rangle\leqslant N_{n}}{l\in\mathbb{Z}^{d}}}\Big\{(b,\omega)\in\mathcal{O}\quad\textnormal{s.t.}\quad\big|\omega\cdot l+\mu_{j,1}^{(\infty)}(b,\omega,i_{0})-\mu_{j_{0},2}^{(\infty)}(b,\omega,i_{0})\big|>\tfrac{2\gamma}{\langle l,j,j_0\rangle^{\tau_2}}\Big\}
			\end{align*}
			we have 
			\begin{align*}
				\Phi_{\infty}^{-1}\mathscr{L}_{0}\Phi_{\infty}&=\mathscr{L}_{\infty}+{\mathscr{E}}_{n}^1,
			\end{align*}
			and the linear operator  ${\mathscr{E}}_{n}^1$ satisfies the estimate
			\begin{equation}\label{e-scrEn1}
				\|{\mathscr{E}}_{n}^1\rho\|_{s_0}^{q,\gamma ,\m}\lesssim \varepsilon\gamma^{-2}N_{0}^{{\mu}_{2}}N_{n+1}^{-\mu_{2}} \|\rho\|_{s_0+1}^{q,\gamma,\m}.
			\end{equation}
			We refer to \eqref{Cant-trs}, \eqref{reduction on nor} and \eqref{mu0 r0} for the definition of  $\mathcal{O}_{\infty,n}^{\gamma,\tau_1}(i_{0}),$ $\mathscr{L}_{0}$ and $\big(\mu_{j,k}^{(0)}(b,\omega,i_{0})\big)_{j\in\Z_{\m}\backslash\overline{\mathbb{S}}_{0,k}}$, respectively.
			\item Given two tori $i_{1}$ and $i_{2}$ both satisfying \eqref{sml-RR}-\eqref{bnd frkI0-4}, then for $k\in\{1,2\}$ we have 
			\begin{align}\label{ed-rjkfty}
				\forall j\in\Z_{\m}\backslash \overline{\mathbb{S}}_{0,k},&\quad\left\|\Delta_{12}r_{j,k}^{(\infty)}\right\|^{q,\gamma}\lesssim\varepsilon\gamma^{-1}\|\Delta_{12}i\|_{\overline{s}_{h}+\sigma_{4}}^{q,\gamma,\m}
			\end{align}
			and
			\begin{align}\label{ed-mujkfty}
				\forall j\in\Z_{\m}\backslash \overline{\mathbb{S}}_{0,k},&\quad\left\|\Delta_{12}\mu_{j,k}^{(\infty)}\right\|^{q,\gamma}\lesssim\varepsilon\gamma^{-1}|j|\| \Delta_{12}i\|_{\overline{s}_{h}+\sigma_{4}}^{q,\gamma,\m}.
			\end{align}
		\end{enumerate}
	\end{prop}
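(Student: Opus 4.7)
The plan is to implement a standard KAM reducibility scheme on $\mathscr{L}_0$, building inductively a sequence $(\mathscr{L}_n)_{n\geqslant 0}$ with $\mathscr{L}_n = \omega\cdot\partial_\varphi \mathbf{I}_{\mathtt{m},\perp} + \mathscr{D}_n + \mathscr{R}_n$, where $\mathscr{D}_n$ is a block-diagonal Fourier multiplier with reversible purely imaginary symbols $\ii\mu_{j,k}^{(n)}$ and where the remainder $\mathscr{R}_n$ has super-exponentially decaying hybrid norm $\interleave\mathscr{R}_n\interleave_{s_l}^{q,\gamma,\m}$. The scheme is initialized with $\mathscr{R}_0$ from Proposition~\ref{prop proj nor dir}, whose hybrid-norm bound \eqref{e-hyb-scrR0} provides the required smallness. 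At step $n\to n+1$ one truncates $\mathscr{R}_n$ at Fourier level $N_n$ via the projectors $\mathbf{P}_{N_n}$ from \eqref{proj mat}, leaving the high-frequency tail $\mathbf{P}_{N_n}^{\perp}\mathscr{R}_n$ untouched as a contribution to the final error $\mathscr{E}_n^1$; the low-frequency part is then erased, modulo resonances, by conjugating with $\Phi_n \triangleq e^{\Psi_n}$ where $\Psi_n$ solves the homological equation
\begin{equation*}
\omega\cdot\partial_\varphi \Psi_n + [\mathscr{D}_n,\Psi_n] + \mathbf{P}_{N_n}\mathscr{R}_n = [\mathscr{R}_n]_{\mathrm{res}}.
\end{equation*}
Here $[\mathscr{R}_n]_{\mathrm{res}}$ retains only the diagonal-block entries at $l = 0$ and $j = j_0$; these define the new eigenvalue corrections $r_{j,k}^{(n+1)}$, so that $\mu_{j,k}^{(n+1)} = \mu_{j,k}^{(n)} + r_{j,k}^{(n+1)}$.

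In Fourier variables, the diagonal blocks of $\Psi_n$ read
\begin{equation*}
(\Psi_n)_{j,k;j_0,k}(l) = \frac{(\mathbf{P}_{N_n}\mathscr{R}_n)_{j,k;j_0,k}(l)}{\ii\bigl(\omega\cdot l + \mu_{j,k}^{(n)} - \mu_{j_0,k}^{(n)}\bigr)}, \qquad (l,j)\neq (0,j_0),
\end{equation*}
which are invertible on $\mathscr{O}_{\infty,n}^{\gamma,\tau_1,\tau_2}(i_0)$ by the $\langle j-j_0\rangle$-weighted second-order Melnikov condition. The anti-diagonal blocks $(\Psi_n)_{j,1;j_0,2}(l)$ have denominator $\ii(\omega\cdot l + \mu_{j,1}^{(n)} - \mu_{j_0,2}^{(n)})$ and are inverted using the isotropic condition with weight $\langle l,j,j_0\rangle^{-\tau_2}$; crucially this denominator never vanishes even at $(l,j) = (0,j_0)$ since $\mu_{j,1}^{(0)}(b) - \mu_{j,2}^{(0)}(b)$ is nonzero and grows in $j$ by Corollary~\ref{coro-equilib-freq} and \eqref{mu0 r0}, so these blocks are solved without any residual, which ensures that $\mathscr{D}_\infty$ remains block-diagonal throughout.

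Each parameter derivative of an inverted small divisor introduces a factor $\gamma^{-1}\langle l\rangle^{\tau_2}$ or $\gamma^{-1}\langle l,j,j_0\rangle^{\tau_2}$, yielding $\interleave\Psi_n\interleave_s^{q,\gamma,\m} \lesssim \gamma^{-1}N_n^{\tau_2(q+1)}\interleave\mathscr{R}_n\interleave_s^{q,\gamma,\m}$ on $\mathscr{O}_{\infty,n}^{\gamma,\tau_1,\tau_2}(i_0)$. Combining the composition estimate Corollary~\ref{cor-hyb-nor}-(ii) with the standard telescoping argument gives a quadratic Nash-Moser-type decay
\begin{equation*}
\interleave\mathscr{R}_{n+1}\interleave_{s_l}^{q,\gamma,\m} \lesssim \gamma^{-1}N_n^{2\tau_2(q+1)}\bigl(\interleave\mathscr{R}_n\interleave_{s_l}^{q,\gamma,\m}\bigr)^2 + N_n^{-\mathtt{b}}\interleave\mathscr{R}_n\interleave_{s_l+\mathtt{b}}^{q,\gamma,\m}.
\end{equation*}
Under the smallness \eqref{sml-RR} and the choice of $\mu_2$ in \eqref{p-RR}, this closes the induction along the geometric sequence \eqref{def geo Nn}, and the limit $\Phi_\infty = \lim_{n\to\infty}\Phi_0\cdots\Phi_n$ satisfies \eqref{cont-Phifty}. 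Summing the telescopic series for the eigenvalue corrections produces $r_{j,k}^{(\infty)}$ with the bound \eqref{e-rjfty}, where the extra $|j|$ factor is inherited from the smoothing nature of the original kernels in $\mathscr{R}_0$ via Lemma~\ref{lem sym--rev}. The Lipschitz estimates \eqref{ed-rjkfty}-\eqref{ed-mujkfty} in the torus variable are propagated along the scheme using \eqref{ed-hyb-scrR0} and the corresponding Lipschitz bounds on small divisors.

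The main technical obstacle is the anti-diagonal second-order Melnikov condition: by Lemma~\ref{lem-asym}, the mixed difference $\mu_{j,1}^{(0)} - \mu_{j_0,2}^{(0)}$ behaves asymptotically like $\mathtt{v}_1(b)j - \mathtt{v}_2(b)j_0 + O(1)$, which does \emph{not} grow linearly in $|j-j_0|$ but rather in $\langle j,j_0\rangle$ because the two transport speeds $\mathtt{v}_1, \mathtt{v}_2$ are distinct. The off-diagonal Toeplitz norm used in the scalar KAM reductions of \cite{HHM21,HR21} therefore fails here, and one is forced to impose an isotropic Melnikov condition with weight $\langle l,j,j_0\rangle^{-\tau_2}$ on these blocks; this is precisely the motivation for the hybrid norm \eqref{hyb nor}. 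The scheme nevertheless closes because Proposition~\ref{prop:conjP}-3 and Proposition~\ref{prop proj nor dir}-(iv) provide infinite smoothing on the anti-diagonal blocks, so the isotropic norm costs no $\theta$-regularity, and this smoothing is compatible with the tame composition estimate Corollary~\ref{cor-hyb-nor}-(ii), which is what allows the quadratic iteration to converge despite the worsened denominators.
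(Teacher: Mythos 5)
Your proposal follows essentially the same route as the paper: the same KAM reducibility scheme with the hybrid norm, the same splitting of the homological equation into diagonal blocks (solved under the $\langle j-j_0\rangle$-weighted Melnikov condition, with the $(0,j_0)$-resonant part feeding the eigenvalue corrections) and anti-diagonal blocks (solved under the isotropic condition with no residual, thanks to the non-vanishing of $\mu_{j,1}^{(0)}-\mu_{j,2}^{(0)}$), the same $\gamma^{-1}N_n^{\tau_2(q+1)}$ loss and quadratic iteration closed by \eqref{p-RR}--\eqref{sml-RR}; the use of $e^{\Psi_n}$ in place of the paper's $\mathbf{I}_{\mathtt{m},\perp}+\Psi_n$ with Neumann-series inverse is immaterial. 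Two points in your sketch are thinner than a proof requires. First, the $|j|$-weighted bound \eqref{e-rjfty} is not simply ``inherited from the smoothing of the original kernels'': the remainders $\mathscr{R}_m$ for $m\geqslant 1$ are built from compositions and Neumann series, so the $1/|j|$ decay of their diagonal Fourier coefficients must be propagated quantitatively; the paper does this by iterating the auxiliary quantity $\widehat{\delta}_m(s)=\max\big(\delta_m(s),\gamma^{-1}\interleave\partial_\theta\mathscr{R}_m\interleave_s^{q,\gamma,\mathtt{m}}\big)$ alongside $\delta_m(s)$ and extracting the $1/|j|$ via the integration-by-parts identity $\big\langle P^1_{N_m}\mathscr{R}_{m,k}\mathbf{e}_{0,j},\mathbf{e}_{0,j}\big\rangle=\tfrac{1}{|j|}\big\langle P^1_{N_m}\partial_\theta\mathscr{R}_{m,k}\mathbf{e}_{0,j},\mathbf{e}_{0,j}\big\rangle$. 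Second, the step-$m$ homological equation involves the intermediate eigenvalues $\mu_{j,k}^{(m)}$, not $\mu_{j,k}^{(\infty)}$, so solvability on $\mathscr{O}_{\infty,n}^{\gamma,\tau_1,\tau_2}(i_0)$ is not immediate: one must extend $\Psi_m$ to all of $\mathcal{O}$ by a cut-off on the small divisors and then verify the inclusion of the final Cantor set into each intermediate one, using the convergence $\sup_j|j|\,\|\mu^{(m)}_{j,k}-\mu^{(\infty)}_{j,k}\|^{q,\gamma}\lesssim\gamma\,\delta_0(s_h)N_0^{\mu_2}N_m^{-\mu_2}$ together with $\mu_2\geqslant\tau_2$.
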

	
	\begin{proof}
		{\bf{(i)}} First recall that Proposition \ref{prop proj nor dir} states that in restriction to the Cantor set $\mathcal{O}_{\infty,n}^{\gamma,\tau_1}(i_0)$ the following identity holds
		$$\mathscr{B}_{\perp}^{-1}\widehat{\mathcal{L}}\mathscr{B}_{\perp}=\mathscr{L}_{0}+{\mathscr{E}}_{n}^0,$$
		where the operator $\mathscr{L}_{0}$ decomposes as follows 
		\begin{equation*}
			\mathscr{L}_0=\omega\cdot\partial_{\varphi}\mathbf{I}_{\mathtt{m},\perp}+\mathscr{D}_0+\mathscr{R}_0,
		\end{equation*}
		with
		$$\mathscr{D}_0=\Pi_{\overline{\mathbb{S}}_{0}}^{\perp}\mathscr{D}_0\Pi_{\overline{\mathbb{S}}_{0}}^{\perp}=\begin{pmatrix}
			\mathscr{D}_{0,1} & 0\\
			0 & \mathscr{D}_{0,2}
		\end{pmatrix},\qquad\mathscr{D}_{0,k}=\Big(\ii\mu_{j,k}^{(0)}\Big)_{j\in\Z_{\m}\backslash\overline{\mathbb{S}}_{0,k}},\qquad\mu_{-j,k}^{(0)}(b,\omega)=-\mu_{j,k}^{(0)}(b,\omega)$$
		and $\mathscr{R}_0$ a real and reversible Toeplitz in time operator of zero order satisfying $\Pi_{\overline{\mathbb{S}}_{0}}^{\perp} \mathscr{R}_0\Pi_{\overline{\mathbb{S}}_{0}}^{\perp} =\mathscr{R}_0.$
		Let us define the quantity 
		$$
		\delta_{0}(s)=\gamma ^{-1}\interleave\mathscr{R}_{0}\interleave_{s}^{q,\gamma,\m},
		$$
		By virtue of \eqref{e-hyb-scrR0}, we find
		\begin{align}\label{edlt0s}
			\delta_{0}(s)\leqslant C\varepsilon\gamma^{-2}\left(1+\|\mathfrak{I}_{0}\|_{s+\sigma_{3}}^{q,\gamma,\m}\right).
		\end{align}
		Thus, combining \eqref{p-RR},  \eqref{sml-RR} and the fact that $\sigma_4\geqslant \sigma_3$ yields
		\begin{align}\label{e-dlt0sh init}
			\nonumber N_{0}^{\mu_{2}}\delta_{0}(s_{h}) &\leqslant  C N_{0}^{\mu_{2}}\varepsilon\gamma^{-2}\\
			&\leqslant C\varepsilon_{0}.
		\end{align}
		The smallness conditions \eqref{edlt0s} and \eqref{e-dlt0sh init} allow to start a KAM reduction procedure similarly to the scalar case \cite[Prop. 6.5]{HR21}. Nevertheless the following KAM iteration is done at the matricial level. For this aim, we need to consider the hybrid norm \eqref{hyb nor} to overcome spatial resonances coming from the anti-diagonal entries when solving the homological equations. To clarify this point, let us first discuss a general KAM step of the procedure.\\
		$\blacktriangleright$ \textbf{KAM step.}  Now, we explain the typical KAM step used in the reduction of the remainder. Assume that we have a linear operator $\mathscr{L}$ taking the following form when the parameters are restricted to some Cantor set $\mathscr{O}$  
		$$\mathscr{L}=\omega\cdot\partial_{\varphi}\mathbf{I}_{\mathtt{m},\perp}+\mathscr{D}+\mathscr{R},$$
		with
		\begin{equation}\label{struc scr D}
			\mathscr{D}=\Pi_{\overline{\mathbb{S}}_{0}}^{\perp}\mathscr{D}\Pi_{\overline{\mathbb{S}}_{0}}^{\perp}=\begin{pmatrix}
				\mathscr{D}_1 & 0\\
				0 & \mathscr{D}_2
			\end{pmatrix},\qquad\mathscr{D}_{k}=\big(\ii\mu_{j,k}\big)_{j\in\Z_{\m}\backslash \overline{\mathbb{S}}_{0,k}},\qquad\mu_{-j,k}(b,\omega)=-\mu_{j,k}(b,\omega).
		\end{equation}
		In addition we assume that the matrix operator  
		$$\mathscr{R}=\begin{pmatrix}
			\mathscr{R}_1 & \mathscr{R}_3\\
			\mathscr{R}_4 & \mathscr{R}_2
		\end{pmatrix}$$ 
		is real, reversible Toeplitz in time of zero order and satisfies
		$$\Pi_{\overline{\mathbb{S}}_{0}}^{\perp} \mathscr{R}\Pi_{\overline{\mathbb{S}}_{0}}^{\perp} =\mathscr{R}.$$
		One may check from \eqref{proj-nor1} that this latter assumption is equivalent to 
		\begin{equation}\label{cond proj R}
			\begin{pmatrix}
				\Pi_{1}^{\perp}\mathscr{R}_1\Pi_{1}^{\perp} & \Pi_{1}^{\perp}\mathscr{R}_3\Pi_{2}^{\perp}\vspace{0.3cm}\\
				\Pi_{2}^{\perp}\mathscr{R}_4\Pi_{1}^{\perp} & \Pi_{2}^{\perp}\mathscr{R}_2\Pi_{2}^{\perp}
			\end{pmatrix}=\begin{pmatrix}
				\mathscr{R}_1 & \mathscr{R}_3\\
				\mathscr{R}_4 & \mathscr{R}_2
			\end{pmatrix}.
		\end{equation}
		According to Definition  \ref{Def-Rev}, the real and reversibility properties of $\mathscr{R}_k$ are equivalent to say
		\begin{equation}\label{coef scrRk}
			(\mathscr{R}_k)_{l_{0},j_{0}}^{l,j}\triangleq \ii\,r_{j_{0},k}^{j}(b,\omega,l-l_{0})\in \ii\,\mathbb{R}\qquad\mbox{and}\qquad (\mathscr{R}_k)_{-l_{0},-j_{0}}^{-l,-j}=-(\mathscr{R}_k)_{l_{0},j_{0}}^{l,j}.
		\end{equation}
		Moreover, the condition \eqref{cond proj R} is equivalent to
		\begin{align}
			&\forall k\in\{1,2\},\quad\forall \,l\,\in\mathbb{Z}^d,\quad\forall\, j\,\,\hbox{or}\,\,\,j_0\in\overline{\mathbb{S}}_{0,k},\quad r_{j_{0},k}^{j}(b,\omega,l)=0,\label{restri rk}\\
			&\forall \ell\in\{3,4\},\quad\forall \,l\,\in\mathbb{Z}^d,\quad\forall\, j\in\overline{\mathbb{S}}_{0,\ell-2}\,\,\hbox{or}\,\,\,j_0\in\overline{\mathbb{S}}_{0,5-\ell},\quad r_{j_{0},\ell}^{j}(b,\omega,l)=0.\label{restri rl}
		\end{align}
		Now, consider a linear invertible transformation close to the identity \begin{equation}\label{ansatz Phi reduc rem}
			\Phi=\mathbf{I}_{\mathtt{m},\perp}+\Psi:\mathcal{O}\rightarrow\mathcal{L}({\mathbf{H}}_{\perp,\m}^{s}),\qquad\Psi=\Pi_{\overline{\mathbb{S}}_{0}}^{\perp}\Psi\Pi_{\overline{\mathbb{S}}_{0}}^{\perp}=\begin{pmatrix}
				\Psi_{1} & \Psi_3\\
				\Psi_4 & \Psi_2
			\end{pmatrix},
		\end{equation}
		with $\Psi$ depending on $\mathscr{R}$ and {\it small} in a suitable sense related to the hybrid norm \eqref{hyb nor}. Then, one readily obtains, in restriction to $\mathscr{O},$ the following decomposition
		\begin{align*}
			\Phi^{-1}\mathscr{L}\Phi & =  \Phi^{-1}\Big(\Phi\left(\omega\cdot\partial_{\varphi}\Pi_{\overline{\mathbb{S}}_{0}}^{\perp}+\mathscr{D}\right)+\left[\omega\cdot\partial_{\varphi}\Pi_{\overline{\mathbb{S}}_{0}}^{\perp}+\mathscr{D},\Psi\right]+\mathscr{R}+\mathscr{R}\Psi\Big)\\
			& =  \omega\cdot\partial_{\varphi}\Pi_{\overline{\mathbb{S}}_{0}}^{\perp}+\mathscr{D}+\Phi^{-1}\Big(\big[\omega\cdot\partial_{\varphi}\Pi_{\overline{\mathbb{S}}_{0}}^{\perp}+\mathscr{D}\, ,\, \Psi\big]+\mathbf{P_{N}}\mathscr{R}+\mathbf{P}_{\mathbf{N}}^{\perp}\mathscr{R}+\mathscr{R}\Psi\Big),
		\end{align*}
		where $\mathbf{P_{N}}\mathscr{R}$ and $\mathbf{P}_{\mathbf{N}}^{\perp}\mathscr{R}$ are defined as in \eqref{proj mat}. We shall select $\Psi$ such that the above expression contains a new remainder $\mathscr{R}_{\textnormal{\tiny{next}}}$ quadratically smaller than the previous one $\mathscr{R}$ up to modify the diagonal part $\mathscr{D}$ into a new one $\mathscr{D}_{\textnormal{\tiny{next}}}$ with the same structure \eqref{struc scr D}. Therefore, we choose $\Psi$ such that it solves the following \textit{matricial homological equation}
		\begin{equation}\label{hom eq Psi}
			\big[\omega\cdot\partial_{\varphi}\mathbf{I}_{\mathtt{m},\perp}+\mathscr{D}\, ,\,\Psi\big]+\mathbf{P_{N}}\mathscr{R}=\lfloor \mathbf{P_{N}}\mathscr{R}\rfloor,
		\end{equation}
		where $\lfloor \mathbf{P_{N}}\mathscr{R}\rfloor$ is the diagonal part of the matrix operator $\mathbf{P_{N}}\mathscr{R}$ as defined by \eqref{def diag-diag}-\eqref{def diag opi}, namely
		$$\lfloor \mathbf{P_{N}}\mathscr{R}\rfloor=\begin{pmatrix}
			\lfloor P_{N}^1\mathscr{R}_1\rfloor & 0\\
			0 & \lfloor P_{N}^1\mathscr{R}_2\rfloor
		\end{pmatrix}.$$
		The matricial equation \eqref{hom eq Psi} is equivalent to the following set of four scalar homological equations
		\begin{equation}\label{4Homeq}
			\left\lbrace\begin{array}{l}
				\big[\omega\cdot\partial_{\varphi}\Pi_{1}^{\perp}+\mathscr{D}_1\, ,\,\Psi_1\big]=\lfloor P_{N}^{1}\mathscr{R}_1\rfloor-P_{N}^{1}\mathscr{R}_1,\vspace{0.1cm}\\
				\big[\omega\cdot\partial_{\varphi}\Pi_{2}^{\perp}+\mathscr{D}_2\, ,\,\Psi_2\big]=\lfloor P_{N}^{1}\mathscr{R}_2\rfloor-P_{N}^{1}\mathscr{R}_2,\vspace{0.1cm}\\
				\big(\omega\cdot\partial_{\varphi}\Pi_{1}^{\perp}+\mathscr{D}_1\big)\Psi_{3}-\Psi_3\big(\omega\cdot\partial_{\varphi}\Pi_{2}^{\perp}+\mathscr{D}_2\big)=-P_{N}^{2}\mathscr{R}_3,\vspace{0.1cm}\\
				\big(\omega\cdot\partial_{\varphi}\Pi_{2}^{\perp}+\mathscr{D}_2\big)\Psi_{4}-\Psi_4\big(\omega\cdot\partial_{\varphi}\Pi_{1}^{\perp}+\mathscr{D}_1\big)=-P_{N}^{2}\mathscr{R}_4.
			\end{array}\right.
		\end{equation}
		As we shall see these equations can be solved modulo the selection of suitable parameters $(b,\omega)$ among a Cantor-type set connected to non-resonance conditions. 
		Let us begin with the diagonal equations on $\Psi_1$ and $\Psi_2$ which can  be treated in a similar way. Fix $k\in\{1,2\},$ then we are interested in solving the equation
		$$\big[\omega\cdot\partial_{\varphi}\Pi_{k}^{\perp}+\mathscr{D}_k\, ,\,\Psi_k\big]=\lfloor P_{N}^{1}\mathscr{R}_k\rfloor-P_{N}^{1}\mathscr{R}_k.$$
		This will be done by using the Fourier expansion of our operators. First notice that similarly to \eqref{cond proj R}-\eqref{restri rk}, the condition $\Psi_k=\Pi_{k}^{\perp}\Psi_k\Pi_{k}^{\perp}$ is equivalent to say that the Fourier coefficients of $\Psi_{k}$ satisfy 
		\begin{equation}\label{restri Psik}
			\forall(l,l_0)\in(\mathbb{Z}^{d})^2,\quad\forall j\,\,\textnormal{or}\,\,j_0\in\overline{\mathbb{S}}_{0,k},\quad(\Psi_k)_{l_{0},j_{0}}^{l,j}=0.
		\end{equation}
		Straightforward computations lead to
		$$
		\forall (l_0,j_0)\in\mathbb{Z}^{d}\times(\mathbb{Z}_{\mathtt{m}}\setminus\overline{\mathbb{S}}_{0,k}),\quad\big[\omega\cdot\partial_{\varphi}\Pi_{k}^{\perp},\Psi_k\big]\mathbf{e}_{l_{0},j_{0}}=\ii\sum_{(l,j)\in\mathbb{Z}^{d }\times(\mathbb{Z}_{\mathtt{m}}\setminus\overline{\mathbb{S}}_{0,k})}(\Psi_k)_{l_{0},j_{0}}^{l,j}\,\,\omega\cdot(l-l_{0})\,\,\mathbf{e}_{l,j}$$
		and using \eqref{struc scr D}
		$$\forall (l_0,j_0)\in\mathbb{Z}^{d}\times(\mathbb{Z}_{\mathtt{m}}\setminus\overline{\mathbb{S}}_{0,k}),\quad[\mathscr{D}_{k},\Psi_k]\mathbf{e}_{l_{0},j_{0}}=\ii\sum_{(l,j)\in\mathbb{Z}^{d }\times(\mathbb{Z}_{\mathtt{m}}\setminus\overline{\mathbb{S}}_{0,k})}(\Psi_k)_{l_{0},j_{0}}^{l,j}\big(\mu_{j,k}(b,\omega)-\mu_{j_0,k}(b,\omega)\big)\mathbf{e}_{l,j}.$$
		Consequently  $\Psi_k$ is a solution of \eqref{hom eq Psi} if and only if for any $(l_0,j_0)\in\mathbb{Z}^{d}\times(\mathbb{Z}_{\mathtt{m}}\setminus\overline{\mathbb{S}}_{0,k}),$
		$$\sum_{(l,j)\in\mathbb{Z}^{d}\times(\mathbb{Z}_{\mathtt{m}}\setminus\overline{\mathbb{S}}_{0,k})}(\Psi_k)_{l_{0},j_{0}}^{l,j}\Big(\omega\cdot (l-l_{0})+\mu_{j,k}(b,\omega)-\mu_{j_0,k}(b,\omega)\Big)\mathbf{e}_{l,j}=-\sum_{(l,j)\in\mathbb{Z}^{d}\times(\mathbb{Z}_{\mathtt{m}}\setminus\overline{\mathbb{S}}_{0,k})\atop\underset{(l,j)\neq(l,j_0)}{\langle l-l_0,j-j_0\rangle\leqslant N}}r_{j_0,k}^{j}(b,\omega,l-l_0)\mathbf{e}_{l,j}.$$
		By identification, we deduce that for any $(l,l_0,j,j_0)\in(\mathbb{Z}^d)^2\times(\mathbb{Z}_{\mathtt{m}}\setminus\overline{\mathbb{S}}_{0,k})^2,$ if $\langle l-l_0,j-j_0\rangle>N$, then $(\Psi_{k})_{l_0,j_0}^{l,j}=0,$
		otherwise we have
		$$(\Psi_k)_{l_{0},j_{0}}^{l,j}\Big(\omega\cdot (l-l_{0})+\mu_{j,k}(b,\omega)-\mu_{j_0,k}(b,\omega)\Big)=\left\lbrace\begin{array}{ll}
			-r_{j_{0},k}^{j}(b,\omega,l-l_{0}), & \mbox{if }(l,j)\neq(l_{0},j_{0}),\\
			0, & \mbox{if }(l,j)=(l_{0},j_{0}).
		\end{array}\right.$$
		As a consequence, we have that $\Psi_k$ is a Toeplitz in time operator with $(\Psi_k)_{j_{0}}^{j}(l-l_0)\triangleq (\Psi_k)_{l_{0},j_{0}}^{l,j}$. In addition, for  $(l,j,j_{0})\in\mathbb{Z}^{d }\times(\mathbb{Z}_{\mathtt{m}}\setminus\overline{\mathbb{S}}_{0,k})^{2}$ with $\langle l,j-j_0\rangle\leqslant N,$ one gets
		\begin{equation}\label{choice psik}
			(\Psi_k)_{j_{0}}^{j}(b,\omega,l)=\left\lbrace\begin{array}{ll}
				\frac{-r_{j_{0},k}^{j}(b,\omega,l)}{\omega\cdot l+\mu_{j,k}(b,\omega)-\mu_{j_{0},k}(b,\omega)}, & \mbox{if }(l,j)\neq(0,j_{0}),\\
				0, & \mbox{if }(l,j)=(0,j_{0}),
			\end{array}\right.
		\end{equation}
		provided that the denominator is non zero.  This latter fact is imposed by selecting suitable values of the parameters $(b,\omega)$ among the following set
		$$\mathscr{O}_{k}=\bigcap_{\underset{|l|\leqslant N}{(l,j,j_{0})\in\mathbb{Z}^{d }\times\left(\mathbb{Z}_{\mathtt{m}}\setminus\overline{\mathbb{S}}_{0,k}\right)^{2}}\atop (l,j)\neq(0,j_0)}\left\lbrace(b,\omega)\in\mathscr{O}
		\quad\textnormal{s.t.}\quad|\omega\cdot l+\mu_{j,k}(b,\omega)-\mu_{j_{0},k}(b,\omega)|>\tfrac{\gamma\langle j-j_0\rangle}{\langle l\rangle^{\tau_2}}\right\rbrace.$$
		This restriction avoids the resonances and implies that the identity  \eqref{choice psik} is well defined. Now, we shall extend $\Psi_k$ to the whole set $\mathcal{O}$ by using the cut-off function $\chi\in C^{\infty}(\mathbb{R},[0,1])$ defined by
		\begin{equation}\label{def cut-off chi}
			\chi(x)=\left\lbrace\begin{array}{ll}
				1 & \textnormal{if }|x|\geqslant\frac{1}{2}\vspace{0.1cm}\\
				0 & \textnormal{if }|x|\leqslant\frac{1}{3}.
			\end{array}\right.
		\end{equation}
		Then, the extension of $\Psi_k$, still denoted $\Psi_k$, is obtained by defining the Fourier coefficients by \eqref{restri Psik} and for $(l,j,j_{0})\in\mathbb{Z}^{d }\times(\mathbb{Z}_{\mathtt{m}}\setminus\overline{\mathbb{S}}_{0,k})^{2}$ with $\langle l,j-j_0\rangle\leqslant N,$ 
		\begin{equation}\label{coef ext psik}
			(\Psi_k)_{j_{0}}^{j}(b,\omega,l)=\left\lbrace\begin{array}{ll}
				-\varrho_{j_{0},k}^{j}(b,\omega,l)\,\, r_{j_{0},k}^{j}(b,\omega,l),& \mbox{if }\quad (l,j)\neq(0,j_{0}),\\
				0, & \mbox{if }\quad (l,j)=(0,j_{0}),
			\end{array}\right.
		\end{equation}
		with
		\begin{align}\label{varro psik}
			\varrho_{j_{0},k}^{j}(b,\omega,l)\triangleq \frac{\chi\left((\omega\cdot l+\mu_{j,k}(b,\omega)-\mu_{j_{0},k}(b,\omega))(\gamma\langle j-j_{0}\rangle)^{-1}\langle l\rangle^{\tau_2}\right)}{\omega\cdot l+\mu_{j,k}(b,\omega)-\mu_{j_{0},k}(b,\omega)}\cdot
		\end{align}
		The extension \eqref{coef ext psik} is smooth and coincides with \eqref{choice psik} for the parameters taken in $\mathscr{O}_{k}.$ In addition, putting together \eqref{coef ext psik}, \eqref{varro psik}, \eqref{coef scrRk} and \eqref{struc scr D} gives	$$(\Psi_k)_{j_{0}}^{j}(l)\in\mathbb{R}\qquad\textnormal{and}\qquad (\Psi_k)_{-j_{0}}^{-j}(-l)=(\Psi_k)_{j_{0}}^{j}(l).$$
		Therefore Definition \ref{Def-Rev} implies that $\Psi_k$ is a real and reversibility preserving Toeplitz in time operator. We now turn to the anti-diagonal equations satisfied by $\Psi_3$ and $\Psi_4$ in \eqref{4Homeq} which can be unified in the following form. Fix $\ell\in\{3,4\}$, then both equations of interest write
		\begin{equation}\label{hom eq Psil}
			\big(\omega\cdot\partial_{\varphi}\Pi_{\ell-2}^{\perp}+\mathscr{D}_{\ell-2}\big)\Psi_{\ell}-\Psi_{\ell}\big(\omega\cdot\partial_{\varphi}\Pi_{5-\ell}^{\perp}+\mathscr{D}_{5-\ell}\big)=-P_{N}^2\mathscr{R}_{\ell}.
		\end{equation}
		First notice that similarly to \eqref{cond proj R}-\eqref{restri rl}, the condition $\Psi_\ell=\Pi_{\ell-2}^{\perp}\Psi_\ell\Pi_{5-\ell}^{\perp}$ implies that the Fourier coefficients of $\Psi_{\ell}$ satisfy 
		\begin{equation}\label{restri Psil}
			\forall(l,l_0)\in(\mathbb{Z}^{d})^2,\quad\forall j\in\overline{\mathbb{S}}_{0,\ell-2}\,\,\textnormal{or}\,\,j_0\in\overline{\mathbb{S}}_{0,5-\ell},\quad(\Psi_\ell)_{l_{0},j_{0}}^{l,j}=0.
		\end{equation}
		One readily has that $\Psi_\ell$ is a solution of \eqref{hom eq Psil} if and only if for any $(l_0,j_0)\in\mathbb{Z}^{d}\times(\mathbb{Z}_{\mathtt{m}}\setminus\overline{\mathbb{S}}_{0,5-\ell}),$
		\begin{align*}
			\sum_{(l,j)\in\mathbb{Z}^{d}\times(\mathbb{Z}_{\mathtt{m}}\setminus\overline{\mathbb{S}}_{0,\ell-2})}(\Psi_\ell)_{l_{0},j_{0}}^{l,j}\Big(\omega\cdot (l-l_{0})+\mu_{j,\ell-2}(b,\omega)-&\mu_{j_0,5-\ell}(b,\omega)\Big)\mathbf{e}_{l,j}\\
			&=-\sum_{(l,j)\in\mathbb{Z}^{d}\times(\mathbb{Z}_{\mathtt{m}}\setminus\overline{\mathbb{S}}_{0,\ell-2})\atop\langle l-l_0,j,j_0\rangle\leqslant N}r_{j_0,\ell}^{j}(b,\omega,l-l_0)\mathbf{e}_{l,j}.
		\end{align*}
		By identification, we deduce that for any $(l,l_0,j,j_0)\in(\mathbb{Z}^d)^2\times(\mathbb{Z}_{\mathtt{m}}\setminus\overline{\mathbb{S}}_{0,\ell-2})\times(\mathbb{Z}_{\mathtt{m}}\setminus\overline{\mathbb{S}}_{0,5-\ell}),$ if $\langle l-l_0,j,j_0\rangle>N$, then $(\Psi_{\ell})_{l_0,j_0}^{l,j}=0,$
		otherwise we have
		$$(\Psi_\ell)_{l_{0},j_{0}}^{l,j}\Big(\omega\cdot (l-l_{0})+\mu_{j,\ell-2}(b,\omega)-\mu_{j_0,5-\ell}(b,\omega)\Big)=-r_{j_{0},\ell}^{j}(b,\omega,l-l_{0}).$$
		As a consequence, we have that $\Psi_\ell$ is a Toeplitz in time operator with $(\Psi_\ell)_{j_{0}}^{j}(l-l_0)\triangleq (\Psi_\ell)_{l_{0},j_{0}}^{l,j}$. In addition, for  $(l,j,j_{0})\in\mathbb{Z}^{d }\times(\mathbb{Z}_{\mathtt{m}}\setminus\overline{\mathbb{S}}_{0,\ell-2})\times(\mathbb{Z}_{\mathtt{m}}\setminus\overline{\mathbb{S}}_{0,5-\ell})$ with $\langle l,j,j_0\rangle\leqslant N,$ one gets
		\begin{equation}\label{choice psil}
			(\Psi_\ell)_{j_{0}}^{j}(b,\omega,l)=\frac{-r_{j_{0},\ell}^{j}(b,\omega,l)}{\omega\cdot l+\mu_{j,\ell-2}(b,\omega)-\mu_{j_{0},5-\ell}(b,\omega)}
		\end{equation}
		provided that the denominator is non zero.  This latter fact is imposed by selecting suitable values of the parameters $(b,\omega)$ among the following set
		$$\mathscr{O}_{1,2}=\bigcap_{(l,j,j_{0})\in\mathbb{Z}^{d}\times(\mathbb{Z}_{\mathtt{m}}\setminus\overline{\mathbb{S}}_{0,1})\times(\mathbb{Z}_{\mathtt{m}}\setminus\overline{\mathbb{S}}_{0,2})\atop\langle l,j,j_0\rangle\leqslant N_{n}}\Big\{(b,\omega)\in\mathcal{O}_{\infty,n}^{\gamma,\tau_1}(i_0)\quad\textnormal{s.t.}\quad\big|\omega\cdot l+\mu_{j,1}(b,\omega)-\mu_{j_0,2}(b,\omega)\big|>\tfrac{\gamma}{\langle l,j,j_0\rangle^{\tau_2}}\Big\}.$$
		This  implies that the identity  \eqref{choice psil} is well defined. Now, the extension of $\Psi_\ell$, still denoted $\Psi_\ell$, is obtained by defining the Fourier coefficients by \eqref{restri Psil} and for $(l,j,j_{0})\in\mathbb{Z}^{d }\times(\mathbb{Z}_{\mathtt{m}}\setminus\overline{\mathbb{S}}_{0,\ell-2})\times(\mathbb{Z}_{\mathtt{m}}\setminus\overline{\mathbb{S}}_{0,5-\ell})$ with $\langle l,j,j_0\rangle\leqslant N,$ 
		\begin{equation}\label{coef ext psil}
			(\Psi_\ell)_{j_{0}}^{j}(b,\omega,l)=-\varrho_{j_{0},\ell}^{j}(b,\omega,l)\,\, r_{j_{0},\ell}^{j}(b,\omega,l)
		\end{equation}
		with
		\begin{align}\label{varro psil}
			\varrho_{j_{0},\ell}^{j}(b,\omega,l)\triangleq \frac{\chi\left((\omega\cdot l+\mu_{j,\ell-2}(b,\omega)-\mu_{j_{0},5-\ell}(b,\omega))\gamma^{-1}\langle l,j,j_0\rangle^{\tau_2}\right)}{\omega\cdot l+\mu_{j,\ell-2}(b,\omega)-\mu_{j_{0},5-\ell}(b,\omega)},
		\end{align}
		where $\chi$ is the cut-off function introduced in \eqref{def cut-off chi}. The extension \eqref{coef ext psil} is smooth and coincides with \eqref{choice psil} for the parameters taken in $\mathscr{O}_{1,2}.$ In addition, putting together \eqref{coef ext psil}, \eqref{varro psil}, \eqref{coef scrRk} and \eqref{struc scr D} gives	$$(\Psi_\ell)_{j_{0}}^{j}(l)\in\mathbb{R}\qquad\textnormal{and}\qquad (\Psi_\ell)_{-j_{0}}^{-j}(-l)=(\Psi_\ell)_{j_{0}}^{j}(l).$$
		Therefore Definition \ref{Def-Rev} implies that $\Psi_{\ell}$ is a real and reversibility preserving Toeplitz in time operator. Now consider,
		\begin{equation}\label{D-R next}
			\mathscr{D}_{\textnormal{\tiny{next}}}\triangleq \mathscr{D}+\lfloor \mathbf{P_{N}}\mathscr{R}\rfloor,\quad \quad\mathscr{R}_{\textnormal{\tiny{next}}}\triangleq \Phi^{-1}\big(-\Psi\,\lfloor \mathbf{P_{N}}\mathscr{R}\rfloor +\mathbf{P}_{\mathbf{N}}^{\perp}\mathscr{R}+\mathscr{R}\Psi\big)
		\end{equation}
		and
		$$\mathscr{L}_{\textnormal{\tiny{next}}}\triangleq \omega\cdot\partial_{\varphi}\mathbf{I}_{\mathtt{m},\perp}+\mathscr{D}_{\textnormal{\tiny{next}}}+\mathscr{R}_{\textnormal{\tiny{next}}}.$$
		Recall that $\mathscr{D},$ $\mathscr{R}$ and $\Psi$ satisfy the localizations properties \eqref{struc scr D}, \eqref{cond proj R} and \eqref{ansatz Phi reduc rem}, respectively. One can easily check that this property is stable under composition/addition and therefore obtains
		$$\Pi_{\mathbb{S}_{0}}^{\perp}\mathscr{D}_{\textnormal{\tiny{next}}}\Pi_{\mathbb{S}_0}^{\perp}=\mathscr{D}_{\textnormal{\tiny{next}}}\qquad\textnormal{and}\qquad\Pi_{\mathbb{S}_0}^{\perp}\mathscr{R}_{\textnormal{\tiny{next}}}\Pi_{\mathbb{S}_0}^{\perp}=\mathscr{R}_{\textnormal{\tiny{next}}}.$$
		Therefore, in restriction to the Cantor set 
		$$\mathscr{O}_{\textnormal{\tiny{next}}}^{\gamma}\triangleq \mathscr{O}_{1}\cap\mathscr{O}_{2}\cap\mathscr{O}_{1,2},$$
		the above construction implies that
		$$\mathscr{L}_{\textnormal{\tiny{next}}}=\Phi^{-1}\mathscr{L}\Phi.$$
		To end this KAM step, we shall now give some quantitative estimates in order to prove the convergence of the scheme. For this aim, we assume that the following estimates hold true.
		\begin{equation}\label{ass-mukd}
			\forall k\in\{1,2\},\quad\forall (j,j_0)\in(\mathbb{Z}_{\mathtt{m}}\setminus\overline{\mathbb{S}}_{0,k})^2,\quad\max_{\alpha\in\mathbb{N}^{d+1}\atop|\alpha|\in\llbracket 0,q\rrbracket}\sup_{(b,\omega)\in\mathcal{O}}\Big|\partial_{b,\omega}^{\alpha}\Big(\mu_{j,k}(b,\omega)-\mu_{j_0,k}(b,\omega)\Big)\Big|\leqslant C|j-j_0|
		\end{equation}
		and
		\begin{equation}\label{ass-mu12d}
			\forall (j,j_0)\in(\mathbb{Z}_{\mathtt{m}}\setminus\overline{\mathbb{S}}_{0,1})\times(\mathbb{Z}_{\mathtt{m}}\setminus\overline{\mathbb{S}}_{0,2}),\quad\max_{\alpha\in\mathbb{N}^{d+1}\atop|\alpha|\in\llbracket 0,q\rrbracket}\sup_{(b,\omega)\in\mathcal{O}}\Big|\partial_{b,\omega}^{\alpha}\Big(\mu_{j,1}(b,\omega)-\mu_{j_0,2}(b,\omega)\Big)\Big|\leqslant C\langle j,j_0\rangle.
		\end{equation}
		We denote
		$$A_{l,j,j_{0}}^{k,k'}(b,\omega)\triangleq \omega\cdot l+\mu_{j,k}(b,\omega)-\mu_{j_{0},k'}(b,\omega), \quad a_{l,j,j_{0}}\triangleq (\gamma\langle j-j_0\rangle)^{-1}\langle l\rangle^{\tau_2},\quad \widetilde{a}_{l,j,j_0}\triangleq \gamma^{-1}\langle l,j,j_0\rangle^{\tau_2}.$$
		Then, we can write
		\begin{align*}
			\forall k\in\{1,2\},\quad\varrho_{j_{0},k}^{j}(b,\omega,l)&=a_{l,j,j_{0}}\widehat \chi\left(a_{l,j,j_{0}} A_{l,j,j_{0}}^{k,k}(b,\omega)\right),\\
			\forall\ell\in\{3,4\},\quad\varrho_{j_{0},\ell}^{j}(b,\omega,l)&=\widetilde{a}_{l,j,j_{0}}\widehat \chi\left(\widetilde{a}_{l,j,j_{0}} A_{l,j,j_{0}}^{\ell-2,5-\ell}(b,\omega)\right),
		\end{align*}
		where   $\widehat{\chi}(x)\triangleq \frac{\chi(x)}{x}$  is $\mathcal{C}^\infty$ with bounded derivatives. The assumptions \eqref{ass-mukd}-\eqref{ass-mu12d} imply
		\begin{align*}
			\forall\, (l,j,j_{0})\in\mathbb{Z}^{d}\times(\mathbb{Z}_{\mathtt{m}}\setminus\overline{\mathbb{S}}_{0,k})^2,\quad\max_{\alpha\in\mathbb{N}^{d+1}\atop|\alpha|\in\llbracket 0, q\rrbracket}\sup_{(b,\omega)\in\mathcal{O}}\Big|\partial_{b,\omega}^{\alpha}A_{l,j,j_{0}}^{k,k}(b,\omega)\Big|\leqslant C\langle l,j-j_0\rangle
		\end{align*}
		and
		\begin{align*}
			\forall\, (l,j,j_{0})\in\mathbb{Z}^{d}\times(\mathbb{Z}_{\mathtt{m}}\setminus\overline{\mathbb{S}}_{0,\ell-2})\times(\mathbb{Z}_{\mathtt{m}}\setminus\overline{\mathbb{S}}_{0,5-\ell}),\quad\max_{\alpha\in\mathbb{N}^{d+1}\atop|\alpha|\in\llbracket 0, q\rrbracket}\sup_{(b,\omega)\in\mathcal{O}}\Big|\partial_{b,\omega}^{\alpha}A_{l,j,j_{0}}^{\ell-2,5-\ell}(b,\omega)\Big|\leqslant C\langle l,j,j_0\rangle.
		\end{align*}
		Applying Lemma \ref{lem funct prop}-(iv), we obtain for any $\alpha\in\mathbb{N}^{d+1}$ with $|\alpha|\in\llbracket 0,q\rrbracket,$
		\begin{align*}
			\forall k\in\{1,2\},\quad\sup_{(b,\omega)\in\mathcal{O}}\Big|\partial_{b,\omega}\varrho_{j_0,k}^{j}(b,\omega,l)\Big|&\leqslant C\gamma^{-(|\alpha|+1)}\langle l,j-j_0\rangle^{\tau_2 |\alpha|+\tau_2+|\alpha|},\\
			\forall\ell\in\{3,4\},\quad\sup_{(b,\omega)\in\mathcal{O}}\Big|\partial_{b,\omega}\varrho_{j_0,\ell}^{j}(b,\omega,l)\Big|&\leqslant C\gamma^{-(|\alpha|+1)}\langle l,j,j_0\rangle^{\tau_2 |\alpha|+\tau_2+|\alpha|}.
		\end{align*}
		In similar way to  \cite[Prop. 6.5]{HR21}, making use Leibniz rule implies
		\begin{align}
			\forall k\in\{1,2\},\quad\|\Psi_{k}\|_{\textnormal{\tiny{O-d}},s}^{q,\gamma,\mathtt{m}}&\leqslant C\gamma^{-1}\|P_{N}^{1}\mathscr{R}_{k}\|_{\textnormal{\tiny{O-d}},s+\tau_2 q+\tau_2}^{q,\gamma,\mathtt{m}},\label{e-psik-scrRk}\\
			\forall\ell\in\{3,4\},\quad\|\Psi_{\ell}\|_{\textnormal{\tiny{I-D}},s}^{q,\gamma,\mathtt{m}}&\leqslant C\gamma^{-1}\|P_{N}^{2}\mathscr{R}_{\ell}\|_{\textnormal{\tiny{I-D}},s+\tau_2 q+\tau_2}^{q,\gamma,\mathtt{m}}.\label{e-psil-scrRl}
		\end{align}
		Combining \eqref{e-psik-scrRk},  \eqref{e-psil-scrRl}, \eqref{hyb nor} and Corollary \ref{cor-hyb-nor}, we get
		\begin{align}\label{e-Psi-scrR-hyb}
			\interleave\Psi\interleave_{s}^{q,\gamma,\mathtt{m}}&\leqslant C\gamma^{-1}\interleave\mathbf{P_N}\mathscr{R}\interleave_{s+\tau_2 q+\tau_2}^{q,\gamma,\mathtt{m}}\nonumber\\
			&\leqslant C\gamma^{-1}N^{\tau_2q+\tau_2}\interleave\mathscr{R}\interleave_{s}^{q,\gamma,\mathtt{m}}.
		\end{align}
		Now assume the following smallness condition
		\begin{align}\label{ass-sml-scrR}
			\gamma ^{-1}N^{\tau_2 q+\tau_2}\interleave \mathscr{R}\interleave_{s_0}^{q,\gamma,\mathtt{m}}\leqslant  C\varepsilon_{0}.
		\end{align}
		Putting together \eqref{e-Psi-scrR-hyb} and \eqref{ass-sml-scrR}, we obtain
		\begin{equation}\label{sml-Psi-hyb}
			\interleave\Psi\interleave_{s_0}^{q,\gamma,\mathtt{m}}\leqslant C\varepsilon_{0}.
		\end{equation}
		We deduce that, for $\varepsilon_0$ small enough, the operator $\Phi$ is invertible and its inverse is given by
		$$\Phi^{-1}=\displaystyle\sum_{n=0}^{\infty}(-1)^{n}\Psi^{n}\triangleq \textnormal{Id}+\Sigma.$$
		According to Corollary \ref{cor-hyb-nor}-(ii), \eqref{e-Psi-scrR-hyb} and \eqref{sml-Psi-hyb}, one obtains
		\begin{align}
			\displaystyle\interleave\Sigma\interleave_{s}^{q,\gamma,\mathtt{m}} & \leqslant  \displaystyle\interleave\Psi\interleave_{s}^{q,\gamma,\mathtt{m}}\left(1+\sum_{n=1}^{\infty}\left(C\interleave\Psi\interleave_{s_{0}}^{q,\gamma,\mathtt{m}}\right)^{n}\right)\label{e2-Sigma}\\
			&\leqslant \displaystyle C\,\gamma ^{-1}N^{\tau_2 q+\tau_2}\interleave\mathscr{R}\interleave_{s}^{q,\gamma,\mathtt{m}}.\label{el-Sigma}
		\end{align}
		In particular, \eqref{ass-sml-scrR} implies
		\begin{equation}\label{sml-Sigma}
			\interleave\Sigma\interleave_{s_0}^{q,\gamma,\mathtt{m}}\leqslant C\gamma ^{-1}N^{\tau_2 q+\tau_2}\interleave\mathscr{R}\interleave_{s_0}^{q,\gamma,\mathtt{m}}\leqslant C\varepsilon_0.
		\end{equation}
		The second identity in \eqref{D-R next} also writes
		$$\mathscr{R}_{\textnormal{\tiny{next}}}=\mathbf{P}_{\mathbf{N}}^{\perp}\mathscr{R}+\Phi^{-1}\mathscr{R}\Psi-\Psi\,\lfloor \mathbf{P_{N}}\mathscr{R}\rfloor +\Sigma\big(\mathbf{P}_{\mathbf{N}}^{\perp}\mathscr{R}-\Psi\,\lfloor \mathbf{P_{N}}\mathscr{R}\rfloor\big).$$
		Hence, one gets from Corollary \ref{cor-hyb-nor}-(ii), \eqref{e2-Sigma}, \eqref{sml-Psi-hyb} and \eqref{sml-Sigma},
		\begin{align}\label{e-scrRnext1}
			\nonumber \interleave\mathscr{R}_{\textnormal{\tiny{next}}}\interleave_{s}^{q,\gamma,\mathtt{m}} & \leqslant\interleave\mathbf{P}_{\mathbf{N}}^{\perp}\mathscr{R}\interleave_{s}^{q,\gamma,\mathtt{m}}+C\interleave\Sigma\interleave_{s}^{q,\gamma,\mathtt{m}}\left(\interleave\mathbf{P}_{\mathbf{N}}^{\perp}\mathscr{R}\interleave_{s_{0}}^{q,\gamma,\mathtt{m}}+\interleave\Psi\interleave_{s_{0}}^{q,\gamma,\mathtt{m}}\interleave\mathscr{R}\interleave_{s_{0}}^{q,\gamma,\mathtt{m}}\right)\\
			&\quad+C\left(1+\interleave\Sigma\interleave_{s_{0}}^{q,\gamma,\mathtt{m}}\right)
			\left(\interleave\Psi\interleave_{s}^{q,\gamma,\mathtt{m}}\interleave\mathscr{R}\interleave_{s_{0}}^{q,\gamma,\mathtt{m}}+\interleave\Psi\interleave_{s_{0}}^{q,\gamma,\mathtt{m}}\interleave\mathscr{R}\interleave_{s}^{q,\gamma,\mathtt{m}}\right).
		\end{align}
		Using Corollary \ref{cor-hyb-nor}-(i), \eqref{e-Psi-scrR-hyb}, \eqref{ass-sml-scrR}, \eqref{sml-Psi-hyb}, \eqref{el-Sigma}, \eqref{sml-Sigma} and \eqref{e-scrRnext1}, we have for all $S\geqslant\overline{s}\geqslant s\geqslant s_{0}$,
		\begin{equation}\label{e-KAM Rnext}
			\interleave\mathscr{R}_{\textnormal{\tiny{next}}}\interleave_{s}^{q,\gamma,\mathtt{m}}\leqslant N^{s-\overline{s}}\interleave\mathscr{R}\interleave_{\overline{s}}^{q,\gamma,\mathtt{m}}+C\gamma^{-1}N^{\tau_2 q+\tau_2}\interleave\mathscr{R}\interleave_{s_{0}}^{q,\gamma,\mathtt{m}}\interleave\mathscr{R}\interleave_{s}^{q,\gamma,\mathtt{m}}.
		\end{equation}
		One also has
		\begin{align*}
			\partial_{\theta}\mathscr{R}_{\textnormal{\tiny{next}}}&=\Phi^{-1}\Big(\mathbf{P}_{\mathbf{N}}^{\perp}\partial_{\theta}\mathscr{R}+\partial_{\theta}\mathscr{R}\Psi-\Psi\partial_{\theta}\lfloor\mathbf{P}_{\mathbf{N}}\mathscr{R}\rfloor-[\partial_{\theta},\Psi]\lfloor\mathbf{P}_{\mathbf{N}}\mathscr{R}\rfloor\Big)\\
			&\quad+[\partial_{\theta},\Sigma]\Big(\mathbf{P}_{\mathbf{N}}^{\perp}\mathscr{R}+\mathscr{R}\Psi-\Psi\lfloor\mathbf{P}_{\mathbf{N}}\mathscr{R}\rfloor\Big).
		\end{align*} 
		Using the fact that for any scalar operator $T$,
		$$\|[\partial_{\theta},T]\|_{\textnormal{\tiny{O-d}},s}^{q,\gamma,\mathtt{m}}\lesssim\|T\|_{\textnormal{\tiny{O-d}},s+1}^{q,\gamma,\mathtt{m}},\qquad\|[\partial_{\theta},T]\|_{\textnormal{\tiny{I-D}},s}^{q,\gamma,\mathtt{m}}\lesssim\|T\|_{\textnormal{\tiny{I-D}},s+1}^{q,\gamma,\mathtt{m}},$$
		one has for any matricial operator $\mathbf{T},$
		$$\interleave[\partial_{\theta},\mathbf{T}]\interleave_{s}^{q,\gamma,\mathtt{m}}\lesssim\interleave\mathbf{T}\interleave_{s+1}^{q,\gamma,\mathtt{m}}.$$
		Thus, in a similar way to \eqref{e-KAM Rnext}, one obtains for any $s_0\leqslant s\leqslant\overline{s}\leqslant S,$
		\begin{equation}\label{e-KAM dRnext}
			\widehat{\delta}_{\textnormal{\tiny{next}}}(s)\leqslant N^{s-\overline{s}}\,\widehat{\delta}(\overline{s})+C\gamma^{-1}N^{\tau_2 q+\tau_2+1}\widehat{\delta}(s_0)\widehat{\delta}(s),
		\end{equation}
		where
		$$\widehat{\delta}(s)\triangleq \max\Big(\gamma^{-1}\interleave\partial_{\theta}\mathscr{R}\interleave_{s}^{q,\gamma,\mathtt{m}}\,,\,\delta(s)\Big).$$
		$\blacktriangleright$ \textbf{Initialization} Now, we shall check the validity of the assumptions \eqref{ass-mukd}, \eqref{ass-mu12d} and \eqref{ass-sml-scrR} for the initial operator $\mathscr{L}=\mathscr{L}_0$ in \eqref{reduction on nor}. It is clear from \eqref{ASYFR1+}-\eqref{ASYFR1-} that
		$$\forall k\in\{1,2\},\quad\forall(j,j_{0})\in\mathbb{Z}^{2},\quad\max_{q'\in\llbracket 0,q\rrbracket}\sup_{b\in(b_{*},b^{*})}\big|\partial_{b}^{q'}\big(\Omega_{j,k}(b)-\Omega_{j_{0},k}(b)\big)\big|\leqslant C\,|j-j_{0}|$$
		and
		$$\forall(j,j_{0})\in\mathbb{Z}^{2},\quad\max_{q'\in\llbracket 0, q\rrbracket}\sup_{b\in(b_{*},b^{*})}\big|\partial_{b}^{q'}\big(\Omega_{j,1}(b)-\Omega_{j_{0},2}(b)\big)\big|\leqslant C\,\langle j,j_{0}\rangle.$$
		Consequently, we infer from \eqref{mu0 r0}-\eqref{e-ed-r0},
		\begin{align*}
			\forall k\in\{1,2\},\quad\forall(j,j_{0})\in\mathbb{Z}^{2},\quad\max_{\alpha\in\mathbb{N}^{d+1}\atop|\alpha|\in\llbracket 0, q\rrbracket}\sup_{(b,\omega)\in\mathcal{O}}\left|\partial_{b,\omega}^{\alpha}\left(\mu_{j,k}^{(0)}(b,\omega)-\mu_{j_{0},k}^{(0)}(b,\omega)\right)\right|\leqslant C\,|j-j_{0}|
		\end{align*}
		and
		\begin{align*}
			\forall(j,j_{0})\in\mathbb{Z}^{2},\quad\max_{\alpha\in\mathbb{N}^{d+1}\atop|\alpha|\in\llbracket 0, q\rrbracket}\sup_{(b,\omega)\in\mathcal{O}}\left|\partial_{b,\omega}^{\alpha}\left(\mu_{j,1}^{(0)}(b,\omega)-\mu_{j_{0},2}^{(0)}(b,\omega)\right)\right|\leqslant C\,\langle j,j_{0}\rangle.
		\end{align*}
		This proves the initial assumptions \eqref{ass-mukd}-\eqref{ass-mu12d}. Now let us focus on the assumption \eqref{ass-sml-scrR}. This latter is obtained by gathering \eqref{e-hyb-scrR0}, \eqref{sml-RR} and \eqref{p-RR}. Indeed, 
		\begin{align*}
			\gamma ^{-1}N_0^{\tau_2q+\tau_2}\interleave\mathscr{R}_{0}\interleave_{s_0}^{q,\gamma,\mathtt{m}}&\leqslant C\varepsilon\gamma ^{-2}N_0^{\mu_2}\left(1+\|\mathfrak{I}_{0}\|_{s_h+\sigma_{4}}^{q,\gamma,\mathtt{m}}\right)\\
			&\leqslant C\varepsilon_{0}.
		\end{align*}
		$\blacktriangleright$ \textbf{KAM iteration}. Now, we shall implement the complete KAM reduction scheme. Given $m\in\mathbb{N}$ we assume that we have constructed a linear operator 
		\begin{align}\label{Op-Lm}
			\mathscr{L}_{m}\triangleq \omega\cdot\partial_{\varphi}\mathbf{I}_{\mathtt{m},\perp}+\mathscr{D}_{m}+\mathscr{R}_{m},
		\end{align}
		with
		$$\mathscr{D}_m=\begin{pmatrix}
			\mathscr{D}_{m,1} & 0\\
			0 & \mathscr{D}_{m,2}
		\end{pmatrix},\qquad\mathscr{D}_{m,k}=\left(\ii \mu_{j,k}^{(m)}\right)_{j\in\mathbb{Z}_{\mathtt{m}}\setminus\overline{\mathbb{S}}_{0,k}},\qquad\mu_{-j,k}^{(m)}(b,\omega)=-\mu_{j,k}^{(m)}(b,\omega)$$ 
		and $\mathscr{R}_m$ a real and reversible Toeplitz in time matrix operator of zero order satisfying $\Pi_{\overline{\mathbb{S}}_{0}}^{\perp} \mathscr{R}_m\Pi_{\overline{\mathbb{S}}_{0}}^{\perp} =\mathscr{R}_m.$ In addition, we assume that the assumptions \eqref{ass-mukd}, \eqref{ass-mu12d} and \eqref{ass-sml-scrR} hold for $\mathscr{D}_m$ and $\mathscr{R}_m$. Notice that for $m=0$ we take the operator $\mathscr{L}_0$ defined in \eqref{reduction on nor}. Applying the KAM step we can construct a linear invertible operator $\Phi_{m}=\mathbf{I}_{\mathtt{m},\perp}+\Psi_{m}$ with $\Psi_m$ living in $\mathcal{O}$ such that in restriction to the Cantor set	
		\begin{align}\label{Cantor-SX}
			&\mathscr{O}_{m+1}^{\gamma }=
			\bigcap_{\underset{\,j, j_{0}\in\Z_{\m}\backslash\overline{\mathbb{S}}_{0,k}}{ {k\in\{1,2\}}}}\bigcap_{\underset{|l|\leqslant N_{m}}{l\in\mathbb{Z}^{d}\atop(l,j)\neq(0,j_{0})}}\left\lbrace(b,\omega)\in\mathscr{O}_m^\gamma\quad\textnormal{s.t.}\quad\left|\omega\cdot l+\mu_{j,k}^{(m)}(b,\omega,i_{0})-\mu_{j_{0},k}^{(m)}(b,\omega,i_{0})\right|>\tfrac{\gamma\langle j-j_0\rangle}{\langle l\rangle^{\tau_2}}\right\rbrace\nonumber\\
			&\bigcap_{(l,j,j_{0})\in\mathbb{Z}^{d }\times(\mathbb{Z}_{\mathtt{m}}\setminus\overline{\mathbb{S}}_{0,1})\times(\mathbb{Z}_{\mathtt{m}}\setminus\overline{\mathbb{S}}_{0,2})\atop\langle l,j,j_0\rangle\leqslant N_{m}}\left\lbrace(b,\omega)\in\mathscr{O}_{m}^{\gamma }\quad\textnormal{s.t.}\quad \left|\omega\cdot l+\mu_{j,1}^{(m)}(b,\omega)-\mu_{j_{0},2}^{(m)}(b,\omega)\right|>\tfrac{\gamma}{\langle l,j,j_0\rangle^{\tau_2}}\right\rbrace,
		\end{align}
		the operator $\Psi_{m}$ satisfies the following homological equation
		$$\big[\omega\cdot\partial_{\varphi}\mathbf{I}_{\mathtt{m},\perp}+\mathscr{D}_{m},\Psi_{m}\big]+\mathbf{P_{N_{m}}}\mathscr{R}_{m}=\lfloor \mathbf{P_{N_{m}}}\mathscr{R}_{m}\rfloor$$
		and consequently, the following identity holds in $\mathscr{O}_{m+1}^{\gamma}$
		\begin{align}\label{Op-Lm1}
			\Phi_{m}^{-1}\mathscr{L}_{m}\Phi_{m}=\omega\cdot\partial_{\varphi}\mathbf{I}_{\mathtt{m},\perp}+\mathscr{D}_{m+1}+\mathscr{R}_{m+1},
		\end{align}
		with
		\begin{align}\label{scr Dm+1-Rm+1}
			\mathscr{D}_{m+1}\triangleq \mathscr{D}_{m}+\lfloor \mathbf{P_{N_{m}}}\mathscr{R}_{m}\rfloor\quad\mbox{and}\quad \mathscr{R}_{m+1}\triangleq \Phi_{m}^{-1}\left(-\Psi_{m}\,\lfloor \mathbf{P_{N_{m}}}\mathscr{R}_{m}\rfloor +\mathbf{P_{N_{m}}}^{\perp}\mathscr{R}_{m}+\mathscr{R}_{m}\Psi_{m}\right).
		\end{align}
		Recall that the operator $\lfloor \mathbf{P_{N_{m}}}\mathscr{R}_{m}\rfloor$ is defined by
		$$\lfloor \mathbf{P_{N_{m}}}\mathscr{R}_{m}\rfloor=\begin{pmatrix}
			\lfloor P_{N_{m}}^1\mathscr{R}_{m,1}\rfloor & 0\\
			0 & \lfloor P_{N_{m}}^1\mathscr{R}_{m,2}\rfloor
		\end{pmatrix},$$
		with
		$$\lfloor P_{N_{m}}^1\mathscr{R}_{m,k}\rfloor=\left(\ii r_{j,k}^{(m)}\right)_{j\in\mathbb{Z}_{\mathtt{m}}\setminus\overline{\mathbb{S}}_{0,k}},\qquad r_{-j,k}^{(m)}(b,\omega)=-r_{j,k}^{(m)}(b,\omega).$$
		Observe that the symmetry condition for $r_{j,k}^{(m)}$ is a consequence of the reversibility of $\mathscr{R}_m.$ By construction, we find
		\begin{align}\label{spec rec def}
			\mu_{j,k}^{(m+1)}=\mu_{j,k}^{(m)}+r_{j,k}^{(m)}.
		\end{align}
		We point out that working with this extension for $\Psi_m$ allows to extend  both $\mathscr{D}_{m+1}$ and the remainder $\mathscr{R}_{m+1}$ provided that the operators $\mathscr{D}_{m}$ and $\mathscr{R}_{m}$ are defined in the whole range of parameters. Thus the operator defined by the right-hand side in \eqref{Op-Lm1} can be extended to the whole set $\mathcal{O}$ and we denote this extension by $\mathscr{L}_{m+1}$. that is,
		\begin{align*}
			\omega\cdot\partial_{\varphi}\mathbf{I}_{\mathtt{m},\perp}+\mathscr{D}_{m+1}+\mathscr{R}_{m+1}\triangleq \mathscr{L}_{m+1}.
		\end{align*}
		This enables to construct by induction the sequence of operators $\left(\mathscr{L}_{m+1}\right)$ in the full set  $\mathcal{O}$. Similarly the operator $\Phi_{m}^{-1}\mathscr{L}_{m}\Phi_{m}$ admits an extension in $\mathcal{O}$ induced by the extension of $\Phi_m^{\pm1}$ . However, by   construction the identity $\mathscr{L}_{m+1}=\Phi_{m}^{-1}\mathscr{L}_{m}\Phi_{m}$ in \eqref{Op-Lm1} occurs in the  Cantor set $\mathscr{O}_{m+1}^{\gamma }$ and may fail outside this set. Define
		\begin{equation}\label{def dltm dltmh}
			\delta_{m}(s)\triangleq \gamma ^{-1}\interleave\mathscr{R}_{m}\interleave_{s}^{q,\gamma,\mathtt{m}}\qquad\textnormal{and}\qquad\widehat{\delta}_{m}(s)\triangleq \max\Big(\delta_m(s)\,,\,\gamma^{-1}\interleave\partial_{\theta}\mathscr{R}_m\interleave_{s}^{q,\gamma,\mathtt{m}}\Big).
		\end{equation}
		Assume that the following estimates hold
		\begin{align}\label{e-mujmk} \forall\,(j,j_{0})\in(\mathbb{Z}_{\mathtt{m}}\setminus\overline{\mathbb{S}}_{0,k})^{2},\quad\max_{|\alpha| \in\llbracket 0,q\rrbracket}\sup_{(b,\omega)\in\mathcal{O}}\left|\partial_{b,\omega}^{\alpha}\left(\mu_{j,k}^{(m)}(b,\omega)-\mu_{j_{0},k}^{(m)}(b,\omega)\right)\right|\leqslant C\,|j-j_{0}|
		\end{align}
		and
		\begin{align}\label{e-mujm12} \forall\,(j,j_{0})\in(\mathbb{Z}_{\mathtt{m}}\setminus\overline{\mathbb{S}}_{0,1})\times(\mathbb{Z}_{\mathtt{m}}\setminus\overline{\mathbb{S}}_{0,2}),\quad\max_{|\alpha| \in\llbracket 0,q\rrbracket}\sup_{(b,\omega)\in\mathcal{O}}\left|\partial_{b,\omega}^{\alpha}\left(\mu_{j,1}^{(m)}(b,\omega)-\mu_{j_{0},2}^{(m)}(b,\omega)\right)\right|\leqslant C\,\langle j,j_{0}\rangle.
		\end{align}
		Applying the KAM step, we deduce from \eqref{e-KAM Rnext} and \eqref{e-KAM dRnext} the following induction formulae true for any $s_0\leqslant s\leqslant\overline{s}\leqslant S,$
		\begin{align*}
			\delta_{m+1}(s)&\leqslant N_{m}^{s-\overline{s}}\delta_{m}(\overline{s})+CN_{m}^{\tau_2q+\tau_2}\delta_{m}(s_0)\delta_{m}(s),\\
			\widehat{\delta}_{m+1}(s)&\leqslant N_{m}^{s-\overline{s}}\,\widehat{\delta}_{m}(\overline{s})+CN_{m}^{\tau_2q+\tau_2+1}\widehat{\delta}_{m}(s_0)\widehat{\delta}_{m}(s).
		\end{align*}
		Hence, in a similar way to \cite[Prop. 6.5]{HR21}, our choice of parameters \eqref{param} allow to prove by induction on $m\in\mathbb{N}$ that 
		\begin{equation}\label{hyp-ind dltp}
			\forall\, m\in\mathbb{N},\quad \delta_{m}(\overline{s}_{l})\leqslant \delta_{0}(s_{h})N_{0}^{\mu_{2}}N_{m}^{-\mu_{2}}\quad \mbox{ and }\quad \delta_{m}(s_{h})\leqslant\left(2-\tfrac{1}{m+1}\right)\delta_{0}(s_{h}),
		\end{equation}
		and
		\begin{equation}\label{hyp-ind dltph}
			\forall m\in\mathbb{N},\quad\widehat{\delta}_{m}(s_{0})\leqslant \widehat{\delta}_{0}(s_{h})N_{0}^{\mu_{2}}N_{m}^{-\mu_{2}}\quad \mbox{ and }\quad \widehat{\delta}_{m}(s_{h})\leqslant\left(2-\tfrac{1}{m+1}\right)\widehat{\delta}_{0}(s_{h}).
		\end{equation}
		Observe that the first condition in \eqref{hyp-ind dltp} together with \eqref{e-dlt0sh init} and \eqref{p-RR} implies that the smallness condition \eqref{ass-sml-scrR} is satisfied for any $m$ (replacing $\mathscr{R}$ by $\mathscr{R}_m$ and $N$ by $N_m$). Using the Topeplitz structure of $\mathscr{R}_{m,k}$ and an integration by parts, we get from \eqref{spec rec def}
		\begin{align*}
			\left\|\mu_{j,k}^{(m+1)}-\mu_{j,k}^{(m)}\right\|^{q,\gamma}&=\big\|\big\langle P_{N_{m}}^1\mathscr{R}_{m,k}\mathbf{e}_{l,j}\,,\,\mathbf{e}_{l,j}\big\rangle_{L^{2}(\mathbb{T}^{d +1})}\big\|^{q,\gamma}\\
			&=\big\|\big\langle P_{N_{m}}^1\mathscr{R}_{m,k}\mathbf{e}_{0,j}\,,\,\mathbf{e}_{0,j}\big\rangle_{L^{2}(\mathbb{T}^{d+1})}\big\|^{q,\gamma}\\
			&=\tfrac{1}{|j|}\big\|\big\langle P_{N_{m}}^1\mathscr{R}_{m,k}\mathbf{e}_{0,j}\,,\,\partial_{\theta}\mathbf{e}_{0,j}\big\rangle_{L^{2}(\mathbb{T}^{d+1})}\big\|^{q,\gamma}\\
			&=\tfrac{1}{|j|}\big\|\big\langle P_{N_{m}}^1\partial_{\theta}\mathscr{R}_{m,k}\mathbf{e}_{0,j}\,,\,\mathbf{e}_{0,j}\big\rangle_{L^{2}(\mathbb{T}^{d+1})}\big\|^{q,\gamma}.
		\end{align*}
		Therefore, a duality argument together with \eqref{def dltm dltmh}, \eqref{hyp-ind dltph}, \eqref{edlt0s}, \eqref{sml-RR} and Corollary \ref{cor-hyb-nor}-(iii) imply
		\begin{align}\label{bound Cv mujkm}
			|j|\left\|\mu_{j,k}^{(m+1)}-\mu_{j,k}^{(m)}\right\|^{q,\gamma}&\leqslant \big\|\partial_{\theta}\mathscr{R}_{m}\mathbf{e}_{0,j}\|_{s_0}^{q,\gamma,\mathtt{m}}\,\,\langle j\rangle^{-s_0}\nonumber\\
			&\leqslant C\interleave\partial_{\theta}\mathscr{R}_{m}\interleave_{s_0}^{q,\gamma,\mathtt{m}}\|\mathbf{e}_{0,j}\|_{H^{s_0}}\,\,\langle j\rangle^{-s_0}\nonumber\\
			&\leqslant C \gamma\,\widehat{\delta}_{0}(s_{h})N_{0}^{\mu_{2}}N_{m}^{-\mu_{2}}\nonumber\\
			&\leqslant C \varepsilon\gamma^{-1}N_0^{\mu_{2}}N_{m}^{-\mu_{2}}.
		\end{align}
		Now we shall check that the assumptions \eqref{e-mujmk} and \eqref{e-mujm12} are satisfied for the next step. Combining \eqref{bound Cv mujkm} with \eqref{e-mujmk} we infer that for $k\in\{1,2\}$ and  $(j,j_{0})\in(\mathbb{Z}_{\mathtt{m}}\setminus\overline{\mathbb{S}}_{0,k})^{2},$
		\begin{align*}
			\max_{|\alpha| \in\llbracket 0,q\rrbracket}\sup_{(b,\omega)\in\mathcal{O}}\left|\partial_{b,\omega}^{\alpha}\left(\mu_{j,k}^{(m+1)}(b,\omega)-\mu_{j_{0},k}^{(m+1)}(b,\omega)\right)\right|\leqslant C\big(1+\varepsilon\gamma^{-1-q}N_0^{\mu_2}N_{m}^{-\mu_{2}}\big)\,|j-j_{0}|.
		\end{align*}
		Now putting together \eqref{bound Cv mujkm} and \eqref{e-mujm12}, we get for $(j,j_{0})\in(\mathbb{Z}_{\mathtt{m}}\setminus\overline{\mathbb{S}}_{0,1})\times(\mathbb{Z}_{\mathtt{m}}\setminus\overline{\mathbb{S}}_{0,2}),$
		\begin{align*}
			\max_{|\alpha| \in\llbracket 0,q\rrbracket}\sup_{(b,\omega)\in\mathcal{O}}\left|\partial_{b,\omega}^{\alpha}\left(\mu_{j,1}^{(m+1)}(b,\omega)-\mu_{j_{0},2}^{(m+1)}(b,\omega)\right)\right|\leqslant C\big(1+\varepsilon\gamma^{-1-q}N_0^{\mu_2}N_{m}^{-\mu_{2}}\big)\,\langle j,j_{0}\rangle.
		\end{align*}
		The convergence of the series $\sum N_m^{-\mu_2}$  implies  the desired result with a constant $C$ uniform in $m.$ This achieves the induction argument. Observe that the bound \eqref{bound Cv mujkm} implies the convergence of the sequence  $\left(\mu_{j,k}^{(m)}\right)_{m\in\mathbb{N}}$ toward some $\mu_{j,k}^{(\infty)}\in W^{q,\infty,\gamma }(\mathcal{O},\mathbb{C})$ given by
		\begin{equation}\label{constru mujkfty}
			\mu_{j,k}^{(\infty)}=\mu_{j,k}^{(0)}+\sum_{m=0}^\infty\left(\mu_{j,k}^{(m+1)}-\mu_{j,k}^{(m)}\right)\triangleq \mu_{j,k}^{(0)}+r_{j,k}^{(\infty)},
		\end{equation}
		where $(\mu_{j,k}^{(0)})$ was introduced in Proposition \ref{prop proj nor dir}, writes  
		$$\mu_{j,k}^{(0)}(b,\omega,i_{0})=\Omega_{j,k}(b)+j\big(c_{k}(b,\omega,i_{0})-\mathtt{v}_k(b)\big).$$
		The estimate \eqref{e-rjfty} follows immediately from \eqref{constru mujkfty} and \eqref{bound Cv mujkm}. Define the diagonal operator $\mathscr{D}_{\infty,k}$ defined on the normal modes by
		\begin{align}\label{Dinfty-op}
			\forall (l,j)\in\mathbb{Z}^d\times(\mathbb{Z}_{\mathtt{m}}\setminus\overline{\mathbb{S}}_{0,k}),\quad \mathscr{D}_{\infty,k} {\bf e}_{l,j}=\ii\mu_{j,k}^{(\infty)}{\bf e}_{l,j}.
		\end{align}
		By definition of the off-diagonal norm and \eqref{bound Cv mujkm}, we obtain
		\begin{align}\label{Cv-od-scrDn}
			\|\mathscr{D}_{m,k}-\mathscr{D}_{\infty,k}\|_{\textnormal{\tiny{O-d}},s_{0}}^{q,\gamma,\mathtt{m}}=\sup_{j\in\mathbb{Z}_{\mathtt{m}}\setminus\overline{\mathbb{S}}_{0,k}}\left\|\mu_{j,k}^{(m)}-\mu_{j,k}^{(\infty)}\right\|^{q,\gamma}\leqslant C\, \gamma\,\delta_{0}(s_{h})N_{0}^{\mu_{2}} N_{m}^{-\mu_{2}}.
		\end{align}
		Consider the diagonal operator  $\mathscr{L}_{\infty}\triangleq \omega\cdot\partial_{\varphi}\mathbf{I}_{\mathtt{m},\perp}+\mathscr{D}_{\infty},$ where $\mathscr{D}_\infty$ is introduced in \eqref{Dinfty-op}. For any $m\in\mathbb{N},$ applying  \eqref{Cv-od-scrDn} and \eqref{hyp-ind dltp} yields
		\begin{align*}
			\interleave\mathscr{L}_{m}-\mathscr{L}_{\infty}\interleave_{s_{0}}^{q,\gamma,\mathtt{m}}&\leqslant2\max_{k\in\{1,2\}}\|\mathscr{D}_{m,k}-\mathscr{D}_{\infty,k}\|_{\textnormal{\tiny{O-d}},s_{0}}^{q,\gamma,\mathtt{m}}+\interleave\mathscr{R}_{m}\interleave_{s_{0}}^{q,\gamma,\mathtt{m}}\\
			&\leqslant C\, \gamma\,\delta_{0}(s_{h})N_{0}^{\mu_{2}} N_{m}^{-\mu_{2}},
		\end{align*}
		where $\mathscr{L}_{m}$ is given in \eqref{Op-Lm}. As a consequence,
		\begin{align*}
			\lim_{m\rightarrow\infty} \interleave\mathscr{L}_{m}-\mathscr{L}_{\infty}\interleave_{s_{0}}^{q,\gamma,\mathtt{m}}=0.
		\end{align*}
		Now we define the sequence $\left(\widehat{\Phi}_m\right)_{m\in\mathbb{N}}$ of the successive transformations as follows 
		\begin{equation}\label{Def-Phi}
			\widehat\Phi_0\triangleq \Phi_0\qquad\textnormal{and}\qquad \forall m\geqslant1,\quad \widehat\Phi_m\triangleq \Phi_0\circ\Phi_1\circ...\circ\Phi_m.
		\end{equation} 
		The identity ${\Phi}_{m}=\hbox{Id}+\Psi_m$ gives 
		$$\widehat\Phi_{m+1}=\widehat\Phi_{m}+\widehat\Phi_{m}\Psi_{m+1}.$$
		Using \eqref{e-Psi-scrR-hyb} and \eqref{hyp-ind dltp}, a completeness argument implies that the series $\sum(\widehat\Phi_{m+1}-\widehat\Phi_{m})$ converges to an element $\Phi_\infty$ still close to the identity, so invertible and which satisfies
		\begin{equation}\label{Cv-hyp-Phin}
			\interleave \Phi_{\infty}^{-1}-\widehat{\Phi}_{n}^{-1}\interleave_{s_0+1}^{q,\gamma,\mathtt{m}}+\interleave\Phi_{\infty}-\widehat{\Phi}_{n}\interleave_{s_0+1}^{q,\gamma,\mathtt{m}}\lesssim\delta_{0}(s_h)N_0^{\mu_2}N_{n+1}^{-\mu_2}
		\end{equation}
		and \eqref{cont-Phifty}. We refer the reader to \cite[Prop. 6.5]{HR21} for the complete computations up to slight modifications corresponding to the hybrid norm. By construction \eqref{Def-Phi} and \eqref{Op-Lm1}, we have in $\mathscr{O}_{n+1}^{\gamma}$ the following identity
		\begin{align*}
			\widehat{\Phi}_{n}^{-1}\mathscr{L}_{0}\widehat{\Phi}_{n}&=\omega\cdot\partial_{\varphi}\mathbf{I}_{\mathtt{m},\perp}+\mathscr{D}_{n+1}+\mathscr{R}_{n+1}\\
			&=\mathscr{L}_{\infty}+\mathscr{D}_{n+1}-\mathscr{D}_{\infty}+\mathscr{R}_{n+1}.
		\end{align*}
		Assume for a while that the set $\mathscr{O}_{\infty,n}^{\gamma,\tau_1,\tau_2}(i_{0})$ described in Proposition \ref{prop RR} satisfies the following inclusion property with respect to the  intermediate Cantor sets given by \eqref{Cantor-SX},
		\begin{equation}\label{incl Crr}
			\mathscr{O}_{\infty,n}^{\gamma,\tau_1,\tau_2}(i_{0})\subset\bigcap_{m=0}^{n+1}\mathscr{O}_{m}^{\gamma}=\mathscr{O}_{n+1}^{\gamma}.
		\end{equation}
		Hence, in restriction to $\mathscr{O}_{\infty,n}^{\gamma,\tau_1,\tau_2}(i_0)\subset\mathscr{O}_{n+1}^{\gamma}$, we obtain 	
		\begin{align*}
			\Phi_{\infty}^{-1}\mathscr{L}_{0}\Phi_{\infty}&=\mathscr{L}_{\infty}+\big(\mathscr{D}_{n+1}-\mathscr{D}_{\infty}+\mathscr{R}_{n+1}\big)\Pi_{\overline{\mathbb{S}}_{0}}^{\perp}\\
			&\quad+\Phi_{\infty}^{-1}\mathscr{L}_{0}\left(\Phi_{\infty}-\widehat{\Phi}_{n}\right)+\left(\Phi_{\infty}^{-1}-\widehat{\Phi}_{n}^{-1}\right)\mathscr{L}_{0}\widehat{\Phi}_{n}\\
			&\triangleq \mathscr{L}_{\infty}+\mathscr{E}_{n}^1.
		\end{align*}
		The estimate \eqref{e-scrEn1} is obtained by using \eqref{hyb nor}, Lemma \ref{properties of Toeplitz in time operators}-(ii)-(iii), \eqref{Cv-od-scrDn}, \eqref{e-scrL0} and \eqref{Cv-hyp-Phin} combined with \eqref{def dltm dltmh}, \eqref{hyp-ind dltp}, \eqref{edlt0s}, \eqref{cont-Phifty} and \eqref{sml-RR}. Now it remains to prove \eqref{incl Crr}. This is done by a finite induction on $m$ with $n$ fixed. First, by definition we have
		$\mathscr{O}_{\infty,n}^{\gamma,\tau_1,\tau_2}(i_{0})\subset\mathcal{O}\triangleq \mathscr{O}_{0}^{\gamma }.$ 
		Now suppose that $\mathscr{O}_{\infty,n}^{\gamma,\tau_1,\tau_2}(i_{0})\subset\mathscr{O}_{m}^{\gamma}$ for $m\leqslant n$ and let us prove that 
		\begin{align}\label{Inc-Orr in Om+1}
			\mathscr{O}_{\infty,n}^{\gamma,\tau_1,\tau_2}(i_{0})\subset\mathscr{O}_{m+1}^{\gamma}.
		\end{align}
		Let  $(b,\omega)\in\mathscr{O}_{\infty,n}^{\gamma,\tau_1,\tau_2}(i_{0}).$ For $(l,j,j_{0})\in\mathbb{Z}^{d }\times(\mathbb{Z}_{\mathtt{m}}\setminus\overline{\mathbb{S}}_{0,1})\times(\mathbb{Z}_{\mathtt{m}}\setminus\overline{\mathbb{S}}_{0,2})$ such that $0\leqslant \langle l,j,j_0\rangle\leqslant N_{m}$, the triangle inequality, \eqref{Cv-od-scrDn}, \eqref{p-RR} and \eqref{e-dlt0sh init} imply
		\begin{align*}
			\left|\omega\cdot l+\mu_{j,1}^{(m)}(b,\omega)-\mu_{j_{0},2}^{(m)}(b,\omega)\right| & \geqslant\left|\omega\cdot l+\mu_{j,1}^{(\infty)}(b,\omega)-\mu_{j_{0},2}^{(\infty)}(b,\omega)\right|-2\max_{k\in\{1,2\}}\sup_{j\in\mathbb{Z}_{\mathtt{m}}\setminus\overline{\mathbb{S}}_{0,k}}\left\|\mu_{j,k}^{(m)}-\mu_{j,k}^{(\infty)}\right\|^{q,\gamma}\\
			& \geqslant\tfrac{2\gamma}{\langle l,j,j_0\rangle^{\tau_2}}-2C\gamma\delta_{0}(s_{h})N_{0}^{\mu_{2}} N_{m}^{-\mu_{2}}\\
			& \geqslant\tfrac{\gamma}{\langle l,j,j_0\rangle^{\tau_2}}\left(2-2C\gamma\varepsilon_{0}\langle l,j,j_0\rangle^{\tau_2-\mu_2}\right).
		\end{align*}
		Thus for $\varepsilon_0$ small enough and by  \eqref{p-RR}(implying that $\mu_2\geqslant \tau_2$) we get
		$$\left|\omega\cdot l+\mu_{j,1}^{(m)}(b,\omega)-\mu_{j_{0},2}^{(m)}(b,\omega)\right| > \tfrac{\gamma}{\langle l,j,j_0\rangle^{\tau_2}}\cdot$$
		Now for $k\in\{1,2\}$ and $(l,j,j_0)\in\mathbb{Z}^{d}\times(\mathbb{Z}_{\mathtt{m}}\setminus\overline{\mathbb{S}}_{0,k})^2$ with $(l,j)\neq (0,j_0)$ and $|l|\leqslant N_m,$ we get
		\begin{align*}
			\left|\omega\cdot l+\mu_{j,k}^{(m)}(b,\omega)-\mu_{j_{0},k}^{(m)}(b,\omega)\right| & \geqslant\left|\omega\cdot l+\mu_{j,k}^{(\infty)}(b,\omega)-\mu_{j_{0},k}^{(\infty)}(b,\omega)\right|-2\sup_{j\in\mathbb{Z}_{\mathtt{m}}\setminus\overline{\mathbb{S}}_{0,k}}\left\|\mu_{j,k}^{(m)}-\mu_{j,k}^{(\infty)}\right\|^{q,\gamma}\\
			& \geqslant\tfrac{2\gamma\langle j-j_0\rangle}{\langle l\rangle^{\tau_2}}-2C\gamma\delta_{0}(s_{h})N_{0}^{\mu_{2}} N_{m}^{-\mu_{2}}\\
			& \geqslant\tfrac{\gamma\langle j-j_0\rangle}{\langle l\rangle^{\tau_2}}\left(2-2C\gamma\varepsilon_{0}\langle l\rangle^{\tau_2-\mu_2}\right).
		\end{align*}
		Hence, taking $\varepsilon_0$ small enough, we obtain $k\in\{1,2\},$
		$$\left|\omega\cdot l+\mu_{j,k}^{(m)}(b,\omega)-\mu_{j_{0},k}^{(m)}(b,\omega)\right|>\tfrac{\gamma\langle j-j_0\rangle}{\langle l\rangle^{\tau_2}}\cdot$$
		Hence, $(b,\omega)\in\mathscr{O}_{m+1}^{\gamma}$ which proves \eqref{Inc-Orr in Om+1}.\\
		\textbf{(ii)} One can get the estimates \eqref{ed-rjkfty} and \eqref{ed-mujkfty} by a similar induction procedure as above starting with \eqref{diff Vpm} and \eqref{ed-hyb-scrR0} applied with $\mathtt{p}=4\tau_2q+4\tau_2.$ For more details, we refer the reader to \cite[Prop. 6.5]{HR21}.
	\end{proof}
	We  end this section with the effective construction of the approximate right inverse of the linearized operator in the normal directions. Since we have constructed a  diagonal operator $\mathscr{L}_{\infty}$ with Fourier multiplier entries, the situation is brought back to two decoupled scalar studies. Therefore, we can copy the proof done in \cite[Prop. 6.6]{HR21} with small adaptations and obtain the following result.
	\begin{prop}\label{prop inv linfty}
		Let $(\gamma,q,d,\tau_{1},s_{0},\mu_2,s_h,S,\mathtt{m})$ satisfying \eqref{setting tau1 and tau2}--\eqref{init Sob cond}, \eqref{ouvert-sym}  and \eqref{p-RR}--\eqref{sml-RR}. There exists $\sigma_5\triangleq \sigma_5(\tau_1,\tau_2,q,d)\geqslant\sigma_{4}$ such that if 
		\begin{equation}\label{bnd frkIn-1}
			\|\mathfrak{I}_0\|_{s_h+\sigma_5}^{q,\gamma,\mathtt{m}}\leqslant 1,
		\end{equation}
		then the following assertions hold true.
		\begin{enumerate}[label=(\roman*)]
			\item Consider the operator  $\mathscr{L}_{\infty}$ defined in Proposition $\ref{prop RR},$ then there exists a family of  linear operators $\big(\mathtt{T}_n\big)_{n\in\mathbb{N}}$  defined in $\mathcal{O}$ satisfying the estimate
			$$
			\forall s\in[s_0,S],\quad \sup_{n\in\mathbb{N}}\|\mathtt{T}_{n}\rho\|_{s}^{q,\gamma ,\mathtt{m}}\lesssim \gamma ^{-1}\|\rho\|_{s+\tau_{1}q+\tau_{1}}^{q,\gamma ,\mathtt{m}}$$
			and such that for any $n\in\mathbb{N}$, in the Cantor set
			$$\Lambda_{\infty,n}^{\gamma,\tau_{1}}(i_{0})\triangleq\bigcap_{\underset{(l,j)\in\mathbb{Z}^{d }\times\mathbb{S}_{0}^{c}\atop |l|\leqslant N_{n}}{k\in\{1,2\}}}\left\lbrace(b,\omega)\in\mathcal{O}\quad\textnormal{s.t.}\quad\left|\omega\cdot l+\mu_{j,k}^{(\infty)}(b,\omega,i_{0})\right|>\tfrac{\gamma \langle j\rangle }{\langle l\rangle^{\tau_{1}}}\right\rbrace,
			$$
			we have
			$$
			\mathscr{L}_{\infty}\mathtt{T}_n=\mathbf{I}_{\mathtt{m},\perp}+{\mathscr{E}}_{n}^2,
			$$
			with 
			$$
			\forall s_{0}\leqslant s\leqslant\overline{s}\leqslant S, \quad \|{\mathscr{E}}_{n}^2\rho\|_{s}^{q,\gamma ,\mathtt{m}} \lesssim 
			N_n^{s-\overline{s}}\gamma^{-1}\|\rho\|_{\overline{s}+1+\tau_{1}q+\tau_{1}}^{q,\gamma ,\mathtt{m}}.
			$$
			\item 
			There exists a family of linear  operators $\big(\widehat{\mathtt{T}}_{n}\big)_{n\in\mathbb{N}}$ satisfying
			\begin{equation*}
				\forall \, s\in\,[ s_0, S],\quad\sup_{n\in\mathbb{N}}\|\widehat{\mathtt{T}}_{n}\rho\|_{s}^{q,\gamma ,\mathtt{m}}\lesssim\gamma^{-1}\left(\|\rho\|_{s+\sigma_5}^{q,\gamma ,\mathtt{m}}+\| \mathfrak{I}_{0}\|_{s+\sigma_5}^{q,\gamma ,\mathtt{m}}\|\rho\|_{s_{0}+\sigma_5}^{q,\gamma,\mathtt{m}}\right)
			\end{equation*}
			and such that in the Cantor set
			\begin{equation}\label{ttGn}
				\mathtt{G}_n(\gamma,\tau_{1},\tau_{2},i_{0})\triangleq \mathcal{O}_{\infty,n}^{\gamma,\tau_{1}}(i_{0})\cap\mathscr{O}_{\infty,n}^{\gamma,\tau_{1},\tau_{2}}(i_{0})\cap\Lambda_{\infty,n}^{\gamma,\tau_{1}}(i_{0}),
			\end{equation}
			we have
			$$
			\widehat{\mathcal{L}}\,\widehat{\mathtt{T}}_{n}=\mathbf{I}_{\mathtt{m},\perp}+\mathtt{E}_n,
			$$
			where $\mathtt{E}_n$ satisfies the following estimate
			\begin{align*}
				\forall\, s\in [s_0,S],\quad  &\|\mathtt{E}_n\rho\|_{s_0}^{q,\gamma ,\mathtt{m}}
				\nonumber\lesssim N_n^{s_0-s}\gamma^{-1}\Big( \|\rho\|_{s+\sigma_5}^{q,\gamma,\mathtt{m}}+\varepsilon\gamma^{-2}\| \mathfrak{I}_{0}\|_{s+\sigma_5}^{q,\gamma,\mathtt{m}}\|\rho\|_{s_{0}+\sigma_5}^{q,\gamma,\mathtt{m}} \Big)\\
				&\qquad\qquad\qquad\quad+ \varepsilon\gamma^{-3}N_{0}^{{\mu}_{2}}N_{n+1}^{-\mu_{2}} \|\rho\|_{s_0+\sigma_5}^{q,\gamma,\mathtt{m}}.
			\end{align*}
			Recall that  $\widehat{\mathcal{L}},$ $  \mathcal{O}_{\infty,n}^{\gamma,\tau_{1}}(i_{0})$ and $\mathscr{O}_{\infty,n}^{\gamma,\tau_{1},\tau_{2}}(i_{0})$ are given in Propositions $\ref{lemma-normal-s}$, $\ref{prop strighten}$ and $\ref{prop RR}$, respectively.
			\item In the Cantor set $\mathtt{G}_{n}(\gamma,\tau_{1},\tau_{2},i_{0})$, we have the following splitting
			$$\widehat{\mathcal{L}}=\widehat{\mathtt{L}}_{n}+\widehat{\mathtt{R}}_{n},\qquad\textnormal{with}\qquad\widehat{\mathtt{L}}_{n}\widehat{\mathtt{T}}_{n}=\textnormal{Id}\qquad\textnormal{and}\qquad\widehat{\mathtt{R}}_{n}=\mathtt{E}_{n}\widehat{\mathtt{L}}_{n},$$
			where the operators $\widehat{\mathtt{L}}_{n}$ and $\widehat{\mathtt{R}}_{n}$ are defined in  $\mathcal{O}$ and satisfy the following estimates
			\begin{align*}
				\forall s\in[s_{0},S],\quad& \sup_{n\in\mathbb{N}}\|\widehat{\mathtt{L}}_{n}\rho\|_{s}^{q,\gamma,\mathtt{m}}\lesssim\|\rho\|_{s+1}^{q,\gamma,\mathtt{m}}+\varepsilon\gamma^{-2}\|\mathfrak{I}_{0}\|_{s+\sigma_5}^{q,\gamma,\mathtt{m}}\|\rho\|_{s_{0}+1}^{q,\gamma,\mathtt{m}},\\
				\forall s\in[s_{0},S],\quad &\|\widehat{\mathtt{R}}_{n}\rho\|_{s_{0}}^{q,\gamma,\mathtt{m}}\lesssim N_{n}^{s_{0}-s}\gamma^{-1}\left(\|\rho\|_{s+\sigma_5}^{q,\gamma,\mathtt{m}}+\varepsilon\gamma^{-2}\|\mathfrak{I}_{0}\|_{s+\sigma_5}^{q,\gamma,\mathtt{m}}\|\rho\|_{s_{0}+\sigma_5}^{q,\gamma,\mathtt{m}}\right)\\
				&\qquad\qquad\qquad\quad+\varepsilon\gamma^{-3}N_{0}^{\mu_{2}}N_{n+1}^{-\mu_{2}}\|\rho\|_{s_{0}+\sigma_5}^{q,\gamma,\mathtt{m}}.
			\end{align*}
		\end{enumerate}
	\end{prop}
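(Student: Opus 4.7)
\medskip

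\noindent\textbf{Proof plan.} The strategy, inspired by \cite[Prop. 6.6]{HR21}, is to first invert the diagonal limit operator $\mathscr{L}_\infty$ mode-by-mode and then to conjugate this inverse back through the two successive reduction transformations $\mathscr{B}_\perp$ (from Proposition \ref{prop proj nor dir}) and $\Phi_\infty$ (from Proposition \ref{prop RR}) in order to recover an approximate inverse of the original operator $\widehat{\mathcal{L}}$.

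\medskip

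\noindent\textbf{Step 1 (Part (i): inversion of $\mathscr{L}_\infty$).} Since $\mathscr{L}_\infty = \omega\cdot\partial_\varphi\mathbf{I}_{\mathtt{m},\perp} + \mathscr{D}_\infty$ is a diagonal Fourier multiplier on $\mathbf{H}^\perp_{\overline{\mathbb{S}}_0}$, I define $\mathtt{T}_n$ component-by-component. For $k\in\{1,2\}$, $(l,j)\in\mathbb{Z}^d\times(\mathbb{Z}_{\mathtt{m}}\setminus\overline{\mathbb{S}}_{0,k})$, and any $\rho = (\rho_1,\rho_2)$ in the normal subspace, I set
\begin{equation*}
(\mathtt{T}_n\rho)_{l,j,k} \triangleq \frac{\chi\!\left((\omega\cdot l+\mu_{j,k}^{(\infty)})\gamma^{-1}\langle l\rangle^{\tau_1}\langle j\rangle^{-1}\right)}{\ii(\omega\cdot l+\mu_{j,k}^{(\infty)}(b,\omega,i_0))}\,\Pi_{N_n}\rho_{l,j,k},
\end{equation*}
with $\chi$ the cutoff introduced in \eqref{def cut-off chi}. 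This defines $\mathtt{T}_n$ on all of $\mathcal{O}$, coincides with the exact inverse of $\mathscr{L}_\infty$ on the frequency truncation $|l|\leq N_n$ inside the Cantor set $\Lambda_{\infty,n}^{\gamma,\tau_1}(i_0)$, and satisfies the tame estimate thanks to the standard Rüssmann-type argument: applying Leibniz together with $q$-fold differentiation of the reciprocal weighted by $\chi$ yields the algebraic loss $\langle l\rangle^{\tau_1(q+1)}$, i.e.\ $\tau_1q+\tau_1$ derivatives, as claimed. The identity $\mathscr{L}_\infty\mathtt{T}_n = \mathbf{I}_{\mathtt{m},\perp} + \mathscr{E}_n^2$ holds with $\mathscr{E}_n^2 = -\mathscr{L}_\infty\,\mathtt{T}_n\,\Pi_{N_n}^\perp$ on $\Lambda_{\infty,n}^{\gamma,\tau_1}(i_0)$; the claimed tame bound on $\mathscr{E}_n^2$ follows from Lemma \ref{lem funct prop}-(i) (frequency truncation gains $N_n^{s-\overline s}$) combined with the boundedness of $\mathscr{L}_\infty$ provided by \eqref{e-scrL0}.

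\medskip

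\noindent\textbf{Step 2 (Part (ii): conjugation back to $\widehat{\mathcal{L}}$).} I then define
\begin{equation*}
\widehat{\mathtt{T}}_n \triangleq \mathscr{B}_\perp\,\Phi_\infty\,\mathtt{T}_n\,\Phi_\infty^{-1}\,\mathscr{B}_\perp^{-1}.
\end{equation*}
Combining Proposition \ref{prop proj nor dir}-(ii) (valid on $\mathcal{O}_{\infty,n}^{\gamma,\tau_1}(i_0)$) and Proposition \ref{prop RR}-(i) (valid on $\mathscr{O}_{\infty,n}^{\gamma,\tau_1,\tau_2}(i_0)$), both of which are intersected in the Cantor set $\mathtt{G}_n$ defined in \eqref{ttGn}, I obtain on $\mathtt{G}_n$
\begin{equation*}
\widehat{\mathcal{L}}\,\widehat{\mathtt{T}}_n = \mathscr{B}_\perp\Phi_\infty\big(\mathscr{L}_\infty+\mathscr{E}_n^1\big)\mathtt{T}_n\Phi_\infty^{-1}\mathscr{B}_\perp^{-1} + \mathscr{E}_n^0\,\Phi_\infty\mathtt{T}_n\Phi_\infty^{-1}\mathscr{B}_\perp^{-1} = \mathbf{I}_{\mathtt{m},\perp} + \mathtt{E}_n,
\end{equation*}
where the error $\mathtt{E}_n$ is the sum of three contributions: the conjugated $\mathscr{E}_n^2$ from Step 1 (giving the $N_n^{s_0-s}$ factor), the conjugated $\mathscr{E}_n^1$ from Proposition \ref{prop RR} (contributing the $\varepsilon\gamma^{-3}N_0^{\mu_2}N_{n+1}^{-\mu_2}$ term, using \eqref{e-scrEn1} and $\gamma^{-1}$-losses from two conjugations) and the $\mathscr{E}_n^0$ contribution from Proposition \ref{prop proj nor dir} (of the same type, using \eqref{e-scrEn0}). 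The tame bounds on $\widehat{\mathtt{T}}_n$ and $\mathtt{E}_n$ are assembled via Corollary \ref{cor-hyb-nor}-(iii) (link between hybrid operator norm and classical operator norm) and the continuity estimates \eqref{e-vectBnor}, \eqref{cont-Phifty} for the conjugating operators, together with the smallness assumptions \eqref{sml-RR} and \eqref{bnd frkIn-1}.

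\medskip

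\noindent\textbf{Step 3 (Part (iii): splitting).} Finally, I define $\widehat{\mathtt{L}}_n$ as the right inverse of $\widehat{\mathtt{T}}_n$ obtained by conjugating the exact inverse of $\mathtt{T}_n$ (restricted to the range of $\Pi_{N_n}$): explicitly, setting $\mathtt{L}_n \triangleq \mathscr{L}_\infty\,\Pi_{N_n} + \Pi_{N_n}^\perp$ (a block-diagonal operator that is a true inverse of $\mathtt{T}_n$ everywhere), I put $\widehat{\mathtt{L}}_n \triangleq \mathscr{B}_\perp\Phi_\infty\,\mathtt{L}_n\,\Phi_\infty^{-1}\mathscr{B}_\perp^{-1}$, so that $\widehat{\mathtt{L}}_n\widehat{\mathtt{T}}_n = \mathrm{Id}$ on the whole parameter set, and $\widehat{\mathtt{R}}_n = \widehat{\mathcal{L}} - \widehat{\mathtt{L}}_n = \mathtt{E}_n\widehat{\mathtt{L}}_n$ on $\mathtt{G}_n$. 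The tame estimate on $\widehat{\mathtt{L}}_n$ reduces to that of $\mathscr{L}_\infty$ (plus bounded lower-order terms), exactly as in \eqref{e-scrL0}, and the bound on $\widehat{\mathtt{R}}_n$ then follows from Step 2 via the product law.

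\medskip

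\noindent\textbf{Main obstacle.} The computations in Steps 1 and 3 are entirely standard Fourier-multiplier arithmetic. The genuine difficulty lies in Step 2, specifically in tracking the various $\gamma^{-1}$ losses and the degree of derivative loss through the two conjugations, since the error $\mathscr{E}_n^1$ is measured in the classical operator topology (where only the $s_0 \to s_0+1$ estimate is available from \eqref{e-scrEn1}), while the conjugating operators are measured in the hybrid norm. Carefully separating the two different types of error contributions (those proportional to $N_n^{s_0-s}$ versus those proportional to $N_{n+1}^{-\mu_2}$) and absorbing all the $\varepsilon\gamma^{-\text{const}}$ prefactors using \eqref{sml-RR} to yield the stated exponents $\gamma^{-1}$ (for the gain in smoothness) and $\gamma^{-3}$ (in front of the exponentially small term) is the delicate bookkeeping, and determines the final value of $\sigma_5$.
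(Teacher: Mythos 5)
Your overall strategy (mode-by-mode inversion of the diagonal operator $\mathscr{L}_\infty$ with a cut-off regularization, followed by conjugation through $\mathscr{B}_\perp$ and $\Phi_\infty$) is exactly the one the paper intends: the paper gives no proof of this proposition and simply defers to \cite[Prop. 6.6]{HR21}, and your Steps 1 and 2 reproduce that argument correctly in structure, including the correct bookkeeping of the three error contributions in $\mathtt{E}_n$ and of the $\gamma$-losses.

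There is, however, a concrete inconsistency between your Step 1 and Step 3 that breaks part (iii) as written. With your definition of $\mathtt{T}_n$, which contains the factor $\Pi_{N_n}$, the operator $\mathtt{L}_n\triangleq\mathscr{L}_\infty\Pi_{N_n}+\Pi_{N_n}^\perp$ satisfies $\mathtt{L}_n\mathtt{T}_n=\Pi_{N_n}\neq\mathrm{Id}$ (the high time-frequencies are annihilated by $\mathtt{T}_n$, and $\Pi_{N_n}^\perp\mathtt{T}_n=0$), so $\widehat{\mathtt{L}}_n\widehat{\mathtt{T}}_n=\mathrm{Id}$ fails. The same misplacement makes your formula $\mathscr{E}_n^2=-\mathscr{L}_\infty\mathtt{T}_n\Pi_{N_n}^\perp$ identically zero, whereas with your $\mathtt{T}_n$ one actually has $\mathscr{E}_n^2=-\Pi_{N_n}^\perp$. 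The repair is to define $\mathtt{T}_n$ as the exact inverse of the modified diagonal operator $\mathtt{L}_n$, i.e. to add the block $\Pi_{N_n}^\perp$ (or any boundedly invertible symbol on $|l|>N_n$) to $\mathtt{T}_n$; then $\mathscr{E}_n^2=(\mathscr{L}_\infty-\mathrm{Id})\Pi_{N_n}^\perp$ still satisfies the claimed tame bound, $\mathtt{T}_n$ and $\mathtt{L}_n$ are two-sided inverses of each other (which you also need, silently, to pass from $\widehat{\mathtt{R}}_n\widehat{\mathtt{T}}_n=\mathtt{E}_n$ to $\widehat{\mathtt{R}}_n=\mathtt{E}_n\widehat{\mathtt{L}}_n$), and part (iii) follows. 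Note finally that even after this fix the identity $\widehat{\mathtt{L}}_n\widehat{\mathtt{T}}_n=\mathrm{Id}$ holds only on the Cantor set $\mathtt{G}_n$ and not, as you claim, ``on the whole parameter set'': for $|l|\leqslant N_n$ off the Cantor set the cut-off $\chi$ need not equal $1$, so $\mathtt{L}_n\mathtt{T}_n$ has symbol $\chi\neq1$ at some frequencies. This weaker statement is all the proposition asserts, so the overclaim is harmless, but it should not be used elsewhere.
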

	\section{Construction of quasi-periodic solutions }
	We provide, in this last section, a construction of a non-trivial solution to the equation \eqref{operatorF}. This is done in two steps. First, we implement a Nash-Moser iteration, where we find a solution provided that the parameters $(b,\omega)$ belong to a suitable Borel set. The latter is constructed as the intersection of the Cantor sets required to invert the linearized operator in the normal modes for all the steps of the procedure. Then we rigidified  the frequencies in order to get a solution for the original problem where $\alpha=-\mathtt{J}\omega_{\textnormal{Eq}}(b).$ This gives rise to a final set  described in terms of  $b$ that we should estimate its Lebesgue  measure. Actually, we prove that it has asymptotically full measure as the parameter $\varepsilon$ vanishes.
	\subsection{Nash-Moser iteration}
	Here, we perform the Nash-Moser scheme which allows to find a solution of
	$$\mathcal{F}\big(i,\alpha,b,\omega\big)\triangleq\mathcal{F}\big(i,\alpha,b,\omega,\varepsilon\big)=0,$$ 
	with $\mathcal{F}$ as in \eqref{operatorF}.
	This method is classical and has been used in several papers, see for instance \cite{BBMH18,BBM14,BB15}. 
	 The iterative construction of the approximate solutions is summarized in the following proposition. The proof is a slight modification of the one exposed in \cite{BM18,HHM21,HR21}.
	\begin{prop}\label{Nash-Moser}
		\textbf{(Nash-Moser scheme)}\\
		Let $(\tau_{1},\tau_{2},q,d,s_{0})$ satisfy \eqref{setting tau1 and tau2}--\eqref{init Sob cond} and $\mathtt{m}\geqslant \mathtt{m}^*,$ where $\mathtt{m}^*$ is defined in Corollary $\ref{coro-equilib-freq}.$ We consider the following parameters
		\begin{equation}\label{param NM}
			\left\lbrace\begin{array}{rcl}
				\overline{a} & = & \tau_{2}+{3}\\
				\mu_1 & = & 3q(\tau_{2}+{3})+6\overline{\sigma}+6\\
				a_{1} & = & 6q(\tau_{2}+{3})+12\overline{\sigma}+15\\
				a_{2} & = & 3q(\tau_{2}+{3})+6\overline{\sigma}+9\\
				\mu_{2} & = & 2q(\tau_{2}+{3})+5\overline{\sigma}+7\\
				s_{h} & = & s_{0}+4q(\tau_{2}+{3})+9\overline{\sigma}+11\\
				\kappa_{1} & = & 2s_{h}-s_{0}
			\end{array}\right.
		\end{equation}
		where the number $\overline{\sigma}=\overline{\sigma}(\tau_{1},\tau_{2},d)$ is the total loss of regularity given by Theorem $\ref{theo appr inv}.$
		There exist $C_{\ast}>0$ and $\varepsilon_{0}>0$ such that for any $\varepsilon\in[0,\varepsilon_{0}]$ we impose the constraint relating $\gamma$ and $N_{0}$ to $\varepsilon$,
		\begin{equation}\label{rigidity gam-N0}
			0<a<\tfrac{1}{\mu_{2}+q+2},\qquad \gamma\triangleq\varepsilon^{a}, \qquad N_{0}\triangleq\gamma^{-1}.
		\end{equation}
		Let $n\in\mathbb{N}.$ We introduce the finite dimensional subspace $E_{n,\mathtt{m}}$ defined by
		$$E_{n,\mathtt{m}}\triangleq\Big\{\mathfrak{I}=(\Theta,I,z)\in\mathbb{T}^d\times\mathbb{R}^d\times\mathbf{H}_{\perp,\mathtt{m}}^{\infty}\quad
		\textnormal{s.t.}\quad\Theta=\Pi_{N_n}\Theta,\quad I=\Pi_{N_n}I\quad\textnormal{and}\quad z=\Pi_{N_n}z\Big\},$$
		where $\Pi_{N_n}$ is the projector defined through \eqref{def projectors PiN}. Then, the following properties hold true.
		\begin{itemize}
			\item [$(\mathcal{P}1)_{n}$] There exists a $q$-times differentiable application
			$$\mathtt{W}_{n}:\begin{array}[t]{rcl}
				\mathcal{O} & \rightarrow &  E_{n-1,\mathtt{m}}\times\mathbb{R}^{d}\times\mathbb{R}^{d+1}\\
				(b,\omega) & \mapsto & \big(\mathfrak{I}_{n},\alpha_{n}-\mathtt{J}\omega,0\big)
			\end{array}$$
			satisfying $\mathtt{W}_{0}=0$ and for $n\in\mathbb{N}^*,$
			\begin{equation}\label{e-ttWn}
				\|\mathtt{W}_{n}\|_{s_{0}+\overline{\sigma}}^{q,\gamma,\mathtt{m}}\leqslant C_{\ast}\varepsilon\gamma^{-1}N_{0}^{q\overline{a}}.
			\end{equation}
			We set
			\begin{equation}\label{ttU0}
				\mathtt{U}_0\triangleq\Big((\varphi,0,0),\mathtt{J}\omega,(b,\omega)\Big)
			\end{equation}
			and for $n\in\mathbb{N}^*,$
			\begin{equation}\label{ttUn}
				\mathtt{U}_{n}\triangleq\mathtt{U}_{0}+\mathtt{W}_{n}\qquad \textnormal{and}\qquad \mathtt{H}_{n} \triangleq\mathtt{U}_{n}-\mathtt{U}_{n-1},.
			\end{equation}
			Then 
			\begin{align}
				\forall s\in[s_{0},S],\quad\|\mathtt{H}_{1}\|_{s}^{q,\gamma,\mathtt{m}}&\leqslant \tfrac{1}{2}C_{\ast}\varepsilon\gamma^{-1}N_{0}^{q\overline{a}},\label{ttH1s}\\
				\forall\, 2\leqslant m\leqslant n,\quad\|\mathtt{H}_{m}\|_{s_{0}+\overline{\sigma}}^{q,\gamma,\mathtt{m}}&\leqslant C_{\ast}\varepsilon\gamma^{-1}N_{m-1}^{-a_{2}},\label{ttHks0sig}\\
				\forall n\geqslant 2,\quad\|\mathtt{H}_{n}\|_{\overline{s}_h+\overline{\sigma}}^{q,\gamma,\mathtt{m}}&\leqslant C_{\ast}\varepsilon\gamma^{-1}N_{n-1}^{-a_{2}}.\label{ttHn shbsig4} 
			\end{align}
			\item [$(\mathcal{P}2)_{n}$] Set 
			\begin{equation}\label{in gamn}
				i_{n}\triangleq(\varphi,0,0)+\mathfrak{I}_{n},\qquad \gamma_{n}\triangleq\gamma(1+2^{-n})\in[\gamma,2\gamma].
			\end{equation}
			The torus $i_n$ is reversible and $\mathtt{m}$-fold, that is
			\begin{equation}\label{reversibility in}
				\mathfrak{S}i_n(\varphi)=i_n(-\varphi){\qquad\textnormal{and}\qquad \mathfrak{T}_{\mathtt{m}}i_n(\varphi)=i_n(\varphi),}
			\end{equation}
			with $\mathfrak{S}$ and $\mathfrak{T}_{\mathtt{m}}$ as in \eqref{rev th I z}-\eqref{mfold th I z}. Define also
			$$\mathtt{A}_{0}^{\gamma}\triangleq\mathcal{O}\qquad\mbox{and}\qquad \mathtt{A}_{n+1}^{\gamma}\triangleq\mathtt{A}_{n}^{\gamma}\cap\mathtt{G}_{n}(\gamma_{n+1},\tau_{1},\tau_{2},i_{n})$$
			where $\mathtt{G}_{n}(\gamma_{n+1},\tau_{1},\tau_{2},i_{n})$ is given through \eqref{ttGn}. Consider the open sets 
			$$
			\forall \mathtt{v}>0,\quad \mathrm{O}_{n}^\mathtt{v}\triangleq\Big\{(b,\omega)\in\mathcal{O}\quad\textnormal{s.t.}\quad {\mathtt{dist}}\big((b,\omega),\mathtt{A}_{n}^{2\gamma}\big)< \mathtt{v} N_{n}^{-\overline{a}}\Big\},\qquad\mathtt{dist}(x,\mathtt{A})\triangleq\inf_{y\in\mathtt{A}}\|x-y\|.$$ 
			Then we have the following estimate 
			\begin{equation}\label{decay FttUn}
				\|\mathcal{F}(\mathtt{U}_{n})\|_{s_{0}}^{q,\gamma,\mathtt{m},\mathrm{O}_{n}^{2\gamma}}\triangleq\sum_{\underset{|\alpha|\leqslant q}{\alpha\in\mathbb{N}^{d+1}}}\gamma^{|\alpha|}\sup_{(b,\omega)\in{\mathrm{O}_{n}^{2\gamma}}}\|\partial_{(b,\omega)}^{\alpha}\mathcal{F}(\mathtt{U}_{n})(b,\omega,\cdot)\|_{H^{s_0-|\alpha|}_{\mathtt{m}}}\leqslant C_{\ast}\varepsilon N_{n-1}^{-a_{1}}.
			\end{equation}
			\item[$(\mathcal{P}3)_{n}$] We have the following growth in high regularity norm 
			\begin{equation}\label{growth ttWn}
				\|\mathtt{W}_{n}\|_{\kappa_{1}+\overline{\sigma}}^{q,\gamma,\mathtt{m}}\leqslant C_{\ast}\varepsilon\gamma^{-1}N_{n-1}^{\mu_1}.
			\end{equation}
		\end{itemize}
	\end{prop}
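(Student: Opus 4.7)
The plan is to establish $(\mathcal{P}1)_n$--$(\mathcal{P}3)_n$ by induction on $n$, implementing a standard Berti--Bolle/Nash--Moser Newton scheme where the loss of regularity in each Newton step is compensated by the smoothing projectors $\Pi_{N_n}$ and the super-exponential sequence $N_n$ from \eqref{def geo Nn}. The base case $n=0$ is immediate: $\mathtt{W}_0 = 0$ gives $i_0 = (\varphi,0,0)$, $\alpha_0 = \mathtt{J}\omega$, and $\mathcal{F}(\mathtt{U}_0) = -\varepsilon X_{\mathcal{P}_\varepsilon}(i_0)$, which is of size $\varepsilon$ in $H^{s_0}_{\mathtt{m}}$ by Lemma \ref{tame X per}, so $(\mathcal{P}2)_0$ holds for $C_*$ large; the bounds $(\mathcal{P}1)_0$ and $(\mathcal{P}3)_0$ are trivial.

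For the inductive step, assume the three properties hold at step $n$. First I would verify, using $(\mathcal{P}1)_n$, the smallness condition \eqref{bnd frkIn-final} required by Theorem \ref{theo appr inv} at $i_n$: by \eqref{e-ttWn}, \eqref{rigidity gam-N0} and the choice of $\mu_2$ in \eqref{param NM}, one has $\|\mathfrak{I}_n\|_{s_h+\overline{\sigma}}^{q,\gamma,\mathtt{m}} \lesssim \varepsilon\gamma^{-1} N_{n-1}^{\mu_1} \ll 1$ for $\varepsilon$ small. This supplies an almost-approximate right inverse $\mathrm{T}_0(i_n)$ of $d_{i,\alpha}\mathcal{F}(i_n,\alpha_n)$ together with the splitting \eqref{splitting of approximate inverse} and the tame estimates \eqref{tame T0}--\eqref{calE3}. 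I then define the Newton correction by
\[
\mathtt{H}_{n+1} \triangleq -\widetilde{\Pi}_n\, \mathrm{T}_0(i_n)\, \Pi_{N_n}\mathcal{F}(\mathtt{U}_n),
\]
where $\widetilde{\Pi}_n$ projects onto $E_{n,\mathtt{m}}\times\mathbb{R}^d\times\{0\}$, and set $\mathtt{U}_{n+1} \triangleq \mathtt{U}_n + \mathtt{H}_{n+1}$. The extension property built into $\mathrm{T}_0$ and the remainders $\mathcal{E}_i^{(n)}$ ensures $\mathtt{W}_{n+1}$ is defined on all of $\mathcal{O}$, not only on the Cantor set where the inverse is exact. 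The estimates \eqref{ttH1s}, \eqref{ttHks0sig}, \eqref{ttHn shbsig4} then follow from \eqref{tame T0} applied to $\Pi_{N_n}\mathcal{F}(\mathtt{U}_n)$, combined with the smoothing inequalities of Lemma \ref{lem funct prop}(i) and the inductive bound \eqref{decay FttUn}.

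The crux is propagating $(\mathcal{P}2)_{n+1}$. Writing
\[
\mathcal{F}(\mathtt{U}_{n+1}) = \mathcal{F}(\mathtt{U}_n) + d\mathcal{F}(\mathtt{U}_n)\mathtt{H}_{n+1} + Q_n,
\]
where $Q_n$ is the quadratic Taylor remainder controlled by Lemma \ref{tame X per}(iii), I would substitute the definition of $\mathtt{H}_{n+1}$ and use \eqref{splitting of approximate inverse} on the parameter subset $\mathrm{O}_{n+1}^{2\gamma} \subset \mathtt{A}_{n+1}^{2\gamma} + B(0, 2\gamma N_{n+1}^{-\overline{a}})$, where the Cantor conditions defining $\mathtt{G}_n(\gamma_{n+1},\tau_1,\tau_2,i_n)$ hold up to a controlled perturbation (thanks to the choice $\overline{a} = \tau_2 + 3$, so perturbing by $N_{n+1}^{-\overline{a}}$ in parameter space degrades the small-divisor thresholds only marginally). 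This yields the standard decomposition
\[
\mathcal{F}(\mathtt{U}_{n+1}) = \underbrace{\Pi_{N_n}^\perp \mathcal{F}(\mathtt{U}_n)}_{\text{high-frequency tail}} + \underbrace{(\mathcal{E}_1^{(n)}+\mathcal{E}_2^{(n)}+\mathcal{E}_3^{(n)})\Pi_{N_n}\mathcal{F}(\mathtt{U}_n)}_{\text{approximate-inverse errors}} + Q_n.
\]
The tail is bounded using \eqref{growth ttWn} via interpolation between $\kappa_1+\overline{\sigma}$ and $s_0$; the inverse errors are estimated through \eqref{calE1}--\eqref{calE3} choosing $\mathtt{b}$ large in terms of $a_1$, $a_2$, $\mu_1$, $\mu_2$; and the quadratic term is handled by \eqref{ttHks0sig} combined with the low-norm bound on $\mathtt{H}_{n+1}$. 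The rigidity \eqref{rigidity gam-N0} together with the inequalities encoded in \eqref{param NM} is exactly designed so that each of the three contributions is bounded by $\tfrac{1}{3} C_* \varepsilon N_n^{-a_1}$, closing $(\mathcal{P}2)_{n+1}$. The high-norm bound $(\mathcal{P}3)_{n+1}$ follows by summing \eqref{tame T0} at $s = \kappa_1+\overline{\sigma}$ over the telescoping series $\mathtt{W}_{n+1} = \sum_{m=1}^{n+1}\mathtt{H}_m$, using the inductive control of $\|\mathfrak{I}_m\|_{\kappa_1+\overline{\sigma}}^{q,\gamma,\mathtt{m}}$ and the smoothing estimate $\|\Pi_{N_n}\mathcal{F}(\mathtt{U}_n)\|_{s}^{q,\gamma,\mathtt{m}} \lesssim N_n^{s-s_0} \|\mathcal{F}(\mathtt{U}_n)\|_{s_0}^{q,\gamma,\mathtt{m}}$.

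The hardest step is the bookkeeping that makes $(\mathcal{P}2)_{n+1}$ close, because three competing error sources (high-frequency tail, inverse-approximation remainders $\mathcal{E}_{1,2,3}^{(n)}$, and the quadratic term $Q_n$) must all decay faster than $N_n^{-a_1}$ simultaneously; this forces the rather precise relations between $a_1, a_2, \mu_1, \mu_2, \overline{a}, s_h, \kappa_1$ fixed in \eqref{param NM}, and each inequality must be verified. A secondary technical point is the symmetry preservation: the projectors $\Pi_{N_n}$ and the operator $\mathrm{T}_0$ preserve the reversibility and $\mathtt{m}$-fold properties \eqref{reversibility in} because the building blocks in Theorem \ref{theo appr inv} and Proposition \ref{prop RR} are $\mathtt{m}$-fold preserving and reversibility preserving, so this is inherited at each step provided the right-hand side $\mathcal{F}(\mathtt{U}_n)$ satisfies \eqref{symmetry g1 g2 g3}, which itself is inductive from $(\mathcal{P}1)_n$ and the corresponding symmetries of $X_{K_\varepsilon^{\alpha_0}}$.
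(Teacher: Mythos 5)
Your overall strategy coincides with the paper's (a Berti--Bolle Nash--Moser iteration with truncated Newton corrections, parameter-set inclusions obtained by perturbing the Melnikov conditions, and a final verification of the relations in \eqref{param NM}), but three concrete steps are either wrong as written or missing. First, your verification of the smallness hypothesis of Theorem \ref{theo appr inv} does not close: you bound $\|\mathfrak{I}_n\|_{s_h+\overline{\sigma}}^{q,\gamma,\mathtt{m}}\lesssim\varepsilon\gamma^{-1}N_{n-1}^{\mu_1}$ and declare this $\ll 1$, but $N_{n-1}^{\mu_1}\to\infty$ so this is false for large $n$ at fixed $\varepsilon$. The paper instead exploits $s_h=\tfrac12(s_0+\kappa_1)$ and interpolates each increment, $\|\mathtt{H}_n\|_{s_h+\overline{\sigma}}\lesssim(\|\mathtt{H}_n\|_{s_0+\overline{\sigma}})^{1/2}(\|\mathtt{H}_n\|_{\kappa_1+\overline{\sigma}})^{1/2}\lesssim\varepsilon\gamma^{-1}N_{n-1}^{(\mu_1-a_2)/2}$, which is summable precisely because $a_2\geqslant\mu_1+2$ in \eqref{param NM}; summing gives $\|\mathfrak{I}_n\|_{s_h+\overline{\sigma}}\lesssim\varepsilon^{1/2}$ uniformly in $n$. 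Without this interpolation the induction cannot be continued.

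Second, your three-term decomposition of $\mathcal{F}(\mathtt{U}_{n+1})$ is algebraically incomplete: since the correction is $\widetilde{\mathtt{H}}_{n+1}=-\mathbf{\Pi}_{N_n}\mathrm{T}_n\Pi_{N_n}\mathcal{F}(\mathtt{U}_n)$ (with the outer truncation needed to land in $E_{n,\mathtt{m}}$), the Taylor expansion produces a fourth term, the commutator $(L_n\mathbf{\Pi}_{N_n}^{\perp}-\Pi_{N_n}^{\perp}L_n)\mathrm{T}_n\Pi_{N_n}\mathcal{F}(\mathtt{U}_n)$, which is not covered by the tail, the errors $\mathcal{E}^{(n)}_{1,2,3}$, or $Q_n$. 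It must be estimated separately, using that $\omega\cdot\partial_\varphi$ and $\mathcal{J}\mathbf{L}_0$ commute with the projectors so that only $\varepsilon[d_iX_{\mathcal{P}_\varepsilon}(i_n),\Pi_{N_n}^{\perp}]$ survives, giving a contribution of order $\varepsilon N_n^{s_0+\overline{\sigma}+\frac23\mu_1+3-\kappa_1}$. Third, you do not construct the extension of the correction to all of $\mathcal{O}$: the quadratic decay of $\mathcal{F}(\widetilde{\mathtt{U}}_{n+1})$ and the tame bounds on $\widetilde{\mathtt{H}}_{n+1}$ are only available on $\mathrm{O}_{n+1}^{4\gamma}$, and the paper multiplies by a cut-off $\chi_{n+1}$ equal to $1$ on $\mathrm{O}_{n+1}^{2\gamma}$ and supported in $\mathrm{O}_{n+1}^{4\gamma}$, whose $(b,\omega)$-derivatives cost a factor $(\gamma^{-1}N_n^{\overline{a}})^{|\alpha|}$; this loss is exactly the origin of the factor $N_0^{q\overline{a}}$ in \eqref{e-ttWn} and of the relation $a_2=\tfrac23a_1-q\overline{a}-2\overline{\sigma}-1$, so it cannot be absorbed into "the extension property built into $\mathrm{T}_0$".
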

	
	\begin{proof}
		We follow closely \cite[Prop. 7.1]{HR21}.
		First notice that the initial guess $\mathtt{U}_0$ is associated to a reversible flat torus and satisfies by virtue of \eqref{operatorF} and Lemma \ref{tame X per} the following estimate for some large enough constant $C_{\ast}$
		\begin{equation}\label{e-F(ttU0)}
			\forall s\geqslant 0,\quad \|\mathcal{F}(\mathtt{U}_{0})\|_{s}^{q,\gamma,\mathtt{m}}\leqslant C_{\ast}\varepsilon.
		\end{equation}
		The properties $(\mathcal{P}1)_{0},$ $(\mathcal{P}2)_{0}$ and $(\mathcal{P}3)_{0}$ follow immediately since $N_{-1}=1$ and $\mathrm{O}_{0}^{\gamma}=\mathcal{O}$ and by setting $\mathtt{W}_0=0.$ Now, let us turn to the induction step. Fix $n\in\mathbb{N}^*$ and suppose that $(\mathcal{P}1)_{\ell},$ $(\mathcal{P}2)_{\ell}$ and $(\mathcal{P}3)_{\ell}$ hold for any $\ell\in\llbracket 0,n\rrbracket.$ The purpose is to verify that these properties also hold at the order $n+1$. We denote by
		$$L_{n}\triangleq L_{n}(b,\omega)\triangleq d_{i,\alpha}\mathcal{F}(i_{n}\big(b,\omega),\alpha_{n}(b,\omega),(b,\omega)\big)$$
		the linearized operator of $\mathcal{F}$ at the state $(i_n,\alpha_n)$. As we shall see later, the next approximation $\mathtt{U}_{n+1}$ can be obtained through  the construction of a reversible and $\mathtt{m}$-fold preserving approximate right inverse for $L_n,$  which is the subject of Theorem \ref{theo appr inv}. To apply this result and get some bounds on $\mathtt{U}_{n+1}$ we need to establish first some intermediate results connected to the smallness condition and to some Cantor set inclusions.

		\noindent $\blacktriangleright$ \textbf{Smallness/boundedness properties.} First observe that \eqref{param NM} implies \eqref{p-RR}. Thus, to apply Theorem \ref{theo appr inv}, we need to check the smallness \eqref{sml-RR} and boundedness \eqref{bnd frkIn-final} properties. According to \eqref{rigidity gam-N0}, a small enough choice of $\varepsilon$ leads, for some a priori fixed $\varepsilon_0>0,$ to
		\begin{equation}\label{sml NM}
			\varepsilon\gamma^{-2-q}N_0^{\mu_{2}}=\varepsilon^{1-a(\mu_2+q+2)}\leqslant\varepsilon_0,
		\end{equation}
		which is exactly \eqref{sml-RR}. Now, since from \eqref{param NM} $\kappa_{1}=2s_h-s_0,$ then by interpolation inequality in Lemma \ref{lem funct prop}-(v), we have for $n\geqslant2,$
		\begin{equation}\label{interp NM}
			\|\mathtt{H}_{n}\|_{s_{h}+\overline{\sigma}}^{q,\gamma,\mathtt{m}}\lesssim\left(\|\mathtt{H}_{n}\|_{s_{0}+\overline{\sigma}}^{q,\gamma,\mathtt{m}}\right)^{\frac{1}{2}}\left(\|\mathtt{H}_{n}\|_{\kappa_{1}+\overline{\sigma}}^{q,\gamma,\mathtt{m}}\right)^{\frac{1}{2}}.		\end{equation}
		The property \eqref{growth ttWn} applied with the indices $n$ and $n-1$ gives
		\begin{align*}
			\|\mathtt{H}_{n}\|_{\kappa_{1}+\overline{\sigma}}^{q,\gamma,\mathtt{m}}&=\|\mathtt{U}_{n}-\mathtt{U}_{n-1}\|_{\kappa_{1}+\overline{\sigma}}^{q,\gamma,\mathtt{m}}\\
			&=\|\mathtt{W}_{n}-\mathtt{W}_{n-1}\|_{\kappa_{1}+\overline{\sigma}}^{q,\gamma,\mathtt{m}}\\
			&\leqslant\|\mathtt{W}_{n}\|_{\kappa_{1}+\overline{\sigma}}^{q,\gamma,\mathtt{m}}+\|\mathtt{W}_{n-1}\|_{\kappa_{1}+\overline{\sigma}}^{q,\gamma,\mathtt{m}}\\
			&\leqslant 2C_{\ast}\varepsilon\gamma^{-1}N_{n-1}^{\mu_{1}}.
		\end{align*}
		Inserting the last estimate together with \eqref{ttHks0sig} into \eqref{interp NM} leads to
		\begin{equation}\label{est ttHn sh+sigma}
			\forall n\geqslant 2,\quad\|\mathtt{H}_{n}\|_{s_{h}+\overline{\sigma}}^{q,\gamma,\mathtt{m}}\leqslant CC_{\ast}\varepsilon\gamma^{-1}N_{n-1}^{\frac{1}{2}(\mu_{1}-a_{2})}.
		\end{equation}
		Observe that \eqref{param NM} implies in particular $a_{2}\geqslant \mu_{1}+2.$ Hence, by \eqref{rigidity gam-N0}, \eqref{ttH1s} and \eqref{est ttHn sh+sigma}, we infer
		\begin{align*}
			\|\mathtt{W}_{n}\|_{s_{h}+\overline{\sigma}}^{q,\gamma,\mathtt{m}}&\leqslant\|\mathtt{H}_{1}\|_{s_{h}+\overline{\sigma}}^{q,\gamma,\mathtt{m}}+\sum_{k=2}^{n}\|\mathtt{H}_{k}\|_{s_{h}+\overline{\sigma}}^{q,\gamma,\mathtt{m}}\\
			&\leqslant \tfrac{1}{2}C_{\ast}\varepsilon\gamma^{-1}N_{0}^{q\overline{a}}+ CN_{0}^{-1}C_{\ast}\varepsilon\gamma^{-1}\\
			&\leqslant C_{\ast}\varepsilon^{1-a(1+q\overline{a})}.
		\end{align*}
		Remark that  \eqref{param NM} and \eqref{rigidity gam-N0} provide $a\leqslant\tfrac{1}{2(1+q\overline{a})}.$ Thus, taking $\varepsilon$ small enough and $\overline{\sigma}\geqslant\sigma_5$ with $\sigma_5$ as in Proposition \eqref{prop inv linfty}, we get
		\begin{align}\label{bnd frkIn}
			\|\mathfrak{I}_{n}\|_{s_{h}+\sigma_5}^{q,\gamma,\mathtt{m}}&\leqslant\|\mathtt{W}_{n}\|_{s_{h}+\overline{\sigma}}^{q,\gamma,\mathtt{m}}\nonumber\\
			&\leqslant C_{\ast}\varepsilon^{\frac{1}{2}}\nonumber\\
			&\leqslant 1,
		\end{align}
		which corresponds to \eqref{bnd frkIn-1}. Up to increase the value of $\overline{\sigma},$ we can always assume that $s_0+\overline{\sigma}\geqslant\overline{s}_h+\sigma_{4}$ where $\overline{s}_h$ and $\sigma_{4}$ are respectively given by \eqref{param} and Proposition \ref{prop RR}. Consequently \eqref{ttHks0sig} gives \eqref{ttHn shbsig4}.\\
		
		\noindent $\blacktriangleright$ \textbf{Set inclusions.} The properties \eqref{sml NM} and \eqref{bnd frkIn} allow to apply Theorem \ref{theo appr inv}. Hence, we can reduce the linearized operator $L_{n}$ at the current step. Therefore, the sets $\mathtt{A}_{\ell}^{\gamma}$ for $\ell\leqslant n+1$ and $\gamma\in(0,1)$ are well-defined. Our next purpose is to check some suitable inclusions required later for defining the extensions of our quantities outside the constructed Cantor sets. More precisely, we shall verify the following
		\begin{equation*}
			\mathtt{A}_{n+1}^{2\gamma}\subset\mathrm{O}_{n+1}^{4\gamma}\subset\left(\mathtt{A}_{n+1}^{\gamma}\cap\mathrm{O}_{n}^{2\gamma}\right).
		\end{equation*}
		Obviously, by construction, the first inclusion is trivial. Hence, we are left to prove the last one. Observe that by construction $\mathtt{A}_{\ell+1}^{2\gamma}\subset\mathtt{A}_{\ell}^{2\gamma}.$ Then for $(b,\omega)\in \mathrm{O}_{\ell+1}^{4\gamma},$ we have 
		\begin{align*}
			\mathtt{dist}\big((b,\omega),\mathtt{A}_{\ell}^{2\gamma}\big)&\leqslant  \mathtt{dist}\big((b,\omega),\mathtt{A}_{\ell+1}^{2\gamma}\big)\\
			&<4\gamma N_{\ell+1}^{-\overline{a}}= 4\gamma N_{\ell}^{-\overline{a}}N_{0}^{-\frac{1}{2}\overline{a}}\\
			&<2\gamma N_{\ell}^{-\overline{a}}.
		\end{align*}
		The last estimate is true provided that $2N_0^{-\frac12\overline a}<1,$ which is obtained taking $\varepsilon$ small enough according to \eqref{rigidity gam-N0}. Thus, we have proved
		\begin{equation}\label{O2gm in Ogm}
			\forall \ell\in \llbracket 0,n\rrbracket,\quad  \mathrm{O}_{\ell+1}^{4\gamma}\subset \mathrm{O}_{\ell}^{2\gamma}.
		\end{equation}
		Now we prove by induction in $\ell$ the following inclusion
		\begin{equation}\label{hyprec O in A}
			\forall \ell\in \llbracket 0,n+1\rrbracket,\quad  \mathrm{O}_{\ell}^{4\gamma}\subset\mathtt{A}_{\ell}^{\gamma}.
		\end{equation}
		The case $\ell=0$ is obvious because $\mathrm{O}_{0}^{4\gamma}=\mathcal{O}=\mathtt{A}_{0}^{\gamma}.$ Now suppose that \eqref{hyprec O in A} is true for some $\ell\in \llbracket 0,n\rrbracket$ and let us check the inclusion property \eqref{hyprec O in A} at the next order $\ell+1.$ Putting together \eqref{O2gm in Ogm} and \eqref{hyprec O in A}, we get
		$$\mathrm{O}_{\ell+1}^{4\gamma}\subset\mathrm{O}_{\ell}^{2\gamma}\subset\mathrm{O}_{\ell}^{4\gamma}\subset\mathtt{A}_{\ell}^{\gamma}.$$
		Hence, it remains to verify that 
		$$\mathrm{O}_{\ell+1}^{4\gamma}\subset\mathtt{G}_{\ell}\Big(\gamma_{\ell+1},\tau_{1},\tau_{2},i_{\ell}\Big).$$
		Let $(b,\omega)\in\mathrm{O}_{\ell+1}^{4\gamma},$ then by construction, one can find $(b',\omega')\in\mathtt{A}_{\ell+1}^{2\gamma}$ with 
		\begin{equation}\label{dist b b'}
			\mathtt{dist}\left((b,\omega),(b',\omega')\right)<4\gamma N_{\ell+1}^{-\overline{a}}.
		\end{equation}
		Let us start proving that $(b,\omega)\in\mathcal{O}_{\infty,\ell}^{\gamma_{\ell+1},\tau_{1}}(i_{\ell}).$ For all $k\in\{1,2\}$ and $(l,j)\in\mathbb{Z}^{d}\times\mathbb{Z}_{\mathtt{m}}\setminus\{(0,0)\}$ with $|l|\leqslant N_{\ell},$ we have by triangle and Cauchy-Schwarz inequalities together with \eqref{dist b b'} and the fact $(b',\omega')\in\mathcal{O}_{\infty,\ell}^{2\gamma_{\ell+1},\tau_{1}}(i_{\ell})$,
		\begin{align*}
			\left|\omega\cdot l+j{c_{k}(b,\omega,i_{\ell})}\right|&\geqslant\left|\omega'\cdot l+j{c_{k}(b',\omega',i_{\ell})}\right|-|\omega-\omega'||l|-|j|\left|c_{k}(b,\omega,i_{\ell})-c_{k}(b',\omega',i_{\ell})\right|\\
			&>\tfrac{4\gamma_{\ell+1}^{\upsilon}2^{\upsilon}\langle j\rangle}{\langle l\rangle^{\tau_{1}}}-4\gamma N_{\ell+1}^{1-\overline{a}}-\langle j\rangle\left|c_{k}(b,\omega,i_{\ell})-c_{k}(b',\omega',i_{\ell})\right|.
		\end{align*}
		Now the Mean Value Theorem and the definition of $\mathrm{O}_{\ell+1}^{4\gamma}$ imply
		$$\left|c_{k}(b,\omega,i_{\ell})-c_{k}(b',\omega',i_{\ell})\right|\leqslant CN_{\ell+1}^{-\overline{a}}\|c_{k}(i_{\ell})\|^{q,\gamma}.$$
		From \eqref{sml-r0}, we deduce 
		\begin{align*}
			\|c_{k}(i_{\ell})\|^{q,\gamma}&\leqslant\|c_{k}(i_{\ell})-\mathtt{v}_{k}\|^{q,\gamma}+\|\mathtt{v}_{k}\|^{q,\gamma}\\
			&\leqslant C.
		\end{align*}
		Combining the last two estimates gives
		$$\left|c_{k}(b,\omega,i_{\ell})-c_{k}(b',\omega',i_{\ell})\right|\leqslant C\gamma\gamma^{-1}N_{\ell+1}^{-\overline{a}}\leqslant C\gamma N_{\ell+1}^{1-\overline{a}}.$$
		Consequently, using the facts that $\gamma_{\ell}\geqslant\gamma$ and $\upsilon\in(0,1),$ we get
		\begin{align*}
			\left|\omega\cdot l+jc_{k}(b,\omega,i_{\ell})\right|&>\tfrac{4\gamma_{\ell+1}^{\upsilon}2^{\upsilon}\langle j\rangle}{\langle l\rangle^{\tau_{1}}}-C\gamma\langle j\rangle N_{\ell+1}^{1-\overline{a}}\\
			&\geqslant\tfrac{4\gamma_{\ell+1}^{\upsilon}\langle j\rangle}{\langle l\rangle^{\tau_{1}}}\left(2^{\upsilon}-CN_{\ell+1}^{\tau_{1}+1-\overline{a}}\right).
		\end{align*}
		Our choice of parameters \eqref{param NM} and \eqref{setting tau1 and tau2} implies in particular
		\begin{equation}\label{cond-abarre-1}
			\overline{a}=\tau_{2}+{3}\geqslant \tau_{1}+2.
		\end{equation}
		Therefore, taking $N_{0}$ sufficiently large, we obtain
		$$2^{\upsilon}-CN_{\ell+1}^{\tau_{1}+1-\overline{a}}\geqslant 2^{\upsilon}-CN_{0}^{-1}>1,$$
		which implies in turn
		$$\left|\omega\cdot l+jc_{k}(b,\omega,i_{\ell})\right|>\tfrac{4\gamma_{\ell+1}^{\upsilon}\langle j\rangle}{\langle l\rangle^{\tau_{1}}}\cdot
		$$
		This proves that $(b,\omega)\in\mathcal{O}_{\infty,\ell}^{\gamma_{\ell+1},\tau_{1}}(i_{\ell}).$ Now, let us check that $(b,\omega)\in\mathscr{O}_{\infty,\ell}^{\gamma_{\ell+1},\tau_{1},\tau_{2}}(i_{\ell}).$ For all $k\in\{1,2\}$ and $(l,j,j_0)\in\mathbb{Z}^{d}\times(\mathbb{Z}_{\mathtt{m}}\setminus\overline{\mathbb{S}}_{0,k})^2$ with $|l|\leqslant N_{\ell},$ using the triangle and Cauchy-Schwarz inequalities together with \eqref{dist b b'} and the fact that $(b',\omega')\in\mathscr{O}_{\infty,\ell}^{2\gamma_{\ell+1},\tau_{1},\tau_{2}}(i_{\ell})$
		\begin{align*}
			\left|\omega\cdot l+\mu_{j,k}^{(\infty)}(b,\omega,i_{\ell})-\mu_{j_{0},k}^{(\infty)}(b,\omega,i_{\ell})\right|&\geqslant\left|\omega'\cdot l+\mu_{j,k}^{(\infty)}(b',\omega',i_{\ell})-\mu_{j_{0},k}^{(\infty)}(b',\omega',i_{\ell})\right|-|\omega-\omega'||l|\\
			&-\left|\mu_{j,k}^{(\infty)}(b,\omega,i_{\ell})-\mu_{j_{0},k}^{(\infty)}(b,\omega,i_{\ell})+\mu_{j_{0},k}^{(\infty)}(b',\omega',i_{\ell})-\mu_{j,k}^{(\infty)}(b',\omega',i_{\ell})\right|\\
			&>\tfrac{4\gamma_{\ell+1}\langle j-j_{0}\rangle}{\langle l\rangle^{\tau_{2}}}-4\gamma N_{\ell+1}^{1-\overline{a}}\\
			&-\left|\mu_{j,k}^{(\infty)}(b,\omega,i_{\ell})-\mu_{j_{0},k}^{(\infty)}(b,\omega,i_{\ell})+\mu_{j_{0},k}^{(\infty)}(b',\omega',i_{\ell})-\mu_{j,k}^{(\infty)}(b',\omega',i_{\ell})\right|.
		\end{align*}
		We remind from \eqref{def mu lim} that the perturbed eigenvalues admit the following structure
		\begin{equation}\label{dec mjkfty}
			\mu_{j,k}^{(\infty)}(b,\omega,i_{\ell})=\mu_{j,k}^{(0)}(b,\omega,i_{\ell})+r_{j,k}^{(\infty)}(b,\omega,i_{\ell}).
		\end{equation}
		Hence,
		\begin{align*}
			&\left|\mu_{j,k}^{(\infty)}(b,\omega,i_{\ell})-\mu_{j_{0},k}^{(\infty)}(b,\omega,i_{\ell})+\mu_{j_{0},k}^{(\infty)}(b',\omega',i_{\ell})-\mu_{j,k}^{(\infty)}(b',\omega',i_{\ell})\right|\\
			&\leqslant\left|\mu_{j,k}^{(0)}(b,\omega,i_{\ell})-\mu_{j_{0},k}^{(0)}(b,\omega,i_{\ell})+\mu_{j_{0},k}^{(0)}(b',\omega',i_{\ell})-\mu_{j,k}^{(0)}(b',\omega',i_{\ell})\right|\\
			&\quad+\left|r_{j,k}^{(\infty)}(b,\omega,i_{\ell})-r_{j,k}^{(\infty)}(b',\omega',i_{\ell})\right|+\left|r_{j_{0},k}^{(\infty)}(b,\omega,i_{\ell})-r_{j_{0},k}^{(\infty)}(b',\omega',i_{\ell})\right|.
		\end{align*}
		The  Mean Value Theorem, \eqref{dist b b'} and the definition of $\mathrm{O}_{\ell+1}^{4\gamma}$ allow to write
		\begin{equation}\label{im00}
			\left|\mu_{j,k}^{(0)}(b,\omega,i_{\ell})-\mu_{j_{0},k}^{(0)}(b,\omega,i_{\ell})+\mu_{j_{0},k}^{(0)}(b',\omega',i_{\ell})-\mu_{j,k}^{(0)}(b',\omega',i_{\ell})\right|\leqslant\gamma CN_{\ell+1}^{1-\overline{a}}\langle j-j_{0}\rangle.
		\end{equation}
		Similarly, using in particular  \eqref{e-rjfty}, \eqref{sml NM} and the definition of $\mathrm{O}_{\ell+1}^{4\gamma}$ we get
		\begin{equation}\label{ir00}
			\left|r_{j,k}^{(\infty)}(b,\omega,i_{\ell})-r_{j,k}^{(\infty)}(b',\omega',i_{\ell})\right|\leqslant C\gamma N_{\ell+1}^{-\overline{a}}\varepsilon \gamma^{-2}\leqslant \gamma CN_{\ell+1}^{1-\overline{a}}\langle j-j_{0}\rangle.
		\end{equation}
		Gathering the previous inequalities and using the facts that $|l|\leqslant N_{\ell}$ and $\gamma_{\ell+1}\geqslant\gamma$ we deduce
		\begin{align*}
			\left|\omega\cdot l+\mu_{j,k}^{(\infty)}(b,\omega,i_{\ell})-\mu_{j_{0},k}^{(\infty)}(b,\omega,i_{\ell})\right|&\geqslant\tfrac{\gamma_{\ell+1}\langle j-j_{0}\rangle}{\langle l\rangle^{\tau_{2}}}\left(4-CN_{\ell+1}^{\tau_{2}+1-\overline{a}}\right).
		\end{align*}
		By the choice \eqref{cond-abarre-1}, if $N_{0}$ is large enough, then we obtain
		$$CN_{\ell+1}^{\tau_{2}+1-\overline{a}}\leqslant CN_0^{-1}<1,$$
		which implies in turn
		$$\left|\omega\cdot l+\mu_{j,k}^{(\infty)}(b,\omega,i_{\ell})-\mu_{j_{0},k}^{(\infty)}(b,\omega,i_{\ell})\right|>\tfrac{2\gamma_{\ell+1}\langle j-j_{0}\rangle}{\langle l\rangle^{\tau_{2}}}\cdot$$
		For all $(l,j,j_0)\in\mathbb{Z}^{d}\times(\mathbb{Z}_{\mathtt{m}}\setminus\overline{\mathbb{S}}_{0,1})\times(\mathbb{Z}_{\mathtt{m}}\setminus\overline{\mathbb{S}}_{0,2})$ with $\langle l,j,j_0\rangle\leqslant N_{\ell},$ using the triangle and Cauchy-Schwarz inequalities together with \eqref{dist b b'} and the fact that $(b',\omega')\in\mathscr{O}_{\infty,\ell}^{2\gamma_{\ell+1},\tau_{1},\tau_{2}}(i_{\ell})$
		\begin{align*}
			\left|\omega\cdot l+\mu_{j,1}^{(\infty)}(b,\omega,i_{\ell})-\mu_{j_{0},2}^{(\infty)}(b,\omega,i_{\ell})\right|&\geqslant\left|\omega'\cdot l+\mu_{j,1}^{(\infty)}(b',\omega',i_{\ell})-\mu_{j_{0},2}^{(\infty)}(b',\omega',i_{\ell})\right|-|\omega-\omega'||l|\\
			&-\left|\mu_{j,1}^{(\infty)}(b,\omega,i_{\ell})-\mu_{j_{0},2}^{(\infty)}(b,\omega,i_{\ell})+\mu_{j_{0},2}^{(\infty)}(b',\omega',i_{\ell})-\mu_{j,1}^{(\infty)}(b',\omega',i_{\ell})\right|\\
			&>\tfrac{4\gamma_{\ell+1}}{\langle l,j,j_{0}\rangle^{\tau_{2}}}-4\gamma N_{\ell+1}^{1-\overline{a}}\\
			&-\left|\mu_{j,1}^{(\infty)}(b,\omega,i_{\ell})-\mu_{j_{0},2}^{(\infty)}(b,\omega,i_{\ell})+\mu_{j_{0},2}^{(\infty)}(b',\omega',i_{\ell})-\mu_{j,1}^{(\infty)}(b',\omega',i_{\ell})\right|.
		\end{align*}
		Similarly to \eqref{im00} and \eqref{ir00}, we get
		\begin{align*}
			\left|\mu_{j,1}^{(0)}(b,\omega,i_{\ell})-\mu_{j_{0},2}^{(0)}(b,\omega,i_{\ell})+\mu_{j_{0},2}^{(0)}(b',\omega',i_{\ell})-\mu_{j,1}^{(0)}(b',\omega',i_{\ell})\right|\leqslant\gamma CN_{\ell+1}^{1-\overline{a}}\langle j,j_{0}\rangle\leqslant\gamma CN_{\ell+1}^{2-\overline{a}},\\
			\left|r_{j,1}^{(\infty)}(b,\omega,i_{\ell})-r_{j,1}^{(\infty)}(b',\omega',i_{\ell})\right|+\left|r_{j_0,2}^{(\infty)}(b,\omega,i_{\ell})-r_{j_0,2}^{(\infty)}(b',\omega',i_{\ell})\right|\leqslant \gamma CN_{\ell+1}^{1-\overline{a}}\langle j,j_{0}\rangle\leqslant\gamma CN_{\ell+1}^{2-\overline{a}}.
		\end{align*}
		Therefore,
		$$\left|\omega\cdot l+\mu_{j,1}^{(\infty)}(b,\omega,i_{\ell})-\mu_{j_{0},2}^{(\infty)}(b,\omega,i_{\ell})\right|\geqslant\tfrac{\gamma_{\ell+1}}{\langle l,j,j_0\rangle^{\tau_{2}}}\left(4-CN_{\ell+1}^{\tau_{2}+2-\overline{a}}\right).$$
		By the choice \eqref{cond-abarre-1}, if $N_{0}$ is large enough, then we obtain
		$${CN_{\ell+1}^{\tau_{2}+2-\overline{a}}\leqslant CN_0^{-1}<1},$$
		and then
		$$\left|\omega\cdot l+\mu_{j,1}^{(\infty)}(b,\omega,i_{\ell})-\mu_{j_{0},2}^{(\infty)}(b,\omega,i_{\ell})\right|>\tfrac{2\gamma_{\ell+1}}{\langle l,j,j_0\rangle^{\tau_{2}}}\cdot$$
		This proves that $(b,\omega)\in\mathscr{O}_{\infty,\ell}^{\gamma_{\ell+1},\tau_{1},\tau_{2}}(i_{\ell}).$ It remains to check that $(b,\omega)\in\Lambda_{\infty,\ell}^{\gamma_{\ell+1},\tau_1}(i_{\ell}).$ For all $k\in\{1,2\}$ and  $(l,j)\in\mathbb{Z}^{d}\times(\mathbb{Z}_{\mathtt{m}}\setminus\overline{\mathbb{S}}_{0,k})$ with $|l|\leqslant N_{\ell},$ we have by left triangle and Cauchy-Schwarz inequalities together with \eqref{dist b b'} and the fact $(b',\omega')\in\Lambda_{\infty,\ell}^{2\gamma_{\ell+1},\tau_{1}}(i_{\ell})$
		\begin{align*}
			\left|\omega\cdot l+\mu_{j,k}^{(\infty)}(b,\omega,i_{\ell})\right|&\geqslant\left|\omega'\cdot l+\mu_{j,k}^{(\infty)}(b',\omega',i_{\ell})\right|-|\omega-\omega'||l|-\left|\mu_{j,k}^{(\infty)}(b,\omega,i_{k})-\mu_{j,k}^{(\infty)}(b',\omega',i_{\ell})\right|\\
			&>\tfrac{2\gamma_{\ell+1}\langle j\rangle}{\langle l\rangle^{\tau_{1}}}-4\gamma N_{\ell}N_{\ell+1}^{-\overline{a}}-\left|\mu_{j,k}^{(\infty)}(b,\omega,i_{\ell})-\mu_{j,k}^{(\infty)}(b',\omega',i_{\ell})\right|\\
			&>\tfrac{2\gamma_{\ell+1}\langle j\rangle}{\langle l\rangle^{\tau_{1}}}-4\gamma N_{\ell+1}^{1-\overline{a}}-\left|\mu_{j,k}^{(\infty)}(b,\omega,i_{\ell})-\mu_{j,k}^{(\infty)}(b',\omega',i_{\ell})\right|.
		\end{align*}
		The Mean Value Theorem and the definition of $\mathrm{O}_{\ell+1}^{4\gamma}$ give
		\begin{align*}
			\left|\mu_{j,k}^{(\infty)}(b,\omega,i_{\ell})-\mu_{j,k}^{(\infty)}(b',\omega',i_{\ell})\right|&\leqslant|(b,\omega)-(b',\omega')|\gamma^{-1}\|\mu_{j,k}^{(\infty)}(i_{\ell})\|^{q,\gamma}\\
			&\leqslant 4N_{\ell+1}^{-\overline{a}}\|\mu_{j,k}^{(\infty)}(i_{\ell})\|^{q,\gamma}.
		\end{align*}
		Now, by triangle inequlity
		$$\forall j\in\mathbb{Z}_{\mathtt{m}}\setminus\overline{\mathbb{S}}_{0,k},\quad \|\mu_{j,k}^{(\infty)}(i_{\ell})\|^{q,\gamma}\leqslant\|\mu_{j,k}^{(\infty)}(i_{\ell})-\Omega_{j,k}\|^{q,\gamma}+\|\Omega_{j,k}\|^{q,\gamma}.$$
		From \eqref{lim omega jk} one has for all $|j|\geqslant\mathtt{m}^{*}$, 
		$$\|\Omega_{j,k}\|^{q,\gamma}\leqslant C|j|.$$
		Besides, \eqref{def mu lim}, \eqref{mu0 r0}, \eqref{e-ed-r0} and \eqref{e-rjfty} imply
		$$\forall j\in\mathbb{Z}_{\mathtt{m}}\setminus\overline{\mathbb{S}}_{0,k},\quad \|\mu_{j,k}^{(\infty)}(i_{\ell})-\Omega_{j,k}\|^{q,\gamma}\leqslant C|j|.$$
		Putting together the preceding three estimates gives
		$$\forall j\in\mathbb{Z}_{\mathtt{m}}\setminus\overline{\mathbb{S}}_{0,k},\quad \|\mu_{j,k}^{(\infty)}(i_{\ell})\|^{q,\gamma}\leqslant C|j|.$$
		As a consequence, we have
		$$\left|\mu_{j,k}^{(\infty)}(b,\omega,i_{\ell})-\mu_{j,k}^{(\infty)}(b',\omega',i_{\ell})\right|\leqslant C\langle j\rangle N_{\ell+1}^{-\overline{a}}\leqslant C\gamma\langle j\rangle N_{\ell+1}^{1-\overline{a}}.$$
		Usint that $|l|\leqslant N_{\ell}\leqslant N_{\ell+1}$ and $\gamma_{\ell+1}\geqslant \gamma$, we get
		\begin{align*}
			\left|\omega\cdot l+\mu_{j,k}^{(\infty)}(b,\omega,i_{\ell})\right|&\geqslant\tfrac{2\gamma_{\ell+1}\langle j\rangle}{\langle l\rangle^{\tau_{1}}}-C\gamma\langle j\rangle N_{\ell+1}^{1-\overline{a}}\\
			&\geqslant\tfrac{\gamma_{\ell+1}\langle j\rangle}{\langle l\rangle^{\tau_{1}}}\left(2-CN_{\ell+1}^{\tau_{1}+1-\overline{a}}\right).
		\end{align*}
		Now, we choose $N_{0}$ sufficiently large so that
		$$CN_{\ell+1}^{\tau_{1}+1-\overline{a}}\leqslant CN_{0}^{-1}<1,$$
		and then
		$$\left|\omega\cdot l+\mu_{j,k}^{(\infty)}(b,\omega,i_{\ell})\right|>\tfrac{\gamma_{\ell+1}\langle j\rangle}{\langle l\rangle^{\tau_{1}}}\cdot
		$$
		This shows that, $(b,\omega)\in\Lambda_{\infty,\ell}^{\gamma_{\ell+1},\tau_{1}}(i_{\ell})$ and finally $(b,\omega)\in\mathtt{G}_{\ell}\big(\gamma_{\ell+1},\tau_{1},\tau_{2},i_{\ell}\big).$ Therefore $(b,\omega)\in\mathtt{A}_{\ell+1}^{\gamma}.$ This achieves the induction proof of \eqref{hyprec O in A}.\\
		
		\noindent $\blacktriangleright$ \textbf{Construction of the next approximation.} Our next task is to construct the next approximate solution denoted $\mathtt{U}_{n+1}.$ Observe that according to Theorem \ref{theo appr inv}, the properties \eqref{sml NM} and \eqref{bnd frkIn} allow to construct a reversible approximate right inverse $\mathrm{T}_n\triangleq\mathrm{T}_{n}(b,\omega)$ of the linearized operator $L_{n}.$ Recall that the operator $\mathrm{T}_n$ is well-defined on the whole set of parameters $\mathcal{O}$ and satisfies, by virtue of \eqref{tame T0}, the following tame estimate
		\begin{equation}\label{eari-NM}
			\forall s\in[s_{0},S],\quad\|\mathrm{T}_{n}\rho\|_{s}^{q,\gamma,\mathtt{m}}\lesssim\gamma^{-1}\left(\|\rho\|_{s+\overline{\sigma}}^{q,\gamma,\mathtt{m}}+\|\mathfrak{I}_{n}\|_{s+\overline{\sigma}}^{q,\gamma,\mathtt{m}}\|\rho\|_{s_{0}+\overline{\sigma}}^{q,\gamma,\mathtt{m}}\right).
		\end{equation}
		In addition it is an approximate right inverse of $L_n$ when restricted to $\mathtt{G}_{n}(\gamma_{n+1},\tau_{1},\tau_{2},i_{n}).$ More precisely, according to \eqref{splitting of approximate inverse} we have in $\mathtt{G}_{n}(\gamma_{n+1},\tau_{1},\tau_{2},i_{n})$
		\begin{equation}\label{approx Ln}
			L_n{\rm T}_n-\textnormal{Id} = \mathcal{E}^{(n)}_1+\mathcal{E}^{(n)}_2+\mathcal{E}^{(n)}_3,
		\end{equation}
		where the error terms in the right hand-side satisfy the estimates \eqref{calE1}, \eqref{calE2} and \eqref{calE3}. The next approximation is defined as follows,
		$$\widetilde{\mathtt{U}}_{n+1}\triangleq\mathtt{U}_{n}+\widetilde{\mathtt{H}}_{n+1},\qquad \widetilde{\mathtt{H}}_{n+1}\triangleq(\widehat{\mathfrak{I}}_{n+1},\widehat{\alpha}_{n+1},0)\triangleq-\mathbf{\Pi}_{N_n}\mathrm{T}_{n}\Pi_{N_n}\mathcal{F}(\mathtt{U}_{n})\in E_{n,\mathtt{m}}\times\mathbb{R}^{d}\times\mathbb{R}^{d+1},
		$$
		where the projector $\mathbf{\Pi}_{N_n}$ and its orthogonal are defined by
		\begin{equation}\label{proj-NM}
			\mathbf{\Pi}_{N_n}(\mathfrak{I},\alpha,0)=(\Pi_{N_n}\mathfrak{I},\alpha,0)\qquad \textnormal{and }\qquad\mathbf{\Pi}_{N_n}^{\perp}(\mathfrak{I},\alpha,0)=(\Pi_{N_n}^{\perp}\mathfrak{I},0,0).
		\end{equation}
		Then, applying Taylor formula yields
		\begin{align}\label{Fnext-NM}
			\nonumber \mathcal{F}(\widetilde{\mathtt{U}}_{n+1})& =  \mathcal{F}(\mathtt{U}_{n})-L_{n}\mathbf{\Pi}_{N_n}\mathrm{T}_{n}\Pi_{N_n}\mathcal{F}(\mathtt{U}_{n})+Q_{n}\\
			\nonumber& =  \mathcal{F}(\mathtt{U}_{n})-L_{n}\mathrm{T}_{n}\Pi_{N_n}\mathcal{F}(\mathtt{U}_{n})+L_{n}\mathbf{\Pi}_{N_n}^{\perp}\mathrm{T}_{n}\Pi_{N_n}\mathcal{F}(\mathtt{U}_{n})+Q_{n}\\
			\nonumber& =  \mathcal{F}(\mathtt{U}_{n})-\Pi_{N_n}L_{n}\mathrm{T}_{n}\Pi_{n}\mathcal{F}(\mathtt{U}_{n})+(L_{n}\mathbf{\Pi}_{N_n}^{\perp}-\Pi_{N_n}^{\perp}L_{n})\mathrm{T}_{n}\Pi_{N_n}\mathcal{F}(\mathtt{U}_{n})+Q_{n}\\
			& =  \Pi_{N_n}^{\perp}\mathcal{F}(\mathtt{U}_{n})-\Pi_{N_n}(L_{n}\mathrm{T}_{n}-\textnormal{Id})\Pi_{N_n}\mathcal{F}(\mathtt{U}_{n})+(L_{n}\mathbf{\Pi}_{N_n}^{\perp}-\Pi_{N_n}^{\perp}L_{n})\mathrm{T}_{n}\Pi_{N_n}\mathcal{F}(\mathtt{U}_{n})+Q_{n},
		\end{align}
		where $Q_n$ denotes the quadratic part given by
		\begin{align}\label{quad-NM}
			Q_{n}=\mathcal{F}(\mathtt{U}_{n}+\widetilde{\mathtt{H}}_{n+1})-\mathcal{F}(\mathtt{U}_{n})-L_{n}\widetilde{\mathtt{H}}_{n+1}.
		\end{align}
		Now, we shall prove \eqref{decay FttUn} at the order $n+1$ for a suitable extension $\mathtt{U}_{n+1}$ of $\widetilde{\mathtt{U}}_{n+1}|_{\mathrm{O}_{n+1}^{2\gamma}}.$ This is done in two steps. The first one is to prove that
		\begin{align}\label{decay ttUn inter}
			\|\mathcal{F}(\widetilde{\mathtt{U}}_{n+1})\|_{s_{0}}^{q,\gamma,\mathtt{m},\mathrm{O}_{n+1}^{4\gamma}}\leqslant C_{\ast}\varepsilon N_{n}^{-a_{1}}.
		\end{align}
		The second step is to construct the classical extension of $\mathtt{U}_{n+1}$ which fulfills the desired estimate \eqref{decay FttUn}.\\
		\ding{226} \textit{Proof of \eqref{decay ttUn inter}.} We estimate each one of the four terms in the right hand-side of \eqref{Fnext-NM}. Let us begin with the first one. Applying Lemma \ref{lem funct prop}-(i) and using the inclusion \eqref{O2gm in Ogm}, we obtain
		\begin{equation}\label{e-first term NM}
			\|\Pi_{N_n}^{\perp}\mathcal{F}(\mathtt{U}_{n})\|_{s_{0}}^{q,\gamma,\mathtt{m},\mathrm{O}_{n+1}^{4\gamma}}\leqslant N_{n}^{s_{0}-\kappa_{1}}\|\mathcal{F}(\mathtt{U}_{n})\|_{\kappa_{1}}^{q,\gamma,\mathtt{m},\mathrm{O}_{n}^{2\gamma}}.
		\end{equation}
		Now, Taylor formula together with \eqref{operatorF}, Lemma \ref{tame X per}, \eqref{e-F(ttU0)} and \eqref{ttUn} imply
		\begin{align}\label{e-fttUn-Wn}
			\nonumber \forall s\geqslant s_{0},\quad\|\mathcal{F}(\mathtt{U}_{n})\|_{s}^{q,\gamma,\mathtt{m},\mathrm{O}_{n}^{2\gamma}}&\leqslant\|\mathcal{F}(\mathtt{U}_{0})\|_{s}^{q,\gamma,\mathtt{m}}+\|\mathcal{F}(\mathtt{U}_{n})-\mathcal{F}(\mathtt{U}_{0})\|_{s}^{q,\gamma,\mathtt{m},\mathrm{O}_{n}^{2\gamma}}\\
			&\lesssim\varepsilon+\| \mathtt{W}_{n}\|_{s+\overline{\sigma}}^{q,\gamma,\mathtt{m}}.
		\end{align}
		Besides, \eqref{growth ttWn}, \eqref{def geo Nn} and \eqref{rigidity gam-N0} together give 
		\begin{align}\label{eps+ttWn}
			\nonumber\varepsilon+\|\mathtt{W}_{n}\|_{\kappa_{1}+\overline{\sigma}}^{q,\gamma,\mathtt{m}}&\leqslant\varepsilon\left(1+C_{\ast}\gamma^{-1}N_{n-1}^{\mu_{1}}\right)\\
			&\leqslant 2C_{\ast}\varepsilon N_{n}^{\frac{2}{3}\mu_{1}+1}.
		\end{align}
		Inserting \eqref{e-fttUn-Wn} and \eqref{eps+ttWn} into \eqref{e-first term NM} yields
		\begin{equation}\label{e-1-NM}
			\|\Pi_{N_n}^{\perp}\mathcal{F}(\mathtt{U}_{n})\|_{s_{0}}^{q,\gamma,\mathtt{m},\mathrm{O}_{n+1}^{4\gamma}}\lesssim C_{\ast}\varepsilon N_{n}^{s_{0}+\frac{2}{3}\mu_{1}+1-\kappa_{1}}.
		\end{equation}
		Let us move on to the second term. According to \eqref{hyprec O in A}, we have the following inclusions
		$$\mathrm{O}_{n+1}^{4\gamma}\subset\mathtt{A}_{n+1}^{\gamma}\subset\mathtt{G}_{n}\Big(\gamma_{n+1},\tau_{1},\tau_{2},i_{n}\Big).$$
		Hence, the decomposition \eqref{approx Ln} holds on $\mathrm{O}_{n+1}^{4\gamma}$ and we can write
		$$\Pi_{N_n}(L_{n}\mathrm{T}_{n}-\textnormal{Id})\Pi_{N_n}\mathcal{F}(\mathtt{U}_{n})=\mathfrak{E}_{1,n}+\mathfrak{E}_{2,n}+\mathfrak{E}_{3,n},$$
		with for any $k\in\{1,2,3\},$
		$$\mathfrak{E}_{k,n}\triangleq\Pi_{N_n}\mathcal{E}_{k}^{(n)}\Pi_{N_n}\mathcal{F}(\mathtt{U}_{n}).$$
		Thus, we need to estimate each one of the error terms $\mathfrak{E}_{k,n}$.  
		We begin with $\mathfrak{E}_{1,n}$ for which we need the following interpolation-type inequality
		\begin{align}\label{interp NM2}
			\|\mathcal{F}(\mathtt{U}_{n})\|_{q,s_{0}+\overline{\sigma}}^{\gamma,\mathtt{m},\mathrm{O}_{n}^{2\gamma}}&\leqslant\|\Pi_{N_n}\mathcal{F}(\mathtt{U}_n)\|_{q,s_0+\overline{\sigma}}^{\gamma,\mathtt{m},\mathrm{O}_{n}^{2\gamma}}+\|\Pi_{N_n}^{\perp}\mathcal{F}(\mathtt{U}_n)\|_{q,s_0+\overline{\sigma}}^{\gamma,\mathtt{m},\mathrm{O}_{n}^{2\gamma}}\nonumber\\
			&\leqslant N_{n}^{\overline{\sigma}}\|\mathcal{F}(\mathtt{U}_n)\|_{q,s_0}^{\gamma,\mathtt{m},\mathrm{O}_{n}^{2\gamma}}+N_{n}^{s_0-\kappa_{1}}\|\mathcal{F}(\mathtt{U}_n)\|_{q,\kappa_{1}+\overline{\sigma}}^{\gamma,\mathtt{m},\mathrm{O}_{n}^{2\gamma}}.
		\end{align}
	Now, putting together \eqref{e-fttUn-Wn} and \eqref{eps+ttWn}, we infer
	\begin{equation}\label{FttUn high}
		\|\mathcal{F}(\mathtt{U}_{n})\|_{\kappa_{1}+\overline{\sigma}}^{q,\gamma,\mathtt{m},\mathrm{O}_{n}^{2\gamma}}  \leqslant  C_{\ast}\varepsilon N_{n}^{\overline{\sigma}+\frac{2}{3}\mu_{1}+1}.
	\end{equation}
		Combining \eqref{calE1}, \eqref{interp NM2}, \eqref{e-ttWn}, \eqref{sml NM} and \eqref{FttUn high}, we obtain
		\begin{align}\label{Efrk1n}
			\|&\mathfrak{E}_{1,n}\|_{s_0}^{q,\gamma,\mathtt{m},\mathrm{O}_{n}^{2\gamma}}\lesssim\gamma^{-1}\|\mathcal{F}(\mathtt{U}_n)\|_{s_0+\overline{\sigma}}^{q,\gamma,\mathtt{m},\mathrm{O}_{n}^{2\gamma}}\|\Pi_{N_n}\mathcal{F}(\mathtt{U}_n)\|_{s_0+\overline{\sigma}}^{q,\gamma,\mathtt{m},\mathrm{O}_{n}^{2\gamma}}\left(1+\|\mathfrak{I}_n\|_{s_0+\overline{\sigma}}^{q,\gamma,\mathtt{m}}\right)\nonumber\\
			&\lesssim \gamma^{-1}N_n^{\overline{\sigma}}\left(N_n^{\overline{\sigma}}\|\mathcal{F}(\mathtt{U}_n)\|_{s_0}^{q,\gamma,\mathtt{m},\mathrm{O}_{n}^{2\gamma}}+N_{n}^{s_0-\kappa_{1}}\|\mathcal{F}(\mathtt{U}_n)\|_{\kappa_{1}+\overline{\sigma}}^{q,\gamma,\mathtt{m},\mathrm{O}_{n}^{2\gamma}}\right)\|\mathcal{F}(\mathtt{U}_n)\|_{s_0}^{q,\gamma,\mathtt{m},\mathrm{O}_{n}^{2\gamma}}\left(1+\|\mathtt{W}_{n}\|_{s_0+\overline{\sigma}}^{q,\gamma,\mathtt{m}}\right)\nonumber\\
			&\lesssim C_{\ast}\varepsilon\left(N_{n}^{2\overline{\sigma}-\frac{4}{3}a_1}+N_n^{s_0+2\overline{\sigma}+\frac{2}{3}\mu_{1}+1-\frac{2}{3}a_1-\kappa_{1}}\right).
		\end{align}
		As for $\mathfrak{E}_{2,n}$, we apply \eqref{calE2} with $\mathtt{b}=\kappa_{1}-s_0$ and use \eqref{sml NM}, \eqref{decay FttUn} and \eqref{growth ttWn} in order to find
		\begin{align}\label{Efrk2n}
			\|\mathfrak{E}_{2,n}\|_{s_0}^{q,\gamma,\mathtt{m},\mathrm{O}_{n}^{2\gamma}}&\lesssim\gamma^{-1}N_n^{s_0-\kappa_{1}}\left(\|\Pi_{N_n}\mathcal{F}(\mathtt{U}_n)\|_{\kappa_{1}+\overline{\sigma}}^{q,\gamma,\mathtt{m},\mathrm{O}_{n}^{2\gamma}}+\varepsilon\|\mathfrak{I}_n\|_{\kappa_{1}+\overline{\sigma}}^{q,\gamma,\mathtt{m}}\|\Pi_{N_n}\mathcal{F}(\mathtt{U}_n)\|_{s_0+\overline{\sigma}}^{q,\gamma,\mathtt{m},\mathrm{O}_{n}^{2\gamma}}\right)\nonumber\\
			&\lesssim \gamma^{-1}N_n^{s_0-\kappa_{1}}\left(\|\mathcal{F}(\mathtt{U}_n)\|_{\kappa_{1}+\overline{\sigma}}^{q,\gamma,\mathtt{m},\mathrm{O}_{n}^{2\gamma}}+\varepsilon N_n^{\overline{\sigma}}\|\mathtt{W}_n\|_{\kappa_{1}+\overline{\sigma}}^{q,\gamma,\mathtt{m}}\|\mathcal{F}(\mathtt{U}_n)\|_{s_0}^{q,\gamma,\mathtt{m},\mathrm{O}_{n}^{2\gamma}}\right)\nonumber\\
			&\lesssim C_{\ast}\varepsilon N_n^{s_0+\overline{\sigma}+\frac{2}{3}\mu_{1}+2-\kappa_{1}}+C_{\ast}\varepsilon N_n^{s_0+\overline{\sigma}+\frac{2}{3}\mu_{1}+2-\frac{2}{3}a_{1}-\kappa_{1}}\nonumber\\
			&\lesssim C_{\ast}\varepsilon N_n^{s_0+\overline{\sigma}+\frac{2}{3}\mu_{1}+2-\kappa_{1}}.
		\end{align}
		Similarly, putting together \eqref{calE3}, \eqref{def geo Nn}, \eqref{rigidity gam-N0} and \eqref{sml NM}, we infer
		\begin{align}\label{Efrk3n}
			\|\mathfrak{E}_{3,n}\|_{s_0}^{q,\gamma,\mathtt{m},\mathrm{O}_{n}^{2\gamma}}&\lesssim N_n^{s_0-\kappa_{1}}\gamma^{-2}\left(\|\Pi_{N_n}\mathcal{F}(\mathtt{U}_n)\|_{\kappa_{1}+\overline{\sigma}}^{q,\gamma,\mathtt{m},\mathrm{O}_{n}^{2\gamma}}+\varepsilon\gamma^{-2}\|\mathfrak{I}_n\|_{\kappa_{1}+\overline{\sigma}}^{q,\gamma,\mathtt{m}}\|\Pi_{N_n}\mathcal{F}(\mathtt{U}_n)\|_{s_0+\overline{\sigma}}^{q,\gamma,\mathtt{m},\mathrm{O}_{n}^{2\gamma}}\right)\nonumber\\
			&\quad+\varepsilon\gamma^{-4}N_{0}^{\mu_{2}}N_n^{-\mu_{2}}\|\Pi_{N_n}\mathcal{F}(\mathtt{U}_n)\|_{s_0+\overline{\sigma}}^{q,\gamma,\mathtt{m},\mathrm{O}_{n}^{2\gamma}}\nonumber\\
			&\lesssim C_{\ast}\varepsilon\left(N_n^{s_0+\overline{\sigma}+\frac{2}{3}\mu_{1}+2-\kappa_{1}}+N_n^{\overline{\sigma}+1-\mu_{2}-\frac{2}{3}a_1}\right).
		\end{align}
		Gathering, \eqref{Efrk1n}, \eqref{Efrk2n} and \eqref{Efrk3n}, we deduce
		\begin{equation}\label{second term NMn}
			\|\Pi_{N_n}(L_{n}\mathrm{T}_{n}-\textnormal{Id})\Pi_{N_n}\mathcal{F}(\mathtt{U}_{n})\|_{s_0}^{q,\gamma,\mathtt{m},\mathrm{O}_{n+1}^{4\gamma}}\leqslant CC_{\ast}\varepsilon\left(N_{n}^{2\overline{\sigma}-\frac{4}{3}a_1}+N_n^{s_0+2\overline{\sigma}+\frac{2}{3}\mu_{1}+1-\kappa_{1}}+N_n^{\overline{\sigma}+1-\mu_{2}-\frac{2}{3}a_1}\right).
		\end{equation}
		For $n=0$, we deduce from \eqref{e-F(ttU0)}, \eqref{sml NM} and by slight modifications of the preceding  computations 
		\begin{align}\label{second term NM0}
			\|\Pi_{N_0}(L_{0}\mathrm{T}_{0}-\textnormal{Id})\Pi_{N_0}\mathcal{F}(\mathtt{U}_{0})\|_{s_{0}}^{q,\gamma,\mathtt{m},\mathrm{O}_{1}^{4\gamma}}&\leqslant\|\mathfrak{E}_{1,0}\|_{s_0}^{q,\gamma,\mathtt{m}}+\|\mathfrak{E}_{2,0}\|_{s_0}^{q,\gamma,\mathtt{m}}+\|\mathfrak{E}_{3,0}\|_{s_0}^{q,\gamma,\mathtt{m}}\nonumber\\
			& \lesssim \varepsilon^{2}\gamma^{-1}+\varepsilon\gamma^{-1}+\big(\varepsilon \gamma^{-2}N_{0}^{s_{0}-\kappa_{1}}+\varepsilon^{2}\gamma^{-4}\big)\nonumber\\
			&\lesssim\varepsilon\gamma^{-2}.
		\end{align}
		Now, we turn to the estimate corresponding to the third term in \eqref{Fnext-NM}. In view of  \eqref{Linearized-op-F-DC}, we have for $\mathtt{H}=(\widehat{\mathfrak{I}},\widehat{\alpha})$ with $\widehat{\mathfrak{I}}=(\widehat{\Theta},\widehat{I},\widehat{z}),$
		\begin{equation}\label{linH00}
			L_{n}\mathtt{H}=\omega\cdot\partial_{\varphi}\widehat{\mathfrak{I}}-(0,0,\mathcal{J}\mathbf{L}_0(b)\widehat{z})-\varepsilon d_{i}X_{\mathcal{P}_{\varepsilon}}(i_{n})\widehat{\mathfrak{I}}-(\mathtt{J}\widehat{\alpha},0,0).
		\end{equation}
		Now, \eqref{proj-NM} and the fact that $\omega\cdot\partial_{\varphi}$ and $\mathcal{J}\mathbf{L}_0(b)$ are diagonal yield
		$$\left(L_{n}\mathbf{\Pi}_{N_n}^{\perp}-\Pi_{N_n}^{\perp}L_{n}\right)\mathtt{H}=-\varepsilon[d_{i}X_{\mathcal{P}_{\varepsilon}}(i_{n}),\Pi_{N_n}^{\perp}]\widehat{\mathfrak{I}}.$$
		Applying Lemma \ref{tame X per}-{(ii)} together with Lemma \ref{lem funct prop}-(i) and \eqref{O2gm in Ogm}, we infer
		$$
		\left\|\left(L_{n}\mathbf{\Pi}_{N_n}^{\perp}-\Pi_{N_n}^{\perp}L_{n}\right)\mathtt{H}\right\|_{s_{0}}^{q,\gamma,\mathtt{m},\mathrm{O}_{n+1}^{4\gamma}}\lesssim\varepsilon N_{n}^{s_{0}-\kappa_{1}}\left(\|\widehat{\mathfrak{I}}\|_{\kappa_{1}+1}^{q,\gamma,\mathtt{m},\mathrm{O}_{n}^{2\gamma}}+\|\mathfrak{I}_{n}\|_{\kappa_{1}+\overline{\sigma}}^{q,\gamma,\mathtt{m}}\|\widehat{\mathfrak{I}}\|_{s_{0}+1}^{q,\gamma,\mathtt{m},\mathrm{O}_{n}^{2\gamma}}\right).
		$$
		Hence,
		\begin{align*}
			\left\|\left(L_{n}\mathbf{\Pi}_{N_n}^{\perp}-\Pi_{N_n}^{\perp}L_{n}\right)\mathrm{T}_{n}\Pi_{n}\mathcal{F}(\mathtt{U}_{n})\right\|_{s_{0}}^{q,\gamma,\mathtt{m},\mathrm{O}_{n+1}^{4\gamma}}&\lesssim\varepsilon N_{n}^{s_{0}-\kappa_{1}}\|\mathrm{T}_{n}\Pi_{N_n}\mathcal{F}(\mathtt{U}_{n})\|_{\kappa_{1}+1}^{q,\gamma,\mathtt{m},\mathrm{O}_{n}^{2\gamma}}\\
			&\quad +\varepsilon N_{n}^{s_{0}-\kappa_{1}}\|\mathfrak{I}_{n}\|_{\kappa_{1}+\overline{\sigma}}^{q,\gamma,\mathtt{m}}\|\mathrm{T}_{n}\Pi_{N_n}\mathcal{F}(\mathtt{U}_{n})\|_{s_{0}+1}^{q,\gamma,\mathtt{m},\mathrm{O}_{n}^{2\gamma}}.
		\end{align*}
		Now, using \eqref{eari-NM}, \eqref{O2gm in Ogm}, Lemma \ref{lem funct prop}-(i), Sobolev embeddings, \eqref{sml NM} and \eqref{rigidity gam-N0}, we get
		\begin{align*}
			&\left\|\left(L_{n}\mathbf{\Pi}_{N_n}^{\perp}-\Pi_{N_n}^{\perp}L_{n}\right)\mathrm{T}_{n}\Pi_{n}\mathcal{F}(\mathtt{U}_{n})\right\|_{s_{0}}^{q,\gamma,\mathtt{m},\mathrm{O}_{n+1}^{4\gamma}}\\
			&\lesssim \varepsilon\gamma^{-1}N_{n}^{s_{0}-\kappa_{1}}\|\Pi_{N_n}\mathcal{F}(\mathtt{U}_{n})\|_{\kappa_{1}+\overline{\sigma}+1}^{q,\gamma,\mathtt{m},\mathrm{O}_{n}^{2\gamma}}+\|\mathfrak{I}_{n}\|_{\kappa_{1}+\overline{\sigma}+1}^{q,\gamma,\mathtt{m}}\|\Pi_{N_n}\mathcal{F}(\mathtt{U}_{n})\|_{s_{0}+\overline{\sigma}}^{q,\gamma,\mathtt{m},\mathrm{O}_{n}^{2\gamma}}\\
			&\quad+\varepsilon\gamma^{-1}N_{n}^{s_{0}-\kappa_{1}}\|\mathfrak{I}_{n}\|_{\kappa_{1}+\overline{\sigma}}^{q,\gamma,\mathtt{m}}\left(\|\Pi_{N_n}\mathcal{F}(\mathtt{U}_{n})\|_{s_{0}+\overline{\sigma}+1}^{q,\gamma,\mathtt{m},\mathrm{O}_{n}^{2\gamma}}+\|\mathfrak{I}_{n}\|_{s_{0}+\overline{\sigma}+1}^{q,\gamma,\mathtt{m}}\|\Pi_{N_n}\mathcal{F}(\mathtt{U}_{n})\|_{s_{0}+\overline{\sigma}}^{q,\gamma,\mathtt{m},\mathrm{O}_{n}^{2\gamma}}\right)\\
			&\lesssim \varepsilon N_{n}^{s_{0}+2-\kappa_{1}}\left(\|\mathcal{F}(\mathtt{U}_{n})\|_{\kappa_{1}+\overline{\sigma}}^{q,\gamma,\mathtt{m},\mathrm{O}_{n}^{2\gamma}}+\| \mathtt{W}_{n}\|_{\kappa_{1}+\overline{\sigma}}^{q,\gamma,\mathtt{m}}\|\Pi_{N_n}\mathcal{F}(\mathtt{U}_{n})\|_{s_{0}+\overline{\sigma}}^{q,\gamma,\mathtt{m},\mathrm{O}_{n}^{2\gamma}}\right).
		\end{align*}
		Besides, from Lemma \ref{lem funct prop}-(ii), \eqref{decay FttUn} and \eqref{def geo Nn}, we obtain
		\begin{align*}
			\|\Pi_{N_n}\mathcal{F}(\mathtt{U}_{n})\|_{s_{0}+\overline{\sigma}}^{q,\gamma,\mathtt{m},\mathrm{O}_{n}^{2\gamma}}&\leqslant N_{n}^{\overline{\sigma}}\|\mathcal{F}(\mathtt{U}_{n})\|_{s_{0}}^{q,\gamma,\mathtt{m},\mathrm{O}_{n}^{2\gamma}}\\
			&\leqslant C_{\ast}\varepsilon N_{n}^{\overline{\sigma}}N_{n-1}^{-a_{1}}\\
			&\leqslant C_{\ast}\varepsilon N_{n}^{\overline{\sigma}-\frac{2}{3}a_{1}}.
		\end{align*}
		Added to \eqref{param NM}, \eqref{FttUn high} and \eqref{growth ttWn}, we deduce that 
		\begin{equation}\label{final estimate commutator}
			\|(L_{n}\mathbf{\Pi}_{n}^{\perp}-\Pi_{n}^{\perp}L_{n})\mathrm{T}_{n}\Pi_{n}\mathcal{F}(\mathtt{U}_{n})\|_{s_{0}}^{q,\gamma,\mathtt{m},\mathrm{O}_{n+1}^{4\gamma}}\leqslant CC_{\ast}\varepsilon N_{n}^{s_{0}+\overline{\sigma}+\frac{2}{3}\mu_{1}+3-\kappa_{1}}.
		\end{equation}
		We are left with the quadratic term in \eqref{Fnext-NM}. Another application of Taylor formula with \eqref{quad-NM} leads to
		$$Q_{n}=\int_{0}^{1}(1-t)d_{i,\alpha}^{2}\mathcal{F}(\mathtt{U}_{n}+t\widetilde{\mathtt{H}}_{n+1})[\widetilde{\mathtt{H}}_{n+1},\widetilde{\mathtt{H}}_{n+1}]dt.$$
		Now, \eqref{linH00} and Lemma \ref{tame X per}-{(iii)} give
		\begin{align}\label{quad-e-11}
			\| Q_{n}\|_{s_{0}}^{q,\gamma,\mathtt{m},\mathrm{O}_{n+1}^{4\gamma}}\lesssim\varepsilon\left(1+\|\mathtt{W}_{n}\|_{s_{0}+2}^{q,\gamma,\mathtt{m}}+\| \widetilde{\mathtt{H}}_{n+1}\|_{s_{0}+2}^{q,\gamma,\mathtt{m},\mathrm{O}_{n+1}^{4\gamma}}\right)\left(\| \widetilde{\mathtt{H}}_{n+1}\|_{s_{0}+2}^{q,\gamma,\mathtt{m},\mathrm{O}_{n+1}^{4\gamma}}\right)^{2}.
		\end{align}
	Observe that, \eqref{e-fttUn-Wn}, \eqref{rigidity gam-N0} and \eqref{e-ttWn} imply
	\begin{equation}\label{sml FttU0-00}
		\gamma^{-1}\|\mathcal{F}(\mathtt{U}_{n})\|_{s_{0}}^{q,\gamma,\mathtt{m},\mathrm{O}_{n}^{2\gamma}}\leqslant 1.
	\end{equation}
		Gathering \eqref{O2gm in Ogm}, \eqref{hyprec O in A}, \eqref{eari-NM}, \eqref{e-fttUn-Wn} and \eqref{sml FttU0-00}, we obtain for all $s\in[s_{0},S]$ 
		\begin{align}\label{tttHm+1 and ttWn}
			\| \widetilde{\mathtt{H}}_{n+1}\|_{s}^{q,\gamma,\mathtt{m},\mathrm{O}_{n+1}^{4\gamma}}&=  \|\mathbf{\Pi}_{N_n}\mathrm{T}_{n}\Pi_{N_n}\mathcal{F}(\mathtt{U}_{n})\|_{s}^{q,\gamma,\mathtt{m},\mathrm{O}_{n+1}^{4\gamma}}\nonumber\\
			& \lesssim  \gamma^{-1}\left(\|\Pi_{N_n}\mathcal{F}(\mathtt{U}_{n})\|_{s+\overline{\sigma}}^{q,\gamma,\mathtt{m},\mathrm{O}_{n}^{2\gamma}}+\|\mathfrak{I}_{n}\|_{s+\overline{\sigma}}^{q,\gamma,\mathtt{m}}\|\Pi_{N_n}\mathcal{F}(\mathtt{U}_{n})\|_{s_{0}+\overline{\sigma}}^{q,\gamma,\mathtt{m},\mathrm{O}_{n}^{2\gamma}}\right)\nonumber\\
			& \lesssim  \gamma^{-1}\left(N_{n}^{\overline{\sigma}}\|\mathcal{F}(\mathtt{U}_{n})\|_{s}^{q,\gamma,\mathtt{m},\mathrm{O}_{n}^{2\gamma}}+N_{n}^{2\overline{\sigma}}\|\mathfrak{I}_{n}\|_{s}^{q,\gamma,\mathtt{m}}\|\mathcal{F}(\mathtt{U}_{n})\|_{s_{0}}^{q,\gamma,\mathtt{m},\mathrm{O}_{n}^{2\gamma}}\right)\nonumber\\
			& \lesssim  \gamma^{-1}N_{n}^{2\overline{\sigma}}\left(\varepsilon+\| \mathtt{W}_{n}\|_{s}^{q,\gamma,\mathtt{m}}\right).
		\end{align}
		Similarly, \eqref{eari-NM}, \eqref{O2gm in Ogm}, \eqref{e-ttWn}, \eqref{sml NM} and \eqref{decay FttUn}, we infer
		\begin{align}\label{decay tttHm+1 s0}
			\nonumber \| \widetilde{\mathtt{H}}_{n+1}\|_{s_{0}}^{q,\gamma,\mathtt{m},\mathrm{O}_{n+1}^{4\gamma}}&\lesssim\gamma^{-1}N_{n}^{\overline{\sigma}}\|\mathcal{F}(\mathtt{U}_{n})\|_{s_{0}}^{q,\gamma,\mathtt{m},\mathrm{O}_{n}^{2\gamma}}\\
			&\lesssim C_{\ast}\varepsilon\gamma^{-1} N_{n}^{\overline{\sigma}}N_{n-1}^{-a_{1}}.
		\end{align}
		For $\varepsilon$ sufficiently small, \eqref{e-ttWn} and \eqref{decay tttHm+1 s0} imply
		\begin{align*}
			\|\mathtt{W}_{n}\|_{s_{0}+2}^{q,\gamma,\mathtt{m}}+\| \widetilde{\mathtt{H}}_{n+1}\|_{s_{0}+2}^{q,\gamma,\mathtt{m},\mathrm{O}_{n+1}^{4\gamma}} & \leqslant C_{\ast}\varepsilon\gamma^{-1}N_0^{q\overline{a}}+N_{n}^{2}\| \widetilde{\mathtt{H}}_{n+1}\|_{s_{0}}^{q,\gamma,\mathtt{m},\mathrm{O}_{n+1}^{4\gamma}}\\
			& \leqslant 1+C\varepsilon\gamma^{-1}N_{n}^{\overline{\sigma}+2}N_{n-1}^{-a_{1}}\\
			& \leqslant 1+C\varepsilon\gamma^{-1}N_{n-1}^{3+\frac{3}{2}\overline{\sigma}-a_{1}}.
		\end{align*}
		But \eqref{param NM} gives in particular $a_{1}\geqslant 3+\tfrac{3}{2}\overline{\sigma}.$ Thus,
		\begin{equation}\label{bnd ttWn and tttHn+1}
			\|\mathtt{W}_{n}\|_{s_{0}+2}^{q,\gamma,\mathtt{m}}+\| \widetilde{\mathtt{H}}_{n+1}\|_{s_{0}+2}^{q,\gamma,\mathtt{m},\mathrm{O}_{n+1}^{4\gamma}}\leqslant 2.
		\end{equation}
		Therefore, inserting \eqref{bnd ttWn and tttHn+1} and \eqref{decay tttHm+1 s0} into \eqref{quad-e-11} and using \eqref{rigidity gam-N0} and \eqref{sml NM}, we get 
		\begin{align*}
			\| Q_{n}\|_{s_{0}}^{q,\gamma,\mathtt{m},\mathrm{O}_{n+1}^{4\gamma}} & \lesssim \varepsilon\left(\| \widetilde{\mathtt{H}}_{n+1}\|_{s_{0}+2}^{q,\gamma,\mathtt{m},\mathrm{O}_{n+1}^{4\gamma}}\right)^{2}\\
			& \leqslant \varepsilon N_{n}^{4}\left(\| \widetilde{\mathtt{H}}_{n+1}\|_{s_{0}}^{q,\gamma,\mathtt{m},\mathrm{O}_{n+1}^{4\gamma}}\right)^{2}\\
			& \lesssim \varepsilon C_{\ast}N_{n}^{2\overline{\sigma}+4}N_{n-1}^{-2a_1}.
		\end{align*}
		Using \eqref{def geo Nn}, we finally obtain for $n\in\mathbb{N}^*$,
		\begin{equation}\label{fourth term NM}
			\|Q_{n}\|_{s_{0}}^{q,\gamma,\mathtt{m},\mathcal{O}_{n+1}^{4\gamma}}\leqslant CC_{\ast}\varepsilon N_{n}^{2\overline{\sigma}+4-\frac{4}{3}a_{1}}.
		\end{equation}
		As for $n=0$, we come back to \eqref{tttHm+1 and ttWn} and \eqref{e-F(ttU0)} to get for all $s\in[s_{0},S]$
		\begin{align}\label{ttH1}
			\|\widetilde{\mathtt{H}}_{1}\|_{s}^{q,\gamma,\mathtt{m},\mathrm{O}_{1}^{4\gamma}}&\lesssim\gamma^{-1}\|\Pi_{0}\mathcal{F}(\mathtt{U}_{0})\|_{s+\overline{\sigma}}^{q,\gamma,\mathtt{m}}\nonumber\\
			&\lesssim C_{\ast}\varepsilon\gamma^{-1}.
		\end{align}
		Finally, the inequality \eqref{fourth term NM} becomes for $n=0$,
		\begin{equation}\label{e-quad0-NM}
			\|Q_{0}\|_{s_{0}}^{q,\gamma,\mathtt{m},\mathrm{O}_{0}^{4\gamma}}\lesssim C_{\ast}\varepsilon^{3}\gamma^{-2}.
		\end{equation}
		Plugging \eqref{e-1-NM}, \eqref{second term NMn}, \eqref{final estimate commutator} and \eqref{fourth term NM}, into \eqref{Fnext-NM} gives for $n\in\mathbb{N}^{*}$,
		\begin{align*}
			\|\mathcal{F}(\widetilde{\mathtt{U}}_{n+1})\|_{s_{0}}^{q,\gamma,\mathtt{m},\mathrm{O}_{n+1}^{4\gamma}}&\leqslant CC_{\ast}\varepsilon\left(N_{n}^{s_{0}+2\overline{\sigma}+\frac{2}{3}\mu_{1}+1-\kappa_{1}}+N_{n}^{\overline{\sigma}+1-\mu_{2}-\frac{2}{3}a_{1}}+N_{n}^{2\overline{\sigma}+4-\frac{4}{3}a_{1}}\right).
		\end{align*} 
		Now, our choice of parameters in \eqref{param NM} implies
		$$s_{0}+2\overline{\sigma}+\tfrac{2}{3}\mu_{1}+2+a_{1}\leqslant\kappa_{1},\qquad\overline{\sigma}+\tfrac{1}{3}a_{1}+2\leqslant\mu_{2},\qquad2\overline{\sigma}+5\leqslant\tfrac{1}{3}a_{1}.$$
		Consequently, for $N_{0}$ large enough, that is $\varepsilon$ small enough, we can obtain for any $n\in\mathbb{N},$
		\begin{equation}\label{suitable constraints NM}
			\max\Big(CN_{n}^{s_{0}+2\overline{\sigma}+\frac{2}{3}\mu_{1}+1-\kappa_{1}}\,,\, CN_{n}^{\overline{\sigma}+1-\mu_{2}-\frac{2}{3}a_{1}}\, , \, CN_{n}^{2\overline{\sigma}+4-\frac{4}{3}a_{1}}\Big) \leqslant \tfrac{1}{3}N_{n}^{-a_{1}},
		\end{equation}
		which implies in turn \eqref{decay ttUn inter} for $n\in\mathbb{N}^*.$ As for the case $n=0$, we insert \eqref{e-1-NM}, \eqref{second term NM0}, \eqref{final estimate commutator} and \eqref{e-quad0-NM} into \eqref{Fnext-NM} to get
		$$\|\mathcal{F}(\widetilde{\mathtt{U}}_{1})\|_{q,s_{0}}^{\gamma,\mathtt{m},\mathrm{O}_{1}^{4\gamma}}\leqslant CC_{\ast}\varepsilon\left(N_{0}^{s_{0}+2\overline{\sigma}+\frac{3}{2}\mu_{1}+1-\kappa_{1}}+\varepsilon\gamma^{-2}+\varepsilon^{2}\gamma^{-2}\right).$$
		Hence, using \eqref{suitable constraints NM} and the fact that \eqref{rigidity gam-N0} and \eqref{param NM} imply $0<a<\tfrac{1}{2+a_{1}}\cdot$, then taking $\varepsilon$ small enough, we infer
		$$C\left(\varepsilon\gamma^{-2}+\varepsilon^{2}\gamma^{-2}\right)\leqslant \tfrac{2}{3}N_{0}^{-a_{1}}.$$
		As a consequence, \eqref{decay ttUn inter} occurs also for $n=0.$\\
		\ding{226} \textit{Construction of the extension.}
		\noindent The next goal is to construct an extention of $\widetilde{\mathtt{H}}_{n+1}$ to the whole set of parameters $\mathcal{O}$ and still satisfying nice decay properties. For this aim, we introduce the following cut-off function $\chi_{n+1}\in C^{\infty}\big(\mathcal{O},[0,1]\big)$ given by
		$$\chi_{n+1}=\begin{cases}
			1 & \textnormal{in }\mathrm{O}_{n+1}^{2\gamma}\\
			0 & \textnormal{in }\mathcal{O}\setminus \mathrm{O}_{n+1}^{4\gamma}
		\end{cases}$$
		and such that
		\begin{equation}\label{groth deriv chin+1}
			\forall \alpha\in\mathbb{N}^{d},\quad |\alpha|\in\llbracket 0,q\rrbracket,\quad  \|\partial_{b,\omega}^{\alpha}\chi_{n+1}\|_{L^{\infty}(\mathcal{O})}\lesssim\left(\gamma^{-1}N_{n}^{\overline{a}}\right)^{|\alpha|}.
		\end{equation}
		Therefore, we can define the extension $\mathtt{H}_{n+1}$ of $\widetilde{\mathtt{H}}_{n+1}$ as follows
		\begin{equation}\label{def extension ttHn+1}
			\mathtt{H}_{n+1}\triangleq\begin{cases}
				\chi_{n+1}\,\widetilde{\mathtt{H}}_{n+1} & \textnormal{in } \mathrm{O}_{n+1}^{4\gamma},\\
				0 &\textnormal{in }\mathcal{O}\setminus\mathrm{O}_{n+1}^{4\gamma}.
			\end{cases}
		\end{equation}
		We also define
		\begin{equation}\label{def ttUn+1}
			\mathtt{W}_{n+1}\triangleq\mathtt{W}_{n}+\mathtt{H}_{n+1},\qquad\mathtt{U}_{n+1}\triangleq\mathtt{U}_0+\mathtt{W}_{n+1}=\mathtt{U}_{n}+\mathtt{H}_{n+1}.
		\end{equation}
		We can observe that in restriction to $\mathrm{O}_{n+1}^{2\gamma}$, we have
		$$\mathtt{H}_{n+1}=\widetilde{\mathtt{H}}_{n+1},\qquad\mathtt{U}_{n+1}=\widetilde{\mathtt{U}}_{n+1}\qquad\textnormal{and}\qquad\mathcal{F}(\mathtt{U}_{n+1})=\mathcal{F}(\widetilde{\mathtt{U}}_{n+1}).$$
		The last identity together with \eqref{decay ttUn inter} and the fact that $\mathrm{O}_{n+1}^{2\gamma}\subset\mathrm{O}_{n+1}^{4\gamma}$ imply \eqref{decay FttUn}.
		Now, the product laws in Lemma \ref{lem funct prop} together with \eqref{def extension ttHn+1} and \eqref{groth deriv chin+1} provide the following estimate
		\begin{equation}\label{control ext ttHn+1}
			\forall s\geqslant s_{0},\quad\|\mathtt{H}_{n+1}\|_{s}^{q,\gamma,\mathtt{m}}\lesssim N_{n}^{q\overline{a}}\|\widetilde{\mathtt{H}}_{n+1}\|_{s}^{q,\gamma,\mathtt{m},\mathrm{O}_{n+1}^{4\gamma}}.
		\end{equation}
		Then, gathering \eqref{control ext ttHn+1} and \eqref{decay tttHm+1 s0}, implies for any $n\in\mathbb{N}^{*},$
		\begin{align*}
			\| \mathtt{H}_{n+1}\|_{s_{0}+\overline{\sigma}}^{q,\gamma,\mathtt{m}}&\leqslant C N_{n}^{q\overline{a}+\overline\sigma}\|\widetilde{\mathtt{H}}_{n+1}\|_{s_{0}}^{q,\gamma,\mathtt{m},\mathrm{O}_{n+1}^{4\gamma}}\\
			&\leqslant C C_{\ast}\varepsilon\gamma^{-1} N_{n}^{q\overline{a}+2\overline{\sigma}-\frac{2}{3}a_{1}}.
		\end{align*}
		The constraint \eqref{param NM} implies in particular $a_{2}=\tfrac{2}{3}a_{1}-q\overline{a}-2\overline{\sigma}-1\geqslant 1,$ we get for $\varepsilon$ small enough
		\begin{align}\label{estim Hm+1 tilde}
			\|\mathtt{H}_{n+1}\|_{s_{0}+\overline{\sigma}}^{q,\gamma,\mathtt{m}}&\leqslant CN_0^{-1}C_{\ast}\varepsilon\gamma^{-1}N_{n}^{-a_{2}}\nonumber\\
			&\leqslant C_{\ast}\varepsilon\gamma^{-1}N_{n}^{-a_{2}}.
		\end{align}
		As for the case $n=0$, we combine \eqref{control ext ttHn+1} and \eqref{ttH1} to get, up to take $C_{\ast}$ sufficiently large,
		\begin{equation}\label{e-ttH1-01}
			\|\mathtt{H}_{1}\|_{s}^{q,\gamma,\mathtt{m}}\leqslant \tfrac{1}{2}C_{\ast}\varepsilon\gamma^{-1}N_{0}^{q\overline{a}}.
		\end{equation}
		Putting together \eqref{ttUn}, \eqref{e-ttH1-01} and \eqref{estim Hm+1 tilde}, we deduce
		\begin{align*}
			\|\mathtt{W}_{n+1}\|_{s_{0}+\overline{\sigma}}^{q,\gamma,\mathtt{m}}&\leqslant\|\mathtt{H}_{1}\|_{s_{0}+\overline{\sigma}}^{q,\gamma,\mathtt{m}}+\sum_{k=2}^{n+1}\|\mathtt{H}_{k}\|_{s_{0}+\overline{\sigma}}^{q,\gamma,\mathtt{m}}\\
			&\leqslant \tfrac{1}{2}C_{\ast}\varepsilon\gamma^{-1}N_{0}^{q\overline{a}}+CN_{0}^{-1}C_{\ast}\varepsilon\gamma^{-1}\\
			&\leqslant C_{\ast}\varepsilon\gamma^{-1}N_{0}^{q\overline{a}}.
		\end{align*}
		This proves \eqref{e-ttWn} at the order $n+1.$ Now \eqref{tttHm+1 and ttWn}, \eqref{control ext ttHn+1} and \eqref{growth ttWn} all together yield
		\begin{align*}
			\|\mathtt{W}_{n+1}\|_{\kappa_{1}+\overline{\sigma}}^{q,\gamma,\mathtt{m}} & \leqslant  \| \mathtt{W}_{n}\|_{\kappa_{1}+\overline{\sigma}}^{q,\gamma,\mathtt{m}}+CN_{n}^{q\overline{a}}\|\mathtt{H}_{n+1}\|_{\kappa_{1}+\overline{\sigma}}^{q,\gamma,\mathtt{m}}\\
			& \leqslant C_{\ast} \varepsilon\gamma^{-1}N_{n-1}^{\mu_{1}}+CC_{\ast}\gamma^{-1}N_{n}^{q\overline{a}+2\overline{\sigma}}\left(\varepsilon+\| \mathtt{W}_{n}\|_{\kappa_{1}+\overline{\sigma}}^{q,\gamma,\mathtt{m}}\right)\\
			& \leqslant  CC_{\ast}\varepsilon\gamma^{-1}N_{n}^{q\overline{a}+2\overline{\sigma}+1+\frac{2}{3}\mu_{1}}.
		\end{align*}
		By \eqref{param NM} we get $q\overline{a}+2\overline{\sigma}+2=\tfrac{\mu_{1}}{3},$ which implies that for $\varepsilon$ small enough we have
		\begin{align*}
			\|\mathtt{W}_{n+1}\|_{\kappa_{1}+\overline{\sigma}}^{q,\gamma,\mathtt{m}}&\leqslant CN_{0}^{-1}C_{\ast}\varepsilon\gamma^{-1}N_{n}^{\mu_{1}}\\
			&\leqslant C_{\ast}\varepsilon\gamma^{-1}N_{n}^{\mu_{1}}.
		\end{align*}
		This proves \eqref{growth ttWn} at the order $n+1.$\\
		\ding{226} \textit{Reversibility preserving property of the scheme.} Form $(\mathcal{P}2)_n$, we know that the torus $i_n$ is reversible. Observe that the projectors $\Pi_{N_n}$ are reversibility preserving thanks to the symmetry with respect to the Fourier modes. Now, using the reversibility property of the operators $\mathrm{T}_n$ and $\Pi_{N_n}$ we have that the torus component of $\widehat{\mathfrak{I}}_{n+1}$ of $\widetilde{\mathtt{H}}_{n+1}$ is reversible. Since the cut-off function $\chi_{n+1}$ only depends on the variables $(b,\omega)$, then the reversibility property is also preserved  for  the torus component $\mathfrak{I}_{n+1}$ of $\mathtt{H}_{n+1}$. Looking at the first component of \eqref{def ttUn+1}, we have
		$$i_{n+1}=i_n+\mathfrak{I}_{n+1}.$$
		Hence, the reversibility property \eqref{reversibility in} also holds at the order $n+1$.
	\end{proof}
	The previous iteration procedure converges and allows to find a non trivial reversible quasi-periodic solution of our problem provided some restriction on the internal radius $b$. More precisely, we have the following result.
	\begin{cor}\label{Corollary NM}
		There exists $\varepsilon_0>0$ such that, for all $\varepsilon\in(0,\varepsilon_0),$ the following assertions hold true. There exists a $q$-times differentiable function
		$$\mathtt{U}_{\infty}:\begin{array}[t]{rcl}
			\mathcal{O} & \rightarrow & \left(\mathbb{T}^{d}\times\mathbb{R}^{d}\times \mathbf{H}^{s_0}_{\bot,\mathtt{m}}\right)\times\mathbb{R}^{d}\times\mathbb{R}^{d+1}\\
			(b,\omega) & \mapsto & \big(i_{\infty}(b,\omega),\alpha_{\infty}(b,\omega),(b,\omega)\big)
		\end{array}$$
		such that in restriction to the Cantor set $\mathtt{G}_{\infty}^{\gamma}$ defined by
		\begin{equation}\label{Canbomgfin}
			\mathtt{G}_{\infty}^{\gamma}\triangleq\bigcap_{n\in\mathbb{N}}\mathtt{A}_{n}^{\gamma},
		\end{equation}
		we have
		\begin{equation}\label{solution NM non rigidified}
			\forall(b,\omega)\in\mathtt{G}_{\infty}^{\gamma},\quad\mathcal{F}\big(\mathtt{U}_{\infty}(b,\omega)\big)=0.
		\end{equation}
		The torus $i_{\infty}$ is $\mathtt{m}$-fold and reversible. The vector $\alpha_{\infty}\in W^{q,\infty,\gamma}(\mathcal{O},\mathbb{R}^d)$ satisfies
		\begin{equation}\label{limt alf}
			\alpha_{\infty}(b,\omega)=\mathtt{J}\omega+\mathrm{r}_{\varepsilon}(b,\omega),\qquad\|\mathrm{r}_{\varepsilon}\|^{q,\gamma}\lesssim\varepsilon\gamma^{-1}N_{0}^{q\overline{a}}.
		\end{equation}
		In addition, there exists a $q$-times differentiable function $b\in(b_{*},b^{*})\mapsto\omega(b,\varepsilon)$ implicitly defined by 
		\begin{equation}\label{implicit omegaper}
			\alpha_{\infty}\big(b,\omega(b,\varepsilon)\big)=-\mathtt{J}\omega_{\textnormal{Eq}}(b)
		\end{equation}
		with
		\begin{equation}\label{pert freq}
			\omega(b,\varepsilon)=-\omega_{\textnormal{Eq}}(b)+\bar{r}_{\varepsilon}(b),\qquad \|\bar{r}_{\varepsilon}\|^{q,\gamma}\lesssim\varepsilon\gamma^{-1}N_{0}^{q\overline{a}},
		\end{equation}
		such that
						\begin{equation}\label{solution NM rigidified}
			\forall b\in \mathcal{C}_{\infty}^{\varepsilon},\quad \mathcal{F}\Big(\mathtt{U}_{\infty}\big(b,\omega(b,\varepsilon)\big)\Big)=0,
		\end{equation}
where
		\begin{equation}\label{FCS}
			\mathcal{C}_{\infty}^{\varepsilon}\triangleq\Big\{b\in(b_{*},b^{*})\quad\textnormal{s.t.}\quad\big(b,\omega(b,\varepsilon)\big)\in\mathtt{G}_{\infty}^{\gamma}\Big\}.
		\end{equation}
	\end{cor}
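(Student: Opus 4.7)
The plan is to extract the corollary as the natural consequence of the convergence of the Nash-Moser scheme established in Proposition \ref{Nash-Moser}, supplemented with an implicit function argument to rigidify the frequency parameter.

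First, I would prove the existence and regularity of $\mathtt{U}_\infty$. The telescoping identity $\mathtt{U}_n = \mathtt{U}_0 + \sum_{k=1}^n \mathtt{H}_k$ combined with the estimates \eqref{ttH1s}--\eqref{ttHks0sig} shows that $(\mathtt{U}_n)_{n\in\mathbb{N}}$ is a Cauchy sequence in $W^{q,\infty,\gamma}\big(\mathcal{O}, (\mathbb{T}^d\times\mathbb{R}^d\times\mathbf{H}^{s_0+\overline{\sigma}}_{\perp,\mathtt{m}}) \times\mathbb{R}^d\times\mathbb{R}^{d+1}\big)$, since $\sum_n N_{n-1}^{-a_2}$ converges thanks to $a_2 > 0$. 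Denote the limit by $\mathtt{U}_\infty = (i_\infty,\alpha_\infty, (b,\omega))$. Then \eqref{e-ttWn} passes to the limit and gives, by the definition \eqref{ttU0} of $\mathtt{U}_0$, the decomposition $\alpha_\infty(b,\omega) = \mathtt{J}\omega + \mathrm{r}_\varepsilon(b,\omega)$ with $\|\mathrm{r}_\varepsilon\|^{q,\gamma}\leqslant C\varepsilon\gamma^{-1}N_0^{q\overline{a}}$, proving \eqref{limt alf}. The $\mathtt{m}$-fold and reversibility properties of $i_\infty$ follow by taking limits in \eqref{reversibility in}, since both $\mathfrak{S}$ and $\mathfrak{T}_{\mathtt{m}}$ are continuous involutions on the phase space.

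Next, I would establish \eqref{solution NM non rigidified}. For any $(b,\omega)\in\mathtt{G}_\infty^\gamma = \bigcap_n \mathtt{A}_n^\gamma$, the parameter lies in the interior of every $\mathrm{O}_n^{2\gamma}$, so the estimate \eqref{decay FttUn} applies for all $n$, giving $\|\mathcal{F}(\mathtt{U}_n)\|_{s_0}^{q,\gamma,\mathtt{m}}\leqslant C_\ast \varepsilon N_{n-1}^{-a_1}\to 0$. Since $\mathcal{F}$ is continuous (by Lemma \ref{tame X per}) and $\mathtt{U}_n\to \mathtt{U}_\infty$ in $H^{s_0}$-norm, we conclude $\mathcal{F}(\mathtt{U}_\infty(b,\omega))=0$ on $\mathtt{G}_\infty^\gamma$.

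The main obstacle is the rigidification step: constructing the frequency curve $b\mapsto \omega(b,\varepsilon)$ solving \eqref{implicit omegaper}. From \eqref{limt alf}, the equation reads $\omega + \mathtt{J}^{-1}\mathrm{r}_\varepsilon(b,\omega) = -\omega_{\textnormal{Eq}}(b)$, which I would rewrite as a fixed-point problem
\begin{equation*}
\omega = -\omega_{\textnormal{Eq}}(b) - \mathtt{J}^{-1}\mathrm{r}_\varepsilon(b,\omega) \triangleq \mathcal{T}_\varepsilon(b,\omega).
\end{equation*}
Thanks to \eqref{rigidity gam-N0} one has $\varepsilon\gamma^{-1}N_0^{q\overline{a}}=\varepsilon^{1-a(1+q\overline{a})}\to 0$ as $\varepsilon\to 0$, and the constraint on $a$ in \eqref{rigidity gam-N0} combined with Proposition \ref{Nash-Moser}'s parameter choice ensures that this quantity is small. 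Since $\omega_{\textnormal{Eq}}([b_*,b^*])$ sits strictly inside the ball $\mathscr{U}$ (see \eqref{def scrU}), $\mathcal{T}_\varepsilon(b,\cdot)$ maps a suitable neighborhood of $-\omega_{\textnormal{Eq}}(b)$ into itself and is a contraction for $\varepsilon$ small enough, because $\partial_\omega \mathcal{T}_\varepsilon = -\mathtt{J}^{-1}\partial_\omega \mathrm{r}_\varepsilon$ has operator norm bounded by $C\varepsilon^{1-a(2+q\overline{a})}$. The Banach fixed-point theorem then yields a unique $\omega(b,\varepsilon)$, and the estimate \eqref{pert freq} follows directly from the bound on $\mathrm{r}_\varepsilon$. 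The $q$-times differentiability in $b$ is inherited through the implicit function theorem applied to $(b,\omega)\mapsto \omega-\mathcal{T}_\varepsilon(b,\omega)$, whose $\omega$-differential is invertible by the contraction estimate. Finally, \eqref{solution NM rigidified} follows from \eqref{solution NM non rigidified} applied at the point $(b,\omega(b,\varepsilon))$, which by the definition \eqref{FCS} lies in $\mathtt{G}_\infty^\gamma$.
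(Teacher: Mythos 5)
Your proposal is correct and follows essentially the same route as the paper: convergence of $(\mathtt{U}_n)$ via the telescoping bounds on $\mathtt{H}_n$, passage to the limit in \eqref{decay FttUn} and \eqref{reversibility in}, and inversion of $\omega\mapsto\alpha_\infty(b,\omega)$ (using $\mathtt{J}^{-1}=\mathtt{J}$) to define the frequency curve. The only difference is cosmetic — you justify the invertibility of $\alpha_\infty(b,\cdot)$ explicitly by a contraction argument, whereas the paper asserts it directly from the smallness of $\mathrm{r}_\varepsilon$.
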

	\begin{proof}
		We deduce from \eqref{ttUn} and \eqref{estim Hm+1 tilde} that
		$$\|\mathtt{W}_{n+1}-\mathtt{W}_{n}\|_{s_{0}}^{q,\gamma,\mathtt{m}}=\|\mathtt{H}_{n+1}\|_{s_{0}}^{q,\gamma,\mathtt{m}}\leqslant\|\mathtt{H}_{n+1}\|_{s_{0}+\overline{\sigma}}^{q,\gamma,\mathtt{m}}\leqslant C_{\ast}\varepsilon\gamma^{-1}N_{n}^{-a_{2}}.$$
		Consequently, the sequence $\left(\mathtt{W}_{n}\right)_{n\in\mathbb{N}}$ converges and we denote
		$$\mathtt{W}_{\infty}\triangleq\lim_{n\rightarrow\infty}\mathtt{W}_{n}\triangleq(\mathfrak{I}_{\infty},\alpha_{\infty}-\mathtt{J}\omega,0,0),\qquad\mathtt{U}_{\infty}\triangleq\big(i_{\infty},\alpha_{\infty},(b,\omega)\big)=\mathtt{U}_0+\mathtt{W}_{\infty}.$$
		The reversibility and $\mathtt{m}$-fold properties of $i_{\infty}$ are obtained as the pointwise limit in \eqref{reversibility in}. Now, for $\varepsilon$ small enough, we get \eqref{solution NM non rigidified} from \eqref{decay FttUn}. The identity \eqref{limt alf} follows from the previous construction and the corresponding estimate is obtained by taking the limit in \eqref{e-ttWn}. We recall that the open set $\mathcal{O}$ is defined in \eqref{ouvert-sym}-\eqref{def scrU} by
		$$\mathcal{O}=(b_{*},b^{*})\times\mathscr{U},\qquad\mathscr{U}=B(0,R_{0}),\qquad\omega_{\textnormal{Eq}}\big([b_*,b^*]\big)\subset B\big(0,\tfrac{R_0}{2}\big).$$
		According to \eqref{limt alf}, we have that for any $b\in(b_{*},b^{*}),$ the mapping $\omega\in\mathscr{U}\mapsto\alpha_{\infty}(b,\omega)\in\alpha_{\infty}(b,\mathscr{U})$ is invertible, implying that
		$$\widehat{\omega}=\alpha_{\infty}(b,\omega)=\mathtt{J}\omega+\mathrm{r}_{\varepsilon}(b,\omega)\qquad\Leftrightarrow\qquad\omega=\alpha_{\infty}^{-1}(b,\widehat{\omega})=\mathtt{J}\widehat{\omega}+\widehat{\mathrm{r}}_{\varepsilon}(b,\widehat{\omega}).$$
		In particular, 
		$$\widehat{\mathrm{r}}_{\varepsilon}(b,\widehat{\omega})=-\mathtt{J}\mathrm{r}_{\varepsilon}(b,\omega).$$
		Then, differentiating the previous relation and using \eqref{limt alf}, we get 
		\begin{equation}\label{estimate mathrm repsilon}
			\|\widehat{\mathrm{r}}_{\varepsilon}\|^{q,\gamma}\lesssim\varepsilon\gamma^{-1}N_{0}^{q\overline{a}}.
		\end{equation}
		Finally, if we denote 
		$$\omega(b,\varepsilon)\triangleq\alpha_{\infty}^{-1}(b,-\mathtt{J}\omega_{\textnormal{Eq}}(b))=-\omega_{\textnormal{Eq}}(b)+\overline{\mathrm{r}}_{\varepsilon}(b),\qquad\overline{\mathrm{r}}_{\varepsilon}(b)\triangleq\widehat{\mathrm{r}}_{\varepsilon}\big(b,-\mathtt{J}\omega_{\textnormal{Eq}}(b)\big),$$
		then we have in particular \eqref{implicit omegaper} and the  estimate \eqref{pert freq} follows from \eqref{estimate mathrm repsilon}. In addition, combining \eqref{solution NM non rigidified}, \eqref{implicit omegaper} and \eqref{FCS}, the indentity \eqref{solution NM rigidified} holds. 
		 The proof of Corollary \ref{Corollary NM} is now complete.
	\end{proof}
	\subsection{Measure of the final Cantor set}
	In this last section, we check that the final Cantor set $\mathcal{C}_{\infty}^{\varepsilon}$ in the variable $b$ given by \eqref{FCS} is massive set, which proves the existence of non-trivial quasi-periodic solution to our problem. Actually, we prove that the measure of $\mathcal{C}_{\infty}^{\varepsilon}$ is $\varepsilon$-close to $(b^*-b_*).$ One of the main technical ingredient is  the following  R\"{u}ssmann Lemma \cite[Thm. 17.1]{R01}. 
	\begin{lem}\label{lemma Russmann book}
		Let $q_{0}\in\mathbb{N}^{*}$ and $\alpha,\beta\in\mathbb{R}_{+}^*.$ Let $f\in C^{q_{0}}([a,b],\mathbb{R})$ such that
		$$\inf_{x\in[a,b]}\max_{k\in\llbracket 0,q_{0}\rrbracket}|f^{(k)}(x)|\geqslant\beta.$$
		Then, there exists $C=C(a,b,q_{0},\| f\|_{C^{q_{0}}([a,b],\mathbb{R})})>0$ such that 
		$$\Big|\big\lbrace x\in[a,b]\quad\textnormal{s.t.}\quad |f(x)|\leqslant\alpha\big\rbrace\Big|\leqslant C\tfrac{\alpha^{\frac{1}{q_{0}}}}{\beta^{1+\frac{1}{q_{0}}}},$$
		where the notation  $|A|$  corresponds to the Lebesgue measure of a given measurable set $A.$
	\end{lem}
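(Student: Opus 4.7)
The plan is to proceed by induction on $q_0$, the essential tool being the elementary fact that if $g\in C^1(I,\R)$ satisfies $|g'|\geqslant c$ on an interval $I$, then $\{x\in I:|g(x)|\leqslant\delta\}$ is an interval of length at most $2\delta/c$ (since $g$ is monotone with a definite lower bound on $|g'|$).

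For the base case $q_0=0$, the hypothesis reads $|f(x)|\geqslant\beta$, so $\{|f|\leqslant\alpha\}$ is empty when $\alpha<\beta$, while for $\alpha\geqslant\beta$ the trivial bound $b-a$ is absorbed into the constant, giving a bound proportional to $\alpha/\beta$. For the inductive step, the first move is a covering argument: using continuity of the derivatives (controlled by $\|f\|_{C^{q_0}}$) together with compactness of $[a,b]$, extract a finite cover of $[a,b]$ by open subintervals $I_1,\dots,I_M$ such that on each $I_j$ there exists an index $k_j\in\llbracket 0,q_0\rrbracket$ with $|f^{(k_j)}|\geqslant\beta/2$ on $I_j$. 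The number $M$ of such intervals depends only on $\beta$, $b-a$ and $\|f\|_{C^{q_0}}$, since the length of a neighborhood on which $|f^{(k_j)}|\geqslant\beta/2$ is guaranteed by a Taylor estimate involving $\|f^{(k_j+1)}\|_\infty$.

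On each interval $I_j$ the analysis reduces to a scalar problem: if $k_j=0$ the set $\{|f|\leqslant\alpha\}\cap I_j$ is empty for $\alpha<\beta/2$, while for $k_j\geqslant 1$ one introduces intermediate thresholds $\delta_0=\alpha<\delta_1<\cdots<\delta_{k_j-1}<\delta_{k_j}=\beta/2$ and peels off derivatives from the top down. Starting with $|f^{(k_j)}|\geqslant\beta/2$, the sublemma yields that $\{|f^{(k_j-1)}|\leqslant\delta_{k_j-1}\}\cap I_j$ is an interval of length at most $4\delta_{k_j-1}/\beta$. Outside this interval one has $|f^{(k_j-1)}|>\delta_{k_j-1}$, so on each connected component of the complement the sublemma applies again with $c=\delta_{k_j-1}$ to control $\{|f^{(k_j-2)}|\leqslant\delta_{k_j-2}\}$, and so on down to level $0$. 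At each level the total number of components at most doubles, so one gets a bound of the shape $|\{|f|\leqslant\alpha\}\cap I_j|\lesssim\sum_{l=0}^{k_j-1}2^{k_j-l}\frac{\delta_l}{\delta_{l+1}}$.

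The final step is to optimize the thresholds on a geometric scale, choosing $\delta_l=\alpha^{(k_j-l)/k_j}(\beta/2)^{l/k_j}$ so that all ratios $\delta_l/\delta_{l+1}$ equal $(\alpha/\beta)^{1/k_j}$ up to a constant; this yields $|\{|f|\leqslant\alpha\}\cap I_j|\lesssim(\alpha/\beta)^{1/k_j}\leqslant(\alpha/\beta)^{1/q_0}$, and after summing over the $M$ intervals one recovers the claim $|\{|f|\leqslant\alpha\}|\leqslant C\alpha^{1/q_0}/\beta^{1+1/q_0}$ (the extra factor $\beta^{-1}$ absorbs the $\beta$-dependence of $M$ in the covering step). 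The genuine technical difficulty is precisely this bookkeeping: ensuring that the dependence on $\|f\|_{C^{q_0}}$, $b-a$ and $\beta$ is tracked consistently across all scales so that the clean exponent $1/q_0$ survives, rather than degrading to $1/2^{q_0}$ or similar as a naive iteration would produce.
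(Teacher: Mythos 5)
The paper does not actually prove this statement: it is quoted from R\"ussmann \cite[Thm.~17.1]{R01}, so there is no internal proof to compare with. Your architecture --- a covering of $[a,b]$ by subintervals on each of which one fixed derivative is bounded below, followed by the top-down peeling via the monotonicity sublemma and the geometric choice $\delta_l=\alpha^{(k_j-l)/k_j}(\beta/2)^{l/k_j}$ --- is the standard route, and the bookkeeping is sound: each ratio $\delta_l/\delta_{l+1}$ equals $(2\alpha/\beta)^{1/k_j}$, the doubling factors only cost $2^{k_j+1}\leqslant 2^{q_0+1}$, one passes from $(\alpha/\beta)^{1/k_j}$ to $(\alpha/\beta)^{1/q_0}$ using $\alpha\leqslant\beta$ (the range $\alpha\gtrsim\beta$ being disposed of by the trivial bound $b-a$ and $\beta\leqslant\|f\|_{C^{q_0}}$), and the factor $\beta^{-1}$ comes from the number $M$ of covering intervals.

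There is, however, a genuine gap in the covering step at the top index. When $k_j=q_0$, your justification --- ``a Taylor estimate involving $\|f^{(k_j+1)}\|_\infty$'' --- is unavailable: $f$ is only $C^{q_0}$, so $f^{(q_0+1)}$ need not exist, and the radius of a neighborhood on which $|f^{(q_0)}|\geqslant\beta/2$ is controlled only by the modulus of continuity of $f^{(q_0)}$, which is not a function of $\|f\|_{C^{q_0}}$. As written, your $M$, hence your constant $C$, depends on that modulus rather than on $\|f\|_{C^{q_0}}$ as the statement requires; this uniformity is precisely what Proposition~\ref{lem-meas-es1} needs, since the lemma is applied there to an infinite family of functions with only a uniform $C^{q_0}$ bound. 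The fix is a dichotomy rather than a continuity argument: partition $[a,b]$ into intervals $I_j$ of fixed length $\eta=c\beta/\|f\|_{C^{q_0}}$; if some $y\in I_j$ and some $k<q_0$ satisfy $|f^{(k)}(y)|\geqslant\beta/2$, then $|f^{(k)}|\geqslant\beta/4$ on all of $I_j$ by the Lipschitz bound $\|f^{(k+1)}\|_{\infty}\leqslant\|f\|_{C^{q_0}}$ and you peel from level $k$; otherwise $|f^{(k)}|<\beta/2$ on $I_j$ for every $k<q_0$, and the hypothesis then forces $|f^{(q_0)}|\geqslant\beta$ at \emph{every} point of $I_j$, with no regularity of $f^{(q_0)}$ beyond continuity being used. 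With this modification the rest of your argument goes through. (A cosmetic remark: the proof is not an induction on $q_0$ --- the induction is on the derivative level inside a fixed $I_j$ --- and the ``base case $q_0=0$'' lies outside the statement, where the exponent $1/q_0$ is meaningless.)
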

	Our main result is stated in the next proposition.
	\begin{prop}\label{lem-meas-es1}
		Let $q_{0}$ be defined as in Lemma $\ref{lemma transversalityE}$ and assume that \eqref{param NM} and \eqref{rigidity gam-N0} hold with $q=q_0+1.$ Assume the additional conditions
		\begin{equation}\label{selec tau12-upsilon}
			\tau_1>dq_{0},\qquad\tau_2>\tau_1+dq_0,\qquad\upsilon\triangleq \frac{1}{q_0+3}\cdot
		\end{equation}
		Then there exists $C>0$ such that 
		$$(b^{*}-b_{*})-C \varepsilon^{\frac{a\upsilon}{q_{0}}}\leqslant\big|\mathcal{C}_{\infty}^{\varepsilon}\big|\leqslant b^*-b_* .$$
	\end{prop}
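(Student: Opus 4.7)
Following the Berti--Bolle/Berti--Montalto framework used in \cite{BM18,HHM21,HR21}, the plan is to bound the complementary set $(b_*,b^*)\setminus\mathcal{C}_\infty^\varepsilon$ by decomposing it as a countable union of resonant "slices" indexed by the failed Melnikov condition, and then estimate each slice via R\"ussmann's lemma (Lemma~\ref{lemma Russmann book}) applied to a perturbation of the unperturbed frequency curve $b\mapsto(\omega_{\textnormal{Eq}}(b),\Omega_{j,k}(b))$. The upper bound $|\mathcal{C}_\infty^\varepsilon|\leqslant b^*-b_*$ is trivial from the inclusion $\mathcal{C}_\infty^\varepsilon\subset(b_*,b^*)$.

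\textbf{Step 1 -- Decomposition.} By the construction in Corollary~\ref{Corollary NM} and the Cantor sets of Propositions~\ref{prop strighten}, \ref{prop RR} and \ref{prop inv linfty}, substituting the curve $\omega=\omega(b,\varepsilon)$ into $\mathtt{G}_\infty^\gamma=\bigcap_n\mathtt{A}_n^\gamma$ expresses $(b_*,b^*)\setminus\mathcal{C}_\infty^\varepsilon$ as the union, over all quadruples $(l,j,j_0,k)$ respectively indexing:
\begin{itemize}
\item transport first-order conditions $|\omega(b,\varepsilon)\cdot l+j\,\mathtt{c}_k|\leqslant 4\gamma^\upsilon\langle j\rangle/\langle l\rangle^{\tau_1}$;
\item diagonal second-order conditions $|\omega(b,\varepsilon)\cdot l+\mu_{j,k}^{(\infty)}-\mu_{j_0,k}^{(\infty)}|\leqslant 2\gamma\langle j-j_0\rangle/\langle l\rangle^{\tau_2}$;
\item off-diagonal second-order conditions $|\omega(b,\varepsilon)\cdot l+\mu_{j,1}^{(\infty)}-\mu_{j_0,2}^{(\infty)}|\leqslant 2\gamma/\langle l,j,j_0\rangle^{\tau_2}$;
\item first-order inversion conditions $|\omega(b,\varepsilon)\cdot l+\mu_{j,k}^{(\infty)}|\leqslant \gamma\langle j\rangle/\langle l\rangle^{\tau_1}$,
\end{itemize}
of one-dimensional "bad" sets in $b$. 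Since the inner scheme Cantor sets are nested (cf.\ the inclusion \eqref{incl Crr}), the iteration index $n$ drops out of the final union.

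\textbf{Step 2 -- Transversality persistence and R\"ussmann bounds.} At $\varepsilon=0$ we have $\omega(b,0)=-\omega_{\textnormal{Eq}}(b)$ and $\mu_{j,k}^{(\infty)}=\Omega_{j,k}(b)$, so the quantitative transversality of Lemma~\ref{lemma transversalityE} applies to each of the four combinations above (parts (ii), (iv), (v), (iii) respectively). For $\varepsilon$ small, the perturbations $\omega(b,\varepsilon)+\omega_{\textnormal{Eq}}(b)=\bar r_\varepsilon(b)=O(\varepsilon\gamma^{-1}N_0^{q\overline a})$ from \eqref{pert freq}, $\mathtt{c}_k-\mathtt{v}_k=O(\varepsilon)$ from \eqref{sml-r0}, and $r_{j,k}^{(\infty)}=O(\varepsilon\gamma^{-1}|j|^{-1})$ from \eqref{e-rjfty}, together with the smallness condition \eqref{rigidity gam-N0}, preserve the lower bound on $\max_{0\leqslant q\leqslant q_0}|\partial_b^q(\cdot)|$ up to a fixed fraction $\rho_0/2$ of the equilibrium value, yielding a \emph{perturbed R\"ussmann transversality} (this is exactly the role of the auxiliary Lemma~\ref{lem Ru-pert} announced by the authors and the mechanism that fixes the restriction $\Omega>\Omega_\mathtt{m}^*$). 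Applying Lemma~\ref{lemma Russmann book} with $q_0$ then gives, for each individual slice, the measure bounds $\lesssim(\gamma^\upsilon\langle j\rangle)^{1/q_0}/\langle l\rangle^{\tau_1/q_0}$ for the transport/inversion conditions, $\lesssim(\gamma\langle j-j_0\rangle)^{1/q_0}/\langle l\rangle^{\tau_2/q_0}$ for the diagonal ones, and $\lesssim\gamma^{1/q_0}/\langle l,j,j_0\rangle^{\tau_2/q_0}$ for the off-diagonal ones.

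\textbf{Step 3 -- Summation and main obstacle.} Using that $|\mu_{j,k}^{(\infty)}|\gtrsim |j|$ (Lemma~\ref{lem-asym}-3 together with the smallness of the correction $r_{j,k}^{(\infty)}$), the first-order slices are empty unless $|j|\lesssim\langle l\rangle$; the diagonal second-order slices similarly force $|j-j_0|\lesssim\langle l\rangle$. Summing all bounds over the admissible lattices and using \eqref{selec tau12-upsilon} gives
\[
|(b_*,b^*)\setminus\mathcal{C}_\infty^\varepsilon|\lesssim \gamma^{\upsilon/q_0}\!\!\sum_{l\in\mathbb{Z}^d}\frac{\langle l\rangle^{1/q_0}}{\langle l\rangle^{\tau_1/q_0}}+\gamma^{1/q_0}\!\!\sum_{l,j,j_0}\frac{1}{\langle l,j,j_0\rangle^{\tau_2/q_0}}+\text{(lower order)},
\]
and both series converge thanks to $\tau_1>dq_0$ and $\tau_2>\tau_1+dq_0\geqslant(d+2)q_0$; the dominant term is $\gamma^{\upsilon/q_0}=\varepsilon^{a\upsilon/q_0}$, which concludes with $\delta=a\upsilon/q_0$. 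The key difficulty is precisely the off-diagonal block: because $\mu_{j,1}^{(\infty)}-\mu_{j_0,2}^{(\infty)}\sim (\mathtt{v}_1-\mathtt{v}_2)\cdot\text{average}(j,j_0)$ (no cancellation of leading order when the two transport speeds differ), no decay is available in the lattice variables $j,j_0$ without using the three-variable R\"ussmann transversality of Lemma~\ref{lemma transversalityE}-(v); this forces the isotropic weight $\langle l,j,j_0\rangle^{-\tau_2}$ that was introduced together with the hybrid norm \eqref{hyb nor}, and convergence of the corresponding triple series is the only place in the whole measure argument where the reinforced assumption $\tau_2>\tau_1+dq_0$ is actually used.
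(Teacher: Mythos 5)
Your Steps 2 and 3 follow the paper's strategy (perturbed transversality via Lemma~\ref{lem Ru-pert}, then R\"ussmann's Lemma~\ref{lemma Russmann book} slice by slice), but there are two genuine gaps in the decomposition and summation.

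First, the claim in Step 1 that ``the iteration index $n$ drops out of the final union'' is not justified and, as stated, is false. The resonant sets removed at step $n$ are defined through the perturbed eigenvalues $\mu_{j,k,n}^{(\infty)}(b,\varepsilon)=\mu_{j,k}^{(\infty)}(b,\omega(b,\varepsilon),i_n)$ and the coefficients $\mathtt{c}_{k,n}$, which depend on $n$ through the approximate torus $i_n$; the union over $n$ therefore does not collapse to a single union over $(l,j,j_0,k)$. If you simply bound each layer $\mathcal{S}_n=|\mathcal{C}_n^\varepsilon\setminus\mathcal{C}_{n+1}^\varepsilon|$ by the same full lattice sum $\sim\gamma^{\upsilon/q_0}$, the series $\sum_n\mathcal{S}_n$ diverges. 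The paper's mechanism (Lemma~\ref{lemm-dix1}) is to combine the convergence rate $\|i_n-i_{n-1}\|_{\overline{s}_h+\overline{\sigma}}^{q,\gamma,\mathtt{m}}\lesssim\varepsilon\gamma^{-1}N_{n-1}^{-a_2}$ with the strictly decreasing thresholds $\gamma_{n+1}=\gamma_n-\varepsilon^a2^{-n-1}$ to show that any resonant set at step $n$ with $|l|\leqslant N_{n-1}$ (resp.\ $\langle l,j,j_0\rangle\leqslant N_{n-1}$ for the off-diagonal block) is contained in the corresponding set at step $n-1$, hence is empty inside $\mathcal{C}_n^\varepsilon\setminus\mathcal{C}_{n+1}^\varepsilon$. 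Only the tail $|l|>N_{n-1}$ contributes at step $n$, and the tails of convergent series are summable in $n$. Your citation of \eqref{incl Crr} is off-target: that inclusion lives inside the KAM reducibility scheme at a \emph{fixed} torus $i_0$ and says nothing about the Nash--Moser index.

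Second, your summation of the diagonal second-order block is incomplete. The constraint $|j-j_0|\lesssim\langle l\rangle$ alone leaves the free index $j$ ranging over all of $\mathbb{Z}_{\mathtt{m}}$ for each fixed difference $j-j_0$, and the R\"ussmann bound $\gamma^{1/q_0}\langle j-j_0\rangle^{1/q_0}\langle l\rangle^{-1-(\tau_2+1)/q_0}$ does not decay in $\min(|j|,|j_0|)$, so the sum over $(j,j_0)$ diverges. The missing device is Lemma~\ref{some cantor set are empty}-(iv): when $\min(|j|,|j_0|)\geqslant c_2\gamma_{n+1}^{-\upsilon}\langle l\rangle^{\tau_1}$ the set $\mathcal{R}_{l,j,j_0}^{k}(i_n)$ is absorbed into the transport set $\mathcal{R}_{l,j-j_0}^{(0,k)}(i_n)$ (this is where the exponent $\upsilon$ and the asymptotic $\Omega_{j,k}(b)=j\mathtt{v}_k(b)+O(1)$ are used), so one only sums over $\min(|j|,|j_0|)\lesssim\gamma^{-\upsilon}\langle l\rangle^{\tau_1}$, producing the term $\gamma^{1/q_0-\upsilon}\sum_l\langle l\rangle^{\tau_1-1-\tau_2/q_0}$. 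Consequently, the reinforced condition on $\tau_2$ is needed for this \emph{diagonal} sum as well, not only for the off-diagonal triple series as you assert; and the competition between $\gamma^{\upsilon/q_0}$ and $\gamma^{1/q_0-\upsilon}$ is what dictates the choice $\upsilon=\tfrac{1}{q_0+3}$ and the final exponent $\varepsilon^{a\upsilon/q_0}$.
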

	\begin{proof}
		The identities \eqref{FCS} and \eqref{Canbomgfin} provide the following decomposition of the final Cantor set
		\begin{equation}\label{Cinftyepsilon}
			\mathcal{C}_{\infty}^{\varepsilon}=\bigcap_{n\in\mathbb{N}}\mathcal{C}_{n}^{\varepsilon},\qquad\textnormal{where}\qquad \mathcal{C}_{n}^{\varepsilon}\triangleq \Big\{b\in(b_{*},b^{*})\quad\hbox{s.t}\quad \big(b,{\omega}(b,\varepsilon)\big)\in\mathtt{A}_n^{\gamma}\Big\},
		\end{equation}
		with $\mathtt{A}_n^{\gamma}$ and $\omega(b,\varepsilon)$ as in Proposition \ref{Nash-Moser} and \eqref{pert freq}. We can write
		\begin{equation}\label{complement finCantset}
			(b_{*},b^{*})\setminus\mathcal{C}_{\infty}^{\varepsilon}=\big((b_{*},b^{*})\setminus\mathcal{C}_{0}^{\varepsilon}\big)\sqcup\bigsqcup_{n=0}^{\infty}\big(\mathcal{C}_{n}^{\varepsilon}\setminus\mathcal{C}_{n+1}^{\varepsilon}\big).
		\end{equation}
		First, let us prove that
		\begin{equation}\label{triv emb}
			(b_{*},b^{*})\setminus\mathcal{C}_{0}^{\varepsilon}=\varnothing,\qquad\textnormal{i.e.}\qquad\mathcal{C}_{0}^{\varepsilon}=(b_*,b^*).
		\end{equation}
		For this purpose, notice that \eqref{pert freq} and \eqref{rigidity gam-N0} imply
		$$\sup_{b\in(b_{*},b^{*})}\big|\omega(b,\varepsilon)+\omega_{\textnormal{Eq}}(b)\big|\leqslant\|\overline{\mathrm{r}}_{\varepsilon}\|^{q,\gamma}\leqslant C\varepsilon\gamma^{-1}N_{0}^{q\overline{a}}=C\varepsilon^{1-a(1+q\overline{a})}.$$
		But \eqref{param NM} and \eqref{rigidity gam-N0} give in particular
		$$0<a<\frac{1}{1+q\overline{a}}\cdot$$
		Hence, in view of \eqref{def scrU}, for $\varepsilon$ sufficiently small we can ensure
		$$\forall b\in(b_{*},b^{*}),\quad \omega(b,\varepsilon)\in \mathscr{U}=B(0,R_{0}).$$
		By construction of $\mathtt{A}_{0}^{\gamma}$ and $\mathcal{O}$, we deduce \eqref{triv emb}. Coming back to \eqref{complement finCantset}, we find 
		\begin{align}\label{sum meas CnmCn+1}
			\nonumber \Big|(b_{*},b^{*})\setminus\mathcal{C}_{\infty}^{\varepsilon}\Big|&\leqslant\sum_{n=0}^{\infty}\Big|\mathcal{C}_{n}^{\varepsilon}\setminus\mathcal{C}_{n+1}^{\varepsilon}\Big|\\
			&\triangleq \sum_{n=0}^{\infty}\mathcal{S}_{n}.
		\end{align}
		Using the notations of Propositions \ref{prop RR} and \ref{prop proj nor dir}, we denote the perturbed frequencies associated with the reduced linearized operator at state $i_n$ in the following way  
		\begin{align}\label{def mujknfty}
			\nonumber \mu_{j,k,n}^{(\infty)}(b,\varepsilon)&\triangleq \mu_{j,k,n}^{(\infty)}\big(b,{\omega}(b,\varepsilon),i_{n}\big)\\
			&=\Omega_{j,k}(b)+jr_{k,n}^{(0)}(b,\varepsilon)+r_{j,k,n}^{(\infty)}(b,\varepsilon),
		\end{align}
		where
		$$r_{k,n}^{(0)}(b,\varepsilon)\triangleq \mathtt{c}_{k,n}(b,\varepsilon)-\mathtt{v}_k(b),\qquad \mathtt{c}_{k,n}(b,\varepsilon)\triangleq \mathtt{c}_{k}\big(b,{\omega}(b,\varepsilon),i_{n}\big),\qquad r_{j,k,n}^{(\infty)}(b,\varepsilon)\triangleq r_{j,k}^{(\infty)}\big(b,{\omega}(b,\varepsilon),i_{n}\big).$$
		Now, according to \eqref{Cinftyepsilon} and Propositions \ref{prop strighten}, \ref{prop RR} and \ref{prop inv linfty}, one has by construction that for any $n\in\mathbb{N}$, 
		\begin{align}\label{Set CnmCn+1}
			\mathcal{C}_{n}^{\varepsilon}\setminus\mathcal{C}_{n+1}^{\varepsilon}&=\bigcup_{\underset{(l,j)\in\mathbb{Z}^{d}\times\mathbb{Z}_{\mathtt{m}}\setminus\{(0,0)\}\atop |l|\leqslant N_{n}}{k\in\{1,2\}}}\mathcal{R}_{l,j}^{(0,k)}(i_{n})\cup\bigcup_{\underset{(l,j)\in\mathbb{Z}^{d}\times(\mathbb{Z}_{\mathtt{m}}\setminus\overline{\mathbb{S}}_{0,k})\atop |l|\leqslant N_{n}}{k\in\{1,2\}}}\mathcal{R}_{l,j}^{(1,k)}(i_{n})\nonumber\\
			&\quad\cup\bigcup_{\underset{\underset{\langle l\rangle\leqslant N_{n}}{(l,j,j_{0})\in\mathbb{Z}^{d}\times(\mathbb{Z}_{\mathtt{m}}\setminus\overline{\mathbb{S}}_{0,k})^{2}}\atop(l,j)\neq(0,j_{0})}{k\in\{1,2\}}}\mathcal{R}_{l,j,j_{0}}^{k}(i_{n})\cup\bigcup_{(l,j,j_{0})\in\mathbb{Z}^{d}\times(\mathbb{Z}_{\mathtt{m}}\setminus\overline{\mathbb{S}}_{0,1})\times(\mathbb{Z}_{\mathtt{m}}\setminus\overline{\mathbb{S}}_{0,2})\atop\langle l,j,j_0\rangle\leqslant N_{n}}\mathcal{R}_{l,j,j_{0}}^{1,2}(i_{n}),
		\end{align}
		where we denote for $k\in\{1,2\},$
		\begin{align*}
			\mathcal{R}_{l,j}^{(0,k)}(i_{n})&\triangleq \left\lbrace b\in\mathcal{C}_{n}^{\varepsilon}\quad\textnormal{s.t.}\quad\Big|{\omega}(b,\varepsilon)\cdot l+j\mathtt{c}_{k,n}(b,\varepsilon)\Big|\leqslant\tfrac{4\gamma_{n+1}^{\upsilon}\langle j\rangle}{\langle l\rangle^{\tau_1}}\right\rbrace,\\
			\mathcal{R}_{l,j,j_{0}}^{k}(i_{n})&\triangleq \left\lbrace b\in\mathcal{C}_{n}^{\varepsilon}\quad\textnormal{s.t.}\quad\Big|{\omega}(b,\varepsilon)\cdot l+\mu_{j,k,n}^{(\infty)}(b,\varepsilon)-\mu_{j_{0},k,n}^{(\infty)}(b,\varepsilon)\Big|\leqslant\tfrac{2\gamma_{n+1}\langle j-j_0\rangle}{\langle l\rangle^{\tau_2}}\right\rbrace,\\
			\mathcal{R}_{l,j,j_{0}}^{1,2}(i_{n})&\triangleq \left\lbrace b\in\mathcal{C}_{n}^{\varepsilon}\quad\textnormal{s.t.}\quad\Big|{\omega}(b,\varepsilon)\cdot l+\mu_{j,1,n}^{(\infty)}(b,\varepsilon)-\mu_{j_{0},2,n}^{(\infty)}(b,\varepsilon)\Big|\leqslant\tfrac{2\gamma_{n+1}}{\langle l,j,j_0\rangle^{\tau_2}}\right\rbrace,\\
			\mathcal{R}_{l,j}^{(1,k)}(i_{n})&\triangleq \left\lbrace b\in\mathcal{C}_{n}^{\varepsilon}\quad\textnormal{s.t.}\quad\Big|{\omega}(b,\varepsilon)\cdot l+\mu_{j,k,n}^{(\infty)}(b,\varepsilon)\Big|\leqslant\tfrac{\gamma_{n+1}\langle j\rangle}{\langle l\rangle^{\tau_1}}\right\rbrace.
		\end{align*}
		Since 
		$$W^{q,\infty,\gamma}(\mathcal{O},\mathbb{C})\hookrightarrow C^{q-1}(\mathcal{O},\mathbb{C})\qquad\textnormal{and}\qquad q=q_0+1,$$
		one obtains for any $n\in\mathbb{N}$ and for any $(k,\ell)\in\{1,2\}^2$, the $C^{q_0}$ regularity of the curves
		$$\begin{array}{ll}
			\displaystyle b\mapsto\omega(b,\varepsilon)\cdot l+j\mathtt{c}_{k,n}(b,\varepsilon),&\quad (l,j)\in\mathbb{Z}^{d}\times\mathbb{Z}_{\mathtt{m}}\backslash\{(0,0)\},\vspace{0.1cm}\\
			\displaystyle b\mapsto\omega(b,\varepsilon)\cdot l+\mu_{j,k,n}^{(\infty)}(b,\varepsilon)-\mu_{j_0,\ell,n}^{(\infty)}(b,\varepsilon),&\quad (l,j,j_0)\in\mathbb{Z}^{d}\times(\mathbb{Z}_{\mathtt{m}}\setminus\overline{\mathbb{S}}_{0,k})\times (\mathbb{Z}_{\mathtt{m}}\setminus\overline{\mathbb{S}}_{0,\ell}),\vspace{0.1cm}\\
			\displaystyle b\mapsto\omega(b,\varepsilon)\cdot l+\mu_{j,k,n}^{(\infty)}(b,\varepsilon),&\quad (l,j)\in\mathbb{Z}^{d}\times(\mathbb{Z}_{\mathtt{m}}\setminus\overline{\mathbb{S}}_{0,k}).	
		\end{array}$$
		Therefore, applying Lemma \ref{lemma Russmann book} together with Lemma \ref{lem Ru-pert} yields that for all $n\in\mathbb{N}$, 
		\begin{align}\label{e-Russ- Rk12}
			\forall |j|\leqslant C_{0}\langle l\rangle,\qquad \Big|\mathcal{R}_{l,j}^{(0,k)}(i_{n})\Big|&\lesssim\gamma^{\frac{\upsilon}{q_{0}}}\langle j\rangle^{\frac{1}{q_{0}}}\langle l\rangle^{-1-\frac{\tau_1+1}{q_{0}}},\nonumber\\
			\forall |j|\leqslant C_{0}\langle l\rangle,\qquad \Big|\mathcal{R}_{l,j}^{(1,k)}(i_{n})\Big|&\lesssim\gamma^{\frac{1}{q_{0}}}\langle j\rangle^{\frac{1}{q_{0}}}\langle l\rangle^{-1-\frac{\tau_1+1}{q_{0}}},\\
			\forall |j-j_0|\leqslant C_{0}\langle l\rangle,\qquad \Big|\mathcal{R}_{l,j,j_{0}}^{k}(i_{n})\Big|&\lesssim\gamma^{\frac{1}{q_{0}}}\langle j-j_0\rangle^{\frac{1}{q_{0}}}\langle l\rangle^{-1-\frac{\tau_2+1}{q_{0}}},\nonumber\\
			\Big|\mathcal{R}_{l,j,j_{0}}^{1,2}(i_{n})\Big|&\lesssim\gamma^{\frac{1}{q_{0}}}\langle l,j,j_0\rangle^{-1-\frac{\tau_2+1}{q_{0}}}.\nonumber
		\end{align}
		We first estimate the measure of $\mathcal{S}_0$ and $\mathcal{S}_{1}$ defined in \eqref{sum meas CnmCn+1}. Their estimation cannot be done in a similar way to the other terms due to the range of validity of the estimate \eqref{ediff in in-1 norm sh+sigma4} obtained later in the proof of Lemma \ref{lemm-dix1}. From Lemma  \ref{some cantor set are empty}, we have some trivial inclusions allowing us to write for $n\in\{0,1\}$,
		\begin{align}\label{e-cal S01}
			\mathcal{S}_{n}&\lesssim  \sum_{k\in\{1,2\}\atop\underset{|j|\leqslant C_{0}\langle l\rangle, |l|\leqslant N_{n}}{(l,j)\in\mathbb{Z}^{d}\times\mathbb{Z}_{\mathtt{m}}\setminus\{(0,0)\}}}\Big|\mathcal{R}_{l,j}^{(0,k)}(i_{n})\Big|+\sum_{k\in\{1,2\}\atop\underset{|j|\leqslant C_{0}\langle l\rangle, |l|\leqslant N_{n}}{(l,j)\in\mathbb{Z}^{d}\times(\mathbb{Z}_{\mathtt{m}}\setminus\overline{\mathbb{S}}_{0,k})}}\Big|\mathcal{R}_{l,j}^{(1,k)}(i_{n})\Big|\\
			&\quad+\sum_{k\in\{1,2\}\atop\underset{\underset{(l,j)\neq(0,j_0),\min(|j|,|j_0|)\leqslant c_2\gamma_{1}^{-\upsilon}\langle l\rangle^{\tau_1}}{|j-j_0|\leqslant C_0\langle l\rangle,|l|\leqslant N_n}}{(l,j,j_{0})\in\mathbb{Z}^{d}\times(\mathbb{Z}_{\mathtt{m}}\setminus\overline{\mathbb{S}}_{0,k})^2}}\Big|\mathcal{R}_{l,j,j_{0}}^{k}(i_{n})\Big|+\sum_{(l,j,j_{0})\in\mathbb{Z}^{d}\times(\mathbb{Z}_{\mathtt{m}}\setminus\overline{\mathbb{S}}_{0,1})\times(\mathbb{Z}_{\mathtt{m}}\setminus\overline{\mathbb{S}}_{0,2})\atop\langle l,j,j_0\rangle\leqslant N_{n}}\Big|\mathcal{R}_{l,j,j_{0}}^{1,2}(i_{n})\Big|.\nonumber
		\end{align}
		Inserting \eqref{e-Russ- Rk12} into \eqref{e-cal S01} implies that for $n\in\{0,1\}$,
		\begin{align*}
			\mathcal{S}_{n}&\lesssim \gamma^{\frac{\upsilon}{q_0}}\sum_{(l,j)\in\mathbb{Z}^{d+1}\atop|j|\leqslant C_{0}\langle l\rangle}\langle j\rangle^{\frac{1}{q_{0}}}\langle l\rangle^{-1-\frac{\tau_1+1}{q_{0}}}+ \gamma^{\frac{1}{q_{0}}}\sum_{(l,j)\in\mathbb{Z}^{d+1}\atop|j|\leqslant C_{0}\langle l\rangle}\langle j\rangle^{\frac{1}{q_{0}}}\langle l\rangle^{-1-\frac{\tau_1+1}{q_{0}}}\\
			&\quad+\gamma^{\frac{1}{q_{0}}}\sum_{(l,j,j_0)\in\mathbb{Z}^{d+2}\atop\underset{\min(|j|,|j_0|)\leqslant c_2\gamma_1^{\upsilon}\langle l\rangle^{\tau_1}}{|j-j_0|\leqslant C_{0}\langle l\rangle}}\langle j-j_0\rangle^{\frac{1}{q_{0}}}\langle l\rangle^{-1-\frac{\tau_2+1}{q_{0}}}+\gamma^{\frac{1}{q_{0}}}\sum_{(l,j,j_0)\in\mathbb{Z}^{d+2}}\langle l,j,j_0\rangle^{-1-\frac{\tau_2+1}{q_{0}}}.
		\end{align*}
		Notice that the conditions $|j-j_0|\leqslant C_0\langle l\rangle$ and $\min(|j|,|j_0|)\leqslant c_2\gamma_1^{-\upsilon}\langle l\rangle^{\tau_1}$ imply
		\begin{equation}\label{Majo-max j j0}
			\max(|j|,|j_0|)\leqslant\min(|j|,|j_0|)+|j-j_0|\leqslant c_2\gamma_1^{-\upsilon}\langle l\rangle^{\tau_1}+C_0\langle l\rangle\lesssim\gamma^{-\upsilon}\langle l\rangle^{\tau_1}.
		\end{equation}
		Consequently, we have
		\begin{align}\label{e-calSn00}
			\max_{n\in\{0,1\}}\mathcal{S}_{n}&\lesssim  \gamma^{\frac{1}{q_{0}}}\Bigg(\sum_{l\in\mathbb{Z}^d}\langle l\rangle^{-\frac{\tau_1}{q_{0}}}+\gamma^{-\upsilon}\sum_{\l\in\mathbb{Z}^d}\langle l\rangle^{\tau_1-1-\frac{\tau_2}{q_0}}+\sum_{(l,j,j_0)\in\mathbb{Z}^{d+2}}\langle l,j,j_0\rangle^{-1-\frac{\tau_2+1}{q_{0}}}\Bigg)+\gamma^{\frac{\upsilon}{q_0}}\sum_{l\in\mathbb{Z}^d}\langle l\rangle^{-\frac{\tau_1}{q_0}}.
		\end{align}
		Observe that \eqref{selec tau12-upsilon} implies 
		$$\min\big(\tfrac{\upsilon}{q_0}\,\tfrac{1}{q_0}-\upsilon\big)=\tfrac{\upsilon}{q_0}.$$
		Now the constraints on $\tau_1$ and $\tau_2$ listed in \eqref{selec tau12-upsilon} allow to make the series in \eqref{e-calSn00} convergent and we get
		\begin{equation}\label{e-calSn 01}
			\max_{n\in\{0,1\}}\mathcal{S}_n\lesssim\gamma^{\min\big(\frac{\upsilon}{q_0},\tfrac{1}{q_0}-\upsilon\big)}=\gamma^{\frac{\upsilon}{q_0}}.
		\end{equation}
		Let us now move to the estimate of $\mathcal{S}_{n}$ for $n\geqslant 2$ defined by \eqref{sum meas CnmCn+1}. Using Lemma \ref{lemm-dix1} and Lemma \ref{some cantor set are empty}, we infer
		\begin{align*}
			\mathcal{S}_{n}&\lesssim  \sum_{k\in\{1,2\}\atop\underset{|j|\leqslant C_{0}\langle l\rangle, N_{n-1}<|l|\leqslant N_{n}}{(l,j)\in\mathbb{Z}^{d}\times\mathbb{Z}_{\mathtt{m}}\setminus\{(0,0)\}}}\Big|\mathcal{R}_{l,j}^{(0,k)}(i_{n})\Big|+\sum_{k\in\{1,2\}\atop\underset{|j|\leqslant C_{0}\langle l\rangle, N_{n-1}<|l|\leqslant N_{n}}{(l,j)\in\mathbb{Z}^{d}\times(\mathbb{Z}_{\mathtt{m}}\setminus\overline{\mathbb{S}}_{0,k})}}\Big|\mathcal{R}_{l,j}^{(1,k)}(i_{n})\Big|\\
			&\quad+\sum_{k\in\{1,2\}\atop\underset{\underset{(l,j)\neq(0,j_0),\min(|j|,|j_0|)\leqslant c_2\gamma_{n+1}^{-\upsilon}\langle l\rangle^{\tau_1}}{|j-j_0|\leqslant C_0\langle l\rangle,N_{n-1}<|l|\leqslant N_n}}{(l,j,j_{0})\in\mathbb{Z}^{d}\times(\mathbb{Z}_{\mathtt{m}}\setminus\overline{\mathbb{S}}_{0,k})^2}}\Big|\mathcal{R}_{l,j,j_{0}}^{k}(i_{n})\Big|+\sum_{(l,j,j_{0})\in\mathbb{Z}^{d}\times(\mathbb{Z}_{\mathtt{m}}\setminus\overline{\mathbb{S}}_{0,1})\times(\mathbb{Z}_{\mathtt{m}}\setminus\overline{\mathbb{S}}_{0,2})\atop N_{n-1}<\langle l,j,j_0\rangle\leqslant N_{n}}\Big|\mathcal{R}_{l,j,j_{0}}^{1,2}(i_{n})\Big|.
		\end{align*}
		Similarly to \eqref{Majo-max j j0}, we have the implication
		$$\Big(\min(|j|,|j_0|)\leqslant c_2\gamma_{n+1}^{-\upsilon}\langle l\rangle^{\tau_1}\quad\textnormal{and}\quad|j-j_0|\leqslant C_0\langle l\rangle\Big)\quad\Rightarrow\quad\max(|j|,|j_0|)\lesssim\gamma^{-\upsilon}\langle l\rangle^{\tau_1}.$$
		Hence, we deduce from \eqref{e-Russ- Rk12} that for any $n\geqslant2$
		\begin{align*}
			\mathcal{S}_{n}&\lesssim  \gamma^{\frac{1}{q_{0}}}\Bigg(\sum_{l\in\mathbb{Z}^d\atop|l|> N_{n-1}}\langle l\rangle^{-\frac{\tau_1}{q_{0}}}+\gamma^{-\upsilon}\sum_{\l\in\mathbb{Z}^d\atop|l|>N_{n-1}}\langle l\rangle^{\tau_1-1-\frac{\tau_2}{q_0}}+\sum_{(l,j,j_0)\in\mathbb{Z}^{d+2}\atop\langle l,j,j_0\rangle>N_{n-1}}\langle l,j,j_0\rangle^{-1-\frac{\tau_2+1}{q_{0}}}\Bigg)+\gamma^{\frac{\upsilon}{q_0}}\sum_{l\in\mathbb{Z}^d\atop|l|>N_{n-1}}\langle l\rangle^{-\frac{\tau_1}{q_0}}.
		\end{align*}
		We deduce that the series of general term $\mathcal{S}_n$ converges and
		\begin{align}\label{esti cal Snb2}
			\sum_{n=2}^\infty \mathcal{S}_{n}&\lesssim  \gamma^{\frac{\upsilon}{q_{0}}}=\varepsilon^{\frac{a\upsilon}{q_0}}.
		\end{align}
		Inserting \eqref{e-calSn 01} and \eqref{esti cal Snb2} into \eqref{sum meas CnmCn+1} yields
		$$\Big|(b_{*},b^{*})\setminus\mathcal{C}_{\infty}^{\varepsilon}\Big|\lesssim  \varepsilon^{\frac{a\upsilon}{q_0}}.$$
		This proves the Proposition \ref{lem-meas-es1}. 
	\end{proof}
	We shall now prove Lemma \ref{lemm-dix1} and Lemma \ref{some cantor set are empty} used in the proof of Proposition \ref{lem-meas-es1}.
	\begin{lem}\label{lemm-dix1}
		Let $n\in\mathbb{N}\setminus\{0,1\}$ and $k\in\{1,2\}.$ Then the following assertions hold true.
		\begin{enumerate}[label=(\roman*)]
			\item For $(l,j)\in\mathbb{Z}^d\times\mathbb{Z}_{\mathtt{m}}$ with $(l,j)\neq(0,0)$ and $|l|\leqslant N_{n-1}$, we get  $\,\,\mathcal{R}_{l,j}^{(0,k)}(i_{n})=\varnothing.$
			\item For  $(l,j)\in\mathbb{Z}^d\times(\mathbb{Z}_{\mathtt{m}}\setminus\overline{\mathbb{S}}_{0,k})$ with $|l|\leqslant N_{n-1}$, we get $\,\,\mathcal{R}_{l,j}^{(1,k)}(i_{n})=\varnothing.$
			\item For $(l,j,j_0)\in\mathbb{Z}^d\times(\mathbb{Z}_{\mathtt{m}}\setminus\overline{\mathbb{S}}_{0,k})^2$ with $|l|\leqslant N_{n-1}$ and $(l,j)\neq(0,j_0),$ we get $\,\,\mathcal{R}_{l,j,j_0}^{k}(i_n)=\varnothing.$
			\item For $(l,j,j_0)\in\mathbb{Z}^d\times(\mathbb{Z}_{\mathtt{m}}\setminus\overline{\mathbb{S}}_{0,1})\times(\mathbb{Z}_{\mathtt{m}}\setminus\overline{\mathbb{S}}_{0,2})$ with $\langle l,j,j_0\rangle\leqslant N_{n-1},$ we get $\,\,\mathcal{R}_{l,j,j_0}^{1,2}(i_n)=\varnothing.$
			\item For any $n\in\mathbb{N}\setminus\{0,1\},$
			\begin{align*}
				\mathcal{C}_{n}^{\varepsilon}\setminus\mathcal{C}_{n+1}^{\varepsilon}&=\bigcup_{\underset{\underset{N_{n-1}<|l|\leqslant N_{n}}{(l,j)\in\mathbb{Z}^{d}\times\mathbb{Z}_{\mathtt{m}}\setminus\{(0,0)\}}}{k\in\{1,2\}}}\mathcal{R}_{l,j}^{(0,k)}(i_{n})\cup\bigcup_{\underset{\underset{N_{n-1}<|l|\leqslant N_{n}}{(l,j)\in\mathbb{Z}^{d}\times(\mathbb{Z}_{\mathtt{m}}\setminus\overline{\mathbb{S}}_{0,k})}}{k\in\{1,2\}}}\mathcal{R}_{l,j}^{(1,k)}(i_{n})\\
				&\quad\cup\bigcup_{\underset{\underset{N_{n-1}<|l|\leqslant N_{n}}{(l,j,j_{0})\in\mathbb{Z}^{d}\times(\mathbb{Z}_{\mathtt{m}}\setminus\overline{\mathbb{S}}_{0,k})^{2}}\atop(l,j)\neq(0,j_{0})}{k\in\{1,2\}}}\mathcal{R}_{l,j,j_{0}}^{k}(i_{n})\cup\bigcup_{(l,j,j_{0})\in\mathbb{Z}^{d}\times(\mathbb{Z}_{\mathtt{m}}\setminus\overline{\mathbb{S}}_{0,1})\times(\mathbb{Z}_{\mathtt{m}}\setminus\overline{\mathbb{S}}_{0,2})\atop N_{n-1}<\langle l,j,j_0\rangle\leqslant N_{n}}\mathcal{R}_{l,j,j_{0}}^{1,2}(i_{n}).
			\end{align*}
		\end{enumerate}
	\end{lem}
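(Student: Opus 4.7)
The strategy is standard in Nash--Moser/KAM schemes: each Cantor condition defining $\mathtt{A}_{n+1}^\gamma$ at step $n$ is strictly stronger than the corresponding one at step $n-1$ for low Fourier indices ($|l|\leqslant N_{n-1}$ or $\langle l,j,j_0\rangle\leqslant N_{n-1}$), because the variation of the perturbed frequencies between two consecutive steps is controlled by the size of $\mathtt{H}_n=i_n-i_{n-1}$, which is very small thanks to Proposition~\ref{Nash-Moser}. Thus the low-frequency "bad" sets at step $n$ must already be empty, and only high-frequency indices can contribute to $\mathcal{C}_n^\varepsilon\setminus\mathcal{C}_{n+1}^\varepsilon$. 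Throughout, $\gamma_n-\gamma_{n+1}=\gamma\,2^{-n}$ provides a uniform margin, and a Taylor expansion of $x\mapsto x^\upsilon$ yields the lower bound $\gamma_n^\upsilon-\gamma_{n+1}^\upsilon\gtrsim \gamma^\upsilon 2^{-n}$ used in (i).

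To prove (i), let $b\in\mathcal{R}_{l,j}^{(0,k)}(i_n)$ with $(l,j)\neq(0,0)$, $|l|\leqslant N_{n-1}$. Since $b\in\mathcal{C}_n^\varepsilon$ we have $(b,\omega(b,\varepsilon))\in\mathtt{A}_n^\gamma\subset\mathcal{O}_{\infty,n-1}^{\gamma_n,\tau_1}(i_{n-1})$, so
\[
\bigl|\omega(b,\varepsilon)\cdot l+j\,\mathtt{c}_{k,n-1}(b,\varepsilon)\bigr|>\tfrac{4\gamma_n^\upsilon\langle j\rangle}{\langle l\rangle^{\tau_1}}.
\]
Combining with the defining inequality of $\mathcal{R}_{l,j}^{(0,k)}(i_n)$ via the triangle inequality gives
\[
|j|\,\bigl|\mathtt{c}_{k,n}(b,\varepsilon)-\mathtt{c}_{k,n-1}(b,\varepsilon)\bigr|>\tfrac{4(\gamma_n^\upsilon-\gamma_{n+1}^\upsilon)\langle j\rangle}{\langle l\rangle^{\tau_1}}\gtrsim \tfrac{\gamma^\upsilon \langle j\rangle}{2^{n}\langle l\rangle^{\tau_1}}.
\]
Now apply \eqref{diff Vpm} in Proposition~\ref{prop strighten} together with the Nash--Moser estimate \eqref{ttHn shbsig4}, which give $|\mathtt{c}_{k,n}-\mathtt{c}_{k,n-1}|\lesssim \varepsilon\,\|\mathtt{H}_n\|_{\overline s_h+\sigma_1}^{q,\gamma,\m}\lesssim\varepsilon^2\gamma^{-1}N_{n-1}^{-a_2}$. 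Using $\langle l\rangle^{\tau_1}\leqslant N_{n-1}^{\tau_1}$ and the parameter relations \eqref{param NM}--\eqref{rigidity gam-N0} (in particular $a_2-\tau_1$ large enough and $\gamma=\varepsilon^a$), the resulting inequality $\varepsilon^2\gamma^{-2-\upsilon}\,2^n N_{n-1}^{\tau_1-a_2}\gtrsim 1$ is violated for $\varepsilon$ small enough (and $n\geqslant 2$, which is where \eqref{ttHn shbsig4} applies), yielding $\mathcal{R}_{l,j}^{(0,k)}(i_n)=\varnothing$.

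Parts (ii)--(iv) are proved in the same spirit, replacing $\mathtt{c}_{k,n}-\mathtt{c}_{k,n-1}$ by the variation of the reduced eigenvalues $\mu_{j,k,n}^{(\infty)}-\mu_{j,k,n-1}^{(\infty)}$. Decomposing this variation via \eqref{def mujknfty} as the sum of the transport variation $j(\mathtt{c}_{k,n}-\mathtt{c}_{k,n-1})$ (controlled by \eqref{diff Vpm}) and the KAM-remainder variation $r_{j,k,n}^{(\infty)}-r_{j,k,n-1}^{(\infty)}$ (controlled by \eqref{ed-mujkfty}), both are bounded by $\varepsilon\gamma^{-1}\langle j\rangle\,\|\mathtt{H}_n\|_{\overline s_h+\sigma_4}^{q,\gamma,\m}$. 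The same chain of inequalities, using the margin $\gamma_n-\gamma_{n+1}\gtrsim\gamma 2^{-n}$ (note for these cases we do not need the $\upsilon$-power), shows emptyness of the bad sets whenever $|l|\leqslant N_{n-1}$ (resp.\ $\langle l,j,j_0\rangle\leqslant N_{n-1}$ in case (iv), where the factor $\langle j,j_0\rangle$ in the numerator absorbs the $\langle j,j_0\rangle$-growth of the Lipschitz bound). The main obstacle is (iv): there the second Melnikov denominator decays only in $\langle l,j,j_0\rangle^{-\tau_2}$ and the Lipschitz bound for $\mu_{j,1,n}^{(\infty)}-\mu_{j_0,2,n}^{(\infty)}$ grows like $\langle j,j_0\rangle$, so one has to verify that the parameter constraints indeed guarantee $\varepsilon^2\gamma^{-2}\,2^n N_{n-1}^{\tau_2-a_2}\langle j,j_0\rangle\ll \langle l,j,j_0\rangle^{-\tau_2+1}$ under the hypothesis $\langle l,j,j_0\rangle\leqslant N_{n-1}$, which ultimately reduces to the choice $a_2\geqslant \tau_2+2$ already enforced in \eqref{param NM}.

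Finally, part (v) is immediate: the decomposition \eqref{Set CnmCn+1} of $\mathcal{C}_n^\varepsilon\setminus\mathcal{C}_{n+1}^\varepsilon$ unions over indices with $|l|\leqslant N_n$ (resp.\ $\langle l,j,j_0\rangle\leqslant N_n$), and by (i)--(iv) every summand with $|l|\leqslant N_{n-1}$ (resp.\ $\langle l,j,j_0\rangle\leqslant N_{n-1}$) is empty, leaving only the indices with $N_{n-1}<|l|\leqslant N_n$ or $N_{n-1}<\langle l,j,j_0\rangle\leqslant N_n$, as stated.
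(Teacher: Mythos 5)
Your proposal is correct and follows essentially the same route as the paper: the paper phrases the argument as the inclusion $\mathcal{R}_{l,j,j_0}^{1,2}(i_n)\subset\mathcal{R}_{l,j,j_0}^{1,2}(i_{n-1})$ (forcing emptiness since consecutive difference sets $\mathcal{C}_{n-1}^{\varepsilon}\setminus\mathcal{C}_{n}^{\varepsilon}$ and $\mathcal{C}_{n}^{\varepsilon}\setminus\mathcal{C}_{n+1}^{\varepsilon}$ are disjoint), which is just the contrapositive of your direct contradiction between the step-$(n-1)$ Melnikov lower bound and the step-$n$ resonance upper bound. The quantitative ingredients you invoke --- the estimate $\|i_n-i_{n-1}\|\lesssim\varepsilon\gamma^{-1}N_{n-1}^{-a_2}$, the Lipschitz bounds \eqref{diff Vpm} and \eqref{ed-rjkfty}, the margin $\gamma_n-\gamma_{n+1}$, and the constraints $a_2>\tau_2+1$, $a<2/3$ --- are exactly those used in the paper.
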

	\begin{proof}
		Observe that the point (v) follows immediately from \eqref{Set CnmCn+1} and the points (i), (ii), (iii) and (iv). The points (i), (ii) and (iii) can be proved similarly to \cite[Lem. 7.1-(i)-(ii)-(iii)]{HR21} based on the following estimate, obtained from \eqref{ttHn shbsig4}.
		\begin{align}\label{ediff in in-1 norm sh+sigma4}
			\forall n\geqslant 2,\qquad\|i_{n}-i_{n-1}\|_{\overline{s}_{h}+\overline{\sigma}}^{q,\gamma,\mathtt{m}}&\leqslant\|\mathtt{U}_{n}-\mathtt{U}_{n-1}\|_{\overline{s}_{h}+\overline{\sigma}}^{q,\gamma,\mathtt{m}}\nonumber\\
			&\leqslant\| \mathtt{H}_{n}\|_{\overline{s}_{h}+\overline{\sigma}}^{q,\gamma,\mathtt{m}}\nonumber\\
			&\leqslant C_{\ast}\varepsilon\gamma^{-1}N_{n-1}^{-a_{2}}.
		\end{align}
		We mention that the required constraint on $\upsilon$ stated in \eqref{selec tau12-upsilon} appears in the skipped proofs. Now it remains to prove the point (iv).\\
		\noindent \textbf{(iv)} Let $(l,j,j_{0})\in\mathbb{Z}^d\times(\mathbb{Z}_{\mathtt{m}}\setminus\overline{\mathbb{S}}_{0,1})\times(\mathbb{Z}_{\mathtt{m}}\setminus\overline{\mathbb{S}}_{0,2})$ such that $\langle l,j,j_0\rangle\leqslant N_{n-1}.$ It is sufficient to prove that 
		$$\mathcal{R}_{l,j,j_{0}}^{1,2}(i_{n})\subset \mathcal{R}_{l,j,j_{0}}^{1,2}(i_{n-1}).$$
		Indeed, if this inclusion holds, then by construction
		$$\mathcal{R}_{l,j,j_0}^{1,2}(i_n)\subset\big(\mathcal{C}_{n}^{\varepsilon}\setminus\mathcal{C}_{n+1}^{\varepsilon}\big)\cap\big(\mathcal{C}_{n-1}^{\varepsilon}\setminus\mathcal{C}_{n}^{\varepsilon}\big)=\varnothing.$$
		Take $b\in\mathcal{R}_{l,j,j_{0}}^{1,2}(i_{n})\subset\mathcal{C}_{n}^{\varepsilon}\subset \mathcal{C}_{n-1}^{\varepsilon}.$ Then coming back to \eqref{Set CnmCn+1}, we deduce from the triangle inequality that
		\begin{equation}\label{triv121}
			\left|{\omega}(b,\varepsilon)\cdot l+\mu_{j,1,n-1}^{(\infty)}(b,\varepsilon)-\mu_{j_{0},2,n-1}^{(\infty)}(b,\varepsilon)\right|\leqslant\tfrac{2\gamma_{n+1}}{\langle l,j,j_0\rangle^{\tau_2}}+\varrho_{j,j_0,n}(b,\varepsilon),
		\end{equation}
		where 
		$$\varrho_{j,j_0,n}(b,\varepsilon)\triangleq \left|\mu_{j,1,n}^{(\infty)}(b,\varepsilon)-\mu_{j_0,2,n}^{(\infty)}(b,\varepsilon)-\mu_{j,1,n-1}^{(\infty)}(b,\varepsilon)+\mu_{j_{0},2,n-1}^{(\infty)}(b,\varepsilon)\right|.$$ 
		Using the decomposition \eqref{def mujknfty}, we infer
		\begin{align}\label{triv122}
			\nonumber \varrho_{j,j_0,n}(b,\varepsilon)&\leqslant |j|\left|r_{1,n}^{(0)}(b,\varepsilon)-r_{1,n-1}^{(0)}(b,\varepsilon)\right|+|j_0|\left|r_{2,n}^{(0)}(b,\varepsilon)-r_{2,n-1}^{(0)}(b,\varepsilon)\right|\\
			&\quad+\left|r_{j,1,n}^{(\infty)}(b,\varepsilon)-r_{j,1,n-1}^{(\infty)}(b,\varepsilon)\right|+\left|r_{j_0,2,n}^{(\infty)}(b,\varepsilon)-r_{j_0,2,n-1}^{(\infty)}(b,\varepsilon)\right|.
		\end{align}
		Applying  \eqref{e-ed-r0} together with \eqref{ediff in in-1 norm sh+sigma4}, \eqref{rigidity gam-N0} and the fact that $\sigma_{4}\geqslant\sigma_{3},$ we obtain
		\begin{align}\label{triv123}
			\left|r_{1,n}^{(0)}(b,\varepsilon)-r_{1,n-1}^{(0)}(b,\varepsilon)\right|+\left|r_{2,n}^{(0)}(b,\varepsilon)-r_{2,n-1}^{(0)}(b,\varepsilon)\right|&\lesssim \varepsilon\| i_{n}-i_{n-1}\|_{\overline{s}_{h}+\sigma_{3}}^{q,\gamma,\mathtt{m}}\nonumber\\
			&\lesssim \varepsilon^{2}\gamma^{-1}N_{n-1}^{-a_2}\nonumber\\
			&\lesssim \varepsilon^{2-a}N_{n-1}^{-a_2}.
		\end{align}
		In the same way, we can apply \eqref{e-rjfty} together with \eqref{ediff in in-1 norm sh+sigma4} and \eqref{rigidity gam-N0} to deduce
		\begin{align}\label{triv124}
			\left|r_{j,1,n}^{(\infty)}(b,\varepsilon)-r_{j,1,n-1}^{(\infty)}(b,\varepsilon)\right|+\left|r_{j_0,2,n}^{(\infty)}(b,\varepsilon)-r_{j_0,2,n-1}^{(\infty)}(b,\varepsilon)\right|&\lesssim \varepsilon\gamma^{-1}\| i_{n}-i_{n-1}\|_{\overline{s}_{h}+\sigma_{4}}^{q,\gamma,\mathtt{m}}\nonumber\\
			&\lesssim \varepsilon^{2}\gamma^{-2}N_{n-1}^{-a_2}\nonumber\\
			&\lesssim \varepsilon^{2(1-a)}\langle l,j,j_0\rangle N_{n-1}^{-a_2}.
		\end{align}
		Inserting \eqref{triv123} and \eqref{triv124} into \eqref{triv122} gives
		\begin{align}\label{sml-rhojj0n}
			\varrho_{j,j_0,n}(b,\varepsilon)\lesssim \varepsilon^{2(1-a)}\langle l,j,j_0\rangle N_{n-1}^{-a_{2}}.
		\end{align}
		Now putting together \eqref{triv121},  \eqref{sml-rhojj0n} and the fact that $\gamma_{n+1}=\gamma_{n}-\varepsilon^a 2^{-n-1},$ we get
		\begin{align*}
			\left|{\omega}(b,\varepsilon)\cdot l+\mu_{j,1,n-1}^{(\infty)}(b,\varepsilon)-\mu_{j_{0},2,n-1}^{(\infty)}(b,\varepsilon)\right| 
			&\leqslant \displaystyle\tfrac{2\gamma_{n}}{\langle l,j,j_0\rangle^{\tau_2}}-{\varepsilon^a}2^{-n}\langle l,j,j_0\rangle ^{-\tau_2}+C\varepsilon^{2(1-a)}\langle l,j,j_0\rangle N_{n-1}^{-a_{2}}.
		\end{align*} 
		Added to the constraint $\langle l,j,j_0\rangle\leqslant N_{n-1}$, we obtain
		\begin{align*}
			-{\varepsilon^a }2^{-n}\langle l,j,j_0\rangle ^{-\tau_2}+C\varepsilon^{2(1-a)}N_{n-1}^{-a_{2}}\leqslant {\varepsilon^a}2^{-n}\langle l,j,j_0\rangle ^{-\tau_2}\Big(-1+C\varepsilon^{2-3a}2^{n}N_{n-1}^{-a_{2}+\tau_2+1}\Big).
		\end{align*}
		Observe that our choice of parameters \eqref{param NM} and \eqref{rigidity gam-N0} gives in particular
		\begin{align*}
			a_2>\tau_2+1\qquad\textnormal{and}\qquad a<\tfrac{2}{3}\cdot
		\end{align*}
		Hence, up to taking $\varepsilon$ small enough, we infer
		\begin{align*}
			\forall\, n\in\mathbb{N},\quad -1+C\varepsilon^{2-3a}2^{n}N_{n-1}^{-a_{2}+\tau_2+1}\leqslant 0.
		\end{align*}
		Consequently,
		$$\left|{\omega}(b,\varepsilon)\cdot l+\mu_{j,1,n-1}^{(\infty)}(b,\varepsilon)-\mu_{j_{0},2,n-1}^{(\infty)}(b,\varepsilon)\right| \leqslant\tfrac{2\gamma_{n}}{\langle l,j,j_0\rangle^{\tau_2}}\cdot$$
		This means that $b\in  \mathcal{R}_{l,j,j_{0}}^{1,2}(i_{n-1}).$ This concludes the proof of Lemma \ref{lemm-dix1}.
	\end{proof}
	The following lemma provides necessary constraints between time and space Fourier modes so that the sets in \eqref{Set CnmCn+1} are not void.
	\begin{lem}\label{some cantor set are empty}
		Let $k\in\{1,2\}.$ There exists $\varepsilon_0$ such that for any $\varepsilon\in[0,\varepsilon_0]$ and $n\in\mathbb{N}$ the following assertions hold true. 
		\begin{enumerate}[label=(\roman*)]
			\item Let $(l,j)\in\mathbb{Z}^{d}\times\mathbb{Z}_{\mathtt{m}}\setminus\{(0,0)\}.$ If $\,\mathcal{R}_{l,j}^{(0,k)}(i_{n})\neq\varnothing,$ then $|j|\leqslant C_{0}\langle l\rangle.$
			\item Let $(l,j)\in\mathbb{Z}^{d}\times\left(\mathbb{Z}_{\mathtt{m}}\setminus\overline{\mathbb{S}}_{0,k}\right).$ If $\, \mathcal{R}_{l,j}^{(1,k)}(i_{n})\neq\varnothing,$ then $|j|\leqslant C_{0}\langle l\rangle.$
			\item Let $(l,j,j_0)\in\mathbb{Z}^d\times(\mathbb{Z}_{\mathtt{m}}\setminus\overline{\mathbb{S}}_{0,k})^2.$ If $\mathcal{R}_{l,j,j_0}^{k}(i_n)\neq\varnothing$, then $|j-j_0|\leqslant C_0\langle l\rangle.$
			\item Let $(l,j,j_0)\in\mathbb{Z}^d\times(\mathbb{Z}_{\mathtt{m}}\setminus\overline{\mathbb{S}}_{0,k})^2.$ There exists $c_2>0$ such that if $\min(|j|,|j_0|)\geqslant c_2\gamma_{n+1}^{-\upsilon}\langle l\rangle^{\tau_1},$ then
			$$\mathcal{R}_{l,j,j_0}^{k}(i_n)\subset\mathcal{R}_{l,j-j_0}^{(0,k)}(i_n).$$
		\end{enumerate}
	\end{lem}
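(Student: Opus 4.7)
\medskip

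\noindent\textbf{Proof plan.} All four statements exploit the common asymptotic identity, valid for $|j|\geqslant\mathtt{m}^*$ (see \eqref{ASYFR1+}--\eqref{ASYFR1-} and \eqref{def mujknfty}),
\begin{equation*}
\mu_{j,k,n}^{(\infty)}(b,\varepsilon)=j\,\mathtt{c}_{k,n}(b,\varepsilon)+\tfrac{(-1)^k}{2}\mathrm{sgn}(j)+(-1)^{k+1}\mathrm{sgn}(j)\mathtt{r}_{|j|}(b)+r_{j,k,n}^{(\infty)}(b,\varepsilon),
\end{equation*}
where, thanks to \eqref{ASYFR1-} and \eqref{e-rjfty}, the last two contributions are $O(|j|^{-1})$ and $O\big(\varepsilon\gamma^{-1}|j|^{-1}\big)$ respectively. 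The scalar coefficient $\mathtt{c}_{k,n}=\mathtt{v}_k+r_{k,n}^{(0)}$ is, by \eqref{sml-r0} and Lemma \ref{lem-asym}-3, an $O(\varepsilon)$ perturbation of $\mathtt{v}_k(b)\geqslant\Omega>0$. Throughout, I shall also use that by \eqref{pert freq} one has $|\omega(b,\varepsilon)|\leqslant |\omega_{\textnormal{Eq}}(b)|+\|\bar r_\varepsilon\|^{q,\gamma}\leqslant R_0$ uniformly in $b\in(b_*,b^*)$.

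\medskip

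\noindent\textbf{Steps (i) and (ii).} These are nearly identical. For (i), take $b\in\mathcal{R}_{l,j}^{(0,k)}(i_n)$. The triangle inequality combined with the definition yields
$$
|j|\,|\mathtt{c}_{k,n}(b,\varepsilon)|\leqslant |\omega(b,\varepsilon)|\,|l|+4\gamma_{n+1}^{\upsilon}\langle j\rangle\leqslant R_0|l|+4\gamma_{n+1}^{\upsilon}\langle j\rangle.
$$
Since $\mathtt{c}_{k,n}\geqslant\tfrac{\Omega}{2}$ for $\varepsilon$ small enough and $\gamma_{n+1}^{\upsilon}\leqslant (2\gamma)^{\upsilon}\ll 1$, the term $4\gamma_{n+1}^{\upsilon}|j|$ can be absorbed into the left-hand side, giving $|j|\leqslant C_0\langle l\rangle$ for some constant $C_0$ depending only on $\Omega,R_0,b^*$. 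Step (ii) follows the same pattern, using the asymptotic $|\mu_{j,k,n}^{(\infty)}|\geqslant\tfrac{\Omega}{2}|j|$ for $|j|$ large, which follows from the identity above and \eqref{ASYFR1-}, \eqref{e-rjfty} after the smallness condition \eqref{sml-RR}.

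\medskip

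\noindent\textbf{Step (iii).} Subtract the identity above at $j$ and $j_0$. When $j$ and $j_0$ have the same sign, the constant $\tfrac{(-1)^k}{2}\mathrm{sgn}(\cdot)$ contributions cancel and one obtains
$$
\mu_{j,k,n}^{(\infty)}-\mu_{j_0,k,n}^{(\infty)}=(j-j_0)\mathtt{c}_{k,n}+O(1/\min(|j|,|j_0|))+O(\varepsilon\gamma^{-1}/\min(|j|,|j_0|)).
$$
When they have opposite signs, an extra constant term of modulus at most $1$ appears, but then $|j-j_0|=|j|+|j_0|\geqslant|j|,|j_0|\geqslant\mathtt{m}^*$ so this constant is still harmless relative to $\langle j-j_0\rangle$. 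In either case, if $b\in\mathcal{R}_{l,j,j_0}^{k}(i_n)$, then $|\omega\cdot l|+C\geqslant |(j-j_0)\mathtt{c}_{k,n}|-\tfrac{2\gamma_{n+1}\langle j-j_0\rangle}{\langle l\rangle^{\tau_2}}$, and repeating the argument of (i) yields $|j-j_0|\leqslant C_0\langle l\rangle$.

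\medskip

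\noindent\textbf{Step (iv) --- the main point.} This is the key assertion and will be the most delicate. Starting from the decomposition
\begin{align*}
&\omega(b,\varepsilon)\cdot l+\mu_{j,1,n}^{(\infty)}(b,\varepsilon)-\mu_{j_0,1,n}^{(\infty)}(b,\varepsilon) \\
&\qquad=\omega(b,\varepsilon)\cdot l+(j-j_0)\mathtt{c}_{k,n}(b,\varepsilon)+E_1(b)+E_2(b,\varepsilon),
\end{align*}
where $E_1$ collects the $\mathtt{r}$-terms plus (at worst) a constant of modulus $\leqslant 1$, and $E_2$ collects the $r_{\cdot,k,n}^{(\infty)}$-terms, I would estimate
$$
|E_1|\leqslant \mathbf{1}_{\mathrm{sgn}(j)\neq\mathrm{sgn}(j_0)}+\frac{C}{\min(|j|,|j_0|)},\qquad |E_2|\leqslant\frac{C\varepsilon\gamma^{-1}}{\min(|j|,|j_0|)}.
$$
Under the hypothesis $\min(|j|,|j_0|)\geqslant c_2\gamma_{n+1}^{-\upsilon}\langle l\rangle^{\tau_1}$, both fractional terms are bounded by $\tfrac{C\gamma_{n+1}^{\upsilon}\langle l\rangle^{-\tau_1}}{c_2}$. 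The indicator term only triggers when $\mathrm{sgn}(j)\neq\mathrm{sgn}(j_0)$, in which case $\langle j-j_0\rangle=|j|+|j_0|\geqslant 2c_2\gamma_{n+1}^{-\upsilon}\langle l\rangle^{\tau_1}$, so it is dominated by $\tfrac{\gamma_{n+1}^{\upsilon}\langle j-j_0\rangle}{2c_2\langle l\rangle^{\tau_1}}$. Combining these estimates with the defining inequality of $\mathcal{R}_{l,j,j_0}^{k}$ gives
$$
\big|\omega\cdot l+(j-j_0)\mathtt{c}_{k,n}\big|\leqslant \frac{2\gamma_{n+1}\langle j-j_0\rangle}{\langle l\rangle^{\tau_2}}+\frac{C'\gamma_{n+1}^{\upsilon}\langle j-j_0\rangle}{c_2\langle l\rangle^{\tau_1}}.
$$
Since $\gamma_{n+1}\leqslant 2\gamma\leqslant 1$ and $\tau_2>\tau_1$, the first summand is at most $\tfrac{2\gamma_{n+1}^{\upsilon}\langle j-j_0\rangle}{\langle l\rangle^{\tau_1}}$. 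Choosing $c_2\geqslant 2C'$ (absolute constant) forces the right-hand side to be bounded by $\tfrac{4\gamma_{n+1}^{\upsilon}\langle j-j_0\rangle}{\langle l\rangle^{\tau_1}}$, meaning $b\in\mathcal{R}_{l,j-j_0}^{(0,k)}(i_n)$ (the required nondegeneracy $(l,j-j_0)\neq(0,0)$ is ensured by the hypothesis $(l,j)\neq(0,j_0)$ built into $\mathcal{R}_{l,j,j_0}^{k}$). The delicate aspect is the careful sign bookkeeping needed to identify the constant piece in $E_1$ and to absorb it only in the opposite-sign case; once this case split is made cleanly, the rest is elementary triangle inequalities.
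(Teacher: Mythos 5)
Your proof is correct and follows essentially the same route as the paper: triangle inequalities built on the expansion $\mu_{j,k,n}^{(\infty)}=j\mathtt{c}_{k,n}+\mathrm{sgn}(j)\tfrac{(-1)^k}{2}+O(|j|^{-1})$, the lower bound $|\mathtt{c}_{k,n}|\geqslant\tfrac{\Omega}{2}$, and for (iv) the absorption of the constant and $O(1/\min(|j|,|j_0|))$ corrections under the largeness hypothesis on $\min(|j|,|j_0|)$. The only cosmetic differences are that the paper reduces (iv) to $0<j<j_0$ by oddness and tracks a $\pm$ sign rather than keeping $\mathrm{sgn}(j),\mathrm{sgn}(j_0)$ explicit, and in (iii) it invokes Lemma \ref{lem-asym}-5 for the lower bound on $|\Omega_{j,k}-\Omega_{j_0,k}|$ where you re-derive the needed inequality directly from the expansion; both are sound.
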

	\begin{proof}
		\textbf{(i)} Observe that the case $j=0$ is trivial. Now, for $j\neq0$ we assume that $\mathcal{R}_{l,j}^{(0,\epsilon)}(i_{n})\neq\varnothing.$ Then, there exists $b\in(b_{*},b^{*})$ such that
		$$|\omega(b,\varepsilon)\cdot l+j \mathtt{c}_{k,n}(b,\varepsilon)|\leqslant\tfrac{4\gamma_{n+1}^{\upsilon}|j|}{\langle l\rangle^{\tau_1}}\cdot$$ 
		From triangle and Cauchy-Schwarz inequalities, \eqref{in gamn}, \eqref{rigidity gam-N0} and the fact that $(b,\varepsilon)\mapsto \omega(b, \varepsilon)$ is bounded, we deduce 
		\begin{align}\label{maj cknj}
			|\mathtt{c}_{k,n}(b,\varepsilon)||j|&\leqslant 4|j|\gamma_{n+1}^{\upsilon}\langle l\rangle^{-\tau_1}+|{\omega}(b,\varepsilon)\cdot l|\nonumber\\
			&\leqslant 4|j|\gamma_{n+1}^{\upsilon}+C\langle l\rangle\nonumber\\
			&\leqslant 8\varepsilon^{a\upsilon}|j|+C\langle l\rangle.
		\end{align}
		Now by construction \eqref{mu0 r0}, we can write
		$$\mathtt{c}_{k,n}(b,\varepsilon)=\mathtt{v}_k(b)+r_{k,n}^{(0)}(b,\varepsilon).$$
		Remark that by definition \eqref{def V10 V20}, we have
		$$\inf_{k\in\{1,2\}}\inf_{b\in(b_{*},b^{*})}\big|\mathtt{v}_{k}(b)\big|\geqslant\Omega.$$
		Together with \eqref{sml-r0} and Proposition \ref{Nash-Moser}-$(\mathcal{P}1)_{n}$, this implies
		\begin{align}\label{e-uni r0}
			\forall q'\in\llbracket 0,q\rrbracket,\quad\sup_{n\in\mathbb{N}}\sup_{b\in(b_{*},b^{*})}\big|\partial_{b}^{q'}r_{k,n}^{(0)}(b,\varepsilon)\big|&\leqslant\gamma^{-q'}\sup_{n\in\mathbb{N}}\| r_{k,n}^{(0)}\|^{q,\gamma}\nonumber\\
			&\lesssim\varepsilon\gamma^{-q'}\nonumber\\
			&\lesssim\varepsilon^{1-aq'}.
		\end{align}
		Hence, choosing $\varepsilon$ small enough implies
		\begin{equation}\label{low bnd ckn}
			\inf_{k\in\{1,2\}}\inf_{n\in\mathbb{N}}\inf_{b\in(b_{*},b^{*})}|\mathtt{c}_{k,n}(b,\varepsilon)|\geqslant \tfrac{\Omega}{2}\cdot
		\end{equation}
		Inserting \eqref{low bnd ckn} into \eqref{maj cknj} yields
		$$\big(\tfrac{\Omega}{2}-8\varepsilon^{a\upsilon}\big)|j|\leqslant C\langle l\rangle.$$
		Thus, selecting $\varepsilon$ small enough ensures that  $|j|\leqslant C_{0}\langle l\rangle$ for some $C_{0}>0.$\\
		
		\noindent \textbf{(ii)} The case $j=0$ is obvious so we may treat the case where $j\neq 0.$ Assume that  $\mathcal{R}_{l,j}^{(1,k)}(i_{n})\neq\varnothing.$ Then, we can find $b\in(b_{*},b^{*})$ such that 
		$$\big|\omega(b,\varepsilon)\cdot l+\mu_{j,k,n}^{(\infty)}(b,\varepsilon)\big|\leqslant\tfrac{\gamma_{n+1}|j|}{\langle l\rangle^{\tau_1}}\cdot$$
		Applying the triangle and Cauchy-Schwarz inequalities together with \eqref{in gamn} and \eqref{rigidity gam-N0} yields
		\begin{align}\label{maj mujknfty}
			\big|\mu_{j,k,n}^{(\infty)}(b,\varepsilon)\big|&\leqslant \gamma_{n+1}|j|\langle l\rangle^{-\tau_1}+|{\omega}(b,\varepsilon)\cdot l|\nonumber\\
			&\leqslant  2\varepsilon^a|j|+C\langle l\rangle.
		\end{align}
		Now coming back to the structure of the eigenvalues in \eqref{def mujknfty}, then using the triangle inequality, we infer
		\begin{align}\label{low mjknfty}
			\big|\mu_{j,k,n}^{(\infty)}(b,\varepsilon)\big|&\geqslant |\Omega_{j,k}(b)|-|j|\big|r_{k,n}^{(0)}(b,\varepsilon)\big|-\big|r_{j,k,n}^{(\infty)}(b,\varepsilon)\big|.
		\end{align}
		Notice that \eqref{e-rjfty} implies
		\begin{align}\label{e-uni rjknfty}
			\forall q'\in\llbracket 0,q\rrbracket,\quad\sup_{n\in\mathbb{N}}\sup_{b\in(b_{*},b^{*})}\sup_{j\in\overline{\mathbb{S}}_{0,k}}\big|\partial_{b}^{q'}r_{j,k,n}^{(\infty)}(b,\varepsilon)\big|&\leqslant\gamma^{-q'}\sup_{n\in\mathbb{N}}\sup_{j\in\overline{\mathbb{S}}_{0,k}}\| r_{j,k,n}^{(\infty)}\|^{q,\gamma}\nonumber\\
			&\lesssim\varepsilon\gamma^{-q'-1}\nonumber\\
			&\lesssim\varepsilon^{1-a(q'+1)}.
		\end{align}
		Gathering \eqref{low mjknfty}, Lemma \ref{lem-asym}-3, \eqref{e-uni r0} and \eqref{e-uni rjknfty}, we obtain
		\begin{equation}\label{low mjknfty2}
			\big|\mu_{j,k,n}^{(\infty)}(b,\varepsilon)\big|\geqslant \Omega|j|-C\varepsilon^{1-a}|j|.
		\end{equation}
		Inserting \eqref{low mjknfty2} into \eqref{maj mujknfty} yields
		\begin{align*}
			\big( \Omega-C\varepsilon^{1-a}-2\varepsilon^a\big)|j|
			&\leqslant  C\langle l\rangle.
		\end{align*}
		Hence, taking $\varepsilon$ small enough we obtain $|j|\leqslant C_0\langle  l\rangle,$ for some $C_{0}>0.$\\
		\noindent\textbf{(iii)} Notice that for $j=j_0$ we have $\mathcal{R}_{l,j_0,j_{0}}^{k}(i_{n})=\mathcal{R}^{(0,k)}_{l,0}(i_{n})$. Hence this situation has already been studied in the first point. Therefore, we shall consider $j\neq j_0.$ Let us assume that $\mathcal{R}_{l,j,j_{0}}(i_{n})\neq\varnothing.$ We can find $b\in(b_{*},b^{*})$ such that 
		$$\big|\omega(b,\varepsilon)\cdot l+\mu_{j,k,n}^{(\infty)}(b,\varepsilon)-\mu_{j_0,k,n}^{(\infty)}(b,\varepsilon)\big|\leqslant\tfrac{2\gamma_{n+1}|j-j_0|}{\langle l\rangle^{\tau_2}}\cdot$$
		Using one more time the triangle and Cauchy-Schwarz inequalities together with \eqref{in gamn} and \eqref{rigidity gam-N0}, we infer
		\begin{align*}
			\big|\mu_{j,k,n}^{(\infty)}(b,\varepsilon)-\mu_{j_{0},k,n}^{(\infty)}(b,\varepsilon)\big|&\leqslant 2\gamma_{n+1}|j-j_{0}|\langle l\rangle^{-\tau_{2}}+|{\omega}(b,\varepsilon)\cdot l|\\
			&\leqslant 2\gamma_{n+1}|j-j_{0}|+C\langle l\rangle\\
			&\leqslant 4\varepsilon^a|j-j_{0}|+C\langle l\rangle.
		\end{align*}
		On the other hand, the triangle inequality, Lemma \ref{lem-asym}-5, \eqref{e-uni r0} and \eqref{e-uni rjknfty} give for $\varepsilon$ small enough
		\begin{align*}
			\big|\mu_{j,k,n}^{(\infty)}(b,\varepsilon)-\mu_{j_{0},k,n}^{(\infty)}(b,\varepsilon)| & \geqslant  |\Omega_{j,k}(b)-\Omega_{j_{0},k}(b)|-\big|r_{k,n}^{(0)}(b,\varepsilon)\big||j-j_{0}|-\big|r_{j,k,n}^{(\infty)}(b,\varepsilon)\big|-\big|r_{j_{0},k,n}^{(\infty)}(b,\varepsilon)\big|\\
			& \geqslant  \big(c-C\varepsilon^{1-a}\big)|j-j_{0}|\\
			& \geqslant \tfrac{c}{2}|j-j_{0}|.
		\end{align*}
		Putting together the foregoing inequalities yields
		$$\big(\tfrac{c}{2}-4\varepsilon^{a}\big)|j-j_0|\leqslant C\langle l\rangle.$$
		Thus, for $\varepsilon$ sufficiently small, we get $|j-j_{0}|\leqslant C_{0}\langle l\rangle,$ for some $C_{0}>0.$\\
		\noindent \textbf{(iv)} Observe that the case $j=j_0$ is trivial, so we may restrict our discussion to the case $j\neq j_0.$ Now, according to the symmetry property $\mu_{-j,k,n}^{(\infty)}(b,\varepsilon)=-\mu_{j,k,n}^{(\infty)}(b,\varepsilon)$ we can assume, without loss of generality that $0<j<j_0.$ Take $b\in\mathcal{R}_{l,j,j_{0}}^{k}(i_{n}).$ Then by definition, we have
		\begin{equation}\label{complmt Russ2}
			\big|{\omega}(b,\varepsilon)\cdot l+\mu_{j,k,n}^{(\infty)}(b,\varepsilon)\pm\mu_{j_{0},k,n}^{(\infty)}(b,\varepsilon)\big|\leqslant\tfrac{2\gamma_{n+1}\langle j\pm j_{0}\rangle}{\langle l\rangle^{\tau_{2}}}\cdot
		\end{equation}
		Recall from \eqref{def V10 V20} and \eqref{ASYFR1+} the decompositions for $j>0,$
		\begin{align*}
			\mathtt{v}_{k}(b)&=\Omega+(2-k)\frac{1-b^2}{2},\\
			\Omega_{j,k}(b)&=j\mathtt{v}_{k}(b)+\frac{(-1)^{k}}{2}+(-1)^{k+1}\mathtt{r}_{j}(b).
		\end{align*}
		Therefore, by the triangle inequality, we get
		\begin{align}
			\big|\omega(b,\varepsilon)\cdot l+(j\pm j_{0})\mathtt{c}_{k,n}(b,\varepsilon)\big|  &\leqslant  \big|\omega(b,\varepsilon)\cdot l+\mu_{j,k,n}^{(\infty)}(b,\varepsilon)\pm\mu_{j_{0},k,n}^{(\infty)}(b,\varepsilon)\big|+\tfrac{1}{2}(1\pm 1)\nonumber\\
			&\quad+\big|\mathtt{r}_{j}(b)\pm\mathtt{r}_{j_0}(b)\big|+\big|r_{j,k,n}^{(\infty)}(b,\varepsilon)\pm r_{j_{0},k,n}^{(\infty)}(b,\varepsilon)\big|.\label{link R0K and Rkk}
		\end{align}
		First, it is obvious that
		\begin{equation}\label{trive 1pm1}
			1\pm 1\leqslant \tfrac{\langle j\pm j_0\rangle}{j}\cdot
		\end{equation}
		Second, the estimate \eqref{ASYFR1-} implies in particular
		\begin{align}\label{e-ttrj pm ttrj0}
			\big|\mathtt{r}_{j}(b)\pm\mathtt{r}_{j_0}(b)\big|&\leqslant C\big(|j|^{-1}+|j_0|^{-1}\big)\nonumber\\
			&\leqslant C\tfrac{\langle j\pm j_0\rangle}{j}\cdot
		\end{align}
		Third, the estimate \eqref{e-rjfty} together with \eqref{rigidity gam-N0} give
		\begin{align}\label{e-rjfty pm rj0fty}
			\big|r_{j,k,n}^{(\infty)}(b,\varepsilon)\pm r_{j_{0},k,n}^{(\infty)}(b,\varepsilon)\big|  \leqslant &C \varepsilon^{1-a}\big(|j|^{-1}+|j_0|^{-1}\big)\nonumber\\
			\leqslant & C \tfrac{\langle j\pm j_{0}\rangle}{j}\cdot
		\end{align}
		Gathering \eqref{complmt Russ2}, \eqref{link R0K and Rkk}, \eqref{trive 1pm1}, \eqref{e-ttrj pm ttrj0} and \eqref{e-rjfty pm rj0fty} yields
		\begin{align*}
			\nonumber \big|{\omega}(b,\varepsilon)\cdot l+(j\pm j_{0})\mathtt{c}_{k,n}(b,\varepsilon)\big|  \leqslant & \tfrac{2\gamma_{n+1}\langle j\pm j_{0}\rangle}{\langle l\rangle^{\tau_{2}}}+ C \tfrac{\langle j\pm j_{0}\rangle}{j}\cdot
		\end{align*}
		Hence, using the assumptions $\displaystyle j\geqslant \tfrac{C}{2} \gamma_{n+1}^{-\upsilon}\langle l\rangle^{\tau_{1}}$  and $\tau_{2}>\tau_{1}$, we infer
		$$\big|{\omega}(b,\varepsilon)\cdot l+(j\pm j_{0})\mathtt{c}_{k,n}^{}(b,\varepsilon)\big| \leqslant  \tfrac{4\gamma_{n+1}^{\upsilon}\langle j\pm j_{0}\rangle}{\langle l\rangle^{\tau_{1}}}\cdot$$
		This gives the desired result and ends the proof of Lemma \ref{some cantor set are empty}, up to defining $c_{2}\triangleq \frac{C}{2}\cdot$
	\end{proof}
	The next and last lemma is concerned by the transversality property of the perturbed frequency vector $\omega(b,\varepsilon)$ given by Corollary \ref{Corollary NM}. 
	\begin{lem}\label{lem Ru-pert}
		Let $q_{0}$, $C_{0}$ and $\rho_{0}$ as in Lemma $\ref{lemma transversalityE}$. There exist $\varepsilon_{0}>0$ small enough such that for any   $\varepsilon\in[0,\varepsilon_{0}]$ the following assertions hold true.
		\begin{enumerate}[label=(\roman*)]
			\item For all $l\in\mathbb{Z}^{d}\setminus\{0\}, $ we have 
			$$\inf_{b\in[b_*,b^*]}\max_{q'\in\llbracket 0,q_{0}\rrbracket}\big|\partial_{b}^{q'}\left(\omega(b,\varepsilon)\cdot l\right)\big|\geqslant\tfrac{\rho_{0}\langle l\rangle}{2}\cdot$$
			\item For all $(l,j)\in\mathbb{Z}^{d}\times\mathbb{Z}_{\mathtt{m}}\setminus\{(0,0)\}$ such that $|j|\leqslant C_{0}\langle l\rangle,$ we have
			$$\forall n\in\mathbb{N},\quad\inf_{b\in[b_*,b^*]}\max_{q'\in\llbracket 0,q_{0}\rrbracket}\left|\partial_{b}^{q'}\big(\omega(b,\varepsilon)\cdot l+jc_{k,n}(b,\varepsilon)\big)\right|\geqslant\tfrac{\rho_{0}\langle l\rangle}{2}\cdot$$
			\item For all $(l,j)\in\mathbb{Z}^{d}\times(\mathbb{Z}_{\mathtt{m}}\setminus\overline{\mathbb{S}}_{0,k})$ such that $|j|\leqslant C_{0}\langle l\rangle,$ we have
			$$\forall n\in\mathbb{N},\quad\inf_{b\in[b_*,b^*]}\max_{q'\in\llbracket 0,q_{0}\rrbracket}\left|\partial_{b}^{q'}\left(\omega(b,\varepsilon)\cdot l+\mu_{j,k,n}^{(\infty)}(b,\varepsilon)\right)\right|\geqslant\tfrac{\rho_{0}\langle l\rangle}{2}\cdot$$
			\item For all $(l,j,j_{0})\in\mathbb{Z}^{d}\times(\mathbb{Z}_{\mathtt{m}}\setminus\overline{\mathbb{S}}_{0,k})^{2}$ such that $|j-j_0|\leqslant C_0\langle l\rangle$, we have
			$$\forall n\in\mathbb{N},\quad\inf_{b\in[b_*,b^*]}\max_{q'\in\llbracket 0,q_{0}\rrbracket}\left|\partial_{b}^{q'}\left(\omega(b,\varepsilon)\cdot l+\mu_{j,k,n}^{(\infty)}(b,\varepsilon)-\mu_{j_{0},k,n}^{(\infty)}(b,\varepsilon)\right)\right|\geqslant\tfrac{\rho_{0}\langle l\rangle}{2}\cdot$$
			\item For all $(l,j,j_{0})\in\mathbb{Z}^{d}\times(\mathbb{Z}_{\mathtt{m}}\setminus\overline{\mathbb{S}}_{0,1})\times(\mathbb{Z}_{\mathtt{m}}\setminus\overline{\mathbb{S}}_{0,2})$, we have
			$$\forall n\in\mathbb{N},\quad\inf_{b\in[b_*,b^*]}\max_{q'\in\llbracket 0,q_{0}\rrbracket}\left|\partial_{b}^{q'}\left(\omega(b,\varepsilon)\cdot l+\mu_{j,1,n}^{(\infty)}(b,\varepsilon)-\mu_{j_{0},2,n}^{(\infty)}(b,\varepsilon)\right)\right|\geqslant\tfrac{\rho_{0}\langle l,j,j_0\rangle}{2}\cdot$$
		\end{enumerate}
	\end{lem}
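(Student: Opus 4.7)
\smallskip

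\noindent\textbf{Proof proposal for Lemma \ref{lem Ru-pert}.}
The strategy is a perturbation argument: each quantity appearing in (i)--(v) decomposes as an equilibrium part (to which the qualitative non-degeneracy of Lemma \ref{lemma transversalityE} applies) plus a small remainder arising from the Nash--Moser construction and the KAM reduction. The goal is to show that the remainders and their derivatives up to order $q_0$ are small enough to be absorbed, allowing us to pass from the lower bound $\rho_0\langle l\rangle$ at the equilibrium to $\tfrac{\rho_0}{2}\langle l\rangle$ for the perturbed frequencies. The basic mechanism is: $\|\cdot\|^{q,\gamma}$ controls $q_0+1$ derivatives with a $\gamma^{-q_0}$ loss, while the weighted estimates \eqref{pert freq}, \eqref{e-uni r0} and \eqref{e-uni rjknfty} bound the various remainders by a power of $\varepsilon$ times $\gamma^{-1}$ or $\gamma^{-2}$; combined with \eqref{rigidity gam-N0} ($\gamma=\varepsilon^a$ with $a$ small enough) this yields smallness of the form $\varepsilon^\kappa$ for some $\kappa>0$ uniform in $n$ and in the indices.

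\smallskip

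Concretely, for (i) I would write $\omega(b,\varepsilon)\cdot l=-\omega_{\textnormal{Eq}}(b)\cdot l+\bar r_\varepsilon(b)\cdot l$, apply Lemma \ref{lemma transversalityE}--(i) to the first term, and bound the contribution of $\bar r_\varepsilon$ and its derivatives using \eqref{pert freq} together with the rigidity \eqref{rigidity gam-N0} to get a remainder of size $\varepsilon^{1-a(1+q\bar a+q_0)}|l|$, which is $\ll \rho_0\langle l\rangle$ for $\varepsilon$ small. For (ii), decompose $\mathtt{c}_{k,n}(b,\varepsilon)=\mathtt{v}_k(b)+r_{k,n}^{(0)}(b,\varepsilon)$ and combine Lemma \ref{lemma transversalityE}--(ii) with \eqref{e-uni r0} and the restriction $|j|\leqslant C_0\langle l\rangle$ to absorb $j\, r_{k,n}^{(0)}$ into $\tfrac{\rho_0}{2}\langle l\rangle$. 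For (iii) and (iv), use the decomposition \eqref{def mujknfty}
\[
\mu_{j,k,n}^{(\infty)}(b,\varepsilon)=\Omega_{j,k}(b)+j\,r_{k,n}^{(0)}(b,\varepsilon)+r_{j,k,n}^{(\infty)}(b,\varepsilon);
\]
the leading part is handled by Lemma \ref{lemma transversalityE}--(iii),(iv), while the correction terms are bounded by \eqref{e-uni r0} and \eqref{e-uni rjknfty}, the factors $|j|$ or $|j-j_0|$ being controlled by $\langle l\rangle$ in view of the hypothesis.

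\smallskip

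The main obstacle, as expected, is assertion (v), which features the off-diagonal second-order Melnikov expression $\omega(b,\varepsilon)\cdot l+\mu_{j,1,n}^{(\infty)}-\mu_{j_0,2,n}^{(\infty)}$ involving two eigenvalue families with different transport speeds $\mathtt{c}_{1,n}$ and $\mathtt{c}_{2,n}$. Here the lower bound $\tfrac{\rho_0}{2}\langle l,j,j_0\rangle$ is \emph{isotropic} in $(l,j,j_0)$, so one cannot absorb a remainder of size $\langle j,j_0\rangle$; we must use that $j\,r_{1,n}^{(0)}-j_0\,r_{2,n}^{(0)}$ is bounded in modulus by $\langle j,j_0\rangle\,\varepsilon^{1-a(q_0+1)}$, which is $\ll\langle l,j,j_0\rangle$ since $\langle j,j_0\rangle\leqslant\langle l,j,j_0\rangle$. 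The equilibrium lower bound for this part is precisely Lemma \ref{lemma transversalityE}--(v), which holds thanks to the condition $\Omega>\Omega_{\mathtt{m}}^*$ already assumed. Together with the estimate \eqref{e-uni rjknfty} for the $r_{j,k,n}^{(\infty)}$ terms, this gives the desired bound uniformly in $n$, provided $\varepsilon_0$ is chosen small enough and the rigidity \eqref{rigidity gam-N0} is enforced with $q=q_0+1$. All five estimates then follow by taking $\varepsilon_0$ to be the minimum of finitely many thresholds, concluding the proof.
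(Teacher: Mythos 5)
Your proposal is correct and follows essentially the same route as the paper: a perturbative argument splitting each quantity into its equilibrium part (handled by Lemma \ref{lemma transversalityE}) plus remainders $\overline{\mathrm{r}}_{\varepsilon}$, $r_{k,n}^{(0)}$, $r_{j,k,n}^{(\infty)}$ controlled via \eqref{pert freq}, \eqref{e-uni r0}, \eqref{e-uni rjknfty} and the rigidity $\gamma=\varepsilon^{a}$, with the isotropic weight $\langle l,j,j_0\rangle$ absorbing the $\langle j,j_0\rangle$-sized remainder in point (v) exactly as in the paper's computation.
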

	\begin{proof} The points (i), (ii), (iii) and (iv) are obtained following closely \cite[Lem. 7.3]{HR21}. The estimates are obtained by a perturbative argument from the equilibrium transversality conditions proved in Lemma \ref{lemma transversalityE}-1-2-3-4. Therefore, it remains to prove the point (v).\\
		\textbf{(v)} Using the decompositions \eqref{mu0 r0}-\eqref{def mu lim}-\eqref{pert freq} together with \eqref{e-uni r0}, \eqref{e-uni rjknfty} and Lemma \ref{lemma transversalityE}-5, we get for $\varepsilon$ sufficiently small
		\begin{align*}
			\max_{q'\in\llbracket 0,q_{0}\rrbracket}\left|\partial_{b}^{q'}\Big(\omega(b,\varepsilon)\cdot l\right.&\left.\left.+\mu_{j,1,n}^{(\infty)}(b,\varepsilon)-\mu_{j_{0},2,n}^{(\infty)}(b,\varepsilon)\right)\right|
			\geqslant\max_{q'\in\llbracket 0,q_{0}\rrbracket}\left|\partial_{b}^{q'}\Big(\omega_{\textnormal{Eq}}(b)\cdot l+\Omega_{j,1}(b)-\Omega_{j_{0},2}(b)\Big)\right|\\
			&-\max_{q'\in\llbracket 0,q\rrbracket}\left|\partial_{b}^{q'}\left(\overline{\mathrm{r}}_{\varepsilon}(b)\cdot l+jr_{1,n}^{(0)}(b,\varepsilon)-j_{0}r_{2,n}^{(0)}(b,\varepsilon)+r_{j,1,n}^{(\infty)}(b,\varepsilon)-r_{j_{0},2,n}^{(\infty)}(b,\varepsilon)\right)\right|\\
			&\geqslant\rho_{0}\langle l,j,j_0\rangle-C\varepsilon^{1-a(1+q+q\overline{a})}\langle l,j,j_0\rangle-C\varepsilon^{1-a(1+q)}\langle l,j,j_0\rangle\\
			&\geqslant\tfrac{\rho_{0}\langle l,j,j_0\rangle}{2}\cdot
		\end{align*}
		This ends the proof of Lemma \ref{lem Ru-pert}.
	\end{proof}

	\noindent NYUAD Research Institute, New York University Abu Dhabi, PO Box 129188, Abu Dhabi, United Arab Emirates.\\
	Email address: zh14@nyu.edu.\\
	
	\noindent NYUAD Research Institute, New York University Abu Dhabi, PO Box 129188, Abu Dhabi, United Arab Emirates. Univ Rennes, CNRS, IRMAR – UMR 6625, F-35000 Rennes, France.\\
	Email address: thmidi@univ-rennes1.fr.\\
	
	\noindent SISSA International School for Advanced Studies, Via Bonomea 265, 34136, Trieste, Italy.\\
	E-mail address : eroulley@sissa.it.
\end{document}